\documentclass[11pt]{article}
\usepackage[utf8x]{inputenc}
\usepackage[margin=1.5in]{geometry}
\usepackage{microtype}
\usepackage[T1]{fontenc}
\usepackage{ae,aecompl}
\usepackage{times}
\usepackage{tikz-cd}
\usepackage{mathrsfs} 
\usepackage{harmony}
\usepackage{verbatim,amsmath,amsthm,amsfonts,amssymb,latexsym,graphicx,mathtools,extpfeil,color,mathabx}
\usepackage{tikz}
\usepackage{epstopdf,pinlabel}
\epstopdfsetup{suffix=}
\usepackage[all]{xy}
\usepackage{graphicx}
\usepackage{booktabs}
\usepackage{caption}
\usepackage{tabularx}
\usepackage{subcaption}
\usepackage{cancel}
\usepackage{slashed}
\DeclareMathAlphabet{\mathpzc}{OT1}{pzc}{m}{it}
\pagestyle{plain}
%%%%%%%%%%%%%%%%%%%%%%%%%%%%%%%%%%%%%%%%%%%%%%%%
%%%%%%%%%%%%%%%%%%%%%%%%%%%%%%%%%%%%%%%%%%%%%%%%

\usepackage[colorlinks,pagebackref,hypertexnames=false]{hyperref} \usepackage[alphabetic,backrefs,msc-links]{amsrefs}

\usepackage{aliascnt}
\numberwithin{equation}{section}

\DeclareUnicodeCharacter{2212}{\ensuremath{-}}

\usepackage{accents}

%< LETTERS >====================================================================

% ROMAN

% SANS SERIF

\newcommand{\sfR}{{\sf R}}

% UNDERLINED

% BOLD

\newcommand{\bC}{{\bf C}}

\newcommand{\bH}{{\bf H}}

\newcommand{\bN}{{\bf N}}

\newcommand{\bP}{{\bf P}}
\newcommand{\bQ}{{\bf Q}}
\newcommand{\bR}{{\bf R}}

\newcommand{\bZ}{{\bf Z}}

% CALLIGRAPHIC

\newcommand{\cH}{\mathcal{H}}

\newcommand{\cO}{\mathcal{O}}

\newcommand{\cQ}{\mathcal{Q}}

\newcommand{\cS}{\mathcal{S}}

% SCRIPT

\newcommand{\sB}{\mathscr{B}}

\newcommand{\sD}{\mathscr{D}}

\newcommand{\sS}{\mathscr{S}}
\newcommand{\sT}{\mathscr{T}}

% FRAKTUR

\newcommand{\fd}{{\mathfrak d}}

\newcommand{\fj}{{\mathfrak j}}

\newcommand{\fo}{{\mathfrak o}}
\newcommand{\fp}{{\mathfrak p}}

\newcommand{\fs}{{\mathfrak s}}

\newcommand{\fC}{{\mathfrak C}}

\newcommand{\fI}{{\mathfrak I}}

\newcommand{\fX}{{\mathfrak X}}

% SLASHED

%< NUMBERS >====================================================================

\newcommand{\Z}{\bZ}
\newcommand{\Q}{\bQ}
\newcommand{\R}{\bR}
\newcommand{\C}{\bC}

%< LIE ALGEBRAS AND LIE GROUPS >================================================

%< MISC >=======================================================================

\DeclareMathOperator{\ind}{ind}

\renewcommand{\det}{\operatorname{det}}

\renewcommand{\epsilon}{\varepsilon}

\def\({\mathopen{}\left(}
\def\){\right)\mathclose{}}
\def\<{\mathopen{}\left<}
\def\>{\right>\mathclose{}}

%\newcommand{\hkred}{{/\!\! /\!\! /}}

%< COLOR >======================================================================
\usepackage{multicol, color}

\definecolor{gold}{rgb}{0.85,.66,0}
\definecolor{cherry}{rgb}{0.9,.1,.2}
\definecolor{burgundy}{rgb}{0.8,.2,.2}
\definecolor{orangered}{rgb}{0.85,.3,0}
\definecolor{orange}{rgb}{0.85,.4,0}
\definecolor{olive}{rgb}{.45,.4,0}
\definecolor{lime}{rgb}{.6,.9,0}
%\definecolor{green}{rgb}{.3,.5,0}
\definecolor{green}{rgb}{.2,.7,0}
\definecolor{grey}{rgb}{.4,.4,.2}
\definecolor{brown}{rgb}{.4,.3,.1}

%< ENVIRONMENTS >===============================================================

\def\makeautorefname#1#2{\AtBeginDocument{\expandafter\def\csname#1autorefname\endcsname{#2}}}

\newcommand{\mynewtheorem}[2]{
  \newaliascnt{#1}{equation}          
  \newtheorem{#1}[#1]{#2}
  \aliascntresetthe{#1}
  \makeautorefname{#1}{#2}
}
\mynewtheorem{theorem}{Theorem}
\mynewtheorem{prop}{Proposition}
\mynewtheorem{cor}{Corollary}
\mynewtheorem{construction}{Construction}
\mynewtheorem{lemma}{Lemma}
\mynewtheorem{conjecture}{Conjecture}

\numberwithin{substep}{step}
\makeautorefname{step}{Step}
\makeautorefname{substep}{Step}

\numberwithin{subcase}{case}
\makeautorefname{case}{Case}
\makeautorefname{subcase}{case}

\theoremstyle{remark}
\mynewtheorem{remark}{Remark}

\theoremstyle{definition}
\mynewtheorem{definition}{Definition}
\mynewtheorem{example}{Example}
\mynewtheorem{exercise}{Exercise}
\mynewtheorem{convention}{Convention}
\newtheorem*{convention*}{Convention}
\newtheorem*{conventions*}{Conventions}
\mynewtheorem{question}{Question}
        
\makeautorefname{chapter}{Chapter}
\makeautorefname{section}{Section}
\makeautorefname{subsection}{Section}
\makeautorefname{subsubsection}{Section}

%%% Local Variables:
%%% mode: plain-tex
%%% TeX-master: "PSL(2,C) gauge theory.tex"
%%% End:

\newcommand\hrC{{\widehat { C}}}
\newcommand\fhrC{{\widehat { \mathfrak C}}}

\newcommand\crC{{\widecheck { C}}}
\newcommand\fcrC{{\widecheck {\mathfrak C}}}

\newcommand\brC{{\overline {C}}}
\newcommand\fbrC{{\overline {\mathfrak C}}}
\newcommand\diamd{\blacklozenge}

\newcommand\opam{t}

\title{Unoriented skein exact triangles in\\ equivariant singular instanton Floer theory}
\author{Aliakbar Daemi\thanks{The work of AD was supported by NSF Grant DMS-1812033 and NSF FRG Grant DMS-1952762.} \hspace{1cm} Christopher Scaduto\thanks{The work of CS was supported by NSF Grant DMS-1952762.}}
\date{}

\newcommand{\Addresses}{{
  \bigskip
  \footnotesize
  Aliakbar Daemi, \textsc{Department of Mathematics, Washington University in St. Louis, One Brookings drive, Room 207A,
  St. Louis, MO 63130}\par\nopagebreak
  \textit{E-mail address}: \texttt{adaemi@wustl.edu}
  \vspace{.4cm}

Christopher Scaduto, \textsc{Department of Mathematics, University of Miami, 1365 Memorial Dr 515, Coral Gables, FL 33124}\par\nopagebreak
  \textit{E-mail address}: \texttt{cscaduto@miami.edu}
}}

\begin{document}
\maketitle

\begin{abstract}

Equivariant singular instanton Floer theory is a framework that associates to a knot in an integer homology $3$-sphere a suite of homological invariants that are derived from circle-equivariant Morse--Floer theory of a Chern--Simons functional for framed singular $SU(2)$-connections. These invariants generalize the instanton knot homology of Kronheimer and Mrowka. In the present work, these constructions are extended from knots to links with non-zero determinant, and several unoriented skein exact triangles are proved in this setting. As a particular case, a categorification of the behavior of the Murasugi signature for links under unoriented skein relations is established. In addition to the exact triangles, Fr\o yshov-type invariants for links are defined, and several computations using the exact triangles are carried out. The computations suggest a relationship between Heegaard Floer $L$-space knots and those knots whose instanton-theoretic categorification of the knot signature is supported in even gradings.\\

A main technical contribution of this work is the construction of maps for certain cobordisms between links on which obstructed reducible singular instantons are present. These constructions are inspired by recent work of the first author and Miller Eismeier in the setting of non-singular instanton theory for rational homology $3$-spheres. 

\end{abstract}

\newpage

\hypersetup{linkcolor=black}
\setcounter{tocdepth}{2}
\tableofcontents

\newpage

%!TEX root = main.tex

\section{Introduction}

In this work we construct several unoriented skein exact triangles in the setting of equivariant singular instanton Floer theory, as developed by the authors in \cite{DS1,DS2}, and which builds upon work of Kronheimer and Mrowka \cite{KM:YAFT,KM:unknot}. To motivate our results, we begin by explaining a special case which already exhibits several new features, and which categorifies the behavior of the knot signature under unoriented skein relations.

\subsection{Exact triangles for a categorification of the knot signature}

In recent decades, the study of knots and links has flourished from the introduction of various knot homology theories. Such theories typically come either in the form of a Floer homology, rooted in gauge theory or symplectic geometry, or from constructions in representation theory, which have a combinatorial flavor and are amenable to computations. For many knot homology theories $\mathcal{H}$, there is an associated long exact sequence
\begin{equation}\label{eq:formalexacttriangle}
	\begin{tikzcd}[]
\cdots   \arrow[r] & \mathcal{H}(L) \arrow[r] & \mathcal{H}(L') \arrow[r] & \mathcal{H}(L'') \arrow[r] & \mathcal{H}(L) \arrow[r] & \cdots
\end{tikzcd}
\end{equation}
whenever three links $L,L',L''$ in the $3$-sphere only differ in a small $3$-ball as depicted in Figure \ref{fig:skein}. We say $L,L',L''$ is an {\emph{unoriented skein triple}}, and $\mathcal{H}$ satisfies an {\emph{unoriented skein exact triangle}} when we have an exact sequence \eqref{eq:formalexacttriangle} for each such triple. Examples of $\mathcal{H}$ that satisfy an unoriented skein exact triangle include Khovanov homology \cite{khovanov}, Kronheimer and Mrowka's instanton homology \cite{KM:unknot}, and 3-manifold Floer homologies applied to double branched covers of links \cite{os:branched,KMOS,scadutothesis,daemi-pfh}.

Any unoriented skein triple $L,L',L''$ has a cyclic symmetry, in the sense that $L',L'',L$ and $L'',L,L'$ are also unoriented skein triples. This amounts to the fact that if one performs a certain rotation on the skein $3$-ball, then the pictures in Figure \ref{fig:skein} are cyclically permuted.

In recent work \cite{DS1,DS2}, the authors introduced a suite of homological invariants for knots $K$ derived from instanton Floer theory. One of these invariants,  the {\emph{irreducible instanton homology}} $I(K)$, is a $\Z/4$-graded abelian group, and is essentially a version of the orbifold instanton homology of Collin--Steer \cite{collin-steer}. It stands apart from all other known knot homology theories as having Euler characteristic one-half the knot signature:
\begin{equation}\label{eq:irredhomknoteuler}
	\chi\left( I(K) \right) = \frac{1}{2}\sigma(K).
\end{equation}
Under favorable circumstances, $I(K)$ is generated at the chain-level by the conjugacy classes of {\emph{irreducible}} $SU(2)$ representations of $\pi_1(S^3\setminus K)$ which are traceless around meridians; the differential counts singular instantons on $\R\times (S^3\setminus K)$. In general, this description is only an approximation, as perturbations may be required to achieve transversality.

\begin{figure}[t]
    \centering
    \includegraphics[scale=0.75]{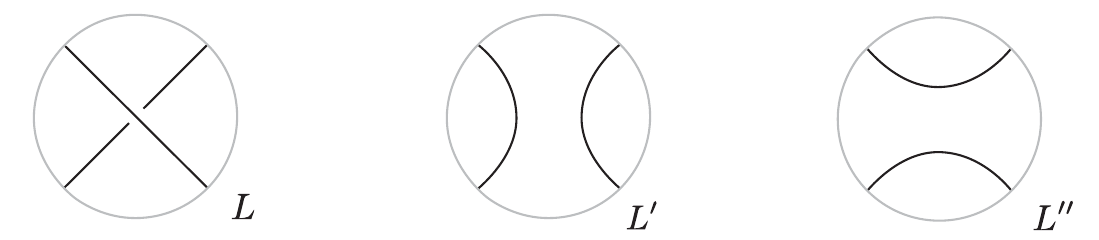}
    \caption{{\small{Local pictures for three links forming an unoriented skein triple.}}}
    \label{fig:skein}
\end{figure}

A natural question arises as to whether $I(K)$ satisfies an unoriented skein exact triangle. The first obstacle is that $I(K)$ has been defined in \cite{DS1} only for knots, but every unoriented skein triple has at least one link with multiple components. Fortunately, the construction of $I(K)$ generalizes in a straightforward way to links with {\emph{non-zero determinant}}, and this is the setting we develop. The authors expect that the story here adapts to arbitrary links, at the expense of some additional technicalities; see Subsection \ref{subsec:concludingremarks}.
 
Formula \eqref{eq:irredhomknoteuler} generalizes as follows. Recall that to a link $L$ with an orientation $o$, there is an associated signature $\sigma(L,o)\in\Z$. Let $-o$ denote the orientation obtained from $o$ by reversing the orientation of every component. Then $\fo=\{o,-o\}$ is called a {\emph{quasi-orientation}} of $L$. Write $\mathcal{Q}(L)$ for the set of quasi-orientations of $L$; note $|\mathcal{Q}(L)|=2^{|L|-1}$ where $|L|$ is the number of components. The signature $\sigma(L,o)$ only depends on $\fo$, and we often write $\sigma(L,\fo)$. The {\emph{Murasugi signature}} \cite{murasugi} of a link $L$ is the average of all $\sigma(L,\fo)$:
\[
	\xi(L) = \frac{1}{2^{|L|-1}} \sum_{\fo\in \cQ(L)} \sigma(L,\fo)	.
\]
For any link, $\xi(L)$ is an integer. Now, given a link $L$ with non-zero determinant, the irreducible instanton homology $I(L)$ is again a $\Z/4$-graded abelian group, and we will show that its Euler characteristic is a multiple of the Murasugi signature:
\begin{equation}\label{eq:irredhomlinkeuler}
	\chi\left( I(L) \right) = 2^{|L|-2} \cdot \xi(L).
\end{equation}

We now describe the behavior of the Murasugi signature for unoriented skein triples $(L,L',L'')$, implicit in \cite{murasugi}. Suppose that we can orient $L$ and $L'$ so that in the 3-ball we have Figure \ref{fig:orientskein}, and the components in $L$, $L'$ which do not intersect the $3$-ball are oriented the same way. Note that this can be done exactly when the difference between the number connected components of $L$ and $L'$ is one. Call the orientations $o$ and $o'$, respectively. Define
\[
  \epsilon(L,L') := \sigma(L,o) -  \sigma(L',o') \in \{ -1, 0 , 1 \}.
\]
This does not depend on the orientations $o$ and $o'$, as long as they are compatible in the way just described. If $L$ and $L'$ have non-zero determinants, then $\epsilon(L,L')=\pm 1$. By the symmetry of the skein triple, we may define $\epsilon$ for any two consecutive links for which the the difference between the number connected components is one. Note $\epsilon(L',L)=-\epsilon(L,L')$.

Now suppose $(L,L',L'')$ is an unoriented skein triple in which $L$ has one more component than $L'$ and $L''$. Then $\epsilon(L,L')$ and $\epsilon(L'',L)$ are defined, and we set
\begin{equation}\label{eq:deltadef}
	\delta = \frac{1}{2}(\epsilon(L,L') - \epsilon(L'',L))
\end{equation}
If the links have non-zero determinants, then $\delta\in \{-1,0,1\}$. In this case, the relation the Murasugi signatures satisfy, written in terms of Euler characteristics via \eqref{eq:irredhomlinkeuler}, is:
\begin{equation}\label{eq:eulercharskeinrel}
	\chi\left( I(L) \right) = \chi\left( I(L') \right)+ \chi\left( I(L'') \right) + \delta\cdot 2^{|L|-2}
\end{equation}
From this relation, another obstacle to an unoriented skein exact triangle for $I(L)$ appears. If such an exact triangle exists, and each of the maps in the triangle are homogeneously graded, there is an induced relation for Euler characteristics. However, the only version of relation \eqref{eq:eulercharskeinrel} that can come from an exact triangle is the one with $\delta=0$. In the cases $\delta\in \{1,-1\}$, some modifications must be made.

\begin{figure}[t]
    \centering
    \includegraphics[scale=0.75]{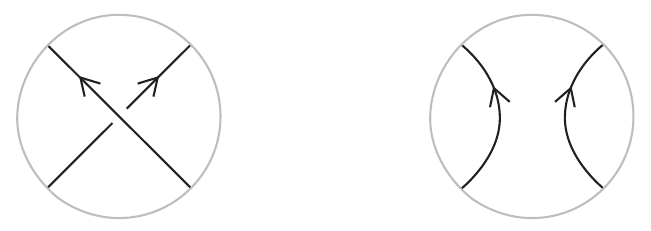}
    \caption{{\small{An oriented resolution from $L$ to $L'$.}}}
    \label{fig:orientskein}
\end{figure}

For a link $L$ with non-zero determinant, we shall define variations of $I(L)$, the ($\pm$) ``suspensions'', denoted $I_+(L)$ and $I_{-}(L)$. Each is related to $I(L)$ through an exact triangle:
            \begin{equation}
                \begin{tikzcd}[column sep=1.7ex, row sep=5ex, fill=none, /tikz/baseline=-10pt]
            & I_{+}(L)  \arrow[dr,"\text{odd deg}"]  & \\
             I(L)  \arrow[ur] & &  \Z^{2^{|L|-1}}  \arrow[ll,"(\delta_2)_\ast"] 
            \end{tikzcd}
\hspace{1.05cm}
                            \begin{tikzcd}[column sep=1.7ex, row sep=5ex, fill=none, /tikz/baseline=-10pt]
            & I_{ -}(L)  \arrow[dr]  & \\
             \Z^{2^{|L|-1}} \arrow[ur] & &  I(L) \arrow[ll,"(\delta_1)_\ast","\text{odd deg}"']
            \end{tikzcd} \label{eq:irrhomdeftriangles}
        \end{equation}
        The maps are of even degree except where indicated. The group $\Z^{2^{|L|-1}}$ is freely generated, in even gradings, by quasi-orientations; equivalently, its generators are the conjugacy classes of abelian traceless $SU(2)$ representations of $\pi_1(S^3\setminus L)$. The map $(\delta_1)_\ast$ is induced by counting singular instantons on $\R\times (S^3\setminus L)$ which are irreducible at $-\infty$ and reducible (abelian) at $+\infty$; the map $(\delta_2)_\ast$ is similar, with $-\infty$ and $+\infty$ swapped.
        
        Returning to \eqref{eq:eulercharskeinrel}, and noting that our assumptions imply $|L|=1+|L'|=1+|L''|$, we observe that an exact triangle {\emph{is possible}}, from the viewpoint of Euler characteristics, if $I(L')$ is replaced by $I_{+}(L')$ in the case $\delta=-1$, and if $I(L'')$ is replaced by $I_{-}(L'')$ in the case $\delta=1$. We show that such exact triangles indeed exist.
        
        \begin{theorem}\label{thm:irredexacttrianglesintro}
        	Let $L$, $L'$, $L''$ be an unoriented skein triple as in Figure \ref{fig:skein}. Suppose each link has non-zero determinant. Without loss of generality, assume $L$ has one more component than $L'$,  $L''$. There are three cases depending on the possible values of $\delta$. In each case we have an exact triangle as described in Figure \ref{fig:irredexacttrianglesintro}.
        \end{theorem}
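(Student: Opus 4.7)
The plan is to adapt the surgery exact triangle strategy from Kronheimer--Mrowka's proof of the unoriented skein exact triangle in \cite{KM:unknot} to the equivariant singular setting developed in \cite{DS1,DS2}, with the essential new ingredient being the careful treatment of obstructed reducible singular instantons, along the lines of the methods alluded to in the abstract. First, I would set up three elementary singular cobordisms relating the links in the skein triple: resolving the skein $3$-ball in one way rather than another amounts to attaching a band to the link, producing an embedded cobordism surface, and the three resolutions give a cyclic sequence of cobordisms $W_1 \co L \to L'$, $W_2 \co L' \to L''$, $W_3 \co L'' \to L$.

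For each such $W_i$ I would construct a chain-level map between the relevant equivariant singular instanton chain complexes. This is the step where the three cases of $\delta$ enter. When both endpoints of the cobordism admit only irreducible singular flat connections, the cobordism map is defined as in the knot case by counting rigid irreducible singular instantons; but when an endpoint carries obstructed abelian reducibles, contributions from the reducible locus must be incorporated into the map, and the natural source or target changes from $I$ to one of the suspensions $I_{\pm}$ defined via \eqref{eq:irrhomdeftriangles}. Under the standing hypothesis that $L$ has one more component than $L'$ and $L''$, the abelian reducibles are concentrated on the $L$-side, and the sign of $\delta$ dictates whether their contribution is pushed into $L'$, producing $I_{+}(L')$ when $\delta = -1$, or into $L''$, producing $I_{-}(L'')$ when $\delta = 1$; the case $\delta = 0$ corresponds to a cancellation that leaves all three terms as ordinary $I$-groups. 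The grading shifts in the resulting triangles are dictated by the standard singular instanton index formulas and recover \eqref{eq:eulercharskeinrel} at the level of Euler characteristics.

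Exactness would then be proved in two classical steps. Compositions of two consecutive cobordism maps are shown to be null-homotopic by the standard neck-stretching argument: in each composite cobordism one stretches a neck along an auxiliary sphere separating the two elementary saddles, and end-of-parameter counts of rigid singular instantons over the resulting $1$-parameter family of metrics assemble into the required chain homotopy, with reducible correction terms absorbed into the suspension maps of \eqref{eq:irrhomdeftriangles}. To upgrade these null-homotopies into quasi-isomorphisms between the mapping cones and the third chain complexes, I would carry out the Floer-style octahedral argument, constructing an explicit quasi-isomorphism out of higher chain homotopies produced by stretching a second neck inside the triple composition.

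The main obstacle is the construction of cobordism maps in the presence of obstructed abelian reducibles, where the residual $S^1$-isotropy prevents one from defining the map as a naive count of rigid instantons. This is exactly the technical issue singled out in the abstract as a main contribution of the paper, and the resolution is to work $S^1$-equivariantly and include Kuranishi obstruction data for the reducible locus; the resulting correction terms are precisely those coupling $I(L)$ to $\Z^{2^{|L|-1}}$ via $(\delta_1)_\ast$ and $(\delta_2)_\ast$ in \eqref{eq:irrhomdeftriangles}, and their appearance in the skein cobordism maps is what forces the suspension modifications in the $\delta = \pm 1$ cases. Once this equivariant cobordism formalism is in place, a careful bookkeeping over the irreducible and reducible strata of the one- and two-dimensional moduli spaces on the composite cobordisms produces both the null-homotopies and the higher octahedral homotopy needed to assemble the exact triangles of Figure \ref{fig:irredexacttrianglesintro}.
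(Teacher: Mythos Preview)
Your broad outline—adapt the Kronheimer--Mrowka cobordism-and-metric-family template, with the essential new work at obstructed reducibles—is correct in spirit, but two points diverge from the paper in ways that would cause trouble in execution.

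First, the paper does not prove Theorem~\ref{thm:irredexacttrianglesintro} directly. It proves the stronger Theorem~\ref{thm:exacttriangles}, an exact triangle of $\cS$-complexes $\widetilde C,\widetilde C',\widetilde C''$ with suspensions $\Sigma^{\pm 1}$ inserted in Cases II and III; Theorem~\ref{thm:irredexacttrianglesintro} then follows by pure algebra (Subsection~\ref{subsec:scomplextriangles}), since an exact triangle of $\cS$-complexes induces one on irreducible homology, and $I_{\pm}(L)$ is \emph{by definition} the irreducible homology of $\Sigma^{\pm 1}\widetilde C(L)$. Working at the $\cS$-complex level is not optional: the null-homotopies and the metric-family argument (a pentagon of five hypersurfaces in the triple composite, not an octahedron) must be carried out for all components $(\lambda,\mu,\Delta_1,\Delta_2,\rho)$ of an $\cS$-morphism simultaneously, and only the $\cS$-complex formalism tracks the interaction of irreducible and reducible strata coherently. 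Cases II and III are shown to be equivalent by mirroring, so only Cases I and II are argued directly.

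Second, your description of the reducibles is not right, and this matters for locating the obstruction. Reducible \emph{critical points} exist on all three links—each $(Y,L^{(i)})$ has $2^{|L^{(i)}|-1}$ of them—so they are not ``concentrated on the $L$-side''. What distinguishes the cases is the index of reducible \emph{instantons on the saddle cobordisms}: $S:L\to L'$ and $S'':L''\to L$ are orientable and carry flat reducibles of index $\epsilon-2\in\{-3,-1\}$, while $S':L'\to L''$ is non-orientable and carries none. In Case~II one has $\epsilon(L,L')=-1$, so the reducibles on $S$ have index $-3$ and are obstructed; the induced map is then not an ordinary $\cS$-morphism but a \emph{height $-1$ morphism}, which is precisely the data of an $\cS$-morphism $\widetilde C\to\Sigma\widetilde C'$. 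This is the exact origin of the suspension. Its construction (Section~\ref{sec:obstructed}) requires modifying the moduli spaces near broken trajectories containing an index $-3$ reducible, excising neighborhoods described by obstructed gluing theory and replacing them with zero loci of explicit sections interpolating to holonomy data—considerably more delicate than ``including Kuranishi obstruction data'' suggests.
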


    \begin{figure}[t]
  \centering
            \renewcommand{\arraystretch}{1.5}
            \begin{tabular}{ c | c | c }
        \textbf{Case I} & \textbf{Case II} & \textbf{Case III}\\ 
        \hline
        $\delta=0$ & $ \delta=-1$ & $\delta=+1$\\
        \hline
        $\epsilon(L,L') = + 1$ & $\epsilon(L,L') = -1$ & $\epsilon(L,L') = + 1$\\
        $\epsilon(L'',L) = + 1$ & $\epsilon(L'',L) = + 1$ & $\epsilon(L'',L) = -1$ \\
        \hline
        \begin{minipage}{4.2 cm}{
            \begin{equation*}
                \begin{tikzcd}[column sep=2ex, row sep=5ex, fill=none, /tikz/baseline=-10pt]
            &  I(L)  \arrow[dr]  & \\
             I(L'')    \arrow[ur] & &   I(L')  \arrow[ll] \\[-3mm]
            \end{tikzcd}
        \end{equation*}
        }\end{minipage} & 
        \begin{minipage}{4.5 cm}{
            \begin{equation*}
                \begin{tikzcd}[column sep=2ex, row sep=5ex, fill=none, /tikz/baseline=-10pt]
            &  I(L)  \arrow[dr]  & \\
           I(L'')    \arrow[ur] & & I_+(L')  \arrow[ll] \\[-3mm]
            \end{tikzcd}
        \end{equation*}
        }\end{minipage} & 
        \begin{minipage}{4 cm}{
            \begin{equation*}
                \begin{tikzcd}[column sep=2ex, row sep=5ex, fill=none, /tikz/baseline=-10pt]
            &I(L)  \arrow[dr]  & \\
            I_{-}(L'')   \arrow[ur] & & I(L')  \arrow[ll] \\[-3mm]
            \end{tikzcd}
        \end{equation*}
        }\end{minipage}  \\
        \hline 
    \end{tabular}
   \caption{{\small{The exact triangles of Theorem \ref{thm:irredexacttrianglesintro}.}}}
    \label{fig:irredexacttrianglesintro}
\end{figure}

The proof of Theorem \ref{thm:irredexacttrianglesintro}, as well as the more general results below, take as an initial template the proof of Kronheimer and Mrowka's exact triangle for $I^\natural(K)$ from \cite{KM:unknot}, which utilizes maps induced by metric families on cobordisms between the links. In contrast to the case of $I^\natural(K)$, however, we must deal with the presence of reducible connections. A singular connection in our setting is {\emph{reducible}} if it is compatible with a $U(1)$-reduction of the bundle, which behaves in a prescribed way along the singular locus.

A reducible singular instanton is {\emph{obstructed}} if it is not cut out transversely by the instanton equation. In our setup, reducible instantons with index less than or equal to $-3$ are necessarily obstructed, even after perturbations. Some of the cobordisms in the proof of the above exact triangles have obstructed reducibles. To deal with these we must use {\emph{obstructed gluing theory}} to construct the relevant chain-level maps.

The use of obstructed gluing theory to define maps for cobordisms with obstructed reducibles is a main technical contribution of the present work, and is developed in Section \ref{sec:obstructed}. A consequence is an extension of the functoriality of the equivariant singular instanton Floer theory package developed in \cite{DS1,DS2}, which constructed maps for cobordisms without any obstructed reducibles. The construction of cobordism maps in the presence of obstructed reducibles, as well as some of the algebra that appears below, is inspired by recent work of the first author and Miller Eismeier in the setting of non-singular instanton homology \cite{DME:QHS}. See Subsection \ref{subsec:concludingremarks} for further discussion.

\vspace{0.3cm}

\begin{remark}
	It is natural to expect that \eqref{eq:irredhomlinkeuler} is equal to an invariant defined by Benard and Conway \cite{benard-conway}, when the parameters $\alpha_i$ in that reference are set to $\pi/2$, corresponding to the traceless meridional holonomy condition. This is verified by \cite{liu-saveliev} for two-component links with non-zero linking number, which builds on work in \cite{benard-conway}. $\diamd$
\end{remark}

\subsection{Exact triangles in equivariant singular instanton theory}

We now turn to the more general setting, starting from the viewpoint established in \cite{DS1}. In this case we consider a {\emph{based}} knot $K$, that is, a knot with a chosen basepoint, which is often suppressed from notation. The main invariant constructed in \cite{DS1} is called an {\emph{$\cS$-complex}}, denoted $\widetilde C(K )$. As a $\Z/4$-graded group, there is a decomposition
\begin{equation}\label{eq:scomplexdecompintro}
	\widetilde C_\ast(K ) = C_\ast(K) \oplus C_{\ast-1}(K) \oplus \Z_{(0)}.
\end{equation}
The differential of $\widetilde C_\ast(K )$ anti-commutes with $\chi$, the map which sends $C_\ast(K)$ isomorphically to $C_{\ast-1}(K)$ and is otherwise zero. The $\cS$-complex $\widetilde C(K)$ is an $S^1$-equivariant Morse--Floer chain complex associated to the Chern--Simons functional for $SU(2)$-connections on $S^3\setminus K$ with limiting traceless meridional holonomy, and which have limiting holonomy exactly $\mathbf{i}\in SU(2)$ for meridians encircling the basepoint of $K$. The $\cS$-complex structure of $\widetilde C(K)$ is essentially that of a differential graded module over the algebra $H_\ast(S^1)\cong \Lambda^\ast(\chi)$, where the algebra structure is induced by the group structure of $S^1$. It is shown in \cite{DS1} that $\widetilde C(K)$ is an invariant of the based knot $K$ up to chain homotopy of $\cS$-complexes.

In \cite{DS1}, the authors showed that the total homology of $\widetilde C(K)$ is naturally isomorphic to Kronheimer and Mrowka's instanton knot homology $I^\natural(K)$ from \cite{KM:unknot}:
\begin{equation}\label{eq:introisotoinatural}
	H_\ast( \widetilde C(K) ) \cong I^\natural(K).
\end{equation}
Thus $\widetilde C(K)$ is an equivariant upgrade of $I^\natural(K)$. The invariant $I^\natural(K)$ has played an important role in low-dimensional topology in recent years. For example, Kronheimer and Mrowka used it to show that Khovanov homology detects the unknot \cite{KM:unknot}. 

The $\cS$-complex $\widetilde C(K)$, with varying additional layers of structure, has also been used to address problems in low-dimensional topology in ways beyond its predecessor $I^\natural(K)$. These include applications to the $4$-dimensional clasp number of knots \cite{DS2} and other aspects of knot concordance \cite{DISST:special-cycles}, existence results for non-abelian $SU(2)$-representations of fundamental groups of knot and surface complements \cite{DS2,imori,DISST:special-cycles}, as well as results on the homology cobordism of $3$-manifolds \cite{DISST:special-cycles}.

The construction of $\widetilde C(K)$ adapts in a straightforward manner to based links $L$ with non-zero determinant. It is simpler, in this generality, to work with $\Z/2$-graded $\cS$-complexes, although $\Z/4$-gradings can be defined (see Subsection \ref{subsec:gradings}). For such a link, the $\Z$ in \eqref{eq:scomplexdecompintro} is replaced by $\Z^{2^{|L|-1}}$. This summand is generated by the abelian (reducible) traceless flat $SU(2)$ connections on $S^3\setminus L$, which are in bijection with quasi-orientations of $L$.

At this point we mention that our notion of $\cS$-complex in the present work is more general than the one considered in \cite{DS1,DS2}, which required that the last summand in \eqref{eq:scomplexdecompintro}, which is more invariantly described as $\text{ker}(\chi)/\text{im}(\chi)$, is free of rank 1.

Everything discussed thus far is in fact valid for links with non-zero determinant in an arbitrary integer homology 3-sphere. We now state the main result of this work.\\

\vspace{0.3cm}

\begin{theorem}\label{thm:exacttriangles}
  Let $L$, $L'$, $L''$ be based links in an integer homology 3-sphere which form a skein triple as in Figure \ref{fig:skein}. Suppose each link has non-zero determinant. Write $\widetilde C$, $\widetilde C'$, $\widetilde C''$ for their respective singular instanton $\cS$-complexes. Without loss of generality, assume $L$ has one more component than $L'$,  $L''$. There are three cases depending on the possible values of $\delta$. In each case we have an exact triangle of $\cS$-complexes as described in Figure \ref{fig:exacttriangles}.
\end{theorem}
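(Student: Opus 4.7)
The plan is to adapt the triangle-detection strategy of Kronheimer and Mrowka \cite{KM:unknot}, upgraded to the $\cS$-complex level and to the setting with obstructed reducibles. Let $B^3\subset Y$ be the skein ball. Attaching a $2$-handle to $[0,1]\times Y$ along a suitable framed circle inside $\{1\}\times B^3$ produces three elementary cobordisms $W\co L\to L'$, $W'\co L'\to L''$ and $W''\co L''\to L$, each endowed with an embedded surface with boundary that interpolates between the pictures of Figure~\ref{fig:skein}. The three cobordisms can be cyclically permuted by the rotational symmetry of the skein ball, and any two of them glue along the third to produce a cobordism carrying an embedded $2$-sphere of self-intersection~$-2$ (or the relevant dotted-handle analogue). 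Each cobordism, together with the standard choice of singular bundle data, induces a chain map between the associated $\cS$-complexes via Section~\ref{sec:obstructed}. The first task is to verify that these maps are indeed $\cS$-chain maps, i.e.\ they anticommute with $\chi$ up to chain homotopy; this amounts to standard $S^1$-equivariance combined with the fact that each cobordism has at most one component containing a meridional circle of the basepoint arc.

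With the three cobordism maps in hand, I would next construct explicit $\cS$-chain null-homotopies of the two-fold compositions. The chain-level null-homotopy comes from a $1$-parameter family of metrics on the composite cobordism $W'\circ W$ (and its cyclic rotations) in which the neck stretches along the $-2$-sphere. The count of singular instantons over this family provides the null-homotopy $H$ satisfying $dH+Hd=\mu'\circ\mu$ modulo boundary terms coming from reducibles. This is precisely the step where the new obstructed-gluing input is indispensable: the composite cobordism has reducible singular instantons whose index is too negative for a transverse cut by the instanton equation, so the naive signed count is ill-defined. Using the obstruction bundle and the associated multi-section constructed in Section~\ref{sec:obstructed}, together with the structure of the $\Z^{2^{|L|-1}}$ summand of \eqref{eq:scomplexdecompintro} (generated by quasi-orientations), the resulting chain-level corrections are precisely what is needed to absorb the sign $\delta$ of \eqref{eq:deltadef} and to account for the case distinction into $\delta\in\{-1,0,+1\}$. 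In Case I the reducible contributions cancel and one obtains a genuine exact triangle of $\cS$-complexes; in Cases II and III the reducible contributions are nonzero and must be packaged into the $(\pm)$-suspension operation on one of the three $\cS$-complexes, matching the Euler-characteristic bookkeeping of \eqref{eq:eulercharskeinrel}.

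The exactness itself would then be established by the usual triangle-detection argument. Iterating the elementary cobordisms three times around the triple yields a cube of $\cS$-complexes whose vertices are the mapping cones of the various compositions; after showing that two opposite faces of the cube are chain-homotopy equivalent (via a count of singular instantons on a cobordism with two $-2$-spheres, again requiring obstructed gluing), the standard homological algebra lemma forces the sequence around the triangle to be exact. To implement this cleanly in the equivariant setting, I would formulate a general ``triangle detection lemma for $\cS$-complexes'' asserting that if three $\cS$-chain maps admit null-homotopies whose composed cone is $\cS$-quasi-isomorphic to the identity up to an explicit twist by $\chi$, then the triangle is exact. The twist is precisely where the pieces carrying a $\Z^{2^{|L|-1}}$ summand (the suspensions $\widetilde C_\pm$) enter.

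The main obstacle is the obstructed-gluing analysis behind the chain-homotopy and cube-equivalence steps. The difficulty is twofold: one must identify the ends of the relevant parameterized moduli spaces in the presence of negatively-indexed reducibles, and one must prove that the resulting obstruction-bundle counts combine to yield the precise sign structure demanded by $\delta$. This is where the new techniques inspired by \cite{DME:QHS}, and developed in Section~\ref{sec:obstructed}, do the real work; everything else is a careful bookkeeping exercise once the chain maps, null-homotopies, and cube-equivalence are available.
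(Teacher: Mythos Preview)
Your overall template—Kronheimer--Mrowka's metric-family triangle detection, upgraded equivariantly and with obstructed-gluing corrections—is the right one, and matches the paper's strategy. However, several concrete features of your sketch are off in ways that would derail the argument.

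First, the topology: the elementary cobordisms are \emph{saddle cobordisms} of surfaces inside the fixed cylinder $I\times Y$, not $4$-dimensional $2$-handle attachments; the double composite does not contain a $(-2)$-sphere but rather, following \cite{KM:unknot}, splits off a copy of $\mathbb{RP}^2$ with self-intersection $+2$ along an $(S^3,U_1)$ cross-section. This $\mathbb{RP}^2$ piece supports no reducibles (it is non-orientable with trivial bundle data), which is essential to the index arguments that control what can appear at broken metrics.

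Second, and more seriously, you have misidentified where the suspension enters. In Cases II and III it is one of the \emph{elementary} saddle cobordisms---namely $S:L\to L'$ in Case II---that is obstructed: its minimal reducibles have index $-3$, so by Proposition~\ref{prop:saddlecobor} and the machinery of Section~\ref{sec:obstructed} it induces not an ordinary morphism $\widetilde C\to\widetilde C'$ but a height $(-1)$ morphism, equivalently a genuine $\cS$-morphism $\widetilde C\to\Sigma\widetilde C'$. The suspension is therefore already present in the \emph{definition} of one of the three maps around the triangle, not absorbed as a correction term in the null-homotopies. In Case I, by contrast, all three saddle cobordisms are unobstructed; the obstructed reducibles (counted by the maps $J,J'$) live on the pieces $(I\times Y\setminus B^4,\overline S''\setminus B^2)$ arising at the $(S^3,U_1)$-broken end of the $1$-parameter family, and they contribute to the null-homotopy relations (e.g.\ the $J\delta_1$ term in \eqref{eq:caseirel5}) rather than to the cobordism maps themselves.

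Third, the final ``homotopic to an isomorphism'' step uses a \emph{pentagonal} $2$-parameter family of metrics on the triple composite (with five hypersurfaces as in Figure~\ref{fig:5hypersurfaces}), not a cube; and the argument reduces via Lemma~\ref{lemma:morphismuseful} to checking the irreducible and reducible components separately. You also omit the observation (Lemma in Section~\ref{subsec:triangleprelim}) that Cases II and III are equivalent under mirroring/duality, so only Cases I and II need direct proof.
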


\begin{figure}[t]
    \centering
    \renewcommand{\arraystretch}{1.5}
    \begin{tabular}{ c | c | c }
        \textbf{Case I} & \textbf{Case II} & \textbf{Case III}\\ 
        \hline
                $\delta=0$ & $ \delta=-1$ & $\delta=+1$\\
        \hline
        $\epsilon(L,L') = + 1$ & $\epsilon(L,L') = -1$ & $\epsilon(L,L') = + 1$\\
        $\epsilon(L'',L) = + 1$ & $\epsilon(L'',L) = + 1$ & $\epsilon(L'',L) = -1$ \\
        \hline
        \begin{minipage}{4 cm}{
            \begin{equation*}
                \begin{tikzcd}[column sep=2ex, row sep=5ex, fill=none, /tikz/baseline=-10pt]
            & \widetilde C  \arrow[dr]  & \\
            \widetilde C''    \arrow[ur] & &  \widetilde  C'  \arrow[ll]
            \end{tikzcd}
        \end{equation*}
        }\end{minipage} & 
        \begin{minipage}{4 cm}{
            \begin{equation*}
                \begin{tikzcd}[column sep=2ex, row sep=5ex, fill=none, /tikz/baseline=-10pt]
            & \widetilde C  \arrow[dr]  & \\
            \widetilde C''    \arrow[ur] & & \Sigma \widetilde  C'  \arrow[ll]
            \end{tikzcd}
        \end{equation*}
        }\end{minipage} & 
        \begin{minipage}{4 cm}{
            \begin{equation*}
                \begin{tikzcd}[column sep=2ex, row sep=5ex, fill=none, /tikz/baseline=-10pt]
            & \widetilde C  \arrow[dr]  & \\
            \Sigma^{-1} \widetilde C''    \arrow[ur] & & \widetilde  C'  \arrow[ll]
            \end{tikzcd}
        \end{equation*}
        }\end{minipage}  \\
        & & \\
        \hline 
    \end{tabular}
    \caption{{\small{The exact triangles of Theorem \ref{thm:exacttriangles}.}}}
    \label{fig:exacttriangles}
\end{figure}

Two remarks are in order. First, an ``exact triangle of $\cS$-complexes'' is a variant of the notion of exact triangle in the setting of $A_\infty$-modules as described for example by Seidel \cite[Lemma 3.7]{seidel}; see Subsection \ref{subsec:scomplextriangles} for details. Any one $\cS$-complex in an exact triangle is a mapping cone, in the category of $\cS$-complexes, of the other two $\cS$-complexes.

Second, we explain the notation $\Sigma^{\pm 1} \widetilde C$. Given any $\cS$-complex $\widetilde C$, we can algebraically form the $(\pm)$ ``suspension'' $\Sigma^{\pm 1} \widetilde C$ which is a new $\cS$-complex. The homotopy-type of the suspension is simply the tensor product of $\cS$-complexes
\[
	\Sigma^{\pm 1} \widetilde C \simeq \widetilde C \otimes \widetilde \cO(\pm 1)
\]
where $\widetilde\cO(\pm 1)$ is a standard atomic $\cS$-complex. In fact, $\widetilde \cO(\pm 1)$ is isomorphic, up to a coefficient base change, to the $\cS$-complex of the trefoil $T_{\pm 2,3}$. In practice, we work with a smaller model for the suspended $\cS$-complex; see Subsection \ref{subsec:suspensiondefn}.

The algebraic formalism of suspension arises in the following way. When there is a surface cobordism of non-zero determinant links $S:L\to L'$ which has only unobstructed reducibles, we obtain a morphism $\widetilde C(L)\to \widetilde C(L')$, following \cite{DS1,DS2}. However, in the presence of obstructed reducibles, which occur for example on some cobordisms in the exact triangles, this is not the case. Suppose the obstructed reducibles on $S$ have index $-3$, which is the mildest deviation from the unobstructed case, and is sufficient for all considerations in the present work. Then there is associated to $S$ a morphism of $\cS$-complexes
\begin{equation}\label{eq:morphismsuspensionintro}
	\widetilde C(L) \longrightarrow \Sigma\widetilde C(L')
\end{equation}
By applying negative suspension, this data may alternatively be viewed as a morphism $\Sigma^{-1}\widetilde C(L) \to \widetilde C(L')$. Our use of obstructed gluing theory, mentioned earlier, is clarified as follows: the main technical contribution of the present work is the association of a morphism \eqref{eq:morphismsuspensionintro} to cobordisms with obstructed reducibles of index $-3$. This is taken up in Section \ref{sec:obstructed}. Several variations of this construction appear in the proof of Theorem \ref{thm:exacttriangles}.

Having parsed some of the essential aspects of Theorem \ref{thm:exacttriangles}, we now turn to its implications. First, Theorem \ref{thm:exacttriangles} implies Theorem \ref{thm:irredexacttrianglesintro}. Indeed, the irreducible instanton homology $I(L)$ can be extracted from the $\cS$-complex $\widetilde C(L)$; in the decomposition \eqref{eq:scomplexdecompintro} for a knot, $C(K)$ is the chain complex that computes $I(K)$. More invariantly, given any $\cS$-complex, the irreducible homology is defined to be $H_{\ast+1}(\text{im}\chi)$. Now the fact that Theorem \ref{thm:exacttriangles} implies Theorem \ref{thm:irredexacttrianglesintro} is just a matter of algebra; see Subsection \ref{subsec:scomplextriangles}. We note that $I_\pm(L)$ are by definition the irreducible homology groups of the suspensions $\Sigma^{\pm 1}\widetilde C(L)$.

Another invariant of an $\cS$-complex $\widetilde C$ is its homology $H_\ast(\widetilde C)$. As already mentioned in \eqref{eq:introisotoinatural}, it was shown in \cite{DS1} that the homology of $\widetilde C(K)$ is naturally isomorphic to Kronheimer and Mrowka's $I^\natural(K)$. The proof adapts to show that this is also true for based links with non-zero determinant. An important property of the suspension operation on $\cS$-complexes is that it leaves homology invariant: 
\[
	H_\ast( \Sigma^{\pm 1} \widetilde C)\cong H_\ast (\widetilde C).
\]
For this reason, the differences in the three exact triangles of Theorem \ref{thm:exacttriangles} disappear after taking homology, and we recover Kronheimer and Mrowka's exact triangle of \cite{KM:unknot}:
\begin{equation}\label{eq:inaturalexacttriangle}
	\begin{tikzcd}[]
\cdots  \arrow[r] &  I^\natural(L) \arrow[r] & I^\natural(L') \arrow[r] & I^\natural(L'') \arrow[r] & I^\natural(L)  \arrow[r] & \cdots
\end{tikzcd}
\end{equation}
As $I^\natural(L)$ is perhaps a more familiar invariant to some readers, we mention in passing that the $\cS$-complex structure of $\widetilde C(L)$ implies the existence of a spectral sequence
\begin{equation}
	E_2 = I_\ast(L)\oplus I_{\ast-1}(L)\oplus \Z^{2^{|L|-1}} \quad \rightrightarrows  \quad I^\natural(L) \label{eq:ssirrtonatintro}
\end{equation} 
for any link $L$ of non-zero determinant, which converges by the $E_4$-page.

Another set of invariants one can obtain from an $\cS$-complex is its {\emph{equivariant homology groups}}. This construction, as applied to the singular instanton $\cS$-complex of a knot, was studied in \cite{DS1}. The construction may be adapted to the $\cS$-complex of a non-zero determinant link $L$ (see Section \ref{subsec:equivhomgrps}), and we obtain a triad of $\Z[x]$-modules 
\[
	\widehat{I}(L), \qquad \widecheck{I}(L), \qquad \overline{I}(L).
\]
(Our notation is inspired by that of the formally similar equivariant homology groups in Seiberg--Witten monopole Floer homology \cite{km:monopole}.) An important property of suspensions of $\cS$-complexes is that the equivariant homology groups are invariant under suspensions (see Proposition \ref{prop:susequivar}). Thus some algebra combined with Theorem \ref{thm:exacttriangles} implies:

\begin{theorem}\label{thm:equivexacttriangle}
    Let $L$, $L'$, $L''$ be based links in an integer homology 3-sphere $Y$ which form a skein triple as in Figure \ref{fig:skein}, and suppose each has non-zero determinant. Then we have an exact triangle of equivariant singular instanton Floer groups as $\Z[x]$-modules:
\begin{equation}\label{eq:equivexacttriangle}
	\begin{tikzcd}[column sep=1ex, row sep=8ex, fill=none, /tikz/baseline=-10pt]
& \widehat{I}(Y,L) \arrow[dr] & \\
\widehat{I}(Y,L'') \arrow[ur] & & \widehat{I}(Y,L') \arrow[ll] 
\end{tikzcd}
\end{equation}
A similar statement holds for the equivariant homology groups $\widecheck{I}$ and $\overline{I}$, and the maps in \eqref{eq:equivexacttriangle} are compatible with the ones that relate $\widehat{I}$, $\widecheck{I}$, $\overline{I}$.
\end{theorem}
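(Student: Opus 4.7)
The plan is to deduce this from Theorem \ref{thm:exacttriangles} by applying the equivariant homology constructions functorially, and then using suspension-invariance to reconcile the three cases into a single uniform statement. I will spell out the steps for $\widehat{I}$; the arguments for $\widecheck{I}$ and $\overline{I}$ are identical in form.

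First, I would invoke Theorem \ref{thm:exacttriangles} to obtain, in each of the three cases $\delta\in\{-1,0,+1\}$, an exact triangle of $\cS$-complexes relating $\widetilde C$, $\widetilde C'$, $\widetilde C''$ (with appropriate $\Sigma^{\pm 1}$ decorations in Cases II and III). By the definition of an exact triangle of $\cS$-complexes in Subsection \ref{subsec:scomplextriangles}, any one of these is the mapping cone (in the $\cS$-complex category) of a morphism between the other two. The equivariant homology $\widehat I$ is a construction that sends an $\cS$-complex to a $\Z[x]$-module, and sends morphisms of $\cS$-complexes to $\Z[x]$-module maps. Standard homological algebra, applied in this setting, shows that a mapping cone goes to a long exact sequence of $\Z[x]$-modules; the triangle presentation in \eqref{eq:equivexacttriangle} then encodes the cyclic exact sequence
\[
\cdots\to\widehat I(Y,L'')\to\widehat I(Y,L)\to\widehat I(Y,L')\to\widehat I(Y,L'')\to\cdots.
\]

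Next, I would handle the suspensions. In Cases II and III the $\cS$-complex triangle involves $\Sigma\widetilde C'$ or $\Sigma^{-1}\widetilde C''$ in place of $\widetilde C'$ or $\widetilde C''$, so the long exact sequence obtained above a priori contains $\widehat I(\Sigma^{\pm 1}\widetilde C(L'))$ or $\widehat I(\Sigma^{\mp 1}\widetilde C(L''))$. Here I invoke Proposition \ref{prop:susequivar}, which asserts that $\widehat I$ (and similarly $\widecheck I$, $\overline I$) is invariant under the suspension operation on $\cS$-complexes, with the isomorphism being natural with respect to $\cS$-complex morphisms. Substituting these canonical isomorphisms turns the Case II and Case III long exact sequences into one with the uniform entries $\widehat I(Y,L)$, $\widehat I(Y,L')$, $\widehat I(Y,L'')$, yielding the triangle \eqref{eq:equivexacttriangle}.

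For the parallel statements for $\widecheck I$ and $\overline I$, I apply the same two-step argument with those functors in place of $\widehat I$, using that all three equivariant constructions respect suspensions (Proposition \ref{prop:susequivar}). Compatibility between the three triangles with the natural maps relating $\widehat I$, $\widecheck I$, $\overline I$ follows because those natural transformations are defined directly on the level of $\cS$-complexes, so they intertwine with any morphism produced by a cobordism, and in particular with the $\cS$-complex morphisms in the triangle of Theorem \ref{thm:exacttriangles}.

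The step that requires the most care — but which is essentially packaged into Subsection \ref{subsec:scomplextriangles} and Proposition \ref{prop:susequivar} — is the verification that the suspension-invariance of equivariant homology is natural with respect to morphisms of $\cS$-complexes, so that the canonical isomorphisms $\widehat I(\Sigma^{\pm 1}\widetilde C)\cong\widehat I(\widetilde C)$ actually commute with the triangle maps and preserve the $\Z[x]$-module structure. Once this naturality is in hand, the theorem is a formal consequence of Theorem \ref{thm:exacttriangles}, and no further gauge-theoretic input is needed at this stage.
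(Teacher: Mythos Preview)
Your proposal is correct and follows essentially the same route as the paper: invoke Theorem \ref{thm:exacttriangles}, pass to equivariant homology functorially (the paper packages this step as Lemma \ref{lemma:equivtriangle}), and then use the suspension-invariance of Proposition \ref{prop:susequivar} to erase the $\Sigma^{\pm 1}$ decorations in Cases II and III. Your emphasis on the naturality of the suspension isomorphisms is exactly the point of the commutative-diagram formulation of Proposition \ref{prop:susequivar}.
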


In \cite{DS1}, the $\cS$-complex for a based knot is defined using a local coefficient system $\Delta$, following a construction of Kronheimer and Mrowka \cite{KM:YAFT}. The result is an $\cS$-complex $\widetilde C(K;\Delta)$ over the coefficient ring $\sS=\Z[T^{\pm 1}]$. Roughly, the variable $T$ encodes information about holonomies of the knot in the longitudinal direction. This construction easily extends to the case of an $\cS$-complex for a non-zero determinant link.

\begin{theorem}\label{thm:localcoeffintro}
	There are exact triangles just as in Theorem \ref{thm:exacttriangles}, but where each $\cS$-complex has the local coefficient system $\Delta$ over the ring $\sS=\Z[T^{\pm 1}]$.
\end{theorem}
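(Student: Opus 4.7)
The plan is to show that the entire proof of Theorem~\ref{thm:exacttriangles} carries over to the local coefficient setting with only bookkeeping modifications. First, I would recall from \cite{KM:YAFT,DS1} that the local system $\Delta$ is defined by attaching an $\sS=\Z[T^{\pm 1}]$-weight $T^{\mu(A)}$ to each singular instanton $A$ on a pair $(W,S)$, where $\mu(A)$ is the holonomy perturbation of the longitudinal holonomy along $S$ recorded by a choice of $1$-chain on $S$ connecting the relevant boundary arcs. This weight assignment is additive under concatenation of cobordisms, provided one fixes consistent $1$-chains on the surfaces, so $\Delta$ is genuinely functorial.

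Next I would extend the cobordism maps of Theorem~\ref{thm:exacttriangles} to this setting. For each surface cobordism $(W,S)$ appearing in the proof (the saddle cobordisms between $L,L',L''$ and the various auxiliary cobordisms used to define the chain homotopies), pick a $1$-chain $\nu\subset S$ connecting the basepoint arc in the incoming end to that in the outgoing end, which can be done consistently across the skein triple because the cobordism surfaces have a single product region connecting basepoints. With these choices fixed, one redefines all chain-level maps by counting singular instantons with weight $T^{\mu(A)}$, exactly as in \cite{DS1}. The key compatibility is that when two cobordisms are glued, the $1$-chains concatenate; hence all gluing identities that underlie the construction of cobordism maps, their homotopies, and the chain-level relations of Theorem~\ref{thm:exacttriangles} remain valid verbatim over $\sS$. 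In particular, the suspension $\cS$-complex $\Sigma^{\pm 1}\widetilde C(L;\Delta)$ is again the one defined in Subsection~\ref{subsec:suspensiondefn}, now taken over $\sS$, and the morphism \eqref{eq:morphismsuspensionintro} for an obstructed cobordism is defined from obstructed gluing theory with the same weight $T^{\mu(A)}$ attached to each glued instanton.

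The main technical point to verify is that the obstructed gluing constructions of Section~\ref{sec:obstructed} are compatible with $\Delta$. The obstructed gluing map on a neighborhood of a reducible $A_0$ produces a family of approximate instantons parametrized by obstruction data; since the holonomy along the longitude varies continuously with the gluing parameter and the weight $T^{\mu(A)}$ depends only on this holonomy (more precisely on the value of a closed $1$-form along $\nu$), the weight extends continuously and agrees with the weight on any genuine instanton in the family. Consequently, the algebraic identities encoding boundary degenerations and compositions, which in Section~\ref{sec:obstructed} express the coefficients of cobordism maps in terms of counts of instantons on the unobstructed strata and correction terms from obstructed reducibles, acquire consistent $T$-weights on both sides.

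The hardest step is likely the verification in the obstructed setting that the correction terms introduced for reducibles of index $-3$ carry the intended $T$-weights so that the chain-homotopy identities hold over $\sS$. Once this is in place, the algebraic exact-triangle argument at the end of the proof of Theorem~\ref{thm:exacttriangles} depends only on formal properties of $\cS$-complexes and is insensitive to the coefficient ring, so it applies to $\widetilde C(L;\Delta)$ over $\sS$ to yield the three cases of Figure~\ref{fig:exacttriangles}.
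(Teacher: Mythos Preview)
Your broad outline is correct in spirit: most relations in the proof of Theorem~\ref{thm:exacttriangles} carry over verbatim once all maps are weighted by local coefficients, and the obstructed gluing of Section~\ref{sec:obstructed} adapts without essential difficulty, since the monopole weight varies continuously with the gluing parameter. However, you misidentify the crux of the adaptation and omit the step that actually requires care.

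The delicate point is not the obstructed gluing but the argument that the map $\mathbf{N}:(C,d)\to (C,-d)$ (and its analogues $\mathbf{N}',\mathbf{N}''$), defined using the pentagonal metric family, is chain homotopic to an isomorphism. Over $\Z$, this is inherited from Kronheimer--Mrowka \cite[\S 7.3]{KM:unknot}: one uses the degree-$\pm 1$ map \eqref{eq:endpointmapfornmap} to replace the pair $(B^4,F_2)$ with its metric family $G_5$ by $(B^4,F)$ where $F$ is a standard annulus, reducing to a cylinder. Over $\sS$, this replacement a priori introduces powers of $T$ determined by monopole numbers of the instantons in $M^+(B^4,F_2)^{G_5}_1$. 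For the argument to survive, one must observe that the instantons in the interval component of this moduli space all carry the \emph{same} monopole number (being in a single connected component of the configuration space), so that $\mathbf{N}$ becomes homotopic to a unit of $\sS$ times the identity, up to a sign-change, rather than an arbitrary $\sS$-linear map. The circle components contribute zero by a signed-count argument. This geometric input is what the paper isolates, and it is not covered by your final sentence, which asserts the endgame ``depends only on formal properties of $\cS$-complexes.'' The condition \eqref{eq:exacttriangle4} is not purely formal here.

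A smaller point you also miss: the maps $J,J'$ involve the unique index-$0$ instanton on $(B^4,F_0)$, which has \emph{nontrivial} monopole number (see \cite[\S 2.7]{KM:unknot}). The paper's convention that $J$ counts pairs $([A],[B])$ in the broken moduli space, rather than simply the reducibles $[A]$ multiplied by the sign $\varepsilon_0$, matters over $\sS$: it prevents spurious $T$-powers from entering relations such as \eqref{eq:caseirel5}. Relatedly, the notion of ``sign-change'' on $\sfR$ must be relaxed to diagonal automorphisms with entries monomials $\pm T^n$, not just $\pm 1$, so that identities like \eqref{eq:caseiredrels} continue to hold.
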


\noindent For a more general statement, see Theorem \ref{thm:localcoeffgen}.

The $\cS$-complex of a link can also be equipped with a {\emph{Chern--Simons filtration}}. In the terminology of \cite{DS1}, this gives rise to an $I$-graded $\cS$-complex, or more precisely an {\emph{enriched complex}}. This structure is used to provide several applications in \cite{DS2, DISST:special-cycles}. The exact triangles above are compatible with the Chern--Simons filtration in a reasonable way. This topic will be studied in future work.

\subsection{Exact triangles involving non-trivial bundles}

Another variation of Theorem \ref{thm:exacttriangles} is obtained by allowing different choices of bundles. Let $L$ be a link in an oriented, connected $3$-manifold $Y$, which is not necessarily an integer homology $3$-sphere. In every case discussed thus far, the bundle over $Y\setminus L$ has been the trivial $SU(2)$-bundle which extends (trivially) to all of $Y$. Following \cite{KM:unknot}, we represent a non-trivial $U(2)$-bundle over $Y\setminus L$ by an unoriented embedded 1-manifold $\omega\subset Y$ with $\partial \omega = \omega\cap L$. The Poincar\'{e} dual of $\omega$ is a representative for the second Stiefel--Whitney class of the bundle. More precisely, $\omega$ corresponds to an $SO(3)=PU(2)$-bundle; an orientation of $\omega$ determines a lift to $U(2)$.

We say that $(Y,L,\omega)$ is an {\emph{admissible link}} if there exists a closed oriented surface $\Sigma\subset Y$ transverse to $L$ and $\omega$ such that either $\Sigma$ is disjoint from $L$ and has odd intersection with $\omega$, or  $\Sigma$ has odd intersection with $L$. The admissibility condition guarantees that there are no {\emph{reducible}} critical points for the Chern--Simons functional. To an admissible link $(Y,L,\omega)$ with basepoint, we defined in \cite[Section 8.1]{DS1} a relatively $\Z/4$-graded $\cS$-complex
\[
    \widetilde C^{\omega}(Y,L)
\]
which is the mapping cone of the $v$-map acting on Kronheimer and Mrowka's instanton chain complex $C^\omega(Y,L)$, with homology $I^\omega(Y,L)$, constructed in \cite{KM:YAFT,KM:unknot}.

The following is a variation of our main theorem, Theorem \ref{thm:exacttriangles}, where exactly one of the links is equipped with a non-trivial bundle.

\begin{theorem}\label{thm:exacttriangles-withbundles}
  Let $L$, $L'$, $L''$ be based links in an integer homology 3-sphere $Y$ that form an unoriented skein triple. Assume $L$ has one more component than $L'$,  $L''$, and that $L',L''$ have ${\rm{det}}\neq 0$. Let $\omega\subset Y$ be an arc whose endpoints lie on the components of $L$ that intersect the skein 3-ball. Then $(Y,L, \omega)$ is admissible. There are two possible values of
  \begin{equation}\label{eq:epsilonavg}
  	\frac{1}{2}(\epsilon(L,L') +\epsilon(L'',L))\in \{0,1\}.
  \end{equation}
In each case we have an exact triangle of $\cS$-complexes as displayed in Figure \ref{fig:exacttriangles-withbundles}.
\end{theorem}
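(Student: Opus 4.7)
My approach will be to run the Kronheimer--Mrowka-style exact triangle argument used to prove Theorem \ref{thm:exacttriangles}, modified to account for the non-trivial bundle $\omega$ carried by $L$. I would set up the three standard saddle cobordisms $W_{01}\co (Y,L,\omega)\to (Y,L')$, $W_{12}\co (Y,L')\to (Y,L'')$ and $W_{20}\co (Y,L'')\to (Y,L,\omega)$ localized inside the skein ball, and extend $\omega$ across $W_{01}$ and $W_{20}$ by arcs whose only boundary is the original $\omega\subset L$. Such an extension is possible because the two endpoints of $\omega$ lie on precisely the two components of $L$ that are merged by the saddle, so $\omega$ may be swept along the pair-of-pants to become an arc in the cobordism with no output boundary on the $L'$ or $L''$ end; this is what makes the bundle data on the other end trivial.

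The first step is to verify admissibility. Since $\omega$ has endpoints on two distinct components of $L$, a small torus around one of these components, pushed slightly off of $L$, is disjoint from $L$ and intersects $\omega$ in a single point, exhibiting admissibility of $(Y,L,\omega)$. As a consequence, $\widetilde C^{\omega}(Y,L)$ has no reducible generators, whereas $\widetilde C(Y,L')$ and $\widetilde C(Y,L'')$ each contain $2^{|L'|-1}$ reducible generators indexed by quasi-orientations.

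The second step is to analyze reducibles on each of the three cobordisms. The cobordisms $W_{01}$ and $W_{20}$ interpolate between an admissible configuration and a non-admissible one, so reducibles on them arise solely from quasi-orientation generators at the $L'$ or $L''$ end; whether a given reducible extends over the cobordism and the computation of its index is read off from a signature-type formula exactly as in the proof of Theorem \ref{thm:exacttriangles}. An $\epsilon=-1$ transition yields an obstructed reducible of index $-3$ on the corresponding cobordism, while an $\epsilon=+1$ transition yields either an unobstructed reducible or no reducible at all. The middle cobordism $W_{12}$ runs between two $\omega$-free non-zero determinant links, so its reducible analysis is already covered by Theorem \ref{thm:exacttriangles}. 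Separating the two possible values of \eqref{eq:epsilonavg} then produces two distinct patterns of obstructed reducibles, which correspond to the two cases of the theorem and account for the suspensions $\Sigma^{\pm 1}$ (or their absence) in Figure \ref{fig:exacttriangles-withbundles}.

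The main obstacle will be the construction of the chain-level cobordism maps in the presence of the index $-3$ obstructed reducibles, for which I will invoke the obstructed gluing theory of Section \ref{sec:obstructed}; that machinery produces morphisms of $\cS$-complexes of the form $\widetilde C\to \Sigma\widetilde C'$ from such cobordisms. Once these maps are available, the exact triangle is obtained by the now-standard one-parameter metric family argument on the composite of any two of the three saddle cobordisms: one end of the family recovers the composition of the two induced maps, the other end recovers (up to a suspension shift dictated by the reducible analysis above) the third cobordism map, and the parametrized moduli space supplies the null-homotopy promoting the resulting cone sequence to an exact triangle of $\cS$-complexes in the sense of Subsection \ref{subsec:scomplextriangles}.
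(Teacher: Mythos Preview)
There is a genuine gap in your reducible analysis, and it stems from your choice of bundle extension. You propose to ``sweep'' $\omega$ along the saddle cobordisms so that the bundle data at the $L'$ and $L''$ ends is empty; consequently you take the middle cobordism $W_{12}\co (Y,L')\to (Y,L'')$ to carry trivial bundle data and appeal directly to the analysis of Theorem \ref{thm:exacttriangles}. But with trivial bundle on the non-orientable saddle $W_{12}$ there are \emph{no} reducible instantons, so the reducible component of the induced morphism $\widetilde C(Y,L')\to\widetilde C(Y,L'')$ is the zero map. Since $\widetilde C^{\omega}(Y,L)$ has trivial reducible summand, the reducible part of the putative exact triangle reads
\[
0 \longrightarrow \sfR(Y,L') \longrightarrow \sfR(Y,L'') \longrightarrow 0,
\]
which is exact only if the middle map is an isomorphism. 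With your setup it is zero, so no exact triangle of $\cS$-complexes can result. Relatedly, your claim that reducibles live on $W_{01}$ and $W_{20}$ is incorrect: one end of each of these cobordisms is the admissible pair $(Y,L,\omega)$, which supports no reducible singular connections, and hence neither cobordism carries reducible instantons.

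The paper's approach differs precisely at this point. One places $\omega$ \emph{outside} the skein $3$-ball and equips \emph{all} cobordisms with the product bundle data $I\times\omega$. On $L'$ and $L''$ the arc $\omega$ then has both endpoints on a single component, so the bundle data is topologically trivial (and the $\cS$-complexes are identified, non-canonically, with $\widetilde C(Y,L')$ and $\widetilde C(Y,L'')$). The crucial gain is that $S'\setminus I\times\omega$ is \emph{orientable}, so the non-orientable saddle $S'=W_{12}$ now carries flat reducibles, and the resulting map $\eta'\co\sfR'\to\sfR''$ is an isomorphism. The index of these reducibles is $-1-\epsilon(L,L')-\epsilon(L'',L)$, giving Case A (unobstructed, index $-1$) when the sum is $0$ and Case B (obstructed, index $-3$) when the sum is $2$; this is where the suspension in Figure \ref{fig:exacttriangles-withbundles} comes from. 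The only other reducibles that enter are on the piece $(I\times Y\setminus B^4,\overline{S}'\setminus B^2)$ appearing at a broken metric in the $1$-parameter family, again governed by $S'$ rather than by $S$ or $S''$.
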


\begin{figure}[t]
    \centering
    \renewcommand{\arraystretch}{1.5}
    \begin{tabular}{ c | c }
        \textbf{Case A} & \textbf{Case B} \\
        \hline & \\[-0.6cm]
                $\frac{1}{2}(\epsilon(L,L') +\epsilon(L'',L))=0$ &   $\frac{1}{2}(\epsilon(L,L') +\epsilon(L'',L))=1$  \\[2mm]
        \hline & \\[-7.5mm]
        \begin{minipage}{6.5 cm}{
            \begin{equation*}
                \begin{tikzcd}[column sep=2ex, row sep=5ex, fill=none, /tikz/baseline=-10pt]
            &  \widetilde C^{\omega}(Y,L)  \arrow[dr]  & \\
           \widetilde C(Y,L'')    \arrow[ur] & & \widetilde  C(Y,L')   \arrow[ll] \\[-4.5mm]
            \end{tikzcd}
        \end{equation*}
        }\end{minipage} & 
        \begin{minipage}{6.5 cm}{
            \begin{equation*}
                \begin{tikzcd}[column sep=2ex, row sep=5ex, fill=none, /tikz/baseline=-10pt]
            &  \widetilde C^{\omega}(Y,L) \arrow[dr]  & \\
           	\Sigma \widetilde C(Y,L'')  \arrow[ur] & &  \widetilde  C(Y,L')  \arrow[ll] \\[-4.5mm]
            \end{tikzcd}
        \end{equation*}
        }\end{minipage} \\
        \hline 
    \end{tabular}
    \caption{{\small{The exact triangles of Theorem \ref{thm:exacttriangles-withbundles}.}}}
    \label{fig:exacttriangles-withbundles}
\end{figure}

The exact triangle of Case B may appear asymmetric regarding the placement of the suspension operation $\Sigma$. However, we may apply the inverse suspension $\Sigma^{-1}$ operation to the whole triangle, to obtain an exact triangle of the form
            \begin{equation*}
                \begin{tikzcd}[column sep=2ex, row sep=5ex, fill=none, /tikz/baseline=-10pt]
            &  \widetilde C^{\omega}(Y,L) \arrow[dr]  & \\
           	\widetilde C(Y,L'')  \arrow[ur] & &  \Sigma^{-1}\widetilde  C(Y,L')  \arrow[ll]
            \end{tikzcd}
        \end{equation*}
Indeed, $\Sigma^{-1}\Sigma \widetilde C(Y,L'')\simeq   \widetilde C(Y,L'')$, and $\Sigma\widetilde C^{\omega}(Y,L)=\widetilde C^{\omega}(Y,L)$. The latter follows from the definition of suspension and the fact that $\widetilde C^{\omega}(Y,L)$ has no reducibles. 

From Theorem \ref{thm:exacttriangles-withbundles} there follow two exact triangles for the associated irreducible homology groups, see Subsection \ref{subsec:euler}. Taking the Euler characteristics of these triangles reveals a narrative similar to that which led up to Theorem \ref{thm:irredexacttrianglesintro}.

For notational simplicity assume $Y$ is the 3-sphere. A key difference between the triangles in Theorem \ref{thm:exacttriangles} and those in Theorem \ref{thm:exacttriangles-withbundles} is that the bottom horizontal map has degree $1$ (mod $2$) in the former cases, whereas it has degree $0$ (mod $2$) in the latter cases. This leads to the relation, in the case of the triangles of Theorem \ref{thm:exacttriangles-withbundles}, given by
\begin{equation}\label{eq:eulercharskeinrel-withbundle}
	\chi\left( I^\omega(L) \right) = \pm \left( \chi\left( I(L') \right) - \chi\left( I(L'') \right)  - \widehat{\delta}\cdot 2^{|L|-2} \right)
\end{equation}

\noindent where $\widehat{\delta}$ is the expression \eqref{eq:epsilonavg}. Properties of the Murasugi signature dictate that the right side of \eqref{eq:eulercharskeinrel-withbundle} is a multiple of the linking number between the two components of $L$ that intersect the skein 3-ball. This is the content of the first part of the following result.

\vspace{0.2cm}

\begin{theorem}\label{thm:eulerchar-withbundles}
	Let $(Y,L,\omega)$ be an admissible link, where $Y$ is an integer homology 3-sphere. If $\omega$ is an arc connecting two distinct components $L_1,L_2\subset L$, then
	\begin{equation}
		|\chi\left( I^\omega(Y,L) \right)|= 2^{|L|-2}|\text{{\emph{lk}}}(L_1,L_2)| \label{eq:eulerchar-onearc}
	\end{equation}
	If the number of components $L_0\subset L$ with $\#( L_0\cap \partial \omega)\equiv 1 \pmod{2}$ is greater than $2$, then 
	\[
		\chi\left( I^\omega(Y,L) \right) = 0
	\]
\end{theorem}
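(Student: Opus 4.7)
The plan is to combine the exact triangles of Theorem \ref{thm:exacttriangles-withbundles} (and variants allowing bundles on all three links) with the Euler characteristic formula \eqref{eq:irredhomlinkeuler} and the relation \eqref{eq:eulercharskeinrel-withbundle}, reducing both parts to classical identities for the Murasugi signature and the linking number.

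For part (1), I would choose a diagram of $L$ containing a crossing $c$ between $L_1$ and $L_2$ (after a Reidemeister II move if necessary, this can always be arranged), and place $\omega$ in a small 3-ball around $c$ with endpoints on the two strands. The resulting skein triple $(L, L', L'')$ satisfies the hypotheses of Theorem \ref{thm:exacttriangles-withbundles} with $|L'| = |L''| = |L|-1$. Assuming $L', L''$ have non-zero determinant (a generic condition, achievable after stabilization otherwise), substituting $\chi(I(L_\ast)) = 2^{|L_\ast|-2}\xi(L_\ast)$ from \eqref{eq:irredhomlinkeuler} into \eqref{eq:eulercharskeinrel-withbundle} yields
\[
    |\chi(I^\omega(Y,L))| = 2^{|L|-3}\,\bigl|\xi(L') - \xi(L'') - 2\widehat\delta\bigr|.
\]
The asserted formula \eqref{eq:eulerchar-onearc} thus reduces to the classical identity
\[
    \bigl|\xi(L') - \xi(L'') - 2\widehat\delta\bigr| = 2\,|\mathrm{lk}(L_1, L_2)|,
\]
which I would prove by averaging the standard oriented-crossing formula for the Levine--Tristram signature over quasi-orientations: each quasi-orientation of $L$ selects one of $L', L''$ as its oriented resolution at $c$, and summing the $\epsilon$-contributions over the $2^{|L|-1}$ quasi-orientations collapses $\sigma(L') - \sigma(L'') - 2\widehat\delta$ to a signed count of $L_1L_2$-crossings equal to $\pm 2\,\mathrm{lk}(L_1, L_2)$. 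An alternative route uses the Goeritz form, whose single off-diagonal entry affected by the skein move equals the linking number.

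For part (2), I would induct on $|L|$. Pick an arc component $\omega_0$ of $\omega$ connecting two of the odd-parity components, say $L_1$ and $L_2$, and apply an exact triangle at a crossing $c$ between them. In $L'$ and $L''$ these two components merge, so the residual bundle $\omega' = \omega \setminus \omega_0$ has exactly two fewer odd-parity components than $\omega$, but remains non-trivial (admissible). A variant of Theorem \ref{thm:exacttriangles-withbundles} with non-trivial bundles on all three links -- constructed via the obstructed-reducible cobordism technology of Section \ref{sec:obstructed}, and with no $\widehat\delta$ correction because $L', L''$ now carry admissible bundles and thus have no reducibles -- yields on the level of Euler characteristics
\[
    \chi(I^\omega(Y,L)) = \pm\bigl(\chi(I^{\omega'}(Y,L')) - \chi(I^{\omega'}(Y,L''))\bigr).
\]
If $\omega'$ has $\geq 4$ odd-parity components, the inductive hypothesis gives vanishing of both terms on the right. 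If $\omega'$ has exactly two odd-parity components $L_3, L_4$, part (1) applies: both Euler characteristics equal $\pm 2^{|L'|-2}|\mathrm{lk}(L_3, L_4)|$, and since the skein move at $c$ does not involve $L_3$ or $L_4$, these linking numbers agree, so the difference vanishes.

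The main obstacles are (i) the classical signature identity used in part (1), which, though essentially known, requires delicate bookkeeping over quasi-orientations (the Goeritz-form route is the cleanest); and (ii) the variant of Theorem \ref{thm:exacttriangles-withbundles} with non-trivial bundles on all three links, which relies on the obstructed-reducible cobordism maps of Section \ref{sec:obstructed} but must also confirm that the $\widehat\delta$-correction disappears precisely because there are no reducibles on the admissible resolutions $L', L''$.
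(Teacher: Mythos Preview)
Your overall strategy --- exact triangles plus Euler characteristic identities --- is the right one, and for part (2) your inductive picture is close to the paper's. But there are two genuine gaps.

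\textbf{Part (1): the determinant obstruction and the signature identity.} Your single-step reduction via Theorem~\ref{thm:exacttriangles-withbundles} requires $L',L''$ to have non-zero determinant, and ``achievable after stabilization'' does not suffice: the invariants $I(L')$, $I(L'')$ and the formula \eqref{eq:irredhomlinkeuler} are only defined for $\det\neq 0$ links in this paper. The paper sidesteps this entirely by inducting on $|L|$. The base case $|L|=2$ uses Theorem~\ref{thm:exacttriangles-withbundles} exactly as you propose, but now $L',L''$ are \emph{knots} and automatically have $\det\neq 0$; the computation then reduces to $\sigma(L,\fo')-\sigma(L,\fo'')=2\,\text{lk}(L_1,L_2)$ via \eqref{eq:signaturechangeqo}. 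For $|L|\geq 3$, the paper instead places the skein ball \emph{away} from $L_1$ (and from $L_2$ once $|L|\geq 4$), so that $\omega$ remains an admissible arc on $L',L''$. This allows the use of the Kronheimer--Mrowka triangle \eqref{eq:kmequivtriangle} with non-trivial bundles on \emph{all three} links --- no reducibles, no determinant hypothesis, no $\widehat\delta$-correction --- and the linking number is preserved under the induction. Your ``classical identity'' $|\xi(L')-\xi(L'')-2\widehat\delta|=2|\text{lk}(L_1,L_2)|$ is in fact equivalent to what the paper's induction establishes, but it is not a standard result you can cite.

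\textbf{Part (2): the variant triangle and the signs.} Your concern (ii) is misplaced: the ``variant of Theorem~\ref{thm:exacttriangles-withbundles} with non-trivial bundles on all three links'' is precisely the Kronheimer--Mrowka triangle \eqref{eq:kmequivtriangle}, which is \emph{simpler}, not harder --- all three links are admissible, there are no reducibles anywhere, and no obstructed gluing is needed. More seriously, your final step ``both Euler characteristics equal $\pm 2^{|L'|-2}|\text{lk}(L_3,L_4)|$\ldots so the difference vanishes'' does not follow: matching absolute values does not give cancellation. The paper resolves this by working throughout with the signed version (Theorem~\ref{thm:eulerchar-withbundles-signs}), fixing absolute $\Z/2$-gradings via compatible quasi-orientations $\fo',\fo''$ that agree on the components $L_3,L_4$ not touching the skein ball; then $\text{lk}_{\fo'}(L_3',L_4')=\text{lk}_{\fo''}(L_3'',L_4'')$ with matching signs, and the difference genuinely vanishes.
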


\vspace{0.2cm}

Note Kronheimer and Mrowka's instanton homology $I^\omega(Y,L)$ of an admissible link is only a priori a relatively $\Z/4$-graded abelian group. We will show how a choice of quasi-orientation for $L$ naturally determines an absolute $\Z/2$-grading, and give a version of Theorem \ref{thm:eulerchar-withbundles} that involves signs; see Theorem \ref{thm:eulerchar-withbundles-signs}. We also describe a method for fixing an absolute $\Z/4$-grading, see Subsection \ref{subsec:absz4grading}.

Saveliev and Harper \cite{saveliev-harper} defined a Casson-Lin type invariant for two component links $L=L_1\cup L_2$ in the 3-sphere that is expected to agree with $\pm \chi(I^\omega(L))$. They computed that their invariant equals $\pm \text{lk}(L_1,L_2)$, which agrees with relation \eqref{eq:eulerchar-onearc}; see also \cite{boden-herald}.

The proof of the last statement in Theorem \ref{thm:eulerchar-withbundles} uses exact triangles which have non-trivial bundles on each link. Let $L$, $L'$, $L''$ be based links in an oriented $3$-manifold $Y$ that form an unoriented skein triple. Let $\omega$ be unoriented $1$-manifold in $Y$ away from the skein 3-ball and which makes each link admissible. Then there is an exact triangle of $\cS$-complexes:
              \begin{equation}\label{eq:kmequivtriangle}
                \begin{tikzcd}[column sep=2ex, row sep=5ex, fill=none, /tikz/baseline=-10pt]
            & \widetilde C^{\omega}(Y,L)  \arrow[dr]  & \\
            \widetilde C^{\omega}(Y,L'')    \arrow[ur] & &  \widetilde  C^{\omega}(Y,L')  \arrow[ll]
            \end{tikzcd}
        \end{equation}
        The assumptions in this situation guarantee the absence of any reducible connections. This result is essentially the exact triangle established by Kronheimer and Mrowka in \cite{KM:unknot}; the only additional layer of structure included in our formulation is that the exact triangle for $C^{\omega}(Y,L)$, $C^{\omega}(Y,L')$, $C^{\omega}(Y,L'')$ which follows from \cite{KM:unknot} is compatible with $v$-maps. A proof follows the lines of the proof of Theorem \ref{thm:exacttriangles-withbundles}, forgetting all contributions from reducibles. For this reason we do not discuss this case further.

\subsection{Computations and further invariants}

Exact triangles in any link homology theory are a useful computational tool. Here we survey some computations that follow from our results. For simplicity, we focus only on some immediate applications to irreducible instanton homology $I(L)$.

All of the link homology theories $\mathcal{H}$ mentioned at the start of the introduction gave the property that for a non-split alternating link, and more generally a quasi-alternating link as defined in \cite{os:branched}, the rank of $\mathcal{H}(L)$ is determined by the determinant of $L$. In all cases this can be proved at least in part by induction on the exact triangle \eqref{eq:formalexacttriangle}. As an example, Kronheimer and Mrowka's $I^\natural(L)$ is free abelian of rank $\det(L)$, as proved in \cite{KM:unknot}.

Although the irreducible instanton homology $I(L)$ does not satisfy such an unoriented skein exact triangle in general, one can still use Theorem \ref{thm:irredexacttrianglesintro} to show that for quasi-alternating links, its rank is determined by the determinant. 

\begin{theorem}\label{thm:quasialtintro}
	Let $L$ be a non-split alternating link, or more generally a quasi-alternating link. Then as (ungraded) abelian groups, we have
	\[
		I(L) \cong \Z^{\frac{1}{2}(\det(L)-2^{|L|-1})}
	\]
	As a consequence, the spectral sequence \eqref{eq:ssirrtonatintro} collapses at the $E_2$-page.
\end{theorem}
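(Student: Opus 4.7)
The plan is to induct on the quasi-alternating construction of $L$ (equivalently, on $\det L$). The base case is the unknot, where both $I(L)$ and $\tfrac{1}{2}(\det L - 2^{|L|-1})$ vanish. For the inductive step, suppose $L$ is realized via a skein triple $(L, L_0, L_\infty)$ with $L_0, L_\infty$ quasi-alternating of smaller determinant and $\det L = \det L_0 + \det L_\infty$. By induction, $I(L_0)$ and $I(L_\infty)$ are free abelian of the asserted ranks, and the spectral sequences \eqref{eq:ssirrtonatintro} for $L_0, L_\infty$ collapse.

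First, I would verify inductively that $I^\natural(L)\cong\Z^{\det L}$ using Kronheimer--Mrowka's skein exact triangle \eqref{eq:inaturalexacttriangle} for $I^\natural$, starting from the inductive hypothesis for $I^\natural(L_0)$ and $I^\natural(L_\infty)$. The convergence of the spectral sequence \eqref{eq:ssirrtonatintro} then gives the rank lower bound
\[
2\,\rk I(L) + 2^{|L|-1} \;=\; \rk E_2 \;\geq\; \rk E_\infty \;=\; \det L,
\]
so $\rk I(L) \geq \tfrac{1}{2}(\det L - 2^{|L|-1})$.

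For the matching upper bound, I would apply Theorem \ref{thm:irredexacttrianglesintro} to the skein triple, using the cyclic symmetry of the triple to place the ``fat'' link (the one with one more component than the other two) in the role of the theorem's $L$. In the case $\delta=0$, the resulting exact triangle directly yields $\rk I(\text{fat}) \leq \rk I(\text{thin}_1) + \rk I(\text{thin}_2)$, which by the inductive hypothesis equals $\tfrac{1}{2}(\det L - 2^{|L|-1})$. In the cases $\delta = \pm 1$, the exact triangle involves a suspension $I_\pm$, whose rank may exceed that of $I$ by up to $2^{|\cdot|-1}$; the sharp bound is then obtained by coupling this with the defining triangles \eqref{eq:irrhomdeftriangles} for $I_\pm$, the Euler-characteristic relation \eqref{eq:eulercharskeinrel}, and rank constraints from the other cyclic rotations of the skein triple.

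Combining the two bounds yields $\rk I(L) = \tfrac{1}{2}(\det L - 2^{|L|-1})$, and hence $\rk E_2 = \rk E_\infty$, which forces the spectral sequence \eqref{eq:ssirrtonatintro} to collapse at the $E_2$-page. Freeness of $I(L)$ then follows, since $I(L)$ is a direct summand of $E_2$, which upon collapse is isomorphic as an abelian group to the free abelian group $I^\natural(L)$. The main obstacle is the case analysis for $\delta = \pm 1$: a single exact triangle is insufficient to pin down the sharp upper bound, and several constraints (the defining triangles for $I_\pm$, the Euler-characteristic identity, and multiple cyclic rotations of the triple) must be coupled to close the argument.
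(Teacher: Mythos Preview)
Your overall architecture matches the paper's: induct on $\det L$, use Kronheimer--Mrowka's $I^\natural(L)\cong\Z^{\det L}$ and the spectral sequence for the lower bound, and use Theorem~\ref{thm:irredexacttrianglesintro} for the upper bound. However, there is a genuine gap in how you handle the upper bound, and a secondary gap in the freeness argument.

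\textbf{The upper bound.} You correctly identify that the cases $\delta=\pm 1$ are the obstacle, but the remedy you propose (coupling the $I_\pm$ triangles, the Euler relation, and cyclic rotations) is not shown to work, and in fact the difficulty dissolves once you invoke the right signature lemma. The paper uses a result of Manolescu--Ozsv\'ath (stated here as Lemma~\ref{lemma:manoz}): in a skein triple with $\det L=\det L'+\det L''$ and all determinants nonzero, the orientable saddle cobordisms have $\epsilon=+1$. This pins down which case of Figure~\ref{fig:irredexacttrianglesintro} actually occurs, depending on whether the new link $L_0$ is the fat link or one of the thin ones. When $L_0$ is fat one is always in Case~I and the bound is immediate; when $L_0$ is thin one is in Case~I or Case~III (resp.\ II), and the uniform bound $\rk I(L_0)\le \rk I(L)+\rk I(\text{other thin})+2^{|\text{thin}|-1}$, valid in either case via the defining triangle for $I_\pm$, gives exactly the target rank after a short computation. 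Without Lemma~\ref{lemma:manoz}, when $L_0$ is fat and hypothetically $\delta\neq 0$, the naive bound is too weak by $2^{|L'|-1}$, and I do not see that your proposed constraints recover the loss.

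\textbf{Freeness and collapse.} The implication ``$\rk E_2=\rk E_\infty$ forces collapse'' is false over $\Z$: rank equality only shows each differential has finite image, not that it vanishes. Moreover, even granting $E_2=E_\infty$, one only has $E_\infty\cong\mathrm{gr}\,I^\natural(L)$, and the associated graded of a free abelian group need not be free. The paper instead runs the entire rank argument with coefficients in an arbitrary field (the exact triangles and the spectral sequence are available over any ring), obtaining $\dim_{\bF} I(L;\bF)=\tfrac12(\det L-2^{|L|-1})$ for every field $\bF$; comparing $\bF=\Q$ with $\bF=\bF_p$ then forces $I(L;\Z)$ to be torsion-free.
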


In a different direction, we consider links which have the smallest possible rank from the viewpoint of \eqref{eq:irredhomlinkeuler}: we say that a link $L$ with non-zero determinant is {\emph{$I$-basic}} if
\[
	\text{rank}_\Z I(L) = |2^{|L|-2} \cdot \xi(L)|
\]
Such links are rather special. For example, the only $I$-basic knots with 12 crossings or less are the torus knots and the pretzel knot $P(-2,3,7)$, and the only alternating knots that are $I$-basic are alternating torus knots. Here are some more examples: 

\begin{theorem}\label{thm:ibasiclistintro}
	The following knots are $I$-basic:
	\begin{itemize}
		\item[(i)] All torus knots;
		\item[(ii)] the pretzel knots $P(-2,3,n)$ where $n\in \Z_{>0}$ is odd;
		\item[(iii)] the twisted torus knots listed in Proposition \ref{prop:twistedtorusknots}.
	\end{itemize}
\end{theorem}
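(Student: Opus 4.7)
The plan is to show that for each knot $K$ in (i)--(iii), the $\Z/4$-graded group $I(K)$ is concentrated in gradings of a single parity. Combined with the identity $\chi(I(K))=\tfrac{1}{2}\sigma(K)$ of \eqref{eq:irredhomknoteuler}, this immediately forces $\mathrm{rank}_\Z I(K)=|\sigma(K)|/2=|2^{|K|-2}\xi(K)|$, giving the $I$-basic condition. The strategy throughout is to apply Theorem \ref{thm:irredexacttrianglesintro} inductively along a crossing-resolution tree that stays inside a controllable subclass of links.

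The first step is a parity-propagation lemma: in any of the three cases of Theorem \ref{thm:irredexacttrianglesintro}, if two of the three terms of the triangle are supported in a single parity of $\Z/4$-grading and the parities line up with the mod-$2$ degrees of the three connecting maps (the bottom map being odd and the others even), then the same is true of the third term. Because $I_\pm(L)$ differs from $I(L)$ only through the reducible summand $\Z^{2^{|L|-1}}$, which sits in a fixed parity as in \eqref{eq:irrhomdeftriangles}, single-parity concentration passes freely between $I$ and $I_\pm$; this handles the suspended terms appearing in Cases II and III. The signs $\epsilon(L,L')$ and $\epsilon(L'',L)$ determine $\delta$ via \eqref{eq:deltadef} and thereby select which of the three triangles applies.

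For torus knots I would induct on the crossing number: resolving a crossing in a standard braid diagram for $T(p,q)$ produces a skein triple $(T(p,q), T', T'')$ in which $T',T''$ are torus links with strictly smaller parameters and classical signatures. The base cases (unknot, Hopf link, trefoil) are $I$-basic by the direct calculations in \cite{DS1}, and the inductive step goes through by the parity-propagation lemma. For the pretzels $P(-2,3,n)$ with $n$ odd, induction on $n$ by resolving a crossing in the $n$-twist region produces a skein triple involving $P(-2,3,n-2)$ together with an auxiliary link that is either a torus link or quasi-alternating; the quasi-alternating intermediates are $I$-basic by Theorem \ref{thm:quasialtintro}, and their single-parity concentration follows from the definite sign of their Euler characteristic. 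The twisted torus knots of Proposition \ref{prop:twistedtorusknots} are handled by an analogous crossing-reduction to torus knots.

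The main obstacle will be the bookkeeping for the parity-propagation lemma across the three cases of Theorem \ref{thm:irredexacttrianglesintro}, particularly the interplay between the odd-degree map, the suspensions $I_\pm$, and the sign $\delta$. At each induction step one must verify that the computed signatures force the anticipated value of $\delta$ and that the parities of the two inductively known terms align correctly with the map degrees; a mismatch would produce cancellation and break the rank identity. Once this combinatorial setup is pinned down, the remaining input is the routine computation of signatures and quasi-orientations for the torus, pretzel, and twisted torus families.
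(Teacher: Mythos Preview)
There are genuine gaps in your approach for parts (ii) and (iii), and part (i) is handled differently in the paper.

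First, your claim that quasi-alternating intermediates are $I$-basic by Theorem \ref{thm:quasialtintro} is false. That theorem computes $\mathrm{rk}_\Z I(L)=\tfrac{1}{2}(\det(L)-2^{|L|-1})$, which is generally much larger than $|2^{|L|-2}\xi(L)|$; the figure-eight knot, for instance, has $\mathrm{rk}_\Z I=2$ but $\sigma=0$. So quasi-alternating links are almost never $I$-basic, and your parity-propagation argument collapses at exactly the step where you invoke them.

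Second, resolving a crossing in the $n$-twist region of $P(-2,3,n)$ with $n$ odd gives the skein triple $(P(-2,3,n),\,P(-2,3,n-1),\,P(-2,3,\infty))$, not one involving $P(-2,3,n-2)$. The link $P(-2,3,\infty)$ is an unknot, but the intermediate $P(-2,3,n-1)$ is a two-component link which can have determinant zero: indeed $\det(P(-2,3,6))=0$. In that case $I(L)$ is not even defined in the paper's framework, and Theorem \ref{thm:irredexacttrianglesintro} does not apply. This obstructs the induction step from $P(-2,3,5)$ to $P(-2,3,7)$.

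The paper circumvents both problems via Lemma \ref{lemma:ibasic}, which uses the exact triangles with \emph{non-trivial} bundle data from Theorem \ref{thm:exacttriangles-withbundles}. Placing an arc $\omega$ on the two-component intermediate $K_m$ makes it admissible, so $I^\omega(K_m)$ is defined regardless of determinant, and the triangle gives a direct isomorphism $I^\omega(K_m)\cong I(K_{m-1})$ (up to a possible $I_+$). The parity bookkeeping then goes through cleanly because the degrees of the maps in Proposition \ref{prop:degreesfornontrivbundletriangle} are known. For part (i), the paper does not use exact triangles at all: it observes directly that the traceless character variety of $T_{p,q}$ consists of exactly $|\sigma(T_{p,q})|/2$ non-degenerate irreducible points, so the chain complex already has minimal rank.
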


\noindent Items (ii) and (iii) of this result are proved inductively using Theorem \ref{thm:irredexacttrianglesintro}.

The reader familiar with $L$-space knots in Heegaard Floer theory \cite{os-lens} may notice that all of the properties listed above for $I$-basic knots also hold for $L$-space knots. An $L$-space knot is a knot in the $3$-sphere for which there is some Dehn surgery of $K$ such that the hat-flavor of Heegaard Floer homology is supported in even gradings. All of the $I$-basic knots listed in Theorem \ref{thm:ibasiclistintro} are $L$-space knots. We are led to the following:

\begin{question}\label{question:lspaceknots}
	Is every $I$-basic knot an $L$-space knot, and conversely? 
\end{question}

\noindent In a sequel article, the authors will explore this question in more detail.

Also constructed in the present work are numerical invariants for links derived from equivariant singular instanton Floer theory and modelled after Fr\o yshov's invariant for integer homology 3-spheres \cite{froyshov}. Such invariants for knots were studied in \cite{DS1,DS2}. Given an oriented link $L\subset S^3$ with a basepoint, let $n=|L|$ be the number of components. For an algebra $\mathscr{S}$ over the ring $\Z[T_1^{\pm 1},\ldots, T_n^{\pm 1}]$, we define a non-increasing function
\[
	d_L^\mathscr{S}:\Z \to \Z_{\geq 0}
\]
with the property that $d_L^\mathscr{S}(i)=2^{|L|-1}$ for $i\ll 0$ and $d_L^{\mathscr{S}}(i)=0$ for $i\gg 0$. If $L$ and $L'$ are oriented and based links, we may form their connected sum at the basepoints, and we have
\[
	d_L^\mathscr{S}(i)d_{L'}^\mathscr{S}(j) \leqslant d^{\mathscr{S}}_{L\# L'}(i+j)
\]
for all $i,j\in \Z$. If $mL$ is the mirror of $L$, with the same basepoint, then 
\[
	d_{mL}^\mathscr{S}(i) = 2^{|L|-1} - d_{L}^\mathscr{S}(-i).
\]
Perhaps most importantly, $d_L^\mathscr{S}$ is an invariant of $L$ under concordances which preserve the basepoints. See Subsection \ref{subsec:froyshovinvs} for more details. In the case of a knot $K$, we have
\[
	h_\mathscr{S}(K) = \max\left\{ i : \; d^\mathscr{S}_K(i)= 1 \right\}
\] 
where $h_\mathscr{S}(K)$ is the Fr\o yshov-type invariant studied in \cite{DS1,DS2}. In particular, by the results of \cite{DS2}, $d_K^{\mathscr{S}}$ recovers the knot signature, and it is natural to expect that $d^\mathscr{S}_L$ for general links is related to link signatures for certain choices of $\mathscr{S}$. In this work we only construct these invariants and exhibit a few basic properties, and leave a more extensive treatment for future work (although see Proposition \ref{prop:alternatingfroyshov}). Another possible future direction is the construction of concordance invariants for links which take into account the Chern--Simons filtration, following the case of knots as studied in \cite{DS1,DS2,DISST:special-cycles}.

\subsection{Concluding remarks}\label{subsec:concludingremarks}

The present work deals primarily with links that have non-zero determinant. The main reason for this restriction is that the reducible traceless flat $SU(2)$ connections of these link complements are {\it non-degenerate} critical points of the Chern--Simons functional, see Proposition \ref{prop:detnonzeronondeg}. For a link $L$ with zero determinant, such reducibles are still in correspondence with quasi-orientations, but they are no longer non-degenerate. We can always make a small perturbation of the Chern--Simons functional so that the reducible critical points are all non-degenerate (and are still in correspondence with the quasi-orientations). Following a construction similar to the one in Section \ref{sec:links}, we can then associate an $\cS$-complex to $L$ and any such choice of perturbation. However, the chain homotopy type of the $\cS$-complex is not expected to be an invariant of $L$ anymore.  

The standard approach in Floer theories to establish invariance with respect to the choice of perturbations and other auxiliary data uses {\it continuation maps}. These are instances of cobordism maps, where the underlying cobordism is a product, but the choice of perturbation is not translationally invariant. In the case at hand, this method breaks down to show invariance of the chain homotopy type of $\cS$-complex of $L$ with respect to perturbations because these cobordisms might have obstructed reducibles. However, one should be able to use our extension of functoriality in Section \ref{sec:obstructed} to show that the chain homotopy type of the $\cS$-complex of $L$ is a topological invariant of $L$ up to suspension of $\cS$-complexes. 

These ideas may be useful in other directions as well. For example, in \cite{imori}, singular instantons with other holonomy parameters along meridians of knots are used to define $\cS$-complexes for knots satisfying some non-degeneracy condition. We expect that a modification of the above strategy can be used to extend the construction of \cite{imori} to all holonomy parameters for any knot, and more generally for any link. In a separate context, the ideas here might also be used to provide a framework for Seiberg--Witten monopole Floer homology of rational homology $3$-spheres, alternative to \cite{km:monopole, froyshov-monopole}.

The methods here may also be useful for a version of singular instanton Floer theory for knots which is equivariant with respect to the Lie group $O(2)$. This is the symmetry group one obtains by incorporating the ``flip symmetry'' into the $S^1$-equivariant theory treated here. See \cite[Section 2(iv)]{km-embedded-i} and \cite[Section 2.3]{DS1}. The set of reducible critical points in this context may be identified with binary dihedral $SU(2)$ representations of the knot group. However, apart from the unique critical point which is reducible with respect to the $S^1$-action, the other reducibles in general require a choice of non-zero perturbation to achieve non-degeneracy. In this sense, the scenario is similar to the issue for links with determinant zero as discussed above.

The strategy to extend the definition of instanton $\cS$-complexes in the ways mentioned above is similar to the one used in \cite{DME:QHS}, where it is shown that a variation of $\cS$-complexes for rational homology 3-spheres defined using non-singular instantons in \cite{miller} is a topological invariant up to an appropriate notion of suspension. In fact, the definition of maps for obstructed cobordisms in Section \ref{sec:obstructed} is inspired by \cite{DME:QHS}. However, there are some essential differences. Unlike the $\cS$-complexes in our setup, the corresponding notion in \cite{miller,DME:QHS} is infinite-dimensional, which presents challenges in direct computations and other algebraic aspects of the story. Furthermore, our definition of suspension is more explicit and can be defined at the algebraic level for any $\cS$-complex. This is in contrast with suspension in \cite{DME:QHS}, where additional topological input is needed. 

The present work sets the stage for a development of surgery exact triangles in non-singular instanton theory, extending Floer's exact triangles for versions of irreducible instanton homology \cite{floer:inst2,bd-floer,scadutothesis} to the equivariant setting of \cite{miller,DME:QHS}. Similar to work here, cobordisms with obstructed reducible instantons are expected to appear, and the construction of their associated Floer-theoretic maps should play an important role.

Finally, as already explained above, the exact triangles established in this work may be viewed as equivariant upgrades of Kronheimer and Mrowka's unoriented skein exact triangle \eqref{eq:inaturalexacttriangle} for $I^\natural(L)$. A natural question that arises is whether this upgrade extends to Kronheimer and Mrowka's spectral sequence of \cite{KM:unknot}. In particular, one might ask if there is a relationship between the instanton $\cS$-complex of a link and Khovanov homology with one of its equivariant structures \cite{kh-frob}. Note that $\cS$-complexes for links with zero determinant necessarily enter in this story, as the chain complex for Khovanov homology involves the complete resolutions of a diagram, which are unlinks.\\

\vspace{0.25cm}

\noindent \textbf{Outline.} \; In Section \ref{sec:prelims}, the algebra of $\cS$-complexes is developed. Here we define and study suspensions of $\cS$-complexes. In Section \ref{sec:links}, the $\cS$-complex for a knot as defined in \cite{DS1} is extended to the case of links with non-zero determinant. We also explain in this section how the construction of cobordism maps with unobstructed reducibles, as developed in \cite{DS1,DS2}, carry over to this setting. In Section \ref{sec:obstructed}, we use obstructed gluing theory to define maps for cobordisms that have obstructed reducibles with index $-3$. In Section \ref{sec:proofs}, we prove our main result, Theorem \ref{thm:exacttriangles}. It is shown that Cases II and III of this theorem are equivalent, and the section is largely focused on proving Cases I and II. In Section \ref{sec:nontrivbundles} we prove Theorems \ref{thm:exacttriangles-withbundles} and \ref{thm:eulerchar-withbundles}. In Section \ref{sec:further}, Theorems \ref{thm:equivexacttriangle} and \ref{thm:localcoeffintro} are proved, and Fr\o yshov-type invariants for links are constructed. In Section \ref{sec:comps}, we consider computations of irreducible instanton homology $I(L)$, and prove Theorems \ref{thm:quasialtintro} and \ref{thm:ibasiclistintro}. \\

\vspace{0.25cm}

\noindent \textbf{Acknowledgments.} \; The authors thank Nikolai Saveliev for helpful comments.\\

\newpage

\newpage

\newpage

%!TEX root = main.tex

\section{Algebraic Preliminaries}\label{sec:prelims}

In this section we define the necessary algebra for the exact triangles in equivariant singular instanton Floer theory. In the first subsection we define $\cS$-complexes in a way which is slightly more general than in \cites{DS1,DS2}. We then introduce some basic operations of $\cS$-complexes, the notion of suspension, heights of morphisms, and discuss features particular to the cases of even and odd degree morphisms. In the last subsection we discuss exact triangles of $\cS$-complexes.

\subsection{\texorpdfstring{$\cS$}{S}-complexes}

The following is the algebraic object central to our framework.

\begin{definition}\label{defn:scomplex}
    An {\emph{$\cS$-complex}} over a commutative ring $R$ is a chain complex $(\widetilde C, \widetilde d )$ over $R$, finitely generated and free as an $R$-module, with a graded module decomposition
    \[
      \widetilde C = C \oplus C[-1] \oplus \sfR
    \]
    such that with respect to this decomposition the differential $\widetilde d$ takes the from
    \begin{equation}
        \widetilde d = \left[ \begin{array}{ccc} d & 0 & 0 \\ v & -d & \delta_2 \\ \delta_1 & 0 & r \end{array} \right].\label{eq:dtilde}
    \end{equation}
    In general, for a graded $R$-module $C$ we write $C[i]$ for the graded module obtained from $C$ by setting $C[i]_{j} = C_{i+j}$. $\diamd$ 
\end{definition}

In the above definition it is implicit that our complexes have $\Z$-gradings. In the context of equivariant singular instanton theory, we will consider $\Z/4$-graded $\cS$-complexes. For a given $\cS$-complex we often write $\widetilde C$ instead of $(\widetilde C, \widetilde d )$. Our convention is that the differential of a chain complex has degree $-1$.

A {\emph{morphism}} $\widetilde\lambda:\widetilde C \to \widetilde C'$ of $\cS$-complexes is a degree $0$ chain map of the form
\begin{equation}
  \widetilde\lambda =  \left[ \begin{array}{ccc} \lambda & 0 & 0 \\ \mu & \lambda & \Delta_2 \\ \Delta_1 & 0 & \rho \end{array} \right]\label{eq:lambdatilde}
\end{equation}
with respect to the decompositions $\widetilde C = C \oplus C[-1] \oplus \sfR$ and $\widetilde C' = C' \oplus C'[-1] \oplus \sfR'$. An $\cS$-chain homotopy  $\widetilde K: \widetilde C\to \widetilde C'$ is a chain homotopy between morphisms $\widetilde\lambda, \widetilde\lambda':\widetilde C \to \widetilde C'$, so that it satisfies $\widetilde d' \widetilde K + \widetilde K \widetilde d = \widetilde \lambda - \widetilde \lambda'$, and which takes the form 
\begin{equation}
    \widetilde K =  \left[ \begin{array}{ccc} K & 0 & 0 \\ L & -K & M_2 \\ M_1 & 0 & J \end{array} \right]\label{eq:Ktilde}
\end{equation}
with respect to the decompositions of $\widetilde C$ and $\widetilde C'$. A {\emph{chain homotopy equivalence}} of $\cS$-complexes $\widetilde C$ and $\widetilde C'$ is a pair of morphisms $\lambda:\widetilde C\to \widetilde C'$ and $\widetilde \lambda':\widetilde C'\to \widetilde C$ such that $\widetilde \lambda \widetilde \lambda'$ and $\widetilde \lambda'\widetilde\lambda$ are chain homotopic to the identity morphisms. We sometimes say {\emph{$\cS$-chain homotopy equivalence}} for emphasis; and we write $\widetilde C \simeq \widetilde C'$.

A {\emph{degree $k$ morphism}} $\widetilde \lambda:\widetilde C\to \widetilde C'$, where $k$ is some integer, is defined just as above except that the underlying chain map has degree $k$. Chain homotopies in this case are also defined similarly. Note that by our convention a degree $k$ morphism for $k\neq 0$ is not a morphism $\widetilde C\to \widetilde C'$ in the strict sense defined above. Indeed, in forming the category of $\cS$-complexes, one should use degree $0$ morphisms. Nonetheless, in the sequel, we often abuse terminology and use ``morphism'' to refer to a degree $k$ morphism for some integer $k$.

The definitions given above are more general than ones found in previous works \cites{DS1,DS2,DISST:special-cycles}. First, we allow the summand $\sfR\subset \widetilde C$ to be a free module of arbitrary rank, while in previous work we only considered the case in which $\sfR$ has rank 1 and is identified with the ring $R$. This generalization is necessary when defining the $\cS$-complex for a link, which will have $\sfR$ free of rank $2^{|L|-1}$ where $|L|$ is the number of components of $L$.

Second, in contrast to \cites{DS1,DS2,DISST:special-cycles}, we now have a potentially non-zero component $r:\mathsf{R}\to \mathsf{R}$ in the differential $\widetilde d$ of an $\cS$-complex. In the case that $\sf{R}$ is rank 1, $r=0$ for grading reasons, specializing to the case of the previous works. In fact most of our $\cS$-complexes in the sequel will have $r=0$. We allow the extra flexibility that $r\neq 0$ so that we may take mapping cones of $\cS$-complexes, for example.

Finally, we also diverge from \cites{DS1,DS2} by allowing the component $\rho:\sfR \to \sfR'$ of a morphism to be zero; and we allow the component $J:\sfR\to \sfR'$ of a chain homotopy $\widetilde K$ to be non-zero. This flexibility is required for the more general types of cobordism maps that we will encounter in the sequel.

\begin{definition}
    A morphism $\widetilde\lambda:\widetilde C\to \widetilde C'$ between $\cS$-complexes is {\emph{strong}} if it is degree $0$ and the component $\rho:(\sfR,r)\to (\sfR',r')$ is a quasi-isomorphism.  \; $\diamd$
\end{definition}

\noindent In the case that the summands $\sfR$ and $\sfR'$ are rank 1, a strong morphism recovers the notion of a morphism used in \cite{DS1}.

The structure of an $\cS$-complex $\widetilde C$ may be encoded as follows. Define
\[
  \chi =  \left[ \begin{array}{ccc} 0 & 0 & 0 \\ 1 & 0 & 0 \\ 0 & 0 & 0 \end{array} \right]
\]
with respect to the decomposition $\widetilde C = C \oplus C[-1] \oplus \sfR$. Thus $\chi$ maps $C$ to $C[-1]$ identically and is otherwise zero, and $\chi$ has degree 1. Now the condition that $\widetilde d$ takes the form \eqref{eq:dtilde} is equivalent to the condition that $\chi$ is an anti-chain map, i.e. $\widetilde d \chi + \chi \widetilde d = 0$. Similarly, given $\cS$-complexes $\widetilde C$ and $\widetilde C'$ with associated maps $\chi$ and $\chi'$, the conditions that $\widetilde \lambda$ and $\widetilde K$ take the forms \eqref{eq:lambdatilde} and \eqref{eq:Ktilde} are equivalent to $\chi' \widetilde \lambda =\widetilde \lambda \chi$ and $\chi' \widetilde K +\widetilde K \chi = 0$, respectively.

In this way, an $\cS$-complex is a differential graded (dg) module over the dg-algebra $R[\chi]/(\chi^2) = \Lambda_R(\chi)$ where $\chi$ has degree $1$, and where the differential on the algebra is zero. Conversely, suppose we have a finitely generated free dg-module $\widetilde C$ over $\Lambda_R(\chi)$ such that $\text{ker}(\chi)/\text{im}(\chi)$ is a free $R$-module. Then we can choose a complement $\sfR\subset\text{ker}(\chi)$ to $\text{im}(\chi)$, and defining $C$ by $C[-1] = \text{im}(\chi)$, we obtain a short exact sequence
\begin{equation*}
    0\longrightarrow{}C[-1] \oplus \sfR\xhookrightarrow{\hspace{.4cm}}\widetilde C \xrightarrow{\;\;\chi\;\;} C\longrightarrow{}0
\end{equation*}
A module splitting of this exact sequence expresses $\widetilde C$ as an $\cS$-complex. Thus every such dg module over $\Lambda_R(\chi)$ is equivalent to an $\cS$-complex. The typical $\cS$-complex that arises in the sequel has a natural decomposition as appears in Definition \ref{defn:scomplex}, and this explains our preference for using $\cS$-complexes over the language of dg-modules.

Identifying the dg-algebra $\Lambda_R(\chi)$ with $H_\ast(S^1;R)$, with its Pontryagin product, makes it clear that $\cS$-complexes are models for $S^1$-equivariant chain complexes.

Given an $\cS$-complex $\widetilde C = C \oplus C[-1] \oplus \sfR$ there are associated two complexes
\[
    (C,d) , \qquad (\sfR, r)  
\]
called the {\emph{irreducible}} and {\emph{{reducible}} complexes, respectively. The {\emph{irreducible}} homology of $\widetilde C$ refers to $H_\ast(C,d)$, and similarly $H_\ast(\sfR,r)$ is called the {\emph{reducible}} homology. As mentioned above, typically we will have $r=0$ in the reducible complex, and in this case the reducible homology can be identified with $\sfR$.

Let $(\widetilde C,\widetilde d)$ be an $\cS$-complex. For convenience, we list the relations that the components of the differential $\widetilde d$ satisfy which together are equivalent to the relation $\widetilde d^2=0$:
\begin{align}
	d^2 &= 0 \\
	\delta_1 d + r\delta_1 &=0 \label{eq:diff2}\\
	d \delta_2 - \delta_2 r & =0 \label{eq:diff3}\\
	d v-v d - \delta_2 \delta_1 & = 0 \label{eq:diff4}\\
    r^2 & = 0 \label{eq:diff5}
\end{align}
Given a morphism $\widetilde \lambda:(\widetilde C,\widetilde d)\to (\widetilde C',\widetilde d')$ the relation $\widetilde d'\widetilde \lambda = \widetilde \lambda \widetilde d$ is equivalent to:
\begin{align}
	d'\lambda - \lambda d &= 0 \\
	\Delta_1 d + \rho \delta_1 - \delta_1'\lambda - r'\Delta_1 &=0  \label{eq:lambda2}\\
	d'\Delta_2  - \delta'_2 \rho + \lambda \delta_2 + \Delta_2 r & =0  \label{eq:lambda3}\\
	\mu d + d'\mu + \lambda v - v'\lambda + \Delta_2\delta_1 - \delta_2'\Delta_1  & = 0 \label{eq:lambda4}\\
    \rho r - r' \rho &= 0 \label{eq:lambda5}
\end{align}
Finally, for a chain homotopy $\widetilde K$ the relation $\widetilde d' \widetilde K + \widetilde K \widetilde d = \widetilde \lambda - \widetilde \lambda'$ is equivalent to:
\begin{align}
	d'K + K d  - \lambda + \lambda' &= 0 \\
	\delta_1'K + r'M_1 + M_1d + J\delta_1 - \Delta_1 + \Delta_1' &=0\\
	-d'M_2 + \delta_2'J - K\delta_2 + M_2 r  - \Delta_2 + \Delta_2'& =0\\
	v'K -d'L + \delta_2'M_1 + Ld - Kv +M_2\delta_1 - \mu + \mu'  & = 0\\
    r'J + Jr - \rho + \rho' &= 0 \label{eq':K5}
\end{align}
Recall that in our applications we typically have $r=r'=0$, and so the relations \eqref{eq:diff5} and \eqref{eq:lambda5} are vacuous, while for $\widetilde K$ the last relation \eqref{eq':K5} is $\rho=\rho'$.

The following observation will be used in the proof of Theorem \ref{thm:exacttriangles}.

\begin{lemma}\label{lemma:morphismuseful}
	Let $\widetilde \lambda:\widetilde C\to \widetilde C'$ be a morphism of $\cS$-complexes with irreducible component $\lambda:C\to C'$ and the reducible component $\rho:\sfR\to \sfR'$, each chain homotopic to isomorphisms. Then $\widetilde \lambda$ is $\cS$-chain homotopic to an isomorphism of $\cS$-complexes.
\end{lemma}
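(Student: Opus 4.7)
The plan is to modify $\widetilde\lambda$ by two successive $\cS$-chain homotopies, first replacing its irreducible component $\lambda$ by an isomorphism and then replacing its reducible component $\rho$ by an isomorphism, and finally to observe that the resulting morphism is automatically an isomorphism of $\cS$-complexes.

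For the first step, I fix a chain homotopy $K \colon C \to C'$ between $\lambda$ and a chain-level isomorphism $\lambda_0$, and form the $\cS$-chain homotopy $\widetilde K_1$ whose only non-zero entries are $K$ in the $(C,C)$ slot and $-K$ in the $(C[-1],C[-1])$ slot, taking $L=M_1=M_2=J=0$ in the notation of \eqref{eq:Ktilde}. A direct check verifies that $\widetilde K_1$ has the block form required of an $\cS$-chain homotopy (equivalently, anti-commutes with $\chi$). The modified morphism $\widetilde\lambda_1 := \widetilde\lambda - (\widetilde d'\widetilde K_1 + \widetilde K_1\widetilde d)$ then has irreducible component $\lambda_0$, reducible component still equal to $\rho$, and explicit off-diagonal corrections to $\mu,\Delta_1,\Delta_2$ that will not enter the rest of the argument.

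For the second step, I fix a chain homotopy $J \colon \sfR \to \sfR'$ between $\rho$ and an isomorphism $\rho_0$ and form the analogous $\cS$-chain homotopy $\widetilde K_2$ with $J$ in the $(\sfR,\sfR)$ slot and zeros elsewhere. Subtracting $\widetilde d'\widetilde K_2 + \widetilde K_2 \widetilde d$ from $\widetilde\lambda_1$ yields a morphism $\widetilde\lambda_2$: because this second $\cS$-chain homotopy has vanishing $(C,C)$-entry, the irreducible component remains $\lambda_0$, while the reducible component becomes $\rho_0$. So $\widetilde\lambda_2$ is $\cS$-chain homotopic to $\widetilde\lambda$, and both of its diagonal components are isomorphisms.

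Finally, I observe that $\widetilde\lambda_2$ is itself an isomorphism of $\cS$-complexes. After permuting the summands on both source and target so that the decomposition reads $C \oplus \sfR \oplus C[-1]$, the matrix of $\widetilde\lambda_2$ becomes lower block-triangular with diagonal entries $\lambda_0,\rho_0,\lambda_0$, each invertible. Hence $\widetilde\lambda_2$ is invertible as an $R$-module map; its inverse is automatically a chain map, and since it still commutes with $\chi$ (being the inverse of a map that does), the inverse is a morphism of $\cS$-complexes, so $\widetilde\lambda_2$ is an $\cS$-isomorphism as required. The step I expect to require the most care is the first one, namely verifying that the naive choice of $\widetilde K_1$ is a genuine $\cS$-chain homotopy and reading off the precise effect on $\widetilde\lambda$; after that, the second step is structurally identical and the final invertibility argument is immediate from the block-triangular form.
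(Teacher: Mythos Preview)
Your proposal is correct and takes essentially the same approach as the paper: the paper combines your two steps into a single $\cS$-chain homotopy $\widetilde K$ with both $K$ and $J$ filled in simultaneously (and $L=M_1=M_2=0$), then invokes the same block-triangular argument for invertibility. The only cosmetic difference is that the paper reorders as $C\oplus \sfR\oplus C[-1]$ and calls the resulting matrix upper-triangular, whereas you obtain a lower-triangular form; either way the diagonal entries are the isomorphisms $\lambda_0,\rho_0,\lambda_0$.
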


\begin{proof}
	Let $\lambda$ be chain homotopic to $\lambda'$ via $K:C\to C'$, so that $d' K + K d - \lambda + \lambda' =0$; and let $\rho$ be chain homotopic to a $\rho'$ via $J:\sfR\to \sfR'$, so that $r' J + Jr - \rho + \rho'=0$. Define
\begin{equation*}
  \widetilde\lambda' =  \left[ \begin{array}{ccc} \lambda' & 0 & 0 \\ \mu + Kv - v' K & \lambda' & \Delta_2 + K \delta_2 -\delta_2' J \\ \Delta_1 - \delta_1' K -J\delta_1 & 0 & \rho' \end{array} \right] \qquad
 \widetilde K =  \left[ \begin{array}{ccc} K & 0 & 0 \\ 0 & -K & 0 \\ 0 & 0 & J \end{array} \right]
\end{equation*}
Then direct computation shows that $\widetilde \lambda$ is a morphism and is chain homotopic to $\widetilde \lambda'$ via $\widetilde K$. Now if $\lambda'$ and $\rho'$ are isomorphisms, then the map $\widetilde \lambda'$ is an isomorphism because with respect to ordering the decompositions as $C\oplus \sfR\oplus C[-1]$ and $C'\oplus \sfR'\oplus  C'[-1]$ it is upper triangular with diagonal entries $\lambda',\lambda',\rho'$. The inverse of $\widetilde \lambda'$ is also easily seen to be a morphism. Compare \cite[Lemma 6.29]{DS1}.
\end{proof}

\subsection{Basic operations}

We now discuss duals, tensor products, and mapping cones of $\cS$-complexes. The first two of these operations are adaptations of \cite[Section 4]{DS1} to our current setting.

Here and below, $\epsilon$ denotes the sign map which multiplies a homogeneously graded element of degree $i$ by $(-1)^i$.
For a graded $R$-module $V$ we define the dual $V^\dagger = \text{Hom}_R(V,R)$ with grading given as follows: if $f\in V^\dagger$ is homogeneous of the form $f:V_i\to R$ then the grading of $f$ is equal to $-i$. For any map $m:V\to W$ between graded vector spaces, we define $m^\dagger:W^\dagger\to V^\dagger$ as $m^\dagger(f) = -\varepsilon(f)f(m)$. 

Given an $\cS$-complex $(\widetilde C, \widetilde d, \chi)$, the dual $\cS$-complex $(\widetilde C^\dagger, \widetilde d^\dagger, \chi^\dagger)$ is defined using the conventions above. The dual $\cS$-complex has decomposition
\[
    \widetilde C^\dagger =   C^\dagger\oplus C^\dagger[-1] \oplus \sfR^\dagger
\]
and with respect to this decomposition the differential takes the form
\begin{equation}
    \widetilde d^\dagger = \left[ \begin{array}{ccc} d^\dagger & 0 & 0 \\ v^\dagger & -d^\dagger & \delta_1^\dagger \\ \delta_2^\dagger & 0 & r^\dagger \end{array} \right].\label{eq:dtildedual}
\end{equation}
Note that when taking duals, the roles of $\delta_1$ and $\delta_2$ are interchanged.

It is straightforward to define the tensor product of $\cS$-complexes $(\widetilde C, \widetilde d, \chi)$ and $(\widetilde C', \widetilde d', \chi')$ from the viewpoint of dg-modules: we simply take the tensor product chain complex and define the $\chi$-action to be $\chi\otimes 1 + \varepsilon \otimes \chi'$. Following the discussion in \cite[\S 4.5]{DS1} we can represent this tensor product as an $\cS$-complex $(\widetilde C^\otimes , \widetilde d^\otimes)$ in the following way:
\[
  \widetilde C^\otimes = C^\otimes \oplus C^\otimes [-1] \oplus \sfR^\otimes , \qquad \sfR^\otimes = \sfR\otimes \sfR'
\]
\[
    C^\otimes  = \left(  C\otimes C' \right) \oplus \left(  C\otimes C' \right)[-1]  \oplus \left(  C\otimes \sfR' \right) \oplus \left(  \sfR \otimes C' \right)
\]
The differential $\widetilde d^\otimes$ has the following components: 
\[
    d^\otimes =\left[
        \begin{array}{cccc}
            d\otimes 1 + \varepsilon \otimes d' &0&0&0\\
            -\varepsilon v \otimes 1 + \varepsilon \otimes v' & d\otimes 1 - \varepsilon \otimes d' & \varepsilon \otimes \delta_2' & -\epsilon\delta_2 \otimes 1\\
            \varepsilon \otimes \delta_1' &0&d\otimes 1 + \varepsilon \otimes r'& 0\\
            \delta_1\otimes 1& 0 & 0 & \epsilon\otimes d' + r\otimes 1\\
        \end{array}
        \right]    
\]
\begin{equation*}
	v^\otimes =\left[
					\begin{array}{cccc}
						v\otimes 1 &0& 0& \delta_2\otimes 1\\
						0 & v\otimes 1 & 0 &0\\
						0 & 0& v\otimes 1 & 0\\
						0 & \epsilon \delta_1\otimes 1& 0  & 1\otimes v'\\
					\end{array}
					\right] \label{eq:vtensor}
\end{equation*}
\[
    \delta_1^\otimes = [0 , 0 , \delta_1\otimes 1 , \varepsilon \otimes \delta_1'] \qquad \delta_2^\otimes = [0,0,\delta_2\otimes 1, 1 \otimes \delta_2']^\intercal  \qquad r^\otimes = r\otimes 1 + \varepsilon \otimes r'
\]
Note that this representation of the tensor product as an $\cS$-complex depends on the order of $\widetilde C$ and $\widetilde C'$, although changing the order gives an isomorphic $\cS$-complex.

Let $\lambda: C\to C'$ be a degree $k$ chain map of chain complexes. The mapping cone complex has underlying graded $R$-module $C[-1-k]\oplus C'$ and its differential is given by
\[
    \left[ \begin{array}{cc} -d & 0 \\ \lambda & d' \end{array} \right]  
\]
Let $\widetilde\lambda:\widetilde C\to \widetilde C'$ be a degree $k$ morphism of $\cS$-complexes. Define the mapping cone of $\widetilde \lambda$ to be the $\cS$-complex described by the ordinary mapping cone of $\widetilde \lambda$
\[
   {\text{Cone}}(\widetilde \lambda)=\widetilde C[-1-k]\oplus \widetilde C',\hspace{1.5cm}
   \widetilde{d}_{\text{Cone}(\widetilde \lambda)}=\left[ \begin{array}{cc} -\widetilde d & 0 \\ \widetilde\lambda & \widetilde d' \end{array} \right], 
\]
together with the dg module structure given by the map
\[
  {\chi}_{\text{Cone}(\widetilde \lambda)}=\left[ \begin{array}{cc} -\chi & 0 \\ 0 & \chi' \end{array} \right].
\]
This has the module splitting given by:
\[
    \text{Cone}(\widetilde \lambda) =  \left( C[-1-k]\oplus C' \right) \oplus \left( C[-2-k]\oplus C'[-1] \right) \oplus \left( \sfR[-1-k] \oplus \sfR' \right),
\]
Upon applying an automorphism to this decomposition, we obtain the formula
\begingroup
\renewcommand{\arraystretch}{1.25}
\[
    \widetilde{d}_{\text{Cone}(\widetilde \lambda)} = \left[\begin{array}{cc|cc|cc} 
        -d & 0 & 0 & 0 & 0 & 0 \\
        \lambda & d' & 0 & 0 & 0 & 0 \\
        \hline
        v & 0 & d & 0 & \delta_2 & 0\\
        \mu & v' & -\lambda & -d' & \Delta_2& \delta_2' \\
        \hline
        -\delta_1 & 0 & 0 & 0 & -r & 0\\
        \Delta_1 & \delta_1' & 0 & 0 & \rho & r'
     \end{array} \right].
\]
\endgroup
Clearly the irreducible and reducible complexes are just the mapping cones of $\lambda$ and $\rho$, respectively.  Note that a special case of the mapping cone is when $\widetilde \lambda = 0$, which yields the operation of direct sum of $\cS$-complexes.

\subsection{Suspension complexes}\label{subsec:suspensiondefn}

In this subsection we define an operation on $\cS$-complexes called {\emph{suspension}}. This construction plays a key role in the formulation of the unoriented skein exact triangles for equivariant singular instanton Floer theory. 

\begin{definition}\label{defn:suspension}
    Let $\widetilde C = C \oplus C[-1] \oplus \sfR$ be an $\cS$-complex with differential $\widetilde d$. We define the {\emph{suspension}} of $\widetilde C$, which is an $\cS$-complex, as follows:
    \begingroup
\renewcommand{\arraystretch}{1.25}
    \[
        \widetilde C_\Sigma = C_\Sigma \oplus C_\Sigma[-1] \oplus \sfR, \qquad C_\Sigma = C[-2]\oplus \sfR[-1],
    \]
    \[
        \widetilde d_\Sigma = \left[\begin{array}{cc|cc|c} 
            d & -\delta_2 & 0 & 0 & 0 \\
            0 & -r & 0 & 0 & 0 \\
            \hline
            v & 0 & -d & \delta_2 & v\delta_2 \\
            \delta_1 & 0 & 0 & r & \delta_1\delta_2 \\
            \hline
            0 & 1 & 0 & 0 & r
         \end{array} \right].
    \]
\endgroup
    We also write $\Sigma \widetilde C$ for the suspended $\cS$-complex $\widetilde C_\Sigma$. \; $\diamd$
\end{definition}

We have written the differential $\widetilde d_\Sigma$ in block form corresponding to the decomposition of $\widetilde C_\Sigma$ given. It is straightforward to verify that $\widetilde d_\Sigma^2 = 0$ follows from the relation $\widetilde d^2=0$.

The meaning of the suspension complex is clarified by the following result. To state it, let us define a certain standard $\cS$-complex as follows:
\[
    \widetilde \cO(1) = C \oplus C[-1] \oplus \sfR , \qquad C = R_{(1)}, \qquad \sfR = R_{(0)}
\]
The subscripts indicate gradings. The differential is given by $d=v=\delta_2=0$ and $\delta_1:R_{(1)}\to R_{(0)}$ is an isomorphism. Alternatively, $\cO(1)$ is the suspension of the trivial  $\cS$-complex that has $C=0$, $\sfR=R_{(0)}$ and trivial differential.

\begin{prop}\label{prop:sustensor}
    The suspension $\Sigma \widetilde C$ is $\cS$-chain homotopy equivalent to $\widetilde C\otimes \widetilde \cO(1)$.
\end{prop}

\begin{proof}
    Let $(C^\otimes,d^\otimes)$ be the tensor product $\cS$-complex description of $\widetilde C\otimes \widetilde \cO(1)$ as given in the previous subsection. Then $C^\otimes = C[-1]\oplus C[-2] \oplus C \oplus \sfR$ and $\sfR^\otimes = \sfR$ with
    \[
      d^\otimes = \left[         \begin{array}{cccc}
        d &0&0&0\\
        -\varepsilon v  & d  & 0 & -\epsilon\delta_2 \\
        \varepsilon  &0&d& 0\\
        \delta_1& 0 & 0 &  r\\
    \end{array} \right]  
    \qquad
    v^\otimes =\left[
					\begin{array}{cccc}
						v &0& 0& \delta_2\\
						0 & v & 0 &0\\
						0 & 0& v & 0\\
						0 & \epsilon \delta_1& 0  & 0\\
					\end{array}
					\right]
    \] 
    and $\delta_1^\otimes = [0,0,\delta_1,\epsilon]$, $\delta_2^\otimes = [0,0,\delta_2,0]^\intercal$, $r^\otimes = r$. Next, define a map $\widetilde\lambda:\widetilde C_\Sigma\to \widetilde C^\otimes$ by prescribing its components as follows:
    \[
      \lambda = \left[ \begin{array}{cc} 0 & 0\\ 1 & 0 \\ 0 & 0 \\ 0 & \epsilon \end{array} \right] \qquad \mu =  \left[ \begin{array}{cc} 0 & 0\\ 0 & 0 \\ 0 & 0 \\ 0 & r \end{array} \right]  \qquad 
      \Delta_2 = \left[ \begin{array}{c} \epsilon \delta_2  \\ \epsilon \delta_2r \\ 0 \\ 0   \end{array} \right] 
    \]
    and $\rho=1$, $\Delta_1=0$. Define a map $\widetilde\lambda':\widetilde C^\otimes \to \widetilde C_\Sigma$ by
    \[
        \lambda' = \left[ \begin{array}{cccc} d & 1  & v & 0 \\ 0 & 0  & \delta_1 & \epsilon \\\end{array} \right] 
         \qquad 
         \mu' =  \left[ \begin{array}{cccc} 0 & 0 & 0 & 0 \\ \delta_1 & 0  & 0 & r \\\end{array} \right] 
    \]
    and $\rho'=1$, $\Delta'_1=\Delta'_2=0$. A direct computation shows that $\widetilde\lambda$ and $\widetilde\lambda'$ are chain maps and thus morphisms of $\cS$-complexes. Further, $\widetilde \lambda'\widetilde\lambda=1$. Finally, defining $\widetilde K:\widetilde C^\otimes \to \widetilde C^\otimes$ as follows provides a chain homotopy from $\widetilde \lambda \widetilde \lambda'$ to the identity: the components are
    \[
      K =   \left[
        \begin{array}{cccc}
            0 &0& -\varepsilon & 0\\
            0 & 0 & -\varepsilon d &0\\
            0 & 0& 0 & 0\\
            0 & 0 & 0  & 0\\
        \end{array}
        \right] \qquad       L =   \left[
            \begin{array}{cccc}
                0 &0& 0 & 0\\
                0 & 0 & 0 &0\\
                0 & 0& 0 & 0\\
                0 & 0 & 0  & \varepsilon \\
            \end{array}
            \right]
            \qquad
            M_2 = \left[\begin{array}{c} 0 \\ -\varepsilon \delta_2 \\ 0 \\ 2r \end{array} \right]
    \]
    and $M_1=J=0$. This completes the proof.
\end{proof}

We can also suspend in the ``reverse direction''. One way to do this is to first take the dual $\cS$-complex, then take the suspended complex, and take the dual one more time. What emerges is the {\emph{negative suspension}} of $\widetilde C$, which is an $\cS$-complex given as follows:
    \begingroup
\renewcommand{\arraystretch}{1.25}
\[
    \widetilde C_{\Sigma^{-1}} = C_{\Sigma^{-1}} \oplus C_{\Sigma^{-1}}[-1] \oplus \sfR, \qquad C_{\Sigma^{-1}} = C[2]\oplus \sfR[2],
\]
\[
    \widetilde d_{\Sigma^{-1}} = \left[\begin{array}{cc|cc|c} 
        d & 0 & 0 & 0 & 0 \\
        \delta_1 & r & 0 & 0 & 0 \\
        \hline
        v & \delta_2 & -d & 0 & 0 \\
        0 & 0 & -\delta_1 & -r & 1 \\
        \hline
        \delta_1v &  \delta_1\delta_2 & 0 & 0 & r
     \end{array} \right].
\]
\endgroup
We also write $\Sigma^{-1} \widetilde C$ for the negatively suspended $\cS$-complex $\widetilde C_{\Sigma^{-1}}$. The dual $\cS$-complex of $\widetilde \cO(1)$, written $\widetilde \cO(-1)=\widetilde \cO(1)^\dagger$, may be described as follows: 
\[
    \widetilde \cO(-1) = C \oplus C[-1] \oplus \sfR , \qquad C = R_{(-2)}, \qquad \sfR = R_{(0)}
\]
with differential given by $d=v=\delta_1=0$ and $\delta_2:R_{(0)}\to R_{(-2)}$ is an isomorphism. The dual description of Proposition \ref{prop:sustensor} gives the following.

\begin{prop}
    $\Sigma^{-1} \widetilde C$ is $\cS$-chain homotopy equivalent to $\widetilde C \otimes \widetilde \cO(-1)$.
\end{prop}

To understand the relationship between $\Sigma \Sigma^{-1} \widetilde C$ and $\Sigma^{-1} \Sigma  \widetilde C$ we have the following. We write $\widetilde \cO(0)$ for the $\cS$-complex with $C=0$ and $\sfR=R_{(0)}$ and trivial differential.

\begin{lemma}
    $\widetilde \cO(1) \otimes \widetilde \cO(-1)$ is $\cS$-chain homotopy equivalent to $\widetilde \cO(0)$.
\end{lemma}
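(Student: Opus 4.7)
The plan is to reduce the claim, via Proposition~\ref{prop:sustensor}, to exhibiting an explicit $\cS$-chain homotopy equivalence $\Sigma\widetilde{\cO}(-1)\simeq \widetilde{\cO}(0)$. Indeed, applying Proposition~\ref{prop:sustensor} with $\widetilde C=\widetilde{\cO}(-1)$, together with the fact (noted just after the tensor product construction) that the tensor product of $\cS$-complexes is independent of order up to isomorphism, gives $\widetilde{\cO}(1)\otimes\widetilde{\cO}(-1)\simeq \Sigma\widetilde{\cO}(-1)$.

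To make the suspension concrete, I would substitute $d=v=\delta_1=r=0$ and $\delta_2\colon R_{(0)}\to R_{(-2)}$ an isomorphism into Definition~\ref{defn:suspension}. The resulting $\cS$-complex $\Sigma\widetilde{\cO}(-1)$ is free of rank five, with generators $a,b,c,d,e$ of degrees $0,1,1,2,0$ respectively: $a\in C[-2]$ and $b\in \sfR[-1]$ together form $C_\Sigma$, the pair $(c,d)$ are their shifts in $C_\Sigma[-1]$, and $e\in\sfR$. The only nonzero components of $\widetilde d_\Sigma$ reduce to $\widetilde d_\Sigma b=-a+e$ and $\widetilde d_\Sigma d=c$, while $\chi$ acts by $a\mapsto c$, $b\mapsto d$ and kills $c,d,e$. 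In particular $H_\ast(\Sigma\widetilde{\cO}(-1))\cong R$ concentrated in degree $0$, matching $H_\ast(\widetilde{\cO}(0))$.

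Next I define morphisms in both directions. Let $\widetilde\lambda\colon \widetilde{\cO}(0)\to\Sigma\widetilde{\cO}(-1)$ be given by $\rho=\id$ (sending the generator $f$ of $\sfR=R_{(0)}$ in $\widetilde{\cO}(0)$ to $e$) and all other blocks zero; let $\widetilde\lambda'\colon \Sigma\widetilde{\cO}(-1)\to\widetilde{\cO}(0)$ be given by $\rho'=\id$, together with $\Delta_1'(a)=f$, $\Delta_1'(b)=0$, and all other blocks zero. A direct check using the single relation $\widetilde d_\Sigma b=-a+e$ shows that both $\widetilde\lambda$ and $\widetilde\lambda'$ are chain maps commuting with $\chi$ (the latter condition being trivial on the $\widetilde{\cO}(0)$ side since $\chi=0$ there), and visibly $\widetilde\lambda'\widetilde\lambda=\id_{\widetilde{\cO}(0)}$.

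To exhibit an $\cS$-chain homotopy $\widetilde K$ from $\widetilde\lambda\widetilde\lambda'$ to $\id$, I would take the block $K\colon C_\Sigma\to C_\Sigma$ in \eqref{eq:Ktilde} to satisfy $K(a)=b$ and $K(b)=0$, with the blocks $L,M_1,M_2,J$ all zero; the $-K$-block on $C_\Sigma[-1]$ then sends $c\mapsto -d$ and kills $d$. A generator-by-generator verification of $\widetilde d_\Sigma\widetilde K+\widetilde K\widetilde d_\Sigma=\widetilde\lambda\widetilde\lambda'-\id$ and $\chi\widetilde K+\widetilde K\chi=0$ then completes the proof. The main obstacle is merely the bookkeeping of signs and block placements in the $\cS$-complex formulas; the actual verifications reduce to linear algebra on a rank-five free module.
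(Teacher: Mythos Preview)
Your proof is correct. The route differs from the paper's: the paper computes the tensor product $\widetilde{\cO}(1)\otimes\widetilde{\cO}(-1)$ directly as a rank-nine $\cS$-complex (with $C^\otimes\cong R^{\oplus 4}$, $\sfR^\otimes\cong R$) and writes down explicit morphisms and a homotopy between it and $\widetilde{\cO}(0)$, whereas you first invoke Proposition~\ref{prop:sustensor} and commutativity of the tensor product to replace $\widetilde{\cO}(1)\otimes\widetilde{\cO}(-1)$ by the rank-five suspension $\Sigma\widetilde{\cO}(-1)$, and then exhibit the equivalence on that smaller model. Your approach recycles already-proved work and halves the size of the verification; the paper's approach is more self-contained and does not rely on Proposition~\ref{prop:sustensor}. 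Either way the content is a short linear-algebra check, and your generator-by-generator verification of the homotopy relation and the $\chi$-compatibility goes through as written.
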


\begin{proof}
    For simplicity we suppress gradings. The $\cS$-complex $\widetilde C^\otimes = \widetilde \cO(1) \otimes \widetilde \cO(-1)$ has $C^\otimes = R^{\oplus 4}$ and $\sfR^\otimes = R$ with differential components
    \[
        d^\otimes  =   \left[
        \begin{array}{cccc}
            0 &0& 0 & 0\\
            0 & 0 & -1 &0\\
            0 & 0& 0 & 0\\
            1 & 0 & 0  & 0\\
        \end{array}\right] \qquad        
         v^\otimes  =   \left[
            \begin{array}{cccc}
                0 &0& 0 & 0\\
                0 & 0 & 0 &0\\
                0 & 0& 0 & 0\\
                0 & 1 & 0  & 0\\
            \end{array}\right]
    \]
    and $\delta_1^\otimes = [0, 0, 1,0]$, $\delta_2=[0,0,0,1]^\intercal$, $r^\otimes =0$. Define $\widetilde \lambda:\widetilde C^\otimes\to \widetilde \cO(0)$ by setting $\rho=1$, $\Delta_1=[0,1,0,0]$ and $\lambda=\mu=\Delta_2=0$. Define $\widetilde \lambda':\widetilde \cO(0)\to \widetilde C^\otimes$ by $\rho'=1$, $\Delta_2'=[1,0,0,0]^\intercal$ and $\lambda'=\mu'=\Delta_1'=0$. These are morphisms and $\widetilde \lambda \widetilde\lambda'=1$. Define the components of $\widetilde K:\widetilde C^\otimes \to  \widetilde C^\otimes$ by
    \[
        K  =   \left[
        \begin{array}{cccc}
            0 &0& 0 & 1\\
            0 & 0 & 1 &0\\
            0 & -1& 0 & 0\\
            0 & 0 & 0  & 0\\
        \end{array}\right] \qquad        
         L  =   \left[
            \begin{array}{cccc}
                0 &0& 1 & 0\\
                0 & 0 & 0 &0\\
                0 & 0& 0 & 0\\
                0 & 0 & 0  & 0\\
            \end{array}\right]
    \]
    and $M_1=M_2=J=0$. Then $\widetilde K$ provides a homotopy from the identity to $\widetilde\lambda'\widetilde \lambda$.
\end{proof}

\begin{cor}
    For all $i,j\in\Z$ we have $\Sigma^i\Sigma^j\widetilde C \simeq \Sigma^{i+j}\widetilde C$.
\end{cor}

Given $n\in \Z$, write $\widetilde \cO(n)$ for the tensor product $\widetilde \cO(1)^{\otimes n}$ for $n>0$, $\widetilde \cO(-1)^{\otimes -n}$ for $n<0$, and $\widetilde \cO(0)$ if $n=0$. From our discussion we have
\[
    \Sigma^n \widetilde C \simeq \widetilde C\otimes \widetilde\cO(n)
\]
The tensor product description of the suspension complex is more convenient for certain algebraic considerations. However, the definition of $\Sigma \widetilde C$ given is both smaller than the tensor product and is also what naturally appears in our proof of the exact triangles in equivariant singular instanton Floer homology. 

From the tensor product viewpoint it is clear how to define the suspension of a morphism. In terms of our suspension complexes, given a morphism $\widetilde \lambda:\widetilde C\to \widetilde C'$ of $\cS$-complexes the suspension $\Sigma\widetilde \lambda=\widetilde \lambda_\Sigma$, which is a morphism $\Sigma\widetilde C \to \Sigma\widetilde C'$, is given as follows:
    \begingroup
\renewcommand{\arraystretch}{1.25}
\[
    \widetilde \lambda_{\Sigma} = \left[\begin{array}{cc|cc|c} 
        \lambda & \Delta_2 & 0 & 0 & 0 \\
        0 & \rho & 0 & 0 & 0 \\
        \hline
        \mu & 0 & \lambda & \Delta_2 & \mu\delta_2 + v'\Delta_2 \\
        \Delta_1 & 0 & 0 & \rho & \Delta_1\delta_2 + \delta_1'\Delta_2\\
        \hline
        0 &  0 & 0 & 0 & \rho
     \end{array} \right].
\]
\endgroup
Given morphisms $\widetilde \lambda:\widetilde C\to \widetilde C'$ and $\widetilde \lambda':\widetilde C' \to \widetilde C''$ the suspension $\Sigma(\widetilde \lambda' \widetilde \lambda)$ is homotopic to $\Sigma\widetilde \lambda' \circ\Sigma\widetilde\lambda$ but in general they are not equal.

If $\widetilde K$ is an $\cS$-chain homotopy between morphisms $\widetilde \lambda$ and $\widetilde \lambda'$ then
    \begingroup
\renewcommand{\arraystretch}{1.25}
\[
    \widetilde K_{\Sigma} = \left[\begin{array}{cc|cc|c} 
        K & -M_2 & 0 & 0 & 0 \\
        0 & -J & 0 & 0 & 0 \\
        \hline
        L & 0 & -K & M_2 & v'M_2 + L \delta_2 \\
        M_1 & 0 & 0 & J & \delta_1'M_2 + M_1\delta_2 \\
        \hline
        0 &  0 & 0 & 0 & J
     \end{array} \right]
\]
\endgroup
provides an $\cS$-chain homotopy between $\Sigma \widetilde\lambda$ and $\Sigma \widetilde\lambda'$.

The tensor product description of suspension complexes also yields:

\vspace{2mm}

\begin{prop}\label{prop:susdual}
	The dual of the $\cS$-complex $\Sigma^n \widetilde C$ is isomorphic to $\Sigma^{-n}\widetilde C^\dagger$.
\end{prop}

\vspace{2mm}

\noindent Indeed, the dual of $\cO(1)$ is $\cO(-1)$, and duals respect tensor products.

Many properties of an $\cS$-complex are preserved under suspension. For example:
\vspace{2mm}

\begin{prop} \label{eq:tildehomologysuspension}
    $H_\ast( \Sigma \widetilde C) \cong H_\ast (\widetilde C)$.
\end{prop}

\vspace{2mm}

\noindent This follows from the tensor product description of $\Sigma\widetilde C$ and the K\"{u}nneth formula. Further, the Fr\o yshov invariants are related by $h(\Sigma^n \widetilde C) = h(\widetilde C) + n$, and suspension behaves nicely with respect to the equivariant homology theories; see Section \ref{sec:further} for more.

Finally, we observe that the Euler characteristic of the irreducible homology transforms in the following way under suspension; this follows directly from the definitions.

\vspace{2mm}

\begin{lemma}\label{lemma:irreuler}
    For $\widetilde C = C\oplus C[-1] \oplus \sfR$ we have $\chi(\Sigma^n C) = \chi(C) - n\cdot \chi(\sfR)$.
\end{lemma}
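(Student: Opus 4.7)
The plan is to prove the $n=1$ case by direct computation from the definition of $\Sigma\widetilde C$ in Definition \ref{defn:suspension}, and then either induct on $|n|$ or invoke the tensor-product description $\Sigma^n \widetilde C \simeq \widetilde C \otimes \widetilde\cO(n)$.

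For the base case, recall that the irreducible part of $\Sigma \widetilde C$ is $C_\Sigma = C[-2]\oplus \sfR[-1]$. Using the grading convention $C[i]_j = C_{i+j}$, one checks $\chi(V[i]) = (-1)^i\chi(V)$ for any bounded graded free module $V$. Hence
\[
\chi(C_\Sigma)\;=\;\chi(C[-2]) + \chi(\sfR[-1])\;=\;\chi(C)-\chi(\sfR),
\]
which is the claim for $n=1$. Crucially, the reducible summand of $\Sigma\widetilde C$ is again $\sfR$ (in the same grading as before), so the quantity $\chi(\sfR)$ is unchanged after suspending. This allows us to iterate: if $\widetilde C^{(k)} := \Sigma^k\widetilde C = C_{\Sigma^k} \oplus C_{\Sigma^k}[-1] \oplus \sfR$, then applying the $n=1$ formula to $\widetilde C^{(k)}$ yields $\chi(C_{\Sigma^{k+1}}) = \chi(C_{\Sigma^k}) - \chi(\sfR)$, and induction on $k\geq 0$ gives the claimed formula.

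For negative $n$, I would repeat the same argument with the explicit formula $C_{\Sigma^{-1}} = C[2]\oplus \sfR[2]$ from the definition of negative suspension. Then $\chi(C_{\Sigma^{-1}}) = \chi(C) + \chi(\sfR)$, matching the formula with $n = -1$; induction handles the rest. Alternatively, and perhaps more uniformly, one can use the $\cS$-chain homotopy equivalence $\Sigma^n\widetilde C \simeq \widetilde C \otimes \widetilde\cO(n)$: from the explicit decomposition of a tensor product $\cS$-complex given just before Proposition \ref{prop:sustensor}, the $C[-1]\otimes C'[-1]$ piece cancels (up to a shift), leaving
\[
\chi(C^\otimes) \;=\; \chi(C)\chi(\sfR') + \chi(\sfR)\chi(C').
\]
Since $\chi(C_{\widetilde\cO(1)}) = -1$ and $\chi(\sfR_{\widetilde\cO(1)}) = 1$, we get $\chi(C_{\widetilde\cO(n)}) = -n$ and $\chi(\sfR_{\widetilde\cO(n)}) = 1$ by induction (or via Künneth), and the formula then reads $\chi(C_{\Sigma^n}) = \chi(C)\cdot 1 + \chi(\sfR)\cdot(-n)$.

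There is no real obstacle here, since the author already signals that the statement "follows directly from the definitions"; the only place to exercise care is the sign bookkeeping with the grading shift convention $C[i]_j = C_{i+j}$, and the observation that the reducible summand $\sfR$ is preserved under suspension so it makes sense to speak of $\chi(\sfR)$ as a fixed quantity independent of $n$.
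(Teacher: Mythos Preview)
Your proof is correct and is exactly what the paper has in mind when it says the lemma ``follows directly from the definitions.'' The direct computation $\chi(C_\Sigma)=\chi(C[-2])+\chi(\sfR[-1])=\chi(C)-\chi(\sfR)$, together with the observation that the reducible summand $\sfR$ is unchanged under suspension so the argument iterates, is the whole content; your alternative via the tensor product formula is also fine (one small slip: the cancelling pieces in $C^\otimes$ are $(C\otimes C')$ and $(C\otimes C')[-1]$, not ``$C[-1]\otimes C'[-1]$'', but your computed formula $\chi(C^\otimes)=\chi(C)\chi(\sfR')+\chi(\sfR)\chi(C')$ is correct).
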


\subsection{Heights of morphisms}\label{subsec:poslevels}

Heights of morphisms were introduced in \cite[Section 4.2]{DS2} to handle certain types of cobordism maps arising in equivariant singular instanton Floer theory. We will see how this notion fits nicely with the concept of suspension.

For our purposes we carry these constructions out for the following class of $\cS$-complexes which have relatively simple reducible complexes.

\begin{definition}\label{defn:rperfect}
    An $\cS$-complex is {\emph{$r$-perfect}} if its reducible complex $\sfR$ is supported in even gradings. In particular, the reducible differential is zero: $r=0$.  \; $\diamd$
\end{definition}

\noindent Every $\cS$-complex in the context of equivariant singular instanton Floer theory is homotopic to an $r$-perfect one, and most are in fact $r$-perfect. Note that in addition to $r=0$, an $r$-perfect $\cS$-complex has $\delta_1\delta_2=0$, and more generally $\delta_1 v^i \delta_2 = 0$ for all $i$. 

    For a morphism $\widetilde \lambda:\widetilde C\to \widetilde C'$ of $r$-perfect $\cS$-complexes define
    \begin{equation}\label{level-n-rel}
        \tau_{0} := \rho, \qquad \tau_{i+1}:=\delta_{1}'v'^i\Delta_2+\Delta_1v^i\delta_2+\sum_{j=0}^{i-1}\delta_{1}'v'^j\mu v^{i-1-j}\delta_2 \quad (i\geqslant 0)
      \end{equation}

\noindent Note that if any of the maps $\tau_i:\sfR\to \sfR'$ are non-zero, then $\widetilde \lambda$ is necessarily of even degree. Throughout this subsection we will assume that the morphisms under consideration are between $r$-perfect $\cS$-complexes and have even degree. The case of odd morphisms requires some modifications, and is discussed in Subsection \ref{subsec:odddegmorphisms}.

\begin{definition}\label{defn:leveln}
      A {\emph{height $n$ morphism}} ($n\in \Z_{\geqslant 0}$) is an even degree morphism $\widetilde \lambda: \widetilde C\to \widetilde C'$ between $r$-perfect $\cS$-complexes satisfying $\tau_j=0$ for all $0\leqslant j<n$. The morphism is further called {\emph{strong height $n$}} if in addition $\tau_{n}:\sfR \to \sfR'$ is an isomorphism. $\diamd$
\end{definition}

In the case that $\sfR$ is free of rank 1, a morphism which is strong height $n$ agrees with what was called a morphism of height $n$ in \cite{DS2}, and a strong height $0$ morphism was a strong morphism. A height $n$ morphism is also a height $i$ morphism for all $0 \leqslant i\leqslant n$. If a morphism is said to have height $n$, then it is implicit that the morphism has even degree.

For the following result we must unravel the $n$-fold suspension $\Sigma^n \widetilde C = \widetilde C_{\Sigma^n}$ for positive $n$. We assume that $\widetilde C$ is an $r$-perfect $\cS$-complex. Then
\[
	C_{\Sigma^n} = C[-2n]\oplus \bigoplus_{i=0}^{n-1} \sfR[-2i-1]
\]
and $\sfR_{\Sigma^n} =\sfR$ with differential $\widetilde d_{\Sigma^n}$ given in components as follows:
\[
    d_{\Sigma^n} =   \left[
        \begin{array}{cccccc}
            d & -\delta_2 & -v\delta_2 & -v^2\delta_2 & \cdots & -v^{n-1}\delta_2\\
             & 0 & 0 & 0 &\cdots &0 \\
             & & 0 & 0 & & 0 \\
             & & &  \ddots &  & \vdots \\
            & & & & &  0 \\ 
             &  &  & & & 0\\
        \end{array}\right]    
\]
\[
  v_{\Sigma^n} =  \left[
    \begin{array}{cccccc}
        v &  &  &  & & \\
        \delta_1 & 0 &  & & &  \\
         & 1& 0 &   & &   \\
         & & 1 &  & &  \\
        & &  &  \ddots& 0 &  \\ 
         &  &  & &1  &0 \\
    \end{array}\right] \qquad   (\delta_2)_{\Sigma^n} =  \left[
        \begin{array}{c}
            v^n\delta_2 \\ 0 \\ 0 \\ \vdots \\ 0
        \end{array}\right]
\]
and $(\delta_1)_{\Sigma^n}=[0,0,\ldots,0,1]$, $r_{\Sigma^n}=0$. There is a natural morphism
\begin{equation}\label{eq:iotamap}
  \widetilde \iota_n : \widetilde  C \longrightarrow \Sigma^n\widetilde C  
\end{equation}
defined as follows: the $\lambda$-component is $[1,0,\ldots,0]^\intercal$, the $\Delta_2$-component is $[0,1,0,\ldots,0]^\intercal$, and all the other components are set to zero. It is straightforward to verify that $\widetilde \iota_n$ is a strong height $n$ morphism.

\begin{prop}\label{prop:levelsuspension}
    Suppose $\widetilde \lambda:\widetilde C\to \widetilde C'$ is a morphism of $r$-perfect $\cS$-complexes which is (strong) height $n\in \Z_{> 0}$. Then there exists a natural (strong) height $0$ morphism 
    \[
        \widetilde \lambda':\Sigma^n\widetilde C \to \widetilde C'
    \]
    such that it factors $\widetilde \lambda$ into the composition $\widetilde \lambda' \widetilde \iota_n = \widetilde \lambda$.
\end{prop}

\begin{proof}
    We may assume $n>0$. For each non-negative integer $i$ define $\mu_i= \sum_{j=0}^{i-1} v'^j \mu v^{i-j-1}$. Define the components of $\widetilde \lambda'$ as follows. Let $\lambda' = [\lambda'_0,\lambda'_1,\ldots,\lambda'_n]$ with $\lambda'_0=\lambda$ and
    \[
        \lambda_i' = \mu_{i-1}\delta_2 + (v')^{i-1}\Delta_2 \qquad (1\leqslant i \leqslant n)
    \]
    Also, $\mu'=[\mu,0,\ldots,0]$, $\Delta'_1=[\Delta_1,0,0,\ldots,0]$, $\rho'=\tau_n$ and $\Delta_2' = \mu_n\delta_2 + (v')^n\Delta_2$. 
    
    We now compute the relations that show $\widetilde \lambda'$ is a morphism. First, $d' \lambda' =  \lambda'  d_{\Sigma^n}$ is equivalent to $d'\lambda = \lambda d$, which is clear, along with the relations $R_i=0$ for $0\leqslant i \leqslant n-1$:
    \begin{align*}
        R_i := d'\mu_i\delta_2 + d'v'^i\Delta_2 + \lambda v^i\delta_2 = 0 
    \end{align*}
    Note $R_0$ is $d'\Delta_2+\lambda\delta_2 =0$, a relation for the morphism $\widetilde \lambda$. We claim the following:
    \begin{equation}\label{eq:rirelation}
        R_{i+1} = v' R_{i} +\delta_2'\tau_{i+1}
    \end{equation}
    for all non-negative integers $i$. To prove this we use $\mu_{i+1} = v'\mu_i + \mu v^i$ to write $R_{i+1}$ as:
    \begin{align*}
        &d'v'\mu_i\delta_2 + d'\mu v^i\delta_2 +  d'v'v'^{i}\Delta_2 + \lambda v v^{i}\delta_2
    \end{align*}
    Use relation \eqref{eq:diff4} on the two instances of $d'v'$ and \eqref{eq:lambda4} on $\lambda v$ to obtain:
    \begin{align*}
        &v'd'\mu_i\delta_2 + \delta_2'\delta_1'\mu_i\delta_2 + \cancel{d'\mu v^i\delta_2} +  v'd'v'^i\Delta_2 + \delta_2'\delta_1'v'^i\Delta_2    +  v'\lambda v^i\delta_2 \phantom{\sum_{j=0}^i }\\
        & \quad  + \delta_2'\Delta_1v^i\delta_2  - \mu d v^i \delta_2 - \cancel{d'\mu v^i\delta_2}  
    \end{align*}
    The terms with $v'$ in front collect together to form $v'R_i$ and those with $\delta_2'$ in front collect together to form $\delta_2'\tau_{i+1}$. Thus we are left with 
    \begin{align*}
        R_{i+1} & = v'R_i + \delta_2'\tau_{i+1}- \mu d v^i \delta_2 
    \end{align*}
    We have used $\delta_1 v^i \delta_2 = 0$ for all $i$, which follows from the assumption the the complexes are $r$-perfect. Now by \eqref{eq:diff3}, $\mu v^i d\delta_2=0$. Then successive applications of \eqref{eq:diff4} to $\mu v^i d\delta_2$ shows that $\mu d v^i \delta_2 $ vanishes, proving \eqref{eq:rirelation}. Now $R_i=0$ for $0\leqslant i\leqslant n-1$ holds by our assumption that $\widetilde\lambda$ has height $n$, relation \eqref{eq:rirelation}, and induction in $i$. Thus $d' \lambda' =  \lambda'  d_{\Sigma^n}$.

    Next, $\Delta'_1 d_{\Sigma^n} + \rho' (\delta_1)_{\Sigma^n} - \delta_1'\lambda' - r'\Delta'_1 =0$ is equivalent to the relations
    \begin{align*}
      &\delta_1'\lambda  = \Delta_1 d   \\[1mm]
      &\delta_1'\mu_{i-1} \delta_2 + \delta_1'(v')^{i-1}\Delta_2 = -\Delta_1v^{i-1}\delta_2 \quad (1\leqslant i\leqslant n-1)\\[1mm]
      &\delta_1'\mu_{n-1} \delta_2 + \delta_1'(v')^{n-1}\Delta_2 = -\Delta_1v^{n-1}\delta_2 + \tau_n
\end{align*}
These are immediate consequences of the assumption that $\widetilde\lambda$ is a morphism of height $n$. 

Next, $d' \Delta_2' - \delta_2'\rho' + \lambda'(\delta_2)_{\Sigma^n} + \Delta_2 r_{\Sigma^n} =0$ is equivalent to
\[
	R_n - \delta_2'\tau_n = 0,
\]
which holds by \eqref{eq:rirelation} and the established relation $R_{n-1}=0$. The relation
\[
	\mu'd_{\Sigma^n}+ d' \mu' + \lambda'v_{\Sigma^n} - v'\lambda' + \Delta_2'(\delta_1)_{\Sigma^n} - \delta_2'\Delta_1'=0
\]
is directly verified. Finally, $\rho'r_{\Sigma^n}=r'\rho'$ holds by $r_{\Sigma^n}=r'=0$. This verifies all relations that exhibit $\widetilde \lambda'$ as a morphism. As $\rho'=\tau_n$ clearly $\widetilde \lambda'$ is strong if $\widetilde \lambda$ is strong height $n$.

The verification $\widetilde \lambda' \widetilde \iota_n = \widetilde \lambda$ is a straightforward computation.
\end{proof}

The above construction suggests a characterization of height $n\in \Z_{\geq 0}$ morphisms in terms of the existence of factorizations $\widetilde C \to \Sigma^n \widetilde C \to \widetilde C'$. 

\begin{lemma}\label{lemma:leveladditivity}
    Let $\widetilde C$, $\widetilde C'$ and $\widetilde C''$ be $r$-perfect, and $\widetilde \lambda:\widetilde C\to\widetilde C'$ and $\widetilde \lambda':\widetilde C'\to \widetilde C''$ have (strong) heights $n$ and $m$, respectively. Then $\widetilde \lambda'\widetilde \lambda$ is a (strong) height $n+m$ morphism.
\end{lemma}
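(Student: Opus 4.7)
The plan is to prove the stronger master identity
\begin{equation*}
\tau_k'' \;=\; \sum_{\substack{a+b=k \\ a,b\geq 0}}\tau_b'\circ\tau_a \qquad (k\geq 0),
\end{equation*}
where $\tau_i,\tau_i',\tau_i''$ denote the maps defined via \eqref{level-n-rel} for $\widetilde\lambda,\widetilde\lambda',\widetilde\lambda''=\widetilde\lambda'\widetilde\lambda$ respectively. Granting this, the lemma is immediate: for $k<n+m$, any decomposition $a+b=k$ forces $a<n$ or $b<m$, so either $\tau_a=0$ or $\tau_b'=0$ and every summand vanishes; for $k=n+m$ the only surviving term is $\tau_m'\circ\tau_n$, which is an isomorphism whenever both $\tau_n$ and $\tau_m'$ are. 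This also handles the strong refinement.

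I would prove the master identity by induction on $k$. First, matrix multiplication of the forms \eqref{eq:lambdatilde} gives the components of $\widetilde\lambda''$:
\begin{equation*}
\lambda''=\lambda'\lambda,\ \ \mu''=\mu'\lambda+\lambda'\mu+\Delta_2'\Delta_1,\ \ \Delta_1''=\Delta_1'\lambda+\rho'\Delta_1,\ \ \Delta_2''=\lambda'\Delta_2+\Delta_2'\rho,\ \ \rho''=\rho'\rho.
\end{equation*}
The base case $k=0$ is then the trivial $\tau_0''=\rho'\rho=\tau_0'\tau_0$. For the inductive step, substitute these components into the definition of $\tau_k''$ and repeatedly apply the morphism identities: push $\delta_1''$ through $\lambda'$ via $\delta_1''\lambda'=\Delta_1'd'+\rho'\delta_1'$ (from \eqref{eq:lambda2} for $\widetilde\lambda'$, with $r''=0$); push $v''$ through $\lambda'$ via $v''\lambda'=\lambda'v'+\mu'd'+d''\mu'+\Delta_2'\delta_1'-\delta_2''\Delta_1'$ (from \eqref{eq:lambda4} for $\widetilde\lambda'$); and dually push $d'$ through $\Delta_2$ via $d'\Delta_2=\delta_2'\rho-\lambda\delta_2$ and through $\lambda$ via $d'\lambda=\lambda d$ (from \eqref{eq:lambda3} and the chain-map property for $\widetilde\lambda$). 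The $r$-perfect hypothesis, giving $r=r'=r''=0$ and $\delta_1 v^i\delta_2=0$, collapses numerous intermediate terms.

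The mechanism is already visible in the case $k=1$: applying $\delta_1''\lambda'=\Delta_1'd'+\rho'\delta_1'$ and then $d'\Delta_2=\delta_2'\rho-\lambda\delta_2$ yields
\begin{align*}
\tau_1''&=\delta_1''(\lambda'\Delta_2+\Delta_2'\rho)+(\Delta_1'\lambda+\rho'\Delta_1)\delta_2\\
&=\Delta_1'(\delta_2'\rho-\lambda\delta_2)+\rho'\delta_1'\Delta_2+\delta_1''\Delta_2'\rho+\Delta_1'\lambda\delta_2+\rho'\Delta_1\delta_2\\
&=(\delta_1''\Delta_2'+\Delta_1'\delta_2')\rho+\rho'(\delta_1'\Delta_2+\Delta_1\delta_2)=\tau_1'\tau_0+\tau_0'\tau_1,
\end{align*}
after cancellation of the two copies of $\Delta_1'\lambda\delta_2$. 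For general $k$ the template is the same: the sum $\sum_{j=0}^{k-2}\delta_1''(v'')^j\mu''v^{k-2-j}\delta_2$ splits into three pieces according to the three summands of $\mu''$, and iterated application of the relations above, combined with the vanishing produced by $r$-perfection, reassembles everything into $\sum_{a+b=k}\tau_b'\tau_a$. The main obstacle is thus not conceptual but combinatorial: carefully tracking which telescopings and cancellations among the many terms generated by the iterated substitutions correspond to which summands $\tau_b'\tau_a$ on the right-hand side.
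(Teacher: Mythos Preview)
Your proposal is correct and takes essentially the same approach as the paper: both establish the master identity $\tau_k^\circ=\sum_{a+b=k}\tau_b'\tau_a$ by expanding the components of the composite morphism and repeatedly applying the relations \eqref{eq:lambda2}--\eqref{eq:lambda4} together with $r$-perfection. The paper carries out the general $k$ computation directly (rather than by induction), introducing the shorthand $A_i=\delta_1'v_{i-1}'\Delta_2$, $B_i=\Delta_1 v_{i-1}\delta_2$, $C_i=\delta_1'v_j'\mu v_{i-2-j}\delta_2$ to organize the bookkeeping you flag as ``combinatorial.''
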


\begin{proof}

    We first establish some notation. Let $v_i=v^i$ for $i\in \Z_{\geqslant 1}$, $v_0=1$, and $v_i=0$ for $i$ negative. We use the summation convention on repeated indices. For $i\geq 0$, write
    \[  
        A_{i+1}=\delta_{1}'v'_i\Delta_2 \qquad B_{i+1} = \Delta_1v_i\delta_2 \qquad C_{i+1} = \delta_{1}'v'_j\mu v_{i-1-j}\delta_2
    \]
    and $A_0=\rho$, $B_0=C_0=0$, so that $\tau_{i} = A_i + B_i + C_i$ for $i\geqslant 0$. Also let each of these terms be zero for $i<0$. We write $A_i'$, $B_i'$, $C_i'$ for the similarly defined terms associated to $\widetilde \lambda'$. Denote by $\tau_{i+1}^\circ$ the expressions \eqref{level-n-rel} associated to $\widetilde \lambda'\widetilde \lambda$. We expand $\tau_{i+1}^\circ$ as follows:
    \begin{align*}\label{eq:taudoubleprime}
        \tau_{i+1}^\circ &=  \underbracket[.01cm]{\delta''_1v''_i\lambda'\Delta_2}_{\text{(i)}} + \underbracket[.01cm]{\delta''_1v''_i\Delta'_2\rho}_{\text{(ii)}} + \underbracket[.01cm]{\Delta_1'\lambda v_i\delta_2}_{\text{(iii)}}+ \underbracket[.01cm]{\rho'\Delta_2  v_i\delta_2}_{\text{(iv)}}\\[1mm] 
        & \qquad \quad + \underbracket[.01cm]{\delta''_1 v_j''\mu'\lambda v_{i-j-1} \delta_2}_{\text{(v)}} 
         + \underbracket[.01cm]{\delta_1'' v''_j \lambda'\mu v_{i-j-1}\delta_2}_{\text{(vi)}} + \underbracket[.01cm]{\delta_1''v_j'' \Delta_2'\Delta_1 v_{i-j-1}\delta_2}_{\text{(vii)}}
    \end{align*}
    We consider each term. First, successively applying \eqref{eq:lambda4} to $v''\lambda''$, term (i) becomes
    \begin{align*}
        \delta''_1v''_i\lambda'\Delta_2 &= \delta_1''\lambda'v'_i\Delta_2 + \delta_1'' v''_j \mu' d' v_{i-j-1} \Delta_2 + \cancel{\delta_1''v''_{j}d''\mu' v'_{i-j-1}\Delta_2}  \\[2mm]
        & \qquad  + \delta_1''v''_{j}\Delta_2'\delta_1' v'_{i-j-1}\Delta_2  - \cancel{\delta_1''v''_{j}\delta_2'' \Delta_1' v'_{i-j-1}\Delta_2} \nonumber
    \end{align*}
Apply relation \eqref{eq:diff4} successively to compute
    \[
        \delta_1'' v''_j \mu' d' v_{i-j-1} \Delta_2 = \delta_1'' v''_j\mu' v'_{i-j-1}d'\Delta_2 + \delta_1'' v''_k\mu' v'_{j-k-1}\delta'_2\delta'_1v'_{i-j-1}\Delta_2
    \]
    Then, using $d'\Delta_2 = -\lambda\delta_2+\delta_2'\rho$, we obtain the following expression for term (i):
    \begin{align*}
        \text{(i)} &= \delta_1''\lambda'v'_i\Delta_2 - \delta_1'' v''_j\mu' v'_{i-j-1} \lambda\delta_2 + C'_{j+1} A_{i-j}  + A'_{j+1}A_{i-j} - A'_0 A_{i+1} - A'_{i+1}A_0
    \end{align*}
    Similar computations for the other terms yields:
    \begin{align*}
        \text{(ii)} &= A'_{i+1}A_0\\[1mm]
        \text{(iii)} &= \Delta_1'v'_{i}\lambda\delta_2  - \Delta_1'd' v'_{j}\mu v_{i-j-1} \delta_2 + B'_{j+1} C_{i-j} + B'_{j+1} B_{i-j}\\[1mm]
        \text{(iv)} &= A'_0 B_{i+1}\\[1mm]
        \text{(v)} &=  \delta_1''v''_j\mu' v'_{i-j-1}\lambda\delta_2 -\delta_1''v''_{j}\mu' d' v'_k \mu v_{i-j-k-2} \delta_2  + C'_{j+1} C_{i-j} + C'_{j+1} B_{i-j}\\[1mm]
        \text{(vi)}  &= \Delta_1'd'v'_j \mu v_{i-j-1} \delta_2+\delta_1''v''_j \mu' d' v'_{k}\mu v_{i-j-k-2} \delta_2 + A'_{j+1} C_{i-j} \\[1mm]
        \text{(vii)} & = A'_{j+1} B_{i-j} - A'_0 B_{i+1}
\end{align*}
Note that using $d'\Delta_2 = -\lambda\delta_2+\delta_2'\rho$ and $\Delta'_1d' = \delta_1''\lambda'-\rho'\delta_1'$, along with \eqref{eq:diff4}, we obtain
\[
    \delta_1''\lambda'v'_i\Delta_2  +  \Delta_1'v'_{i}\lambda\delta_2 = B'_{j+1}A_{i-j} + A_0'A_{i+1}
\] 
Now summing the terms (i)--(vii) we obtain:
\[
    \tau_{i+1}^\circ = \sum_{j} \tau_{j+1}'\tau_{i-j}
\]
The statement follows easily from this relation.
\end{proof}

\begin{cor}
    Let $\widetilde \lambda:\widetilde C\to \widetilde C'$ be an even degree morphism of $r$-perfect $\cS$-complexes which factors as follows, where $\widetilde \lambda'$ is a (strong) morphism:
    \begin{equation*}
        \begin{tikzcd}[column sep=5ex, row sep=10ex, fill=none, /tikz/baseline=-10pt]
    \widetilde C \arrow[r, "\widetilde \iota_n"]  & \Sigma^n \widetilde C \arrow[r, "\widetilde \lambda'"]  & \widetilde C'
    \end{tikzcd}
    \end{equation*}
    Then $\lambda'$ is a (strong) height $n$ morphism.
\end{cor}

\begin{proof}
    The morphism $\widetilde \iota_n$ has height $n$ and $\widetilde \lambda'$ has height $0$, so by Lemma \ref{lemma:leveladditivity} the composition is a height $n+0=n$ morphism.
\end{proof}

\subsection{Negative height morphisms}\label{subsec:neglevels}
We next generalize the notion of non-negative height morphisms by allowing negative heights. As in the previous subsection, we restrict our attention here to the case of $r$-perfect $\cS$-complexes and even degree morphisms.

\begin{definition}\label{level--n-def}
	Suppose $\widetilde C$ and $\widetilde C'$ are $r$-perfect $\cS$-complexes. For an integer $n$, a height $n$ morphism $\widetilde \lambda:\widetilde C\to \widetilde C'$ is given by homomorphisms 
	\begin{equation}\label{level-mor-com}
		\lambda:C\to C', \hspace{1cm}\mu :C\to C', \hspace{1cm}\Delta_1:C\to \sfR',\hspace{1cm}\Delta_2:\sfR\to C',
	\end{equation}	
	as well as a sequence of homomorphisms
	\begin{equation}\label{tau-i}
		\hspace{2cm}\tau_{i}:\sfR\to \sfR'\hspace{1.5cm} i\in \Z.
	\end{equation}	
	The maps $\lambda,\mu,\Delta_1,\Delta_2,\tau_i$ have respective degrees $k,k-1,k,k-1,k-2i$ for some even integer $k$, called the {\emph{degree}} of $\widetilde \lambda$. We require that	$\tau_i=0$ for $i<n$, and for $i>0$ the relation  
	\begin{equation}\label{tau-i-relation}
	  \tau_{i}=\delta_{1}'v'^{i-1}\Delta_2+\Delta_1v^{i-1}\delta_2+\sum_{j=0}^{i-2}\delta_{1}'v'^j\mu v^{i-2-j}\delta_2 \quad
	\end{equation}
	is satisfied.  Furthermore, the following relations are satisfied:
	\begin{align}
	d'\lambda - \lambda d-\sum_{i=1}^{\infty}\sum_{j=0}^{i-1}v'^j\delta_2'\tau_{-i} \delta_1v^{i-1-j} &= 0 \label{eq:lambda1-1}\\
	- \delta_1'\lambda+\Delta_1 d +\sum_{i=0}^{\infty}\tau_{-i}\delta_1v^i  &=0  \label{eq:lambda2-1}\\
	\lambda \delta_2+d'\Delta_2  -\sum_{i=0}^{\infty}v'^i\delta_2'\tau_{-i}  & =0  \label{eq:lambda3-1}\\[2mm]
	\mu d + d'\mu + \lambda v - v'\lambda + \Delta_2\delta_1 - \delta_2'\Delta_1  & = 0 \label{eq:lambda4-1}
	\end{align}
	Note that our assumption on $\tau_i$ implies that all of the above identities involve finitely many terms. The morphism $\widetilde \lambda$ is a {\emph{strong height $n$}} morphism if $\tau_{n}$ is an isomorphism. $\diamd$
\end{definition}

A few remarks about this definition are in order. First note that the above definition agrees with Definition \ref{defn:leveln} in the case that $n$ is non-negative: in this case, any height $n$ morphism gives a morphism of $\cS$-complexes by forgetting the maps $\tau_i$ with $i\geq 1$. However, this does not apply to the case that $n$ is negative. (Although see Proposition \ref{decomp-neg-level}.) Any height $n$-morphism is also height $m$ for any $m\leq n$. On the other hand, it is a strong height $n$ morphism for at most one value of $n$. 

Non-negative height morphisms in the context of singular instanton Floer homology already appeared in \cite{DS2}. In the present work, we shall see in Section \ref{sec:obstructed} how one obtains height $(-1)$-morphisms from certain link cobordisms. For convenience, we write out the definition in this case: a height $(-1)$-morphism is determined by homomorphisms $\lambda$, $\mu$, $\Delta_1$ and $\Delta_2$ as in \eqref{level-mor-com} and $\tau_0, \, \tau_{-1}: \sfR\to \sfR'$ which satisfy the relations
\begin{align}
	d'\lambda - \lambda d-\delta_2'\tau_{-1} \delta_1&= 0 \label{eq:lambda1-1-lev-1}\\[1mm]
	- \delta_1'\lambda+\Delta_1 d +\tau_{-1}\delta_1v+\tau_{0}\delta_1  &=0  \label{eq:lambda2-1-lev-1}\\[1mm]
	\lambda \delta_2+d'\Delta_2  -v'\delta_2'\tau_{-1}-\delta_2'\tau_{0}  & =0  \label{eq:lambda3-1-lev-1}\\[1mm]
	\mu d + d'\mu + \lambda v - v'\lambda + \Delta_2\delta_1 - \delta_2'\Delta_1  & = 0 \label{eq:lambda4-1-lev-1}
\end{align}

\begin{example}
	For any non-negative integer $n$ and any $r$-perfect $\cS$-complex $\widetilde C$, we have a strong height $-n$ morphism $\widetilde \kappa_n: \Sigma^n \widetilde C\to \widetilde C$ where the $\lambda$-component is equal to $[1,0,\dots,0]$, the $\tau_{-n}$ component is the identity, 
	and the remaining components are zero. $\diamd$
\end{example}

The following describes how to compose two morphisms of arbitrary heights.

\begin{definition}\label{comp-neg-lev}
Suppose $\widetilde \lambda:\widetilde C\to \widetilde C'$ with components $(\lambda, \mu, \Delta_1,\Delta_2, \{\tau_{i}\})$ is a height $n$ morphism, and $\widetilde \lambda':\widetilde C'\to \widetilde C''$ with components $(\lambda', \mu', \Delta_1',\Delta_2', \{\tau_{i}'\})$ is a height $m$ morphism. Define $\widetilde \lambda'\widetilde \lambda:\widetilde C\to \widetilde C''$ with components $(\lambda^\circ, \mu^\circ, \Delta_1^\circ,\Delta_2^\circ, \{\tau_{i}^\circ\})$ as follows: 
\begin{align}
	\lambda^\circ:=&\lambda'\lambda+\sum_{i=0}^{\infty}\sum_{j=0}^iv''^j\delta_2''\tau_{-(i+1)}'\Delta_1v^{i-j}+\sum_{i=0}^{\infty}\sum_{j=0}^iv''^j\Delta_2'\tau_{-(i+1)}\delta_1v^{i-j}\nonumber\\[1mm]
	&+\sum_{i=0}^{\infty}\sum_{j=0}^i\sum_{k=0}^{i-j}v''^j\delta_2''\tau_{-(i+2)}'\delta_1'v'^k\mu v^{i-j-k}+\sum_{i=0}^{\infty}\sum_{j=0}^i\sum_{k=0}^{i-j}v''^j\mu'v'^k\delta_2'\tau_{-(i+2)}\delta_1 v^{i-j-k}\nonumber\\[2mm]
	\mu^\circ:=&\lambda'\mu+\mu'\lambda +\Delta_2'\Delta_1\nonumber\\[1mm]
	\Delta_1^\circ:=&\Delta_1'\lambda+\sum_{i=0}^\infty \tau_{-i}'\Delta_1v^i+\sum_{i=0}^{\infty}\sum_{j=0}^{i} \tau_{-(i+1)}'\delta_1'v'^j\mu v^{i-j} \nonumber\\[1mm]
	\Delta_2^\circ:=&\lambda'\Delta_2+\sum_{i=0}^\infty v''^i\Delta_2' \tau_{-i}+\sum_{i=0}^{\infty}\sum_{j=0}^{i} v''^j\mu'v'^{i-j}\delta_2'\tau_{-(i+1)} \nonumber\\[1mm]
	\tau_{i}^\circ:=&\sum_{k\in \Z}\tau_{i-k}'\tau_{k}\nonumber. \qquad \diamd
\end{align}
\end{definition}
The following lemma is a generalization of Lemma \ref{lemma:leveladditivity}, and we omit its proof.
\begin{lemma}\label{lemma:leveladditivity-general}
    Let $\widetilde \lambda:\widetilde C\to\widetilde C'$ and $\widetilde \lambda':\widetilde C'\to \widetilde C''$ be (strong) height $n$ and $m$ morphisms, respectively. Then $\widetilde \lambda'\widetilde \lambda$ is a (strong) height $n+m$ morphism.
\end{lemma}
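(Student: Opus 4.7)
The plan is to verify each of the defining properties of a height $n+m$ morphism for $\widetilde\lambda'\widetilde\lambda$, proceeding from the easiest to the most involved. Throughout, the arguments will follow the template of Lemma \ref{lemma:leveladditivity}, but bookkeeping the additional sums involving $\tau_{-i}$ and $\tau'_{-i}$.

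First I would dispense with the vanishing and strength conditions on $\{\tau^\circ_i\}$. Since $\tau_i^\circ=\sum_{k\in\Z}\tau'_{i-k}\tau_k$ is a convolution, and since by hypothesis $\tau_k=0$ for $k<n$ and $\tau'_j=0$ for $j<m$, every nonzero term in $\tau^\circ_i$ requires both $k\geqslant n$ and $i-k\geqslant m$, forcing $i\geqslant n+m$. Moreover in the edge case $i=n+m$ only the term $k=n$ survives, giving $\tau^\circ_{n+m}=\tau'_m\tau_n$, which is an isomorphism when both factors are. This handles the height $n+m$ claim and the strong case.

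Next I would verify the relation \eqref{tau-i-relation} for $\tau^\circ_i$ with $i>0$. Substituting the formulas of Definition \ref{comp-neg-lev} for $\Delta^\circ_1$, $\Delta^\circ_2$, and $\mu^\circ$ into the right-hand side of \eqref{tau-i-relation} and expanding, one obtains a sum of terms indexed by ways of splitting $v^{i-1}$ (or $v^{i-2}$) across $\widetilde\lambda'$ and $\widetilde\lambda$. The terms grouped by a fixed ``crossing index'' $k$ assemble, via the relation \eqref{tau-i-relation} applied to $\widetilde\lambda$ and to $\widetilde\lambda'$ individually (for positive indices) and via the definitions of $\tau_{\leqslant 0}$ (for nonpositive indices), into $\tau'_{i-k}\tau_k$. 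Here the $r$-perfect hypothesis is essential: it guarantees $\delta'_1 v'^{\,j}\delta'_2=0$ and similarly for the unprimed complexes, which kills all of the cross-terms that would otherwise obstruct the rearrangement. Summing over $k$ yields $\tau^\circ_i$.

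Finally I would verify the four relations \eqref{eq:lambda1-1}--\eqref{eq:lambda4-1} for $(\lambda^\circ,\mu^\circ,\Delta^\circ_1,\Delta^\circ_2,\{\tau^\circ_i\})$. Relation \eqref{eq:lambda4-1} is essentially formal: expanding $d''\mu^\circ+\mu^\circ d+\lambda^\circ v-v''\lambda^\circ+\Delta^\circ_2\delta_1-\delta''_2\Delta^\circ_1$ and applying \eqref{eq:lambda4-1} to $\widetilde\lambda$ and $\widetilde\lambda'$ together with \eqref{eq:diff4}, the ``correction'' terms in $\lambda^\circ$ cancel against the contributions from $\Delta^\circ_2\delta_1$ and $\delta''_2\Delta^\circ_1$ via the convolution identity for $\tau^\circ_i$. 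Relations \eqref{eq:lambda2-1} and \eqref{eq:lambda3-1} are dual to one another and are handled by the same kind of telescoping: after substitution, each double sum telescopes using \eqref{eq:lambda2-1} or \eqref{eq:lambda3-1} for the individual morphisms and the $r$-perfect identity $\delta_1 v^i \delta_2 = 0$. The main obstacle, and the longest computation, will be relation \eqref{eq:lambda1-1} for $\lambda^\circ$: this is where all of the double and triple sums in the definition of $\lambda^\circ$ enter, and one must carefully track how derivatives of $\mu$ and $\lambda$ produce terms of the form $v''^{j}\delta''_2\tau'_{-(i+1)}\delta'_1 v'^{k}$ that, after reindexing, collapse into $\sum_{i\geqslant 1}\sum_{j=0}^{i-1}v''^j\delta''_2\tau^\circ_{-i}\delta_1 v^{i-1-j}$ via the convolution formula. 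Once this single relation is in hand, Lemma \ref{lemma:leveladditivity-general} follows.
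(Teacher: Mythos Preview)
Your proposal is correct and outlines the natural direct verification; the paper in fact omits the proof entirely, simply remarking that the lemma is a generalization of Lemma~\ref{lemma:leveladditivity} (whose proof was given in detail for the non-negative height case). Your plan is therefore more explicit than what the paper provides, and follows the same template as that earlier argument with the expected additional bookkeeping for the $\tau_{\leqslant 0}$ terms.
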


There is also a variation of Proposition \ref{prop:levelsuspension} which applies to negative height morphisms.  
\begin{prop}\label{decomp-neg-level}
	Suppose $\widetilde \lambda:\widetilde C\to \widetilde C'$ is a morphism of (strong) height $-n$. Then there is a (strong) height $0$ morphism $\widetilde \lambda':\widetilde C\to \Sigma^n \widetilde C'$ such that $\widetilde \lambda$ is equal to the composition
	$\widetilde \kappa_n \widetilde \lambda'$.
\end{prop}

\begin{proof}
	Let the components of $\widetilde \lambda':\widetilde C\to \Sigma^n \widetilde C'$ be
	\[
	\lambda'=  \left[
        \begin{array}{c}
            \lambda \\[1mm] 
            \sum_{i=1}^n\tau_{-i}\delta_1v^{i-1} \\[1mm] 
            \sum_{i=2}^n\tau_{-i}\delta_1v^{i-2} \\[1mm]
            \sum_{i=3}^n\tau_{-i}\delta_1v^{i-3}\\[1mm]
             \vdots \\[1mm]
              \tau_{-n}\delta_1 
        \end{array}\right],\hspace{.2cm}
        	\mu'=  \left[
        \begin{array}{c}
            \mu \\ \Delta_1 \\0 \\0\\ \vdots \\ 0
        \end{array}\right],\hspace{.2cm}
        	\Delta_1'=0,\hspace{.2cm}
        	\Delta_2'=  \left[
        \begin{array}{c}
            \Delta_2 \\ \tau_0 \\\tau_{-1} \\\tau_{-2}\\ \vdots \\ \tau_{-(n-1)}
        \end{array}\right],\hspace{.2cm}
        \tau'=\tau_{-n}.
	\]
	It is straightforward to check that $\widetilde \lambda'$ has the required properties.
\end{proof}
In the special case that $\widetilde \lambda:\widetilde C\to \widetilde C'$ is a morphism of height $-1$, the morphism constructed in Proposition \ref{decomp-neg-level} has the following form: 
    \begingroup
\renewcommand{\arraystretch}{1.25}
\begin{equation}\label{eq:morphismfromheight-1}
   \left[ \begin{array}{c|c|c} \lambda & 0 & 0 \\\tau_{-1}\delta_1 & 0 & 0 \\\hline \mu & \lambda & \Delta_2 \\ \Delta_1 & \tau_{-1}\delta_1 & \tau_0\\\hline0&0&\tau_{-1} \end{array} \right].
\end{equation}
\endgroup

\begin{remark}
	Our terminology dictates that a ``height $n$ morphism'' $\widetilde\lambda:\widetilde C\to \widetilde C'$ in the case $n<0$ is in general {\emph{not}} a morphism of $\cS$-complexes, in any natural way. However, Proposition \ref{decomp-neg-level} associates to $\widetilde\lambda$ an honest morphism of $\cS$-complexes $\widetilde C\to \Sigma^n \widetilde C'$. $\diamd$
\end{remark}

\subsection{Odd degree morphisms}\label{subsec:odddegmorphisms}

In Subsections \ref{subsec:poslevels} and \ref{subsec:neglevels}, we restricted our attention to {\emph{even}} degree morphisms. Indeed, in the context of Definition \ref{defn:leveln}, if the morphism $\widetilde\lambda$ is instead {\emph{odd}} degree, then all the $\tau_i$ are necessarily zero, and thus the notion of ``height'' is superflous. 

Nonetheless, the algebra for height $n$ morphisms developed above can be adapted to the case in which odd degree morphisms are allowed. For example, Definition \ref{comp-neg-lev} gives a perfectly sensible composition rule for morphisms in the case when $\widetilde \lambda$ is height $n\in \Z$ (of even degree) and $\widetilde \lambda'$ is odd degree (with $\tau_i'=0$ for all $i$). However, in general this is not the correct rule for composing with odd degree morphisms for our purposes; studying the relations of morphisms that arise from instanton moduli spaces makes this clear. Indeed, in the odd degree case, a morphism $\widetilde \lambda:\widetilde C\to \widetilde C'$ between $r$-perfect $\cS$-complexes should come equipped with the additional data of a sequence of $R$-module homomorphisms
\begin{equation}\label{eq:numaps}
	\nu_i:\sfR\to \sfR' \hspace{1.5cm} i\in \Z_{\geq 0}
\end{equation}
where $\nu_i$ has degree $k-1-2i$, and $k$ is the degree of $\widetilde \lambda$. See Subsection \ref{subsubsec:odddegcobs} for a description of $\nu_i$ in the context of singular instanton Floer theory.

For the remainder of this subsection, assume that all $\cS$-complexes are $r$-perfect. Let $\widetilde \lambda:\widetilde C\to \widetilde C'$ be an even degree morphism of height $n$ and $\widetilde\lambda':\widetilde C'\to \widetilde C''$ an odd degree morphism. If $n\geq 0$, then $\widetilde\lambda$ is a morphism of $\cS$-complexes just as is $\widetilde \lambda'$, and composition is unambiguously defined as the $\cS$-complex morphism $\widetilde \lambda'\widetilde\lambda:\widetilde C\to \widetilde C''$. Note that in a more thorough treatment of the algebra at hand, one should also prescribe the homomorphisms \eqref{eq:numaps} for the composition. However, this is not needed in the sequel. A similar remark applies to the composition described in the following paragraph.

In the case that $n$ is negative, on the other hand, the maps $\nu'_i$ associated to $\widetilde\lambda'$ are expected to play a role in composition. We will describe the case $n=-1$, in which only
\[
	\nu':=\nu'_0:\sfR'\to \sfR''
\]
appears in the composition rule. This case, and minor variations thereof, is the only one that plays an important role in the sequel. Write $(\lambda, \mu, \Delta_1,\Delta_2, \tau_0,\tau_{-1})$ for the components of the height $-1$ morphism $\widetilde \lambda$. The composition $\widetilde \lambda'\widetilde \lambda:\widetilde C\to \widetilde C''$ is then defined to be the $\cS$-complex morphism defined by the following components:
\begin{align}
	\lambda^\circ &:= \lambda' \lambda + \Delta_2' \tau_{-1}\delta_1 \label{eq:compheight-1-odddeg-1} \\[1mm]
	\mu^\circ &:= \lambda'\mu + \mu' \lambda +\Delta_2' \Delta_1  \label{eq:compheight-1-odddeg-2}\\[1mm]
	\Delta_1^\circ & := \Delta_1' \lambda + \nu' \tau_{-1} \delta_1 \label{eq:compheight-1-odddeg-3} \\[1mm]
	\Delta_2^\circ &:= \lambda' \Delta_2 + \Delta_2' \tau_0 + \delta_2'' \nu' \tau_{-1} + v'' \Delta_2' \tau_{-1} + \mu'\delta_2'\tau_{-1} \label{eq:compheight-1-odddeg-4}
\end{align}

We may now describe a partial analogue of Proposition \ref{prop:levelsuspension} for odd degree morphisms. Let $\widetilde \lambda':\widetilde C'\to \widetilde C''$ be an odd degree morphism as above, with associated linear map $\nu'=\nu'_0:\sfR'\to \sfR''$. The claim is that there is a natural $\cS$-complex morphism
\[
	\widetilde \lambda'' : \Sigma \widetilde C' \to \widetilde C''
\]
satisfying $\widetilde \lambda' = \widetilde \lambda''\widetilde \iota_1$, where $\widetilde \iota_1$ is as in \eqref{eq:iotamap}. The formula for $\widetilde \lambda''$ is as follows:
    \begingroup
\renewcommand{\arraystretch}{1.25}
\begin{equation}\label{eq:lambdaprimesuspend}
\widetilde\lambda'' =  \left[ \begin{array}{cc|cc|c} \lambda'  & \Delta'_2 & 0 & 0 & 0  \\ \hline \mu' & 0 &  \lambda' & \Delta'_2 & \mu'\delta'_2+v''\Delta'_2 + \delta_2''\nu' \\ \hline \Delta'_1 & \nu' & 0 & 0 & 0 \end{array} \right]
\end{equation}
\endgroup
To see that this is the correct formula, one need only check that the composition of the $\cS$-complex morphism $\widetilde\lambda''$ with the morphism $\widetilde C\to \Sigma \widetilde C'$, constructed from an even degree height $-1$ morphism $\widetilde \lambda$ via Proposition \ref{decomp-neg-level} as in \eqref{eq:morphismfromheight-1}, agrees with the morphism obtained from composing $(\lambda, \mu, \Delta_1,\Delta_2,\tau_0,\tau_{-1})$ and $(\widetilde \lambda',\nu')$ using the rules \eqref{eq:compheight-1-odddeg-1}--\eqref{eq:compheight-1-odddeg-4}.

\subsection{Exact triangles}\label{subsec:scomplextriangles}

For our purposes it is convenient to define an exact triangle of $\cS$-complexes following the ``triangle detection lemma'', see \cite[Lemma 3.7]{seidel}, which includes a more general discussion, and also \cite[Lemma 4.2]{os:branched}, \cite[Lemma 7.1]{KM:unknot}. Note that our sign conventions are different than those of these references.

\begin{definition} \label{def:exacttriangle}
    An {\emph{exact triangle}} of $\cS$-complexes consists of the following data, for each $i\in \Z/3$: an $\cS$-complex $\widetilde C_i$; a morphism $\widetilde \lambda_i:(\widetilde C_i,\widetilde d_i,\chi_i)\to (\widetilde C_{i-1},\widetilde d_{i-1},\chi_{i-1})$ which for $i=0,2$ is a morphism of degree $0$ and for $i=1$ is a degree $-1$ morphism; a chain homotopy $\widetilde K_i:\widetilde C_i\to \widetilde C_{i-2}$ from $0$ to $\widetilde \lambda_{i-1}\widetilde \lambda_i$. In particular, for each $i\in \Z/3$ we have the relations
    \begin{align}
        \widetilde d_i^2 &= 0 \label{eq:exacttriangle1} \\[1mm]
        \widetilde d_{i-1} \widetilde \lambda_i  - \widetilde \lambda_i \widetilde d_i&=0 \label{eq:exacttriangle2} \\[1mm]
        \widetilde d_{i-2}\widetilde K_i + \widetilde K_{i} \widetilde d_i +  \widetilde \lambda_{i-1}\widetilde \lambda_i & =0 \label{eq:exacttriangle3}
    \end{align}
    We further require that for each $i\in \Z/3$ the morphism given by
    \begin{equation}
       \widetilde \lambda_{i-2} \widetilde K_i -   \widetilde K_{i-1} \widetilde \lambda_i : (\widetilde C_{i}, \widetilde d_i ,\chi_i) \longrightarrow (\widetilde C_{i} , -\widetilde d_i,-\chi_i) \label{eq:exacttriangle4} 
    \end{equation}
    is $\cS$-chain homotopy equivalent to an isomorphism. \; $\diamd$ 
\end{definition}

It is convenient to depict all of the data in an exact triangle as follows:
\begin{equation}\label{eq:exacttrianglegen}
	\begin{tikzcd}[column sep=5ex, row sep=10ex, fill=none, /tikz/baseline=-10pt]
& \widetilde C_2  \arrow[dr, "\widetilde \lambda_2"] \arrow[dl, bend right=60, "\widetilde K_2"'] & \\
\widetilde C_0  \arrow[rr, bend right=60, "\widetilde K_0"']  \arrow[ur, "\widetilde \lambda_0"] & & \widetilde C_1 \arrow[ll, "{[-1]}"', "\widetilde \lambda_1"] \arrow[ul, bend right=60, "\widetilde K_1"']
\end{tikzcd}
\end{equation}
The symbol $[-1]$ in the diagram reminds us that the degree of $\widetilde \lambda_1$ is $-1$, while $\widetilde \lambda_0$ and $\widetilde \lambda_2$ are degree $0$ morphisms. The last condition in the definition of an exact triangle asserts for each $i\in \Z/3$ the existence of a map $\widetilde N_i:\widetilde C_i\to \widetilde C_i$ commuting with the $\chi$-map and such that
\begin{equation}\label{eq:antischainhomotopy}
    \widetilde d_i \widetilde N_i - \widetilde N_i \widetilde d_i   + \widetilde \lambda_{i-2} \widetilde K_i -   \widetilde K_{i-1} \widetilde \lambda_i 
\end{equation}
is an isomorphism. We could have well included the data of the $\widetilde N_i$ in the definition of exact triangle, but for our purposes this makes no difference. 
The proof of the following is the same as for the case of the usual triangle detection lemma.

\begin{prop}
    Suppose we have an exact triangle as in \eqref{eq:exacttrianglegen}. Then we have $\cS$-chain homotopy equivalences given as follows:
    \[
    	\widetilde C_0 \simeq {\rm{Cone}}(\widetilde{\lambda}_2)[1], \qquad \widetilde C_1 \simeq {\rm{Cone}}(\widetilde{\lambda}_0), \qquad \widetilde C_2 \simeq {\rm{Cone}}(\widetilde{\lambda}_1).
    \]
\end{prop}

\noindent In this statement, the notation $\widetilde C[1]$ is short hand for the $\cS$-complex whose underlying graded $R$-module is given by $\widetilde C[1]$, and which has the same differential and $\chi$-map as $\widetilde C$.

Given an exact triangle of $\cS$-complexes as above we obtain an exact triangle, i.e. an associated long exact sequence, at the level of total homology: 
\begin{equation*}\label{eq:exacttrianglehom}
	\begin{tikzcd}[column sep=1ex, row sep=8ex, fill=none, /tikz/baseline=-10pt]
& H_\ast(\widetilde C_2 , \widetilde d_2) \arrow[dr, "(\widetilde\lambda_2)_\ast"] & \\
H_\ast(\widetilde C_0, \widetilde d_0) \arrow[ur, "(\widetilde\lambda_0)_\ast"] & & H_\ast(\widetilde C_1, \widetilde d_1) \arrow[ll, "{[-1]}"', "(\widetilde\lambda_1)_\ast"] 
\end{tikzcd}
\end{equation*}
We also obtain exact triangles for irreducible homology and reducible homology: 
\begin{equation*}\label{eq:exacttriangleirrred}
	\begin{tikzcd}[column sep=1ex, row sep=8ex, fill=none, /tikz/baseline=-10pt]
& H_\ast( C_2, d_2)  \arrow[dr, "(\lambda_2)_\ast"] & \\
H_\ast( C_0, d_0 ) \arrow[ur, "(\lambda_0)_\ast"] & & H_\ast(C_1, d_1) \arrow[ll, "{[-1]}"', "(\lambda_1)_\ast"] 
\end{tikzcd}\qquad 
	\begin{tikzcd}[column sep=1ex, row sep=8ex, fill=none, /tikz/baseline=-10pt]
& H_\ast(\sfR_2,r_2)  \arrow[dr, "(\rho_2)_\ast"] & \\
H_\ast(\sfR_0,r_0)  \arrow[ur, "(\rho_0)_\ast"] & & H_\ast(\sfR_1,r_1)  \arrow[ll, "{[-1]}"', "(\rho_1)_\ast"] 
\end{tikzcd}
\end{equation*}

\noindent In the sequel we deal mostly with $r$-perfect $\cS$-complexes, in which case the last exact triangle is simply an exact triangle relating the graded modules $\sfR_0,\sfR_1,\sfR_2$. 

\newpage

%!TEX root = main.tex

\newcommand{\qo}{{\fo}}
\newcommand{\orient}{{o}}
\newcommand{\sbd}{c}

\section{Invariants for links with non-zero determinant}\label{sec:links}

In this section we explain how the constructions in \cite{DS1} adapt to the case of a link in an integer homology 3-sphere with non-zero determinant. After discussing reducibles and gradings, we define the $\cS$-complex associated to a based link with non-zero determinant. We then discuss unobstructed cobordism maps in this setting. Along the way, notation and conventions for moduli spaces are introduced, mostly in alignment with \cites{DS1,DS2}. This section emphasizes the case of trivial bundles, which is most relevant to Theorem \ref{thm:exacttriangles}. For related material where the bundles are non-trivial, see Section \ref{sec:nontrivbundles}. In the following we assume familiarity with Sections 2 and 3 of \cite{DS1}.

\subsection{Reducibles and gradings}\label{subsec:gradings}

Let $(Y,L)$ be a pair consisting of an oriented integer homology 3-sphere and a link $L\subset Y$. For this pair we consider the setup of singular $SU(2)$ gauge theory in which the model connection near each component of $L$ is just as in the case for a knot, studied in \cite{DS1}: in particular, the holonomy of a singular connection around a sequence of shrinking meridians around any component of $L$ converges to a traceless element of $SU(2)$. For more details on singular connections and related technical aspects, see \cite{km-embedded-i, KM:YAFT, KM:unknot, DS1}.

We assume that the underlying bundle supporting the singular connections may be topologically extended to a trivial $SU(2)$ bundle over $Y$. The unperturbed critical set $\fC$ of the singular Chern--Simons functional is the set of flat singular connections modulo gauge, and is in correspondence, via holonomy, with the traceless character variety
\[
	\fX(Y,L) := \{\rho:\pi_1(Y\setminus L)\to SU(2): \text{tr} \rho(\mu_k)= 0 \}/SU(2)
\]
where $\mu_k$ ranges over meridians of the components $L_1,\ldots, L_n$ of $L$. Let $\text{Or}(Y,L)$ be the set of orientations of $L$. There is an involution on $\text{Or}(Y,L)$ which sends $\orient\in \text{Or}(Y,L)$ to $-\orient$, obtained from $\orient$ by reversing the orientation of each component. The classes in 
\[
     \mathcal{Q}=\mathcal{Q}(Y,L):=\text{Or}(Y,L)/\pm
\]
are called {\emph{quasi-orientations}}. A quasi-orientation $\qo=\{\orient,-\orient\}$ determines a reducible $[\rho_\qo]\in \fX(Y,L)$ as follows. Let $\mu_1,\ldots,\mu_k$ be meridians of $L$ oriented, via $\orient$, using the right-hand rule. Then $\rho_\qo$ factors through $H_1(Y\setminus L;\Z)=\bigoplus_{k=1}^n \Z\cdot \mu_k$ and is determined by
\[
	\rho_\qo(\mu_k) = i \in SU(2).
\]
Using $-\orient$ instead of $\orient$ gives a representation related to $\rho_\qo$ via conjugation by $j$ and thus $[\rho_\qo]$ only depends on $\qo$. The correspondence sending $\qo\in \mathcal{Q}(Y,L)$ to the class $[\rho_\qo]\in \fX(Y,L)$ is easily seen to induce a bijection from $\mathcal{Q}$ to the set of reducible elements of $\fX(Y,L)$. We write $\theta_\qo$ for the reducible singular flat connection class in $\fC$ corresponding to the quasi-orientation $\qo$, so that we have the following decomposition:
\[
	\fC=  \fC^\text{irr} \cup \{ \theta_\qo\}_{\qo\in \mathcal{Q}(Y,L)}
\]
Note that the reducibles $\theta_\qo$ have equivalent adjoint $SO(3)$ connections.

We recall that the determinant of a link is given by $\det(Y,L)=|\Delta_{Y,L}(-1)|$ where $\Delta_{Y,L}$ is the single-variable Alexander polynomial. The key observation for links with non-zero determinant, for our purposes, is the following.

\begin{prop}\label{prop:detnonzeronondeg}
    Suppose the determinant of $L\subset Y$ is non-zero. Then the set of reducibles $ \{ \theta_\qo\}_{\qo\in \mathcal{Q}(Y,L)}$ is non-degenerate and forms a discrete and isolated subset of $\fC$.
\end{prop}

\begin{proof}
    Let $\widetilde Y$ be the double cover of $Y$ branched over the link $L$. The adjoint $SO(3)$ connection of each reducible pulls back to a smooth trivial connection on $\widetilde Y$. The Zariski tangent space of a reducible in $\fC$ can then be identified with the subspace of $H^1(\widetilde Y;\R)\otimes \R^3$ which is invariant under the covering involution tensor $\text{diag}(1,-1,-1)$; see the discussion in \cite[Section 2.7]{DS1}. The hypothesis $\det(Y,L) \neq 0$ is equivalent to $H^1(\widetilde Y;\R)=0$, and so these tangent spaces vanish.
\end{proof}

Following \cite[Section 2.5]{DS1}, we may always choose a small perturbation $\pi$ of the Chern--Simons functional so that the reducibles remain isolated and non-degenerate critical points, and so that the perturbed critical set $\fC_\pi = \fC^\text{irr}_\pi \cup \{ \theta_\qo\}_{\qo\in \mathcal{Q}(Y,L)}$ is non-degenerate.

Choose a quasi-orientation $\qo \in \mathcal{Q}(Y,L)$ with the associated reducible $\theta_\qo$. We may define an absolute $\Z/4$-grading on $\alpha\in \fC_\pi$ denoted $\text{gr}[\qo]$ by the following formula:
\begin{equation}\label{eq:mod4grdef}
	\text{gr}[\qo](\alpha) :\equiv \text{gr}_z(\alpha,\theta_\qo)+ \dim\text{Stab}(\alpha)  \pmod{4}
\end{equation}
where $z$ is any homotopy class of paths from $\alpha$ to the distinguished reducible $\theta_\qo$. As in \cite{DS1}, $\text{gr}_z(\alpha,\theta_\qo)$ is the index of the associated linearized ASD operator with exponential decay at the ends. For instance, if $z$ is the constant homotopy class based at $\theta_\fo$, then we have $\text{gr}_z(\theta_\fo,\theta_\qo)=-1$, which implies that $\text{gr}[\qo](\theta_\qo)\equiv 0$ (mod 4) because $\text{Stab}(\theta_\fo)\cong S^1$. 

We may compare the gradings defined with respect to different reducibles in the following way. First note that additivity of the indices of the ASD operators implies that 
\[
  \text{gr}[\qo'](\alpha)\equiv\text{gr}[\qo](\alpha)-\text{gr}[\qo](\theta_{\qo'}) \pmod{4}.
\]
Applying Corollary \ref{index-cyl-red}, to be proved in Subsection \ref{sec:unobscobmaps}, for a path $z$ from $\theta_{\fo'}$ to $\theta_{\fo}$, we have
\[
  \text{gr}_z(\theta_{\fo'},\theta_\qo)\equiv\sigma(Y,L,\fo) - \sigma(Y,L,\fo')-1 \pmod{4}.
\]
By combining the above formulas we obtain
\begin{equation}\label{red-change-grading}
  \text{gr}[\qo](\alpha)-\text{gr}[\qo'](\alpha)\equiv \sigma(Y,L,\fo) - \sigma(Y,L,\fo') \pmod{4}.
\end{equation}
This relation can be rewritten in terms of $\lambda(Y,L,\fo)$, which is defined as
\[
  \lambda(Y,L,\fo) := -\sum_{1\leqslant i < j \leqslant n}\text{{lk}}_\fo(L_i,L_j)  
\]
for a link $L\subset Y$ in a homology 3-sphere with a quasi-orientation $\qo$. (Note that the linking number $\text{lk}_o(L_i,L_j)$ defined for a link $L_i\cup L_j$ oriented by $o$ depends only on the quasi-orientation $\fo=\{o,-o\}$, so we write $\text{lk}_\fo(L_i,L_j)$.) The signature of a link satisfies the following relation under a change of quasi-orientation:
\begin{equation}
	\sigma(Y,L,\fo) - \sigma(Y,L,\fo') =  \lambda(Y,L,\fo) - \lambda(Y,L,\fo') \label{eq:signaturechangeqo}
\end{equation}
This is due to Murasugi \cite{murasugi} in the case that $Y$ is the $3$-sphere, and generalizes to integer homology 3-spheres by the argument in \cite{kauffman-taylor}, for example. Note that the left hand side of \eqref{eq:signaturechangeqo} is an even integer. The identities in \eqref{red-change-grading} and \eqref{eq:signaturechangeqo} lead to the following.
\begin{prop} \label{prop:grdiff}
	For any $(Y,L)$ with non-zero determinant and quasi-orientations $\fo$ and $\fo'$, 
	the difference between the mod $4$ gradings $\text{{\emph{gr}}}[\fo]$ and $\text{{\emph{gr}}}[\fo']$ 
	is given by an overall shift: 
	\[
	  \text{{\emph{gr}}}[\qo] - \text{{\emph{gr}}}[\qo'] \equiv   \lambda(Y,L,\fo) - \lambda(Y,L,\fo') \pmod{4}.
	\]
	In particular, the mod $2$ value of $\text{{\emph{gr}}}[\qo]$ does not depend on $\fo$, and it induces 
	an absolute $\Z/2$-grading on $\fC_\pi$.
\end{prop}
    
Identity \eqref{eq:signaturechangeqo} implies that 
    $
        \sigma(Y,L) \in \Z/2,
    $
    given by the parity of $\sigma(Y,L,\fo)$ for any $\fo$, is a well-defined quantity independent of quasi-orientation. In fact, this quantity is given as follows. Let $|L|=n$ be the number of components of $L$. Define the {\emph{nullity}} of the link to be
    \[
    	\eta(Y,L) := b_1(\widetilde Y)
    \] 
    where $\widetilde Y$ is the double cover of $Y$ branched over $L$. 

    \begin{prop} $\sigma(Y,L) \equiv |L| + 1 + \eta(Y,L) \pmod{2}$.\label{prop:sigmod2}
    \end{prop}

    \begin{proof}
     For links in the 3-sphere, this is proved in \cite[Theorem 1]{shinohara} and \cite[Corollary 3.5]{kauffman-taylor}. The proof in the latter reference easily adapts to our current setting, replacing the $3$-sphere with an arbitrary integer homology $3$-sphere throughout.
    \end{proof}

     We remark that under the assumption that $\det(Y,L) \neq 0$, the nullity of the link is $0$, and the above formula simplifies to $\sigma(Y,L)\equiv |L|+1$ (mod $2$).

\subsection{The \texorpdfstring{$\cS$}{S}-complex of a link}\label{subsec:linkinv}

We now explain how the construction in \cite[Section 3]{DS1} of associating an $\cS$-complex to a knot using singular instanton Floer theory adapts in a straightforward manner to the more general setting of links with non-zero determinant. 

Let $(Y,L)$ be an integer homology 3-sphere with link, and assume $\det(Y,L)\neq 0$. Choose an orbifold Riemannian metric on $(Y,L)$ with cone angle $\pi$ along $L$, and choose a generic small perturbation $\pi$ of the Chern--Simons functional so that the critical set
\[
\fC_\pi =  \fC_\pi^\text{irr} \cup \{ \theta_\qo\}_{\qo\in \mathcal{Q}(Y,L)}
\]
is non-degenerate. For $\alpha,\beta\in \fC_\pi$ write $M(\alpha,\beta)$ for the moduli space of singular $SU(2)$ instantons on $\R\times (Y,L)$ with finite energy, as defined in \cite[Section 2]{DS1}. We always assume small generic perturbations are chosen such that the irreducible strata of these moduli spaces are smooth and unobstructed.

We briefly review some conventions from \cite{DS1}. Write $\breve{M}(\alpha,\beta)$ for the quotient of $M(\alpha,\beta)$ by the translation action, excluding the constant trajectories. For a homotopy class of paths $z$ in the configuration space from $\alpha$ to $\beta$ we let $M_z(\alpha,\beta)$ be the moduli space of instantons in the class $z$. Note $\text{gr}_z(\alpha,\beta) = \dim M_z(\alpha,\beta)$ when the moduli space $M_z(\alpha,\beta)$ consists of irreducibles and is non-empty. Write $M(\alpha,\beta)_d$ for the union of moduli spaces $M_z(\alpha,\beta)$ with dimension $d$, and $\breve M (\alpha,\beta)_{d-1}$ for its quotient.

Fix a commutative ring $R$. Recall that an $\cS$-complex $\widetilde C = C\oplus C[-1] \oplus \sfR$ over $R$ contains the data of two chain complexes, the {\emph{irreducible}} complex $(C,d)$ and the {\emph{reducible}} complex $(\sfR, r)$. We begin by defining these complexes for $(Y,L)$. We set
\begin{equation}\label{eq:irrcomplexdefn}
    C = C(Y,L) = \bigoplus_{\alpha \in \fC_\pi^\text{irr}} R\cdot \alpha  
\end{equation}
The differential $d$ is then defined by counting isolated instantons from $\alpha$ to $\beta$, i.e. $\langle d(\alpha),\beta \rangle = \# \breve{M}(\alpha,\beta)_0$. The proof that $d^2=0$ is the usual one, and we denote by 
\[
  I(Y,L) = H_\ast( C(Y,L), d )  
\]
the {\emph{irreducible}} instanton homology of $(Y,L)$. Next, we define the reducible complex 
\begin{equation}\label{eq:redcomplexdefn}
    \sfR = \sfR(Y,L) = \bigoplus_{\qo \in \cQ(Y,L)} R\cdot \theta_\qo  
\end{equation}
with zero differential $r=0$. Note that $\sfR$ is a free $R$-module of rank $2^{|L|-1}$.

From our discussion in the previous subsection, upon fixing a quasi-orientation $\qo$ of $(Y,L)$, each of the complexes defined thus far has a $\Z/4$-grading given by $\text{gr}[\qo]$. Furthermore, the induced $\Z/2$-grading is independent of $\qo$, and the gradings of all reducibles are even.

We now turn to define the total $\cS$-complex. To do this we require the choice of a basepoint $p\in L$. As an $R$-module, the $\cS$-complex is given by
\[
  \widetilde C = \widetilde C(Y,L,p) = C\oplus C[-1] \oplus \sfR 
\]
The differential $\widetilde d$ has non-zero components $d, \delta_1,\delta_2,v$, and $d$ has already been defined. For each $\qo\in \cQ(Y,L)$, we define $\delta_1:C\to \sfR$ and $\delta_2:\sfR\to C$ by 
\[
    \langle  \delta_1(\alpha) , \theta_\qo \rangle  = \# \breve{M}(\alpha,\theta_\qo)_0, \qquad \delta_2(\theta_\qo) = \sum \#\breve{M}(\theta_\qo,\alpha)_0
\]
where the sum is over $\alpha\in \fC_\pi^\text{irr}$. Then we have
\begin{equation}\label{eq:deltarels}
    \delta_1d = 0, \qquad d\delta_2 = 0.
\end{equation}
Write $\delta_1^\qo:C\to R$ for $\langle \delta_1 , \theta_\qo \rangle$ and $\delta_2^\qo:R\to C$ for $\delta_2(\theta_\qo)$. Then \eqref{eq:deltarels} expand to the set of relations $\delta_1^\qo d=0 $ and $d\delta_2^\qo  =0$ as $\qo$ ranges over $\cQ(Y,L)$. These are proved just as in the case for knots by counting the ends of the moduli spaces $\breve{M}(\alpha,\theta_\qo)_1$ and $\breve{M}(\theta_\qo,\alpha)_1$.

Finally, we define $v:C\to C$. This is the only data in the $\cS$-complex that depends on the basepoint $p\in L$. For each $\alpha,\beta\in \fC_\pi^\text{irr}$ there is a holonomy map $H_{\alpha\beta}:\breve{M}(\alpha,\beta)\to S^1$. Roughly, $H_{\alpha\beta}([A])$ is defined by first taking the adjoint $SO(3)$-connection of the instanton $[A]$, and then computing the holonomy along the line $\R\times \{ p \}$. As $p\in L$, the connection has a preferred reduction over this line and the holonomy lies in $S^1=SO(2)$. Then 
\[
    \langle v(\alpha) , \beta \rangle = \text{deg}\left( H_{\alpha\beta}: \breve{M}(\alpha,\beta)_1 \to S^1 \right).  
\]
In practice, the holonomy map $H_{\alpha\beta}$ must be modified to satisfy certain technical conditions so that its degree makes sense. For more details, see \cite[Section 3]{DS1}.

\begin{prop}
    $d v-v d -\delta_2\delta_1= 0$.
\end{prop}

\begin{proof}
The proof is essentially the same as \cite[Proposition 3.16]{DS1}. The idea is to count the ends of the moduli space $H_{\alpha\beta}^{-1}(g)\subset \breve{M}(\alpha,\beta)_2$ where $g$ is some generic element of $S^1\setminus \{1\}$. The terms $dv$ and $-vd$ come from factorizations of trajectories in 
\[
	\breve{M}(\alpha,\alpha')_1\times \breve{M}(\alpha',\beta)_0 \quad \text{ and } \quad \breve{M}(\alpha,\alpha')_0\times \breve{M}(\alpha',\beta)_1.
\]
The other possible ends come from breakings along the reducibles. Accounting for all of these contributes the term $-\delta_2\delta_1 =-\sum_{\qo\in\mathcal{Q}}\delta^\qo_2  \delta^\qo_1$. The details are explained in the proof of the above citation, where some technical modifications to this outline are also given.
\end{proof}

\begin{remark}\label{rmk:orientcyl}
    We use orientation conventions of moduli spaces which naturally extend the conventions from \cite[Section 2.9]{DS1}. See Subsection \ref{orientation} for details. $\diamd$ 
\end{remark}

\begin{prop}\label{prop:scomplexinvariance}
Let $(Y,L,p)$ be an integer homology 3-sphere with a link whose determinant is non-zero and a basepoint $p\in L$. Then the $\cS$-chain homotopy equivalence class of the $\Z/2$-graded $\cS$-complex $\widetilde C(Y,L,p)$ is an invariant of $(Y,L,p)$.
\end{prop}

Adapting the terminology of Definition \ref{defn:rperfect}, observe that $\widetilde C(Y,L,p)$ is an $r$-perfect $\Z/2$-graded $\cS$-complex, as the gradings of all reducibles are even. From our discussion in the previous subsection, we can upgrade $\widetilde C(Y,L,p)$ to a $\Z/4$-graded $\cS$-complex upon choosing a quasi-orientation for $L$. In particular, we always have a well-defined relative $\Z/4$-grading. The proof of Proposition \ref{prop:scomplexinvariance} is the same as \cite[Theorem 3.33]{DS1}, using the cobordism maps defined in the next subsection. We write 
\[
  \widetilde I(Y,L,p) = H_\ast ( \widetilde C(Y,L,p) , \widetilde d)  
\]
\[
   I(Y,L) = H_\ast ( C(Y,L) ,  d)  
\]
\[
  \sfR(Y,L) \cong R^{2^{|L|-1}}
\]
for the total homology, irreducible homology, and reducible homology, respectively. Note that the latter two of these do not depend on the basepoint, as indicated in the notation. We refer to $\widetilde I(Y,L,p)$ as the {\emph{framed}} instanton homology of the based link $(Y,L,p)$, and $I(Y,L)$ as the {\emph{irreducible instanton homology}} of the link.

There is another type of map that arises in the above setting:

\begin{definition}\label{defn:smap}
	For a based link $(Y,L)$ with $\cS$-complex $\widetilde C(Y,L,p)$ as above define
\begin{gather}
	s:\sfR(Y,L)\longrightarrow \sfR(Y,L), \label{eq:smapdef} \\[1mm]
	 \phantom{\;\;\; \diamd} \langle s(\theta_\fo),\theta_{\fo'}\rangle := \# \breve{M}(\theta_\fo , \theta_{\fo'})_0. \;\;\; \diamd \nonumber
\end{gather}
\end{definition}
\noindent The moduli spaces appearing in the above definition consist of irreducible instantons with reducible limits. Further, $\breve{M}(\theta_\fo , \theta_{\fo'})_0$ can only be non-empty when the two reducibles have a relative $\Z/4$-grading difference of $2\pmod{4}$, for index reasons; see Proposition \ref{prop:redind}. In particular, in the case that there is one reducible $\theta_\fo$, i.e. when $L$ is a knot, the only relevant moduli space is empty and the map $s$ is zero.

\vspace{.4cm}

\noindent \textbf{Example.} Let $H=L_1\cup L_2\subset S^3$ be the Hopf link. The traceless character variety $\fX(S^3,H)$ consists of two reducibles, corresponding to the two quasi-orientations $\fo_+$ and $\fo_-$ of the Hopf link, and no irreducibles. The quasi-orientation $\fo_{\pm}$ is chosen so that it makes $H$ into a link with $\pm$-crossings. Then $\widetilde C=\sfR=R\cdot \theta_{\fo_+}\oplus R\cdot \theta_{\fo_-}$. The differential $\widetilde d$ is zero. Fix a $\Z/4$-grading using either of the quasi-orientations. Note that $\theta_{\fo_+}$ and $\theta_{\fo_-}$ differ in $\Z/4$-grading by $2\cdot  \text{lk}(L_1,L_2) \equiv 2 $ (mod $4$). Thus
\[
  \widetilde I(S^3,H,p) = \sfR(S^3,H) \cong R_{(0)}\oplus R_{(2)}, \qquad  I(S^3,H) =   0.
\]
Here the point $p\in H$ can of course be any basepoint on $H$. The map $s$ is zero. However, if local coefficients over $\Z[T^{\pm 1}]$ are used, then $s(\theta_{\fo_-})=(T^2-T^{-2})\theta_{\fo_+}$, up to a unit of the ring, and $s(\theta_{\fo_-})=0$. See Proposition \ref{prop:smaphopf}. $\diamd$
\vspace{.4cm}

Recall that in the formation of the $\cS$-complex $\widetilde C(Y,L,p)$, a choice of metric and perturbation data is required. The map $s:\sfR\to \sfR$ also depends on these choices, in general. A different choice of such data leads to another $\cS$-complex $\widetilde C(Y,L,p)'$ which is $\cS$-chain homotopy equivalent to $\widetilde C(Y,L,p)$ via a morphism $\widetilde\lambda:\widetilde C(Y,L,p)\to \widetilde C(Y,L,p)'$, with an associated map $s':\sfR\to\sfR$. Then the maps $s$ and $s'$ differ as follows:
\begin{equation}\label{eq:smapdependency}
	s-s' = \delta'_1 \Delta_2  +  \Delta_1\delta_2
\end{equation}
where $\Delta_1:C(Y,L)\to \sfR$ and $\Delta_2:\sfR\to C(Y,L)'$ are components of $\widetilde \lambda$. This relation is a particular case of one for more general cobordisms, see \eqref{rel-tau-i+1}.

\begin{remark}
The importance of the maps \eqref{eq:smapdef}, from our viewpoint, is their appearance in the proofs of the exact triangles of Theorems \ref{thm:exacttriangles} and \ref{thm:exacttriangles-withbundles}. They first appear when constructing cobordism maps which have obstructed reducibles, in Section \ref{sec:obstructed}. $\diamd$
\end{remark}

\begin{remark}
An alternative to our Definition \ref{defn:scomplex} of $\cS$-complex might have included the data of a map $s:\sfR\to \sfR$, and the algebra of Section \ref{sec:prelims} could have been developed with this extra structure in mind.  Similar remarks hold for the $\nu_i:\sfR\to \sfR'$ of Subsection \ref{unob-cob-map}. $\diamd$
\end{remark}

In the above construction of $\widetilde C(Y,L,p)$, the singular bundle data for the pair $(Y,L)$ is trivial in the sense that the singular connections are defined on a fixed $SU(2)$ bundle over $Y\setminus L$ which extends to a trivial $SU(2)$ bundle over $Y$. As discussed in \cite[Section 8.1]{DS1}, following the work of Kronheimer and Mrowka \cites{KM:YAFT, KM:unknot}, the situation is even simpler for certain non-trivial singular bundle data.

An {\emph{admissible link}} $(Y,L,\omega)$ is the data of an oriented closed $3$-manifold $Y$, a link $L\subset Y$, and an unoriented embedded 1-manifold $\omega\subset Y$ with $\partial \omega = \omega\cap L$; this data is subject to the condition that there exists a closed oriented surface $\Sigma\subset Y$ transverse to $L$ and $\omega$ such that either $\Sigma$ is disjoint from $L$ and has odd intersection with $\omega$, or  $\Sigma$ has odd intersection with $L$. The admissibility condition guarantees that there are no reducible critical points for the Chern--Simons functional in this setting. To an admissible link $(Y,L,\omega)$ with basepoint $p\in L$ we defined in \cite[Section 8.1]{DS1} an $\cS$-complex
\begin{equation}\label{eq:mappingconeadmissible}
    \widetilde C^{\omega}(Y,L,p) = C \oplus C[-1] 
\end{equation}
with $C=C^{\omega}(Y,L)$. Its differential $d$ is defined similarly to the previous case, using irreducible critical points; the reducible complex is $\sfR=0$; and the only other component of $\widetilde d$ is given by the $v$-map defined using the basepoint $p\in L$. We write $\widetilde I^\omega(Y,L,p)$ and $I^\omega(Y,L,p)$ for the total and irreducible homologies of this $\cS$-complex. The latter is the instanton homology of admissible links defined in \cite{KM:unknot}. In Subsection \ref{subsec:absz2gr} we will define absolute $\Z/2$-gradings on these homology groups.

In the sequel we will often omit the basepoint $p\in L$ from notation, and write 
\[
    \widetilde C(Y,L)  
\]
for the $\cS$-complex $\widetilde C(Y,L,p)$ defined above; we think of $(Y,L)$ as a {\emph{based link}} with basepoint suppressed. A similar remark holds for the $\cS$-complex $\widetilde C^\omega(Y,L)$ associated to a based admissible link.

\subsection{Index computations}\label{sec:unobscobmaps}

We now turn to some index computations, much of which are straightforward generalizations from \cites{DS1,DS2}. These formulas are used to compute the degrees of the cobordism maps which are defined in the next subsection. 

Let $(Y,L)$ and $(Y',L')$ be oriented homology $3$-spheres with embedded links. Suppose $(W,S):(Y,L)\to (Y',L')$ is a cobordism of pairs consisting of a compact oriented 4-manifold $W$ with $\partial W = -Y\sqcup Y'$ and a surface $S\subset W$ with $\partial S = -L \sqcup L'$. With conventions as in \cites{DS1,DS2}, recall that the {\emph{energy}} of a singular connection $A$ is given by 
\[
  \kappa(A ) = \frac{1}{8\pi^2} \int_{W^+\setminus S^+} \text{Tr}(F_{\text{ad}(A)}\wedge F_{\text{ad}(A)} ).
\]
Here $W^+$ (resp. $S^+$) is obtained from $W$ (resp. $S$) bt attaching cylindrical ends to the boundary, and $\text{ad}(A)$ is the adjoint $SO(3)$ singular connection. We will always assume that the 4-manifold $W$ is connected. In general, by a singular connection $A$ ``on $(W,S)$'' is really meant a connection on $(W^+,S^+)$, after having attached cylindrical ends.

Following \cite[Section 4.2]{KM:unknot}, singular $SO(3)$ bundle data may be constructed from an embedded unoriented surface with boundary $\sbd\subset W$ which intersects $S$ in a disjoint union of circles; along the circles, $\sbd$ is normal to $S$. In the above citation, the interior of $c$ may also instersect $S$ transversely in some points, but we omit this possibility for simplicity. The singular bundle restricts to a bundle on $W\setminus S$ with second Stiefel--Whitney class $w_2$ given by the Poincar\'{e} dual of the relative homology class of $\sbd$. A choice of singular $U(2)$ bundle data lifting the singular $SO(3)$ bundle data amounts to orienting the surface $\sbd$. 

Henceforth $\sbd$ will denote some choice of surface representing singular $SO(3)$ bundle data. Most relevant to Theorem \ref{thm:exacttriangles} is the case in which $\sbd$ is empty, i.e. the trivial singular $SU(2)$ bundle data. In this case $\sbd$ is often omitted from notation. There will be occasions in which we consider other choices for $\sbd$, however (see Section \ref{sec:nontrivbundles}). In the sequel, ``singular bundle data'' will typically refer to a choice of surface $c$ as above.

We now compute the index of a singular connection with reducible limits on a cobordism of pairs $(W,S)$ with trivial singular bundle data. Note that in the statement to follow, there is no constraint on the determinants of the links (although $Y,Y'$ are still integer homology 3-spheres), and also the surface $S$ is not necessarily orientable. 

\begin{prop}\label{prop:redind}
    Let $A$ be a singular connection on $(W,S):(Y,L)\to (Y',L')$ and suppose that $A$ has reducible flat limits $\theta_\fo$ and $\theta_{\fo'}$ along $(Y,L)$ and $(Y',L')$, respectively. Then the index of the linearized ASD operator associated to $A$ is given by:
    \begin{align*}
      {\text{\emph{ind}}}(A) &=   8\kappa(A) - \frac{3}{2}\left( \chi(W) + \sigma(W) \right) + \chi(S) + \frac{1}{2} S\cdot S \\[1mm] & \qquad + \sigma(Y,L,\fo) - \sigma(Y',L',\fo')  - \eta(Y,L) - \eta(Y',L') - 1.
    \end{align*}
    Here, $S\cdot S$ is computed relative to Seifert framings induced at the ends by $\fo$ and $\fo'$.
\end{prop}

\noindent The index here requires some clarification. Let $\sD_A = -d_A^\ast + d_A^+$ be the ASD operator associated to $A$, where weighted Sobolev spaces are used on the (co)domain so as to force exponential decay along both ends, just as in \cite[\S 2.5]{DS1}. Then $\text{ind}(A)=\text{ind}(\sD_A)$.

\begin{proof}
The outline of the proof is similar to that of \cite[Lemma 2.26]{DS1}. First, consider the case of the usual ASD operator $d^\ast + d^+:\Omega^1\to \Omega^0\oplus \Omega^+$ on a smooth compact oriented 4-manifold $X$ with boundary $Y$. Attach a cylindrical end $\R_{\geq 0}\times Y$ to the boundary, and use weighted Sobolev spaces so that there is exponential decay along the cylindrical end. The index of this operator, written $\text{ind}(X)$, is given by
\[
	\text{ind}(X) = -\frac{1}{2}(\sigma(X) + \chi(X)) - \frac{1}{2}(b_1(Y) + b_0(Y))
\]
This follows from the discussion in \cite[\S 3.3.1]{donaldson-book}.

Now suppose $\Sigma\subset X$ is an orientable surface which is null-homologous, and $(X,\Sigma)$ has boundary $(Y,L)$. Here, unlike in the previous paragraph, we assume $Y$ is an integer homology 3-sphere. Write $\fo$ for the quasi-orientation of $L$ inherited as the boundary of $\Sigma$. We may also assume that $H_1(X;\Z/2)=0$, which allows us to form $\widetilde X$, be the double cover of $X$ branched over $\Sigma$. The covering involution $\tau:\widetilde X\to \widetilde X$ lifts to an involution of $\R^3\times \widetilde X$ defined as follows, for $(v_1,v_2,v_3)\in \R^3$ and $x\in \widetilde X$:
\[
	\widetilde \tau( (v_1,v_2,v_3), x) = ((v_1,-v_2,-v_3),\tau(x))
\] 
Let $\Theta$ be the trivial connection on $ \widetilde X \times \R^3$. Clearly $\text{ind}(\Theta)=3\,\text{ind}(\widetilde X)$. The quotient $X/\tau$ is an orbifold with singular locus $\Sigma$, and the quotient of $\Theta$ by $\widetilde \tau$ is the adjoint of an $SU(2)$ singular connection $\Theta_X$ on $(X,\Sigma)$ with limit $\theta_\fo$. As described in \cite[Lemma 2.26]{DS1}, 
\[
	\text{ind}(\Theta_X) = \dim\left( \text{ker}(\sD_\Theta)^{\widetilde \tau^\ast} \right)-\dim \left(\text{coker}(\sD_\Theta)^{\widetilde \tau^\ast} \right)
	\]
	where the invariant subspaces are taken with respect to $\widetilde \tau^\ast = \tau^\ast\otimes \text{diag}(1,-1,-1)$. Here $\tau^\ast$ is the induced map of $\tau$ on differential forms. We thus have
	\[
		\text{ind}(\Theta_X) = \text{ind}(\widetilde X)^\tau_+ + 2 \,\text{ind}(\widetilde X)^\tau_-
	\]
	 where $\text{ind}(\widetilde X)^\tau_+$ is the index of the ordinary ASD operator but restricted to the $\pm 1$-eigenspace of $\tau^\ast$. Note $\text{ind}(\widetilde X)^\tau_+=\text{ind}(X)$ and $\text{ind}(\widetilde X)^\tau_+ + \text{ind}(\widetilde X)^\tau_-=\text{ind}(\widetilde X)$. This gives
	 \begin{align}
	 \text{ind}(\Theta_X) &= 2 \,\text{ind}(\widetilde  X) - \text{ind}(X) \label{eq:indexplusminus} \\[1mm]
	 									&= -\frac{3}{2}(\sigma(X)  + \chi(\Sigma)) - \sigma(Y,L,\fo) + \chi(\Sigma)-\eta(Y,L) -\frac{1}{2} \nonumber
	 \end{align}
	 In this computation we have used the identities
	 \begin{align*}
	 		\sigma(\widetilde X) &= 2\sigma(X) + \sigma(Y,L,\fo),\\[1mm]
	 		\chi(\widetilde X)&=2\chi(X) - \chi(\Sigma).
	 \end{align*}
	 See \cite[Corollary 3.5]{kauffman-taylor}, for example. We have also used that $b_1(Y)=0$, and that $b_1$ of the branched cover of $(Y,L)$ is by definition the nullity $\eta(Y,L)$.

Applying a similar construction to the case where $(X',\Sigma')$ has boundary $(-Y',L')$, and noting the orientation reversal, we obtain the formula
\[
	\text{ind}(\Theta_{X'}) =  -\frac{3}{2}(\sigma(X')  + \chi(\Sigma')) + \sigma(Y',L',\fo') + \chi(\Sigma')-\eta(Y',L') -\frac{1}{2}
\] 
Write $\theta_{\fo'}$ for the corresponding reducible limit of $\Theta_{X'}$.

Next, let $(W,S):(Y,L)\to (Y',L')$ and the connection $A$ be as in the statement of the proposition. By index addivity, we have
\begin{align}
  \text{ind}(\overline{A}) & =	\text{ind}(\Theta_{X}) + h^0(\theta_\fo) + h^1(\theta_\fo) + \text{ind}(A) \label{eq:indexaddedred} \\[1mm]
  &\qquad \qquad  + h^0(\theta_{\fo'}) + h^1(\theta_{\fo'}) + \text{ind}(\Theta_{X'}) \nonumber
\end{align}
where $\overline{A}$ is the singular connection on $\overline{W}=X\cup W\cup W'$ obtained by gluing $\Theta_X,A,\Theta_{X'}$. See for example \cite[\S 3.4]{bd}. Here $h^i(\theta_{\fo})$ is the dimension of the orbifold cohomology $H^i(Y;\text{ad}\theta_{\fo})$. This vector space is identified with the $\widetilde \tau^\ast$-invariant part of $H^i(\widetilde Y;\underline{\R^3})$ where $\widetilde Y$ is the branched cover of $(Y,L)$. Thus, similar to \eqref{eq:indexplusminus}, we have
\[
	h^1(\theta_\fo ) = 2b_1(\widetilde Y)-b_1(Y) = 2\eta(Y,L)
\]
and $h^0(\theta_{\fo})=2b_0(\widetilde Y)-b_0(Y)=1$. This latter quantity is also simply the dimension of the stabilizer of $\theta_\fo$, which is isomorphic to $S^1$. Similar remarks hold for $h^i(\theta_{\fo'})$.

The index of $\overline{A}$, appearing on the left side of \ref{eq:indexaddedred}, is computed from the formula of Kronheimer and Mrowka \cite[Lemma 2.11]{KM:unknot} for closed pairs:
\begin{equation}\label{eq:indclosedcase}
	\text{ind}(\overline{A}) = 8\kappa(\overline{A}) - \frac{3}{2}\left( \chi(\overline{W}) + \sigma(\overline{W}) \right) + \chi(\overline{S}) + \frac{1}{2} \overline{S}\cdot \overline{S} 
\end{equation}
Here the surface $\overline{S}\subset \overline{W}$ is the gluing of $\Sigma,S,\Sigma'$.

Finally, the formula in the proposition for $\text{ind}(A)$ follows from \eqref{eq:indexaddedred} and the computations given above, making use of the additivity of the terms appearing in \eqref{eq:indclosedcase}. 
\end{proof}

We remark that term $S\cdot S$ is the normal Euler number of $S\subset W$ and is a well-defined integer even in the case that $S$ is non-orientable. To define it, let $S'\subset W$ be a push-off of $S$ which is transverse to $S$ and at the boundary component $\partial S'$ is the Seifert push-offs of the links $L\subset Y$ and $L'\subset Y'$ with respect to the quasi-orientations $\fo$ and $\fo'$, respectively. For $p\in S\cap S'$ any orientation of $T_p S$ determines one of $T_p S'$. Define a local intersection number $\pm 1$ by comparing the orientation of $T_p S\oplus T_p S'$ with that of $T_p W$. The sum of these over all points $p\in S\cap S'$ is then $S\cdot S'$.

\begin{lemma}\label{lemma:4kappamodz}
	Let $A$ be a singular connection on $(W,S,c)$, where $(W,S)$ is a closed pair, and the singular bundle data is represented by a surface $c$ whose boundary lies on $S$.  Suppose $H_1(W;\Z/2)=0$ and $[S]= 0\in H_2(W;\Z/2)$. Then
		\begin{equation}\label{eq:4kappamodz}
		 4 \kappa(A) \equiv -\frac{1}{4}S\cdot S - \frac{1}{2} (\partial c|_S)\cdot (\partial c|_S) \pmod{\Z}
	\end{equation}
	where the intersection number of $\partial c|_S$ is computed, mod $2$, on $S$. Furthermore, if $c$ is empty, then \eqref{eq:4kappamodz} holds mod $2\Z$.
\end{lemma}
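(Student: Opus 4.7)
The plan is to pass to the double branched cover $\pi:\widetilde W\to W$ of $W$ along $S$, which exists by the hypotheses $H_1(W;\Z/2)=0$ and $[S]=0\in H_2(W;\Z/2)$. The latter hypothesis also ensures that $S\cdot S$ is even, so that $\tfrac{1}{2}S\cdot S\in \Z$. First I would observe that the adjoint $SO(3)$-connection $\mathrm{ad}(A)$ has meridional holonomy a rotation by $\pi$, of order two in $SO(3)$. Consequently, the pullback $\widetilde B := \pi^*\mathrm{ad}(A)$ extends to a smooth $SO(3)$-connection on $\widetilde W$; write $\widetilde E$ for the resulting smooth $SO(3)$-bundle on $\widetilde W$. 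A Chern--Weil computation, using that the energy doubles under the cover and the identity $p_1(\mathrm{ad}\, F)=-4c_2(F)$ for $SU(2)$-bundles $F$, then yields the identity $p_1(\widetilde E)=-8\kappa(A)\in\Z$.

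The goal is to compute $p_1(\widetilde E)\pmod 4$ via the Pontryagin-square congruence $p_1(\widetilde E)\equiv\mathfrak{P}(w_2(\widetilde E))\pmod 4$, and then to identify $w_2(\widetilde E)$ geometrically. The subtle point is that, although the pullback adjoint holonomy around meridians of the branch locus $\widetilde S=\pi^{-1}(S)$ is trivial in $SO(3)$ (since $(i\sigma_3)^2=-I$ projects to the identity in $SO(3)$), there is no global $SU(2)$-lift of $\widetilde B$ on $\widetilde W$: on $\widetilde W\setminus\widetilde S$ the natural $SU(2)$-lift is $\pi^*A$ itself, which has holonomy $-I\in SU(2)$ around meridians of $\widetilde S$ and hence fails to extend smoothly, and no central gauge transformation of $\pi^*A$ can correct this. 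Thus $\widetilde E$ does not lift to an $SU(2)$-bundle, and one finds
\[
w_2(\widetilde E)=\PD[\widetilde S+\pi^{-1}(c)]\in H^2(\widetilde W;\Z/2),
\]
where $\pi^{-1}(c)$ is a closed $2$-cycle in $\widetilde W$ precisely because $\partial c\subset S$ lies on the branch locus.

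The heart of the proof is then the evaluation of $\mathfrak{P}(w_2(\widetilde E))\pmod 4$, which I would carry out by summing three contributions. First, $\widetilde S\cdot\widetilde S=\tfrac12 S\cdot S$, the standard halving identity for the branch locus in a double branched cover. Second, a pushoff computation gives $\pi^{-1}(c)\cdot\pi^{-1}(c)\equiv 2e+(\partial c|_S)\cdot(\partial c|_S)\pmod 4$ for some integer $e$, because transverse interior intersection points of $c$ with a generic pushoff lift to pairs of points in $\widetilde W$, while boundary intersection points on $S$ lift one-to-one via $\pi|_S$. Third, the cross-intersection $\widetilde S\cdot\pi^{-1}(c)$ vanishes after a suitable pushoff: locally near $\partial c\subset S$, the surface $\pi^{-1}(c)$ meets $\widetilde S$ tangentially along $\partial c$, and a generic normal perturbation separates them completely. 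Assembling these contributions yields
\[
-8\kappa(A)=p_1(\widetilde E)\equiv\tfrac12 S\cdot S + (\partial c|_S)\cdot(\partial c|_S)\pmod 2,
\]
and dividing by $-2$ gives $4\kappa(A)\equiv-\tfrac14 S\cdot S-\tfrac12(\partial c|_S)\cdot(\partial c|_S)\pmod\Z$, as required.

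For the refinement in the case $c=\emptyset$, the cross-term and the $\pi^{-1}(c)$-term disappear, so $\mathfrak{P}(w_2(\widetilde E))\equiv\widetilde S\cdot\widetilde S\pmod 4$ and the resulting congruence is valid modulo $4$ rather than modulo $2$, which after dividing yields $4\kappa(A)\equiv-\tfrac14 S\cdot S\pmod{2\Z}$. The hard part will be pinning down the precise identification $w_2(\widetilde E)=\PD[\widetilde S+\pi^{-1}(c)]$ on $\widetilde W$ and carrying out the local transversality analysis near $\widetilde S$; in particular, verifying that the cross-intersection vanishes and that the integer contribution $2e$ is irrelevant modulo $\Z$. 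An alternative route would be to invoke the Kronheimer--Mrowka index formula \cite{KM:unknot} for closed singular pairs together with the integrality of the index and the parity $\chi(W)+\sigma(W)\equiv 0\pmod 2$ (which follows from $H_1(W;\Z/2)=0$), but the branched-cover strategy has the advantage of making the geometric meaning of each term on the right-hand side transparent.
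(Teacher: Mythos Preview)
Your proposal is correct and follows essentially the same approach as the paper: pass to the double branched cover, identify $w_2$ of the pulled-back adjoint bundle as Poincar\'e dual to $[\widetilde S]+[\pi^{-1}(c)]$, and compute self-intersections. The paper's treatment is slightly more economical: for the general case it works directly with $p_1\equiv w_2^2\pmod 2$ rather than via the Pontryagin square, so the cross-term $2\,\widetilde S\cdot\widetilde c$ vanishes mod $2$ automatically and there is no need to argue it vanishes geometrically (your claim that a generic perturbation separates $\widetilde S$ and $\pi^{-1}(c)$ completely is not obviously true and is not needed). The Pontryagin square is invoked only for the refinement when $c=\emptyset$, just as you do.
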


\begin{proof}
The topological hypotheses imply that there is a unique double cover of $W$ branched over $S$, which we denote by $\pi:\widetilde W\to W$. Let $\widetilde S=\pi^{-1}(S)$ and $\widetilde c=\pi^{-1}(c)$. Note both are closed surfaces. The singular connection $A$ pulls back via $\pi$ to a singular connection $\widetilde A$ on $\widetilde W$. Furthermore, the adjoint of $\widetilde A$ is a smooth $SO(3)$ connection on a bundle $P$. As the singular bundle data of $A$ is represented by $c$, and the lift $\widetilde A$ has holonomy $-1$ around meridians of $\widetilde S$, we obtain that $w_2(P)$ is Poincar\'{e} dual to $[\widetilde c]+[\widetilde S]$. Thus we have
\[
	4\kappa(A) = 2\kappa(\widetilde A) = -\frac{1}{2}p_1(P) \equiv -\frac{1}{2} w_2(P)^2\equiv -\frac{1}{2} ([\widetilde c]+[\widetilde S])^2 \pmod{\Z}
\]
From the identity $\widetilde S\cdot \widetilde S = \frac{1}{2}S\cdot S$, it remains to prove $\widetilde c\cdot \widetilde c\equiv (\partial c|_S)\cdot (\partial c|_S) \pmod{2}$. Let $c'\subset W$ be a pushoff of $c$ which is transverse to $c$ on its interior and such that $\partial c'\subset S$ is transverse to $\partial c$. Then $\widetilde c'=\pi^{-1}(c')$ is a closed surface transverse to $\widetilde c$. The covering involution of $\widetilde W$ acts freely on points of $\widetilde c'\cap \widetilde c$ that do not lie on $S$, and on $S$ we have
\[
	\# (\widetilde c\cap \widetilde c' \cap S) \equiv \# \partial c\cap \partial c' = (\partial c|_S)\cdot (\partial c|_S) \pmod{2}
\]
Thus $\widetilde c\cdot \widetilde c\equiv (\partial c|_S)\cdot (\partial c|_S) \pmod{2}$, as claimed. 

As to the last statement, from the relation $p_1(P)\equiv w_2(P)^2\pmod{4}$, where the latter quantity is interpreted in the general case using the Prontryagin square, we can write
\begin{equation}
	4\kappa(A) =  -\frac{1}{2} ([\widetilde c]+[\widetilde S])^2 \pmod{2} \label{eq:4kappaupgrade}
\end{equation}
If $c$ is empty the right side is simply $-\frac{1}{4}S\cdot S \pmod{2}$.
\end{proof}

The topological hypotheses of Lemma \ref{lemma:4kappamodz} are an artifact of the proof method, which uses branched covers. Indeed, in \cite[\S 2.4]{KM:unknot}, Kronheimer and Mrowka express $\kappa(A)$ in terms of certain characteristic numbers associated to the singular bundle data. In particular, it follows immediately from their description that \eqref{eq:4kappamodz} is true without the topological assumptions on $(W,S)$ in all cases in which $c$ is empty. On the other hand, the assumptions in Lemma \ref{lemma:4kappamodz} are satisfied for all examples we have in mind, and allow us to avoid discussing further technicalities of singular bundle data.

With these comments in mind, for the remainder of this section we assume for simplicity that $H_1(W;\Z/2)=0$ and $[S]=0\in H_2(W;\Z/2)$.\\

\noindent \textbf{Example.} The following examples are taken from \cite[\S 2.7]{KM:unknot}. There are singular instantons $A_{\pm}^i$ on $(S^4,\mathbb{R}\mathbb{P}^2_\pm)$, where $i\in \{0,1\}$. Here $S=\mathbb{R}\mathbb{P}^2_\pm \subset S^4$ satisfies $S\cdot S=\pm 2$, and $A_\pm^i$ has trivial singular bundle data ($c$ empty) for $i=0$, and if $i=1$ then $\partial c$ is an essential loop on $S$. The invariants for these connections are given in Table \ref{table:rp2examples}. This table verifies the relation \eqref{eq:4kappamodz} for these examples. $\diamd$\\

\begin{figure}[t]
\centering{
\renewcommand{\arraystretch}{1.5}
\begin{tabular}{r | rrrr}
  & $A_+^0$ & $A_+^1$  & $A_-^0$ & $A_-^1$ \\
\hline
$\kappa$ & $\tfrac{1}{8}$ & $0$ & $\tfrac{3}{8}$ & $0$ \\
$S\cdot S$ & $2$  & $2$ & $-2$ & $-2$ \\
$(\partial c|_S)^2$ & $0$  & $1$ & $0$ & $1$ \\
\end{tabular}
}
\captionof{table}[]{}\label{table:rp2examples}
\end{figure}

From Lemma \ref{lemma:4kappamodz} and the proof of Proposition \ref{prop:redind} we obtain the following.

\begin{cor}\label{cor:indexred}
Let $A$ be a singular connection on $(W,S,c):(Y,L)\to (Y',L')$, where $c$ has empty intersection with $Y$ and $Y'$. Suppose that $A$ has reducible flat limits $\theta_\fo$ and $\theta_{\fo'}$ along $(Y,L)$ and $(Y',L')$, respectively. Then the index of $A$ is congruent modulo 2 to
    \begin{align*}
      & - \frac{3}{2}\left( \chi(W) + \sigma(W) \right) + \chi(S)  + \sigma(Y,L,\fo) - \sigma(Y',L',\fo')\\[1mm] & \qquad \qquad  - \eta(Y,L) - \eta(Y',L') - 1 + (\partial c|_S)\cdot (\partial c|_S)
    \end{align*}
    and in particular only depends on $(W,S)$ and $\partial c\subset S$. If $c$ is empty, then this relation for the index, with $(\partial c|_S)\cdot (\partial c|_S)=0$, holds modulo $4$.
\end{cor}

Applying this to a cylinder gives the following, used in the proof of Proposition \ref{prop:grdiff}.

\begin{cor}\label{index-cyl-red}
	Let $(Y,L)$ have non-zero determinant, and let $A$ be a singular connection on the cylinder 
	$(W,S)=I\times (Y,L)$ with empty $c$, connecting the reducibles $\theta_\fo$ and $\theta_{\fo'}$. Then $\ind(A)\equiv \sigma(Y,L,\fo) - \sigma(Y',L',\fo')-1 \pmod{4}$.
\end{cor}

Motivated by Corollary \eqref{cor:indexred}, we introduce the following terminology.

\begin{definition}\label{defn:cobordismparity}
Let $(W,S,c):(Y,L)\to (Y',L')$ be a cobordism with bundle data $c$ which has empty intersection with $Y$ and $Y'$. Suppose also that the two links in homology spheres have non-zero determinant. The {\emph{parity}} of $(W,S,c)$ is determined by the quantity: 
	 \begin{equation}\label{eq:cobparity}
         \frac{1}{2}\left( \chi(W) + \sigma(W) \right) + \chi(S) +|L| + |L'| + (\partial c|_S)\cdot (\partial c|_S)   \pmod{2},
    \end{equation}
    which is the mod $2$ index of a connection with reducible limits shifted by $1$. We refer to $(W,S,c)$ as {\emph{even}} or {\emph{odd}} accordingly. $\diamd$
\end{definition}

\begin{remark}\label{rmk:reducibleevendeg}
The index of any reducible singular connection on a cobordism $(W,S,c)$ has the parity of $1+b^1(W)+b^+(W)$; see the discussion in Subsection \ref{orientation}, for example. If $b^1(W)=b^+(W)=0$, then the index of any reducible is odd. In particular, following Definition \ref{defn:cobordismparity}, if $(W,S,c)$ satisfies $b^1(W)=b^+(W)=0$ and admits a reducible singular instanton for some metric, then it is necessarily an even cobordism. $\diamd$
\end{remark}

\subsection{Unobstructed cobordism maps}\label{unob-cob-map}

The main result of this subsection is that unobstructed cobordisms induce morphisms between instanton $\cS$-complexes of links with non-zero determinants. Just as in Section \ref{sec:prelims}, where the algebra of $\cS$-complex morphisms was developed, we divide the discussion into two cases, depending on the parity of the cobordism. 

\subsubsection{Even cobordisms}

We first treat the case of even cobordisms. By Remark \ref{rmk:reducibleevendeg}, assuming $b^1=b^+=0$, this is the case that is relevant when reducible singular instantons are present.

\begin{definition}\label{eq:levelncobordismmaps}
Let $(W,S):(Y,L)\to (Y',L')$ be a cobordism between links in homology 3-spheres with singular bundle data $\sbd$, where $c$ has empty intersection with $Y$ and $Y'$. Assume $(W,S,c)$ is even. We say $(W,S,\sbd)$ is {\emph{negative definite of height $i\in \Z$}} if 
\begin{equation}
	b^1(W)=b^+(W)=0 \label{eq:negdefcond}
\end{equation}
and the index of every reducible instanton on $(W,S,\sbd)$ is at least $2i-1$. Furthermore, if $i\geqslant 0$, we say the cobordism with bundle is {\emph{unobstructed}}. $\diamd$
\end{definition}

Fix an even cobordism with bundle $(W,S,c)$ satisfying \eqref{eq:negdefcond} as in Definition \ref{eq:levelncobordismmaps}. The assumption $b^+(W)=0$ guarantees that the condition for the existence of reducible singular instantons of a particular index is in fact metric-independent, as are the possible energies of such instantons. Call a reducible instanton $A$ on this cobordism {\emph{minimal}} if its energy $\kappa(A)$ is minimal among all reducible instantons. Note the minimum is taken over {\emph{all}} reducible instantons on $(W,S,\sbd)$, not fixing the limiting reducible connections at the ends. Define
\setlength{\jot}{2ex}
\begin{gather*}
    \eta_i(W,S,\sbd): \sfR(Y,L) \longrightarrow \sfR(Y',L')    \vspace{.4cm}\\
        \langle\eta_i(W,S,\sbd)(\theta_\fo) , \theta_{\fo'}\rangle  = \#\{\text{index } 2i-1 \text{ reducibles with limits } \theta_\fo, \theta_{\fo'}\}
\end{gather*}
\noindent where ``$\#$'' means a signed count determined by orienting the moduli spaces. The condition \eqref{eq:negdefcond} guarantees that there is always a finite set of reducible instantons of any given fixed energy. Suppose minimal reducibles on $(W,S)$ have index $2m-1$. Define
\begin{gather*}
    \eta= \eta(W,S,\sbd) :=\eta_{m}(W,S,\sbd) 
\end{gather*}
With this notation, $\langle\eta(W,S,\sbd)(\theta_\fo) , \theta_{\fo'}\rangle$ is the signed count of minimal reducibles with limiting critical points $\theta_\fo$ and $\theta_{\fo'}$.

\begin{remark}
    Following Remark \ref{rmk:orientcyl}, the conventions for orienting moduli spaces of instantons on cobordisms extend those of \cite[Section 2.9]{DS1}. See Subsection \ref{orientation}. $\diamd$
\end{remark}

\begin{definition}\label{eq:stronglevelncobordismmaps}
    An unobstructed cobordism $(W,S,\sbd)$ is {\emph{strong height $i\in \Z_{\geqslant 0}$}} if the minimal reducibles all have index $2i-1$ and $\eta(W,S,\sbd)$ is an isomorphism. $\diamd$
\end{definition}

Note that if an even cobordism $(W,S,\sbd)$ satisfies \eqref{eq:negdefcond} and it supports no reducible instantons, then it is negative definite of height $i$ for all $i\in \Z$ and in particular it is unobstructed. However, it is not strong for any height.

To define a morphism of $\cS$-complexes we also need to choose an embedded interval $\gamma\subset S\setminus c$ whose endpoints $p\in L$ and $p'\in L'$ give basepoints on the links. We write $(W,S,\gamma):(Y,L,p)\to (Y',L',p')$ for a cobordism with such data.

\begin{theorem}\label{thm:unobsmaps}
    Suppose $(W,S,\gamma):(Y,L,p)\to (Y',L',p')$ with singular bundle data $\sbd$ (where $c$ has empty intersection with $Y,Y'$) is unobstructed, and each link has $\det\neq 0$. Then there is a well-defined morphism of $\cS$-complexes
    \[
           \widetilde \lambda =  \widetilde\lambda_{(W,S,\sbd),\gamma}:\widetilde C(Y,L,p)\to \widetilde C(Y',L',p')
    \]
    If $(W,S,c)$ is negative definite of (strong) height $i$, then $\widetilde \lambda$ is a morphism of (strong) height $i$. In this case the expression $\tau_i$ in Definition \ref{defn:leveln} is equal to $\eta(W,S,c)$. 
\end{theorem}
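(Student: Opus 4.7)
The plan is to adapt the construction of cobordism maps for knots in \cite{DS1,DS2} to the multi-reducible setting arising for links with non-zero determinant, and to check that the height-$i$ structure on $(W,S,c)$ corresponds to the height-$i$ structure on the resulting morphism.

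First I would equip $(W^+,S^+)$ with an orbifold metric of cone angle $\pi$ along $S^+$ and cylindrical ends matching the perturbed metric data used to define $\widetilde C(Y,L,p)$ and $\widetilde C(Y',L',p')$. For each pair of critical points $(\alpha,\beta)$ at the ends, one has the moduli space $M(W,S,c;\alpha,\beta)$ of finite-energy singular instantons. The hypothesis $b^1(W)=b^+(W)=0$ together with Proposition \ref{prop:redind} implies that, for fixed reducible limits, the reducible instantons form a discrete set with a bounded-below sequence of indices, so $\eta_j(W,S,c)$ and the minimal-reducible count $\eta(W,S,c)$ are well-defined. After a small generic holonomy perturbation supported in a collar (which makes the irreducible strata regular), I would define $\lambda$, $\Delta_1$, $\Delta_2$ as signed counts of $0$-dimensional irreducible moduli with the indicated limits, $\rho=\eta_0(W,S,c)$ (which vanishes whenever $i>0$), and $\mu$ via a holonomy cut-down along $\gamma$ as in the definition of the $v$-map. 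The evenness hypothesis on $(W,S,c)$ (Definition \ref{defn:cobordismparity}) ensures that $\widetilde\lambda$ has even degree. The morphism relations \eqref{eq:lambda2}--\eqref{eq:lambda5} would then be verified by the standard analysis of ends of one-dimensional moduli; ends arising from factorizations through an index-$(-1)$ reducible on $(W,S,c)$ contribute the $\rho\delta_1$ and $\delta_2'\rho$ terms, which vanish when $i>0$.

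The core of the proof is to establish the stronger identity $\tau_j(\widetilde\lambda) = \eta_j(W,S,c)$ for every $j\geq 0$. This delivers both the height-$i$ property (since $\eta_j=0$ for $j<i$ by assumption) and the identification $\tau_i = \eta(W,S,c)$. For $j=0$ the identity is the definition of $\rho$; for $j\geq 1$ I would consider a one-dimensional moduli of irreducible instantons on $(W^+,S^+,c)$ with reducible limits $\theta_\fo,\theta_{\fo'}$, cut down by $j-1$ prescribed generic values of the adjoint holonomy along translates of $\gamma$ distributed along the cobordism. The Gromov--Uhlenbeck compactification of this moduli has two families of endpoints: (i) broken trajectories at the cylindrical ends factoring through intermediate irreducibles, which after distributing the $j-1$ holonomy insertions across the two sides of the cobordism assemble into the terms $\delta_1'v'^{j-1}\Delta_2$, $\Delta_1v^{j-1}\delta_2$, and $\sum_k\delta_1'v'^k\mu v^{j-2-k}\delta_2$ in the formula for $\tau_j$; and (ii) configurations converging via obstructed gluing to minimal reducibles of index $2j-1$ on $(W,S,c)$, contributing $\eta_j(W,S,c)$. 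Balancing endpoints with signs then yields $\tau_j = \eta_j$.

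The hardest part will be the gluing analysis in family (ii): giving a precise bijective correspondence between the ends of the chosen one-dimensional moduli near the reducible locus and the reducibles counted by $\eta_j$, via a Kuranishi model around each minimal reducible that correctly accounts for its $S^1$-stabilizer. This extends the single-reducible arguments of \cite{DS1,DS2} to the multi-reducible setting indexed by quasi-orientations; tracking signs through the gluing requires careful use of the orientation conventions of Subsection \ref{orientation}. The strong-height case (where $\eta$ is an isomorphism) demands no extra argument beyond the general identity $\tau_i = \eta(W,S,c)$.
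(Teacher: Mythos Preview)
Your overall strategy matches the paper's: define $\lambda,\Delta_1,\Delta_2,\mu$ by counting $0$-dimensional moduli on $(W,S,c)$, define $\rho$ via the minimal reducibles, and verify the morphism relations and the height property by analyzing ends of $1$-dimensional moduli. The paper itself simply refers to \cite[Section 3]{DS1} and \cite[Proposition 4.17]{DS2} for these verifications, so your outline is in the right spirit.

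However, your ``core'' claim that $\tau_j(\widetilde\lambda)=\eta_j(W,S,c)$ for \emph{every} $j\geq 0$ is false, and the enumeration of ends you give is incomplete. When you compactify the cut-down moduli space $M(W,S,c;\theta_\fo,\theta_{\fo'})$ with reducible limits, there is a third family of ends beyond your (i) and (ii): breakings at the cylindrical ends through \emph{reducible} critical points, i.e.\ configurations in $\breve M(L;\theta_\fo,\theta_{\fo_1})_0\times M(W,S;\theta_{\fo_1},\theta_{\fo'})$ and the analogous factorizations on the outgoing end. These contribute correction terms built from the $s$-maps of Definition \ref{defn:smap} together with lower $\eta_k$. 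The paper records this explicitly in Remark \ref{tau-i+1}: one has $\tau_{i+1}=\eta_{i+1}-s'\eta_i+\eta_i s$, not $\tau_{i+1}=\eta_{i+1}$, and for $j>i+1$ the corrections are expected to involve further bubbling analysis.

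For the theorem as stated you only need $\tau_j$ for $0\leq j\leq i$. In that range the corrections vanish, because every correction term involves some $\eta_k$ with $k<j\leq i$, and the height-$i$ hypothesis forces $\eta_k=0$ for $k<i$. So your argument can be salvaged, but you should (a) restrict the claim to $j\leq i$, (b) include the missing family of ends and explain why it contributes zero under the height hypothesis, and (c) drop the phrase ``obstructed gluing'' in your family (ii): in the unobstructed setting the relevant reducibles have index $\geq -1$ and the gluing near them is standard; obstructed gluing enters only for the height $-1$ case treated in Section \ref{sec:obstructed}.
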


As in our convention of dropping basepoints $p, p'$ from notation, we often write
\[
    \widetilde \lambda =  \widetilde\lambda_{(W,S)}:\widetilde C(Y,L)\to \widetilde C(Y',L')
\]
for the morphism given above, omitting both the path $\gamma$ and the bundle data $c$ from notation. The map also depends on metric and perturbation data, which is surpressed.

By Proposition \ref{prop:levelsuspension}, a negative definite height $i\geqslant 0$ cobordism induces a morphism 
\[
  \Sigma^i \widetilde C(Y,L) \longrightarrow \widetilde C(Y',L')  
\]
and if it is strong height $i$, then this morphism is strong and has $\rho=\eta(W,S,c)$.

The proof of Theorem \ref{thm:unobsmaps} follows along the lines of \cite[Section 3]{DS1} and \cite[Proposition 4.17]{DS2}, and there are no essential differences in the analysis. We proceed to sketch the construction, along the way introducing notation for moduli spaces on cobordisms. 

Fix $(W,S,\sbd)$ as above. A cylindrical end metric on $W^+$ with cone angle $\pi$ along $S^+$ is fixed, as is a connection $A_0$ on the determinant bundle data. An {\emph{instanton}} is a singular connection on $W^+\setminus S^+$ with bundle and conditions near $S$ determined by $\sbd$, such that $\det(A)=A_0$ and $F_{\text{ad}(A)}$ satisfies a perturbed anti-self-duality equation. We write 
\[
    M(W,S,\sbd;\alpha,\alpha')
\]
for the moduli space of finite energy singular instantons mod determinant 1 gauge on $(W,S,\sbd)$ with limiting critical points $\alpha\in \fC_\pi$ and $\alpha'\in \fC_{\pi'}$ on the ends. When the bundle data $\sbd$ is understood or trivial, we omit it from the notation. 

Consider the space of all singular connections for $(W,S,\sbd)$, modulo gauge, with fixed determinant and limits $\alpha$ and $\alpha'$, and write $z$ for a connected component. We refer to $z$ as a homotopy class of paths. The moduli space $M(W,S;\alpha,\alpha')$ is a disjoint union
\[
	M(W,S;\alpha,\alpha')=\bigcup_z M_z(W,S;\alpha,\alpha')
\]
where $z$ ranges over the homotopy classes of paths from $\alpha$ to $\alpha'$. 

The expected dimension of $M_z(W,S;\alpha,\alpha')$ is given by $\text{ind}(A)=\ind(z)\in \Z$, the index of the linearized ASD operator $\sD_A$ with exponential decay conditions. Here $A$ is any connection in the homotopy class $z$. We always assume perturbations have been chosen so that the irreducible strata of the moduli spaces are smooth manifolds of the expected dimension (or empty). Write $M(W,S;\alpha,\alpha')_d$ for the instantons $[A]$ with index $d$.

We now describe $\widetilde \lambda$ by giving its morphism components as in \eqref{eq:lambdatilde}. First,
\begin{equation}\label{eq:lambdafirstdefn}
  \langle\lambda(\alpha),\alpha'\rangle = \# M(W,S;\alpha,\alpha')_0  
\end{equation}
for $\alpha\in \fC_\pi^\text{irr}$ and $\alpha'\in \fC_{\pi'}^{\text{irr}}$. The maps $\Delta_1$ and $\Delta_2$ are defined similarly, but with reducible limits at one end and irreducible limits at the other:
\[
  \langle\Delta_1(\alpha),\theta_{\fo'}\rangle = \# M(W,S;\alpha,\theta_{\fo'})_0  \qquad   \langle\Delta_2(\theta_\fo),\alpha'\rangle = \# M(W,S;\theta_{\fo},\alpha')_0  
\]
Recall that $\gamma\subset S$ is an interval with endpoints $p\in L$ and $p'\in L'$. This determines an embedded real line $\gamma^+\subset S^+$. There is an associated (modified) holonomy map
\[
  H_{\alpha,\alpha'}^\gamma: M(W,S;\alpha,\alpha')_1\longrightarrow S^1
\]
which roughly computes the holonomy of the adjoint $SO(3)$ connection $\text{ad}(A)$ along $\gamma^+$ from $-\infty$ to $+\infty$ after fixing frames for the bundle at $\pm \infty$. As $\gamma^+$ is on the singular locus, the connection has holonomy lying in $S^1\cong SO(2)\subset SO(3)$. Then $\mu$ is defined by
\[
    \langle \mu ( \alpha), \alpha' \rangle  = \#(H^\gamma)^{-1}(-1)
\]
Finally, the component $\rho$ is defined as follows:
\[
    \rho = \begin{cases} \eta(W,S,\sbd) \quad  & \text{minimal reducibles have index } -1 \\ 0 & \text{ otherwise}
    \end{cases}
\]
With these definitions, the proof that $\widetilde\lambda$ is a morphism is essentially the same as in \cite[Section 3]{DS1}, and the statement about (strong) heights follows \cite[Proposition 4.17]{DS2}.

\begin{remark}\label{tau-i+1}
	Theorem \ref{thm:unobsmaps} can be upgraded as follows. If $(W,S,c)$ is negative definite of height $i\geq 0$, then for the associated height $i$ morphism
	$\widetilde \lambda$, we also have
	\begin{equation}\label{rel-tau-i+1}
	  \tau_{i+1}=\eta_{i+1}(W,S,c)-s'\eta_{i}(W,S,c)+\eta_{i}(W,S,c)s
	\end{equation}
	where $s$ and $s'$ are as in Definition \ref{defn:smap}. This relation can be proved in the same way as in \cite[Proposition 4.17]{DS2}. For instance, if $i=0$,
	one obtains this relation by inspecting the ends of $1$-dimensional moduli spaces $M^+(W,S;\theta_{\fo},\theta_{\fo'})_1$ for reducibles $\theta_{\fo}$ and $\theta_{\fo'}$. 
	It is reasonable to expect that analogues of \eqref{rel-tau-i+1} hold for $\tau_j$ with $j>i+1$. This would require analyzing the effect of bubbling at reducible 
	instantons in the relevant moduli spaces. $\diamd$
\end{remark}

\subsubsection{Odd cobordisms}\label{subsubsec:odddegcobs}

As above, let $(W,S,c):(Y,L)\to (Y',L')$ be a cobordism between non-zero determinant links in homology $3$-spheres with singular bundle data $c$ which has empty intersection with $Y$, $Y'$. Suppose further that $(W,S,c)$ is odd, and assume \eqref{eq:negdefcond}. By Remark \ref{rmk:reducibleevendeg}, this cobordism with bundle supports no reducible singular connections. Extending terminology from Definition \ref{eq:levelncobordismmaps}, we refer to $(W,S,c)$ as an {\emph{unobstructed}} cobordism. For such an odd unobstructed cobordism $(W,S,c)$, the construction of an $\cS$-complex morphism
\[
	\widetilde \lambda = \widetilde \lambda_{(W,S,c),\gamma}: \widetilde C(Y,L,p) \to \widetilde C(Y',L',p')
\]
is just as in Theorem \ref{thm:unobsmaps}, except that reducible instantons are absent. In particular, the component $\rho$ is defined to be zero. 

However, unlike the even case, there are potentially non-empty moduli spaces
\[
	M(W,S,c;\theta_{\fo},\theta_{\fo'})_{0}
\]
of zero dimension, with reducible limits, consisting of irreducible singular instantons. We are led to the following, a cobordism-analogue of the $s$-map from Definition \ref{defn:smap}.

\begin{definition}\label{defn:numap}
	For an odd cobordism $(W,S,c):(Y,L)\to (Y',L')$ as above, define
\begin{gather*}
	\nu=\nu(W,S,c):\sfR(Y,L)\longrightarrow \sfR(Y',L'), \\[1mm]
	 \phantom{\;\;\; \diamd}  \langle \nu(\theta_\fo),\theta_{\fo'}\rangle := \# M(W,S,c;\theta_{\fo},\theta_{\fo'})_{0}. \;\;\; \diamd
\end{gather*}
\end{definition}

\noindent The map $\nu$ is in fact the first in a family of maps
\begin{equation*}\label{eq:nu-i-maps-cob}
	\nu_i=\nu_i(W,S,c):\sfR(Y,L)\longrightarrow \sfR(Y',L')
\end{equation*}
where $i\in \Z_{\geq 0}$ and $\nu=\nu_0$. These are defined, roughly, by cutting down moduli spaces $M(W,S,c;\theta_{\fo},\theta_{\fo'})_{2i}$ by the $i^{\text{th}}$ power of the degree $2$ cohomology class in the configuration space determined by the Euler class of the reduced basepoint fibration, where the basepoint is somewhere on the interior of the singular locus $S\subset W$, away from $c$. We omit details, as the maps $\nu_i$ for $i>0$ are not used in the sequel.

The role of the $\nu_i$ maps in the algebra of $\cS$-complex morphisms has already been foretold in Subsection \ref{subsec:odddegmorphisms}. Similar to the map $s$, the map $\nu=\nu_0$ appears in the algebra of obstructed cobordism maps as developed in Section \ref{sec:obstructed}, as well as in several relations in the proofs of the exact triangles in Sections \ref{sec:proofs} and \ref{sec:nontrivbundles}.

\subsubsection{Degrees of cobordism maps}

We now compute the degrees of cobordism maps. Let $(W,S,c):(Y,L)\to (Y',L')$ be a cobordism between non-zero determinant links in homology $3$-spheres, where the singular bundle data does not intersect $Y$, $Y'$. (For the case in which $c$ intersects the ends, see Section \ref{sec:nontrivbundles}.) Assume that the cobordism $(W,S,c)$ is unobstructed, so that there is an induced $\cS$-complex morphism $\widetilde \lambda:\widetilde C(Y,L)\to \widetilde C(Y',L')$.

\begin{prop}\label{prop:degmorphism}
    The mod $2$ degree of $\widetilde \lambda$ agrees with the parity of $(W,S,c)$, given by \eqref{eq:cobparity}. Further, if absolute $\Z/4$-gradings for the $\cS$-complexes of $(Y,L)$ and $(Y',L')$ are fixed by quasi-orientations $\fo$ and $\fo'$ respectively, and $c=\emptyset$, then the $\Z/4$ degree of $\widetilde \lambda$ is given by
    \[
        - \frac{3}{2}\left( \chi(W) + \sigma(W) \right) + \chi(S)  + \sigma(Y,L,\fo) - \sigma(Y',L',\fo')   \pmod{4}
    \]
\end{prop}

\begin{proof}
    We prove that $\lambda:C\to C'$ has the claimed degree; the other components are similar. In this case we take $\alpha\in \fC_\pi^\text{irr}$ and $\alpha'\in \fC_{\pi'}^\text{irr}$. Let $[A]\in M(W,S,\sbd;\alpha,\alpha')_0$. Then the mod $4$ degree of $\lambda$ with respect to $\text{gr}[\fo]$ and $\text{gr}[\fo']$ is congruent to
    \[
      \text{gr}[\fo'](\alpha') - \text{gr}[\fo](\alpha) = \text{gr}_{z'}(\alpha',\theta_{\fo'}) -  \text{gr}_z(\alpha,\theta_\fo)
    \]
    where $z,z'$ are any homotopy classes of paths. By index additivity we have the relation
    \[
        \text{ind}(A) + \text{gr}_{z'}(\alpha',\theta_{\fo'}) =   \text{gr}_z(\alpha,\theta_\fo) + \dim \text{Stab}(\theta_\fo) +  \ind(A')
    \]
    where $A'$ is some singular connection on $(W,S,\sbd)$ with reducible limits $\theta_\fo$ and $\theta_{\fo'}$. Note $\text{ind}(A)=0$ and $\dim \text{Stab}(\theta_\fo)=1$. The result then follows from Corollary \ref{cor:indexred}, which computes $\text{ind}(A')$, and also Proposition \ref{prop:sigmod2} for the mod 2 reduction.
\end{proof}

\begin{figure}[t]
    \centering
     \centerline{ \includegraphics[scale=0.5]{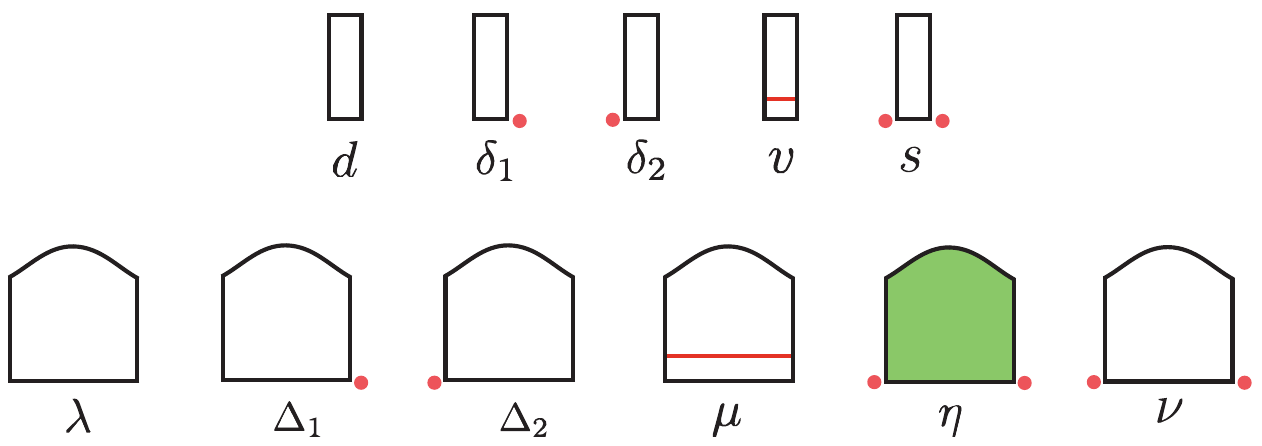}}
    \caption{{\small{Symbols for the maps defined for the cylinder (top) and a cobordism (bottom).}}}
    \label{fig:key1}
\end{figure}

We end this subsection by extending the pictorial calculus introduced in \cite{DS1}, see Figure \ref{fig:key1}. This will be useful for conceptualizing the relations in the proof of the exact triangles. The conventions are as follows. An undecorated cylinder or cobordism represents the map defined by counting isolated instantons (in the cylindrical case the trajectories are unparametrized). The presence of a horizontal line indicates that the moduli space is cut down by the holonomy along this line. The presence of a dot at an end indicates that the moduli spaces involve reducible limits at that end; in the absence of a dot the limits are irreducible critical points. Finally, if the cobordism is shaded (green), it indicates that the minimal (unobstructed) reducibles are counted.

With these conventions the terms in \eqref{eq:lambda4} for the cobordism map $\widetilde\lambda_{(W,S)}$ are depicted in Figure \ref{fig:pic-calc-ex}. The signs of the terms in the relation are not reflected in the picture.

\subsection{Cobordism maps in a cylinder}\label{subsec:cobcyl}

We now specialize the results of the previous subsection to the class of cobordisms most relevant for the proof of the exact triangle. In this case we take $W=I\times Y$ to be the product cobordism, and let $\sbd = \emptyset$, i.e. the trivial $SU(2)$ singular bundle data. Unless otherwise mentioned, $Y$ is a homology $3$-sphere and the links have non-zero determinant.

First suppose $S\subset I\times Y$ is orientable. Referring to \eqref{eq:cobparity}, we see that in this case our cobordism is {\emph{even}}. Indeed, for an orientable surface $S$, the Euler characteristic is congruent modulo $2$ to the number of boundary components. The group $H_1(W\setminus S;\Z)$ is isomorphic to a free abelian group on $|S|$ generators; each generator may be represented by a circle fiber in the boundary of a tubular neighborhood of $S$, one for each component. There are thus $2^{|S|-1}$ many homomorphisms $\pi_1(W\setminus S)\to SU(2)$ up to conjugacy that have abelian image and send each generator just described to a traceless element. These reducibles correspond to minimal reducible instantons; they are all flat and have zero energy.

Define a {\emph{quasi-orientation}} of a surface to be a pair of orientations which are related by reversing orientations on each component. Write $\cQ(W,S)$ for the set of quasi-orientations of $S\subset W$. Then similar to the 3-dimensional case, the minimal reducibles on $(W,S)$ are in bijection with $\cQ(W,S)$. There is a correspondence of sets
\begin{equation}\label{eq:qocorrespondence}
            \begin{tikzcd}[column sep=2ex, row sep=5ex, fill=none, /tikz/baseline=-10pt]
            &  \mathcal{Q}(W,S) \arrow[dl, "s"'] \arrow[dr, "t"] & \\
          \mathcal{Q}(Y,L) & & \mathcal{Q}(Y',L')
            \end{tikzcd}
        \end{equation} 
where $s$ and $t$ are defined by restriction. Suppose $\fo=s(\fs)$ and $\fo'=t(\fs)$ for some $\fs\in \cQ(W,S)$. We write $\Theta_\fs$ for the flat reducible instanton corresponding to the quasi-orientation $\fs$; it has limiting flat connections $\theta_{\fo}$ and $\theta_{\fo'}$.

\begin{figure}[t]
    \centering
     \centerline{ \includegraphics[scale=0.5]{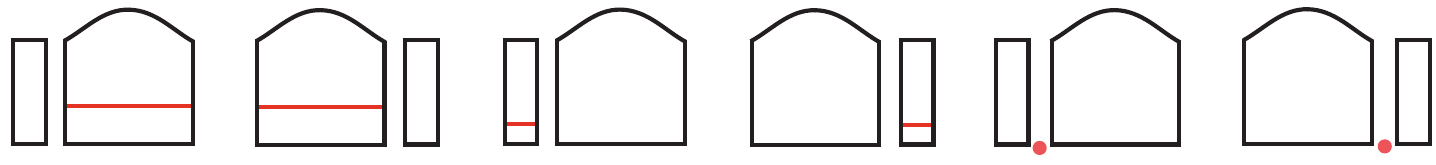}}
    \caption{{\small{Illustration of relation \eqref{eq:lambda4}.}}}
    \label{fig:pic-calc-ex}
\end{figure}

Corollary \ref{cor:indexred} implies that the index of a minimal reducible on this cobordism is 
\begin{equation}\label{eq:indminredcyl}
    \text{ind}(\Theta_\fs) = \chi(S) + \sigma(Y,L,\fo) - \sigma(Y',L',\fo') - 1
\end{equation}
where as above the quasi-orientation $\fs$ of $S$ restricts to $\fo$ and $\fo'$ at the ends. We note also that any reducibles that exist are in fact minimal. The cobordism is unobstructed if this quantity is at least $-1$ for all quasi-orientations of $S$. In the unobstructed case, because our cobordism is even, by Proposition \ref{prop:degmorphism} we obtain a morphism $\widetilde\lambda$ of even degree:
\[
    \text{deg}(\widetilde\lambda) \equiv 0 \pmod{2}
\]

Now consider the case in which $S\subset I\times Y$ is non-orientable. We again fix the trivial bundle data $\sbd$. Then there are no reducible instantons. Indeed, whenever the singular bundle data is trivial and the surface $S$ is non-orientable, all singular connections are irreducible even in any given neighborhood of a non-orientable component of $S$, as can be seen from the requirements in \cite[Section 2.2]{KM:unknot}. Thus we always obtain a morphism $\widetilde \lambda$ between the $\cS$-complexes and by Proposition \ref{prop:degmorphism} its mod $2$ degree is given by
\[
    \text{deg}(\widetilde\lambda) \equiv b_1(S) \pmod{2}
\]
which is the genus of $S$ modulo $2$, i.e. the number $k$ such that $S$ is homemorphic to connect-summing $k$ copies of $\bR\bP^2$ in some way to a collection of spheres with punctures. In particular, if $b_1(S)$ is odd, then the cobordism is odd.

\begin{figure}[t]
    \centering
    \includegraphics[scale=0.62]{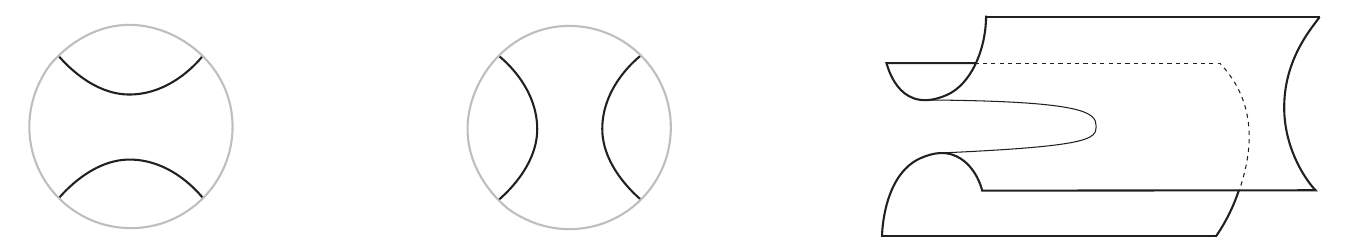}
    \caption{{\small{A saddle cobordism. The links $(Y,L)$ and $(Y,L')$ are the same except in a 3-ball neighborhood, where $(Y,L)$ is depicted on the left and $(Y,L')$ in the middle. The saddle cobordism is locally depicted on the right.}}}
    \label{fig:saddle}
\end{figure}

We further specialize to the case of a saddle cobordism. This is a surface cobordism in the cylinder $I\times Y$ where the links $(Y,L)$ And $(Y,L')$ differ only in a small 3-ball neighborhood $B\subset Y$ as depicted in Figure \ref{fig:saddle}, and $S:L\to L'$ is a product cobordism except in $I\times B$, where it is a standard saddle cobordism. Thus $S$ is obtained by attaching a single 1-handle to $I\times L$. In particular, $\chi(S)=-1$, and $|L|-|L'|=\pm 1$ if $S$ is orientable and $0$ otherwise. The cobordisms that induce maps in the exact triangles are saddle cobordisms.

In the non-orientable case, the cobordism is odd, and we have:

\begin{prop}\label{prop:saddlecobnonor}
    Let $(W,S):(Y,L)\to (Y,L')$ be a saddle cobordism with $S$ non-orientable, i.e. $|L|=|L'|$, and suppose the links have $\det\neq 0$. Then there is an induced morphism $\widetilde\lambda_{(W,S)}:\widetilde C(Y,L)\to \widetilde C(Y,L')$ of odd degree. 
\end{prop}

Now suppose $S$ is orientable. In this case the cobordism is even. Note also that the orientability of $S$ is equivalent to assuming that $L'$ is obtained from $L$ by an oriented resolution of some local crossing for some orientation of $L$. Then for any reducible $\Theta_\fs$ on $(W,S)$ corresponding to a quasi-orientation $\fs$ of the surface $S$ we have
\[
    \text{ind}(\Theta_\fs) =  \sigma(Y,L,\fo) - \sigma(Y',L',\fo') - 2 \; \; \in \; \;  \{-3, -2, -1 \}   
\]
where $\fo$ and $\fo'$ are the quasi-orientations induced by $\fs$. This follows from the well-known property that the difference of the signatures is either $0$ or $\pm 1$. If $L$ and $L'$ both have non-zero determinant, as we are assuming, then in fact $\text{ind}(\Theta_\fs)\in \{-3,-1\}$.

The constraint $\text{ind}(\Theta_\fs)\in \{-3,-2,-1\}$ can be seen directly from the deformation theory of the ASD complex, following \cite[Section 2.7]{DS1}. The adjoint of the reducible $\Theta_\fs$ pulls back to a trivial connection on $X$, the double cover of $W$ branched over $S$. With the assumption that the links have non-zero determinant, their double branched covers have $b_1=0$. The cohomology groups $H^\bullet_{\Theta_\fs}$ in the deformation complex of $\Theta_\fs$ are the $\tau^\ast\otimes \text{diag}(1,-1,-1)$ invariant subspaces of the groups $H^\bullet(X;\R)\otimes \R^3$ where $\bullet\in \{0,1,+\}$, the latter of which describe the cohomology groups of the deformation complex for the trivial $SO(3)$ connection on $X$. Here we have written $\tau$ for the covering involution on $X$. Write $h^\bullet$ for the dimension of $H^\bullet_{\Theta_\fs}$. Then 
\[
    \text{ind}(\Theta_\fs)=-h^0+h^1-h^+
\]
Clearly $h^0=1$, $h^1=0$. Furthermore, since $S$ is a 1-handle surface cobordism, $X$ is a 2-handle 4-dimensional cobordism and $H^2(X;\R)$ has dimension $1$; and because $W=I\times Y$, $\tau$ acts as $-1$ on $H^2(X;\R)$. Thus $h^+\in \{0,2\}$, and so $\text{ind}(\Theta_\fs)\in \{-3,-1\}$. Removing the assumption that determinants are non-zero gives the possibility $\text{ind}(\Theta_\fs)=-2$ using, for example, \cite[Proposition 3.15]{donaldson-book}.

We summarize the conclusions of our discussion as combined with Theorem \ref{thm:unobsmaps}.

\begin{prop}\label{prop:saddlecobor}
    Let $(W,S):(Y,L)\to (Y,L')$ be a saddle cobordism with $S$ orientable, i.e. $|L|-|L'|=\pm 1$, and suppose the links have $\det\neq 0$. Let $\fo,\fo'$ be any quasi-orientations of $L$, $L'$ that extend over $S$ to a quasi-orientation $\fs$. Then we have
    \[
      \epsilon = \epsilon(L,L') := \sigma(Y,L,\fo) - \sigma(Y,L',\fo') \in \{ -1, + 1 \}  
    \]
    The minimal reducibles on $(W,S)$ with trivial $SU(2)$ bundle data are all of index $\epsilon-2$ and are in natural bijection with the quasi-orientations of $S$. If $\epsilon=1$ then there is an induced strong (height $0$) morphism $\widetilde\lambda_{(W,S)}:\widetilde C(Y,L)\to \widetilde C(Y,L')$ of even degree.
\end{prop}

The case in which the difference of signatures $\epsilon = -1$, where every minimal reducible has index $\epsilon-2=-3$, is not an unobstructed cobordism, and its cobordism map has not been defined. This case is an example of an obstructed cobordism, as the analysis of the cobordism map will involve obstructed gluing theory, taken up in Section \ref{sec:obstructed}.

The discussion above relating the indices to the cohomology of double branched covers also gives the following useful observation.

\begin{prop}\label{prop:indbranchedcover}
	Let $(W,S):(Y,L)\to (Y,L')$ be a saddle cobordism, with $S$ orientable. Let $X$ be the double branched cover of $(W,S)$. Then $	\sigma(X) = -\epsilon(L,L')$.
\end{prop}

\subsection{Connected sum theorem and \texorpdfstring{$I^\natural(L)$}{I-natural}}

The connected sum theorem proved in \cite{DS1} carries over to the current setup. Given based links $(Y,L)$ and $(Y',L')$ with basepoints $p\in L$ and $p'\in L'$, we may form the connected sum $(Y\# Y', L\# L')$ along $p$ and $p'$; this is naturally a based link.

\begin{theorem}\label{thm:connectedsum}
    Let $(Y,L)$ and $(Y',L')$ be based links in homology 3-spheres with non-zero determinant. Then there is a chain homotopy equivalence of $\Z/2$-graded $\cS$-complexes:
    \[
        \widetilde C(Y\# Y',L\# L') \simeq \widetilde C(Y,L) \otimes  \widetilde C(Y',L')  
    \]
    The same result holds if either of the links is replaced by an admissible link. This equivalence is natural, up to homotopy, with respect to split cobordisms.
\end{theorem}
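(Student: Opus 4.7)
The plan is to adapt the proof of the connected sum theorem for knots from \cite{DS1} to the link setting. The strategy is based on a neck-stretching argument: we equip $Y\# Y'$ with a family of metrics containing a long cylindrical neck $[-T,T]\times S^2$ separating the two summands (with basepoints $p,p'$ lying on the arcs that pass through the neck region of $L\# L'$), and as $T\to\infty$ we identify moduli spaces on the connected sum with fibered products of moduli spaces over each summand. The chain-level cobordism maps for the three stages of the split cobordism $(Y,L)\sqcup(Y',L')\leftrightsquigarrow (Y\# Y',L\# L')$ are then shown to realize an $\cS$-chain homotopy equivalence with the tensor product $\cS$-complex described in Subsection 2.2.

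First I would analyze the critical set. Using van Kampen, $\pi_1(Y\# Y'\setminus L\# L')$ is an amalgam along the meridian $\mu_0$ of the connect-sum arc, and a traceless representation of this group restricts to traceless representations on each side whose values on $\mu_0$ are conjugate. Combined with the basepoint condition forcing the meridional holonomy at $p=p'$ to be a fixed $\mathbf{i}\in SU(2)$, a standard pillowcase argument gives a natural decomposition
\[
    \fC_{Y\# Y',L\# L'} \;\cong\; \bigl(\fC^{\mathrm{irr}}_{Y,L}\times \fC^{\mathrm{irr}}_{Y',L'}\bigr) \,\sqcup\, \bigl(\fC^{\mathrm{irr}}_{Y,L}\times\{\theta_{\fo'}\}\bigr)\,\sqcup\,\bigl(\{\theta_\fo\}\times \fC^{\mathrm{irr}}_{Y',L'}\bigr)\,\sqcup\,\{\theta_{\fo\#\fo'}\},
\]
where the reducibles correspond via $\mathcal{Q}(L\# L')\cong \mathcal{Q}(L)\times\mathcal{Q}(L')$ (this bijection uses that the connected sum is formed at a single basepoint on each link, so a quasi-orientation of $L\# L'$ is determined by compatible quasi-orientations of the summands). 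The non-degeneracy of the reducibles for the connected sum, needed to run the Floer-theoretic machine, follows from non-degeneracy on each factor (Proposition \ref{prop:detnonzeronondeg}) together with the fact that $\det(L\# L')=\det(L)\det(L')\neq 0$.

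Next I would run the moduli space analysis on the neck-stretched family. For a trajectory on $\R\times (Y\# Y', L\# L')$, the broken trajectory/gluing picture identifies $0$-dimensional moduli spaces of one of three types: (i) $\mathrm{irr}\to\mathrm{irr}$ on both sides, giving the product $d\otimes 1+\varepsilon\otimes d'$ piece and the corresponding $v$, $\delta_1$, $\delta_2$ contributions; (ii) mixed types where one side has a reducible limit that is glued to a reducible-limit trajectory on the other side, contributing the cross terms $\delta_2\otimes 1$, $\epsilon\delta_1\otimes 1$, $\delta_2'$, $\delta_1'$ appearing in the tensor product formula; (iii) trajectories passing through the purely reducible stratum on both factors, which after the pillowcase analysis contribute the remaining terms. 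Matching these counts with the explicit tensor product differential $\widetilde d^\otimes$ written out in Subsection 2.2 is the bookkeeping core of the argument, and is parallel to \cite{DS1} except that the single reducible is replaced by the free module $\sfR=\bigoplus_{\fo\in \mathcal{Q}}R\cdot \theta_\fo$. The $v$-map at the basepoint behaves naturally since the connected-sum arc passes through the basepoints on both sides; this yields the holonomy relation needed for the tensor product formula.

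The main obstacle will be controlling contributions from neighborhoods of reducibles in the neck-stretched moduli spaces. These are handled by obstructed gluing theory exactly as in \cite[Section 7]{DS1}: one must verify that reducible instantons contribute with the correct signs and multiplicities to give the tensor product $\cS$-complex formula, and in particular that unwanted obstructed configurations on the neck are excluded by index counts (here using Proposition \ref{prop:redind} and the assumption $\det\neq 0$, which prevents degenerations into excess-dimensional reducible strata). Once the chain-homotopy equivalence is established, naturality with respect to split cobordisms follows from applying the same neck-stretching argument to $(W,S)\sqcup (W',S')$ versus $(W\natural W', S\natural S')$, together with the functoriality established in Theorem \ref{thm:unobsmaps}. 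Finally, for the admissible case (say $(Y',L',\omega')$ admissible with $\widetilde C^{\omega'}$ having $\sfR'=0$), the argument simplifies: the admissibility hypothesis rules out reducibles on the $(Y',L')$ side, so only the irreducible stratum contributes in the pillowcase analysis, and the tensor product collapses to the direct product of the nontrivial summands, reproducing the $v$-map mapping cone description of $\widetilde C^{\omega'}$ in \eqref{eq:mappingconeadmissible}.
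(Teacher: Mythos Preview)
There is a genuine gap in your critical-set analysis. The claimed decomposition
\[
    \fC_{Y\# Y',L\# L'} \;\cong\; \bigl(\fC^{\mathrm{irr}}_{Y,L}\times \fC^{\mathrm{irr}}_{Y',L'}\bigr) \,\sqcup\, \cdots
\]
is incorrect as a statement about isolated critical points. For a pair of irreducible classes $[\rho_1]\in\fC^{\mathrm{irr}}_{Y,L}$ and $[\rho_2]\in\fC^{\mathrm{irr}}_{Y',L'}$, the conjugacy classes of glued representations of $\pi_1(Y\#Y'\setminus L\#L')$ form a \emph{circle}, not a point: once you normalize $\rho_1(\mu_0)=i$, the representative of $[\rho_2]$ with $\rho_2(\mu_0)=i$ is only determined up to the residual $S^1=\mathrm{Stab}(i)$, and since both $\rho_i$ are irreducible this $S^1$ acts freely on the space of pairs modulo overall conjugation. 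These Morse--Bott circles are precisely what produces, after perturbation, the $(C\otimes C')\oplus (C\otimes C')[-1]$ summand of $C^\otimes$; without them your generator count is off by a factor of two on this piece, and the tensor-product differential cannot be recovered. (The ``pillowcase'' is the character variety of the four-punctured sphere; here the separating surface is a two-punctured sphere, whose traceless variety is a point, so the gluing parameter really lives in $\mathrm{Stab}(i)\cong S^1$.)

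Separately, your outline diverges from the proof in \cite[Section 6]{DS1} that the paper invokes. That argument does not attempt to directly identify the chain complex of the connected sum by neck-stretching on the cylinder $\R\times(Y\#Y')$. Instead it constructs explicit cobordism maps for the pair-of-pants cobordism $(W,\Sigma):(Y,L)\sqcup(Y',L')\to (Y\#Y',L\#L')$ and its reverse, and shows the composites are $\cS$-chain homotopic to the identity by a metric-family argument on the doubled cobordism. The Morse--Bott circles above do arise in that analysis, but as degenerations along the $(S^2,\text{2 pts})$ neck of the cobordism, where the relevant gluing is handled systematically. Your sketch conflates the cobordism-map approach (which you mention in passing) with a direct cylinder decomposition, and the latter would require a careful Morse--Bott treatment that is absent.
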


\noindent Furthermore, fix $\Z/4$-gradings by choosing quasi-orientations $\fo$ and $\fo'$ of $(Y,L)$ and $(Y',L')$, respectively. Then there is a naturally induced quasi-orientation on the connected sum: choose orientations representing $\fo$ and $\fo'$ that make the connected sum compatible with orientations. We then obtain a $\Z/4$-grading on the $\cS$-complex for $(Y\# Y', L\# L')$. In this case the homotopy equivalence above is then of $\Z/4$-graded $\cS$-complexes.

The proof of Theorem \ref{thm:connectedsum} is essentially the same as in \cite[Section 6]{DS1}. All of the cobordism maps are defined just as in the case for knots, except that there are more reducibles ocurring. However, the types of reducibles that occur are the same.

The connected sum theorem allows us to relate our construction to Kronheimer and Mrowka's $I^\natural(Y,L)$ defined in \cite{KM:unknot}. For any based link $(Y,L)$, this group is the homology of the chain complex $C^\natural(Y,L):=C^\omega(Y\# S^3, L\# H)$, where $H$ is a basepointed Hopf link and $\omega$ consists of a small arc joining the two Hopf link components.

\begin{theorem}\label{eq:inatural}
    Let $(Y,L)$ be a based link in an integer homology 3-sphere with non-zero determinant. Then there is a chain homotopy equivalence $\widetilde C(Y,L)\simeq C^\natural (Y,L)$, natural up to chain homotopy equivalence. Consequently, we have an isomorphism
    \[
       I^\natural(Y,L) \cong  \widetilde I(Y,L)
    \]
\end{theorem}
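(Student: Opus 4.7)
The proof adapts the argument for knots given in \cite{DS1} to the case of links with non-zero determinant, with the only substantive change being notational bookkeeping for the enlarged reducible summand $\sfR(Y,L)$ of rank $2^{|L|-1}$. The central tool is the extended connected sum theorem (Theorem \ref{thm:connectedsum}). Applied to the based link $(Y,L)$ and the based admissible Hopf link $(S^3, H, \omega)$ used in the definition of $C^\natural$, where $\omega$ is an arc joining the two components of $H$, it yields a chain homotopy equivalence of $\cS$-complexes
\[
\widetilde C^{\omega}(Y\# S^3, L\# H)\;\simeq\; \widetilde C(Y,L) \otimes \widetilde C^{\omega}(S^3, H).
\]
By the definition of $C^\natural$, the underlying chain complex $C^\omega(Y\# S^3, L\# H)$ of the left-hand $\cS$-complex is exactly $C^\natural(Y,L)$, and by \eqref{eq:mappingconeadmissible} the whole left-hand side is the mapping cone of the $v$-map on $C^\natural(Y,L)$.

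The next step is to identify the admissible Hopf link $\cS$-complex $\widetilde C^{\omega}(S^3, H)$ with an explicit small model, a calculation independent of $(Y,L)$ and inherited directly from \cite{DS1}. The enumeration of traceless flat $SU(2)$ representations of $\pi_1(S^3\setminus H)$ with respect to $\omega$, combined with a direct holonomy computation for the $v$-map, produces a finite $\cS$-complex with $I^{\omega}(S^3, H)\cong \Z$ that is rigid enough to identify $\widetilde C(Y,L) \otimes \widetilde C^{\omega}(S^3, H)$, up to chain homotopy, with a mapping cone built canonically from $\widetilde C(Y,L)$. Matching this mapping-cone description against the one coming from the $v$-map on $C^\natural(Y,L)$ extracts the chain homotopy equivalence $\widetilde C(Y,L) \simeq C^\natural(Y,L)$. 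Naturality under cobordisms and concordances then follows from the naturality clause in Theorem \ref{thm:connectedsum} together with the naturality of the small model for $\widetilde C^{\omega}(S^3,H)$.

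The isomorphism $\widetilde I(Y,L)\cong I^\natural(Y,L)$ is then immediate on passing to homology.

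The only potential obstacle is verifying that the connected sum argument from \cite{DS1}, and the subsequent extraction of the equivalence from the tensor product identification, go through in the presence of the additional reducibles $\{\theta_{\fo}\}_{\fo\in \mathcal{Q}(Y,L)}$. This is not a real obstacle: by Proposition \ref{prop:detnonzeronondeg}, the reducibles are isolated and non-degenerate, so the neck-stretching and cobordism-map analysis underlying the connected sum theorem applies verbatim; the sole change is that $\sfR$ is now free of rank $2^{|L|-1}$ rather than rank one, which enters the proof only as a bookkeeping matter for the components $\delta_1, \delta_2$ of the differential.
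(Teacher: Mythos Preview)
Your proposal is correct and follows the same approach as the paper, which simply defers to \cite[Theorem 8.9]{DS1}: apply the connected sum theorem to $(Y,L)$ and the admissible Hopf link $(S^3,H,\omega)$, compute $\widetilde C^\omega(S^3,H)$ explicitly (one irreducible generator, trivial $v$-map), and observe that the irreducible complex of the resulting tensor product is isomorphic to $\widetilde C(Y,L)$ as a chain complex. One small sharpening: rather than ``matching mapping-cone descriptions,'' the extraction is cleaner if you note that an $\cS$-chain homotopy equivalence restricts to a chain homotopy equivalence on irreducible complexes, so the identification $C^\natural(Y,L)\simeq C^\otimes\cong \widetilde C(Y,L)$ follows directly.
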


\noindent The proof is the same as that of \cite[Theorem 8.9]{DS1}. The discussion there for obtaining $I^\#(Y,L)$ from the $\cS$-complex also generalizes, and many of the other results discussed in \cite[Section 8]{DS1} adapt to this setting.

\newpage

%!TEX root = main.tex

\section{Maps from obstructed cobordisms}\label{sec:obstructed}

The goal of this section is to extend functoriality in equivariant singular instanton Floer theory to certain obstructed negative definite cobordisms. In Subsection \ref{subsec:compactified}, we recall some preliminaries, such as chain convergence and Floer compactifications of moduli spaces. In Subsection \ref{gluing-theory-obs}, we study the behavior of the compactified moduli spaces in neighborhoods of obstructed reducibles for negative definite cobordisms of height $-1$. In Subsection \ref{obs-cob-maps-level--1}, for any such cobordism $(Y,L)\to (Y',L')$, we construct a morphism of $\mathcal{S}$-complexes
\begin{equation}\label{eq:morphismsuspensionintro2}
	\widetilde C(Y,L) \longrightarrow \Sigma\widetilde C(Y',L'),
\end{equation}
where $\Sigma\widetilde C(Y',L')$ is the suspension of the $\mathcal{S}$-complex for $(Y',L')$. In the terminology of Subsection \ref{subsec:poslevels}, the map \eqref{eq:morphismsuspensionintro2} is a height $-1$ morphism $\widetilde C(Y,L) \to \widetilde C(Y',L')$. 

To construct \eqref{eq:morphismsuspensionintro2}, we adapt a construction of \cite{DME:QHS}, where non-singular instantons are used to define cobordism maps between rational homology spheres in the presence of obstructed reducibles. One difference is that the theory in \cite{DME:QHS} is equivariant with respect to $SO(3)$, while our setting of equivariant singular instanton homology is equivariant with respect to $S^1$. Another difference is that here we work with ``small models'' of equivariant complexes, which are of finite rank over the ground ring; in \cite{DME:QHS}, geometric chain complexes are used, which are of infinite rank. There is another key technical difference in the constructions explained in Remark \ref{difference-models}.

\subsection{Preliminaries}\label{subsec:compactified}

In preparation for the discussion of obstructed gluing theory, here we review some preliminaries, such as framed moduli spaces of singular instantons, Floer compactifications, and modified holonomy maps. For more background on singular instanton Floer theory, see \cite{KM:YAFT, DS1}. Some aspects below are straightforward adaptations from the non-singular $SU(2)$ instanton theory, for which we refer to \cite{donaldson-book, DME:QHS}.

For a cobordism $(W,S):(Y,L)\to (Y',L')$ with singular bundle data $c$, recall that
\[
M(W,S,c;\alpha,\alpha')_d
\]
denotes the associated moduli space of singular instantons of index $d$, with limiting critical points $\alpha$ and $\alpha'$ on $(Y,L)$ and $(Y',L')$, respectively. Each of $(Y,L)$ and $(Y',L')$ is a non-zero determinant link in an integer homology $3$-sphere. The moduli space depends on metric and perturbation data, which are surpressed. It is the quotient of a space of singular connections over $(W^+,S^+)$, the associated cylindrical end space, by a determinant-1 gauge transformation group (modulo $\pm 1$) whose elements converge at the ends to the stabilizers of $\alpha$ and $\alpha'$. With our topological assumptions, the possible stabilizers of critical points and of instantons are the trivial group (irreducibles) and a circle (reducibles).

A variation is to consider the moduli space framed at $-\infty$:
\begin{equation}\label{eq:framedmoduli}
	M(W,S,c;\widetilde{\alpha},\alpha')_{d+\dim \Gamma_\alpha}
\end{equation}
The subscript indicates the expected dimension of the irreducible stratum of this space. In constructing this moduli space, one mods out by the gauge transformations $g$ which limit at $-\infty$ to $\pm 1$ and at $+\infty$ to some gauge transformation in the stabilizer of $\alpha'$. There is an action of the (adjoint) stabilizer $\Gamma_\alpha$ on \eqref{eq:framedmoduli}, by extending $g\in\Gamma_\alpha$ to a gauge transformation over the cobordism. The quotient is $M(W,S,c;\alpha,\alpha')_d$. If $\alpha$ is irreducible, then $\Gamma_\alpha=1$, and these moduli spaces are the same. When $\alpha$ is reducible, then $\Gamma_\alpha\cong S^1$, and they are different, unless the moduli space consists itself entirely of reducibles.

Similarly, we have a moduli space framed at $+\infty$, denoted $M(W,S,c;\alpha,\widetilde{\alpha}')_{d+\dim \Gamma_{\alpha'}}$, which has a $\Gamma_{\alpha'}$-action. We also consider moduli framed at both $\pm \infty$:
\begin{equation}\label{eq:framedmoduli2}
	M(W,S,c;\widetilde{\alpha},\widetilde{\alpha}')_{d + \dim \Gamma_{\alpha} + \dim \Gamma_{\alpha'}}
\end{equation} 
and this carries a natural $\Gamma_\alpha \times \Gamma_{\alpha'}$-action. Similar notions are defined for the case of a cylinder $I\times (Y,L)$. The {\emph{index}} of a framed instanton refers to its index forgetting the framing data; in particular, instantons in \eqref{eq:framedmoduli2} have index $d$, as do those in the moduli space
\begin{equation}\label{eq:modulicylframed}
	\breve{M}(Y,L;\widetilde{\alpha}_1,\widetilde{\alpha}_2)_{d -1 + \dim \Gamma_{\alpha_1} + \dim \Gamma_{\alpha_2}}.
\end{equation}
We always assume that perturbation and metric data are chosen such that the critical sets $\mathfrak{C}_\pi(Y,L)$ and $\mathfrak{C}_{\pi'}(Y',L')$ are as in Subsection \ref{subsec:gradings} (non-degenerate, with reducibles in correspondence with quasi-orientations), and such that the irreducible strata of all (framed) moduli spaces are smooth.

For brevity we sometimes write $S$ in place of $(W,S,c)$, and $L$, $L'$ in place of $(Y,L)$, $(Y',L')$, respectively. A {\emph{broken}} framed singular instanton on $(W,S,c)$ is an element of
\begin{align}\label{eq:framedmoduli3}
	\breve{M}(L; \widetilde{\beta}_1,\widetilde{\beta}_2)\times_{\Gamma_{\beta_2}} \times \cdots \times_{\Gamma_{\beta_{k-1}}}\breve{M}(L; \widetilde{\beta}_{k-1},\widetilde{\beta}_{k})\times_{\Gamma_{\beta_{k}}} \times M(S;\widetilde{\beta}_k,\widetilde{\beta}_1') \nonumber \\
	\times_{\Gamma_{\beta_1'}} \breve{M}(L'; \widetilde{\beta}_{1}',\widetilde{\beta}_2')\times_{\Gamma_{\beta_2'}} \cdots \times_{\Gamma_{\beta_{l-1}'}} \breve{M}(L'; \widetilde{\beta}_{l-1}',\widetilde{\beta}_l') 
\end{align} 
Here $\Gamma_{\beta}$ acts diagonally on the two moduli spaces with $\beta$ as a limit, and \eqref{eq:framedmoduli3} is the product of the moduli spaces appearing by the product of the stabilizers $\Gamma_\beta$, as $\beta$ ranges over $\beta_i,\beta_j'$. For brevity we have omitted the dimensional subscripts from \eqref{eq:framedmoduli3}. An element in \eqref{eq:framedmoduli} is also a broken instanton. A tuple of connections $\mathbf{A}=(B_1,\ldots,B_{k-1},A,B_1',\ldots, B_{l-1}')$ representing a broken framed instanton in \eqref{eq:framedmoduli3} has index defined by 
\[
	\text{ind}(\mathbf{A}) = \sum_{i=1}^{k-1} \ind(B_i) + \dim \Gamma_{\beta_{i+1}} + \ind(A)  + \sum_{j=1}^{l-1} \ind(B'_j) + \dim \Gamma_{\beta'_{j}}.
\]
There is a notion of a sequence of elements in \eqref{eq:framedmoduli2} {\emph{chain converging}} to a broken framed instanton in \eqref{eq:framedmoduli3}, see \cite{donaldson-book,DME:QHS} for details. We define 
\begin{equation}\label{eq:framedmoduli4}
	M^+(W,S,c;\widetilde{\alpha},\widetilde{\alpha}')_{d + \dim \Gamma_{\alpha} + \dim \Gamma_{\alpha'}}
\end{equation} 
to be the moduli space of broken framed instantons of index $d$, with the topology of chain convergence. A fundamental result is that \eqref{eq:framedmoduli4} is {\emph{compact}} when $d\leq 3$; for $d>3$, non-compactness arises from bubbling of instantons along the singular locus. We may similarly define the broken instantons version of \eqref{eq:modulicylframed}, which is also compact for $d\leq 3$.

Given basepoints $p\in L$ and $p'\in L'$, and a path $\gamma$ in $W$ from $p$ to $p'$ whose image lies in the singular locus $S$, there is an associated holonomy map
\begin{equation}\label{eq:holmapintroduced}
	h^\gamma_{\alpha,\alpha'}: M(W,S,c;\widetilde{\alpha},\widetilde{\alpha}') \to S^1.
\end{equation}
This map is defined by choosing frames for the adjoint bundles at $(-\infty,p)$ and $(+\infty,p')$, and using parallel transport along $\gamma^+ \subset S^+$, the path obtained from $\gamma$ by attaching $\R_{\leq 0}\times \{p\}$ and $\R_{\geq 0}\times \{p'\}$. In the sequel we slightly abuse this description by referring to the holonomy as taken along $\gamma$. The holonomy takes values in $S^1$ because the instantons are reducible along the singular locus $S^+$. The map \eqref{eq:holmapintroduced} is equivariant with respect to the $\Gamma_{\alpha}\times \Gamma_{\alpha'}$-action in the sense that for $(g,g')\in \Gamma_{\alpha}\times \Gamma_{\alpha'}$ we have
\[
	h^\gamma_{\alpha,\alpha'}((g,g')A) = g' h^\gamma_{\alpha\alpha'}(A) g^{-1}. 
\]
We also have a holonomy map in the case of a cylinder, $h_{\alpha_1,\alpha_2}: \breve{M}(Y,L;\widetilde{\alpha}_1,\widetilde{\alpha}_2) \to S^1$, constructed using parallel transport along $\R\times \{p\}$. The map
\begin{equation*}\label{eq:holmaponbroken}
		h_{{\beta}_1,{\beta}_2}\times \cdots \times h_{{\beta}_{k-1},{\beta}_k}\times   h^\gamma_{{\beta}_k,{\beta}'_1} \times h_{{\beta}_1',{\beta}'_2}\times \cdots \times h_{{\beta}'_{l-1},{\beta}'_l} 
\end{equation*}
descends to the space of broken instantons \eqref{eq:framedmoduli3}, by equivariance. This defines a continuous extension of the holonomy map \eqref{eq:holmapintroduced} to the moduli space of broken instantons.

In practice, we must modify the holonomy map \eqref{eq:holmapintroduced} so that it behaves nicely with respect to the strata of the moduli space. We write
\[
	H^\gamma=H^\gamma_{\alpha,\alpha'}: M(W,S,c;\widetilde{\alpha},\widetilde{\alpha}') \to S^1.
\]
for such a {\emph{modified holonomy map}}, and similarly $H_{\alpha_1,\alpha_2}$ in the case of a cylinder. Such modified holonomy maps were already used in Section \ref{sec:links}. For the details of the construction, see \cite[\S 3.3.2, Appendix]{DS1}. The modified holonomy maps are chosen such that they send $0$-dimensional irreducible strata to $1\in S^1$, and are transverse on all strata to $-1\in S^1$.

Finally, we remark that a neighborhood of an unobstructed broken framed singular instanton in a compactified moduli space such as \eqref{eq:framedmoduli4} only has, a priori, the structure of a topological manifold. Ideally, there would be the structure of a smooth manifold with corners. This is a technical issue and is related to the smoothness of gluing maps, and was dealt with in \cite{DS1}, for example; see Remark 3.17 of that reference. To circumvent this problem, in order to carry out standard differential topological constructions, we work in the framework of {\emph{stratified-smooth spaces}}, which are certain stratified spaces whose strata are smooth manifolds. See \cite[Appendix A]{DME:QHS} for details.

\subsection{Obstructed reducible solutions} \label{gluing-theory-obs}

Let $(W,S):(Y,L)\to (Y',L')$ be a cobordism between links with non-zero determinants in integer homology spheres, with singular bundle data $c$. In this subsection and the following one, we assume that $(W,S,c)$ is negative definite of height $-1$. In particular, $b^1(W)=b^+(W)=0$, and the index of every reducible instanton on $(W,S,c)$ is at least $-3$. We may also assume that $(W,S,c)$ has at least one reducible instanton of index $-3$, for otherwise this cobordism is unobstructed. In particular, in the terminology of Definition \ref{defn:cobordismparity}, $(W,S,c)$ is an {\emph{even}} cobordism, and following Definition \ref{eq:stronglevelncobordismmaps} it is {\emph{strong}} height $-1$. 

 In the subsection to follow, we define a height $-1$ morphism $\widetilde C(Y,L)\to \widetilde C(Y',L')$ associated to $(W,S,c)$. In this subsection, we introduce the relevant results from obstructed gluing theory, adapted from \cite{DME:QHS}, that will be used towards this goal.

Recall that reducible critical points $\theta_\mathfrak{o}$ for $(Y,L)$ are labelled by quasi-orientations $\mathfrak{o}$. In this section, to simplify notation, we write $\theta=\theta_\mathfrak{o}$ for any of these reducible critical points, and similarly $\theta'$ for a reducible on $(Y',L')$. We also frequently write $S$ in place of $(W,S,c)$, and $L,L'$ in place of $(Y,L)$, $(Y',L')$, respectively. We also assume that basepoints $p\in L$ and $p'\in L'$ are fixed, as well as a path $\gamma$ in $S$ from $p$ to $p'$.

After possibly perturbing the ASD equation, we may assume that all irreducible elements in $M(S;\alpha,\alpha')_d$ for $d\leq 3$ are unobstructed. We cannot achieve the same regularity at reducibles of index $-3$. Consequently, the moduli space $M^+(S;\alpha,\alpha')_d$ does not necessarily have the structure of a stratified-smooth space, even after a generic perturbation. For instance, let $\Theta$ be a reducible over $(W,S,c)$ with index $-3$, asymptotic to $\theta$ and $\theta'$, and $[B]\in \breve M(L;\alpha,\theta)_1$. Then there might exist a sequence of elements in $M^+(S;\alpha,\theta')_0$ convergent to the broken solution $[B,\Theta]$. Thus $M^+(S;\alpha,\theta')_0$ is not necessarily a finite discrete set of points. In particular, the map $\Delta_1:C(Y,L)\to \sfR'$ cannot be defined in the usual way without first modifying the space $M^+(S;\alpha,\theta')_0$.

We define three types of obstructed broken singular instantons over $(W,S,c)$. Let $\Theta$ be a reducible over $(W,S,c)$ with index $-3$, with reducible limits $\theta$ and $\theta'$. A {\emph{type I}} obstructed solution is an element of the subspace of broken instantons
\begin{equation}\label{type-I-obs}
  \breve M^+(L;\alpha,\theta)_{d+1}\times\{\Theta\}
\end{equation}
in $M^+(S;\alpha,\theta')_d$. {\emph{Type II}} obstructed solutions form the subspace of broken instantons 
\begin{equation}\label{type-II-obs}
	\{\Theta\}\times \breve M^+(L';\theta',\alpha')_{d+1}
\end{equation}
inside $M^+(S;\theta,\alpha')_d$. {\emph{Type III}} solutions form the subspace of $M^+(S;\alpha,\alpha')_d$ given as
\begin{equation}\label{type-III-base}
  \bigsqcup_{0\leq i\leq d-1} \breve M^+(L;\alpha,\widetilde \theta)_{i+1} \times_{S^1} \overline{\underline \Theta} \times_{S^1} \breve M^+(L';\widetilde \theta',\alpha')_{d-i},
\end{equation}
where for each $i$, the corresponding subspace is empty unless $i\equiv \deg(\alpha)-1$ mod $4$. The moduli space $M(S;\widetilde \theta,\widetilde \theta')_{-1}$ admits an $S^1\times S^1$ action, and $ \overline{\underline \Theta} $ is the orbit of this action on the element $\Theta$. Since $\Theta$ is reducible, this orbit gives a copy of $S^1$. We have a holonomy map $M(S;\widetilde \theta,\widetilde \theta')_{-1}\to S^1$ along the path $\gamma$. In particular, there is a distinguished element of $ \overline{\underline \Theta} $ with holonomy $1$. This induces an identification of \eqref{type-III-base} with 
\begin{equation}\label{type-III-base-2}
  \bigsqcup_{0\leq i\leq d-1} \breve M^+(L;\alpha,\widetilde \theta)_{i+1} \times_{S^1} \breve M^+(L';\widetilde \theta',\alpha')_{d-i},
\end{equation}
Similar to \eqref{type-III-base-2}, \eqref{type-I-obs} and \eqref{type-II-obs} can be respectively written as 
\[
  \breve M^+(L;\alpha,\widetilde \theta)_{d+1}\times_{S^1} \underline \Theta, \hspace{1cm} \overline \Theta \times_{S^1} \breve M^+(L';\widetilde \theta',\alpha')_{d+2},
\]
where $\underline \Theta\subset M(S;\widetilde \theta,\theta')_{-2}$ and $\overline \Theta\subset M(S; \theta,\widetilde \theta')_{-2}$ are the orbits given by $\Theta$.

A description of the behavior of the moduli spaces $M^+(S;\alpha,\alpha')_d$ in a neighborhood of obstructed broken solutions is provided by obstructed gluing theory. Before stating these gluing theory results, we need to define the relevant {\it obstruction bundles}. Consider $\breve M(L;\alpha,\widetilde \theta)_{d+1}$ as a principal $S^1$-bundle over $\breve M(L;\alpha,\theta)_{d}$, and let
\[
  \mathcal H_{\alpha,\theta}:=\breve M(L;\alpha,\widetilde \theta)_{d+1}\times_{S^1} \C
\] 
This smooth line bundle extends over the stratified-smooth space $\breve M^+(L;\alpha,\theta_{\fo})_{d}$ as a stratified complex line bundle, which will also be denoted $\mathcal H_{\alpha,\theta}$. One should regard $\mathcal H_{\alpha,\theta}$ as the space of the cokernels of the ASD operator for the obstructed solutions of type I. To be more precise, the cokernel of the ASD operator for $\Theta$, which has complex dimension $1$, gives rise to $S^1$-bundles $\mathcal H^+(\underline \Theta)$ over $\underline \Theta$ and $\mathcal H^+(\overline \Theta)$ over $\overline \Theta$. Then we have an identification
 \[
	\mathcal H_{\alpha,\theta}=\breve M^+(L;\alpha,\widetilde \theta)_{d+1}\times_{S^1}\mathcal H^+(\underline \Theta).
\]
We may similarly define a complex line bundle $\mathcal H_{\theta,\alpha}$ over $\breve M^+(L;\theta,\alpha)_{d}$, which can be identified with $\mathcal H^+(\overline \Theta) \times_{S^1} \breve M^+(L';\widetilde \theta',\alpha')_{d+2}$.

Finally, to define the obstruction bundle over the space of type III obstructed solutions, we note that the cokernel of $\Theta$ induces a complex line bundle $\mathcal H^+(\overline{\underline \Theta})$ over $\overline{\underline \Theta}$ together with a lift of the $S^1\times S^1$ action on the base. This induces a line bundle on \eqref{type-III-base}, which we denote by $\mathcal H_{\alpha,\alpha'}$. There is a natural projection map 
\[
	\breve M^+(L;\alpha,\widetilde \theta)_{i+1} \times_{S^1} \breve M^+(L';\widetilde \theta',\alpha')_{d-i} \to \breve M^+(L;\alpha,\widetilde \theta)_{i}
\] 
and $\mathcal H_{\alpha,\alpha'}$ is the pullback of $\mathcal H_{\alpha,\theta}$ with respect to this projection map. Similarly, $\mathcal H_{\alpha,\alpha'}$ is the pullback of $\mathcal H_{\theta',\alpha'}$ with respect to the projection map to $\breve M^+(L';\widetilde \theta',\alpha')_{d-i-1}$. In particular, any section of $\mathcal H_{\alpha,\theta}$ or $\mathcal H_{\theta',\alpha'}$ determines a section of $\mathcal H_{\alpha,\alpha'}$.

In what follows, $\overline \R_{+}$ denotes the union of the set of positive real numbers $\R_+$ and $\{\infty\}$, topologized in the obvious way. In particular, $\overline \R_{+}$ is a smooth stratified space with the two strata $\{\infty\}$ and $\R_{+}$. The space $\overline \R_{+}$ is used to parametrize the length of the {\it neck}, where $\infty\in \overline \R_{+}$ is used when the length of the neck is infinity, the location of broken elements of a moduli space of ASD connections. 

The following proposition gives a description for a neighborhood of type I obstructed elements of $M^+(S;\alpha,\theta')_d$. For the proof of this proposition (and Proposition \ref{type-3-obs-gluing} below), we refer the reader to \cite[Subsection 5.1]{DME:QHS} and the references therein.  
\begin{prop}\label{type-1-obs-gluing}
	Fix an integer $-1 \leq d\leq 2$, and $\alpha\in \fC_{\pi}(Y,L)$. Then there is a continuous section $\Psi_{\alpha,\Theta}$ of $\mathcal H_{\alpha,\theta}\times \overline \R_{+}$ over 
	$\breve M^+(L;\alpha,\theta)_{d+1}\times \overline \R_{+}$ satisfying:
	\begin{itemize}
		\item[(i)] $\Psi_{\alpha,\Theta}$ vanishes on $\breve M^+(L;\alpha,\theta)_{d+1}\times \{\infty\}$, and $\Psi_{\alpha,\Theta}$ is transverse to the zero section over any stratum of  $\breve M^+(L;\alpha,\theta)_{d+1}\times \R_{+}$.
		\item[(ii)] There is a map $\Phi_{\alpha,\Theta}:\Psi_{\alpha,\Theta}^{-1}(0)\to M^+(S;\alpha,\theta')_d$ which is a homeomorphism onto an open subset of $M^+(S;\alpha,\theta')_d$, 
		its restriction to each stratum is a diffeomorphism, and it maps $\breve M^+(L;\alpha,\theta)_{d+1}\times \{\infty\}$ to the space of type I obstructed solutions by the identity map. 
		Furthermore, denoting by $\pi$ the projection map 
		\[\breve M^+(L;\widetilde \alpha,\widetilde \theta)_{d+1+\dim\Gamma_\alpha}\times \overline \R_{+}\to\breve M^+(L;\alpha,\theta)_{d+1}\times \overline \R_{+},\] there is a $\Gamma_\alpha\times S^1$-equivariant map 
		\[\widetilde \Phi_{\alpha,\Theta}:\pi^{-1}(\Psi_{\alpha,\Theta}^{-1}(0))\to M^+(S;\widetilde \alpha,\widetilde \theta')_{d+1+\dim\Gamma_\alpha}\] such that $\Phi_{\alpha,\Theta}$ is induced by $\widetilde \Phi_{\alpha,\Theta}$.
		The map $\widetilde \Phi_{\alpha,\Theta}$ sends the point $(x,\infty)$ in the space $\breve M^+(L;\widetilde \alpha,\widetilde \theta)_{d+1+\dim\Gamma_\alpha}\times \{\infty\}$ to 
		$(x,\Theta)\in M^+(S;\widetilde \alpha,\widetilde \theta')_{d+1+\dim\Gamma_\alpha}$. 
		\item[(iii)] $\Psi_{\alpha,\Theta}$ and $\Phi_{\alpha,\Theta}$ are compatible with the stratification of $\breve M^+(L;\alpha,\theta)_{d+1}$ in the following sense: for any 
		$[x,y]\in \breve M^+(L;\alpha,\widetilde \beta)_{i}\times_{\Gamma_\beta} \breve M^+(L;\widetilde \beta,\widetilde \theta)_{d-i+\dim\Gamma_{\beta}}$, 
		we have 
		\[\Psi_{\alpha,\Theta}([x,y],t)=\Psi_{\beta,\Theta}([y],t),\]
		and if in addition we have $\Psi_{\beta,\Theta}([y],t)=0$, then 
		\[\widetilde \Phi_{\alpha,\Theta}([x,y],t)=[x,\widetilde \Phi_{\beta,\Theta}(y,t)].\]
	\end{itemize}
\end{prop}

Roughly speaking, Proposition \ref{type-1-obs-gluing} states that we can glue an element $x$ of the moduli space $\breve M^+(L;\alpha,\theta)_{d+1}$ to the reducible $\Theta$ with a prescribed length of the neck $t\in \R_{+}$ and obtain an element of $M^+(S;\alpha,\theta')_d$ if and only if an obstruction given by the obstruction section $\Psi_{\alpha,\Theta}$ vanishes, and the space of such solutions of $M^+(S;\alpha,\theta')_d$ determines a neighborhood of type I obstructed solutions.

There is a similar proposition for type II obstructed solutions: for any $\alpha'\in \fC_{\pi'}(Y',L')$ and $-1 \leq d\leq 2$, there is a continuous section $\Psi_{\Theta,\alpha'}$ of the bundle
\[
	\mathcal H_{\theta',\alpha'}\times \overline \R_{+}\to \breve M^+(L';\theta',\alpha')_{d+1}\times \overline \R_{+}
\]
and a homeomorphism $\Phi_{\Theta,\alpha'}:\Psi_{\Theta,\alpha'}^{-1}(0)\to M^+(S;\theta,\alpha')_d$, and the analogues of (i)--(iii) are satisfied. The next proposition describes the situation for type III solutions.

\begin{prop}\label{type-3-obs-gluing}
	Suppose $\Theta$ is as in Proposition \ref{type-1-obs-gluing}, $\alpha\in \fC_{\pi}(Y,L)$, $\alpha'\in \fC_{\pi'}(Y',L')$, $1\leq d\leq 3$ and $0\leq i\leq d-1$ are integers such that $i\equiv \deg(\alpha)-1$ mod $4$.
	There is a continuous section $\Psi_{\alpha,\Theta,\alpha'}$ of $\mathcal H_{\alpha,\alpha'}\times \overline \R_{+}\times \overline \R_{+}$ over 
	\begin{equation}\label{domian-C-a-T-a}
	  \breve M^+(L;\alpha,\widetilde \theta)_{i+1}\times_{S^1} \breve M^+(L';\widetilde \theta',\alpha')_{d-i}
	  \times \overline \R_{+}\times \overline \R_{+}
	\end{equation}
	such that the following conditions are satisfied.
	\begin{itemize}
		\item[(i)] For any $[x,y]\in \breve M^+(L;\alpha,\widetilde \theta)_{i+1} \times_{S^1} \breve M^+(L';\widetilde \theta',\alpha')_{d-i}$, we have
		\[
		  \Psi_{\alpha,\Theta,\alpha'}([x,y],t,\infty)=\Psi_{\alpha,\Theta}([x],t),\hspace{0.6cm}\Psi_{\alpha,\Theta,\alpha'}([x,y],\infty,t)=\Psi_{\Theta,\alpha'}([y],t).
		\]
		In particular, $\Psi_{\alpha,\Theta,\alpha'}([x,y],\infty,\infty)=0$.
		\item[(ii)] $\Psi_{\alpha,\Theta,\alpha'}$ is transverse to the zero section over any stratum of  
		\[\breve M^+(L;\alpha,\widetilde \theta)_{i+1}\times_{S^1} \overline{\underline \Theta} \times_{S^1} \breve M^+(L';\widetilde \theta',\alpha')_{d-i}
		\times \R_{+}\times \R_{+}.\]
		\item[(iii)]There is a map $\Phi_{\alpha,\Theta,\alpha'}:\Psi_{\alpha,\Theta,\alpha'}^{-1}(0)\to M^+(S;\alpha,\alpha')_d$ which is a homeomorphism into an open subspace of $M^+(S;\alpha,\alpha')_d$, its restriction to each stratum of $\Psi_{\alpha,\Theta,\alpha'}^{-1}(0)$ 
		is a diffeomorphism, and for any $[x,y]\in \breve M^+(L;\alpha,\widetilde \theta)_{i+1} \times_{S^1} \breve M^+(L';\widetilde \theta',\alpha')_{d-i}$, 
		\[
		  \Phi_{\alpha,\Theta,\alpha'}([x,y],t,\infty)=[\widetilde \Phi_{\alpha,\Theta}(x,t),y],\hspace{.6cm}\Phi_{\alpha,\Theta,\alpha'}([x,y],\infty,t)=[x,\widetilde \Phi_{\Theta,\alpha'}(y,t)].
		\]
		Further, there is a $\Gamma_{\alpha}\times \Gamma_{\alpha'}$-equivariant lift $\widetilde  \Phi_{\alpha,\Theta,\alpha'}$ of $ \Phi_{\alpha,\Theta,\alpha'}$ as in Proposition \ref{type-3-obs-gluing}.
	\end{itemize} 
	\begin{itemize}
		\item[(iv)] For $[x,y,z]\in  \breve M^+(L;\alpha,\widetilde \beta)_{j+\dim(\Gamma_{\beta})} \times_{\Gamma_{\beta}} 
		\breve M^+(L;\widetilde \beta,\widetilde \theta)_{i-j} \times_{S^1} \breve M^+(L';\widetilde \theta',\alpha')_{d-i}$, 
		\[\Psi_{\alpha,\Theta,\alpha'}([x,y,z],t,t')=\Psi_{\beta,\Theta,\alpha'}([y,z],t,t'),\]
		and if in addition we have $\Psi_{\beta,\Theta,\alpha'}([y,z],t,t')=0$, then
		\[
		  \Phi_{\alpha,\Theta,\alpha'}([x,y,z],t,t')=[x,\Phi_{\beta,\Theta,\alpha'}([y,z],t,t')]
		\]
		Similar claims hold for the restriction of $\Psi_{\alpha,\Theta,\alpha'}$ to subspaces of instantons broken along some critical point $\beta'$ for $L'$.	
	\end{itemize}
\end{prop}

We now describe some constructions using the data from these propositions that will be used in the next subsection. For any reducible $\Theta$ of index $-3$, fix a positive real number $T_\Theta$. Write $\psi_{\alpha,\Theta}$ and $\psi_{\Theta,\alpha'}$ for the restrictions of the sections $\Psi_{\alpha,\Theta}$ and $\Psi_{\Theta,\alpha'}$ to $\breve M^+(L;\alpha,\theta)_{d+1}\times \{T_\Theta\}$ and $\{T_\Theta\}\times \breve M^+(L';\theta',\alpha')_{d+1}$. For generic $T_\Theta$, $\psi_{\alpha,\Theta}$ and $\psi_{\Theta,\alpha'}$ are transverse to the zero section. Assume $T_\Theta$ has this property for all $\alpha$ and $\alpha'$. Define
\begin{equation}\label{S-theta}
  S_\Theta:=\{(t,t')\in \overline \R_{+}\times \overline \R_{+} \mid \frac{1}{t}+\frac{1}{t'}=\frac{1}{T_\Theta}\},
\end{equation}
\begin{equation}\label{U-theta}
  U_\Theta:=\{(t,t')\in \overline \R_{+}\times \overline \R_{+} \mid \frac{1}{t}+\frac{1}{t'}\leq \frac{1}{T_\Theta}\}.
\end{equation}
Denote by $\psi_{\alpha,\Theta,\alpha'}$ the restriction of $\Psi_{\alpha,\Theta,\alpha'}$ to the subspace of \eqref{domian-C-a-T-a} where the last two coordinates belong to $S_\Theta$. We may again assume that $\psi_{\alpha,\Theta,\alpha'}$ is transverse to the zero section. Note that $(T_\Theta,\infty)$, $(\infty,T_\Theta) \in S_\Theta$ and as a consequence of Proposition \ref{type-3-obs-gluing}, 
\[
  \psi_{\alpha,\Theta,\alpha'}([x,y],T_\Theta,\infty)=\psi_{\alpha,\Theta}([x],T_\Theta),\hspace{0.4cm}\psi_{\alpha,\Theta,\alpha'}([x,y],\infty,T_\Theta)=\psi_{\Theta,\alpha'}([y],T_\Theta),
\]
for any $[x,y]\in \breve M^+(L;\alpha,\widetilde \theta)_{i+1} \times_{S^1} \breve M^+(L';\widetilde \theta',\alpha')_{d-i}$. We need $T_\Theta$ to be large enough and we will make this property more explicit in the next subsection.

The line bundles $\mathcal H_{\alpha,\theta}$, $\mathcal H_{\theta',\alpha'}$ can be trivialized when $\alpha$ and $\alpha'$ are irreducible, in the following way. Since the modified holonomy map $H_{\alpha, \theta}$ is $S^1$-equivariant, the assignment
\[
  [A]\in \breve M^+(L;\alpha,\widetilde \theta)_{d+1} \mapsto H_{\alpha, \theta}([A])\in S^1\subset \C
\]
determines a nowhere vanishing global section of $\mathcal H_{\alpha,\theta}$. Define $\psi_{\alpha,\Theta}'$ to be this section of $\mathcal H_{\alpha,\theta}$. Similarly, a nowhere vanishing section of $\mathcal H_{\theta',\alpha'}$ is given by
\[
  [A]\in \breve M^+(L';\widetilde \theta',\alpha')_{d+1} \mapsto H_{\theta',\alpha'}([A])^{-1}\in S^1\subset \C,
\]
which we denote by $\psi_{\Theta,\alpha'}'$.

To define the cobordism map associated to $(W,S,c)$, we will need homotopies from $\psi_{\alpha,\Theta}$ to $\psi_{\alpha,\Theta}'$ and from $\psi_{\Theta,\alpha'}$ to $\psi_{\Theta,\alpha'}'$. Suppose $\Xi_{\alpha,\Theta}$ is a section of $\mathcal H_{\alpha,\theta}\times [0,1]$ over $\breve M^+(L;\alpha,\theta)_{d}\times[0,1]$ such that its restriction to each stratum is transverse to the zero section. We require that the restriction of $\Xi_{\alpha,\Theta}$ to $\breve M^+(L;\alpha,\theta)_{d}\times\{0\}$ and $\breve M^+(L;\alpha,\theta)_{d}\times\{1\}$ are respectively $\psi_{\alpha,\Theta}$ and $\psi_{\alpha,\Theta}'$. The properties of the modified holonomy maps imply that the restriction of $\psi_{\alpha,\Theta}'$ to the subspace  $\breve M^+(L;\alpha,\beta)_{i} \times \breve M^+(L;\beta,\theta)_{d-i-1}$ of $\breve M^+(L;\alpha,\theta)_{d}$ is
\begin{equation}\label{co-dim-1-res}
  \psi_{\alpha,\Theta}'(x,y)=H_{\alpha,\beta}(x)\cdot \psi_{\beta,\Theta}'(y).
\end{equation}
For $(x,y,t) \in \breve M^+(L;\alpha,\beta)_{i} \times \breve M^+(L;\beta,\theta)_{d-i-1}\times [0,1]$, consider the section
\begin{equation} \label{co-dim-1-res-2}
  \left( (1-t) + t H_{\alpha\beta}(x)\right)\Xi_{\beta,\Theta}(y,t).
\end{equation}
When $d=i+1$, our assumptions on $H_{\alpha,\beta}$ and $\Xi_{\beta,\Theta}$ imply that \eqref{co-dim-1-res-2} is transverse to the zero section, and in this case we require that $\Xi_{\alpha,\Theta}$ agrees with \eqref{co-dim-1-res-2} over $\breve M^+(L;\alpha,\beta)_{i} \times \breve M^+(L;\beta,\theta)_{d-i-1}\times [0,1]$. Proposition \ref{type-1-obs-gluing} implies that our requirements on $ \Xi_{\alpha,\Theta}$ are consistent. Similarly, we pick a section $\Xi_{\Theta,\alpha'}$ of $\mathcal H_{\theta',\alpha'}\times [0,1]$ over $\breve M^+(L';\theta',\alpha')_{d}\times [0,1]$ such that they have the analogues of the above properties. 

We need a similar construction starting with the section $\psi_{\alpha,\Theta,\alpha'}$. We fix a section $\Xi_{\alpha,\Theta,\alpha'}$ of $\mathcal H_{\alpha,\alpha'}\times S_\Theta\times [0,1]$ which is transverse to the zero section on each stratum and has the following properties for any $[x,y]\in \breve M^+(L;\alpha,\widetilde \theta)_{i+1} \times_{S^1} \breve M^+(L';\widetilde \theta',\alpha')_{d-i}$. First, 
\[
	\Xi_{\alpha,\Theta,\alpha'}([x,y],(t,t'),0)=\psi_{\alpha,\Theta,\alpha'}([x,y],(t,t'))
\]
for any $(t,t')\in S_\Theta$.  Recall that $\mathcal H_{\alpha,\alpha'}$ over $ \breve M^+(L;\alpha,\widetilde \theta)_{i+1} \times_{S^1} \breve M^+(L';\widetilde \theta',\alpha')_{d-i}$ is isomorphic to the pullbacks of $\mathcal H_{\alpha,\theta}$ over $ \breve M^+(L;\alpha,\theta)_{i} $ and $\mathcal H_{\theta',\alpha'}$ over $ \breve M^+(L';\theta',\alpha')_{d-i-1}$. If $\alpha$ (resp. $\alpha'$) is irreducible, we require that $\Xi_{\alpha,\Theta,\alpha'}([x,y],(T_\Theta,\infty),t)=\Xi_{\alpha,\Theta}([x],t)$ (resp. $\Xi_{\alpha,\Theta,\alpha'}([x,y],(\infty,T_\Theta),t)=\Xi_{\Theta,\alpha'}([y],t)$).

We can also be more specific about the restriction of $\Xi_{\alpha,\Theta,\alpha'}$ to 
\[
	\breve M^+(L;\alpha,\widetilde \theta)_{i+1} \times_{S^1} \breve M^+(L';\widetilde \theta',\alpha')_{d-i}\times S_\Theta\times \{1\},
\]
which we denote by $\psi_{\alpha,\Theta,\alpha'}'$. Let $S_-$ and $S_+$ be copies of $S^1$, equipped with $S^1$ actions
\begin{equation}\label{S-m-p}
  (z,x)\in S^1\times S_\pm \mapsto z^{\pm 1}\cdot x \in S_\pm.
\end{equation}
Then we have the following isomorphisms of $S^1$-spaces:
\begin{align}\label{S1-space-id-1}
  &(\pi,H_{\alpha, \theta}):\breve M^+(L;\alpha,\widetilde \theta)_{i+1}\to \breve M^+(L;\alpha,\theta)_{i}\times S_-,
\\
  &(\pi',H_{ \theta',\alpha'}):\breve M^+(L';\widetilde \theta',\alpha')_{d-i}\to \breve M^+(L';\theta',\alpha')_{d-i}\times S_+, \label{S1-space-id-2}
\end{align}
where $\pi$ and $\pi'$ are projection maps. The $S^1$-equivariant maps 
\[
 \widetilde \sigma_\pm: (x_-,x_+)\in S_-\times S_+ \mapsto (x_{\pm})^{\pm 1} \in S^1\subset \C 
\]
determine sections $\sigma_\pm$ of the complex line bundle $\mathcal H$ over $S_-\times_{S^1}S_+$ which is the quotient of $ S_-\times S_+\times \C$ by the diagonal $S^1$ action.
Fix a section $u$ of $\mathcal H\times [-1,1]$ over $S_-\times_{S^1}S_+\times [-1,1]$ which agrees with $\sigma_\pm$ over $S_-\times_{S^1}S_+\times \{\pm 1\}$ and has a unique transversal intersection with the zero section at $([1,1],0)\in S_-\times_{S^1}S_+\times [-1,1]$.

We may use \eqref{S1-space-id-1} and \eqref{S1-space-id-2} to obtain an identification 
\[
  \breve M^+(L;\alpha,\widetilde \theta)_{i+1} \times_{S^1} \breve M^+(L';\widetilde \theta',\alpha')_{d-i} \cong  \breve M^+(L;\alpha,\theta)_{i}\times \breve M^+(L'; \theta',\alpha')_{d-i}\times S_- \times_{S^1}S_+
\]
and with respect to this identification, $\mathcal H_{\alpha,\alpha'}$ is identified with the pullback of the bundle $\mathcal H$. After further identifying $S_\Theta$ with a closed interval, say $[-1,1]$, $\psi_{\alpha,\Theta,\alpha'}'$ can be regraded as a section of the pullback of $\mathcal H$ to the space 
\begin{equation}\label{domain-c-a-T-a-p}
  \breve M^+(L;\alpha,\theta)_{i}\times \breve M^+(L'; \theta',\alpha')_{d-i}\times S_- \times_{S^1}S_+ \times [-1,1].
\end{equation}
Our previous requirements on $\Xi_{\alpha,\Theta,\alpha'}$ imply that over the subspace 
\[
\breve M^+(L;\alpha,\theta)_{i}\times \breve M^+(L';\theta',\alpha')_{d-i}\times S_- \times_{S^1}S_+ \times \{\pm 1\},
\]
$\psi_{\alpha,\Theta,\alpha'}'$ is given by $\sigma_{\pm}$. We further demand that the section $\psi_{\alpha,\Theta,\alpha'}'$ over \eqref{domain-c-a-T-a-p} equals the section $u$ as described in the previous paragraph.

\vspace{.2cm}

\begin{remark}\label{difference-models}
	There is a difference between how we construct the sections $\Xi_{\alpha,\Theta}$ here and the corresponding ones in \cite{DME:QHS}. 
	For instance, the section $\Xi_{\alpha,\Theta}$ here interpolates 
	between the section $\psi_{\alpha,\Theta}$ given by obstructed gluing theory and $\psi_{\alpha,\Theta}'$, which depends only on the modified 
	holonomy map associated to the incoming end of the cobordism. On the other hand, the analogous sections in \cite{DME:QHS} interpolate between obstructed gluing 
	theory sections and some fixed section associated to the incoming end. In particular, there is more flexibility in the choice of this section, as it is not necessarily given by (modified) holonomy maps. 
	
	In fact, the setup of \cite{DME:QHS} does not allow using modified holonomy maps 
	in the same way as in the present work to get the counterpart of $\psi_{\alpha,\Theta}'$, because of the term $H_{\alpha,\beta}(x)$ in \eqref{co-dim-1-res}. 
	In \cite{DME:QHS}, the counterpart of $\psi_{\alpha,\Theta}'$ satisfies the analogue of \eqref{co-dim-1-res}, where  $H_{\alpha,\beta}(x)$  is replaced with $1$. 
	The necessity to use modified holonomy maps to obtain $\psi_{\alpha,\Theta}'$ and its variations in the present work arises from the point that we use smaller 
	(in particular, finite rank) models to define our $\cS$-complexes. In the next subsection, we use these sections to modify our moduli spaces, and the 
	more specific form of $\psi_{\alpha,\Theta}'$ and its variations allows us to have better control on the boundary of our modified moduli spaces.
 \end{remark}

\subsection{Height $-1$ cobordism maps}\label{obs-cob-maps-level--1}

We are ready to define the cobordism map associated to $(W,S,c):(Y,L)\to (Y',L')$, a given cobordism which is negative definite of height $-1$. In particular, we define maps $\lambda$, $\mu$, $\Delta_1$, $\Delta_2$, $\tau_0$ and $\tau_{-1}$ satisfying the following:
\begin{align}
	d'\lambda - \lambda d-\delta_2'\tau_{-1} \delta_1&= 0 \label{lambda-bdry-relation}\\
	- \delta_1'\lambda+\Delta_1 d +\tau_{-1}\delta_1v+\tau_{0}\delta_1  &=0  \label{Delta-1-bdry-relation}\\
	\lambda \delta_2+d'\Delta_2  -v'\delta_2'\tau_{-1}-\delta_2'\tau_{0}  & =0  \label{Delta-2-bdry-relation}\\
	\mu d + d'\mu + \lambda v - v'\lambda + \Delta_2\delta_1 - \delta_2'\Delta_1  & = 0 \label{mu-bdry-relation}
\end{align}

We start by defining the map $\lambda:C(Y,L)\to C(Y',L')$. As in the unobstructed case, this map is defined using the moduli spaces $M^+(W,S,c;\alpha,\alpha')_0$ with $\alpha\in \fC_\pi^\text{irr}$ and $\alpha'\in \fC_{\pi'}^{\text{irr}}$. In general, the moduli space $M^+(W,S,c;\alpha,\alpha')_d$ contains obstructed solutions, which are necessarily of type III, only if $d\geq 1$. In particular, $M(W,S,c;\alpha,\alpha')_0$ is a compact smooth $0$-dimensional manifold. After orienting this moduli space (see Subsection \ref{orientation} below), define the homomorphism $\lambda:C(Y,L)\to C(Y',L')$ by
\[
  \langle\lambda(\alpha),\alpha'\rangle = \# M(W,S,c;\alpha,\alpha')_0 . 
\]
The following proposition asserts that $\lambda$ is not quite a chain map. The failure of $\lambda$ being a chain map is measured by the operators $\delta_1$ and $\delta_2'$. The map $\eta_{-1}(W,S,c):\sfR\to \sfR'$ in this proposition is given by a signed count of reducibles of index $-3$ over $(W,S,c)$, where the signs are specified in Subsection \ref{orientation}.

\begin{prop}\label{bdry-rel-lambda}
	The homomorphism $\lambda:C(Y,L)\to C(Y',L')$ satisfies \eqref{lambda-bdry-relation}, where 
	\[
		\tau_{-1}:=\eta_{-1}(W,S,c).
	\]
\end{prop}

The proof of this proposition uses the moduli spaces $M^+(W,S,c;\alpha,\alpha')_1$. Since these spaces might contain obstructed solutions of type III, they need to be modified in a neighborhood of any such obstructed solution to obtain a stratified-smooth space.  For $d\leq 3$, we define $N^+(W,S,c;\alpha,\alpha')_d$ by first removing 
\begin{equation}\label{first-part-N-complement}
	\bigsqcup_{\Theta,i} \Phi_{\alpha,\Theta,\alpha'}\(\Psi_{\alpha,\Theta,\alpha'}^{-1}(0)\cap(\breve M^+(L;\alpha,\widetilde \theta)_{i+1} \times_{S^1} \breve M^+(L';\widetilde \theta',\alpha')_{d-i}\times U_\Theta)\)
\end{equation}
from $M^+(S;\alpha,\alpha')_d$ and then taking the disjoint union with
\begin{equation}\label{second-part-N}
	\bigsqcup_{\Theta} (\Xi_{\alpha,\Theta,\alpha'})^{-1}(0)\cap \(\breve M^+(L;\alpha,\widetilde \theta)_{i+1} \times_{S^1} \breve M^+(L';\widetilde \theta',\alpha')_{d-i}\times S_\Theta\times [0,1]\),
\end{equation}
where $\Theta$ is a reducible over $(W,S,c)$ with index $-3$ and limits $\theta,\theta'$. Analogous to other moduli spaces associated to the cobordism $(W,S,c)$, we write $N^+(S;\alpha,\alpha')_d$ for this moduli space when it does not cause any confusion. The topological space $N^+(S;\alpha,\alpha')_d$ is a compact $d$-dimensional stratified-smooth space because a neighborhood of the obstructed solutions are removed from $M^+(S;\alpha,\alpha')_d$ and the sections $\psi_{\alpha,\Theta,\alpha'}$ and $\Xi_{\alpha,\Theta,\alpha'}$ are transverse to the zero section. Our convention in orienting this moduli space is discussed in Subsection \ref{orientation}. The orientation on the complement of \eqref{second-part-N} in $N^+(S;\alpha,\alpha')_d$ is inherited from $M^+(S;\alpha,\alpha')_d$. This space and \eqref{second-part-N} have common boundary given by the zeroes of $\psi_{\alpha,\Theta,\alpha'}$, and the orientation on \eqref{second-part-N} is chosen that the induced boundary orientations on this boundary component disagrees with each other.

\begin{proof}
	The boundary strata of $N^+(S;\alpha,\alpha')_1$ can be described as follows. The boundary components from the complement of \eqref{first-part-N-complement} 
	in $M^+(S;\alpha,\alpha')_1$ are given by
	\[
	  \breve M^+(L;\alpha,\beta)_{0}\times M^+(S;\beta,\alpha')_0,\hspace{1cm}M^+(S;\alpha,\beta')_0\times \breve M^+(L';\beta',\alpha')_{0},
	\]
	and $\Phi_{\alpha,\Theta,\alpha'}(\psi_{\alpha,\Theta,\alpha'}^{-1}(0))$ where $\beta\in \fC^{\rm irr}_\pi$, $\beta'\in \fC^{\rm irr}_{\pi'}$ and $\Theta:\theta\to \theta'$ is a reducible over $(W,S,c)$ with index $-3$.
	In particular, a signed count of the boundary components of this part of $N^+(W,S,c;\alpha,\alpha')_1$ implies that
	\begin{equation}\label{cut-moduli-bdry}
	  \langle d'\lambda( \alpha),\alpha'\rangle -\langle \lambda d(\alpha),\alpha'\rangle+ \sum_{\Theta}\# \(\psi_{\alpha,\Theta,\alpha'}^{-1}(0)\).
	\end{equation}
	
	Now, we analyze the boundary points of the second part of $N^+(S;\alpha,\alpha')_1$ given in \eqref{second-part-N}. In this case, $i=0$. Our transversality assumption on $\Xi_{\alpha,\Theta}$ over $\breve M^+(L;\alpha,\theta)_{0} \times [0,1]$ 
	and $\Xi_{\Theta,\alpha'}$ over $\breve M^+(L';\theta',\alpha')_{0}\times [0,1]$ and the relation between these sections and $\Xi_{\alpha,\Theta,\alpha'}$
	imply that the boundary of \eqref{second-part-N} is equal to
	\begin{equation}\label{bdry-part-two}
	  \bigsqcup_{\Theta} \((\psi_{\alpha,\Theta,\alpha'}')^{-1}(0)\sqcup -\psi_{\alpha,\Theta,\alpha'}^{-1}(0)\),
	\end{equation}
	where $\psi_{\alpha,\Theta,\alpha'}'$ is the restriction of $\Xi_{\alpha,\Theta,\alpha'}$ to $\breve M^+(L;\alpha,\widetilde \theta)_{1} \times_{S^1} \breve M^+(L';\widetilde \theta',\alpha')_{1}\times S_\Theta\times \{1\}$.
	Any connected component of $\breve M^+(L;\alpha,\widetilde \theta)_{1}$ (resp. $ \breve M^+(L';\widetilde \theta',\alpha')_{1}$) can be identified with the space $S_-$ (resp. $S_+$) in \eqref{S-m-p}. By our assumptions on $\psi_{\alpha,\Theta,\alpha'}'$, it has exactly one zero over $S_-\times_{S^1}S_+ \times S_\Theta\times \{1\}$, where the sign is determined by the orientations of $S_-$ and $S_+$. From this we see that the signed count of $(\psi_{\alpha,\Theta,\alpha'}')^{-1}(0)$ equals 
	\[
	  -\#\(\breve M^+(L;\alpha,\theta)_{0}\)\cdot \#\(\breve M^+(L';\theta',\alpha')_{0}\)
	\]
	Combining \eqref{cut-moduli-bdry}, \eqref{bdry-part-two} and the characterization of the signed count of $(\psi_{\alpha,\Theta,\alpha'}')^{-1}(0)$ establishes the claim. 	
\end{proof}

The  definition of the operator $\Delta_1$ associated to $(W,S,c)$ uses a modified version of $M^+(S;\alpha,\theta')_0$. The moduli spaces $M^+(W,S,c;\alpha,\theta')_0$ contain obstructed solutions of type I, parametrized by the spaces $\breve M^+(L;\alpha,\theta)_{1}\times\{\Theta\}$ where $\Theta$ is an index $-3$ reducible from some reducible flat connection $\theta$ to the fixed reducible $\theta'$ on $(Y',L')$. Define $N^+(W,S,c;\alpha,\theta')_0$ (usually abbreviated as $N^+(S;\alpha,\theta')_0$) to be the disjoint union of 
\begin{equation}\label{Delta-1-1-def-mod-space}
  M^+(S;\alpha,\theta')_0\setminus \bigsqcup_{\Theta}\Phi_{\alpha,\Theta}\(\Psi_{\alpha,\Theta}^{-1}(0)\cap (\breve M^+(L;\alpha,\theta)_{1}\times (T_\Theta,\infty])\)
\end{equation}
\begin{equation}\label{Delta-1-2-def-mod-space}
 \text{and} \qquad  \bigsqcup_\Theta (\Xi_{\alpha,\Theta})^{-1}(0)\cap \(\breve M^+(L;\alpha,\theta)_{1}\times [0,1]\),
\end{equation}
where $\Theta$ runs over all index $-3$ reducibles which are asymptotic to $\theta'$ on the outgoing end. The space $N^+(S;\alpha,\theta')_0$ is a compact oriented $0$-dimensional manifold. We use this space to define $\Delta_1:C(Y,L)\to \sfR'$ as follows:
\begin{equation}\label{Delta-1-def}
  \langle\Delta_1(\alpha),\theta'\rangle = \# N^+(	S;\alpha,\theta')_0 .   
\end{equation}
Alternatively, we may replace $N^+(S;\alpha,\theta')_0$ in \eqref{Delta-1-def} with the space in \eqref{Delta-1-1-def-mod-space}, denoted by $M_c^+(S;\alpha,\theta')_0$, to define a variation $\Delta_1^1:C(Y,L)\to \sfR'$ of $\Delta_1$. The space in \eqref{Delta-1-2-def-mod-space} gives us another operator $\Delta_1^2:C(Y,L)\to \sfR'$. In particular, we have $\Delta_1=\Delta_1^1+\Delta_1^2$.

\begin{prop}\label{Delta-1}
	The homomorphism $\Delta_1:C(Y,L)\to \sfR'$ satisfies \eqref{Delta-1-bdry-relation}, where
	\begin{align}
	  \tau_{0}&:=\eta_0(W,S,c)-s'\eta_{-1}(W,S,c)+\eta_{-1}(W,S,c)s, \label{tau-obs}\\[1mm]
	  \tau_{-1}&:=\eta_{-1}(W,S,c). \label{tau-obs2}
	\end{align}
	Recall that $s:\sfR\to \sfR$ and $s':\sfR'\to \sfR'$ are introduced in Definition \ref{defn:smap}.
\end{prop}
\begin{proof}
	We wish to form a modification $N^+(S;\alpha,\theta')_1$ of $1$-dimensional moduli spaces $M^+(S;\alpha,\theta')_1$ and derive 
	\eqref{Delta-1-bdry-relation} from the vanishing of the signed count of its boundary points.
	In addition to type I obstructed solutions, the 
	moduli space $M^+(S;\alpha,\theta')_d$ for $d\geq 1$ has type III obstructed solutions of the form
	\begin{equation}\label{obs-red-new}
	  \breve M^+(L;\alpha,\widetilde \theta)_{i+1} \times_{S^1} \overline{\underline \Theta} \times_{S^1} \breve M^+(L';\widetilde \rho',\theta')_{d-i}
	\end{equation}
	for any reducible $\Theta$ of index $-3$ over $(W,S,c)$, with limits $\theta$ and $\rho'$, and $0\leq i\leq d-1$. In the same way as before, we can identify this space with 
	$\breve M^+(L;\alpha,\widetilde \theta)_{i+1} \times_{S^1} \breve M^+(L';\widetilde \rho',\theta')_{d-i}$.
	Define $N^+(S;\alpha,\theta')_d$ as the disjoint union of 
	\begin{align}
	  M^+(S;\alpha,\theta')_d\setminus \bigsqcup_{\Theta}\Phi_{\alpha,\Theta}\(\Psi_{\alpha,\Theta}^{-1}(0)\cap (\breve M^+(L;\alpha,\theta)_{d+1}\times (T_\Theta,\infty])\) \label{first-part-1-Delta-1} \\
	  \setminus \bigsqcup_{\Theta,i}\Phi_{\alpha,\Theta,\theta'}\(\Psi_{\alpha,\Theta,\theta'}^{-1}(0)\cap(\breve M^+(L;\alpha,\widetilde \theta)_{i+1} \times_{S^1} \breve M^+(L';\widetilde \rho',\theta')_{d-i}\times U_\Theta)\),\label{first-part-2-Delta-1}
	\end{align}
	\begin{equation}\label{second-part-N-Delta-1}
	\bigsqcup_\Theta (\Xi_{\alpha,\Theta})^{-1}(0)\cap \(\breve M^+(L;\alpha,\theta)_{d+1}\times [0,1]\),
	\end{equation}
	\begin{equation}\label{third-part-N-Delta-1}
	  \bigsqcup_{\Theta,i} (\Xi_{\alpha,\Theta,\theta'})^{-1}(0)\cap\(\breve M^+(L;\alpha,\widetilde \theta)_{i+1} \times_{S^1} \breve M^+(L';\widetilde \rho',\theta')_{d-i}\times S_\Theta\times [0,1]\).
	\end{equation}
	In \eqref{first-part-1-Delta-1} and \eqref{second-part-N-Delta-1}, $\Theta$ is a reducible of index $-3$ from $\theta$ to $\theta'$, and in \eqref{first-part-2-Delta-1} and \eqref{third-part-N-Delta-1}, 
	$\Theta$ is an index $-3$ reducible from $\theta$ to $\rho'$ and $0\leq i\leq d-1$. 
	Propositions \ref{type-1-obs-gluing} and \ref{type-3-obs-gluing} and the transversality properties of $\Xi_{\alpha,\Theta}$ and $\Xi_{\alpha,\Theta,\alpha'}$ imply that $N^+(S;\alpha,\theta')_d$
	is a $d$-dimensional stratified-smooth space. As it is explained in Subsection \ref{orientation}, this moduli space is oriented and a choice of an orientation for this 
	moduli space is specified there.
	
	We analyze the boundary components of the different parts of $N^+(S;\alpha,\theta')_1$. The boundary components of the subspace in \eqref{first-part-1-Delta-1}--\eqref{first-part-2-Delta-1} are given by
	\[
	  (\bigsqcup_\beta \breve M(L;\alpha,\beta)_{0} \times M_c^+(S;\beta,\theta')_0)\sqcup (\bigsqcup_{\beta'} M^+(S;\alpha,\beta')_0\times \breve M(L';\beta',\theta')_{0} ),
	\]
	\begin{equation}\label{bdry-com-N-Delta-1-2}
	  (\bigsqcup_{\Theta} \breve M(L';\alpha,\theta)_{0}  \times \{\Theta\}) \cup  (\bigsqcup_{\Theta}\Phi_{\alpha,\Theta}(\psi_{\alpha,\Theta}^{-1}(0))) \cup  (\bigsqcup_{\Theta}\Phi_{\alpha,\Theta,\theta'}(\psi_{\alpha,\Theta,\theta'}^{-1}(0)))
	\end{equation}
	where the first disjoint union in \eqref{bdry-com-N-Delta-1-2} runs over reducibles $\Theta$ from $\theta$ to $\theta'$ of index $-1$, 
	in the second disjoint union $\Theta$ is similar but has index $-3$, and in the third, 
	$\Theta$ from $\theta$ to $\rho'$ has index $-3$. The sum of the boundary components of this part of $N^+(S;\alpha,\theta')_1$ gives
	\begin{equation}\label{identity-1-Delta-1}
	  \langle- \delta_1'\lambda(\alpha)+\Delta_1^1 d(\alpha)+\eta_0\delta_1(\alpha),\theta'\rangle+\sum_{\Theta}\#\psi_{\alpha,\Theta}^{-1}(0)+\sum_{\Theta}\#\psi_{\alpha,\Theta,\theta'}^{-1}(0)=0
	\end{equation}
	with the first sum runs over all reducibles $\Theta$ from $\theta$ to $\theta'$ of index $-3$ and the second sum runs over all reducibles $\Theta$ from $\theta$ to $\rho'$ of index $-3$.
	
	The boundary points of \eqref{second-part-N-Delta-1} are given by the zeros of $\Xi_{\alpha,\Theta}$ on the subspaces 
	\begin{gather}\label{second-part-N-Delta-1-bdry-1}
		\breve M^+(L;\alpha,\theta)_{2}\times\{0\},\hspace{1cm}\breve M^+(L;\alpha,\theta)_{2}\times\{1\},\\
\label{second-part-N-Delta-1-bdry-2}
		\breve M^+(L;\alpha,\beta)_{i}\times \breve M^+(L;\beta,\theta)_{1-i}\times [0,1],\hspace{0.3cm}\forall \beta\in \fC^{\rm irr}_{\pi},\, i\in \{0, 1\}\\
\label{second-part-N-Delta-1-bdry-3}
		\text{ and } \qquad \breve M^+(L;\alpha,\widetilde \rho)_{1}\times_{S^1} \breve M^+(L;\widetilde \rho,\theta)_{1}\times [0,1]
	\end{gather}
	of $\breve M^+(L;\alpha,\theta)_{2}\times[0,1]$. The restriction of $\Xi_{\alpha,\Theta}$ to the first space in \eqref{second-part-N-Delta-1-bdry-1} equals $\psi_{\alpha,\Theta}$, and its restriction to the second subspace of \eqref{second-part-N-Delta-1-bdry-1}, 
	$\psi_{\alpha,\Theta}'$, does not have any zero. Our assumption on $\Xi_{\alpha,\Theta}$ implies that its restriction to \eqref{second-part-N-Delta-1-bdry-2} for $i=0$ is equal to $\Xi_{\beta,\Theta}$. In particular, the signed count of the 
	zeros of $\Xi_{\alpha,\Theta}$ over this subspace for all $\Theta$ from $\theta$ to $\theta'$ with index $-3$, $\beta\in \fC^{\text{irr}}_{\pi}$ and $i=0$, gives the term $\langle \Delta_{1}^2d(\alpha),\theta'\rangle$. Continuing with \eqref{second-part-N-Delta-1-bdry-2}, for $i=1$, recall from \eqref{co-dim-1-res-2} that the restriction of $\Xi_{\alpha,\Theta}$ is given by the section
	\begin{align}\label{fL-alt}
	 (x,y,t)\in \breve M^+(L;\alpha,\beta)_{1}\times &\breve M^+(L;\beta,\theta)_{0}\times [0,1]  \nonumber\\
	  \mapsto &((1-t)+t H_{\alpha,\beta}(x))\cdot \Xi_{\beta,\Theta}(y,t) 
	\end{align}	
	of $\mathcal H_{\alpha,\theta}$ over the boundary strata of $\breve M^+(L;\alpha,\beta)_{1}\times \breve M^+(L;\beta,\theta)_{0}\times [0,1]$. Moreover, \eqref{fL-alt} has vanishing set given by
	\[
	  \{(x,y,\frac{1}{2})\mid x\in \breve M^+(L;\alpha,\beta)_{1},\, y\in \breve M^+(L;\beta,\theta)_{0},\, H_{\alpha,\beta}(x)=-1\}.
	\]
	Thus the signed count of the zeros of $\Xi_{\alpha,\Theta}$ over \eqref{second-part-N-Delta-1-bdry-2} for all $\beta\in \fC^{\rm irr}_{\pi}$ and $i=1$ is equal to $\langle \delta_1 v (\alpha),\theta\rangle$.	
	The same argument as in the proof of Proposition \ref{lambda-bdry-relation} implies that the signed count of the boundary components of the zeros of $\Xi_{\alpha,\Theta}$ over \eqref{second-part-N-Delta-1-bdry-3} 
	with the induced boundary orientation is equal to the negative of the product of 
	$\#\breve M^+(L;\alpha,\rho)_{0}$ and $\#\breve M^+(L;\rho,\theta)_{0}$. In summary, summing over all boundary components of \eqref{second-part-N-Delta-1} for all choices of $\Theta$ gives the relation
	\begin{equation}\label{identity-2-Delta-1}
	  \langle \Delta_1^2 d(\alpha)+\eta_{-1}\delta_1v(\alpha)+\eta_{-1}s \delta_1(\alpha),\theta'\rangle-\sum_{\Theta}\#\psi_{\alpha,\Theta}^{-1}(0)=0.
	\end{equation}
	
	An argument as in the proof of Proposition \ref{lambda-bdry-relation} shows that the signed count of the zeros of $\Xi_{\alpha,\Theta,\alpha'}$ over any connected component of 
	\[
	  \breve M^+(L;\alpha,\widetilde \theta)_{1} \times_{S^1} \breve M^+(L';\widetilde \rho',\theta')_{1}\times S_\Theta\times \{1\}
	\]
	is $-1$ and the restriction of  $\Xi_{\alpha,\Theta,\alpha'}$ to
	\[
	  \breve M^+(L;\alpha,\widetilde \theta)_{1} \times_{S^1} \breve M^+(L';\widetilde \rho',\theta')_{1}\times S_\Theta\times \{0\}
	\]
	equals $\psi_{\alpha,\Theta,\alpha'}$, and $\Xi_{\alpha,\Theta,\alpha'}$ does not have any zero over 
	\[
	  \breve M^+(L;\alpha,\widetilde \theta)_{1} \times_{S^1} \breve M^+(L';\widetilde \rho',\theta')_{1}\times \{(T_\Theta,\infty),(\infty,T_\Theta)\}\times [0,1].
	\]	
	Therefore, the sum over the boundary components of \eqref{third-part-N-Delta-1} gives 
	\begin{equation}\label{identity-3-Delta-1}
		-\langle s'\eta_{-1} \delta_1(\alpha),\theta'\rangle -\sum_{\Theta}\#\psi_{\alpha,\Theta,\theta'}^{-1}(0)=0.
	\end{equation}
	Combining \eqref{identity-1-Delta-1}, \eqref{identity-2-Delta-1} and \eqref{identity-3-Delta-1} proves the claim.
\end{proof}

\begin{remark}
	The compatibility of $\Phi_{\alpha,\Theta}$, $\Psi_{\alpha,\Theta}$ from Proposition \ref{type-1-obs-gluing} and 
	$\Phi_{\alpha,\Theta,\alpha'}$, $\Psi_{\alpha,\Theta,\alpha'}$ 
	from Proposition \ref{type-3-obs-gluing} can be used to define the framed variation $N^+(S;\alpha,\widetilde \theta')_{d+1}$ of $N^+(S;\alpha, \theta')_{d}$. In particular, there is an $S^1$ action on $N^+(S;\alpha,\widetilde \theta')_{d+1}$ such that the quotient is 
	$N^+(S;\alpha, \theta')_{d}$. Furthermore, we can define analogues of modified holonomy maps as $S^1$-equivariant maps 
	$N^+(S;\alpha,\widetilde \theta')_{d+1} \to S^1$ as it is explained below. 
\end{remark}

As in the proof of Proposition \ref{Delta-1}, we can modify the moduli spaces $M^+(S;\theta,\alpha')_{d}$ to $N^+(S;\theta,\alpha')_{d}$ by first removing a neighborhood of obstructed solutions of type II and III in these moduli spaces and then using the sections $\Xi_{\alpha,\Theta}$ and $\Xi_{\alpha,\Theta,\alpha'}$ to add a new part to the moduli space which allows us to have a better control on the boundary of the modified space. The moduli spaces $N^+(S;\theta,\alpha')_{0}$ can be used to define the homomorphism $\Delta_2:\sfR\to C(Y',L')$, and the following proposition can be verified by considering the boundary components of $N^+(S;\theta,\alpha')_1$.  Since this proposition can be proved using essentially the same argument as in Proposition \ref{Delta-1}, we omit the details.

\begin{prop}\label{Delta-2}
	The homomorphism $\Delta_2:\sfR\to C(Y',L')$ satisfies relation \eqref{Delta-2-bdry-relation}, where $\tau_{-1}:\sfR\to \sfR'$ and $\tau_{0}:\sfR\to \sfR'$ are the same as in Proposition \ref{Delta-1}.
\end{prop}

As the last ingredient of the height $-1$ morphism associated to $(W,S,c)$, we define a homomorphism $\mu:C(Y,L)\to C(Y',L')$ using the moduli spaces $N^+(S;\alpha,\alpha')_d$ cut down by the (modified) holonomy of connections along a given path $\gamma$ on $S$ connecting the basepoints of  $L$ and $L'$. First, we define maps 
\begin{equation}\label{eq:holmapobs}
  \bH_{\alpha,\alpha'}^\gamma: N^+(S;\widetilde \alpha,\widetilde \alpha')_d\to S^1
\end{equation}
for any $d\leq 2$, which are the counterparts of the $S^1$-valued maps $H^\gamma_{\alpha,\alpha'}$, defined on $M^+(S;\alpha,\alpha')_d$. We assume that for any such $d$, $\alpha$ and $\alpha'$, the modified holonomy map
\[
  H^\gamma_{\alpha,\alpha'}:M^+(S;\widetilde \alpha,\widetilde \alpha')_d \to S^1
\]
is transverse to $-1\in S^1$ on any stratum of $M^+(S;\alpha,\alpha')_d$. For any reducible $\Theta$ of index $-3$ and any generic value of $T_\Theta$, the restriction of $H^\gamma_{\alpha,\alpha'}$ to $\widetilde \Phi_{\alpha,\Theta,\alpha'}(\psi_{\alpha,\Theta,\alpha'}^{-1}(0))$ is also traverse to $-1$ on each stratum. Similarly, if $\alpha$ (resp. $\alpha'$) is reducible, the restriction of $H^\gamma_{\alpha,\theta'}$ (resp. $H^\gamma_{\theta,\alpha'}$) to $\widetilde \Phi_{\alpha,\Theta}(\psi_{\alpha,\Theta}^{-1}(0))$   (resp. $\widetilde \Phi_{\Theta,\alpha'}(\psi_{\Theta,\alpha'}^{-1}(0))$) is transverse. We pick $T_\Theta$ such that all of these transversality conditions are satisfied. We set $\bH_{\alpha,\alpha'}^\gamma=H^\gamma_{\alpha,\alpha'}$ on the part of $N^+(S;\alpha,\alpha')_d$ which is a subset of $M^+(S;\alpha,\alpha')_d$.

Next, we need to extend $\bH_{\alpha,\alpha'}^\gamma$ to the subspaces of the form: 
\begin{equation}\label{domain-2-H-a-a-p}
   (\Xi_{\alpha,\Theta,\alpha'})^{-1}(0)\cap\(\breve M^+(L;\widetilde \alpha,\widetilde \theta)_{i+1} \times_{S^1} \breve M^+(L';\widetilde \theta',\widetilde \alpha')_{d-i}\times S_\Theta\times [0,1]\);
\end{equation}
to the subspaces of the following form for reducible $\alpha'$:
\begin{equation}\label{domain-2-H-a-a-p-2}
	(\Xi_{\alpha,\Theta})^{-1}(0)\cap \(\breve M^+(L;\widetilde \alpha, \widetilde \theta)_{d+1}\times [0,1]\);
\end{equation}
and to the subspaces of the following form for reducible $\alpha$:
\begin{equation}\label{domain-2-H-a-a-p-3}
	(\Xi_{\Theta,\alpha'})^{-1}(0)\cap \(\breve M^+(L';\widetilde\theta',\widetilde\alpha')_{d+1}\times [0,1]\).
\end{equation}
In fact, we claim that there are such maps that are transverse to $-1$ on all strata and they have the following properties on the boundary components of \eqref{domain-2-H-a-a-p}, \eqref{domain-2-H-a-a-p-2} and \eqref{domain-2-H-a-a-p-3}:
\begin{itemize}
	\item[(i)] If $([x,y],(t,t'),0)$ is an element of \eqref{domain-2-H-a-a-p}, then: 
		\[\bH^\gamma_{\alpha,\alpha'}([x,y],(t,t'),0)=H^\gamma_{\alpha,\alpha'}\circ\widetilde \Phi_{\alpha,\Theta,\alpha'}([x,y],(t,t')).\]
		Similarly, for $(x,0)$ in \eqref{domain-2-H-a-a-p-2} and $(y,0)$ in \eqref{domain-2-H-a-a-p-3}, we have
		\[\bH_{\alpha,\theta'}^\gamma(x,0)=H^\gamma_{\alpha,\theta'}\circ\widetilde \Phi_{\alpha,\Theta}(x,T_\Theta),\hspace{1cm}
		\bH_{\theta,\alpha'}^\gamma(y,0)=H^\gamma_{\theta,\alpha'}\circ\widetilde \Phi_{\Theta,\alpha'}(y,T_\Theta).\]
	\item[(ii)] If $([x,y],(t,t'),1)$ is an element of \eqref{domain-2-H-a-a-p}, then: 
		\[\bH^\gamma_{\alpha,\alpha'}([x,y],(t,t'),1)=H_{\alpha,\theta}(x)H_{\theta',\alpha'}(y).\]	
	\item[(iii)] If $([x,y],(T_\Theta,\infty),t)$ is an element of \eqref{domain-2-H-a-a-p}, then: 
		\[\bH^\gamma_{\alpha,\alpha'}([x,y],(T_\Theta,\infty),t)=\bH^\gamma_{\alpha,\theta'}(x,t)H_{\theta',\alpha'}(y).\]	
	\item[(iv)] If $([x,y],(\infty,T_\Theta),t)$ is an element of \eqref{domain-2-H-a-a-p}, then: 
		\[\bH^\gamma_{\alpha,\alpha'}([x,y],(\infty,T_\Theta),t)=H_{\alpha,\theta}(x)\bH^\gamma_{\theta,\alpha'}(y,t).\]	
	\item[(v)] These maps are multiplicative on broken moduli spaces. For instance, if 
	\[[x,y,z]\in  \breve M^+(L;\widetilde \alpha,\widetilde \beta)_{j+\dim(\Gamma_{\beta})} \times_{\Gamma_{\beta}} 
	\breve M^+(L;\widetilde \beta,\widetilde \theta)_{i-j} \times_{S^1} \breve M^+(L';\widetilde \theta',\widetilde \alpha')_{d-i},\]
	such that $([x,y,z],(t,t'),s)$ is an element of \eqref{domain-2-H-a-a-p}, then: 
	\[
	  \bH^\gamma_{\alpha,\alpha'}([x,y,z],(t,t'),s)=H_{\alpha,\theta}(x)\bH^\gamma_{\beta,\alpha'}([y,z],(t,t'),s).
	\]
\end{itemize}

To verify the existence of the maps $\bH^\gamma_{\alpha,\alpha'}$, $\bH^\gamma_{\alpha, \theta'}$ and $\bH^\gamma_{ \theta,\alpha'}$, note that the given properties specify the maps on the boundary and we need to check there is no obstruction to extend these maps to the interior of the domain of these maps. To see this, note that if we set $\bH^\gamma_{\alpha,\alpha'}([x,y],(t,t'),s)=H^\gamma_{\alpha,\theta}(x)H_{\theta',\alpha'}(y)$, $\bH_{\alpha,\theta'}^\gamma(x,s)=H_{\alpha,\theta}(x)$ and $\bH_{\theta,\alpha'}^\gamma(y,s)=H_{\theta',\alpha'}(y)$, conditions (ii)-(v) are satisfied. Even though (i) is not necessarily satisfied, we obtain the required identities in (i) in the limit as $T_\theta\to \infty$. Thus there is no obstruction. 

These maps can be perturbed, if necessary, to be transverse to $-1$, while still satisfying (i)--(v). To check that this is possible, we first show that $\bH^\gamma_{\alpha,\alpha'}$ does not take the value $-1$ on the subspace of its domain given by the zeros of $\Xi_{\alpha,\Theta,\alpha'}$ on 
\begin{equation}\label{final-bdry-str}
	\breve M^+(L;\alpha,\widetilde \theta)_{i+1} \times_{S^1} \breve M^+(L';\widetilde \theta',\alpha')_{d-i}\times S_\Theta\times \{1\}.
\end{equation}
After identifying $S_\Theta\times \{1\}$ with $[-1,1]$, our earlier discussion identifies \eqref{final-bdry-str} with 
\[
  \breve M^+(L;\alpha, \theta)_{i} \times \breve M^+(L';\theta',\alpha')_{1-i}\times S_-\times_{S^1} S_+\times [-1,1],
\]
and $\Xi_{\alpha,\Theta,\alpha'}$ on this space is given by the pullback of the section $u$ over the space $S_-\times_{S^1}S_+\times [-1,1]$. In particular, the zeros of $\Xi_{\alpha,\Theta,\alpha'}$ on \eqref{final-bdry-str} are given by
\begin{equation}\label{cut-down-mod-ob-sec}
	\breve M^+(L;\alpha, \theta)_{i} \times \breve M^+(L';\theta',\alpha')_{1-i}\times \{[1,1]\}\times \{0\}.
\end{equation}	
Property (ii) implies that the restriction of $\bH^\gamma_{\alpha,\alpha'}$ to \eqref{final-bdry-str} is the pullback of the map $S_-\times_{S^1} S_+\to S^1$ given by $[x,x']\to xx'$. In particular, all elements in \eqref{cut-down-mod-ob-sec} are mapped to $1$ via $\bH^\gamma_{\alpha,\alpha'}$. This shows that $(\bH^\gamma_{\alpha,\alpha'})^{-1}(-1)$ has an empty intersection with \eqref{final-bdry-str}. Given this, it is easy to see now that a small perturbation may be chosen such that the maps $\bH^\gamma_{\alpha,\alpha'}$, $\bH^\gamma_{\alpha, \theta'}$ and $\bH^\gamma_{\theta,\alpha'}$ satisfy conditions (i)--(v) and are transverse to $-1$.

The properties of $\bH^\gamma_{\alpha,\alpha'}: N^+(S;\alpha,\alpha')_d\to S^1$ imply that for $d\leq 2$, the space 
\[
  (\bH^\gamma_{\alpha,\alpha'})^{-1}(-1)\cap N^+(S;\alpha,\alpha')_d 
\]
is a compact oriented stratified-smooth space of dimension $d-1$. In particular, in the case that $d=1$, we obtain an oriented compact $0$-dimensional manifold. We define the homomorphism $\mu:C(Y,L)\to C(Y',L')$ by
\begin{equation}\label{mu-def}
  \langle\mu(\alpha),\alpha'\rangle = \# \left((\bH^\gamma_{\alpha,\alpha'})^{-1}(-1)\cap N^+(S;\alpha,\alpha')_1\) .   
\end{equation}
\begin{prop}\label{prop:mu-obs-rel}
	The homomorphism $\mu:C(Y,L)\to C(Y',L')$ satisfies \eqref{mu-bdry-relation}.
\end{prop}
\begin{proof}
	To prove the relation we analyze the boundary components of the $1$-dimensional stratified-smooth space $(\bH^\gamma_{\alpha,\alpha'})^{-1}(-1)\cap N^+(S;\alpha,\alpha')_2$. There are six different possibilities.
	The first type is given by the intersection of $(\bH^\gamma_{\alpha,\alpha'})^{-1}(-1)$ and broken solutions of the form
	\[
	  \breve M^+(L;\alpha,\beta)_{1}\times M^+(S;\beta,\alpha')_0,\hspace{1cm}M^+(S;\alpha,\beta')_0\times \breve M^+(L';\beta',\alpha')_{1}.
	\]
	For elements $(x,y)$ and $(x',y')$ in these two types of spaces, we have
	\[
	  \bH^\gamma_{\alpha,\alpha'}(x,y)=H_{\alpha,\beta}(x),\hspace{1cm}\bH^\gamma_{\alpha,\alpha'}(x',y')=H_{\beta',\alpha'}(y').
	\]
	In particular, the signed count of these boundary points with the induced orientation gives
	\begin{equation} \label{mu-bdry-tye-1}
		 \langle\lambda v(\alpha)-v'\lambda(\alpha),\alpha'\rangle.	
	\end{equation}
	The solutions of the following form give the second type of codimension one stratum:
	\[
	  \breve M^+(L;\alpha,\beta)_{0}\times N^+(S;\beta,\alpha')_1,\hspace{1cm}N^+(S;\alpha,\beta')_1\times \breve M^+(L';\beta',\alpha')_{0}.
	\]
	For elements $(x,y)$ and $(x',y')$ belonging to these two spaces, we have
	\[
	  \bH^\gamma_{\alpha,\alpha'}(x,y)=\bH^\gamma_{\beta,\alpha'}(y),\hspace{1cm}\bH^\gamma_{\alpha,\alpha'}(x',y')=\bH^\gamma_{\alpha,\beta'}(x').
	\]
	Thus, these points have the following contribution in the signed count of boundary points:
	\begin{equation} \label{mu-bdry-tye-2}
		 \langle\mu d(\alpha)+d'\mu (\alpha),\alpha'\rangle.	
	\end{equation}

	The third type of boundary stratum for the space $N^+(S;\alpha,\alpha')_2$ consists of broken solutions of the following form:		
	\[
	  \breve M^+(L;\alpha,\widetilde \theta)_{1} \times_{S^1} N^+(S;\widetilde \theta,\alpha')_1,\hspace{.75cm} N^+(S;\alpha,\widetilde \theta')_1\times_{S^1}\breve M^+(L';\widetilde \theta',\alpha')_{1}.
	\]
	Each connected component of any of these spaces is identified with a copy of $S^1$ using the map $\bH^\gamma_{\alpha,\alpha'}$. In particular, the intersection of
	 $(\bH^\gamma_{\alpha,\alpha'})^{-1}(-1)$ with this space is given by
	\[
	  \breve M^+(L;\alpha, \theta)_{0} \times N^+(S;\widetilde \theta,\alpha')_0,\hspace{.75cm} N^+(S;\alpha,\theta')_0\times\breve M^+(L';\widetilde \theta',\alpha')_{0}.
	\]	
	An analysis of the induced orientation of these boundary points shows that the signed count of these boundary points is given by
	\begin{equation} \label{mu-bdry-tye-3}
		 \langle\Delta_2\delta_1(\alpha)- \delta_2'\Delta_1 (\alpha),\alpha'\rangle.	
	\end{equation}	
	
	Two other types of the boundary strata of $N^+(S;\alpha,\alpha')_2$ are given by $\Phi_{\alpha,\Theta,\alpha'}(\psi_{\alpha,\Theta,\alpha'}^{-1}(0))$ 
	in the complement of \eqref{first-part-N-complement} in $M^+(S;\alpha,\alpha')_2$ and the subspace $\psi_{\alpha,\Theta,\alpha'}^{-1}(0)$ of 
	\eqref{second-part-N} for any reducible $\Theta$ of index $-3$. The restrictions of $\bH^\gamma_{\alpha,\alpha'}$ to these spaces are given by $H_{\alpha,\beta}^\gamma$ and $H_{\alpha,\beta}^\gamma\circ \Phi_{\alpha,\Theta,\alpha'}$. In particular, $\Phi_{\alpha,\Theta,\alpha'}$ gives an orientation-reversing identification of the intersection of $(\bH^\gamma_{\alpha,\alpha'})^{-1}(-1)$ with these boundary strata of $(\bH^\gamma_{\alpha,\alpha'})^{-1}(-1)\cap N^+(S;\alpha,\alpha')_2$. Thus, their signed counts cancel each other.
	Finally, $N^+(S;\alpha,\alpha')_2$ has the boundary strata
	\[
	  (\psi_{\alpha,\Theta,\alpha'}')^{-1}(0)\cap\(\breve M^+(L;\alpha,\widetilde \theta)_{i+1} \times_{S^1} \breve M^+(L';\widetilde \theta',\alpha')_{2-i}\times S_\Theta\)
	\]
	for a reducible $\Theta$ of index $-3$ and $i\in \{0,1\}$. However, the discussion preceding this proposition shows that $\bH^\gamma_{\alpha,\alpha'}$ always takes the value $1$ on this space, 
	and hence this type of boundary component does not contribute to the count of the boundary points of 
	$(\bH^\gamma_{\alpha,\alpha'})^{-1}(-1)\cap N^+(S;\alpha,\alpha')_2$. Now the claim follows from the vanishing of the sum of the terms in \eqref{mu-bdry-tye-1},
	\eqref{mu-bdry-tye-2} and \eqref{mu-bdry-tye-3}.
\end{proof}

\begin{figure}[t]
    \centering
    \centerline{\includegraphics[scale=0.37]{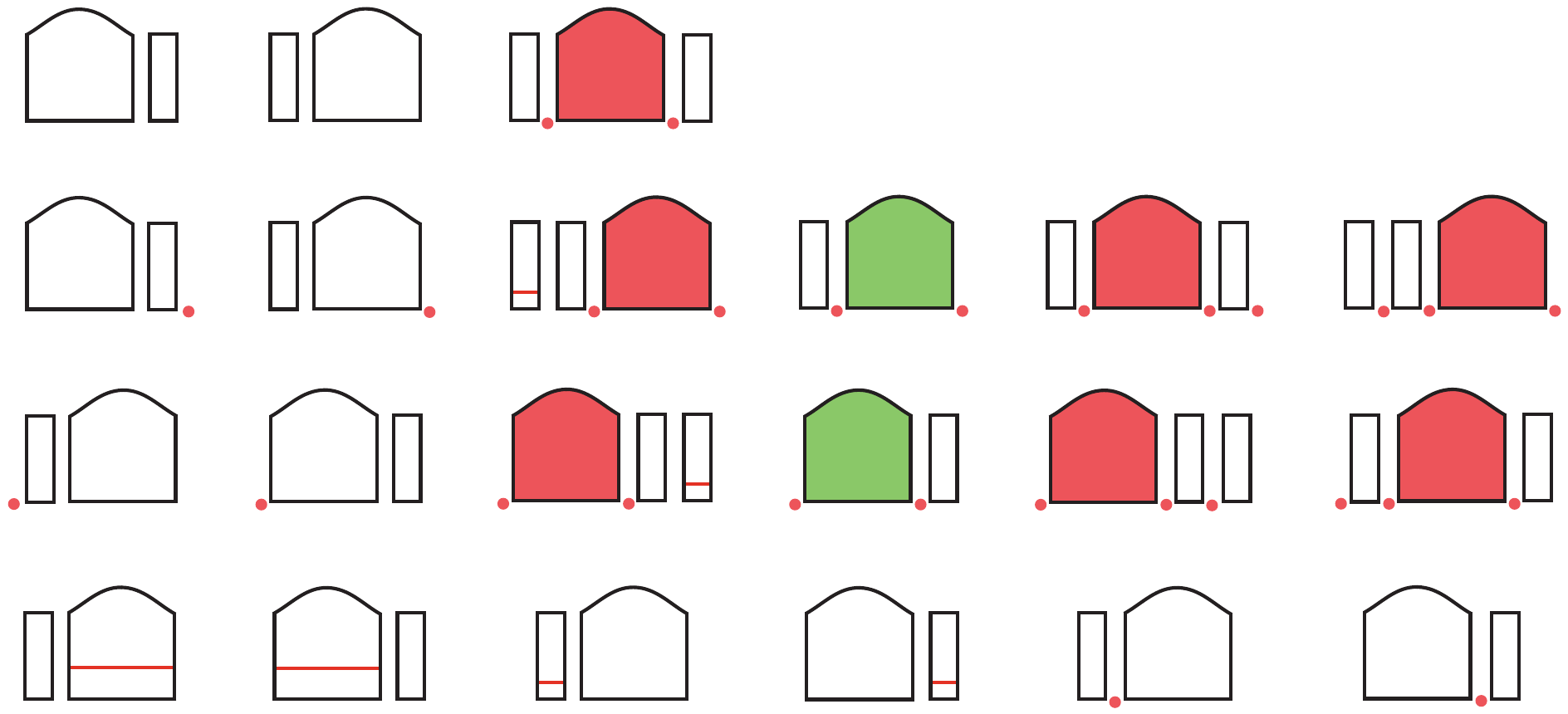}}
    \caption{{\small{Relations \eqref{lambda-bdry-relation}--\eqref{mu-bdry-relation}.}}}
    \label{fig:height-1rels}
\end{figure}

Using the pictorial calculus introduced in Subsection \ref{unob-cob-map}, we depict in Figure \ref{fig:height-1rels} the four relations proved in this section that exhibit $\widetilde\lambda$ as a height $-1$ morphism. Note that by \eqref{tau-obs}, terms involving $\tau_{0}$ expand into three types of terms. In these illustrations, red is used for $\eta_{-1}$, which counts obstructed reducibles of index $-3$, and green is for $\eta_0$, which counts unobstructed reducibles of index $-1$.

We conclude this subsection by comparing the construction above with an alternative approach. The height $-1$ morphism associated to a cobordism $(W,S,c):(Y,L)\to (Y',L')$ as above induces, by Proposition \ref{decomp-neg-level}, a morphism of $\cS$-complexes
\begin{equation}\label{eq:height-1morphismdiscussion}
	\widetilde \lambda: \widetilde C(Y,L)\to \Sigma \widetilde C(Y',L').
\end{equation}
Recall also from Proposition \ref{prop:sustensor} that $\Sigma \widetilde C(Y',L')$ is homotopy equivalent to the tensor product $\widetilde C(Y',L')\otimes \widetilde \cO(1)$ where $\widetilde \cO(1)$ is a certain atomic $\cS$-complex. If one works over an appropriate local coefficient system, then $\widetilde \cO(1)$ is isomorphic to an $\cS$-complex for the right-handed trefoil $T_{2,3}$, see \cite[Section 9.2.3]{DS1}. In this context, an analogue of Theorem \ref{thm:connectedsum} then implies that the suspension $\Sigma \widetilde C(Y',L')$ is homotopy equivalent to an $\cS$-complex for the connected sum of the based link $(Y',L')$ with $(S^3,T_{2,3})$. 

It is thus reasonable to expect that in this local coefficient setting the morphism \eqref{eq:height-1morphismdiscussion} is essentially one which is obtained from $(W,S,c)$ by splicing on a standard cobordism from the unknot to $T_{2,3}$. The effect of such a topological operation should convert the obstructed reducibles into unobstructed ones of index $-1$. This provides an alternative approach to constructing the morphism \eqref{eq:height-1morphismdiscussion} that avoids obstructed gluing theory. It is reminiscent  of Fr\o yshov's work in monopole Floer homology \cite{froyshov-monopole}, where punctures with certain metric and perturbation data are introduced on cobordisms to avoid obstructed gluing theory. 

This trick involving the trefoil was in fact previously utilized in \cite{DS2} and \cite{DISST:special-cycles}. There are some disadvantages to this approach, however. First, the construction only works in the setting of local coefficients, after having inverted certain elements of the ground ring, and in particular does not apply to the case of $\cS$-complexes with ordinary coefficient rings such as $\Z$, $\Q$ or $\Z/2$. Second, in applying this construction in the context of Chern--Simons filtrations, it appears to produce less than optimal results. 

These are not issues for the construction of the morphism \eqref{eq:height-1morphismdiscussion} using obstructed gluing theory given in this section. In addition, our approach is more direct, and, as we will see in Section \ref{sec:proofs}, fits in naturally with the demands of unoriented skein triangles, in which obstructed reducibles appear in several ways. Finally, our construction here may be employed to handle a variety of related problems in Floer theory, ones already mentioned in Subsection \ref{subsec:concludingremarks}, to which the other method does not seem to apply.

\subsection{Homotopies}\label{subsec:homotopies}

In the previous subsection, we associated a height $-1$ morphism of $\cS$-complexes to a cobordism $(W,S,c):(Y,L)\to (Y',L')$ which is negative definite of height $-1$, after fixing some auxiliary choices (including the Riemannian metric and perturbations of the ASD equation). The goal of this subsection is to show that this morphism is independent of these choices up to chain homotopy. To be more specific, let $\mathfrak a_+$ and $\mathfrak a_-$ be two choices of auxiliary data (see below) with associated height $-1$ morphism components given by
\begin{align}
  \lambda^\pm,\, \mu^\pm:C(Y,L)\to C(Y',L'),  \label{maps-aux-choices} \\
   \Delta^\pm_1:C(Y,L) \to  \sfR', \label{maps-aux-choices2} \\
   \Delta^\pm_2: \sfR \to C(Y',L'). \label{maps-aux-choices3}
\end{align}
By choosing a 1-parameter family of auxiliary data that interpolates between the choices $\mathfrak{a}_+$ and $\mathfrak{a}_-$, we shall define homomorphisms 
\[
  K, L:C(Y,L)\to C(Y',L'),\hspace{1cm} M_1:C(Y,L) \to  \sfR',\hspace{1cm}M_2: \sfR \to C(Y',L'),
\]
which are related to the maps in \eqref{maps-aux-choices}--\eqref{maps-aux-choices3} via the following homotopy relations:
\begin{align}
	Kd+d'K-\lambda^++\lambda^-&= 0 \label{htpy-lambda-bdry-relation}\\
	\delta_1'K  + M_1d - \Delta_1^+ + \Delta_1^- &=0  \label{htpy-Delta-1-bdry-relation}\\
	-d'M_2 - K\delta_2    - \Delta_2^+ + \Delta_2^-& =0  \label{htpy-Delta-2-bdry-relation}\\
	Ld-d'L-Kv+v'K+M_2\delta_1+\delta_2'M_1-\mu^++\mu^-  & = 0 \label{htpy-mu-bdry-relation}
\end{align}

Let us explain in detail what the auxiliary data $\mathfrak{a}_+$ contains. First, we choose (i) a cylindrical end metric $g_+$ on the space obtained from $(W,S)$ by attaching cylindrical ends (with cone angle $\pi$ along the surface), (ii) compactly supported perturbation data for the singular ASD equation on the cobordism and (iii) a way in which to modify the holonomy maps (see the Appendix of \cite{DS1}). The choices (i) and (iii) must be compatible at the cylindrical ends with fixed auxiliary data for $(Y,L)$ and $(Y',L')$ which determine the $\cS$-complexes $\widetilde C(Y,L)$ and $\widetilde C(Y',L')$. Regarding (ii), note that the perturbation data for the ends $(Y,L)$ and $(Y',L')$ already induce a perturbation of the ASD equation and the compactly supported perturbation on $(W,S)$ is used to further perturb the ASD equation.

In addition to items (i)--(iii), for each obstructed reducible $\Theta$ of index $-3$ on the cobordism $(W,S,c)$ (with respect to the choices (i) and (ii)), the data $\mathfrak{a}_+$ contains the choices that are used in the proofs of Propositions \ref{type-1-obs-gluing} (and its type II analogue) and \ref{type-3-obs-gluing} which determine the sections $\Psi_{\alpha,\Theta},\Psi_{\Theta,\alpha'},\Psi_{\alpha,\Theta,\alpha'}$ and their equivariant lifts, as well as the maps $\Phi_{\alpha,\Theta},\Phi_{\Theta,\alpha'},\Phi_{\alpha,\Theta,\alpha'}$. Furthermore, for each such obstructed reducible there is the choice of a positive constant $T_{\Theta}\in \R$, as well as the sections $\Xi_{\alpha,\Theta}$, $\Xi_{\Theta,\alpha'}$, $\Xi_{\alpha,\Theta,\alpha'}$. The symbol $\mathfrak{a}_+$ represents all of these choices. Similar remarks hold for the auxiliary data $\mathfrak{a}_-$.

To construct a 1-parameter family $\{\mathfrak{a}_\opam\}_{\opam\in [-1,1]}$ where $\mathfrak{a}_{\pm 1}=\mathfrak{a}_\pm$, we first choose (i') a family of metrics $G=\{g_\opam\}_{\opam\in [-1,1]}$ where $g_{\pm 1}=g_{\pm}$ and (ii') a 1-parameter family of perturbations interpolating between the choices at $\pm 1$. Using these choices we form
\begin{equation}\label{fam-mod-space-hom}
	M(S ;\alpha,\alpha')^G_d= \bigcup_{\opam\in [-1,1]} M(S;\alpha,\alpha')_{d-1}^{g_\opam}.
\end{equation}
for any $\alpha$, $\alpha'$; we will only need to consider the cases $d\leq 2$. Each moduli space appearing on the right in \eqref{fam-mod-space-hom} is defined using the choices of (i') and (ii') specialized to the parameter $\opam\in [-1,1]$. By convention, the metric $g_\opam$ is included in the notation of each such moduli space, while the choice of perturbation surpressed. 

The family moduli space \eqref{fam-mod-space-hom} is a subspace of $\sB(W ,S,c ;\alpha,\alpha')\times [-1,1]$ and is equipped with the subspace topology. We need a compactification of \eqref{fam-mod-space-hom} given as
\begin{equation}\label{fam-mod-space-hom2}
	M^+(S;\alpha,\alpha')^G_d= \bigcup_{\opam\in [-1,1]} M^+(S ;\alpha,\alpha')_{d-1}^{g_\opam}
\end{equation}
where broken trajectories are included. We require these family moduli spaces to be as regular as possible. More specifically, we make choices (i') and (ii') such that:
\begin{itemize}
	\item  Any irreducible element of $M^+(S;\alpha,\alpha')^G_d$ is cut down transversally.
	\item Up to the action of the gauge group, there is a unique reducible $\Theta_\opam$ with respect to $\mathfrak{a}_\opam$ for any isomorphism class of $U(1)$-reduction of the singular bundle data $(W,S,c)$. The reducible $\Theta_\opam$ is regular if $\ind(\Theta_\opam)\geq -1$. If $\ind(\Theta_\opam)=-3$, then $H^1(\Theta_\opam)$ and $H^+(\Theta_\opam)$ respectively have dimensions $0$ and $2$. 
\end{itemize}
Here $H^1(\Theta_\opam)$ and $H^+(\Theta_\opam)$ refer to the cohomology groups in the associated deformation complex of the linearized ASD operator. That the first item above can be arranged is standard. For the second item, one may for example adapt the arguments of \cite[\S 7.3]{CDX}.

Continuing our description of $\{\mathfrak{a}_\opam\}_{\opam\in [-1,1]}$, we next choose (iii') a $1$-parameter family of modified holonomy map data; for this, we again refer to the Appendix of \cite{DS1}. Finally, we must choose $1$-parameter data related to the obstructed gluing data, which we now explain. We fix our attention on a 1-parameter family of reducibles $\{\Theta_\opam\}_{\opam\in [-1,1]}$ of index $-3$, and consider the setting of Proposition \ref{type-1-obs-gluing}. Recalling that the proof of this proposition is a straightforward adaptation of \cite[Proposition 5.5]{DME:QHS}, the essential choice in the data $\mathfrak{a}_{\pm}$ which determines the section $\Psi_{\alpha,\Theta_{\pm}}$ where $\Theta_\pm = \Theta_{\pm 1}$ is that of a linear map
\begin{equation}\label{lin-map-choice-obs-gluing}
	\sigma_{\pm} = \sigma_{\Theta_\pm}:H^+(\Theta_\pm) \to \Omega^{+,g_\pm}(W^+,S^+;\text{ad}P)
\end{equation}
where the codomain is to be interpreted as bundle-valued $g_{\pm}$-self-dual two forms on the orbifold $(W,S)$ with respect to the metric in $\mathfrak{a}_\pm$, and where $\text{ad}P$ is the adjoint bundle data of the singular bundle data on $(W^+,S^+)$, the pair obtained from $(W,S)$ by attaching cylindrical ends. The requirements on \eqref{lin-map-choice-obs-gluing} are that its image consists of elements with compact support, and that the linear map
\begin{equation}\label{lin-map-choice-obs-gluing-requirement}
	d_{\Theta_\pm}^++\sigma_{\Theta_\pm}:\Omega^1(W^+,S^+;\text{ad}P)\oplus H^+(\Theta_\pm) \to \Omega^{+, g_{\pm}}(W^+,S^+;\text{ad}P),
\end{equation}
with the appropriate Sobolev completions (see \cite[\S 2]{DS1}), is a surjection. Here $d_{\Theta_\pm}^+$ is the linearized (perturbed) ASD operator associated to $\Theta_{\pm}$. The section $\Psi_{\alpha,\Theta_{\pm}}$ of Proposition \ref{type-1-obs-gluing} is then constructed in a natural way from the choice of the linear map $\sigma_\pm$; see \cite[Proposition 5.5]{DME:QHS} for more explanation.

Write $\Omega_\opam$ for the subspace of $\Omega^{+,g_\opam}(W^+,S^+;\text{ad}P)$ which consists of compactly supported sections, so that $\sigma_{{\pm}}$ appearing in \eqref{lin-map-choice-obs-gluing} are maps $H^+(\Theta_{\pm})\to \Omega_{\pm 1}$. The condition regarding \eqref{lin-map-choice-obs-gluing-requirement} is then that the following composition
\begin{equation}\label{lin-map-choice-obs-gluing-requirement-2}
	\pi_{\pm 1}\circ \sigma_{{\pm}}:H^+(\Theta_\pm)\to H^+(\Theta_\pm)
\end{equation}
is an isomorphism, where here $\pi_\opam:\Omega_\opam\to H^+(\Theta_\opam)$ is the projection to the cokernel of $d_{\Theta_\opam}^+$. We may furthermore require that \eqref{lin-map-choice-obs-gluing-requirement-2} is the identity on $H^+(\Theta_\opam)$.

With these conventions set, it is straightforward to see that for fixed choices $\sigma_{{\pm}}$ as above, we can find a smooth family of linear maps
\begin{equation}\label{lin-map-choice-obs-gluing-requirement-3}
	\sigma_{\opam}:H^+(\Theta_\opam)\to \Omega_\opam, \qquad \opam\in [-1,1]
\end{equation}
such that $\pi_{\opam}\circ \sigma_{\opam}=\text{id}$ and also $\sigma_{\pm 1}=\sigma_{\pm}$. Indeed, for a fixed parameter $\opam$, the space of such choices described is connected; the family $\sigma_\opam$ may thus be viewed as a section of a fiber bundle over $[-1,1]$ with connected fiber.

 The family of linear maps \eqref{lin-map-choice-obs-gluing-requirement-3} induces, in the context of Proposition \ref{type-1-obs-gluing}, a family of sections, which we may view as a section $\Psi_{\alpha,\Theta^G}$ of the bundle
\[
	\mathcal H_{\alpha,\theta}\times \overline \R_{+}\times G \to \breve M^+(L;\alpha,\theta)_{d}\times \overline \R_{+}\times G.
\]
Here we identify the metric family $G=\{g_\opam\}_{\opam\in [-1,1]}$ with $[-1,1]$ and also denote by $\Theta^G= \{\Theta_\opam\}_{\opam\in[-1,1]}$ the 1-parameter family of obstructed reducibles under consideration. Here we restrict our attention to the cases $0\leq d\leq 3$.

The items of Proposition \ref{type-1-obs-gluing} all have analogues for the section $\Psi_{\alpha,\Theta^G}$. For example, $\Psi_{\alpha,\Theta^G}$ vanishes on $\breve M^+(L;\alpha,\theta)_{d}\times \{\infty\}\times G$ and is transverse to the zero section over any stratum of  $\breve M^+(L;\alpha,\theta)_{d}\times \R_{+} \times G$. Furthermore, there is a map 
\[
	\Phi_{\alpha,\Theta^G}:\Psi_{\alpha,\Theta^G}^{-1}(0)\to M^+(S;\alpha,\theta')^G_d
\]
which is a homeomorphism onto an open subset of $M^+(S;\alpha,\theta')^G_d$. Similarly, we can obtain sections $\Psi_{\Theta^G,\alpha'},\Psi_{\alpha,\Theta^G,\alpha'}$ and associated maps $\Phi_{\Theta^G,\alpha'},\Phi_{\alpha,\Theta^G,\alpha'}$.

Next, choose a smooth function
\[
	T:[-1,1]\to \R_+
\]
such that $T(\pm 1)$ is equal to the constant $T_{\Theta_{\pm}}$ in the auxiliary data $\mathfrak{a}_\pm$. We may choose $T$ generically, just as for the cases of $T_{\Theta_{\pm}}$, so that the constructions below have the requisite transversality. Having made these choices, it is straightforward to construct sections $\Xi_{\alpha,\Theta^G}$, $\Xi_{\Theta^G,\alpha'}$, $\Xi_{\alpha,\Theta^G,\alpha'}$ which extend the ones already defined in Section \ref{gluing-theory-obs} with respect to the auxiliary choices $\mathfrak{a}_\pm$. For example, $\Xi_{\alpha,\Theta^G}$ is a section of the bundle $\mathcal H_{\alpha,\theta}\times [0,1] \times G$ which is transverse to the zero section on each stratum and such that it restricts on $\mathcal H_{\alpha,\theta}\times [0,1] \times \{g_\opam\}$ to the type of section $\Xi_{\alpha,\Theta^{g_\opam}}$ described in Section \ref{gluing-theory-obs} with respect to the auxiliary data $\mathfrak{a}_\opam$. This completes our description of the 1-parameter auxiliary data $\{\mathfrak{a}_\opam\}_{\opam\in[-1,1]}$.

We now describe the maps \eqref{maps-aux-choices}--\eqref{maps-aux-choices3}. Define $K:C(Y,L)\to C(Y',L')$ by
\begin{equation}\label{K-homotopy}
  \langle K(\alpha),\alpha'\rangle = \# M(S;\alpha,\alpha')^G_0 . 
\end{equation}
Like $\lambda$, the map $K$ does not involve any of the obstructed gluing data. To obtain relation \eqref{htpy-lambda-bdry-relation}, one inspects the boundaries of moduli spaces $M^+(S;\alpha,\alpha')^G_1$. There are boundary components $M(S;\alpha,\alpha')_0^{g_\pm}$ corresponding to the boundary of $G$, and which contribute the two terms $\lambda^{\pm}$. The other boundary components are of the form
\[
	M(S;\alpha,\beta')^G_0 \times \breve{M}(L';\beta',\alpha')_0, \qquad \breve{M}(L;\alpha,\beta)_0 \times M(S;\beta,\alpha')^G_0
\]
and these contribute the terms $d' K$ and $K d$ respectively.

Next, to define $M_1$, similar to the case of $\Delta_1$, we introduce the space $N^+(S;\alpha,\theta')^G_0$ defined as the disjoint union of the spaces
\begin{equation*} 
  M^+(S;\alpha,\theta')^G_0\setminus \bigsqcup_{\Theta^G}\Phi_{\alpha,\Theta^G}\(\Psi_{\alpha,\Theta^G}^{-1}(0)\cap (\breve M^+(L;\alpha,\theta)_{0}\times R_G )\)
\end{equation*}
\begin{equation*}
 \text{and} \qquad  \bigsqcup_{\Theta^G} (\Xi_{\alpha,\Theta^G})^{-1}(0)\cap \(\breve M^+(L;\alpha,\theta)_{0}\times [0,1]\times G \),
\end{equation*}
where $R_G = \{ (s,g_\opam)\in \R\times G \mid s> T(\opam)\}$. Here the disjoint unions range over topological types of reductions of the singular bundle data of index $-3$; by our assumptions, for each such reduction there is a 1-parameter family $\Theta^G$ of index $-3$ reducibles. Then set
\begin{equation*} 
  \langle M_1(\alpha),\theta'\rangle = \# N^+(	S;\alpha,\theta')^G_0 .   
\end{equation*}
To obtain relation \eqref{htpy-Delta-1-bdry-relation}, consider the boundaries of moduli spaces $N^+(	S;\alpha,\theta')^G_1$ which are defined analogously to \eqref{first-part-1-Delta-1}--\eqref{third-part-N-Delta-1} using the 1-parameter auxiliary data $\{\mathfrak{a}_\opam\}_{\opam\in[-1,1]}$. There are boundary components $N^+(	S;\alpha,\theta')^{g_\pm}_0$ associated to the boundary of $G$ and these contribute the terms $\Delta_1^\pm$. The other boundary components are of the form
\[
	M(S;\alpha,\beta')^G_0 \times \breve{M}(L';\beta',\theta')_0, \qquad \breve{M}(L;\alpha,\beta)_0 \times N^+(S;\beta,\theta')^G_0
\]
and these contribute the terms $\delta_1' K$ and $M_1 d$ respectively. (There are also some boundary components of $N^+(	S;\alpha,\theta')^G_1$ that obviously cancel.) The case of the map $M_2$ and the relation \eqref{htpy-Delta-2-bdry-relation} is completely analogous.

Finally, to define $L$, one constructs holonomy maps
\begin{equation} \label{eq:holmaphomotopy}
	\bH^{\gamma,G}_{\alpha,\alpha'}:N^+(S;\alpha,\alpha')^G_d\to S^1
\end{equation}
for the path $\gamma$ along $S$ just as in the definition of the maps $\mu^\pm$, but performing the construction for the 1-parameter family $G$, or more precisely, for the data $\{\mathfrak{a}_{\opam}\}_{\opam\in [-1,1]}$. Here $N^+(S;\alpha,\alpha')^G_d$ is defined as in \eqref{first-part-N-complement}--\eqref{second-part-N} but for the 1-parameter auxiliary data. We assume the restriction of \eqref{eq:holmaphomotopy} to the space $N^+(S;\alpha,\alpha')^{g_\pm}_{d-1}$ agrees with the data used to define the maps $\mu^\pm$. Assuming transversality of \eqref{eq:holmaphomotopy} at $-1\in S^1$, we define $L$ by
 \begin{equation*} 
  \langle L(\alpha),\alpha'\rangle = \# \left((\bH^{\gamma,G}_{\alpha,\alpha'})^{-1}(-1)\cap N^+(S;\alpha,\alpha')^G_1\) .   
\end{equation*}
The verification of relation \eqref{htpy-mu-bdry-relation} is similar to previous arguments.

\subsection{Compositions}\label{subsec:functoriality}

We now consider the composition of a height $-1$ morphism from an obstructed cobordism, as constructed in Subsection \ref{obs-cob-maps-level--1}, with a morphism from an unobstructed cobordism, as defined earlier in Subsection \ref{unob-cob-map}. We divide the discussion into two parts, depending on whether the unobstructed cobordism is even or odd.

\subsubsection{The even case}

Suppose $(W,S):(Y,L)\to (Y',L')$ and $(W',S'):(Y',L')\to (Y'',L'')$ are cobordisms of pairs between non-zero determinant links in homology $3$-spheres and $c$, $c'$ give the data of singular bundles on $(W,S)$, $(W',S')$. Let the composition of $(W,S)$ and $(W',S')$ be denoted by $(W^\circ,S^\circ):(Y,L)\to (Y'',L'')$, with singular bundle data $c^\circ=c\sqcup c'$. We also assume that $(W,S,c)$, $(W',S',c')$ are negative definite of respective heights $n$, $n'$ where $n,n'\geq -1$. Then the composite $(W^\circ,S^\circ,c^\circ)$ is negative definite of height $n+n'\geq -1$. We have morphisms of respective heights $n$, $n'$, $n+n'$:
\begin{align*}
  \widetilde \lambda_{(W,S,c)}&:\widetilde C(Y,L) \to \widetilde C(Y',L'), \\[1mm]
  \widetilde \lambda_{(W',S',c')}&:\widetilde C(Y',L') \to \widetilde C(Y'',L''), \\[1mm]
  \widetilde \lambda_{(W^\circ,S^\circ,c^\circ)}&:\widetilde C(Y,L) \to \widetilde C(Y'',L'').
\end{align*}
It is reasonable to expect that $\widetilde \lambda_{(W^\circ,S^\circ,c^\circ)}$ is chain homotopy equivalent to the composition of $\widetilde \lambda_{(W,S,c)}$ and $\widetilde \lambda_{(W',S',c')}$ in the sense of Definition \ref{comp-neg-lev}.

In the case that $n$ and $n'$ are both non-negative, and in particular $(W,S,c)$ and $(W',S',c')$ are both unobstructed, similar arguments as in \cite{DS1} can be used to see that the map associated to $(W^\circ,S^\circ,c^\circ)$ of height $n+n'$ is chain homotopy equivalent to the composition of $\widetilde \lambda_{(W,S,c)}$ and $\widetilde \lambda_{(W',S',c')}$ as morphisms of height $n+n'$. 

The situation in the case that $n=-1$ or $n'=-1$ is more complicated because of obstructed gluing theory. The goal of this subsection is to establish this instance of functoriality for $n=-1$ and $n'=0$, stated as Theorem \ref{func-obs}. The case $n=0$ and $n'=-1$ can be treated in essentially the same way. In the statement of the following theorem, we abbreviate $\widetilde \lambda_{(W,S,c)}$, $\widetilde \lambda_{(W',S',c')}$ and $\widetilde \lambda_{(W^\circ,S^\circ,c^\circ)}$ to $\widetilde \lambda$, $\widetilde \lambda'$ and $\widetilde \lambda^\circ$. We follow a similar convention to denote the components of these morphisms. In the proof of the exact triangles in the next section, several adaptations of the following argument will be used.

\begin{theorem}\label{func-obs}
	Let $(W,S,c):(Y,L)\to (Y',L')$ and $(W',S',c'):(Y',L')\to (Y'',L'')$ be negative definite cobordisms of heights $-1$ and $0$, respectively.
	After fixing auxiliary choices for these cobordisms, consider the associated height $-1$ and $0$ morphisms 
	\[
	  \widetilde \lambda:\widetilde C(Y,L) \to \widetilde C(Y',L'), 
	  \hspace{1cm}\widetilde \lambda':\widetilde C(Y',L') \to \widetilde C(Y'',L''),
	\]
	Then the height $-1$ morphism $\widetilde \lambda^\circ$, associated to the composite of 
	$(W,S,c)$ and $(W',S',c')$, is $\cS$-chain homotopy equivalent to 
	$\widetilde \lambda'\circ \widetilde \lambda$. 
	To be more specific, there are homomorphisms 
	\begin{equation}\label{chain-htpy}
	  K,\, L:C(Y,L) \to C(Y'',L''), \hspace{0.3cm} M_1:C(Y,L) \to \sfR'', \hspace{0.3cm} M_2:\sfR \to C(Y'',L''),
	\end{equation}
	such that the components of $\widetilde \lambda$, $\widetilde \lambda'$ and 
	$\widetilde \lambda^\circ$ satisfy
	\begin{align}
		&\lambda^\circ+Kd+d''K =\,\lambda'\lambda+\Delta_2'\tau_{-1}\delta_1
		\label{lambda-circ-relation}\\
		&\Delta_1^\circ+\delta_1''K+M_1d =\,\Delta_1'\lambda+ \tau_{0}'\Delta_1
		\label{Delta-1-circ-relation}\\
		&\Delta_2^\circ-d''M_2-K\delta_2 =\,\lambda'\Delta_2+\Delta_2' \tau_{0}+v''\Delta_2' \tau_{-1}+\mu'\delta_2' \tau_{-1}
		 \label{Delta-2-circ-relation}\\
		&\mu^\circ+Ld-d''L-Kv+v''K+M_2\delta_1+\delta_2''M_1=\,\lambda'\mu+\mu'\lambda +\Delta_2'\Delta_1 
		\label{mu-circ-relation}\\
		&\tau_{i}^\circ=\,\sum_{k\in \Z}\tau_{i-k}'\tau_{k}\label{tau-circ-relation}
	\end{align}
	In particular, we have the following:
	\begin{align}\label{red-rel}
	 \tau_0^\circ &=\tau_0'\tau_0+(\delta_1''\Delta_2'+\Delta_1'\delta_2')\tau_{-1}\\
	  \tau_{-1}^\circ &=\tau_{0}'\tau_{-1}\label{red-rel-minus}
	\end{align}
\end{theorem}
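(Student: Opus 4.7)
The plan is to adapt the neck-stretching argument familiar from functoriality in Floer theory to the obstructed setting, using the 1-parameter obstructed gluing data developed in Subsection \ref{subsec:homotopies}. First I would fix a diffeomorphism of a collar of $(Y',L')\subset (W^\circ,S^\circ)$ with $[-1,1]\times (Y',L')$ and construct a family of cylindrical-end orbifold metrics $G=\{g_T\}_{T\in [0,\infty]}$ on $(W^\circ,S^\circ,c^\circ)$ such that $g_0$ is the metric used to define $\widetilde\lambda^\circ$, while $g_\infty$ is the broken metric obtained by gluing the metrics used to define $\widetilde\lambda$ and $\widetilde\lambda'$ along infinite cylinders. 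A compactly supported perturbation interpolating between the chosen perturbations at $T=0$ and $T=\infty$ is chosen, together with a 1-parameter choice of linear maps $\sigma_{\Theta^G}$ as in \eqref{lin-map-choice-obs-gluing-requirement-3} for each 1-parameter family of index $-3$ reducibles $\Theta^G$ on $(W^\circ,S^\circ,c^\circ)$ with $\Theta_\infty$ either an index $-3$ reducible $\Theta$ on $(W,S,c)$ glued to an unobstructed index $-1$ reducible on $(W',S',c')$ or (less importantly for the composition formulas) a pairing forced by the broken metric.

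Next, for each pair $\alpha\in \fC_\pi^{\rm irr}$, $\alpha''\in \fC_{\pi''}^{\rm irr}$ or reducible endpoint, I would form the parametrized moduli spaces $M^+(S^\circ;\alpha,\alpha'')^G_d$ as in \eqref{fam-mod-space-hom2} and their obstructed modifications $N^+(S^\circ;\alpha,\alpha'')^G_d$, built by excising small neighborhoods of type~I/II/III obstructed loci using the 1-parameter sections $\Psi_{\alpha,\Theta^G}$, $\Psi_{\Theta^G,\alpha''}$, $\Psi_{\alpha,\Theta^G,\alpha''}$ and re-adding zero sets of the corresponding $\Xi$-sections, exactly as in Subsection \ref{obs-cob-maps-level--1} but now with the extra parameter $T$. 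Assuming transversality (which is achieved by small generic perturbations of $\sigma_{\Theta^G}$, $T_\Theta(T)$, and the $\Xi$-sections, as in Subsection \ref{subsec:homotopies}), I would define $K$, $M_1$, $M_2$ by counting the $0$-dimensional pieces of $M^+(S^\circ;\alpha,\alpha'')^G_0$, $N^+(S^\circ;\alpha,\theta'')^G_0$ and $N^+(S^\circ;\theta,\alpha'')^G_0$ respectively, and $L$ using a $1$-parameter family of modified holonomy maps $\bH^{\gamma,G}_{\alpha,\alpha''}\colon N^+(S^\circ;\alpha,\alpha'')^G_1\to S^1$ built analogously to \eqref{eq:holmapobs} and \eqref{eq:holmaphomotopy}.

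The relations \eqref{lambda-circ-relation}--\eqref{mu-circ-relation} then follow from identifying the codimension-$1$ boundary strata of the corresponding $1$-dimensional (or $2$-dimensional, for $L$) parametrized moduli spaces. The $T=0$ boundary contributes $\lambda^\circ$, $\Delta_1^\circ$, $\Delta_2^\circ$, $\mu^\circ$. The $T=\infty$ boundary decomposes into fibered products along $(Y',L')$; these account for (i)~pairs of irreducible trajectories on each side, giving $\lambda'\lambda$, $\Delta_1'\lambda$, $\lambda'\Delta_2$, $\lambda'\mu+\mu'\lambda$; (ii)~broken configurations that factor through a reducible on $(Y',L')$, giving $\Delta_2'\Delta_1$; and (iii)~\emph{broken configurations that factor through the obstructed reducible part}, which contribute terms of the form $\Delta_2'\tau_{-1}\delta_1$, $\tau_0'\Delta_1$, $\Delta_2'\tau_0+v''\Delta_2'\tau_{-1}+\mu'\delta_2'\tau_{-1}$, the extra terms arising precisely as in the proofs of Propositions \ref{Delta-1}, \ref{Delta-2}, \ref{prop:mu-obs-rel} from the interaction of the $\Xi$-sections with the stretching parameter and modified holonomy. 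The remaining boundary strata of type $d''K+Kd$, $\delta_1''K+M_1d$, etc.\ match the left-hand sides of \eqref{lambda-circ-relation}--\eqref{mu-circ-relation}. The reducible-reducible relations \eqref{tau-circ-relation}, and their specializations \eqref{red-rel}--\eqref{red-rel-minus}, are proved by a direct count: minimal reducibles on $(W^\circ,S^\circ,c^\circ)$ of index $2i-1$ arise by gluing reducibles on each side, and the $\tau_{-1}^\circ=\tau_0'\tau_{-1}$ formula expresses that obstructed index $-3$ reducibles on the composite come from pairing an index $-3$ reducible on $(W,S,c)$ with an unobstructed index $-1$ reducible on $(W',S',c')$, while \eqref{red-rel} records the two ways an index $-1$ reducible can form, the correction term $(\delta_1''\Delta_2'+\Delta_1'\delta_2')\tau_{-1}$ tracking the obstructed contributions through the $\Xi$-sections exactly as in Remark \ref{tau-i+1}.

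The hard part will be the bookkeeping near $T=\infty$, specifically proving that the 1-parameter obstructed gluing data can be chosen compatibly so that the boundary of $N^+(S^\circ;\alpha,\alpha'')^G_d$ at $T=\infty$ is homeomorphic (as a stratified-smooth space with the correct orientations) to the union of fibered products dictated by Definition \ref{comp-neg-lev}, together with the correction strata produced by the $\Xi$-sections. This requires gluing-theoretic statements analogous to Propositions \ref{type-1-obs-gluing}--\ref{type-3-obs-gluing} but parametrized by the neck length $T$ near $T=\infty$, together with a compatibility of the 1-parameter obstruction sections $\Psi_{\alpha,\Theta^G,\alpha''}$ with the sections $\Psi_{\alpha,\Theta}$ on $(W,S,c)$ and $\Psi_{\theta',\alpha''}$ on $(W',S',c')$ when $\Theta_\infty$ is the broken reducible built from an obstructed $\Theta$ and an unobstructed index $-1$ reducible. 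Once this compatibility is established (following \cite[\S 5--6]{DME:QHS} in the adapted $S^1$-equivariant singular setting), the orientation conventions of Subsection \ref{orientation} match signs on both sides and the five relations fall out by inspection of boundaries, completing the proof.
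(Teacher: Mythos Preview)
Your proposal is essentially correct and follows the same neck-stretching strategy as the paper, with the same moduli-theoretic identification of boundary strata. There is one notable simplification the paper makes that you miss: rather than running the family over all of $[0,\infty]$ with $g_0$ equal to the \emph{given} metric used for $\widetilde\lambda^\circ$, the paper invokes the homotopy invariance already established in Subsection~\ref{subsec:homotopies} to \emph{choose} the auxiliary data for $\widetilde\lambda^\circ$ at a metric $g_{t_0}$ with a long neck. This allows them to restrict the family to $[t_0,\infty]$ and arrange that all obstructed solutions for finite $t$ already lie in the gluing neighborhoods described by the analogues of Propositions~\ref{type-1-obs-gluing} and \ref{type-3-obs-gluing} for the broken solutions at $t=\infty$. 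Your approach --- carrying genuine 1-parameter obstructed gluing data over the whole interval, as in Subsection~\ref{subsec:homotopies} --- would also work, but the paper's shortcut avoids having to verify compatibility between the $\Xi$-sections used to define $\widetilde\lambda^\circ$ at $T=0$ and those used in the stretching family.

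One minor imprecision: your account of \eqref{red-rel} suggests the correction term $(\delta_1''\Delta_2'+\Delta_1'\delta_2')\tau_{-1}$ arises geometrically from $\Xi$-sections. In the paper this relation is derived purely algebraically: one has $\eta_0(S^\circ)=\eta_0(S')\eta_0(S)+\eta_1(S')\eta_{-1}(S)$ by gluing reducibles, and then \eqref{red-rel} follows by substituting the formulas \eqref{tau-obs}--\eqref{tau-obs2} for $\tau_0^\circ,\tau_0,\tau_{-1}$ together with \eqref{rel-tau-i+1} for $\tau_1'=\delta_1''\Delta_2'+\Delta_1'\delta_2'$ applied to $S'$. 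No additional moduli-space analysis is needed for the $\tau$-relations.
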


This theorem is proved by following a strategy similar to the previous subsections. Fix auxiliary data to form the $\cS$-complexes $\widetilde C(Y,L)$, $\widetilde C(Y',L')$ and $\widetilde C(Y'',L'')$. Let $\mathfrak a$, $\mathfrak a'$ be auxiliary choices used to form the morphisms $\widetilde \lambda$, $\widetilde \lambda'$ which are compatible with the auxiliary data of the $\cS$-complexes of $L$, $L'$ and $L''$. Similar to the previous subsection, we use a 1-parameter family of auxiliary data $\{\mathfrak{a}_\opam\}_{\opam\in [1,\infty)}$ to define the homotopy required for Theorem \ref{func-obs}. This 1-parameter family is induced by $\mathfrak a$ and $\mathfrak a'$ in the following way. 

First, we use the orbifold Riemannian metrics on $W$ and $W'$ to define an orbifold metric $g_t$ on $W^\circ$ for each $t\in [1,\infty)$. The metric $g_t$ has a neck isometric to $[-t,t]\times Y'$ with the product metric in the middle of the composed cobordism, where the orbifold metric on $Y'$ is the same as used in the definition of $\widetilde C(Y',L')$. On the complement of the neck region, $g_t$ is independent of $t$ and is determined by the metrics of $W$ and $W'$. We extend this family to $t=\infty$ by letting $g_\infty$ be the {\it broken} metric on $W^\circ$ along $Y'$ which is equal to the chosen metrics on $W$ and $W'$. Write $G$ for this 1-parameter family of metrics.

Similarly, for any $t$, we have a perturbation of the ASD equation on $W^\circ$, where the ASD equation is defined using the metric $g_t$. This perturbation over the cylindrical ends and the subspace of the neck region given by $[-t+1,t-1]\times Y'$ is equal to the downward gradient flow of the perturbations for the Chern--Simons functional of $(Y,L)$, $(Y',L')$ and $(Y'',L'')$. On the complement of the ends and the neck region, the perturbation is given by the perturbation of the ASD equation associated to $\mathfrak a$ and $\mathfrak a'$. As usual, perturbation data is typically omitted from notation.

To define the homomorphisms in \eqref{chain-htpy} and prove the desired relations in Theorem \ref{func-obs}, we use the instanton moduli spaces over $W^\circ$ defined with respect to $G$ and with dimension $d\leq 2$. For any pair of critical points $\alpha$, $\alpha''$ on $(Y,L)$ and $(Y'',L'')$, define
\begin{equation}\label{fam-mod-space}
	M(S^\circ  ;\alpha,\alpha'')^G_d= \bigcup_{t\in [1,\infty)} M(S^\circ  ;\alpha,\alpha'')_{d-1}^{g_t}.
\end{equation}
This family moduli space is a subspace of $\sB(W^\circ ,S^\circ ,c^\circ ;\alpha,\alpha'')\times [1,\infty)$ and is equipped with the subspace topology. We need a partial compactification of \eqref{fam-mod-space} given as
\begin{equation}\label{fam-mod-space-comp}
	M^+(S^\circ ;\alpha,\alpha'')^G_d= \bigcup_{T\in [1,\infty]} M^+(S^\circ ;\alpha,\alpha'')_{d-1}^{g_t},
\end{equation}
where the moduli space $M^+(S^\circ ;\alpha,\alpha'')_{d-1}^{g_\infty}$ is the union of all {\it broken} solutions on $(W^\circ ,S^\circ ,c^\circ )$ with respect to the broken metric $g_\infty$. Thus this space includes
\begin{equation}\label{fib-prod-comp}
  \bigcup_{\alpha', i}M(S;\alpha,\widetilde \alpha')_{i+\dim(\Gamma_{\alpha'})}\times_{\Gamma_{\alpha'}}M(S';\widetilde \alpha',\alpha'')_{d-i-1},
\end{equation}
and lower strata where a sequence of broken solutions on the cylinders associated to $L$, $L'$ and $L''$ are inserted in the above fiber product. 
Similar to previous cases, we can arrange the data of $G$ so that the above moduli spaces are as regular as possible in the following sense:
\begin{itemize}
	\item  Any irreducible element of $M(S^\circ;\alpha,\alpha'')^G_d$ is cut down transversally.
	\item Up to the action of the gauge group, there is a unique reducible $\Theta^\circ_\opam$ with respect to $\mathfrak{a}_\opam$ for any isomorphism class of $U(1)$-reduction of the singular bundle data $(W^\circ,S^\circ,c^\circ)$. The reducible $\Theta^\circ_\opam$ is regular if 
	$\ind(\Theta^\circ_\opam)\geq -1$. If $\ind(\Theta_\opam)=-3$, then $H^1(\Theta_\opam)$ and $H^+(\Theta_\opam)$ respectively have dimensions $0$ and $2$. 
\end{itemize}

Any isomorphism class of $U(1)$-reduction of the singular bundle data $(W^\circ,S^\circ,c^\circ)$ determines, and is determined by, isomorphism classes of $U(1)$-reductions of the singular bundle data of $(W,S,c)$ and $(W',S',c')$ such that the induced reductions over $(Y',L')$ agree with each other. In particular, for any reducible $\Theta$ over $(W,S,c)$ with limits $\theta$, $\theta'$ and reducible $\Theta'$ over $(W',S',c')$ with limits $\theta'$, $\theta''$, and any given $t\in [1,\infty)$, there is a unique corresponding reducible $\Theta^\circ_\opam$ over $(W^\circ,S^\circ,c^\circ)$. Furthermore, 
\[
	\ind(\Theta^\circ_\opam)=\ind(\Theta)+\ind(\Theta')+1.
\]
Our assumptions on $(W,S,c)$ and $(W',S',c')$ imply that $\ind(\Theta_\opam)=-3$ if and only if $\ind(\Theta)=-3$ and $\ind(\Theta')=-1$.

Let $\Theta$ be a reducible instanton of index $-3$ over $(W,S,c)$ with limits $\theta$ and $\theta'$ on the ends. Then any element of the subspace of the moduli space $M^+(S^\circ;\alpha,\alpha'')^G_d$ given as
\begin{equation}\label{type-1-3}
  \breve M^+(L;\alpha,\widetilde \theta)_i \times_{S^1} \overline{\underline \Theta} \times_{S^1} M^+(S';\widetilde \theta',\alpha'')_{d-i+1},
\end{equation}
where $1\leq i\leq d+1$, is an obstructed solution.  Here $i=d+1$ implies that $\alpha''$ is a reducible flat connection $\theta''$ and any element of $M^+(S';\widetilde \theta',\theta'')_{0}$ is given by a reducible instanton of index $-1$. Similarly, any element of the space 
\begin{equation}\label{type-2}
  \overline{\Theta} \times_{S^1} M^+(S';\widetilde \theta',\alpha'')_{d+2}
\end{equation}
determines an obstructed solution of $M^+(S^\circ;\theta,\alpha'')^G_d$.

We may use obstructed gluing theory analogous to the previous subsections to describe neighborhoods of the obstructed loci of \eqref{type-1-3} and \eqref{type-2}. We use the same notation as in Propositions \ref{type-1-obs-gluing} and \ref{type-3-obs-gluing} to denote the obstruction sections and the homeomorphisms into the zero sets of the obstruction sections. For instance, there is a section $\Psi_{\Theta,\alpha}$ over the product of \eqref{type-2} and $\overline \R_{+}$, which is transverse to the zero section over the subspace given by the product of \eqref{type-2} and $\R_{+}$, and its zero set is homeomorphic to a neighborhood of the obstructed solutions \eqref{type-2} in $M^+(S^\circ;\alpha,\alpha'')^G_d$ via a homeomorphism $\Phi_{\Theta,\alpha}$. More precisely, the interval $\overline \R_{+}$ is the embedding of $\overline \R_{+}$ as a (closed) neighborhood of $\infty$ in the parameter space of $G$. In particular, replacing the parameter space $[1,\infty]$ of $G$ with $[t_0,\infty]$ for large enough $t_0$, we can assume that $\Psi_{\Theta,\alpha}$ is defined over the product of \eqref{type-2} and $G$.

An obstructed reducible instanton associated to $g_t\in G$ is obtained by gluing an obstructed reducible of index $-3$ over $(W,S,c)$ and an index $-1$ reducible over $(W',S',c')$. We may form broken instantons for the perturbed ASD equation associated to $g_t\in G$, which contain such an obstructed reducible as the component over $(W^\circ,S^\circ,c^\circ)$ in the same way as in \eqref{type-I-obs}, \eqref{type-II-obs} and \eqref{type-III-base}. Then obstructed elements of $M^+(S^\circ;\alpha,\alpha'')^G_d$ are given by such broken connections or the elements in \eqref{type-1-3}, \eqref{type-2}.

By choosing $t_0$ large enough, we may assume that neighborhoods of any such elements are given by the neighborhoods of the subspaces in \eqref{type-1-3} and \eqref{type-2}, which are discussed in the previous paragraph. 
By picking $t_0$ generically, we may also assume that any irreducible element of $M(S^\circ;\alpha,\alpha'')^{g_{t_0}}_d$ is cut down transversally for any $\alpha$ and $\alpha''$. In the following, we assume that $\widetilde \lambda^\circ$ is defined using the metric $g_{t_0}$ and the corresponding perturbation of the ASD equation. The rest of the auxiliary data required to form $\widetilde \lambda^\circ$ are chosen concurrently with the auxiliary data required to define the homomorphisms in \eqref{chain-htpy}. Note that using the result of the previous subsection, we are free to pick any auxiliary data for $(W^\circ,S^\circ,c^\circ)$.

Now we are ready to define the homomorphisms in \eqref{chain-htpy}. First, we define the map $K:C(Y,L) \to C(Y'',L'')$ similar to \eqref{K-homotopy} as
\[
  \langle K(\alpha),\alpha''\rangle = \# M(S^\circ;\alpha,\alpha'')^G_0. 
\]
As $\alpha$ and $\alpha''$ are irreducible, we see that $M(S^\circ;\alpha,\alpha'')^G_0$ is compact using a straightforward index argument. Relation \eqref{lambda-circ-relation} follows from inspecting the ends of the compactified moduli space $M^+(S^\circ;\alpha,\alpha'')^G_1$. This moduli space has obstructed solutions of the type 
\begin{equation}\label{obs-type1-family}
  \breve M(L;\alpha,\widetilde \theta)_{1} \times_{S^1} \overline{\underline \Theta} \times_{S^1} M(S';\widetilde \theta',\alpha'')_{1}
\end{equation}
where $\Theta$ is a reducible of index $-3$ over $(W,S,c)$, asymptotic to reducibles $\theta$ and $\theta'$.
Following the same argument as in Proposition \ref{bdry-rel-lambda}, we can analyze the local behavior of the moduli space around such obstructed solutions and obtain \eqref{lambda-circ-relation}. In particular, the obstructed solutions in \eqref{obs-type1-family} give rise to the term $\Delta_2'\tau_{-1}\delta_1$, which is the counterpart of the term $\delta_2'\tau_{-1} \delta_1$ in \eqref{lambda-bdry-relation}. The term $\lambda'\lambda$ in \eqref{lambda-circ-relation} corresponds to the ends of the moduli space $M(S^\circ;\alpha,\alpha'')_{1}$ modeled on the broken solutions $M(S;\alpha,\alpha')_{0}\times M(S';\alpha',\alpha'')_{0}$. The remaining terms in \eqref{lambda-circ-relation} are obtained in the same way as in the proof of \eqref{htpy-lambda-bdry-relation}.

The moduli spaces $M^+(S^\circ;\alpha,\theta'')^G_d$, with $d\in\{0,1\}$, can be used to define the homomorphism $M_1:C(Y,L) \to \sfR''$ and obtain relation \eqref{Delta-1-circ-relation}. The moduli space $M^+(S^\circ;\alpha,\theta'')^G_d$ contains obstructed solutions of the form
\begin{equation}\label{obs-sol-M1}
  \breve M^+(L;\alpha,\widetilde \theta)_i \times_{S^1} \overline{\underline \Theta} \times_{S^1} M^+(S';\widetilde \theta',\theta'')_{d-i+1},
\end{equation}  
where $1\leq i\leq d+1$, and $\Theta$ is a reducible instanton of index $-3$ over $(W,S,c)$. Furthermore, these elements of $M^+(S^\circ;\alpha,\theta'')^G_d$ give all obstructed solutions with $t=\infty$. Since $(W,S,c)$ is an even cobordism, $M^+(S';\widetilde \theta',\theta'')_{d-i+1}$ is non-empty only if $d-i+1$ is an even integer. Thus the only possible value for $i$ is $d+1$, and in this case the \eqref{obs-sol-M1} has the form 
\begin{equation}\label{obs-t=infty}
  \bigsqcup_{(\Theta,\Theta')} \breve M^+(L;\alpha, \theta)_{d},
\end{equation}
where $\Theta$ is a reducible instanton of index $-3$ over $(W,S,c)$ asymptotic to $\theta$ and $\theta'$, and $\Theta'$ is a reducible instanton of index $-1$ over $(W',S',c')$ asymptotic to $\theta'$ and $\theta''$. As in Subsection \ref{obs-cob-maps-level--1} and for $d=\{0,1\}$, we define the moduli space $N^+(S^\circ;\alpha,\theta'')^G_d$ as the disjoint union of
\begin{equation*} 
  M^+(S^\circ;\alpha,\theta'')^G_d\setminus \bigsqcup_{(\Theta,\Theta')}\Phi_{\alpha,\Theta,\theta''}\(\Psi_{\alpha,\Theta,\theta''}^{-1}(0)\cap (\breve M^+(L;\alpha,\theta)_{d}\times G \times (T_{\Theta^\circ},\infty] )\)
\end{equation*}
\begin{equation*}
 \text{and}  \qquad  \bigsqcup_{(\Theta,\Theta')} (\Xi_{\alpha,\Theta,\theta''})^{-1}(0)\cap \([0,1]\times  \breve M^+(L;\alpha,\theta)_{d}\times G \),
\end{equation*}
where the possible range for $(\Theta,\Theta')$ is as in \eqref{obs-t=infty}. The constant $T_{\Theta^\circ}$ and the section $\Xi_{\alpha,\Theta^\circ}$ are defined in the same way as before. As the moduli spaces of the form $M^+(S';\widetilde \theta',\theta'')_{0}$ for reducibles $\theta'$ and $\theta''$ are empty, the definition of  $N^+(S;\alpha,\theta')^G_d$ does not involve terms of the form \eqref{first-part-2-Delta-1} or \eqref{third-part-N-Delta-1}. Now we set
\begin{equation*} 
  \langle M_1(\alpha),\theta'\rangle = \# N^+(S^\circ;\alpha,\theta'')^G_0.
\end{equation*}
A similar argument as in the proof of relation \eqref{htpy-Delta-1-bdry-relation} using the moduli spaces $N^+(S^\circ;\alpha,\theta'')^G_1$ gives \eqref{Delta-1-circ-relation}. In particular, the terms on the right hand side of \eqref{Delta-1-circ-relation} are given by the following boundary components of $N^+(S^\circ;\alpha,\theta'')^G_1$:
\[
  M^+(S;\alpha,\alpha')_0\times M^+(S';\alpha',\theta'')_0, \qquad \bigsqcup_{\Theta'} N(S;\alpha,\theta')_0\times \{\Theta'\},
\]
where the disjoint union ranges over all reducibles of index $-1$ over $(W',S',c')$ which are asymptotic to $\theta'$ and $\theta''$.

To define $M_2$ and verify \eqref{Delta-2-circ-relation}, use the moduli spaces $M^+(S^\circ;\theta,\alpha'')^G_d$ with $d\in\{0,1\}$ after some modifications. In particular, define $N^+(S^\circ;\theta,\alpha'')^G_0$ as the disjoint union of
\begin{equation*} 
  M^+(S^\circ;\theta,\alpha'')^G_0\setminus \bigsqcup_{\Theta}\Phi_{\Theta,\alpha''}\(\Psi_{\Theta,\alpha''}^{-1}(0)\cap (M^+(S'; \theta',\alpha'')_{1}\times G )\)
\end{equation*}
\begin{equation*}
\text{and} \qquad  \bigsqcup_{\Theta} (\Xi_{\Theta,\alpha''})^{-1}(0)\cap \([0,1]\times  M^+(S'; \theta',\alpha'')_{1}\),
\end{equation*}
where the disjoint unions range over all obstructed reducibles of index $-3$ over $(W,S,c)$, and the section $\Xi_{\Theta,\alpha''}$ is defined in the same way as before. The definition of $N^+(S^\circ;\theta,\alpha'')^G_1$ is slightly different from the definition of $N^+(S^\circ;\theta,\alpha'')^G_0$ because $M^+(S^\circ;\theta,\alpha'')^G_1$ contains obstructed solutions of the following form:
\[
  \breve M^+(L;\theta,\widetilde \rho)_1 \times_{S^1} \overline{\underline \Theta} \times_{S^1} M^+(S';\widetilde \theta',\alpha'')_{1},
\]
where $\rho$ is a reducible flat connection associated to $L$ and $\Theta$ is a reducible instanton of index $-3$ over $(W,S,c)$.  Appearance of such obstructed solutions is similar to the appearance of \eqref{obs-red-new} in the proof of Proposition \ref{Delta-1}. In particular, we define $N^+(S^\circ;\theta,\alpha'')^G_1$ similar to \eqref{first-part-1-Delta-1}--\eqref{third-part-N-Delta-1}, and in the same way that \eqref{Delta-1-bdry-relation} is verified in Proposition \ref{Delta-1}, we obtain relation \eqref{Delta-2-circ-relation} using the moduli space $N^+(S^\circ;\theta,\alpha'')^G_1$. In particular, the terms $\lambda'\Delta_2$, $\Delta_2' \tau_{0}$, $(v''\Delta_2'+\mu'\delta_2')\tau_{-1}$ are respectively counterparts of $\Delta_1 d$, $\tau_{0}\delta_1$, $\tau_{-1}\delta_1v$.

The moduli space $M^+(S^\circ;\alpha,\alpha'')^G_d$ contains obstructed solutions of the form \eqref{type-1-3}, and for $d\in\{1, 2\}$, we define the moduli space $N^+(S^\circ;\alpha,\alpha'')^G_d$ in the same way as in Subsection \ref{obs-cob-maps-level--1} by removing the counterpart of \eqref{first-part-N-complement} and including the counterpart of \eqref{second-part-N}. Recall that $U_\Theta$ and $S_\Theta$ in \eqref{first-part-N-complement} and \eqref{second-part-N} are defined in \eqref{U-theta} and \eqref{S-theta}, and are subspaces of $\overline \R_{+}\times \overline \R_{+}$. In the present setup $U_\Theta$ and $S_\Theta$ are replaced by $(T_\theta,\infty] \times G$ and $\{T_\theta\}\times G$ for a large generic constant $T_\Theta$. Next, we define the map 
\[
  \bH^{\gamma^\circ,G}_{\alpha,\alpha''}:N^+(S^\circ;\alpha,\alpha'')^G_d\to S^1
\]
similar to \eqref{eq:holmapobs} and \eqref{eq:holmaphomotopy}. Here $\gamma^\circ$ is the path in $S^\circ$ connecting the basepoints on the two links obtained by composing the paths $\gamma\subset S$ and $\gamma'\subset S'$. Define $L$ in \eqref{chain-htpy} as
\[
  \langle L(\alpha),\alpha''\rangle = \# \left((\bH^{\gamma^\circ,G}_{\alpha,\alpha''})^{-1}(-1)\cap N^+(S^\circ;\alpha,\alpha'')^G_1\).  
\] 
Repeating a similar argument as in the proof of Proposition \ref{prop:mu-obs-rel} using the moduli space $(\bH^{\gamma^\circ,G}_{\alpha,\alpha''})^{-1}(-1)\cap N^+(S^\circ;\alpha,\alpha'')^G_2$ gives \eqref{mu-circ-relation}. In particular, the terms $\lambda'\mu$, $\mu'\lambda$ and $\Delta_2'\Delta_1$ respectively appear from the boundary components of $N^+(S^\circ;\alpha,\alpha'')^G_2$ given by
\[
  N^+(S;\alpha,\alpha')_1\times M^+(S';\alpha',\alpha'')_0,\hspace{1cm} M^+(S;\alpha,\alpha')_0\times M^+(S';\alpha',\alpha'')_1
\]
\[
  \text{and} \qquad N^+(S;\alpha,\widetilde \theta')_1\times_{S^1} M^+(S';\widetilde \theta',\alpha'')_1.
\]

To obtain relation \eqref{tau-circ-relation}, first note that the counts or reducibles of index $-1$ and $-3$ on $S^\circ$ are related to the corresponding counts on $S$ and $S'$ by the following relations:
\begin{equation}\label{eta-rels}
  \eta_{-1}(S^\circ)=\eta_{0}(S') \eta_{-1}(S),\hspace{1cm}\eta_{0}(S^\circ)=\eta_{0}(S') \eta_{0}(S)+\eta_{1}(S')\eta_{-1}(S)
\end{equation}
The first relation together with the definition of $\tau_{-1}$ for a height $-1$ cobordism in \eqref{tau-obs2} gives relation \eqref{red-rel-minus}. Relation \eqref{red-rel} follows from combining \eqref{eta-rels}, \eqref{tau-obs}, \eqref{tau-obs2} and \eqref{rel-tau-i+1} applied to the cobordism $S'$.

\begin{figure}[t]
    \centering
    \centerline{\includegraphics[scale=0.23]{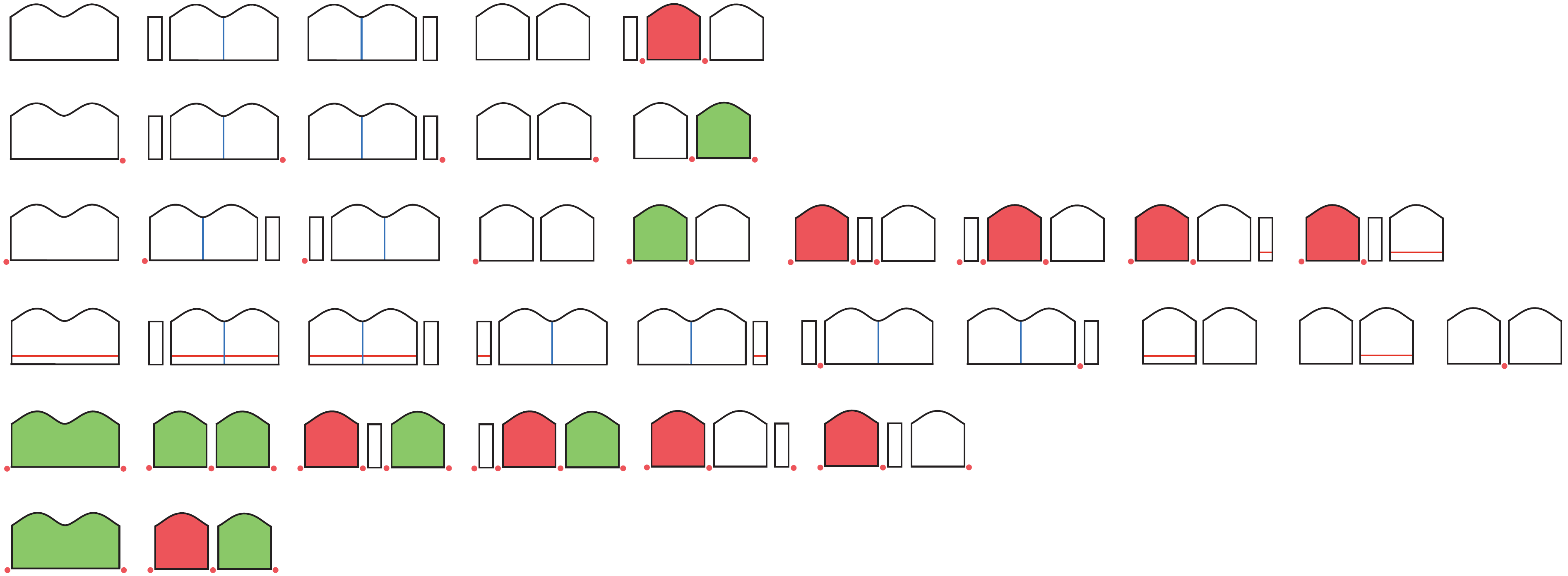}}
    \caption{{\small{Relations \eqref{lambda-circ-relation}--\eqref{mu-circ-relation} and \eqref{red-rel}--\eqref{red-rel-minus}, row by row.}}}
    \label{fig:height-1compeven}
\end{figure}

We depict the relations of Theorem \ref{func-obs} in Figure \ref{fig:height-1compeven}. Pictures that involve a vertical blue line represent maps that are defined using the 1-parameter metric family, i.e. $K$, $M_1$, $M_2$, $L$. The first column, in order, depicts the maps defined on the composition cobordism, i.e. the maps $\lambda^\circ$, $\Delta_1^\circ$, $\Delta_2^\circ$, $\mu^\circ$ and $\tau^\circ$. As before, green (resp. red) is used for unobstructed (resp. obstructed) reducible instantons. Similar illustrations will be used in Section \ref{sec:proofs}.

\begin{remark}
	In the present work, we have constructed maps associated to cobordisms which have the mildest type of obstructed reducibles, of index $-3$. On the other hand, the tools developed here may be used in an approach to define cobordism maps in much greater generality, as we now explain.
	
	 Consider any cobordism $(W,S):(Y,L)\to (Y',L')$ between non-zero determinant links in homology $3$-spheres, with $b^1(W)=b^+(W)=0$. Suppose $(W,S)$ can be decomposed as
\begin{equation}\label{eq:strategyforcompositioningeneral}
	(W,S)= (W_n,S_n)\circ (W_{n-1},S_{n-1})\circ \cdots \circ (W_1,S_1)
\end{equation}
where each $(W_i,S_i)$ is between non-zero determinant links in homology $3$-spheres, and its reducible instantons for any metric have index at least $-3$. In particular, by the constructions of this section, there is an associated height $-1$ morphism for each $(W_i,S_i)$. Then one may obtain a morphism $\widetilde C(Y,L) \to \widetilde C(Y',L')$ associated to $(W,S)$, which in general can be of any negative height, by algebraically composing the height $-1$ morphisms associated to the $(W_i,S_i)$. That this algebraic composition is indeed a plausible definition for the morphism of $(W,S)$ is suggested by Theorem \ref{func-obs}. 

A major step in carrying this strategy out is to show that if two different decompositions as in \eqref{eq:strategyforcompositioningeneral} are chosen for $(W,S)$, then the resulting morphisms constructed are chain homotopic in the appropriate sense. This step should involve studying more general family moduli spaces that interpolate between the choices.

Alternatively, in \eqref{eq:strategyforcompositioningeneral}, one may take all $(W_i,S_i)$ but one to be cylinders, and on each cylinder introduce a perturbation such that the only reducible instantons present have index $-3$. Such perturbations are necessarily non-trivial at the reducible critical points. This approach, which is an adaptation of the ideas in \cite{DME:QHS}, has the advantage that such a decomposition clearly exists, and the independence of the construction may be handled just as the above citation.  $\diamd$
\end{remark}

\subsubsection{The odd case}

Above, we considered the composition of two negative definite cobordisms, one of height $-1$ and the other of some non-negative height. In particular, both cobordisms involved were {\emph{even}}. The following variation of Theorem \ref{func-obs} considers the case in which the unobstructed cobordism is {\emph{odd}}. In this version, the $\nu$-maps of Definition \ref{defn:numap} appear.

\begin{theorem}\label{func-obs-odd}
	Suppose $(W,S,c)$ is a negative definite cobordism of height $-1$, and $(W',S',c')$ is an unobstructed odd cobordism. 
	After fixing auxiliary choices for these cobordisms, let $\widetilde \lambda$ be the height $-1$ morphism associated to $(W,S,c)$
	and $\widetilde \lambda'$ be the odd degree morphism associated to $(W',S',c')$.
	Then the odd degree morphism $\widetilde \lambda^\circ$, associated to the composite of 
	$(W,S,c)$ and $(W',S',c')$, is $\cS$-chain homotopy equivalent to 
	the composition $\widetilde \lambda'\widetilde \lambda$ as described in Subsection \ref{subsec:odddegmorphisms}. 
	More precisely, there are maps
	\begin{equation}\label{chain-htpy}
	  K,\, L:C(Y,L) \to C(Y'',L''), \hspace{0.3cm} M_1:C(Y,L) \to \sfR'', \hspace{0.3cm} M_2:\sfR \to C(Y'',L''),
	\end{equation}
	such that the components of $\widetilde \lambda$, $\widetilde \lambda'$ and 
	$\widetilde \lambda^\circ$ satisfy
	\begin{align}
		\lambda^\circ+Kd+d''K =&\,\lambda'\lambda+\Delta_2'\tau_{-1}\delta_1
		\label{lambda-circ-relation-odddeg}\\
		\Delta_1^\circ+\delta_1''K+M_1d =&\,\Delta_1'\lambda+ \nu' \tau_{-1} \delta_1
		\label{Delta-1-circ-relation-odddeg}\\
		\Delta_2^\circ-d''M_2-K\delta_2 =&\,\lambda'\Delta_2+\Delta_2' \tau_{0}+\delta_2'' \nu' \tau_{-1}+v''\Delta_2' \tau_{-1}+\mu'\delta_2' \tau_{-1}
		 \label{Delta-2-circ-relation-odddeg}\\
		\mu^\circ+Ld-d''L-Kv+v''K+&M_2\delta_1+\delta_2''M_1=\,\lambda'\mu+\mu'\lambda +\Delta_2'\Delta_1 
		\label{mu-circ-relation-odddeg}
	\end{align}
\end{theorem}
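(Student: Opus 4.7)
The proof follows the strategy of Theorem \ref{func-obs}, and the plan is to isolate only the points at which the odd setting requires modification. Fix auxiliary data $\mathfrak{a},\mathfrak{a}'$ for $(W,S,c),(W',S',c')$, inducing a $1$-parameter family of orbifold metrics $G=\{g_t\}_{t\in[1,\infty]}$ on $(W^\circ,S^\circ,c^\circ)$ which stretches the neck along $(Y',L')$, with $g_\infty$ the broken metric. Since $(W',S',c')$ is odd and unobstructed, Remark \ref{rmk:reducibleevendeg} implies that it supports no reducible singular instantons, and hence neither does $(W^\circ,S^\circ,c^\circ)$. Thus $\widetilde\lambda^\circ$ is an odd degree morphism of unobstructed type in the sense of Subsection \ref{subsubsec:odddegcobs}, with components defined by the standard signed counts on $(W^\circ,S^\circ,c^\circ)$ equipped with the metric $g_{t_0}$ for a generic large $t_0$. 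The homotopy maps $K,L,M_1,M_2$ in \eqref{chain-htpy} will be defined by signed counts in appropriately modified family moduli spaces $M^+(S^\circ;\alpha,\alpha'')^G_d$, whose neighborhoods of obstructed configurations at $t=\infty$ are described by one-parameter versions of the obstruction sections $\Psi_{\alpha,\Theta}$, $\Psi_{\Theta,\alpha''}$, $\Psi_{\alpha,\Theta,\alpha''}$ and their interpolations $\Xi$, constructed exactly as in Subsections \ref{gluing-theory-obs}--\ref{subsec:homotopies}.

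The essential new feature compared to the even setting of Theorem \ref{func-obs} is that there are no index $-1$ reducibles on $(W',S',c')$; their role in the composition formula is played by the irreducible singular instantons on $(W',S',c')$ with reducible limits on both ends, counted by the map $\nu'=\nu_0(W',S',c')$ of Definition \ref{defn:numap}. Geometrically, the obstructed broken configurations at $t=\infty$ that must be analyzed have the form $[A,\Theta,A']$, where $A$ is an isolated broken trajectory on $(Y,L)$ ending at a reducible $\theta$, $\Theta$ is an index $-3$ obstructed reducible on $(W,S,c)$ from $\theta$ to $\theta'$, and $A'$ is either a $0$-dimensional irreducible on $(W',S',c')$ from $\theta'$ to an irreducible $\alpha''$ (counted by $\Delta_2'$) or from $\theta'$ to a reducible $\theta''$ on $(Y'',L'')$ (counted by $\nu'$). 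These are the only configurations that require obstructed gluing; standard neck-stretching analysis handles all remaining broken strata at $t=\infty$.

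With these preparations, each map is defined and each relation obtained in direct parallel with Theorem \ref{func-obs}. Set $\langle K(\alpha),\alpha''\rangle=\#M(S^\circ;\alpha,\alpha'')^G_0$; then \eqref{lambda-circ-relation-odddeg} follows from counting ends of $M^+(S^\circ;\alpha,\alpha'')^G_1$, with the $t=t_0$ stratum producing $\lambda^\circ$, the irreducible broken stratum at $t=\infty$ producing $\lambda'\lambda$, and the obstructed stratum producing $\Delta_2'\tau_{-1}\delta_1$ via the obstructed gluing analysis of Subsection \ref{obs-cob-maps-level--1}. The map $M_1$ is defined from modified spaces $N^+(S^\circ;\alpha,\theta'')^G_0$, and the new obstructed stratum of $N^+(S^\circ;\alpha,\theta'')^G_1$ at $t=\infty$, of the form $[A,\Theta,A']$ with $A'\in M(S';\widetilde\theta',\theta'')_0$, contributes the term $\nu'\tau_{-1}\delta_1$ of \eqref{Delta-1-circ-relation-odddeg}, replacing the term $\tau_0'\Delta_1$ of the even case. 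The treatment of $M_2$ and \eqref{Delta-2-circ-relation-odddeg} is symmetric; the extra term $\delta_2''\nu'\tau_{-1}$ arises from broken configurations at $t=\infty$ in which an obstructed $\Theta$ is followed by an irreducible counted by $\nu'$ and then by an isolated cylindrical trajectory out of a reducible on $(Y'',L'')$. Finally, $L$ is defined by cutting down $N^+(S^\circ;\alpha,\alpha'')^G_1$ by the preimage of $-1$ under a suitable modified holonomy map $\bH^{\gamma^\circ,G}_{\alpha,\alpha''}$ along the composed basepoint path $\gamma^\circ$, and \eqref{mu-circ-relation-odddeg} follows from the analysis of $(\bH^{\gamma^\circ,G}_{\alpha,\alpha''})^{-1}(-1)\cap N^+(S^\circ;\alpha,\alpha'')^G_2$ exactly as in the proof of Theorem \ref{func-obs}; consistent with \eqref{eq:compheight-1-odddeg-2}, no additional $\nu'$-term appears. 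The main technical obstacle will be the consistent coordination of the family obstruction sections $\Psi$ and their interpolations $\Xi$ with the moduli spaces $M(S';\widetilde\theta',\widetilde\alpha'')$, so that the signed counts of boundary points reproduce precisely the $\nu'$-dependent terms of \eqref{eq:compheight-1-odddeg-3}--\eqref{eq:compheight-1-odddeg-4}; once this is set up, the remainder is a bookkeeping extension of the even case.
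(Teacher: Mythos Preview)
Your proposal is correct and follows essentially the same approach as the paper's proof, which likewise treats Theorem~\ref{func-obs-odd} as a variation of Theorem~\ref{func-obs} and isolates the points where the odd parity of $(W',S',c')$ forces $\nu'$ to replace the reducible counts. One minor refinement the paper makes that you do not: because $(W',S',c')$ is odd, the parity constraint forces $M^+(S';\widetilde\theta',\theta'')_{d-i+1}$ to be empty unless $d-i+1$ is odd, so in particular $M^+(S^\circ;\alpha,\theta'')^G_0$ contains \emph{no} obstructed solutions and $M_1$ can be defined from the unmodified moduli space; only the $1$-dimensional space used to verify \eqref{Delta-1-circ-relation-odddeg} sees the obstructed stratum $\breve M(L;\alpha,\widetilde\theta)_1\times_{S^1}\overline{\underline\Theta}\times_{S^1}M(S';\widetilde\theta',\theta'')_1$ that produces $\nu'\tau_{-1}\delta_1$.
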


\begin{figure}[t]
    \centering
    \centerline{\includegraphics[scale=0.23]{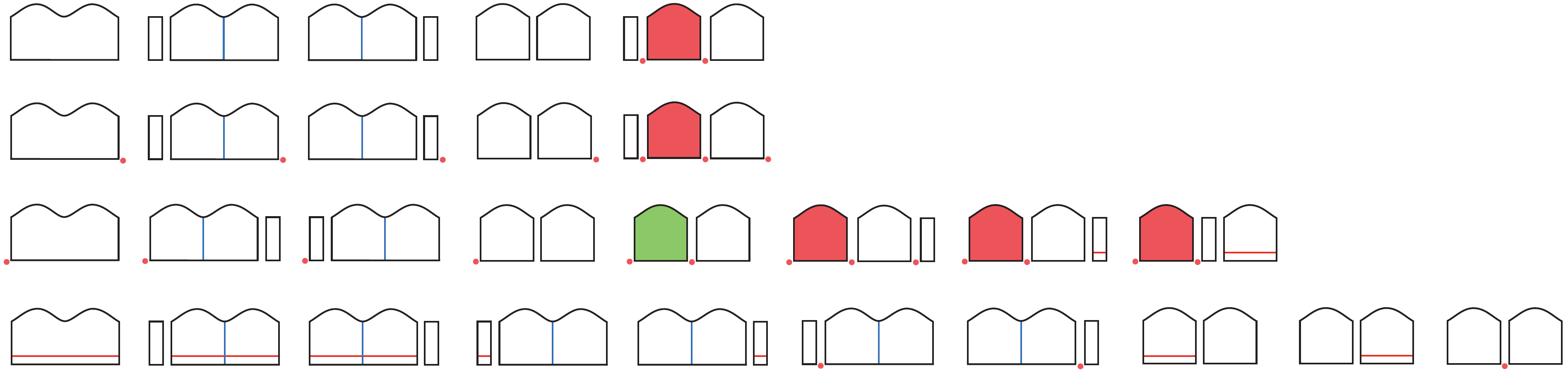}}
    \caption{{\small{Relations \eqref{lambda-circ-relation-odddeg}--\eqref{mu-circ-relation-odddeg}.}}}
    \label{fig:height-1compodd}
\end{figure}

Similar to the previous case, we depict the relations of this result in Figure \ref{fig:height-1compodd}.
This theorem is proved using a similar argument as in the proof of Theorem \ref{func-obs}. Therefore we omit most of the details and only focus on the aspects of the proof of Theorem \ref{func-obs-odd} that differ from those of Theorem \ref{func-obs}. 

We again form a family of metrics $G$ and perturbations of the ASD equation for $(W^\circ,S^\circ,c^\circ)$ parametrized by $[t_0,\infty]$. The moduli space $M^+(S^\circ;\alpha,\alpha'')^G_d$ has obstructed solutions for the parameter $t=\infty$ of the forms given in \eqref{type-1-3} and \eqref{type-2}, and we can use obstructed gluing theory to describe neighborhoods of these solutions. One difference in the present case is that there are no obstructed solutions when $t$ is finite. 

We define the homomorphism $K$ in the same way as before, and the same argument shows \eqref{lambda-circ-relation}. On the other hand, the definition of $M_1$ needs to be slightly modified. For $d\in\{0, 1\}$, the moduli space $M^+(S^\circ;\alpha,\theta'')^G_d$ contains obstructed solutions only of the form
\begin{equation}\label{obs-sol-M1-odd}
  \breve M^+(L;\alpha,\widetilde \theta)_i \times_{S^1} \overline{\underline \Theta} \times_{S^1} M^+(S';\widetilde \theta',\theta'')_{d-i+1},
\end{equation}  
where $1\leq i\leq d+1$, and $\Theta$ is a reducible instanton of index $-3$ over $(W,S,c)$. Our assumption on the parity of $(W',S',c')$ implies that $d-i+1$ is an odd integer. In particular, $M^+(S^\circ;\alpha,\theta'')^G_0$ does not have any obstructed solution, and we define $M_1$ using these moduli spaces without any modifications. To check the relation satisfied by $M_1$, we observe that the moduli space $M^+(S^\circ;\alpha,\theta'')^G_1$ has the following obstructed solutions:
\begin{equation}\label{obs-sol-Delta-1-new}
  \breve M(L;\alpha,\widetilde \theta)_{1} \times_{S^1} \overline{\underline \Theta} \times_{S^1} M(S';\widetilde \theta',\theta'')_{1}.
\end{equation}
Using this moduli space, we obtain relation \eqref{Delta-1-circ-relation-odddeg}, where the term $\nu\tau_{-1}\delta_1$ can be obtained in the same way that the term $\delta_2'\tau_{-1}\delta_1$ appears in relation \eqref{lambda-bdry-relation} for height $-1$ cobordism maps by analyzing neighborhoods of the obstructed solutions in \eqref{obs-sol-Delta-1-new}.

The definition of the homomorphism $M_2$ and the proof of relation \eqref{Delta-2-circ-relation-odddeg} are essentially the same as in the previous case. The new feature is that the moduli spaces $M^+(S^\circ;\theta'',\alpha'')^G_1$ contain obstructed solutions of the form
\[
  \overline{\Theta} \times_{S^1} M(S';\widetilde \theta',\widetilde \theta'')_{2}\times_{S^1} \breve M(L';\widetilde \theta'',\alpha'')_{1},
\]
which give rise to the term $\delta_2'' \nu' \tau_{-1}$ in \eqref{Delta-2-circ-relation-odddeg} in the same way that $\eta_{-1}s \delta_1$ shows up in \eqref{identity-2-Delta-1} as a part of the proof of Proposition \ref{Delta-1}. Finally, we define $L$ in the same way as in the previous cases and obtain the same relation as in \eqref{mu-circ-relation}.

\subsection{Orientations}\label{orientation}

The conventions we use for orienting moduli spaces build upon those of \cite{DS1}. The general scheme follows \cite[\S 3.6]{KM:unknot}; see also the discussion of orientations in the setting of non-singular instanton Floer homology as given in \cite[\S 5.4]{donaldson-book}.

We review and extend the set up from \cite[\S 2.9]{DS1}. Let $(W,S):(Y,L)\to (Y',L')$ be a cobordism of pairs, with bundle data described by an unoriented surface $c\subset W$ with $c \cap \partial W = \omega \sqcup \omega'$. For now we do not assume $Y,Y'$ are integer homology $3$-spheres, nor that the links have $\det \neq 0$. Fix irreducible critical points $\alpha\in \fC_\pi^\omega$, $\alpha' \in \fC_{\pi'}^{\omega'}$. Let $z$ be a homotopy class of singular connections mod gauge on $(W,S,c)$ relative to $\alpha,\alpha'$. Write $\mathscr{B}_z(W,S,c;\alpha,\alpha')$ for the configuration space of singular connections in the homotopy class $z$ (where we remind the reader that cylindrical ends are attached). The index bundle $l_z(W,S,c;\alpha,\alpha')$ of the family of ASD operators $\mathscr{D}_A$ (which are Fredholm with respect to standard Sobolev completions) is a trivializable real line bundle over $\mathscr{B}_z(W,S,c;\alpha,\alpha')$. Denote the two-element set of orientations of $l_z(W,S,c;\alpha,\alpha')$ by
\[
	\Lambda_z[W,S,c;\alpha,\alpha'].
\]
If $\alpha=\theta$ is a reducible, then we consider two index bundles $l_z(W,S,c;\theta_{\pm},\alpha')$, where the ASD operators $\mathscr{D}_A$ are defined on Sobolev spaces whose elements belong to $e^{\varepsilon |t|} L^2$ over $\R_{\leq 0} \times (Y,L)$ for $\pm \varepsilon > 0$ and $|\varepsilon|$ sufficiently small. We write $\Lambda_z[W,S,c;\theta_{\pm},\alpha']$ for the set of orientations. Similar remarks for when $\alpha'$ is reducible. In any of these cases, for different choices of homotopy classes $z_1$ and $z_2$, there are natural identifications
\[
	\Lambda_{z_1}[W,S,c;\alpha,\alpha'] \cong \Lambda_{z_2}[W,S,c;\alpha,\alpha'],
\]
and thus we may drop these decorations from our notation.

Suppose we have another cobordism $(W',S'):(Y',L')\to (Y'',L'')$ with bundle data $c'\subset W'$, where $ c' \cap \partial W'= \omega' \sqcup \omega''$. Write $W'\circ W$ for the composite cobordism, and so forth. Then there is a composition map, which is an isomorphism:
\[
  \Phi:\Lambda[W,S,c;\alpha,\alpha']\otimes_{\Z/2\Z}\Lambda[W',S',c';\alpha',\alpha'']\to \Lambda[W'\circ W,S'\circ S,c'\circ c;\alpha,\alpha''].
\]
Here we view each set of orientations as a $\Z/2$-torsor. If $\alpha'$ is a reducible $\theta'$, then it should be decorated by ``$-$'' in one of the orientation sets in the domain, and by ``$+$'' in the other. The maps $\Phi$ are associative with respect to triple composites.

Consider $ (Y,L)$ with bundle data $\omega$ as above, and fix some choice of {\emph{basepoint}} $\theta$, which is any class of singular connections mod gauge on $(Y,L,\omega)$. Define
\[
	\Lambda[\alpha ] := \Lambda[I\times Y, I\times L; \alpha_+, \theta_-].
\]
If $\alpha$ (resp. $\theta$) is irreducible, ignore the subscript ``$+$'' (resp. ``$-$''). If $\omega$ is empty and $Y$ is an integer homology $3$-sphere with the link $L$ having non-zero determinant, we may choose some quasi-orientation and take $\theta$ to be the associated reducible. 

The irreducible complex for $(Y,L)$ introduced in \eqref{eq:irrcomplexdefn} is more precisely defined as
\begin{equation}\label{eq:irrcomplexprecise}
	C(Y,L) = \bigoplus_{\alpha\in \fC^{\text{irr}}_\pi}R  \Lambda[\alpha]
\end{equation}
and similarly for the admissible case. Here $R\Lambda[\alpha]$ is the infinite cyclic module of $R$ with generators the elements of $\Lambda[\alpha]$. To identify \eqref{eq:irrcomplexprecise} and \eqref{eq:irrcomplexdefn}, one chooses an element in each $\Lambda[\alpha]$. Similarly, the more precise version of \eqref{eq:redcomplexdefn} is
\[
	\sfR(Y,L) = \bigoplus_{\fo\in \mathcal{Q}(Y,L)} R \Lambda[\theta_{\fo}].
\]

Next, consider a cobordism $(W,S,c):(Y,L,\omega)\to (Y',L',\omega')$ as before. Borrowing terminology from \cite{KM:unknot}, an {\emph{I-orientation}} is an element 
\[
	 o_{\theta,\theta'}\in \Lambda[W,S,c;\theta_+,\theta_-']
\]
where $\theta$ and $\theta'$ are the respective basepoint critical points for $(Y,L,\omega)$ and $(Y',L',\omega')$. Given such an I-orientation, and elements $o_\alpha\in \Lambda[\alpha]$ and $o_{\alpha'}\in \Lambda[\alpha']$ where $\alpha\in \fC_{\pi}^\omega$ and $\alpha'\in \fC_{\pi'}^{\omega'}$, we obtain $o_{\alpha,\alpha'}\in \Lambda[W,S,c;\alpha_+,\alpha_-']$ by the rule
\begin{equation}\label{eq:cobordismorientationrule}
	\Phi( o_\alpha \otimes o_{\theta,\theta'}) = \Phi(o_{\alpha,\alpha'} \otimes o_{\alpha'}).
\end{equation}
An orientation of the irreducible stratum of $M_z(W,S,c;\alpha,\alpha')$ is induced by an element of $\Lambda[W,S,c;\alpha_-,\alpha'_-]$. In particular, if $\alpha$ is irreducible, then $o_{\alpha,\alpha'}$ induces an orientation of this moduli space. When $\alpha$ is reducible (and $\omega=\emptyset$), we proceed as follows. Use that $\Lambda[I\times Y,I\times L;\alpha_-,\alpha_-]$ has a distinguished element $o_{\alpha}^{-}$ induced by an orientation of a base-pointed component of $L$ and the canonical homology orientation (see below). Then  
\begin{equation}\label{eq:orientingredsit}
	\Phi(o_{\alpha}^{-}\otimes o_{\alpha,\alpha'} ) 
\end{equation}
gives an element in $\Lambda[W,S,c;\alpha_-,\alpha_-']$, which orients the moduli space. 

Suppose $A_0$ is a reducible singular connection on $(W,S,c)$ with limits in the basepoints $\theta$ and $\theta'$. The adjoint of $A_0$ is compatible with a bundle splitting $\underline{\R}\oplus K_0^{\pm 1}$ where $K_0$ is a complex line bundle. One of $K_0^{\pm 1}$ is distinguished over the other, say $K_0$, by choosing a $U(2)$-lift of the bundle data, which amounts to orienting $c$; and choosing an ordering of the $U(2)$-reduction at point $p\in S\setminus \partial c$, which amounts to orienting the component of $S\setminus \partial c$ containing $p$. Given such choices, there is a natural identification of real vector spaces
\begin{equation}
	l_z(W,S;\theta_-,\theta'_-)|_{[A_0]} \cong \wedge^\text{top} (H^1(W) \oplus H^+(W)^\ast\oplus H^0(W)^\ast).\label{eq:reddetlinemin}
\end{equation}
Indeed, the operator $\mathscr{D}_{A_0}$ splits into a sum according to the decomposition $\underline{\R}\oplus K_0$. The index bundle of the operator coupled to $K_0$ is naturally oriented by its complex structure, and has trivial real determinant. The index bundle of the operator associated to $\underline{\R}$ remains, and appears on the right side of \eqref{eq:reddetlinemin}. Similarly, we have an identification
\begin{equation}\label{ori-bdle}
	l_z(W,S;\theta_+,\theta'_-)|_{[A_0]} \cong \wedge^\text{top} (H^1(W) \oplus H^+(W)^\ast).
\end{equation}
In this way, a reducible singular connection, an orientation of $c$ and of a component of $S\setminus \partial c$, and a choice of a {\emph{homology orientation}}, i.e. an orientation of
\[
	H^1(W)\oplus H^+(W),
\]
	determine an I-orientation $(W,S,c)$.

\begin{remark}\label{rmk:orientation-reversal}
If $K_0$ is replaced by $K_0^{-1}$ above, then the identifications in \eqref{eq:reddetlinemin} and \eqref{ori-bdle} change sign respectively according to the parities of $d+d'-1$ and $d+d'$, where\footnote{This corrects some sign typos in \cite[2.9]{DS1}, where $d,d'$ are defined slightly differently. These typos that appear in \cite{DS1} are isolated and do not affect anything else in that work.}
\[
	d=b^1(W)-b^+(W), \hspace{1cm} d'=\frac{\ind(\sD_{A_0})-d+1}{2}. \;\;\diamd
\]
\end{remark}

Consider $(Y,L,\omega)$, and critical points $\alpha,\beta\in \fC_\pi^\omega$. Assume a basepoint connection $\theta$ is chosen, as described above. We also choose a basepoint $p\in L\setminus  \partial \omega$, just as in the formation of its $\cS$-complex, and orient $\omega$ and the component of $L\setminus  \partial \omega$ containing $p$. Given $o_\alpha\in \Lambda[\alpha]$ and $o_{\beta}\in \Lambda[\beta]$ we obtain an element $o_{\alpha,\beta}\in \Lambda[I\times Y,I\times L,I\times \omega;\alpha_+,\beta_-]$ by the rule
\[
	\Phi(o_{\alpha,\beta}\otimes o_{\beta} ) = o_\alpha.
\]
If $\theta$ is reducible, this convention agrees with the above general rules applied to the cylinder, with I-orientation induced by the canonical homology orientation, and orientation of $[0,1]\times \omega$ and the component of $[0,1]\times (L\setminus \partial \omega)$ containing the basepoint, as in the previous paragraph. The element $o_{\alpha,\beta}$ is used to orient the irreducible stratum of $M_z(\alpha,\beta)$. 

Let $\tau_s$ be the diffeomorphism of $\R\times Y$ given by $\tau_s(t,y)=(t-s,y)$. An identification
\begin{equation}\label{eq:cylmodulitranslationorient}
	 \R\times \breve{M}_z(\alpha,\beta) = M_z(\alpha,\beta)
\end{equation}
is given by $(s,[A])\mapsto [\tau_s^\ast A_0]$, where $A_0$ is gauge equivalent to $A$ and 
\[
	\int_{\R\times Y} t |F_{A_0}|^2=0.
\]
The identification \eqref{eq:cylmodulitranslationorient}, with ordering as written, induces an orientation of $\breve{M}_z(\alpha,\beta)$ from one of $M_z(\alpha,\beta)$ and the usual orientation of $\R$. 

These conventions determine sign conventions for the maps of type $d,\delta_1,s,\lambda,\Delta_1,\Delta_2$ described in Subsection \ref{subsec:linkinv}. For $\delta_2$, we use the opposite of what would follow from above. Specifically, if $\theta_{\fo}$ is the basepoint, given $o_\alpha\in \Lambda[\alpha]$ and $o_{\theta_{\fo'}}\in \Lambda[\theta_{\fo'}]$ we demand
\[
	\langle \delta_2(o_{\alpha}) , o_{\theta_{\fo'}} \rangle = -\# \breve{M}(\alpha,\theta_{\fo'})_0
\]
where the moduli space appearing is oriented as in the previous paragraph, and ``$\#$'' is the signed count of this $0$-manifold with respect to this orientation. To define maps such as $v,\mu$ we must cut down by modified holonomy maps. Relevant to $v$ are the maps 
\[
	H_{\alpha,\beta}:\breve{M}_z(\alpha,\beta)\longrightarrow S^1,
\]
and we orient the preimage of a regular value using the normal-fibers-first convention, where $\breve{M}_z(\alpha,\beta)$ has already been oriented using the above conventions. To define $\mu$ we use the same convention for modified holonomy maps on moduli $M_z(W,S,c;\alpha,\alpha')$.

For the maps $\eta_i = \eta_i(W,S,\sbd)$, proceed as follows. Assume $b^1(W)=b^+(W)=0$. Fix basepoint classes $\theta$ and $\theta'$ for the ends, which may as well be reducibles. Assume that there is a component of $S\setminus \partial c$ that intersects the basepoints at the ends, and choose an orientation of this component, as well as an orientation of $c$. These choices also determine the choices of orientations for $\omega,\omega'$ and the components of $L\setminus \partial \omega$, $L'\setminus \partial \omega'$ containing the basepoints. Finally, fix a homology orientation of $W$ and an I-orientation of $(W,S,c)$. 

Now consider a reducible instanton $\Theta$ on $(W,S,c)$ of index $2i-1$ with limiting connections $\theta_\fo$ and $\theta_{\fo'}$ at the ends; these may be different from the basepoint connections $\theta$ and $\theta'$. The data of $\Theta$ together with the orientation choices involving $S$ and $c$ in the previous paragraph, with the homology orientation of $W$, determine an element of
\begin{equation}\label{eq:detlineindexreds}
	  \Lambda[W,S,c;(\theta_\fo)_-,(\theta_{\fo'})_-].
\end{equation}
There is another element of \eqref{eq:detlineindexreds} which is obtained by using the fixed I-orientation of $(W,S,c)$. Then the contribution of $\Theta$ to $\langle \eta_i(\theta_{\fo}) ,\theta_{\fo'}\rangle$ is defined to be $+1$ if the two orientations obtained in \eqref{eq:detlineindexreds} agree, and $-1$ otherwise. 

A moduli space $M(W,S,c;\alpha,\alpha')^G$ parametrized by a metric family $G$ is oriented, via the normal-fibers-first convention, using the projection $M(W,S,c;\alpha,\alpha')^G\to G$, a choice of orientation for $G$, and the orientations of the fibers $M(W,S,c;\alpha,\alpha')^{g}$ for $g\in G$ as determined above. When further cutting down by a modified holonomy map $H_{\alpha,\alpha'}$, first orient $M(W,S,c;\alpha,\alpha')^G$ as just described, and then orient the primage of a regular value of $H_{\alpha,\alpha'}:M(W,S,c;\alpha,\alpha')^G\to S^1$ using the normal-fibers-first convention.

Finally, we remark on the orientation conventions relevant to the obstructed gluing theory used in this section. First, a framed moduli space such as 
\begin{equation}\label{eq:framedmoduliinorientationsec}
	M(W,S,c;\widetilde{\alpha},\widetilde{\alpha}')
\end{equation}
is a principal $\Gamma_\alpha\times \Gamma_{\alpha'}$-bundle over the irreducible stratum of $M(W,S,c; \alpha, \alpha')$. We use the normal-fibers-first convention to orient the irreducible stratum of \eqref{eq:framedmoduliinorientationsec}, given our orientation of the base and the product orientation of $\Gamma_\alpha\times \Gamma_{\alpha'}$. Maps such as $\Phi_{\alpha,\Theta}$ and $\widetilde \Phi_{\alpha,\Theta}$ in Proposition \ref{type-1-obs-gluing} are declared orientation-preserving, and this induces orientations on the domains of such maps given our conventions. The complex line bundles that appear, as well as the zero sets of sections such as $\Psi_{\alpha,\Theta}$, are oriented in the natural way, given an orientation of the base space.

\newpage

%!TEX root = main.tex

\section{Proof of the exact triangles}\label{sec:proofs}

In this section we prove Theorem \ref{thm:exacttriangles}, the three exact triangles that involve trivial singular bundle data. In the first subsection, we discuss topological aspects of the exact triangles and along the way introduce the basic constructions; this is a review of the situation in \cite{KM:unknot} with a number of extensions (and with different notation). In this preliminary subsection we also explain how Cases I, II, III of Theorem \ref{thm:exacttriangles} are indeed all possible cases, and also that Cases II and III are logically equivalent. In the two subsections that follow, we prove Case I and then Case II, respectively.

\subsection{Setup} \label{subsec:triangleprelim}

As most cobordisms take place in the cylinder $I\times Y$, we sometimes omit it from notation. We assume $L,L',L''$ are links that form a skein triangle as in Figure \ref{fig:skein}, each having non-zero determinant. Fix basepoints on the links that are located away from the local skein 3-ball, so that we may view the basepoints on $L,L',L''$ as the same point $p$. 

Throughout this section, we always use the trivial $SU(2)$ singular bundle data. A metric $g_L$ defined on $Y$ with cone angle $\pi$ along $L$ is fixed, as is small generic perturbation data, and similarly for the other links $L',L''$. Then the $\cS$-complexes for the links are defined:
\begin{equation}\label{eq:scomplexesprooftriangles}
    \widetilde C = \widetilde C(Y,L), \qquad \widetilde C' = \widetilde C(Y,L'), \qquad \widetilde C'' = \widetilde C(Y,L'').
\end{equation}
Write $S:L\to L'$, $S':L'\to L''$ and $S'':L''\to L$ for the saddle cobordisms which are products outside of the local skein region. These are depicted schematically as follows:
\begin{center}
    \includegraphics[scale=0.65]{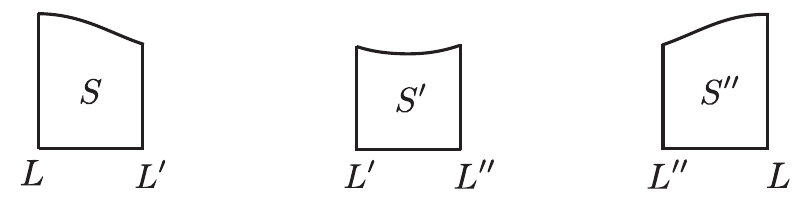}
\end{center}
The height of the  vertical line representing $L$ reminds us that it is the link with more components than $L',L''$. Under this assumption, $S$ and $S''$ are orientable, and $S'$ is non-orientable. A cylindrical end metric $g_S$ on $\R\times Y$ with cone angle $\pi$ along $S^+$ is fixed, where $S^+$ is the surface $S$ with cylindrical ends attached. This metric agrees, at the ends, with $dt^2 + g_L$ and $dt^2+g_{L'}$. Similarly for the other cobordisms. In addition to the $\cS$-complexes \eqref{eq:scomplexesprooftriangles}, the $s$-maps for the links, as given in Definition \ref{defn:smap}, will also play a role in the construction of the exact triangles below.

Recall that $\sfR=\sfR(Y,L)$ is the free abelian group with basis the set of quasi-orientations $\cQ(Y,L)$. Call an automorphism of $\sfR$ a {\emph{sign-change}} if it is a diagonal matrix with entries in $\{1,-1\}$ with respect to this natural basis. Similar remarks apply to $\sfR'$ and $\sfR''$.

 The simple topology of the saddle cobordisms gives the following: each quasi-orientation $\fo$ of $L$ extends over the cobordisms $S$ and $S'$ to unique quasi-orientations $\fo'$ on $L'$ and $\fo''$ on $L''$ respectively; and $\fo$ is uniquely determined by $\fo'$ and $\fo''$. Thus we may write
\begin{equation*}
    \cQ(Y,L) = \cQ(Y,L')\sqcup \cQ(Y,L'')
\end{equation*}
and from this we obtain the following natural identification:
\begin{equation}\label{eq:reddirectsum}
    \sfR = \sfR' \oplus \sfR''
\end{equation}
As discussed in Subsection \ref{subsec:cobcyl}, the orientable cobordisms $S$ and $S''$ have minimal reducibles that are flat and are labelled by their quasi-orientations. Consider the maps
\[
  \eta=\eta(I\times Y, S) : \sfR\to \sfR'  \qquad \eta''=\eta(I\times Y, S''):\sfR''\to \sfR
\]
which are signed counts of minimal reducibles, as in Subsection \ref{unob-cob-map}. Up to sign-changes, $\eta$ and $\eta''$ are the natural projection and inclusion with respect to \eqref{eq:reddirectsum}, respectively. For the cobordism $S'$ we have no reducibles and $\eta'=0$. Recall from Subsection \ref{subsec:cobcyl} that the orientable cobordisms $S$ and $S''$ are {\emph{even}}, while the non-orientable cobordism $S'$ is {\emph{odd}}. 

The signed counts defining $\eta,\eta''$ follow the orientation conventions described in Subsection \ref{orientation}, after fixing arbitrary I-orientations for each of the three cobordisms $S,S',S''$. 

Recall that when defining general cobordism maps we also require a path $\gamma$ that joins the basepoints of the links. In all cases we take this path to be $\gamma = I\times \{p\} \subset I\times Y$, which always lies on the cobordism surface.

For the cobordism $S$ we define the maps $\lambda$, $\mu$, $\Delta_1$, $\Delta_2$ in the usual way as described in Subsection \ref{sec:unobscobmaps}. These definitions, which we display below, will be used in the proof of Case I; for Case II, these maps will be modified in a way similar to what was done in Section \ref{sec:obstructed}, as will be explained in Subsection \ref{subsec:caseii}. For irreducible critical points $\alpha\in \fC_\pi$ and $\alpha'\in \fC_{\pi'}$ and reducible critical points $\theta_{\fo}$ and $\theta_{\fo'}$ where $\fo\in \cQ(Y,L)$ and $\fo'\in \cQ(Y,L')$ we set
\begin{align*}
    \lambda:C\to C' && \langle \lambda(\alpha), \alpha' \rangle & = \# M(I\times Y, S;\alpha,\alpha')^{g_S}_0 & \lambda &= \vcenter{\hbox{\includegraphics[scale=0.35]{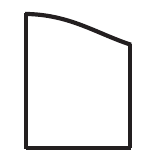} }}\\
    \Delta_1:C\to \sfR' && \langle \Delta_1(\alpha), \theta_{\fo'} \rangle &= \# M(I\times Y, S;\alpha,\theta_{\fo'})^{g_S}_0 & \Delta_1& = \vcenter{\hbox{\includegraphics[scale=0.35]{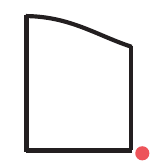} }}\\
    \Delta_2:\sfR\to C' && \langle  \Delta_2(\theta_{\fo}), \alpha' \rangle &= \# M(I\times Y, S;\theta_{\fo},\alpha')^{g_S}_0 &\Delta_2 &= \vcenter{\hbox{\includegraphics[scale=0.35]{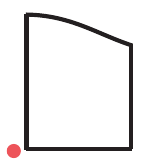} }}\\  
    \mu:C\to C' &&\langle \mu(\alpha), \alpha' \rangle &= \# M_{\gamma}(I\times Y, S;\alpha,\alpha')^{g_S}_0 &\mu &= \vcenter{\hbox{\includegraphics[scale=0.35]{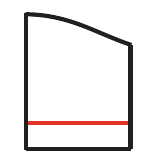} }}
\end{align*}
Here and below, ``$\#$'' refers to a signed-count using the orientation conventions given in Subsection \ref{orientation}. We have indicated the metric used in defining each moduli space as a superscript, as in Section \ref{sec:obstructed}; the choice of perturbations and modified holonomy maps (used for $\mu$) are surpressed from notation. Following notation from \cite[Section 6]{DS1}, we write 
\[
    M_{\gamma}(I\times Y, S;\alpha,\alpha')^{g_S}_d = \left\{ [A]\in M(I\times Y,S;\alpha,\alpha')^{g_S}_{d+1}: H^{\gamma}(A)= -1 \right\}
\]
Recall that $H^\gamma$ is the modified holonomy map to $S^1$ which, roughly, evaluates the holonomy of the adjoint connection along the line $\gamma^+  = \R\times \{p\} \subset \R\times Y$. We define the maps $\lambda',\Delta_1',\Delta_2',\mu'$ for $S'$ and $\lambda'',\Delta_1'',\Delta_2'',\mu''$ for $S''$ in the same way. The relations that these maps satisfy depend on the indices of the minimal reducibles, and these indices will depend on which of Case I--III we are in. 

Now consider the composite cobordisms $T=S'\circ S:L\to L''$, $T'=S''\circ S':L'\to L$ and $T''=S\circ S'':L''\to L$. As shown in \cite[Lemma 7.2]{KM:unknot} we have
\begin{equation}
  (I\times Y, T ) =  (I\times Y\setminus B^4, \overline S''\setminus B^2 ) \cup_{(S^3,U_1)} (B^4, F_0) \label{eq:doublecomp}
\end{equation}
where $F_0$ is $\bR\bP^2$ minus a disk, and $\overline S$ is the reverse cobordism of $S$. Further,
\[
	F_0 \cdot F_0 = + 2
\]
In the above, $(S^3,U_1)$ is a cross-section for a connected sum region; $U_1$ is an unknot. We form a 1-dimensional family of metrics, denoted $G_T$, which is parametrized by $t\in [-\infty,\infty]$. At $t= -\infty$, the metric $g_t\in G_T$ is the metric broken along $(Y,L')$ which is simply $g_S\sqcup g_{S'}$ on the disjoint union $(\R\times Y,S^+)\sqcup (\R\times Y,S'^+)$. At $t=+\infty$, the metric is broken along $(S^3,U_1)$. Write $g_{\overline S''\setminus B^2}\sqcup g_{F_0}$ for this latter broken metric, where $g_{\overline S''\setminus B^2}$ and $g_{F_0}$ are cylindrical end metrics on the respective pairs. The metrics $g_t$ for $t\in \R$ are smooth, and interpolate between the above two broken metrics. Similar remarks, with the obvious notation, hold for the composite cobordisms $T'$ and $T''$. 

We next define maps $K, M_1, M_2, L$ associated to the composite cobordism $T$. (There should be no confusion regarding the map $L$ and the link $L$.) Write $\alpha$ and $\alpha''$ (resp. $\theta_\fo$ and $\theta_{\fo''}$) for irreducible (resp. reducible) critical points for $L$ and $L''$, respectively. Define
\begin{align*}
    K:C\to C'' && \langle K(\alpha), \alpha'' \rangle & = \# M(I\times Y, T;\alpha,\alpha'')^{G_T}_0 & K &= \vcenter{\hbox{\includegraphics[scale=0.35]{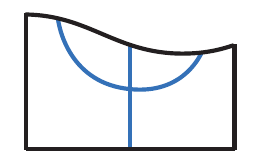} }}\\
    M_1:C\to \sfR'' && \langle M_1(\alpha), \theta_{\fo''} \rangle &= \# M(I\times Y, T;\alpha,\theta_{\fo''})^{G_T}_0 & M_1& = \vcenter{\hbox{\includegraphics[scale=0.35]{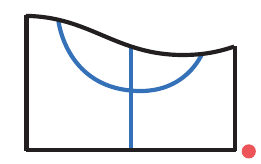} }}\\
    M_2:\sfR\to C'' && \langle  M_2(\theta_{\fo}), \alpha'' \rangle &= \# M(I\times Y, T;\theta_{\fo},\alpha'')^{G_T}_0 &M_2 &= \vcenter{\hbox{\includegraphics[scale=0.35]{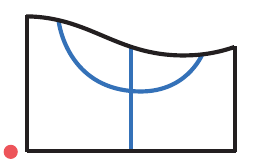} }}\\  
    L:C\to C'' &&\langle L(\alpha), \theta_{\fo''} \rangle &= \# M_{\gamma}(I\times Y, T;\alpha,\alpha'')^{G_T}_0 & L &= \vcenter{\hbox{\includegraphics[scale=0.35]{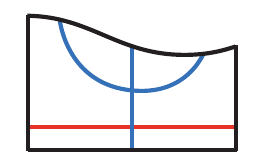} }}
\end{align*}

\noindent Here, as in Section \ref{sec:obstructed}, the superscripts of $G_T$ in the notation indicate that the moduli spaces consist of pairs $([A],g_t)$ where $[A]$ is a singular instanton with respect to $g_t$. There is also chosen a corresponding family of perturbation data. As a general remark that also applies to later constructions in this section, this and other choices analogous to the auxiliary data that appeared in Subsection \ref{subsec:homotopies} are omitted from notation. The subscripts continue to denote the expected dimension of the moduli spaces. For example,
\begin{align*}
    M(I\times Y, S;\alpha,\alpha')^{G_T}_d &= \left\{([A],g_t): \, g_t\in G_T, \, [A]\in M(I\times Y, T;\alpha,\alpha'')^{g_t}_{d-1}  \right\} \\[1mm]
    M_\gamma(I\times Y, S;\alpha,\alpha')^{G_T}_d &= \left\{([A],g_t)\in M(I\times Y, S;\alpha,\alpha')^{g_T}_{d-2}: \; H^\gamma(A)=-1 \right\} 
\end{align*}
The maps $K',M_1',M_2',L'$ and $K'',M_1'',M_2'',L''$ associated to the respective composite cobordisms $T'$ and $T''$ are defined completely analogously. Further commentary on these definitions is given below. See Lemma \ref{lemma:welldefined}, for example.

Next we consider the triple composite $V = S''\circ S' \circ S: L \to L$. Following \cite[Lemma 7.4]{KM:unknot}, there is a distinguished hypersurface pair $(S^3,U_2)\subset (I\times Y, V)$ such that:
\begin{equation}\label{eq:twocompunlinkdecomp}
    (I\times Y, V)  = (W, F_1 )\cup_{(S^3,U_2)} ( B^4, F_2 )
\end{equation}
where the link $U_2$ is an unlink with two components, and the surface $F_2$ is a copy of $\bR\bP^2$ which has two disks deleted. Again we have
\[
	F_2\cdot F_2 = + 2
\]
Furthermore, upon replacing $(B^4, F_2)$ with $(B^4,\Delta)$ where $\Delta$ is two standard disks in the 4-ball, we get the product cobordism:
\begin{equation*}\label{eq:triplecompositedecomp}
    (W, F_1 )\cup_{(S^3,U_2)} ( B^4, \Delta ) \cong (I\times Y, I\times L)
\end{equation*}
For the triple composites $V'=S\circ S''\circ S':L'\to L'$ and $V''=S'\circ S\circ S'':L''\to L''$ the same remarks apply, with the notation following the same pattern as above.

\begin{figure}[t]
    \centering
    \includegraphics[scale=1.1]{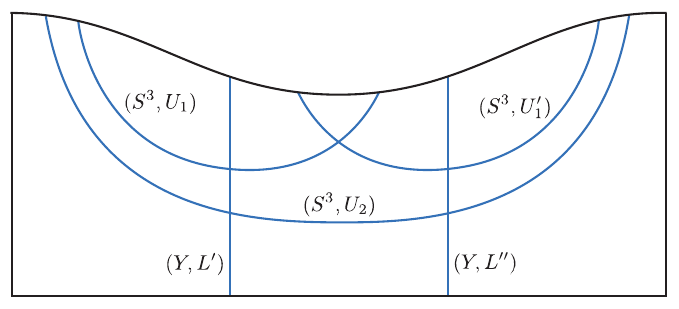}
    \caption{{\small{The 5 hypersurface pairs inside the triple composite $V=S''\circ S' \circ S: L \to L$.}}}
    \label{fig:5hypersurfaces}
\end{figure}

In particular, in the cobordism $(I\times Y, V)$ there are five distinguished hypersurfaces as depicted in Figure \ref{fig:5hypersurfaces}. There is a family of metrics $G_V$ parametrized by a\ pentagon and which stretches along these hypersurfaces. See \cite[\S 7.3]{KM:unknot} for details, and the related antecedent construction in \cite{KMOS}. The metrics on the interior of the pentagon are smooth, while those on the interior of an edge (resp. on a corner) are broken along one hypersurface (resp. two hypersurfaces). Define $Q$, a map associated to $V$, as follows:
\begin{align}\label{eq:qdefn}
    Q:C\to C && \langle Q(\alpha), \beta \rangle & = \# M(I\times Y, V;\alpha,\beta)^{G_V}_0 & Q &= \vcenter{\hbox{\includegraphics[scale=0.35]{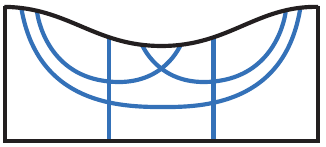} }}
\end{align}

\noindent Here $\alpha$ and $\beta$ are irreducible critical points for the link $(Y,L)$. There are other maps one might associate to $V$ using this 2-dimensional family of metrics by allowing reducible limits, or cutting down by holonomy maps, for example. However, we will see that the only such map necessary for the proof of the exact triangles is the map $Q$. The maps $Q':C'\to C'$ and $Q'':C''\to C''$ are defined analogously, using the cobordisms $V'$ and $V''$.

For the odd (non-orientable) saddle cobordism $S':L'\to L''$ we also have the map $\nu'$ as given in Definition \ref{defn:numap}. In this context, this map is as follows:
\begin{align}
    \nu':\sfR'\to \sfR'' && \langle \nu'(\theta_{\fo'}), \theta_{\fo''} \rangle & = \# M(I\times Y, S';\theta_{\fo'},\theta_{\fo''})^{g_{S'}}_0 & \nu' &= \vcenter{\hbox{\includegraphics[scale=0.35]{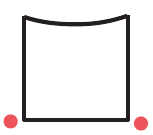} }} \label{eq:nuprime}
\end{align}
This map counts isolated irreducible instantons on $(I\times Y,S')$ with reducible limits. 

The maps defined above (and their modifications for Cases II and III) are the essential ingredients required to assemble the exact triangles. What remains is to prove an assortment of relations, the nature of which will depend on which of Case I--III we are in.

\begin{remark}
	Following our convention from earlier sections, in the case that the family of metrics is a single metric, we often omit the metric from the notation of a moduli space. $\diamd$
\end{remark}

As a preliminary step towards the proof of Theorem \ref{thm:exacttriangles} we have the following.

\begin{lemma}\label{lemma:casesexhaust}
	Suppose $(L,L',L'')$ is a skein triple as above involving links with non-zero determinants. Then exactly one of Cases I, II, or III holds.
\end{lemma}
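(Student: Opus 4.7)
Plan. The strategy is to combine the non-zero determinant hypothesis with the freedom to choose the labels of $L'$ and $L''$. The first step is to apply Proposition \ref{prop:saddlecobor} to the two orientable saddle cobordisms $S : L \to L'$ and $S'' : L'' \to L$; both are orientable because $|L| - |L'| = |L| - |L''| = 1$. Combined with the non-zero determinant hypothesis on all three links, the proposition yields
\[
\epsilon(L, L'), \; \epsilon(L'', L) \in \{+1, -1\},
\]
leaving four a priori combinations. Three of these, namely $(+,+)$, $(-,+)$, and $(+,-)$, correspond precisely to Cases I, II, III, respectively.

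To handle the remaining combination $(-,-)$, I would exploit the fact that the labels $L'$ and $L''$ are not canonically distinguished: the two local resolutions in Figure \ref{fig:skein} admit a free interchange, and the ``without loss of generality'' clause in the statement of Theorem \ref{thm:exacttriangles} permits this relabeling (swapping preserves the property $|L| = |L'|+1 = |L''|+1$). Using the antisymmetry $\epsilon(X, Y) = -\epsilon(Y, X)$, swapping $L' \leftrightarrow L''$ induces the involution
\[
(\epsilon(L, L'),\; \epsilon(L'', L)) \;\longmapsto\; (-\epsilon(L'', L),\; -\epsilon(L, L')).
\]
This sends $(-,-)$ to $(+,+)$ while fixing both $(-,+)$ and $(+,-)$. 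Hence after at most one relabeling, the triple satisfies the hypotheses of exactly one of Cases I, II, III.

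Mutual exclusivity of the three cases is then automatic, since they are distinguished by $\delta = \tfrac{1}{2}(\epsilon(L,L') - \epsilon(L'',L)) \in \{0, -1, +1\}$, taking distinct values in Cases I, II, III respectively. The entire argument is elementary once Proposition \ref{prop:saddlecobor} is in hand; the only minor point requiring care is the justification that swapping $L'$ and $L''$ is a legitimate operation within the setup of Theorem \ref{thm:exacttriangles}, which I would dispatch by appealing to the symmetric role that the two resolutions play in Figure \ref{fig:skein}.
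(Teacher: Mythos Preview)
Your argument has a genuine gap: the interchange $L' \leftrightarrow L''$ is not a legitimate operation on the skein triple. The triple $(L,L',L'')$ carries a fixed \emph{cyclic} order, and the ``without loss of generality'' clause in Theorem \ref{thm:exacttriangles} only uses this cyclic symmetry to arrange that the link with one extra component sits in the $L$-slot; once $L$ is fixed, the roles of $L'$ and $L''$ are determined. Concretely, the saddle cobordisms $S:L\to L'$, $S':L'\to L''$, $S'':L''\to L$ are specific embedded surfaces, and there is no saddle cobordism $L\to L''$ in the triple. The only way to reverse the cyclic order is to pass to mirrors: $(mL,mL'',mL')$ is a skein triple, as used in the very next lemma of the paper. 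A direct check also shows your proposed involution cannot be valid: it would send Case I triples $(+,+)$ to $(-,-)$, so the nonemptiness of Case I (e.g.\ any quasi-alternating skein triple) would force $(-,-)$ triples to exist, contradicting the fact proved below.

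The paper instead shows that the combination $\epsilon(L,L')=\epsilon(L'',L)=-1$ is genuinely impossible. Let $X,X',X''$ be the double covers of $I\times Y$ branched over $S,S',S''$. The description \eqref{eq:doublecomp} of the double composite gives $X'\circ X \cong \overline{X}''\#\overline{\bC\bP}^2$, whence $\sigma(X)+\sigma(X')+\sigma(X'')=-1$. The nonvanishing determinants force each signature to lie in $\{\pm 1\}$, so at most one of the three can equal $+1$. Since Proposition \ref{prop:indbranchedcover} gives $\sigma(X)=-\epsilon(L,L')$ and $\sigma(X'')=-\epsilon(L'',L)$, the case $(-,-)$ would require $\sigma(X)=\sigma(X'')=+1$, which is excluded.
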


\begin{proof}
	Write $X$, $X'$, $X''$ for the double covers of $I\times Y$ branched over $S$, $S'$, $S''$, respectively. The assumption on determinants gives that the signature of each of these 4-manifolds is $\pm 1$. The above description of the double composite cobordism yields $X'\circ X \cong \overline{X}''\#\overline{ \bC\bP}^2$ where $\overline{X}''$ is the reverse cobordism of $X''$. Thus
\begin{equation}\label{eq:sumofthreesignatures}
	\sigma(X) + \sigma(X') + \sigma(X'') = -1.
\end{equation}
Proposition \ref{prop:indbranchedcover} says that $\sigma(X)=-\varepsilon(L,L')$ and $\sigma(X'')=-\varepsilon(L'',L)$. Then equation \eqref{eq:sumofthreesignatures} shows that the case $\epsilon(L,L')=\epsilon(L'',L)=-1$, or equivalently $\sigma(X)=\sigma(X'')=1$, is impossible. This leaves Cases I, II, III.
\end{proof}

By the following, it will suffice to prove Cases I and II of Theorem \ref{thm:exacttriangles}.

\begin{lemma}
	Case II of Theorem \ref{thm:exacttriangles} implies Case III, and conversely.
\end{lemma}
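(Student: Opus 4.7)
The plan is to deduce each of Cases II and III from the other by combining orientation-reversal of the ambient $3$-manifold with the duality of $\cS$-complexes developed in Section \ref{sec:prelims}.

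Start with a skein triple $(L,L',L'')$ in $Y$ satisfying the hypotheses of Case III, so $L$ has one more component than $L',L''$ with $\epsilon(L,L')=+1$ and $\epsilon(L'',L)=-1$. Reversing the orientation of $Y$ yields a skein triple in $-Y$, but the cyclic order of the triple is reversed, since a rotation of the skein $3$-ball that cyclically permutes the three local pictures of Figure \ref{fig:skein} becomes a rotation in the opposite sense after orientation reversal. Thus the mirror skein triple is $(mL,mL'',mL')$, with the roles of $L'$ and $L''$ interchanged.

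I would next verify that $(mL,mL'',mL')$ satisfies the hypotheses of Case II. Component counts are preserved by mirroring, so $mL$ still has one more component than $mL'$ and $mL''$. Since $\sigma(mK,\mathfrak{o})=-\sigma(K,\mathfrak{o})$ under the natural identification of quasi-orientations, one has $\epsilon(mK,mK')=-\epsilon(K,K')$. Using Case III's hypotheses together with the antisymmetry $\epsilon(K,K')=-\epsilon(K',K)$, this gives
\[
\epsilon(mL,mL'')=-\epsilon(L,L'')=\epsilon(L'',L)=-1,\qquad \epsilon(mL',mL)=-\epsilon(L',L)=\epsilon(L,L')=+1,
\]
which are precisely the hypotheses of Case II. Applying Case II to this triple yields an exact triangle
\[
\widetilde C(-Y,mL')\longrightarrow \widetilde C(-Y,mL)\longrightarrow \Sigma\widetilde C(-Y,mL'')\longrightarrow \widetilde C(-Y,mL').
\]

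Finally, I would dualize. Using the standard duality $\widetilde C(-Y,mK)\simeq \widetilde C(Y,K)^\dagger$ of $\cS$-complexes for based links with non-zero determinant (a consequence of the time-reversal symmetry of the ASD equation on the cylinder, extending the knot case treated in \cite{DS1}) together with Proposition \ref{prop:susdual}, which gives $(\Sigma\widetilde C)^\dagger\simeq \Sigma^{-1}\widetilde C^\dagger$, the dualized triangle becomes
\[
\Sigma^{-1}\widetilde C(Y,L'')\longrightarrow \widetilde C(Y,L)\longrightarrow \widetilde C(Y,L')\longrightarrow \Sigma^{-1}\widetilde C(Y,L''),
\]
which is precisely the Case III exact triangle for the original triple. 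The reverse implication is obtained by running the same argument in the opposite direction.

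The main points to verify carefully are the duality $\widetilde C(-Y,mK)\simeq \widetilde C(Y,K)^\dagger$ for based links of non-zero determinant (the extension from knots to links being routine, given the construction in Subsection \ref{subsec:linkinv}), the precise interaction of mirroring with the cyclic convention of a skein triple, and the fact that dualization sends exact triangles of $\cS$-complexes in the sense of Definition \ref{def:exacttriangle} to exact triangles in the reversed direction. This last point follows from the mapping-cone characterization of exact triangles together with the behavior of duality on cones and on the suspension operation $\Sigma$.
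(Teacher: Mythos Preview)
Your proposal is correct and follows essentially the same strategy as the paper: pass to the mirror triple to swap the roles of Cases II and III, and then use the duality $\widetilde C(-Y,mK)\simeq\widetilde C(Y,K)^\dagger$ together with Proposition \ref{prop:susdual} to transport the exact triangle. The paper's argument differs only in presentation, starting from a Case II triple and showing its dual is a Case III triangle for the mirrors, whereas you start from a Case III triple and apply Case II to its mirror; these are logically equivalent.
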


\begin{proof}
Suppose we have a skein triple $(L,L',L'')$ as in Case II, so that $\epsilon(L,L')=-1$ and $\epsilon(L'',L)=+1$. Let $mL$ denote the mirror of $L$. Then we have a skin triple $(mL, mL'', mL')$. Using that the signature is negated under taking mirrors, we have $\epsilon(mL,mL'')=-\epsilon(L,L'')=\epsilon(L'',L)=+1$ and similarly $\epsilon(mL',mL)=\epsilon(L,L')=-1$. Thus $(mL,mL'',mL')$ is a skein triple of the type in Case III.

Now suppose the theorem is proved for skein triples $(L,L',L'')$ in Case II. Thus we have an associated exact triangle of $\cS$-complexes, for some morphisms and homotopies:
\begin{equation*}\label{eq:lemmacaseii}
	\begin{tikzcd}[column sep=5ex, row sep=10ex, fill=none, /tikz/baseline=-10pt]
& \widetilde C  \arrow[dr, "\widetilde \lambda"] \arrow[dl, bend right=60, "\widetilde K"'] & \\
\widetilde C''  \arrow[rr, bend right=60, "\widetilde K''"']  \arrow[ur, "\widetilde \lambda''"] & & \Sigma \widetilde  C' \arrow[ll, "{[-1]}"', "\widetilde \lambda'"] \arrow[ul, bend right=60, "\widetilde K'"']
\end{tikzcd}
\end{equation*}
Take the dual of this diagram in the category of $\cS$-complexes. Note that $\widetilde C^\dagger$ is naturally identified with the $\cS$-complex of the link $mL$, and similarly $(\widetilde C'')^\dagger$ is the $\cS$-complex for $mL''$. On the other hand, $(\Sigma \widetilde C')^\dagger$ is isomorphic to $\Sigma^{-1}( (\widetilde C')^{\dagger})$ by Proposition \ref{prop:susdual}, the negative suspension of the $\cS$-complex for $mL'$. The dual diagram is then seen to be an exact triangle for the triple $(mL,mL'',mL')$, falling into Case III. The converse is similar.
\end{proof}

\subsection{Case I} \label{subsec:casei}

Having set the stage in the previous subsection, we now prove Case I of Theorem \ref{thm:exacttriangles}. In this case we assume $\epsilon(L,L')=\epsilon(L'',L)=1$. By Propositions \ref{prop:saddlecobnonor} and \ref{prop:saddlecobor} this means that the saddle cobordisms $S$, $S'$, $S''$ are all unobstructed. More precisely, $S'$ has no reducible instantons, and the minimal reducible instantons on $(I\times Y,S)$ and $(I\times Y,S'')$ are unobstructed and have index $-1$. As already mentioned above, these reducibles correspond to quasi-orientations of $S$ and $S''$, respectively.

There are other reducible instantons of relevance. Consider the pair $(I\times Y\setminus B^4, \overline S''\setminus B^2 ) $ from \eqref{eq:doublecomp}. This has three ends, and it appears as a component of the broken cobordism one boundary point of the metric family $G_T$. This pair is {\emph{obstructed}}: it has minimal reducibles of index $-3$. Note the component $(B^4, F_0)$ appearing in \eqref{eq:doublecomp} does not have any reducibles, as $F_0$ is non-orientable. Similar remarks hold for the components arising from the relevant broken metric at the boundary of $G_{T'}$. In the case of $G_{T''}$, the analogous pair $(I\times Y\setminus B^4, \overline S'\setminus B^2 ) $ has a non-orientable surface and so has no reducibles.

Each of the four types of minimal reducible instantons discussed above on the respective pairs $(I\times Y,S)$, $(I\times Y,S'')$, $(I\times Y\setminus B^4, \overline S''\setminus B^2 ) $, $(I\times Y\setminus B^4, \overline S\setminus B^2 ) $ has a corresponding map counting the reducibles. We depict each of these maps, in the order given, as follows:
\begin{equation}
	\eta = \vcenter{\hbox{\includegraphics[scale=0.35]{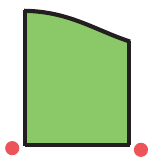} }} \qquad \quad \eta'' = \vcenter{\hbox{\includegraphics[scale=0.35]{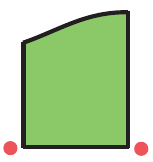} }} 
	\qquad\quad  J = \vcenter{\hbox{\includegraphics[scale=0.35]{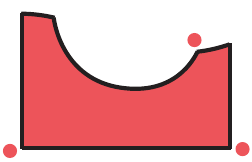} }} \qquad\quad  J' = \vcenter{\hbox{\includegraphics[scale=0.35]{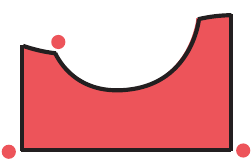} }}\label{eq:minredscasei}
\end{equation}
The colors remind us that the reducibles being counted in the first two cases are unobstructed and of index $-1$; and in the latter two cases the reducibles are obstructed and of index $-3$. The maps $\eta$ and $\eta''$ were described in the previous subsection; the maps $J:\sfR\to \sfR''$ and $J':\sfR'\to \sfR$ are, up to sign-changes, the natural projection and inclusion, respectively, with respect to the decomposition \eqref{eq:reddirectsum}. In particular, we have
\begin{equation}
	J\eta'' \; \dot =\;  \text{id}_{\sfR''} \qquad \quad \eta J' \; \dot =\;  \text{id}_{\sfR'} \qquad \quad \eta'' J \pm  J'\eta \; \dot =\;  \text{id}_{\sfR} \label{eq:caseiredrels}
\end{equation}
where ``$\dot =$'' means equal up to sign-changes.

\begin{remark}\label{rmk:signforjmaps}
	The precise definitions of $J,J'$ are similar to $\eta,\eta''$ in that they involve using fixed I-orientations to determine signed counts of reducibles. However, for $J$ and $J'$ an overall sign $\varepsilon_0 = \pm 1$ is applied in addition to these ones, which is explained below. $\diamd$
\end{remark}

We proceed to construct the exact triangle of Theorem \ref{thm:exacttriangles} in Case I. We define, in the usual fashion as is done in the unobstructed case, maps $\widetilde \lambda:\widetilde C\to \widetilde C'$, $\widetilde \lambda':\widetilde C'\to \widetilde C''$, and $\widetilde \lambda'':\widetilde C''\to \widetilde C$ as follows, with components as prescribed in Subsection \ref{subsec:triangleprelim}:
\[
\widetilde\lambda =  \left[ \begin{array}{ccc} \lambda & 0 & 0 \\ \mu & \lambda & \Delta_2 \\ \Delta_1 & 0 & \eta \end{array} \right] 
\qquad
\widetilde\lambda' =  \left[ \begin{array}{ccc} \lambda' & 0 & 0 \\ \mu' & \lambda' & \Delta'_2 \\ \Delta'_1 & 0 & 0 \end{array} \right]
\qquad
\widetilde\lambda'' =  \left[ \begin{array}{ccc} \lambda'' & 0 & 0 \\ \mu'' & \lambda'' & \Delta''_2 \\ \Delta''_1 & 0 & \eta'' \end{array} \right]
\]
That these are all morphisms of $\cS$-complexes follows from the discussion in Subsection \ref{subsec:cobcyl}. Furthermore, $\widetilde\lambda$ and $\widetilde \lambda''$ are strong height $0$ morphisms of even degree, while $\widetilde \lambda'$, from the non-orientable saddle cobordism, is a morphism of odd degree.

Next, we define maps $\widetilde K:\widetilde C\to \widetilde C''$, $\widetilde K': \widetilde C'\to \widetilde C$ and $\widetilde K'':\widetilde C''\to \widetilde C'$ as follows:
\[
\widetilde K =  \left[ \begin{array}{ccc} K & 0 & 0 \\ L & -K & M_2 \\ M_1 & 0 & J \end{array} \right] 
\quad
\widetilde K' =  \left[ \begin{array}{ccc} K' & 0 & 0 \\ L' & -K' & M'_2 \\ M'_1 & 0 & J' \end{array} \right]
\quad
\widetilde K'' =  \left[ \begin{array}{ccc} K'' & 0 & 0 \\ L'' & -K'' & M''_2 \\ M''_1 & 0 & 0 \end{array} \right]
\]
Before turning to the relations of these maps, we justify the definitions given thus far.

\begin{lemma}\label{lemma:welldefined}
	In Case I, all the maps defined in Subsection \ref{subsec:triangleprelim} are well-defined. More specifically, the moduli space being counted, in each case, is a finite collection of instantons.
\end{lemma}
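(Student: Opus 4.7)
The plan is to show each moduli space is compact (hence finite, since each is $0$-dimensional and cut out transversally on irreducible strata after generic perturbation). Three classes of moduli spaces appear, corresponding to the three types of cobordism data used, and I will address each in turn.

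First, for the moduli spaces defined using a single fixed metric on $S$, $S'$, or $S''$ (yielding $\lambda,\mu,\Delta_1,\Delta_2$ and their primed versions, together with the reducible counts $\eta$, $\eta''$, $\nu'$, and the obstructed counts $J$, $J'$), I will apply standard Uhlenbeck-type compactness. The Case I hypothesis supplies via Propositions \ref{prop:saddlecobor} and \ref{prop:saddlecobnonor} that $S$ and $S''$ are unobstructed orientable saddles whose minimal reducibles have index $-1$, while $S'$ has no reducibles at all. For the $0$-dimensional moduli spaces, every potential codimension-$1$ degeneration factors through an intermediate critical point and produces pieces of nonnegative dimension that cannot sum to the required $-1$, ruling them out. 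The obstructed-reducible counts $J$ and $J'$ are finite because the conditions $b^+=0$ and $b^1=0$ on the relevant three-ended pairs make each isomorphism class of $U(1)$-reduction carry a unique rigid reducible.

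Second, for the moduli spaces over the one-parameter metric families $G_T$, $G_{T'}$, $G_{T''}$ (yielding $K, M_1, M_2, L$ and their primed versions), each parameter space is a compact interval $[-\infty,\infty]$, so I must rule out (i) interior codimension-$1$ breakings along cylindrical ends, which fail the dimension count exactly as in the previous paragraph, and (ii) accumulation at the boundary parameter values $t = \pm\infty$. At $t = -\infty$, the metric breaks along an intermediate link and the limits become broken instantons whose combined expected dimension is $-1$ after accounting for stabilizers; this is ruled out by transversality. At $t = +\infty$, the metric breaks along $(S^3, U_1)$ into the three-ended pair appearing in \eqref{eq:doublecomp} and the pair $(B^4, F_0)$. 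I will carry out a case analysis on the connecting critical point on $(S^3, U_1)$: for each possibility (the unique reducible, or an irreducible after perturbation), I enumerate the pairs of candidate indices on the two pieces and verify that none can sum to give a $0$-dimensional contribution. The key fact is that $(B^4, F_0)$ has no reducibles since $F_0$ is non-orientable, so the component there has nonnegative index, which tightly constrains the three-ended piece even when it is an obstructed reducible of index $-3$.

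Third, for the moduli space over the pentagonal family $G_V$ defining $Q$, I will run the analogous argument on each of the five edges and five corners of the pentagon. Each edge corresponds to a single stretched hypersurface (one of the five in Figure \ref{fig:5hypersurfaces}) and yields a broken configuration whose dimension count mirrors the previous paragraph. Each corner corresponds to two simultaneously stretched hypersurfaces, and I use the same ingredients—index additivity, stabilizer dimensions, absence of reducibles on non-orientable pieces coming from \eqref{eq:doublecomp} and \eqref{eq:twocompunlinkdecomp}, and the index $-3$ lower bound for obstructed reducibles in Case I—to rule out contributions on every boundary stratum. The main obstacle will be the systematic case analysis at $t = +\infty$ for $T$ and $T'$ in the second part, and especially at the corners of the pentagon here: in both situations the broken configuration contains the obstructed three-ended piece whose minimal reducibles have index $-3$, and one must verify that pairing such a reducible with any admissible configuration on the non-orientable piece yields the wrong overall dimension. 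This bookkeeping is essentially the one behind Kronheimer and Mrowka's unoriented skein exact triangle for $I^\natural$ in \cite{KM:unknot}, now extended to the $\cS$-complex setting, with the new wrinkle that reducible critical points on the incoming and outgoing links must also be tracked.
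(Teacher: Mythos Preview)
Your proposal is correct and follows essentially the same approach as the paper: both argue compactness of the $0$-dimensional moduli spaces by ruling out broken limits via index additivity, with the crucial case being a potential obstructed reducible of index $-3$ on the three-ended piece at the $(S^3,U_1)$ end of the metric family. The paper carries out the key inequality explicitly for the map $M_1$ (showing the broken index is $\geq 1+1-3+1+0=0>-1$) and then remarks that the other maps are analogous, while you outline the full case analysis more systematically; but the substance is the same. One small imprecision: your case analysis at $t=+\infty$ mentions ``an irreducible after perturbation'' on $(S^3,U_1)$, but the traceless character variety of the unknot is a single non-degenerate reducible point, so no perturbation is needed there and no irreducibles appear---the only connecting critical point is $\theta$.
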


\begin{proof}
	The maps $\widetilde \lambda$, $\widetilde \lambda'$, $\widetilde \lambda''$ as well as $\nu'$ were already constructed in Section \ref{sec:links}. The only maps that attention are $K,M_1,M_2,L$ and $Q$, as well as their analogues for $S'$ and $S''$. In each case, we must show that the moduli space under consideration is a compact $0$-manifold. The only potential obstacle towards this lies wth instantons in the associated compactified moduli space which are broken solutions involving an obstructed reducible. 
	
The maps $K$,$M_1$,$M_2$, $L$ defined using a $1$-parameter metric family are similar to the maps defined in Subsection \ref{subsec:functoriality}, except that the topology of the situation at hand is different, and the metric family has broken metrics in both directions. Let us consider the map $M_1$, which is defined using moduli spaces of the form
\[
	N:=M(I\times Y, T;\alpha,\theta_{\fo''})^{G_T}_0.
\]
Note that the cobordism $(I\times Y, T)$, for any smooth metric $g_t$ in the interior of $G_T$, contains no reducible instantons, as $T$ is non-orientable. Thus if a sequence $([A_i],g_{t_i})$ in $N$ fails to converge, then, after passing to a subsequence, the metric $g_{t_i}$ must converge to one of the broken metrics at the boundary of $G_T$. Suppose that this broken metric is the one at $t=+\infty$, broken along $(S^3,U_1)$. In this case, $[A_i]$ may a priori chain-converge on the complement of some finite number of points to a broken instanton $([A],[B],[C])$ in
\begin{equation}
	 \breve{M}^+( L;\alpha,\widetilde \theta_{\fo''})\times_{S^1}M(I\times Y\setminus B^4, \overline S''\setminus B^2 ;\widetilde \theta_{\fo},\widetilde \theta, \theta_{\fo''})\times_{S^1} M^+(B^4, F_0 ;\widetilde \theta) \label{eq:modredobstruleout}
\end{equation}
where $[B]$ is one of the obstructed reducibles on $(I\times Y\setminus B^4, \overline S''\setminus B^2 ) $ of index $-3$, while $[A]$ and $[C]$ are possibly broken instantons. This triple corresponds to an instanton decomposed as depicted (with possible further breakings along the two interior cylindrical ends):
\begin{center}
	\includegraphics[scale=0.35]{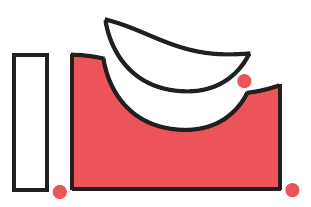}
\end{center}
Here and below, the moduli spaces for $(I\times Y\setminus B^4, \overline S''\setminus B^2 )$ and $(B^4,F_0)$ will always be defined using the metrics restricted from the metric broken along $(S^3,U_1)$. Also, $\theta$ is the unique reducible critical point on $(S^3,U_1)$. As $\alpha$ is irreducible, the (broken) instanton $A$ on the cylinder has $\text{ind}(A)\geq 1$. As remarked earlier, $(B^4, F_0 )$ supports no reducibles, and thus $\text{ind}(C)\geq 0$. Thus the index of $([A],[B],[C])$ satisfies:
\[
	\ind(A) + \dim S^1 + \ind(B) + \dim S^1 + \ind(C) \geqslant 1 + 1 -3 + 1 + 0 =0.
\]
However, the moduli space $N$ is defined using index $-1$ instantons, so that the index of the broken instanton $([A],[B],[C])$ is at most $-1$. This is a contradiction.  If the $g_{t_i}$ converge instead to the metric broken along $(Y,L')$, then the same reasoning leads to a contradiction; in this case, the only reducibles of relevance are the unobstructed ones on $T$ of index $-1$. 

Analogous arguments, involving basic convergence results and our discussion of minimal reducibles above, show that all other maps in Subsection \ref{subsec:triangleprelim} involving metric families of dimension $1$, such as $K,M_2,L$, are well-defined under the assumptions of Case I. 

Finally, consider a map such as $Q$ which is defined using the 2-dimensional metric family $G_V$. Index arguments as above preclude the possibility of the four types of reducible instantons discussed above. Reducible instantons arising from metrics that are broken along the two-component unlink $(S^3,U_2)$ may also enter, a priori; however, under our assumptions, the analysis of the situation at such broken metrics is the same as in \cite[\S 7.3]{KM:unknot}, and we refer the reader there for more details. (The only other possible reducible instantons that appear at broken metrics are unobstructed.) Here it is key that $Q$ involves a moduli space with only {\emph{irreducible}} critical points at the ends.
\end{proof}

It remains to argue that the data thus far described constitutes an exact triangle in the sense of Definition \ref{def:exacttriangle}. The relations \eqref{eq:exacttriangle1} and \eqref{eq:exacttriangle2} in this context hold because $\widetilde C$, $\widetilde C'$, $\widetilde C''$ are $\cS$-complexes and $\widetilde \lambda$, $\widetilde \lambda'$, $\widetilde \lambda''$ are morphisms, as already mentioned above.

Thus the next order of business is \eqref{eq:exacttriangle3}, i.e. the three relations
\begin{align}
	\widetilde d'' \widetilde K + \widetilde K \widetilde d + \widetilde \lambda' \widetilde \lambda = 0 \label{eq:caseirel1}\\
\widetilde d \widetilde K' + \widetilde K' \widetilde d' + \widetilde \lambda'' \widetilde \lambda' = 0 \label{eq:caseirel2}\\
\widetilde d' \widetilde K'' + \widetilde K'' \widetilde d'' + \widetilde \lambda \widetilde \lambda'' = 0 \label{eq:caseirel3}
\end{align}
First consider the equation \eqref{eq:caseirel1}. This expands to the four equations
\begin{align}
	d''K  + K d + \lambda'\lambda = 0 \label{eq:caseirel4} \\
	\delta_1'' K + M_1 d + \Delta_1'\lambda  + J\delta_1 = 0 \label{eq:caseirel5} \\
	-d'' M_2 - K\delta_2 +\lambda'\Delta_2 + \Delta_2' \eta + \delta_2'' J = 0 \label{eq:caseirel6}\\
	v'' K - d''L + \delta_2'' M_1 + L d - Kv + M_2\delta_1 +\mu'\lambda + \lambda'\mu + \Delta_2'\Delta_1 = 0 \label{eq:caseirel7}
\end{align}

\begin{figure}[t]
    \centering
    \centerline{\includegraphics[scale=0.27]{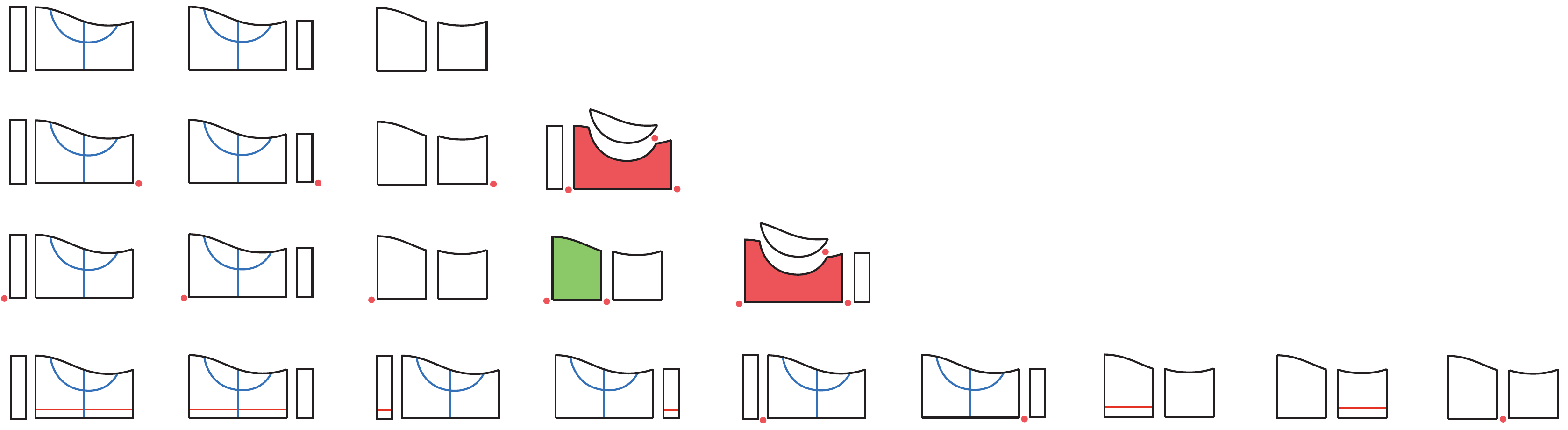}}
    \caption{{\small{Relations \eqref{eq:caseirel4}--\eqref{eq:caseirel7}, row by row.}}}
    \label{fig:caseirels1}
\end{figure}

\begin{lemma}\label{lemma:caseirels1}
	Relations \eqref{eq:caseirel4}--\eqref{eq:caseirel7} hold.
\end{lemma}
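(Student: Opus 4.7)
The plan is to prove each of the four equations \eqref{eq:caseirel4}--\eqref{eq:caseirel7} by inspecting the codimension-$1$ boundary of a corresponding stratified-smooth moduli space on the composite cobordism $(I\times Y,T)$ parametrized by the metric family $G_T$, closely following the template developed in Subsection \ref{subsec:functoriality} and the earlier proofs of Propositions \ref{bdry-rel-lambda}--\ref{prop:mu-obs-rel}. For \eqref{eq:caseirel4} the relevant space is $M^+(T;\alpha,\beta'')^{G_T}_1$ with $\alpha,\beta''$ irreducible; for \eqref{eq:caseirel5} it is an obstructed-gluing modification $N^+(T;\alpha,\theta_{\fo''})^{G_T}_1$ patterned on \eqref{first-part-1-Delta-1}--\eqref{third-part-N-Delta-1}; for \eqref{eq:caseirel6} the analogue $N^+(T;\theta_\fo,\alpha'')^{G_T}_1$; and for \eqref{eq:caseirel7} the preimage of $-1\in S^1$ under a modified holonomy map $\bH^\gamma$ applied to the two-dimensional version of the moduli space used for \eqref{eq:caseirel4}, cut down in the spirit of the map $\mu$.

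For each of these one-dimensional stratified spaces the ends fall into four types. Breakings at the incoming and outgoing cylindrical ends of $T$ produce the standard composition terms $d''K+Kd$ and their variants $\delta_1''K+M_1 d$, $d''M_2+K\delta_2$, and $d''L+Ld+v''K+Kv+\delta_2''M_1+M_2\delta_1$. The broken metric $g_{-\infty}\in G_T$ cut along $(Y,L')$ produces fiber products $M(S;\cdot,\cdot')\times_{\Gamma}M(S';\cdot',\cdot'')$ over critical points on $L'$; here a dimension count, together with the fact that $(I\times Y,S')$ is non-orientable and therefore supports no reducibles (so that the reducible component of $\widetilde \lambda'$ vanishes), rules out all contributions except those factoring through irreducibles on $L'$ and, in \eqref{eq:caseirel6} and \eqref{eq:caseirel7}, those factoring through the unobstructed index $-1$ reducibles on $(I\times Y,S)$. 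These combine to give $\lambda'\lambda$, $\Delta_1'\lambda$, $\lambda'\Delta_2+\Delta_2'\eta$, and $\mu'\lambda+\lambda'\mu+\Delta_2'\Delta_1$, respectively. The broken metric $g_{+\infty}\in G_T$ cut along $(S^3,U_1)$ is the genuinely new feature. A priori it produces broken solutions $[A]\times_{S^1}[B]$ with $A$ an instanton on $(I\times Y\setminus B^4,\overline S''\setminus B^2)$ and $B$ on $(B^4,F_0)$; the index calculation from the proof of Lemma \ref{lemma:welldefined} shows that the only configurations of the required dimension are those in which $A$ is an index $-3$ obstructed reducible $\Theta$ (no such reducibles exist on the $(B^4,F_0)$ side since $F_0$ is non-orientable). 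Following Section \ref{sec:obstructed}, the modified space $N^+$ replaces a neighborhood of each such obstructed broken solution by the zero set of an obstruction section $\Xi$, and the resulting $t=+\infty$ boundary contribution becomes precisely a signed count of such $\Theta$'s pre- or post-composed with a cylinder instanton, i.e.\ $J\delta_1$ in \eqref{eq:caseirel5} and $\delta_2''J$ in \eqref{eq:caseirel6} (with signs fixed by Remark \ref{rmk:signforjmaps}). In \eqref{eq:caseirel4} this contribution is absent since both ends are irreducible, and in \eqref{eq:caseirel7} it vanishes because, as in Proposition \ref{prop:mu-obs-rel}, the modified holonomy $\bH^\gamma$ restricted to the relevant obstructed stratum takes the value $1$ rather than $-1$. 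The crossings $v''K-Kv$ and the reducible-end strata $\delta_2''M_1+M_2\delta_1$ in \eqref{eq:caseirel7} arise, as in that proposition, from transitions of $\bH^\gamma$ through $-1$ and from fiber-product strata at reducibles on $L'$.

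The main technical obstacle is extending the obstructed gluing statements of Propositions \ref{type-1-obs-gluing}--\ref{type-3-obs-gluing} to the family setting at $t=+\infty$, where the obstructed reducible now lives on the three-ended, non-closed pair $(I\times Y\setminus B^4,\overline S''\setminus B^2)$. This adaptation is essentially formal: the $1$-parameter family $G$ of Subsection \ref{subsec:homotopies} is replaced by a neighborhood of $+\infty$ in $G_T$, and the obstruction sections $\Psi_{\alpha,\Theta},\Psi_{\Theta,\alpha''},\Psi_{\alpha,\Theta,\alpha''}$ together with their homotopies $\Xi$ are built from the cokernel of the linearized ASD operator at each $\Theta$ in exactly the manner of Section \ref{sec:obstructed}. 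Once these sections are in place, the four relations \eqref{eq:caseirel4}--\eqref{eq:caseirel7} follow from the signed boundary counts of the respective $N^+$'s by an essentially verbatim adaptation of the proofs of Propositions \ref{bdry-rel-lambda}, \ref{Delta-1}, \ref{Delta-2} and \ref{prop:mu-obs-rel}, now on $(I\times Y,T)$ with the family $G_T$ in place of the cylinder and its perturbation family.
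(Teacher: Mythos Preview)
Your outline matches the paper's approach for \eqref{eq:caseirel4}--\eqref{eq:caseirel6}, including the adaptation of the obstructed gluing machinery to the three-ended piece at $t=+\infty$. Two points warrant correction.

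First, for \eqref{eq:caseirel7} the paper takes a shorter route than you propose: no $N^+$ modification or holonomy-avoidance argument is needed, because an index count rules out obstructed reducibles altogether. Since both $\alpha$ and $\alpha''$ are irreducible, any broken configuration at $t=+\infty$ containing an index $-3$ reducible $\Theta$ on the three-ended piece must also carry non-constant cylinder instantons on \emph{both} the $L$- and $L''$-ends together with an instanton on $(B^4,F_0)$, forcing total index at least $1+(-3)+0+1+3=2$, which exceeds the index available in the ambient family moduli space. Hence $M^+_\gamma(T;\alpha,\alpha'')^{G_T}_1$ is already a compact $1$-manifold with boundary, and its ends are exactly the nine listed types. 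Your proposed $\bH^\gamma=1$ mechanism from Proposition~\ref{prop:mu-obs-rel} does not directly transplant to this geometry in any case, since the obstructed stratum here has a different fiber-product shape (it involves $(B^4,F_0)$ rather than an outgoing cylinder), and the construction of the $\Xi$-sections in Section~\ref{sec:obstructed} was tailored to the two-ended situation.

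Second, for \eqref{eq:caseirel5}--\eqref{eq:caseirel6} your description of the $t=+\infty$ contribution is missing an essential ingredient. The obstructed broken solutions are not bare pairs $[A]\times_{S^1}[B]$ with $A=\Theta$ on the three-ended piece: since $\Theta$'s limit on the $L$-end is reducible while $\alpha$ is irreducible, there is necessarily a cylinder factor $\breve M^+(L;\alpha,\widetilde\theta_\fo)_1$ as well. After the $N^+$ modification, the $\psi'$-argument of Proposition~\ref{bdry-rel-lambda} produces for each $\Theta$ the \emph{product}
\[
-\#\breve M^+(L;\alpha,\theta_\fo)_0\cdot \#M^+(B^4,F_0;\theta)_0.
\]
The identification with $J\delta_1$ then depends on the non-trivial input that $M^+(B^4,F_0;\theta)_0$ consists of a single instanton (via removable singularities and the uniqueness of the index-$0$ instanton on $(S^4,\bR\bP^2_+)$ from \cite[\S 2.7]{KM:unknot}); its sign is the $\varepsilon_0$ absorbed into the definition of $J$ in Remark~\ref{rmk:signforjmaps}. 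This is the step that justifies your claim, and it should be made explicit.
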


\begin{proof}
Relation \eqref{eq:caseirel4} only involves irreducible limits and no holonomy maps, and is essentially is a relation from \cite{KM:unknot}. In our notation, the argument goes as follows. Consider the campactified moduli space $M^+:=M^+(I\times Y, T;\alpha,\alpha'')^{G_T}_1$ where $\alpha$ and $\alpha''$ are irreducible. This is a compact 1-manifold and its boundary is the union of the spaces:
\begin{gather}
	M( T;\alpha,\alpha'')^{\partial G_T}_0 \label{eq:bdycomp1} \\ 
	\textstyle{\bigsqcup_{\beta\in \fC^\text{irr}_{\pi}} } \breve{M}(  L;\alpha,\beta)_0\times M( T;\beta,\alpha'')_0  \label{eq:bdycomp2} \\
		\textstyle{\bigsqcup_{\beta''\in \fC^\text{irr}_{\pi''}}} M( T;\alpha,\beta'')_0 \times \breve{M}( L'';\beta'',\alpha'')_0  \label{eq:bdycomp3}
\end{gather}
The relation is obtained by counting these boundary components. Consider the moduli space \eqref{eq:bdycomp1}. This has two parts, corresponding to the two broken metrics in $\partial G_T$. The part for the metric broken along $(Y,L')$ contributes the term $\langle \lambda'\lambda(\alpha),\alpha''\rangle$. Consider on the other hand $M(I\times Y, T;\alpha,\alpha'')^{g}_0$ where $g$ is the other metric in $\partial G_T$, broken along $(S^3,U_1)$. An element of this moduli space is a pair $([A_1],[A_2])$ where $[A_1]\in M(I\times Y\setminus B^4,\overline{S}''\setminus B^2;\alpha,\theta,\alpha'')$ and $[A_2]\in M(B^4,F_0;\theta)$ and $\theta$ is the reducible flat singular connection class on $(S^3,U_1)$. As remarked earlier, the pair $(B^4,F_0)$ supports no reducible instantons. Then we have
\[
	\ind(A_1) + \dim \text{Stab}(\theta ) + \ind(A_2) \geq 1 
\]
as each $A_i$ is irreducible and has non-negative index. On the other hand, elements of $M^+$ have index $0$. Thus there are no such contributions for the broken metric $g$. Finally, the boundary components \eqref{eq:bdycomp2}  and \eqref{eq:bdycomp3} contribute the terms $\langle Kd(\alpha),\alpha''\rangle$ and $\langle d'' K(\alpha),\alpha''\rangle$. Accounting for signs using our orientation conventions gives \eqref{eq:caseirel4}.

We next address relation \eqref{eq:caseirel5}. Consider $M^+:=M^+(I\times Y, T;\alpha,\theta_{\fo''})^{G_T}_1$ where $\alpha$ is irreducible, and $\theta_{\fo''}$ is reducible. Unlike the previous case, this moduli space is not, in general, a compact $0$-dimensional manifold. The situation is similar to that of Section \ref{sec:obstructed} and concerns the obstructed reducibles $\Theta$ on $(I\times Y\setminus B^4, \overline S''\setminus B^2 ) $ appearing at the metric broken along $(S^3,U_1)$. The statement of Proposition \ref{type-3-obs-gluing} carries over to this setting as follows. There is a continuous section $\Psi_{\alpha,\Theta,\theta_{\fo''}}$ of the complex line bundle $\cH_{\alpha,\theta_{\fo''}}\times \overline \R_+ \times \overline \R_+$ over 
\[
	\breve{M}^+(L;\alpha,\widetilde \theta_{\fo})_1 \times_{S^1} M^+(B^4,F_0;\widetilde \theta)_1 \times  \overline \R_+ \times \overline \R_+
\]
and a map $\Phi_{\alpha,\Theta,\theta_{\fo''}}:\Psi^{-1}_{\alpha,\Theta,\theta_{\fo''}}(0)\to M^+$ which is a homeomorphism onto an open neighborhood of the obstructed solutions (similar to type III solutions in Section \ref{sec:obstructed}):
\[
	\breve{M}^+(L;\alpha,\widetilde \theta_{\fo})_1 \times \{\Theta\} \times M^+(B^4,F_0;\widetilde \theta)_1 / S^1 \subset M^+
\]
The only difference between Proposition \ref{type-3-obs-gluing} and the situation here is that the first copy of $  \overline  \R_+$ above measures the extent to which an instanton is sliding off along the cylindrical end $\R\times (Y, L)$, while the second copy of $\overline \R_+$ embeds as an open neighborhood of the metric in $G_T$ which is broken along $(S^3,U_1)$, sending $\infty\in \overline \R_+$ to $g_{\overline S''\setminus B^2}\sqcup g_{F_0}$. The rest of Proposition \ref{type-3-obs-gluing} carries over to this setting in a straightforward manner. Note that while we use notation such as $\Psi_{\alpha,\Theta,\theta_{\fo''}}$ which is similar to that of Proposition \ref{type-3-obs-gluing}, the role of $\theta_{\fo''}$ here is slightly different to the role of $\alpha'$ in that proposition.

We now proceed as in Proposition \ref{bdry-rel-lambda}. Define $N^+$ to be the disjoint union of 
\begin{equation}
	M^+\setminus \textstyle{ \bigsqcup_{\Theta} }\Phi_{\alpha,\Theta,\theta_{\fo''}}\(\Psi_{\alpha,\Theta,\theta_{\fo''}}^{-1}(0)\cap(\breve M^+(L;\alpha,\widetilde \theta_{\fo})_{1} \times_{S^1} M^+(B,F_0;\widetilde \theta)_{1}\times U_\Theta)\) \label{eq:nplus1part1}
\end{equation}
\begin{equation}
	\text{ and  }\;\;\;\textstyle{ \bigsqcup_{\Theta}} \Xi_{\alpha,\Theta,\theta_{\fo''}}^{-1}(0)\cap \(\breve M^+(L;\alpha,\widetilde \theta_{\fo})_{1} \times_{S^1}M^+(B,F_0;\widetilde \theta)_{1} \times S_\Theta\times [0,1]\) \label{eq:nplus1part2}
\end{equation}
Here we use notation and constructions that follow the same pattern as in Section \ref{sec:obstructed}. The unions appearing in \eqref{eq:nplus1part1}--\eqref{eq:nplus1part2} run over the minimal reducibles $\Theta$ of index $-3$ on the pair $(I\times Y\setminus B^4, \overline S''\setminus B^2 ) $; the reducible $\Theta$ has limits $\theta_{\fo},\theta,\theta_{\fo''}$ at the three ends. The boundary of \eqref{eq:nplus1part1} consists of the following pieces:
\begin{gather}
	\textstyle{ \bigsqcup_{\alpha''\in \fC_{\pi''}^\text{irr}} }M^+(T;\alpha,\alpha'')^{G_T}_0  \times \breve M^+(L'';\alpha'',\theta_{\fo''})_0\label{eq:bdybaseiarg1}\\
	\textstyle{ \bigsqcup_{\beta\in \fC_{\pi}^\text{irr}} } \breve M^+(L;\alpha,\beta)_0  \times  M^+(T;\beta,\theta_{\fo''})^{G_T}_0 \label{eq:bdybaseiarg2}\\
	 \textstyle{ \bigsqcup_{\alpha'\in \fC_{\pi'}^\text{irr}} } M^+(S;\alpha,\alpha')_0  \times M^+(S';\alpha',\theta_{\fo''})_0 \label{eq:bdybaseiarg3} \\
	 -\textstyle{ \bigsqcup_{\Theta} } (\psi_{\alpha,\Theta,\theta_{\fo''}}')^{-1}(0)  \label{eq:bdybaseiarg4}
\end{gather}
The counts of \eqref{eq:bdybaseiarg1}--\eqref{eq:bdybaseiarg3} contribute the respective terms $\langle \delta_1'' K(\alpha),\theta_{\fo''}\rangle$, $\langle M_1 d(\alpha),\theta_{\fo''}\rangle$, $\langle \Delta_1'\lambda(\alpha),\theta_{\fo''}\rangle$ that appear in \eqref{eq:caseirel5}. As in Section \ref{sec:obstructed}, $\psi_{\alpha,\Theta,\theta_{\fo''}}'$ is the restriction of $\Xi_{\alpha,\Theta,\theta_{\fo''}}$ to $\breve M^+(L;\alpha,\widetilde \theta_{\fo})_{1} \times_{S^1}  M^+(B^4,F_0;\widetilde \theta)_{1}\times S_\Theta\times \{1\}$. The boundary of \eqref{eq:nplus1part2} is:
	\begin{equation}\label{eq:caseizerosectionboundaryinpf}
	  \bigsqcup_{\Theta} \left( (\psi_{\alpha,\Theta,\theta_{\fo''}}')^{-1}(0)\sqcup-\psi_{\alpha,\Theta,\theta_{\fo''}}^{-1}(0) \right)
	\end{equation}
The left terms in the union of \eqref{eq:caseizerosectionboundaryinpf} cancel with \eqref{eq:bdybaseiarg4}. The right terms are counted as
\[
	  -\#\breve M^+(L;\alpha,\theta_{\fo})_{0}\cdot \#M^+(B^4,F_0;\theta)_{0}
\]
for each minimal reducible $\Theta$ with limits $\theta_{\fo},\theta,\theta_{\fo''}$. Now we use that $M^+(B^4,F_0;\theta)_{0}$ contains a single instanton. This follows by removable singularities and the fact that there is a unique index $0$ instanton on $(S^4,\bR\bP^2)$ where $\bR\bP^2$ has self-intersection $+2$, when trivial singular bunde data is used. This latter point is explained in \cite[\S 2.7]{KM:unknot}. We set
\begin{equation}\label{eq:oneinstantonishere}
	\#M^+(B^4,F_0;\theta)_{0}=: \epsilon_0 \in \{1,-1\}
\end{equation}
which determines the sign $\varepsilon_0$ of Remark \ref{rmk:signforjmaps}. Thus the terms under consideration contribute the term $\langle J\delta_1(\alpha),\theta_{\fo''}\rangle$. Note that this last term is analogous to the term $\delta_2'\tau_{-1} \delta_1$ that appears in \eqref{lambda-bdry-relation}. This completes the proof of relation \eqref{eq:caseirel5}.

The proof of relation \eqref{eq:caseirel6} is analogous to that of \eqref{eq:caseirel5}. The only difference is that in relation \eqref{eq:caseirel6} there is an additional type of term, $\Delta_2'\eta$, which is handled by the usual unobstructed gluing theory.

Finally, consider \eqref{eq:caseirel7}. Let $M^+:=M^+_\gamma(T;\alpha,\alpha'')_1^{G_T}$ where $\alpha$ and $\alpha''$ are irreducible. We can rule out the appearance of obstructed reducibles, using the usual index additivity argument, which carries through because instantons in $M^+$ are of index $-1$ and $\alpha$, $\alpha''$ are irreducible. Thus $M^+$ is a compact 1-manifold with boundary. The boundary consists of:
\begin{gather*}
	\textstyle{ \bigsqcup_{\beta''\in \fC_{\pi''}^\text{irr}} } M^+(T;\alpha,\beta'')^{G_T}_0 \times \breve M_\gamma^+(L'';\beta'',\alpha'')_0  \\
	\textstyle{ \bigsqcup_{\beta''\in \fC_{\pi''}^\text{irr}} } M_\gamma^+(T;\alpha,\beta'')^{G_T}_0 \times \breve M^+(L'';\beta'',\alpha'')_0  \\
	\textstyle{ \bigsqcup_{\fo''\in \cQ(Y,L'')} } M^+(T;\alpha,\theta_{\fo''})^{G_T}_0 \times \breve M^+(L'';\theta_{\fo''},\alpha'')_0  \\
	\textstyle{ \bigsqcup_{\beta\in \fC_{\pi}^\text{irr}} } \breve M^+(L;\alpha,\beta)_0  \times M_\gamma^+(T;\beta,\alpha'')^{G_T}_0 \\ 	
\textstyle{ \bigsqcup_{\beta\in \fC_{\pi}^\text{irr}} } M_\gamma^+(L;\alpha,\beta)_0 \times  M^+(T;\beta,\alpha'')^{G_T}_0  \\
\textstyle{ \bigsqcup_{\fo\in \cQ(Y,L)} } M^+(L;\alpha,\theta_\fo)_0 \times  M^+(T;\theta_\fo,\alpha'')^{G_T}_0  \\
\textstyle{ \bigsqcup_{\beta'\in \fC_{\pi'}^\text{irr}} } M^+(S;\alpha,\beta')_0 \times  M_\gamma^+(S';\beta',\alpha'')_0  \\
\textstyle{ \bigsqcup_{\beta'\in \fC_{\pi'}^\text{irr}} } M_\gamma^+(S;\alpha,\beta')_0 \times  M^+(S';\beta',\alpha'')_0  \\
\textstyle{ \bigsqcup_{\fo'\in \cQ(Y,L')} } M^+(S;\alpha,\theta_{\fo'})_0 \times  M^+(S';\theta_{\fo'},\alpha'')_0  
\end{gather*}
The last three types of boundary components constitute $M^+_\gamma(T;\alpha,\alpha'')_0^{\partial G_T}$; the other boundary components do not involve metrics in $\partial G_T$. The details involving the modified holonomy maps are essentially the same as in \cite[Section 6]{DS1}. Accounting for the boundary terms above, in order, gives the terms in relation \eqref{eq:caseirel7}.
\end{proof}

Next, equation \eqref{eq:caseirel2} expands to the four equations:
\begin{align}
	dK'  + K' d' + \lambda''\lambda' = 0 \label{eq:caseirel8} \\
	\delta_1 K' + M'_1 d' + \Delta_1''\lambda' + \eta''\Delta_1' + J'\delta_1'  = 0 \label{eq:caseirel9} \\
	-d M'_2 - K'\delta'_2 +\lambda''\Delta'_2 + \delta_2 J' = 0 \label{eq:caseirel10}\\
	v K' - dL' + \delta_2 M'_1 + L' d' - K'v' + M'_2\delta'_1 +\mu''\lambda' + \lambda''\mu' + \Delta_2''\Delta'_1 = 0 \label{eq:caseirel11}
\end{align}
The proofs of these relations are analogous to the arguments given for Lemma \ref{lemma:caseirels1}. These relations are depicted in Figure \ref{fig:caseirels2}. Similarly, equation \eqref{eq:caseirel3} expands to the following four equations, proved in an analogous fashion:
\begin{align}
	d'K''  + K'' d'' + \lambda\lambda'' = 0 \label{eq:caseirel12} \\
	\delta'_1 K'' + M''_1 d'' + \Delta_1\lambda'' + \eta\Delta_1''   = 0 \label{eq:caseirel13} \\
	-d' M''_2 - K''\delta''_2 +\lambda\Delta''_2 + \Delta_2 \eta'' = 0 \label{eq:caseirel14}\\
	v' K'' - d'L'' + \delta_2' M''_1 + L'' d'' - K''v'' + M''_2\delta''_1 +\mu\lambda'' + \lambda\mu'' + \Delta_2\Delta''_1 = 0 \label{eq:caseirel15}\\
	\eta\eta''  = 0 \label{eq:caseirel15half}
\end{align}
Note that relation \eqref{eq:caseirel15half} follows algebraically. The other relations are depicted in Figures \ref{fig:caseirels2} and \ref{fig:caseirels3}. Observe that the verification of \eqref{eq:caseirel3} requires no obstructed gluing theory, in contrast to \eqref{eq:caseirel1} and \eqref{eq:caseirel2}.

\begin{figure}[t]
    \centering
    \centerline{\includegraphics[scale=0.27]{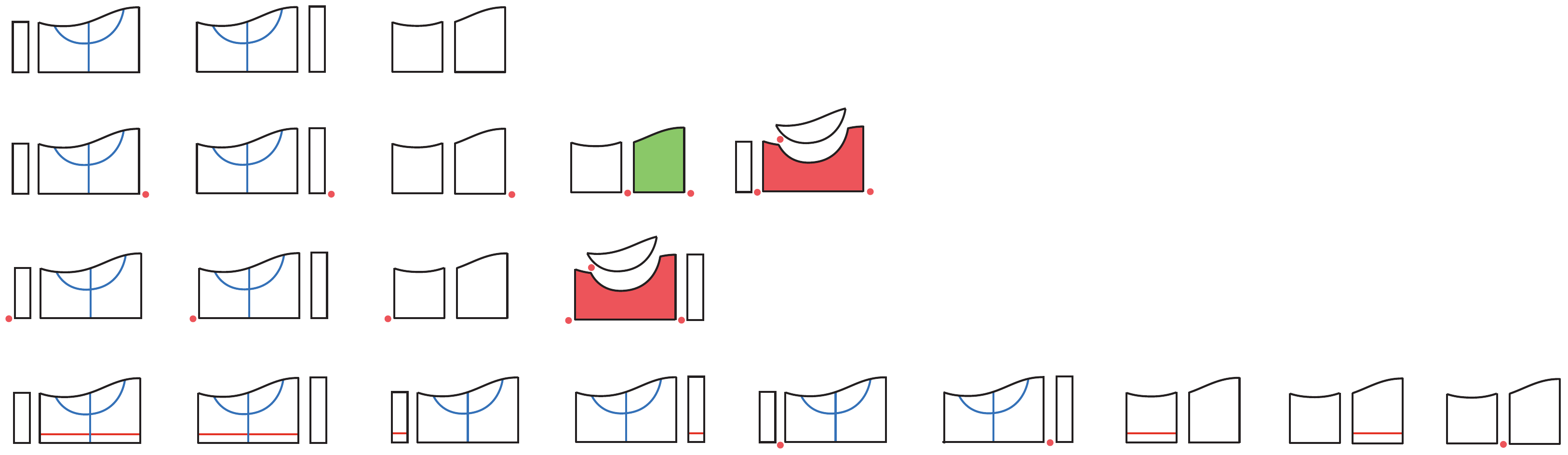}}
    \caption{{\small{Relations \eqref{eq:caseirel8}--\eqref{eq:caseirel11}.}}}
    \label{fig:caseirels2}
\end{figure}

Having established \eqref{eq:caseirel1}--\eqref{eq:caseirel3}, to prove Case I of Theorem \ref{thm:exacttriangles} it remains to show that the following three maps are homotopic to isomorphisms, as in \eqref{eq:antischainhomotopy}:
\begin{align}
	\widetilde \lambda'' \widetilde K - \widetilde K' \widetilde \lambda \label{eq:caseirel16} \\
\widetilde \lambda \widetilde K' - \widetilde K'' \widetilde \lambda' \label{eq:caseirel17} \\
\widetilde \lambda' \widetilde K'' - \widetilde K \widetilde \lambda''  \label{eq:caseirel18}
\end{align}
Note that \eqref{eq:caseirel16} is a morphism $(\widetilde C,\widetilde d)\to (\widetilde C,-\widetilde d)$, and similar remark holds for \eqref{eq:caseirel17}, \eqref{eq:caseirel18}. First consider \eqref{eq:caseirel16}. By Lemma \ref{lemma:morphismuseful}, it suffices to show that the irreducible and reducible components of this map are chain homotopic to isomorphisms. The reducible component of $\widetilde \lambda'' \widetilde K - \widetilde K' \widetilde \lambda$, i.e. the component $\sfR\to \sfR$, is computed to be 
\[
	\eta'' J - J'\eta \;\dot = \; \text{id}_{\sfR}.
\]
Thus it remains to treat the irreducible component. For this we have:

\begin{lemma}\label{lemma:caseilaststep}
	$ \lambda'' K - K' \lambda:(C,d)\to (C,-d)$ is chain homotopic to an isomorphism.
\end{lemma}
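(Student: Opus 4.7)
The plan is to use the map $Q : C \to C$ defined in \eqref{eq:qdefn} as the required chain homotopy. Recall that $Q$ is constructed from the triple composite $V = S'' \circ S' \circ S : L \to L$ equipped with the $2$-dimensional pentagonal family of metrics $G_V$, which stretches along the five internal hypersurfaces of Figure \ref{fig:5hypersurfaces}. The strategy is to identify $\lambda''K - K'\lambda$, modulo $dQ+Qd$ and reducible corrections, with $\pm \mathrm{id}_C$ by inspecting the boundary of the $1$-dimensional parametrized moduli spaces $M^+(V;\alpha,\beta)^{G_V}_1$ for irreducible $\alpha,\beta \in \fC_\pi^{\mathrm{irr}}$, following the template of Lemma \ref{lemma:caseirels1} and \cite[\S 7.3]{KM:unknot}.

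First I would treat the standard boundary components. Breakings at critical points along the cylindrical ends give the usual $dQ+Qd$ contribution. Each of the five edges of the pentagon $\partial G_V$ then contributes a codimension-one term. Two of these edges correspond to stretching along the product hypersurfaces $(Y,L')$ and $(Y,L'')$; each of these edges subdivides according to secondary stretching along the internal $(S^3,U_1)$ sphere inside $T$ or $T'$, and an accounting parallel to the proof of \eqref{eq:caseirel4} together with obstructed gluing on the pairs $(I\times Y\setminus B^4,\overline S''\setminus B^2)$ and $(I\times Y\setminus B^4,\overline S\setminus B^2)$ should identify their contributions with $\lambda''K$ and $-K'\lambda$ plus correction terms factoring through the reducible complex $\sfR$. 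Two further edges correspond to direct stretching along these $(S^3,U_1)$ hypersurfaces; a similar obstructed-gluing argument should show that their contributions are again expressible purely through maps factoring through $\sfR$ and the composite maps $\Delta_1,\Delta_2,\delta_1,\delta_2,J,J',\eta,\eta'',s,\nu'$.

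The decisive input comes from the fifth edge, along which one stretches along the hypersurface $(S^3, U_2)$ of \eqref{eq:twocompunlinkdecomp}. Here $V$ decomposes as $(W,F_1)\cup_{(S^3,U_2)}(B^4,F_2)$ with $F_2\cdot F_2=+2$, and by \cite[Lemma 7.4]{KM:unknot} replacing $(B^4,F_2)$ by the standard pair of disks $(B^4,\Delta)$ converts this into the product cobordism $(I\times Y, I\times L)$. Combining this with the fact that $(B^4,F_0)$ carries a unique isolated irreducible instanton, cf.\ \eqref{eq:oneinstantonishere}, and the standard identification of the moduli space on the product cobordism with the diagonal on the critical set, this edge should contribute $\varepsilon\cdot\mathrm{id}_C$ for some sign $\varepsilon\in\{\pm 1\}$, up to a chain-homotopic correction.

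Collecting all contributions should yield a relation of the form
\[
\lambda''K - K'\lambda \;=\; -\varepsilon\cdot\mathrm{id}_C \,+\, (dQ+Qd) \,+\, \mathcal{E},
\]
where $\mathcal{E}$ is a sum of correction terms each factoring through the reducible complex $\sfR$. The main obstacle will be showing that $\mathcal{E}$ is itself null-homotopic on $(C,d)$; I expect this to follow by a secondary homotopy argument assembled from the relations in Lemma \ref{lemma:caseirels1} and its analogues, together with the reducible identities \eqref{eq:caseiredrels}, since every summand of $\mathcal{E}$ either begins or ends with a map involving $\sfR$ and the available reducible-level identities produce cancellations. Once $\mathcal{E}$ is absorbed, $\lambda''K - K'\lambda$ is chain homotopic to the automorphism $-\varepsilon\cdot\mathrm{id}_C$, and Lemma \ref{lemma:caseilaststep} follows. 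By Lemma \ref{lemma:morphismuseful}, combined with the reducible computation $\eta''J - J'\eta \doteq \mathrm{id}_\sfR$ already noted above, this completes the verification of \eqref{eq:caseirel16}.
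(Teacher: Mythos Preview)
Your overall strategy is right: use $Q$ from the pentagonal family $G_V$ and analyze the boundary of $M^+(V;\alpha,\beta)^{G_V}_1$. But the execution diverges from the paper in two important ways.

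First, you anticipate correction terms $\mathcal{E}$ from obstructed reducibles and plan a secondary homotopy argument to kill them. In the paper's proof no such corrections arise. The key observation, which you are missing, is that because $\alpha,\beta$ are \emph{irreducible}, an index count rules out every broken configuration in $M^+(V;\alpha,\beta)^{G_V}_1$ that contains one of the reducibles in \eqref{eq:minredscasei}. The two edges $\partial_2,\partial_3$ (stretching along $(Y,L')$ and $(Y,L'')$) give $-K'\lambda$ and $\lambda''K$ cleanly, and the two $(S^3,U_1)$-type edges $\partial_1,\partial_4$ are simply empty. So $\mathcal{E}=0$ and your secondary homotopy step is unnecessary.

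Second, your treatment of the fifth edge is the real gap. The boundary component along $(S^3,U_2)$ does not produce $\varepsilon\cdot\mathrm{id}_C$; it defines a map $\mathbf{N}$, and one must then argue separately that $\mathbf{N}$ is chain homotopic to an isomorphism. Your justification confuses $(B^4,F_0)$ (the once-punctured $\mathbb{RP}^2$, relevant to the $(S^3,U_1)$ edges) with $(B^4,F_2)$ (the twice-punctured $\mathbb{RP}^2$); the fact \eqref{eq:oneinstantonishere} about $(B^4,F_0)$ is not what is used here. The actual argument, from \cite[\S 7.3]{KM:unknot}, studies the $1$-dimensional moduli $M^+(B^4,F_2)^{G_5}_1$ and shows its endpoint map to $\fC(S^3,U_2)$ has degree $\pm 1$; this lets one replace $(B^4,F_2)$ by the standard disk pair $(B^4,\Delta)$, after which the glued cobordism becomes the product $(I\times Y, I\times L)$ with a broken metric, and a further metric homotopy finishes. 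You should replace your sketch of the fifth edge with this two-step structure.
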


\begin{figure}[t]
    \centering
    \centerline{\includegraphics[scale=0.27]{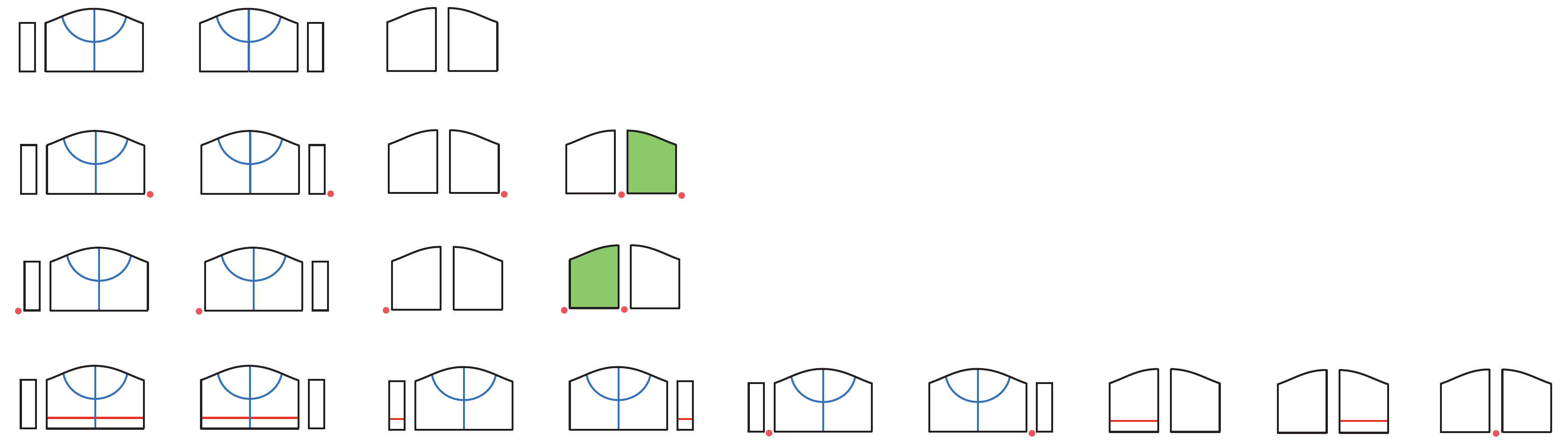}}
    \caption{{\small{Relations \eqref{eq:caseirel12}--\eqref{eq:caseirel15}.}}}
    \label{fig:caseirels3}
\end{figure}

\begin{proof}
We explain how the argument of \cite[\S 7.3]{KM:unknot} carries over to this setting (with different conventions). The argument consists of two steps. 

In the first step, the map $Q$ defined in \eqref{eq:qdefn} is shown to be a chain homotopy between $ \lambda'' K - K' \lambda$ and a map $\bN:(C,d)\to (C,-d)$. This relation,
\[
	dQ - Qd - \mathbf{N} +\lambda'' K - K'\lambda = 0,
\]
is established by considering the moduli space $M^+=M^+(I\times Y, V;\alpha,\beta)_1^{G_V}$ for irreducibles $\alpha,\beta$. There are five boundary components $\partial_i$ of $M^+$ corresponding to the five boundary components $G_i$ ($i\in \{1,2,3,4,5\}$) of the pentagon metric family $G_V$. The boundary component $G_1$ (resp. $G_2$, $G_3$, $G_4$, $G_5$) of the pentagon $G_V$ is homeomorphic to an interval and is the restriction of $G_V$ to those metrics which are broken along the hypersurface pair $(S^3,U_1')$ (resp. $(Y,L')$, $(Y,L'')$, $(S^3,U_1)$, $(S^3,U_2)$). Note $G_2$ is the metric $g_S$ on $S$ with the family $G_T'$ on $T'$, and $G_3$ is the family $G_T$ on $T$ with the metric $g_{S''}$ on $S''$. Thus
\begin{gather*}
	\partial_2 = \textstyle{ \bigsqcup_{\alpha'\in \fC_{\pi'}^\text{irr}} } M^+(S;\alpha,\alpha')_0 \times M^+(T';\alpha',\alpha'')^{G_{T'}}_0 \\
		\partial_3 = \textstyle{ \bigsqcup_{\alpha''\in \fC_{\pi''}^\text{irr}} } M^+(T;\alpha,\alpha'')^{G_T}_0 \times M^+(S'';\alpha'',\beta)_0 
\end{gather*}
Counting these contributions gives the respective terms $-\langle K' \lambda(\alpha),\beta\rangle$ and $\langle \lambda'' K(\alpha),\beta\rangle$. On the other hand, $\partial_1$ and $\partial_4$ are empty by an argument similar to that of relation \eqref{eq:caseirel4}. Counting the component $\partial_5$ defines the term $-\langle \mathbf{N}(\alpha),\beta\rangle$. There are two other types of boundary components for $M^+$ that account for instantons sliding off at the ends: 
\begin{gather*}
	\textstyle{ \bigsqcup_{\delta\in \fC_{\pi}^\text{irr}} } \breve M^+(L;\alpha,\delta)_0 \times M^+(V;\delta,\alpha'')^{G_{V}}_0 \\
		\textstyle{ \bigsqcup_{\delta\in \fC_{\pi}^\text{irr}} } M^+(V;\alpha,\delta)^{G_V}_0 \times \breve M^+(L;\delta,\beta)_0 
\end{gather*} 
These contribute the respective terms $-\langle  Q d(\alpha),\beta\rangle $ and $\langle d Q(\alpha),\beta\rangle $. Thus far we have simply recast the first step of the argument in \cite[\S 7.3]{KM:unknot} using our notation and conventions. In that reference, $M^+$ is a compact 1-manifold with boundary and the above accounts for all boundary components; then $Q$ is a chain homotopy from $ \lambda'' K - K' \lambda$ to $\mathbf{N}$.

Our situation is very similar, given that the limits $\alpha,\beta$ defining $M^+$ are assumed irreducible. However, a priori, $M^+$ may have additional contributions from the four types of reducibles depicted in \eqref{eq:minredscasei}. We must show that none of these reducibles (or reducibles of higher index) occur in the compactification $M^+$. This is done, similar as in the proof of Lemma \ref{lemma:welldefined}, using basic compactness results and index arguments. 

First, suppose there is an element $([A_0], [A_1],[A_2],[A_3])\in M^+$ in
\[
	\left( \breve{M}^+(L;\alpha,\widetilde \theta_{\fo}) \times \{[A_1]\} \times M^+(B^4,F_0;\widetilde \theta) \times M^+(S'';\widetilde \theta_{\fo''},\beta)\right) / S^1\times S^1
\]
where $A_1$ is an obstructed reducible instanton on $(I\times Y\setminus B^4, \overline S''\setminus B^2)$ with limits $ \theta_{\fo}$, $\theta$, $\theta_{\fo''}$. This type of broken instanton is depicted as follows:
\begin{center}
	\includegraphics[scale=0.35]{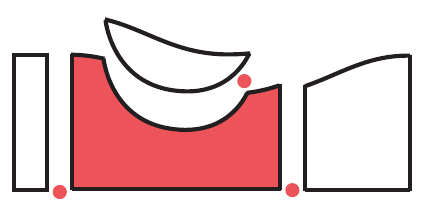}
\end{center}
 Here $S^1\times S^1$ is the product of the stabilizers of $ \theta_{\fo}$, $\theta$, $\theta_{\fo''}$ modulo the stabilizer of $A_1$. As $\alpha$ (resp. $\beta$) is irreducible, so too is $A_0$ (resp. $A_3$), and thus $\ind(A_0)\geq 1$ (resp. $\ind(A_3)\geq 0$). Note $A_0$ is on $\R\times (Y,L)$ while $A_3$ is on $(\R\times Y,S''^+)$. Also, $\ind(A_1) =  -3$, and $A_2$ is irreducible with $\ind(A_2)\geq 0$. Thus the index of $([A_0], [A_1],[A_2],[A_3])$ satisfies
\begin{align*}
	\ind(A_0) + \dim \text{Stab}(\theta_{\fo})& + \ind(A_1) + \dim \text{Stab}(\theta) \\ &+ \dim \text{Stab}(\theta_{\fo''}) + \ind(A_2) + \ind(A_3) \geq 1
\end{align*}
However, instantons in $M^+$ have index at most $-1$, and thus the above scenario is impossible. The argument easily extends to the case that $A_0,A_2,A_3$ are themselves broken instantons. Thus reducibles on $(I\times Y\setminus B^4, \overline S''\setminus B^2)$ do not occur in $M^+$. Reducibles on $(I\times Y\setminus B^4, \overline S\setminus B^2)$ are similarly ruled out, as are reducibles on $(I\times Y,S)$, $(I\times Y, S'')$. 

Finally, the second step of the argument shows that $\mathbf{N}$ is chain homotopic to an isomorphism. The argument here is exactly as in \cite[\S 7.3]{KM:unknot}. Note that the metric family $G_5$ broken along $(S^3,U_2)$ supports none of the minimal reducibles discussed above.
\end{proof}

Thus \eqref{eq:caseirel16} is homotopic to an isomorphism. The verification that the remaining two maps \eqref{eq:caseirel17} and \eqref{eq:caseirel18} are homotopic to isomorphisms follows along the same lines. First, the reducible components of each of these maps are, respectively, given by $\eta J' $ and $J\eta'' $, which are isomorphisms by \eqref{eq:caseiredrels}. Second, the argument that the irreducible components are homotopic to isomorphisms are essentially the same as in Lemma \ref{lemma:caseilaststep}. This completes the proof that the data given forms an exact triangle of $\cS$-complexes, and concludes the proof of Case I in Theorem \ref{thm:exacttriangles}.

\subsection{Case II} \label{subsec:caseii}

Now we prove Case II of Theorem \ref{thm:exacttriangles}. In this case $\epsilon(L,L')=-1$ and $\epsilon(L'',L)=1$. This means that the cobordism $S$ is obstructed, while $S'$ and $S''$ are unobstructed. We retain the notation set up in Subsection \ref{subsec:triangleprelim}, and will explain how some of the definitions there are to be modified in this case.

Recall that the suspension $\Sigma \widetilde C'=\widetilde C'_\Sigma$ of the $\cS$-complex for $(Y,L')$ is defined as:
    \begingroup
\renewcommand{\arraystretch}{1.25}
    \[
        \widetilde C'_\Sigma = C'_\Sigma \oplus C'_\Sigma[-1] \oplus \sfR', \qquad C'_\Sigma = C'[-2]\oplus \sfR'[-1],
    \]
    \[
        \widetilde d'_\Sigma = \left[\begin{array}{cc|cc|c} 
            d' & -\delta'_2 & 0 & 0 & 0 \\
            0 & 0 & 0 & 0 & 0 \\
            \hline
            v' & 0 & -d' & \delta'_2 & v'\delta'_2 \\
            \delta'_1 & 0 & 0 & 0& 0 \\
            \hline
            0 & 1 & 0 & 0 & 0
         \end{array} \right].
    \]
    \endgroup
The goal of this section is to construct an exact triangle of the form:
\begin{equation}\label{eq:exacttrianglecaseiiproof}
	\begin{tikzcd}[column sep=5ex, row sep=10ex, fill=none, /tikz/baseline=-10pt]
& \widetilde C  \arrow[dr, "\widetilde \lambda"] \arrow[dl, bend right=60, "\widetilde K"'] & \\
\widetilde C''  \arrow[rr, bend right=60, "\widetilde K''"']  \arrow[ur, "\widetilde \lambda''"] & &\widetilde C_\Sigma' \arrow[ll, "{[-1]}"', "\widetilde \lambda'"] \arrow[ul, bend right=60, "\widetilde K'"']
\end{tikzcd}
\end{equation}
The complexes are to be considered with their absolute $\Z/2$-gradings.

We begin by defining the maps $\widetilde\lambda,\widetilde\lambda',\widetilde\lambda''$. First, the map $\widetilde \lambda''$ is simply the map induced by the unobstructed cobordism $S''$ as described in Theorem \ref{thm:unobsmaps}:
\[
\widetilde\lambda'' =  \left[ \begin{array}{ccc} \lambda'' & 0 & 0 \\ \mu'' & \lambda'' & \Delta''_2 \\ \Delta''_1 & 0 & \eta'' \end{array} \right]
\]
This is an even degree morphism of $\cS$-complexes which is strong height $0$. Indeed, the map $\eta'':\sfR''\to \sfR$ is as before an inclusion, up to sign-changes, defined via \eqref{eq:reddirectsum}. Next, the map $\widetilde \lambda':\widetilde C_\Sigma'\to \widetilde C''$ is defined as follows, where $\nu'$ was defined in \eqref{eq:nuprime}:
    \begingroup
\renewcommand{\arraystretch}{1.25}
\[
\widetilde\lambda' =  \left[ \begin{array}{cc|cc|c} \lambda'  & \Delta_2' & 0 & 0 & 0  \\ \hline \mu' & 0 &  \lambda' & \Delta'_2 & \mu'\delta_2'+v''\Delta_2' + \delta_2''\nu' \\ \hline \Delta'_1 & \nu' & 0 & 0 & 0 \end{array} \right]
\]
\endgroup
This definition is compatible with the earlier described algebraic construction \eqref{eq:lambdaprimesuspend}, as applied to the morphism of $\cS$-complexes $\widetilde C'\to \widetilde C''$ and the map $\nu':\sfR'\to \sfR''$ which are determined by the odd unobstructed cobordism $S$; the resulting algebraic morphism has the domain $\cS$-complex suspended.

The map $\widetilde \lambda:\widetilde C\to \widetilde C'_\Sigma$ is defined using the height $-1$ obstructed cobordism $S$, following the constructions of Section \ref{sec:obstructed}. Recall from \eqref{eq:morphismfromheight-1} that in terms of the decomposition of $\widetilde C_\Sigma'$:
    \begingroup
\renewcommand{\arraystretch}{1.25}
\[
  \widetilde\lambda=  \left[ \begin{array}{c|c|c} \lambda & 0 & 0 \\ \eta\delta_1 & 0 & 0 \\\hline \mu & \lambda & \Delta_2 \\ \Delta_1 & \eta\delta_1 & \tau_0\\\hline0&0&\eta \end{array} \right]
\]
\endgroup
Here $\eta=\eta_{-1}=\tau_{-1}:\sfR\to \sfR'$ is as before the projection map up to sign-changes, while $\tau_0= - s'\eta - \eta s$. Recall from Section \ref{sec:obstructed} that while $\lambda$ is defined just as in the unobstructed case, the maps $\Delta_1,\Delta_2,\mu$ are defined by suitably modifying the relevant moduli spaces. That $\widetilde \lambda$ defines a morphism $\widetilde C\to \widetilde C'_\Sigma$ was proved in Section \ref{sec:obstructed}.

Although the maps $\eta$, $\eta''$, $J$, $J'$ are defined the same way as in Case I, they take on a different role here. Indeed, the hypotheses $\epsilon(L,L')=-1$ and $\epsilon(L'',L)=1$, and the index formula \eqref{eq:indminredcyl}, imply that now $\eta$ and $J$ count obstructed reducibles of index $-3$, while $\eta''$ and $J'$ count unobstructed reducibles of index $-1$. These are depicted as follows:
\begin{equation}
	\eta = \vcenter{\hbox{\includegraphics[scale=0.35]{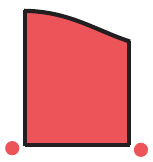} }} \qquad \quad \eta'' = \vcenter{\hbox{\includegraphics[scale=0.35]{graphics/etaprimeprime-pic.pdf} }} 
	\qquad\quad  J = \vcenter{\hbox{\includegraphics[scale=0.35]{graphics/J-obs-pic.pdf} }} \qquad\quad  J' = \vcenter{\hbox{\includegraphics[scale=0.35]{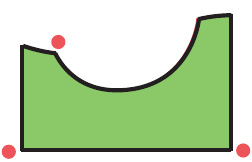} }}\label{eq:minredscaseii}
\end{equation}
Furthermore, the maps $J$ and $J'$ are defined with the same additional sign $\varepsilon_0$ of Remark \ref{rmk:signforjmaps}. 
We next turn to the homotopies $\widetilde K$, $\widetilde K'$, $\widetilde K''$. First we define $\widetilde K$:
\[
\widetilde K =  \left[ \begin{array}{ccc} K & 0 & 0 \\ L & -K & M_2 \\ M_1 & 0 & J \end{array} \right] 
\] 
The terms $K$, $M_1$ are defined just as before. However $M_2$ and $L$ require modified moduli spaces. These are constructed in the same manner as the similarly named maps in Theorem \ref{func-obs}. A minor difference in the setups is that the 1-parameter family of metrics used in that proof is from a fixed (unbroken) metric to a broken metric, while presently our family of metrics interpolates between two broken metrics. We now consider the relation:
\begin{align}
	\widetilde d'' \widetilde K + \widetilde K \widetilde d + \widetilde \lambda' \widetilde \lambda = 0 \label{eq:caseiirel1}
\end{align}
This expands into four relations given as follows:
\begin{align}
	d''K  + K d + \lambda'\lambda + \Delta_2'\eta \delta_1 = 0 \label{eq:caseiirela1} \\
	\delta_1'' K + M_1 d + \Delta_1'\lambda  + J\delta_1  + \nu' \eta \delta_1= 0 \label{eq:caseiirela2} \\
	-d'' M_2 - K\delta_2 + \delta_2'' J  +\lambda'\Delta_2 - \Delta_2' \eta s - \Delta_2' s' \eta + \mu'\delta'_2\eta + v''\Delta_2'\eta + \delta_2''\nu'\eta = 0 \label{eq:caseiirela3}\\
	v'' K - d''L + \delta_2'' M_1 + L d - Kv + M_2\delta_1 +\mu'\lambda + \lambda'\mu + \Delta_2'\Delta_1 = 0 \label{eq:caseiirela4}
\end{align}

\begin{figure}[t]
    \centering
    \centerline{\includegraphics[scale=0.27]{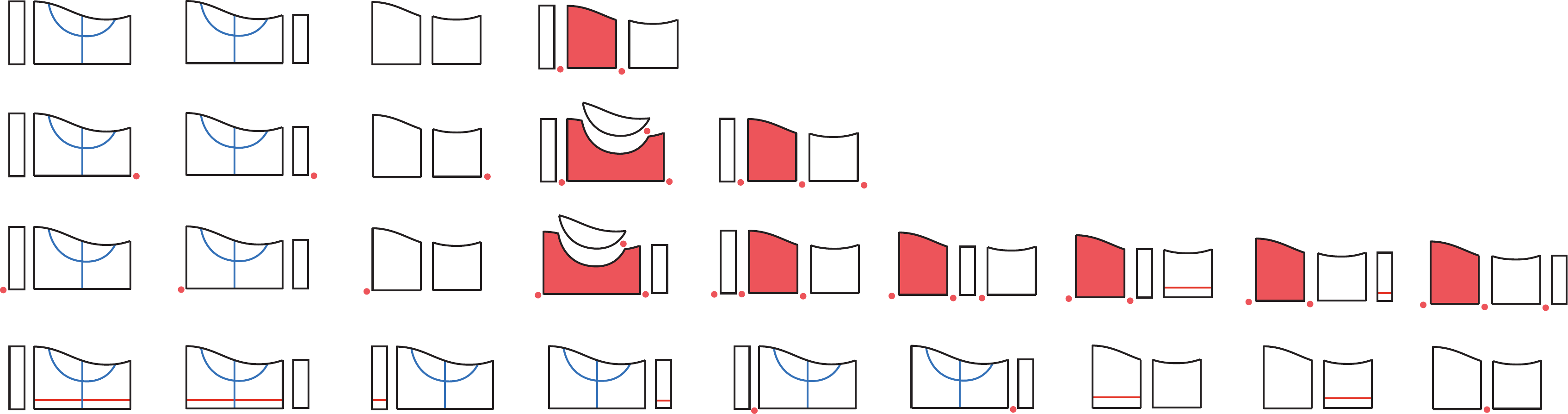}}
    \caption{{\small{Relations \eqref{eq:caseiirela1}--\eqref{eq:caseiirela4}, row by row.}}}
    \label{fig:caseiirelset1}
\end{figure}

\begin{proof}[Proof of relations \eqref{eq:caseiirela1}--\eqref{eq:caseiirela4}]
These are proved in the same way as the relations in Theorem \ref{func-obs}. We remark on the difference in the setups listed mentioned above. By index arguments, there are no contributions from moduli spaces associated to the metric broken along $(S^3,U_1)$ except for the terms $J\delta_1$ and $\delta_2'' J$ in relations \eqref{eq:caseiirela2} and \eqref{eq:caseiirela3}, respectively. These two terms are accounted for in exactly the same way as are the analogous terms in \eqref{eq:caseirel5}--\eqref{eq:caseirel6}. The terms that contribute to these relations are depicted in Figure \ref{fig:caseiirelset1}.
\end{proof}

Next, we define the following map $\widetilde K':\widetilde C_\Sigma'\to \widetilde C$:
    \begingroup
\renewcommand{\arraystretch}{1.25}
\[
\widetilde K' =  \left[ \begin{array}{cc|cc|c} K' & -M_2' & 0 & 0 & 0 \\ \hline L' & 0 & -K' & M_2' & vM_2' +L'\delta_2'  - \mu''\Delta_2' -\Delta_2''\nu' \\ \hline M_1' & 0 & 0 & 0 & J' \end{array} \right] 
\]
\endgroup
The terms $K',L',M_1',M_2'$ are defined in the usual (unobstructed) fashion, without using any modifications. We now consider the verification of the relation:
\begin{align}
\widetilde d \widetilde K' + \widetilde K' \widetilde d_\Sigma' + \widetilde \lambda'' \widetilde \lambda' = 0 \label{eq:caseiirel2}
\end{align}
The matrix entries of the relation \eqref{eq:caseiirel2} are as follows:
\begin{align}
	dK'  + K' d' + \lambda''\lambda' = 0 \label{eq:caseiirelb1} \\
	-d M'_2 - K'\delta'_2 +\lambda''\Delta'_2 = 0 \label{eq:caseiirelb2}\\
	-vM_2' -L'\delta_2'  + \mu''\Delta_2' +\Delta_2''\nu'  +vM_2' +L'\delta_2'  - \mu''\Delta_2' -\Delta_2''\nu'= 0\label{eq:caseiirelb2.5}\\
	\delta_1 K' + M'_1 d' + \Delta_1''\lambda' + \eta''\Delta_1'   = 0 \label{eq:caseiirelb3} \\
		-\delta_1M_2' -M_1'\delta_2' + J' + \Delta_1''\Delta_2' + \eta''\nu' = 0 \label{eq:caseiirelb4} \\
	v K' - dL' + \delta_2 M'_1 + L' d' - K'v' + M'_2\delta'_1 +\mu''\lambda' + \lambda''\mu' + \Delta_2''\Delta'_1 = 0 \label{eq:caseiirelb5}\\
	\delta_2 J' -K'v'\delta_2' +       	\lambda'' \mu' \delta_2' + \lambda'' v'' \Delta_2' \phantom{\lambda''\delta_2''\nu' -dvM_2' -dL'\delta_2' } \nonumber\\ + \lambda''\delta_2''\nu' -dvM_2' -dL'\delta_2'  + d\mu''\Delta_2' +d\Delta_2''\nu' = 0\label{eq:caseiirelb6}
\end{align}

\begin{figure}[t]
    \centering
    \centerline{\includegraphics[scale=0.27]{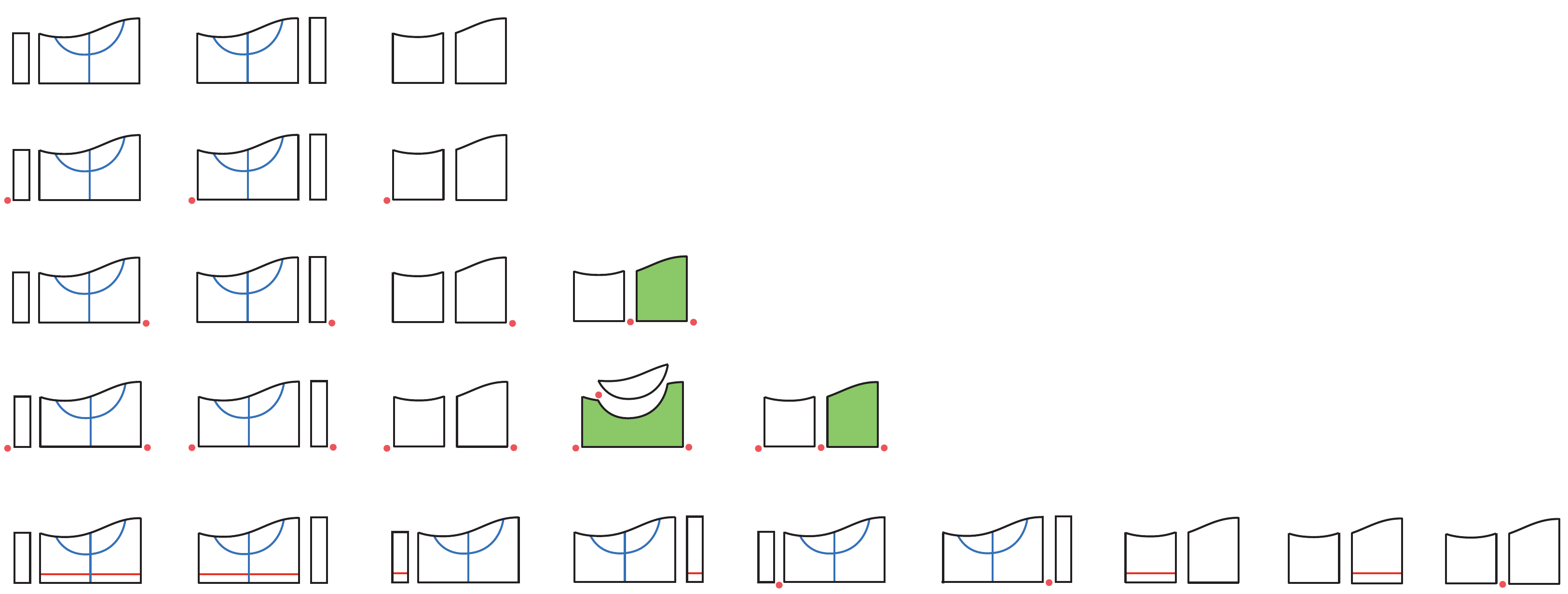}}
    \caption{{\small{Relations \eqref{eq:caseiirelb1}, \eqref{eq:caseiirelb2}, \eqref{eq:caseiirelb3}--\eqref{eq:caseiirelb5}.}}}
    \label{fig:caseiirelset2}
\end{figure}

\begin{proof}[Proof of relations \eqref{eq:caseiirelb1}--\eqref{eq:caseiirelb6}]
Relations \eqref{eq:caseiirelb1}, \eqref{eq:caseiirelb2}, \eqref{eq:caseiirelb3}--\eqref{eq:caseiirelb5} are proved in the usual fashion by looking at the relevant $1$-dimensional moduli spaces; no obstructed gluing theory is needed. The terms that contribute to these relations are depicted in Figure \ref{fig:caseiirelset2}. Relation \eqref{eq:caseiirelb2.5} is true algebraically, as the terms cancel. 

Finally, consider \eqref{eq:caseiirelb6}. We show how this relation follows algebraically from previous ones. First, pre-compose relation \eqref{eq:caseiirelb5} by $\delta_2'$ to obtain
\begin{align}
	v K'\delta_2' - dL'\delta_2' + \delta_2 M'_1\delta_2' + \cancel{L' d'\delta_2'} \phantom{................................} \nonumber \\
	- K'v'\delta_2' + \cancel{M'_2\delta'_1\delta_2'} +\mu''\lambda'\delta_2' + \lambda''\mu'\delta_2' + \Delta_2''\Delta'_1\delta_2' = 0 \label{rel:delta2ppr}
\end{align}
The two cancelled terms follow from $d'\delta_2=0$, $\delta_1'\delta_2'=0$. Now use relation \eqref{eq:caseiirelb2} to write
\[
	vK'\delta_2' = v\lambda'' \Delta_2' -vdM_2'
\]
Now use $dv-vd-\delta_2\delta_1=0$ on $vdM_2'$, and $\mu'' d'' + d\mu'' + \lambda''v''-v\lambda''+\Delta_2''\delta_1''-\delta_2\Delta_1''=0$ on the term $v\lambda'' \Delta_2'$, to obtain the relation
\begin{align*}
vK'\delta_2' &= \mu'' d''\Delta_2' + d\mu''\Delta_2' + \lambda''v''\Delta_2'+\Delta_2''\delta_1''\Delta_2'-\delta_2\Delta_1''\Delta_2' +\delta_2\delta_1M_2' -dvM_2' \\
 &=  -\mu'' \lambda'\delta_2' + d\mu''\Delta_2' + \lambda''v''\Delta_2'-\delta_2\Delta_1''\Delta_2' +\delta_2\delta_1M_2' -dvM_2'
\end{align*}
From the first line to the second we used $d''\Delta_2'+\lambda'\delta_2'=0$ on the term $\mu'' d''\Delta_2' $, and we also used $\delta_1''\Delta_2'=0$, which holds because $\widetilde \lambda'$ is an odd degree morphism, on the term $\Delta_2''\delta_1''\Delta_2'$. For a similar degree reason, the term $\Delta_2''\Delta'_1\delta_2' $ in \eqref{rel:delta2ppr} vanishes.
Substituting this last expression for $vK'\delta_2' $ into \eqref{rel:delta2ppr} we obtain
\begin{align*}
 d\mu''\Delta_2' + \lambda''v''\Delta_2'	 -\delta_2\Delta_1''\Delta_2' +\delta_2\delta_1M_2' -dvM_2' \phantom{...........} \\ - dL'\delta_2' + \delta_2 M'_1\delta_2' 
	- K'v'\delta_2' + \lambda''\mu'\delta_2'  = 0
\end{align*}
Next, use \eqref{eq:caseiirelb4} to replace $-\delta_2\Delta_1''\Delta_2'+\delta_2\delta_1M_2'+\delta_2M_1'\delta_2'$ by $\delta_2J'+\delta_2\eta''\nu'$:
\begin{align*}
 d\mu''\Delta_2' + \lambda''v''\Delta_2' +\delta_2 J' + \delta_2\eta''\nu'	 -dvM_2'  - dL'\delta_2' 
	- K'v'\delta_2' + \lambda''\mu'\delta_2'  = 0
\end{align*}
Finally, using the relation $\lambda''\delta_2'' + d\Delta_2'' -\delta_2\eta''=0$ allows us to replace $\delta_2\eta''\nu'$ with $d\Delta_2''\nu'+\lambda''\delta_2''\nu'$, which gives relation \eqref{eq:caseiirelb6}.
\end{proof}

Next, we define the following map $\widetilde K'': \widetilde C'' \to\widetilde C_\Sigma'$:
    \begingroup
\renewcommand{\arraystretch}{1.25}
\[
\widetilde K'' =  \left[ \begin{array}{c|c|c} K'' & 0 & 0 \\ -\eta\Delta_1'' & 0 & 0 \\ \hline L'' & -K'' & M''_2 \\ M_1'' & \eta\Delta_1'' & 0 \\ \hline 0 & 0 & 0 \end{array} \right] 
\]
\endgroup
Similar to the previous case, the maps $K'',L'',M_1'',M_2''$ are defined just as are the maps in Theorem \ref{func-obs}, the only difference being that here the 1-parameter family of metrics interpolates between two broken metrics. We consider the verification of the relation
\begin{align}
\widetilde d'_\Sigma \widetilde K'' + \widetilde K'' \widetilde d'' + \widetilde \lambda \widetilde \lambda'' = 0 \label{eq:caseiirel3}
\end{align}

\noindent The matrix entries of the relation \eqref{eq:caseiirel3} are as follows:
\begin{align}
	d'K''  + K'' d''+\delta_2'\eta\Delta_1'' + \lambda\lambda'' = 0 \label{eq:caseiirelc1} \\
	-\eta\Delta_1'' d'' + \eta \delta_1\lambda'' = 0 \label{eq:caseiirelc1.5}\\
	\eta\Delta_1'' - \eta\Delta_1'' =0 \label{eq:caseiirelc1.75}\\
	-d' M''_2 - K''\delta''_2 +\lambda\Delta''_2 + \Delta_2 \eta'' = 0 \label{eq:caseiirelc2}\\
	\eta\Delta_1'' v'' + \delta'_1 K'' + M''_1 d'' + \Delta_1\lambda''  +\eta\delta_1\mu'' -s'\eta\Delta_1''  -\eta s \Delta_1''= 0 \label{eq:caseiirelc3} \\
	\eta \delta_1\Delta_2'' -s'\eta\eta'' - \eta s \eta'' + \eta\Delta_1'' \delta_2'' = 0\label{eq:caseiirelc4}\\
	v' K'' - d'L'' + \delta_2' M''_1 + L'' d'' - K''v'' + M''_2\delta''_1 +\mu\lambda'' + \lambda\mu'' + \Delta_2\Delta''_1 = 0 \label{eq:caseiirelc5}\\
	\eta\eta'' = 0 \label{eq:caseiirelc5half}
\end{align}

\begin{figure}[t]
    \centering
    \centerline{\includegraphics[scale=0.27]{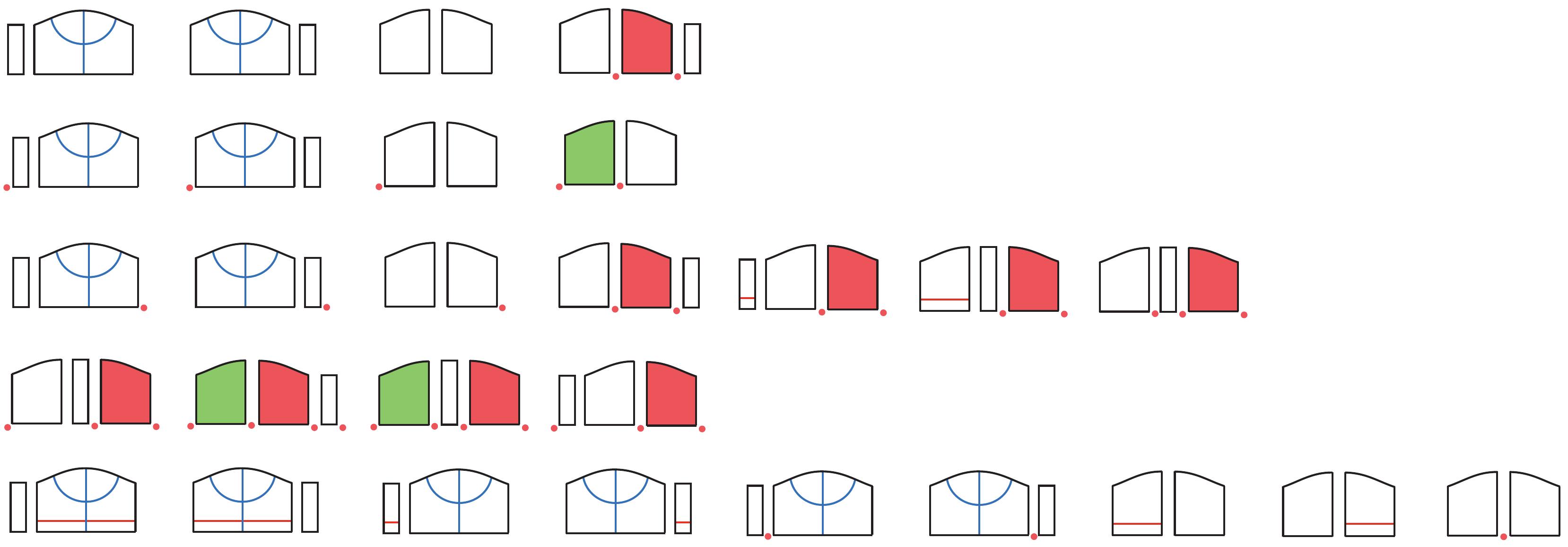}}
    \caption{{\small{Relations \eqref{eq:caseiirelc1}, \eqref{eq:caseiirelc2}--\eqref{eq:caseiirelc5}.}}}
    \label{fig:caseiirelset3}
\end{figure}

\begin{proof}[Proof of relations \eqref{eq:caseiirelc1}--\eqref{eq:caseiirelc5half}]
The relations \eqref{eq:caseiirelc1} and \eqref{eq:caseiirelc2}--\eqref{eq:caseiirelc5} are proved just as are the five relations that appear in Theorem \ref{func-obs}. These are depicted in Figure \ref{fig:caseiirelset3}. We note that there are no contributions from instantons at the metric broken along $(S^3,U''_1)$ by the usual index arguments. Relation \eqref{eq:caseiirelc1.75} is identically true, and \eqref{eq:caseiirelc5half}, which says $\eta\eta''=0$, follows from the definitions of $\eta$ and $\eta''$. Finally, \eqref{eq:caseiirelc1.5} follows by applying $\eta$ to the relation $\Delta_1''d''+\eta''\delta_1''-\delta_1\lambda''=0$ and using $\eta\eta''=0$.
\end{proof}

Finally, we show that each of the following is homotopic to an isomorphism:
\begin{align}
	\widetilde \lambda'' \widetilde K - \widetilde K' \widetilde \lambda \label{eq:caseiirel16} \\
\widetilde \lambda \widetilde K' - \widetilde K'' \widetilde \lambda' \label{eq:caseiirel17} \\
\widetilde \lambda' \widetilde K'' - \widetilde K \widetilde \lambda''  \label{eq:caseiirel18}
\end{align}
As in the situation of Case I, \eqref{eq:caseiirel16} is a morphism $(\widetilde C,\widetilde d)\to (\widetilde C,-\widetilde d)$, and similarly for \eqref{eq:caseiirel17}, \eqref{eq:caseiirel18}; the homotopies to follow should be understood in this context.
We follow the same strategy as in Case I, and show that the irreducible and reducible components of each morphism are respectively homotopic to isomorphisms, and then appeal to Lemma \ref{lemma:morphismuseful}. The reducible components of \eqref{eq:caseiirel16}, \eqref{eq:caseiirel17}, \eqref{eq:caseiirel18} are respectively
\[
	\eta'' J - J'\eta \; \dot = \;  \text{id}_{\sfR}, \qquad  \eta J'  \; \dot = \;  \text{id}_{\sfR'} , \qquad J\eta''  \; \dot = \;  \text{id}_{\sfR''}.
\]
Thus it remains to show that the irreducible components of \eqref{eq:caseiirel16}--\eqref{eq:caseiirel18} are chain homotopic to isomorphisms.

The irreducible component of \eqref{eq:caseiirel16} is given as follows:
\begin{equation}
	\lambda'' K - K' \lambda  + M_2'\delta_1\eta \label{eq:caseiiirrcomp1}
\end{equation}
At this stage, the map $Q$ enters. We remark that the map $Q$, which here is defined the same as in Case I, is well-defined by an argument which is similar to the one given in the proof of Lemma \ref{lemma:caseilaststep}. Next, we have:

\begin{lemma}\label{lemma:qfamily1}
The map $Q$ is a chain homotopy from \eqref{eq:caseiiirrcomp1} to a map $\mathbf{N}$. That is,
\begin{equation}\label{eq:caseiiqrel1}
	dQ - Qd  - \mathbf{N} + \lambda'' K- K' \lambda+ M_2'\eta \delta_1 =0
\end{equation}
Furthermore, $\mathbf{N}:(C,d)\to (C,-d)$ is chain homotopic to an isomorphism.
\end{lemma}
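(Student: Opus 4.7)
The plan is to adapt the two-step argument of Lemma \ref{lemma:caseilaststep} to accommodate obstructed reducibles on the cobordism $S$. Consider the moduli space $M^+ = M^+(I\times Y, V;\alpha,\beta)^{G_V}_1$ for irreducible critical points $\alpha, \beta$. In Case I, this was a compact 1-manifold whose boundary analysis over the five edges of the pentagon $G_V$ established the chain homotopy and defined $\mathbf{N}$ via the edge $G_5$ broken along $(S^3, U_2)$. In Case II, however, on the edge $G_2$ (metrics broken along $(Y, L')$), the composite instantons decompose as instantons on $S$ paired with instantons on $T'$, and here the presence of index $-3$ obstructed reducibles on $(I\times Y, S)$ produces type III obstructed broken solutions of the form $\breve{M}^+(L;\alpha,\widetilde{\theta}_{\fo})_1 \times_{S^1} \overline{\underline{\Theta}} \times_{S^1} M^+(T';\widetilde{\theta}_{\fo'},\beta)^{G_{T'}}_1$. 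Proceeding as in Section \ref{sec:obstructed}, I would define a modified moduli space $N^+$ by excising a neighborhood of these obstructed solutions (via $\Phi_{\alpha,\Theta,\beta}$) and re-gluing the zero set of an interpolating section $\Xi_{\alpha,\Theta,\beta}$ that bridges the obstruction-theoretic gluing section $\Psi_{\alpha,\Theta,\beta}$ with a section built from the modified holonomy maps $H_{\alpha,\theta_{\fo}}$. The space $N^+$ is then a compact stratified-smooth 1-manifold.

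Counting the boundary components of $N^+$ then produces relation \eqref{eq:caseiiqrel1}. The usual ``instanton slides off'' ends contribute $dQ - Qd$, the edge $\partial_3$ (broken along $(Y,L'')$) contributes $\lambda'' K$, and the non-obstructed part of $\partial_2$ contributes $-K'\lambda$. The edges $\partial_1$ and $\partial_4$ (broken along $(S^3, U_1')$ and $(S^3, U_1)$) are vacuous by exactly the same index arguments used in Case I and in Lemma \ref{lemma:welldefined}. The edge $\partial_5$ defines the map $-\mathbf{N}$. The extra term $M_2'\eta\delta_1$ arises from the zeros of the restriction of $\Xi_{\alpha,\Theta,\beta}$ to $\breve{M}^+(L;\alpha,\widetilde{\theta}_{\fo})_1 \times_{S^1} M^+(T';\widetilde{\theta}_{\fo'},\beta)^{G_{T'}}_1 \times S_\Theta \times \{1\}$, following exactly the mechanism that produced $J\delta_1$ in \eqref{eq:caseirel5}; one pairs a count of $\breve{M}^+(L;\alpha,\theta_{\fo})_0$ (giving $\delta_1$), an index $-3$ reducible count on $S$ (giving $\eta$), and a count of $M^+(T';\theta_{\fo'},\beta)^{G_{T'}}_0$ (giving $M_2'$), with an overall sign dictated by the orientation conventions of Subsection \ref{orientation}.

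For the second step, the map $\mathbf{N}$ is defined via the metric family along the edge $G_5$, which is broken along the two-component unlink $(S^3, U_2)$ and decomposes as $(W, F_1) \cup (B^4, F_2)$ with $F_2$ the non-orientable surface of self-intersection $+2$. Since $F_2$ is non-orientable, $(B^4, F_2)$ supports no reducible connections, and the argument of \cite[\S 7.3]{KM:unknot} identifying $\mathbf{N}$ with a chain map chain homotopic to an isomorphism carries through verbatim, \emph{provided} one verifies that obstructed reducibles from $S$ do not contaminate the 0-dimensional moduli spaces used to define $\mathbf{N}$. This verification is an index count: for the relevant critical points on the ends and the fixed broken metric along $(S^3, U_2)$, any broken instanton involving an index $-3$ reducible on $S$ has total index at least $0$, while the moduli spaces defining $\mathbf{N}$ have index $-1$ or lower, as in the proof of Lemma \ref{lemma:caseilaststep}.

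The main obstacle is the careful bookkeeping of the obstructed gluing data over the pentagon family $G_V$: one must construct $\Psi_{\alpha,\Theta,\beta}$ and $\Xi_{\alpha,\Theta,\beta}$ coherently near the corner of $G_V$ where the edge $G_2$ meets adjacent edges, and verify compatibility with the obstructed-gluing modifications already used in defining $K$, $M_2$, $L$ and their primed analogues (the data invoked in Propositions \ref{type-1-obs-gluing} and \ref{type-3-obs-gluing}, adapted to a 2-parameter family as in Subsection \ref{subsec:homotopies}). Once this coherent obstructed gluing data over $G_V$ is in place, the boundary count yielding \eqref{eq:caseiiqrel1} and the index exclusion in the second step are routine, and the conclusion that $\mathbf{N}$ is chain homotopic to an isomorphism reduces to the argument of Kronheimer and Mrowka.
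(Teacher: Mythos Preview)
Your proposal is correct and follows essentially the same approach as the paper. The paper's proof is terse, but it too analyzes $M^+(V;\alpha,\beta)^{G_V}_1$, identifies the type III obstructed solutions along the edge $G_2$ (broken along $(Y,L')$) as elements of $\breve{M}^+(L;\alpha,\widetilde\theta_\fo)\times_{S^1}\overline{\underline{\Theta}}\times_{S^1} M^+(T';\widetilde\theta_{\fo'},\beta)^{G_{T'}}$, and extracts the extra term $M_2'\eta\delta_1$ from the obstructed gluing modification in the same way that $\delta_2'\tau_{-1}\delta_1$ arises in Proposition~\ref{bdry-rel-lambda}; the second step is deferred verbatim to Kronheimer--Mrowka.

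One small point of comparison: where you frame the ``main obstacle'' as constructing coherent $2$-parameter obstructed gluing data near the corners of $G_V$, the paper sidesteps this by observing that a collar neighborhood of the edge $G_2$ inside the pentagon is itself a product $\overline{\R}_+\times G_{T'}$, so that the transverse metric parameter can play the role of the second $\overline{\R}_+$ factor in Proposition~\ref{type-3-obs-gluing} directly. This reinterpretation means no genuinely new gluing package is needed beyond what was already set up in Section~\ref{sec:obstructed}.
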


\begin{proof}
As in the proof of Lemma \ref{lemma:caseirels1}, this is a variation on the argument of Proposition \ref{bdry-rel-lambda}, where the term $M_2'\eta \delta_1$ appears in an analogous way to the term $\delta_2'\tau_{-1}\delta_1$. The relation comes from analyzing the ends of moduli spaces of the form $M^+(V;\alpha,\beta)^{G_V}_1$. The terms that involve no obstructed gluing theory account for all the terms in the relation apart from $M_2'\eta \delta_1$. Note that the term $\mathbf{N}$ is handled exactly as in \cite[\S 7.3]{KM:unknot}.

The gluing data relevant to Proposition \ref{type-3-obs-gluing} here adapts to give a description, in the moduli space $M^+(V;\alpha,\beta)^{G_V}_1$, of a neighborhood of obstructed reducibles $([A],[B],[C])$ where $B$ is an obstructed reducible on $S$ and $([A],[C])$ is in
\[
	\breve{M}^+(L;\alpha,\widetilde \theta_{\fo}) \times_{S^1}	M^+(T';\widetilde \theta_{\fo'},\beta)^{G_{T'}}.
\]
The first factor in $\overline \R_{+}\times \overline \R_{+}$ from Proposition \ref{type-3-obs-gluing} plays a similar role as before, while the second $\overline \R_{+}$ factor is part of the metric family. Specfically, a neighborhood of the interval $G_{T'}$ in the pentagon $G_V$ may be described as $\overline \R_{+}\times G_{T'}$ where $\{\infty\}\times G_{T'}$ corresponds to $G_{T'}$, and this is the relevant factor of $\overline \R_{+}$. With these modifications, the accounting of the term  $M_2'\eta \delta_1$ is entirely analogous to the appearance of $\delta_2'\tau_{-1}\delta_1$ in Proposition \ref{bdry-rel-lambda}.
\end{proof}

The irreducible component of \eqref{eq:caseiirel17} is a chain map $(C'_\Sigma, d'_\Sigma)\to (C'_\Sigma,-d'_\Sigma)$, which with respect to the decomposition $C'_\Sigma  = C'[-2]\oplus \sfR'[-1]$ is:
\begin{equation} \label{eq:caseiiirrcompmatrix2}
		  \left[ \begin{array}{cc}  \lambda K' - K'' \lambda'  & -\lambda M_2' - K''\Delta_2' \\[2mm] \eta\delta_1 K' + \eta\Delta_1''\lambda' & -\eta \delta_1 M_2' + \eta\Delta_1'' \Delta_2'\end{array} \right]
\end{equation}

\begin{lemma}
The map \eqref{eq:caseiiirrcompmatrix2} is chain homotopic to an isomorphism.
\end{lemma}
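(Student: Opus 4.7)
The plan is to mimic the strategy of Lemma \ref{lemma:qfamily1}, constructing a chain homotopy from the map \eqref{eq:caseiiirrcompmatrix2} to a second map $\widetilde{\mathbf{N}}'$ and then showing that $\widetilde{\mathbf{N}}'$ is chain homotopic to an isomorphism in the style of \cite[\S 7.3]{KM:unknot}. The chain homotopy will be a block $2\times 2$ map $\widetilde Q':C'_\Sigma\to C'_\Sigma[1]$ assembled from counts of singular instantons on $(I\times Y,V')$ parametrized by the pentagon family $G_{V'}$, where $V'=S\circ S''\circ S':L'\to L'$ is the triple composite whose five broken hypersurfaces correspond to the five edges of the pentagon. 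The upper-left entry is the Case I-style map $Q'$ (counting isolated irreducible instantons over $G_{V'}$), while the remaining three entries allow one reducible limit (on the source, the target, or both) and are obtained after applying the obstructed gluing theory of Section \ref{sec:obstructed} to the edge of $G_{V'}$ at which the metric degenerates along $(Y,L)$, since the subcobordism $S$ which splits off there is exactly a height $-1$ obstructed cobordism.

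Next, I would extract from the boundaries of the corresponding modified $1$-dimensional moduli spaces the relation
\[
d'_\Sigma\,\widetilde Q' - \widetilde Q'\,d'_\Sigma \;+\; \bigl(\text{matrix \eqref{eq:caseiiirrcompmatrix2}}\bigr) \;-\; \widetilde{\mathbf{N}}' \;=\; 0,
\]
where $\widetilde{\mathbf{N}}'$ is defined by counts on the pentagon edge at which the metric is broken along $(S^3,U_2')$, with $U_2'$ a $2$-component unlink. In this identity, the off-diagonal and lower-right entries of \eqref{eq:caseiiirrcompmatrix2} (namely $-\lambda M_2'-K''\Delta_2'$, $\eta\delta_1 K'+\eta\Delta_1''\lambda'$, and $-\eta\delta_1 M_2'+\eta\Delta_1''\Delta_2'$) appear as direct analogues of the single term $M_2'\eta\delta_1$ in Lemma \ref{lemma:qfamily1}: each is an obstructed contribution coming from type I, II, or III obstructed broken solutions at the degeneration of $G_{V'}$ along $(Y,L)$, obtained by inspecting the sections $\psi'$, $\Xi$ in exactly the manner of Propositions \ref{bdry-rel-lambda}, \ref{Delta-1}, and \ref{Delta-2}, with the two factors of $\overline{\R}_+$ in Proposition \ref{type-3-obs-gluing} now playing the role of (i) the length of the broken neck near $(Y,L)$ and (ii) a coordinate on a collar neighborhood of the $(Y,L)$ edge inside the pentagon $G_{V'}$.

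Finally, to show that $\widetilde{\mathbf{N}}'$ is chain homotopic to an isomorphism I would run the argument of \cite[\S 7.3]{KM:unknot} unchanged. The key point here is that no obstructed reducibles intervene at the $(S^3,U_2')$ stratum: an index argument as in Lemma \ref{lemma:welldefined} (using that $U_2'$ is an unlink, so that the adjoint moduli space over $(B^4,F_2')$ is the ``standard'' one appearing in \cite{KM:unknot}) rules out any broken solution whose middle piece is an obstructed reducible on $S$, so $\widetilde{\mathbf{N}}'$ is defined by unobstructed counts and Kronheimer--Mrowka's computation applies verbatim on each of the four matrix entries. The main obstacle will be in the middle step: the careful bookkeeping of signs and of which type I/II/III contributions match which entries of \eqref{eq:caseiiirrcompmatrix2}, guided by orientation conventions of Subsection \ref{orientation} and the compatibility properties of $\Xi_{\alpha,\Theta,\alpha'}$ along codimension-one strata.
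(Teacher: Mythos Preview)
Your route is plausible but substantially heavier than the paper's, and the final step is underspecified. The paper's argument is almost entirely algebraic: it sets
\[
Q'_\Sigma = \begin{bmatrix} Q' & 0 \\ \eta M_1' & 0 \end{bmatrix},
\]
built only from the already-defined pentagon map $Q'$ (irreducible limits) and the \emph{one-parameter} map $M_1'$ on $T'$; no new pentagon moduli with reducible limits are constructed. Computing $d'_\Sigma Q'_\Sigma - Q'_\Sigma d'_\Sigma$ and adding it to \eqref{eq:caseiiirrcompmatrix2}, the upper-left entry becomes $\mathbf{N}'$ by \eqref{eq:caseiiqrel2}; the lower-left entry vanishes by applying $\eta$ to \eqref{eq:caseiirelb3} and using $\eta\eta''=0$; and the lower-right entry equals $-\eta J'\,\dot=\,-\mathrm{id}_{\mathsf R'}$ by \eqref{eq:caseiirelb4}. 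One further homotopy by $\bigl[\begin{smallmatrix}\psi&0\\0&0\end{smallmatrix}\bigr]$, with $\psi$ the chain homotopy witnessing $\mathbf N'\simeq\text{iso}$, leaves an upper-triangular matrix with isomorphisms on the diagonal, hence an isomorphism.

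By contrast, your approach would define three \emph{new} pentagon-family maps with reducible limits (each requiring obstructed-gluing corrections along the $(Y,L)$ edge), yielding a $\widetilde{\mathbf N}'$ all four of whose entries must then be analyzed. The claim that the Kronheimer--Mrowka replacement argument ``applies verbatim on each of the four matrix entries'' is where the sketch is thinnest: their argument is for irreducible limits, and you would still have to verify that after the $(B^4,F_2')\leadsto(B^4,\Delta')$ replacement the induced maps on the $\mathsf R'$-summand and on the off-diagonal entries behave as required. This is likely tractable but is genuinely additional work; the paper avoids it entirely by recycling the already-established relations \eqref{eq:caseiiqrel2}, \eqref{eq:caseiirelb3}, \eqref{eq:caseiirelb4}.
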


\begin{proof}
First, let $Q'_\Sigma:C_\Sigma'\to C_\Sigma'$ be the map defined by
	\[
		Q'_\Sigma = \left[ \begin{array}{cc} Q' & 0 \\[2mm] \eta M_1' & 0 \end{array} \right].
	\]
The expression $d_\Sigma' Q_\Sigma' - Q_\Sigma'd_\Sigma'$ added to \eqref{eq:caseiiirrcompmatrix2} is given by
\begin{equation}
	\left[ \begin{array}{cc} d'Q'  - Q' d'+\delta_2'\eta M_1' + \lambda K' - K'' \lambda'& Q'\delta_2' - \lambda M_2' - K'' \Delta_2' \\[2mm]
	 \eta M_1' d' + \eta\delta_1 K' +\eta \Delta_1'' \lambda' & -\eta M_1' \delta_2' -\eta \delta_1 M_2' +\eta \Delta_1'' \Delta_2' \end{array} \right] \label{eq:matrixqstep2}
\end{equation}
and we now analyze these terms. Analogous to \eqref{eq:caseiiqrel1}, we have
\begin{equation}
	d'Q' - Q'd'  - \mathbf{N}' +\lambda K' -  K'' \lambda' + \delta_2'\eta M_1' =0 \label{eq:caseiiqrel2}
\end{equation}
where $\mathbf{N}'$ is chain homotopic to an isomorphism. The bottom left entry of \eqref{eq:matrixqstep2} is zero, as follows from applying $\eta$ to the relation $\delta_1 K' + M_1' d' + \Delta_1'' \lambda' + \eta'' \Delta_1' =0$, and using $\eta\eta''=0$. By relation \eqref{eq:caseiirelb4}, the bottom right entry is equal to $-\eta J'  \; \dot = \;  -\text{id}_{\mathsf{R}'}$. Let $\psi$ be a chain homotopy from $\mathbf{N}'$ to an isomorphism. With the observations thus far, 
\[
\left[ \begin{array}{cc} \psi & 0\\ 0 & 0 \end{array} \right]
\]
provides a chain homotopy from  \eqref{eq:matrixqstep2} to a matrix with isomorphisms on the diagonal. 
\end{proof}

\begin{figure}[t]
    \centering
    \centerline{\includegraphics[scale=0.37]{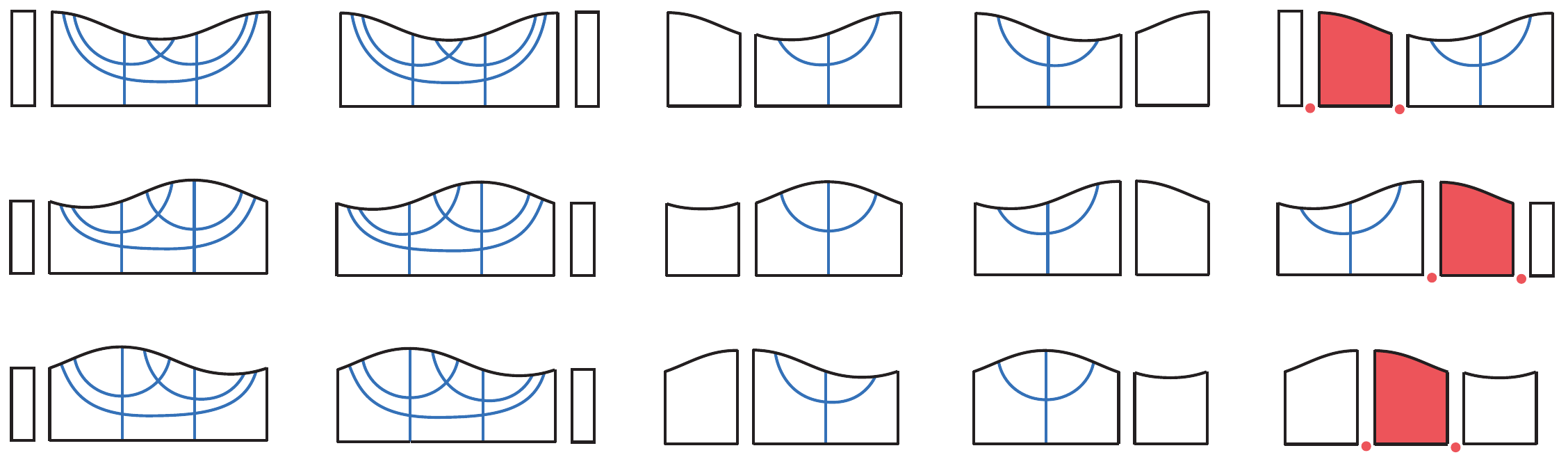}}
    \caption{{\small{Relations \eqref{eq:caseiiqrel1}, \eqref{eq:caseiiqrel2}, \eqref{eq:caseiiqrel3} without the $\mathbf{N}$, $\mathbf{N}'$, $\mathbf{N}''$ terms.}}}
    \label{fig:caseiirelset4}
\end{figure}

Finally, consider the irreducible component of the morphism \eqref{eq:caseiirel18}:
\begin{equation}
	\lambda' K'' - \Delta_2'\eta\Delta_1'' - K\lambda'' : (C'',d'') \to (C'',-d'')\label{eq:caseiiirrcomp3}
\end{equation}
The proof of the following is a minor variation of the proofs for \eqref{eq:caseiiqrel1} and \eqref{eq:caseiiqrel2}.

\begin{lemma}
The map $Q''$ is a chain homotopy from \eqref{eq:caseiiirrcomp3} to a map $\mathbf{N}''$. That is,
\begin{equation}
	d''Q'' - Q''d''  - \mathbf{N}'' +\lambda' K'' - \Delta_2'\eta\Delta_1'' - K\lambda''  =0 \label{eq:caseiiqrel3}
\end{equation}
Furthermore, $\mathbf{N}'': (C'',d'') \to (C'',-d'')$ is chain homotopic to an isomorphism.
\end{lemma}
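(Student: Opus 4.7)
The plan is to follow the template of Lemma \ref{lemma:qfamily1}, establishing \eqref{eq:caseiiqrel3} by analyzing the ends and boundary strata of the $1$-dimensional parametrized moduli space $M^+(V''; \alpha'', \beta'')^{G_{V''}}_1$ for irreducible $\alpha'', \beta'' \in \fC^{\text{irr}}_{\pi''}$, where $G_{V''}$ is the pentagon metric family on $V'' = S'\circ S\circ S''$.

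First I would identify the five pentagon-edge contributions. Two of the edges, broken along $(Y,L')$ and $(Y,L)$, produce the factorizations
\[
M^+(T''; \alpha'', \alpha)^{G_{T''}}_0 \times M^+(S'; \alpha, \beta'')_0 \quad\text{and}\quad M^+(S''; \alpha'', \alpha)_0 \times M^+(T; \alpha, \beta'')^{G_T}_0,
\]
which contribute $\lambda' K''$ and $-K\lambda''$ respectively. Two further edges, broken along unknot hypersurfaces analogous to $(S^3, U_1)$ in the decomposition of $V$, produce no contributions by the same index argument used for relation \eqref{eq:caseirel4}. The final edge, broken along a two-component unlink analogous to $(S^3, U_2)$, defines $-\mathbf{N}''$ by convention. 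Configurations sliding off at the incoming and outgoing $L''$ ends yield the terms $d''Q''$ and $-Q''d''$.

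The main obstacle is producing the obstructed term $-\Delta_2'\eta\Delta_1''$. In contrast to Lemma \ref{lemma:qfamily1}, where the obstructed reducible on $S$ sits on an edge of the pentagon and is glued to a cylindrical-end trajectory on one side and to a family trajectory on $T'$ on the other, here the relevant broken instantons live at the \emph{corner vertex} of $G_{V''}$ where the edges broken along $(Y,L)$ and $(Y,L')$ meet. At this vertex the metric is triply broken into $g_{S'} \sqcup g_S \sqcup g_{S''}$, and the obstructed configurations take the form $([A],[\Theta],[C])$ with $\Theta$ an obstructed reducible on $S$ of index $-3$ and
\[
([A],[C]) \in M^+(S''; \alpha'', \widetilde\theta_{\fo})_1 \times_{S^1} \overline{\underline\Theta} \times_{S^1} M^+(S'; \widetilde\theta_{\fo'}, \beta'')_1.
\]
I would adapt Proposition \ref{type-3-obs-gluing} to this corner geometry: the two gluing-neck parameters occupy a $2$-dimensional corner neighborhood in $G_{V''}$ modelled on $\overline \R_{+} \times \overline \R_{+}$, and the obstruction section $\Psi_{\alpha'',\Theta,\beta''}$ of $\cH_{\alpha'',\beta''} \times \overline \R_{+} \times \overline \R_{+}$ lives over the fibered product above. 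Following the template of Proposition \ref{Delta-1}, I would remove an open neighborhood of the obstructed locus parametrized by $U_\Theta$, and graft in the zero set of an interpolating section $\Xi_{\alpha'',\Theta,\beta''}$ over $S_\Theta\times[0,1]$ which agrees on one face with $\psi_{\alpha'',\Theta,\beta''}$ and on the other with a section $\psi'_{\alpha'',\Theta,\beta''}$ built from modified holonomy maps. The boundary component arising from $(\psi'_{\alpha'',\Theta,\beta''})^{-1}(0)$ restricted to $S_-\times_{S^1}S_+$ evaluates to one signed point per obstructed $\Theta$, yielding exactly the contribution $-\Delta_2'\eta\Delta_1''$.

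For the final assertion that $\mathbf{N}''$ is chain homotopic to an isomorphism, the argument is identical to the second step of the proof of Lemma \ref{lemma:caseilaststep}. The decomposition along the hypersurface analogous to $(S^3, U_2)$ cuts $V''$ as $(W'', F_1'')\cup (B^4, F_2'')$ with $F_2''$ non-orientable, so no reducibles appear along this edge, and the argument of \cite[\S 7.3]{KM:unknot} applies verbatim.
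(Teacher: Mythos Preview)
Your proposal is correct and takes essentially the same approach as the paper, which simply remarks that the proof is a ``minor variation'' of those for \eqref{eq:caseiiqrel1} and \eqref{eq:caseiiqrel2}. You have correctly identified the one genuine geometric difference: for $V''$ the obstructed broken configurations sit at the corner vertex of $G_{V''}$ where both link hypersurfaces $(Y,L)$ and $(Y,L')$ are cut, so that both $\overline\R_+$ gluing parameters come from the pentagon metric family itself, rather than one being a cylinder sliding-off parameter as for $V$ and $V'$. Two small points: in your first displayed fiber product the intermediate critical point should be $\alpha'\in\fC^{\text{irr}}_{\pi'}$ rather than $\alpha$, and when you invoke the index argument of \eqref{eq:caseirel4} for the unknot edges you should also note (as in Lemma \ref{lemma:caseilaststep}) that the $J$-type obstructed reducibles on the complement of $(B^4,F_0)$ in $T$ are ruled out by an index count of the same kind.
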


Note that the only geometric input used that involves the 2-dimensional pentagon metric families are the three relations \eqref{eq:caseiiqrel1}, \eqref{eq:caseiiqrel2}, \eqref{eq:caseiiqrel3}. These are depicted in Figure \ref{fig:caseiirelset4}.

Thus \eqref{eq:caseiirel16}--\eqref{eq:caseiirel18} are all $\cS$-chain homotopic to isomorphisms. This completes the proof that the data \eqref{eq:exacttrianglecaseiiproof} consitutes an exact triangle of $\cS$-complexes, which is the claim of Case II in Theorem \ref{thm:exacttriangles}, and also concludes the proof of Theorem \ref{thm:exacttriangles}.

\newpage

%!TEX root = main.tex

\section{Non-trivial bundles}\label{sec:nontrivbundles}

In this section we prove Theorem \ref{thm:exacttriangles-withbundles}, the two exact triangles involving a non-trivial bundle. This is used to prove Theorem \ref{thm:eulerchar-withbundles}, which computes the Euler characteristic of Kronheimer and Mrowka's instanton homology $I^\omega(Y,L)$ for admissible links $(Y,L,\omega)$ where $Y$ is an integer homology 3-sphere and $L$ is a null-homotopic link. As a preliminary to this latter result, in the first subsection we define absolute $\Z/2$-gradings on the groups $I^\omega(Y,L)$.

\subsection{Absolute gradings for admissible links}\label{subsec:absz2gr}

Let $(Y,L,\omega)$ be an admissible link, where $Y$ is an integer homology 3-sphere. In particular, $\omega$ has an odd number of boundary points on some component of $L$. Note that there is no assumption on the determinant of the link. Fix a ground ring $R$, and choose metric and perturbation data for $(Y,L,\omega)$ similar to that of Subsection \ref{subsec:linkinv}. Recall that $C^\omega(Y,L)$ is freely generated over $R$ by $\fC^\omega_\pi$, the critical set of the $\pi$-perturbed Chern--Simons functional. This consists entirely of irreducibles by the admissibility condition. The differential $d$ counts isolated non-constant instantons as usual, and the homology is denoted
\[
	I^\omega(Y,L) = H( C^\omega(Y,L), d)
\]
This homology group is constructed by Kronheimer and Mrowka \cite{KM:YAFT,KM:unknot}. The $\cS$-complex $\widetilde C^\omega(Y,L)$, which depends on a choice of basepoint on $L$, is the mapping cone complex for the corresponding $v$-map. Any grading defined on $C^\omega(Y,L)$ determines one on $\widetilde C^\omega(Y,L)$ via \eqref{eq:mappingconeadmissible}. Thus we restrict our attention to $C^\omega(Y,L)$.

We follow the conventions of Section \ref{sec:links}, where a singular connection $A$ on a pair $(W,S)$ with boundary $(Y,L)$ implicitly involves having already attached cylindrical ends to the boundary. We may consider $\text{ind}(A)$, the index of the linearized (perturbed) ASD operator associated to $A$, defined using weighted Sobolev spaces which force exponential decay along the cylindrical ends. For such a connection on $\R\times Y$ with limits $\alpha$ at $-\infty$ and $\beta$ at $+\infty$, recall that $\text{ind}(A)=\text{gr}_z(\alpha,\beta)\in \Z$ where $z$ is the relative homotopy class of $A$, viewed as a path from $\alpha$ to $\beta$ in the configuration space of connections mod gauge. Setting 
\begin{equation}\label{eq:relz4grnontriv}
	\text{gr}(\alpha,\beta):=\text{gr}_z(\alpha,\beta)\pmod{4}
\end{equation}
defines a relative $\Z/4$-grading on $C^\omega(Y,L)$. We proceed to explain how auxiliary topological data can be used to fix an absolute $\Z/2$-grading, and also absolute $\Z/4$-grading, each compatible with \eqref{eq:relz4grnontriv}. In the sequel, only the absolute $\Z/2$-grading will be relevant.

\subsubsection{Absolute $\Z/2$-grading}

To fix an absolute $\Z/2$-grading on $C^\omega(Y,L)$, we draw inspiration from the constructions in \cite[\S 5.6]{donaldson-book} and \cite{froyshov}. Fix a critical point $\alpha \in \fC_\pi^\omega$. Choose a quasi-orientation $\fo=\{o,-o\}$ of $L$. Consider a pair $(W,S)$ with boundary $(Y,L)$. We assume the conditions $H_1(W;\Z/2)=0$ and $[S]=0\in H_2(W;\Z/2)$, which can always be arranged. Let $c\subset W$ be a surface representing singular bundle data for $(W,S)$ such that $c\cap Y=\omega$. In this situation the boundary of $c$ can be written as
\[
	\partial c = \omega \cup \partial_h c
\]
where  $\partial_h c$ is the ``horizontal'' boundary of $c$ and $\omega= \partial c \cap Y $ is the ``vertical'' boundary. Let $A$ be any singular connection on $(W,S,c)$ with limit $\alpha$ along its cylindrical end. Now let $B$ be a singular connection on $(W,S)$ with trivial singular bundle data, and having limit a reducible flat connection. Given these choices, define $\text{gr}_\fo:\fC^\omega_\pi\to \Z/2$ by:
\begin{equation}\label{eq:grmod2notrivbundle}
	\text{gr}_\fo (\alpha ) := \text{ind}(A) - \text{ind}(B)  + \partial_h c \cdot_\fo \partial_h c  \pmod{2} 
\end{equation}
The term $ \partial_h c \cdot_\fo \partial_h c \pmod{2}$ is defined as follows. Let $\gamma\subset S$ be a curve transverse and isotopic to $\partial_h c$ which at $L=\partial S$ is slightly pushed off from $\omega\cap L$ in the direction determined by the orientation $o$ of $L$. Then $ \partial_h c \cdot_\fo \partial_h c \equiv \#(\gamma \cap \partial_h c)\pmod{2}$. See Figure \ref{fig:hboundary}. As the notation suggests, this number only depends on the quasi-orientation $\fo$.

\begin{figure}[t]
    \centering
    \centerline{\includegraphics[scale=1.00]{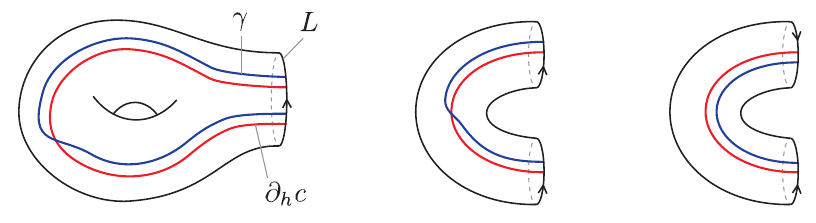}}
    \caption{{\small{The number $\partial_h c\cdot_\fo \partial_hc$ is odd in the first two illustrations, and even for the right illustration. The second and third illustrations show how $\partial_h c\cdot_\fo \partial_hc$ depends on the quasi-orientation of $L$.}}}
    \label{fig:hboundary}
\end{figure}

\begin{prop}\label{prop:mod2grnontriv}
	The quantity $\text{{\emph{gr}}}_\fo (\alpha ) \in \Z/2$ defined by \eqref{eq:grmod2notrivbundle} is independent of the choices of $(W,S,c)$ and the singular connections $A$ and $B$.
\end{prop}

\begin{proof}
Let $(W',S',c')$ be data similar to $(W,S,c)$, except that its boundary is the orientation-reversal of $(Y,L,\omega)$. Let $A'$ be a singular connection on $(W',S',c')$ with limit $\alpha$, and $B'$ a singular connection on $(W',S')$ with trivial singular bundle data with reducible limit $\theta_\fo$. Denote by $(\overline{W},\overline{S},\overline{c})$ the result of gluing $(W,S,c)$ and $(W',S',c')$ along $(Y,L,\omega)$, and write $\overline{A}$ and $\overline{B}$ for the gluings of $A$ to $A'$ and $B$ to $B'$, respectively. Then
\begin{align*}
	\text{gr}_\fo(\alpha) & + \text{ind}(A') - \text{ind}(B') + \partial_h c' \cdot_\fo \partial_h c' \\
	& = \text{ind}(A) - \text{ind}(B) + \partial_h c \cdot_\fo \partial_h c  + \text{ind}(A') -  \text{ind}(B') + \partial_h c' \cdot_\fo \partial_h c' \\
	& = \text{ind}(\overline{A}) - \text{ind}(\overline{B}) + \partial \overline{c} \cdot \partial \overline{c} - h^0(\theta_\fo) - h^1(\theta_\fo) - h^0(\alpha) - h^1(\alpha)\\
	& = \text{ind}(\overline{A}) - \text{ind}(\overline{B}) + \partial \overline{c} \cdot \partial \overline{c} -2\eta(Y,L)- 1 
\end{align*}
We have used index additivity, additivity of the intersection pairing terms, and the relations $h^0(\alpha)=h^1(\alpha)=0$, $h^0(\theta_\fo)=1$, $h^1(\theta_\fo)=2\eta(Y,L)$. Here $\partial \overline{c} \cdot \partial \overline{c}$ is shorthand for the intersection pairing on $\overline{S}$. By \eqref{eq:indclosedcase} and Lemma \ref{lemma:4kappamodz} we have:
\begin{equation}\label{eq:stepgrinv}
	\text{ind}(\overline{A}) - \text{ind}(\overline{B}) \equiv 8\kappa(\overline{A}) - 8\kappa(\overline{B}) \equiv  -\partial \overline{c} \cdot \partial \overline{c} \pmod{2}
\end{equation}
Altogether, we obtain the relation
\[
	\text{gr}_\fo(\alpha) \equiv -1-\text{ind}(A') + \text{ind}(B') - \partial_h c' \cdot_\fo \partial_h c' \pmod{2}
\]
The right side is independent of the choices $(W,S,c)$, $A$, $B$, thus the result is proved.
\end{proof}

	For quasi-orientations $\fo=\{o,-o\}$ and $\fo'=\{o',-o'\}$ of $L=\cup_{i=1}^n L_i$, and a 1-manifold $\omega$ representing singular bundle data, we define
	\begin{equation}
		\fo \cdot_\omega \fo' \equiv \sum_{k=1}^{n} \#(\partial \omega\cap L_k)\cdot (o\cdot o')_k \pmod{2}\label{eq:ooprimeomega}
	\end{equation}
where $(o\cdot o')_k$ is even if $o$ and $o'$ agree on the component $L_k\subset L$, and is odd otherwise. It is straightforward to verify that this definition depends on the given quasi-orientations.

\begin{prop}\label{prop:grchangenontriv}
	For quasi-orientations $\fo$ and $\fo'$ of $L\subset Y$, we have
	\begin{equation*}
		\text{{\emph{gr}}}_\fo - \text{{\emph{gr}}}_{\fo'} \equiv \fo \cdot_\omega \fo' \pmod{2}
	\end{equation*}
\end{prop}

\begin{proof}
	Let $\alpha\in \fC^\omega_\pi$ and choose $(W,S,c)$, $A$, $B$ as in the definition of $\text{gr}_\fo(\alpha)$. Let $B'$ be a connection on $(W,S)$ with trivial singular bundle data and reducible limit $\theta_{\fo'}$. Then
	\begin{align*}
	\text{gr}_\fo(\alpha) - \text{gr}_{\fo'}(\alpha) & \equiv \text{ind}(A) - \text{ind}(B) + \partial_h c \cdot_\fo \partial_h c  -\text{ind}(A) + \text{ind}(B') - \partial_h c \cdot_{\fo'} \partial_h c \\
							& \equiv  \partial_h c \cdot_\fo \partial_h c -  \partial_h c \cdot_{\fo'} \partial_h c \pmod{2}
\end{align*}
where we have used $\ind(B)\equiv \ind(B') \pmod{2}$, which follows from Corollary \ref{cor:indexred}.  To finish the computation, consider the case in which $o$ and $o'$ agree on $L_1$ and disagree on $L_2$. Assume further that $\omega$ contains exactly one arc, and this arc connects $L_1$ and $L_2$. Then, as illustrated in Figure \ref{fig:hboundary} (with some inessential topological simplifications), we have 
\[
	 \partial_h c \cdot_\fo \partial_h c -  \partial_h c \cdot_{\fo'} \partial_h c  \equiv 1 \pmod{2}
\]
The general case follows from this case and additivity of terms involved.
\end{proof}

The notation $\text{gr}_\fo$ defines a $\Z/2$-grading on $C^\omega(Y,L)$ and should be distinguished from our notation $\text{gr}[\fo]$ in Section \ref{sec:links} which defines, in the case $\det(Y,L)\neq 0$, a $\Z/4$-grading on $C(Y,L)$, which is an entirely different complex. However, $\text{gr}_\fo$ can be defined in this latter case as well, and agrees with the mod 2 reduction of $\text{gr}[\fo]$, for any $\fo\in \cQ(Y,L)$:

\begin{prop}
	Let $(Y,L)$ be a link in an integer homology 3-sphere with $\det \neq 0$, and $\fo \in \cQ(Y,L)$. Then the mod 2 grading $\rm{gr}_\fo:\fC_\pi\to \Z/2$ defined using the recipe \eqref{eq:grmod2notrivbundle} with $\omega=\emptyset$ agrees with the absolute mod $2$ grading defined in Subsection \ref{subsec:gradings}.
\end{prop}

\begin{proof}
Fix $\alpha\in \fC_\pi$. Let $A_0$ be a singular conection on $\R\times (Y,L)$ with limits $\alpha$ at $-\infty$ and flat reducible $\theta_\fo$ at $+\infty$. Let $(W,S)$ be a pair with boundary $(Y,L)$. Let $B$ be a connection on $(W,S)$ with trivial singular bundle data, and limit $\theta_\fo$. Then glue $A_0$ and $B$ to obtain a connection on $(W,S)$ with limit $\alpha$. We compute
\begin{align*}
	\text{gr}[\fo](\alpha) & \equiv \text{ind}(A_0 )  + h^0(\alpha) \\
				 & \equiv (\text{ind}(A) - \text{ind}(B) - h^0(\alpha))  + h^0(\alpha)\\
				  & \equiv \text{ind}(A )  -\text{ind}(B) \equiv \text{gr}_\fo(\alpha) \pmod{2}. \qedhere
\end{align*}
\end{proof}

Let $(W,S,c):(Y,L,\omega)\to (Y',L',\omega')$ be a cobordism between links. Assume that either both links are admissible, or one is admissible and the other has determinant non-zero. Furthermore, choose a path $\gamma$ in $S\subset W$ whose endpoints are basepoints $p\in L$ and $p'\in L'$. Then there is an induced morphism of $\cS$-complexes
\begin{equation}\label{eq:gencobmapnontriv}
	\widetilde \lambda = \widetilde \lambda_{(W,S,c),\gamma}: \widetilde C^\omega(Y,L,p)\to \widetilde C^{\omega'}(Y',L',p')
\end{equation} 
Indeed, the hypotheses guarantee that there exist no reducible instantons on $(W,S,c)$ for any metric, and so the morphism is constructed as in Theorem \ref{thm:unobsmaps}, setting $\rho=0$. Recall that we typically omit $\gamma$, $p$, $p'$ from notation. 

\begin{prop}\label{prop:degmorphismnontriv}
Choose quasi-orientations $\fo,\fo'$ to fix $\Z/2$-gradings of the complexes $\widetilde C^\omega(Y,L), \widetilde C^{\omega'}(Y',L')$ respectively. Then the mod $2$ degree of the map \eqref{eq:gencobmapnontriv} is given by 
    \[
         \frac{1}{2}\left( \chi(W) + \sigma(W) \right) + \chi(S) +|L| + |L'| + (\partial c|_S)\cdot_{\fo\fo'} (\partial c|_S)   \pmod{2}
    \]
    The term $(\partial c|_S)\cdot_{\fo\fo'} (\partial c|_S)$ is computed using a framing of $\partial c$ determined by pushing its boundary points along $L$ in directions determined by $o,o'$ (orientations inducing $\fo,\fo'$).
    \end{prop}
    
\noindent The proof is similar to that of Proposition \ref{prop:degmorphism}.\\

\begin{remark}
In general, if $\gamma\subset S$ has an even number of boundary points on the incoming and outgoing ends of the surface cobordism $S$, then we can define the operation $\gamma \cdot_{\fo\fo'} \gamma$ as above, and it depends only on $\gamma$ and the quasi-orientations $\fo$ and $\fo'$. However, if $\gamma$ has an odd number of boundary points on the incoming and outgoing ends of $S$, then the described operation generally depends on the choices of orientations, as the case of the cylinder $S=I\times S^1$ with $\gamma=I\times \{p\}$ shows. $\diamd$
\end{remark}

\subsubsection{Absolute $\Z/4$-grading}\label{subsec:absz4grading}

The absolute $\Z/2$-grading can be upgraded to a $\Z/4$-grading, as we now explain. This construction is only mentioned in passing; in the sequel, only the $\Z/2$-grading is used. 

Let $(Y,L,\omega)$ be an admissible link, and $\fo=\{o,-o\}$ a quasi-orientation of $L$. We define an {\emph{arc-framing}} of the quasi-oriented admissible link $(Y,L,\omega,\fo)$ to be a pushoff $\omega'$ of $\omega$ such that $\partial \omega'\subset L$ is pushed off from $\partial \omega$ along $L$ in the direction of $o$ (or $-o$). In particular, with the given boundary conditions induced by $o$, the arc $\omega'$ is equivalent to a choice of framing of $\omega$ in the standard sense, i.e. a trivialization of its normal bundle. See Figure \ref{fig:arcframing}.

\begin{figure}[t]
    \centering
    \centerline{\includegraphics[scale=0.90]{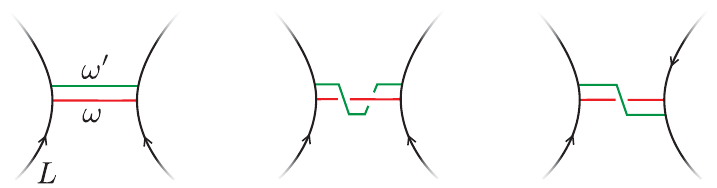}}
    \caption{{\small{Examples of arc-framings for a quasi-oriented admissible link $(Y,L,\omega)$.}}}
    \label{fig:arcframing}
\end{figure}

A quasi-oriented admissible link $(Y,L,\omega,\fo)$ with a choice of arc-framing $f$ will determine the $\Z/4$-grading, as we now explain. Let $(W,S)$ be a pair which has boundary $(Y,L)$. We choose $W$ so that it is simply-connected, and $S$ so that it is a Seifert surface for $L$ compatible with the orientation $o$, whose interior is pushed into $W$. Let $c$ be an orientable surface in $W$ representing singular bundle data, which restricts on $Y$ to the unoriented $1$-manifold $\omega$. Fix $\alpha\in \fC_\pi^\omega$. Choose a singular connection $A$ on $(W,S,c)$ with limit the irreducible connection $\alpha$, and a connection $B$ on $(W,S)$ with trivial singular bundle data and limit the reducible $\theta_\fo$. Define $\text{gr}^f_\fo:\fC^\omega_\pi\to \Z/4$ by:
\begin{equation}\label{eq:grmod2notrivbundle}
	\text{gr}^f_\fo (\alpha ) := \text{ind}(A) - \text{ind}(B)  + \widetilde c \cdot_{f} \widetilde c  \pmod{4} 
\end{equation}
The number $\widetilde c \cdot_{f} \widetilde c$ is computed as follows. Let $\widetilde c$ be the pullback of $c$ to the double branched cover $\widetilde W$ of $(W,S)$. The surface $\widetilde c$ is an orientable surface with boundary $\widetilde \omega\subset \partial \widetilde W=\widetilde Y$, the preimage of $\omega$. The arc-framing $f$ determines a framing, in the usual sense, of $\widetilde \omega$. Then $\widetilde c \cdot_{f} \widetilde c$ is the self-intersection of $\widetilde{c}$ computed with respect to this framing.

The proof that $\text{gr}^f_\fo$ is well-defined is similar to that of Proposition \ref{prop:mod2grnontriv}. At the step \eqref{eq:stepgrinv}, we use that a singular connection $A$ on the closed pair $(\overline{W},\overline{S})$ with singular bundle data $\overline{c}$ satisfies $4\kappa(\overline{A}) \equiv -\frac{1}{2}\gamma\cdot \gamma \pmod{2}$, which follows from \eqref{eq:4kappaupgrade} and the topological assumptions. Here $\gamma$ is the pullback of $\overline{c}$ to the double branched cover of $(\overline{W},\overline{S})$.

Clearly $\text{gr}^f_\fo$ reduces modulo $2$ to the grading $\text{gr}_\fo$ defined previously. Furthermore, if the arc-framing is changed by one twist, as for example in the two left-most illustrations of Figure \ref{fig:arcframing}, then $\text{gr}^f_\fo$ changes by an overall shift of $2\pmod{4}$.

\subsection{Setting up the exact triangles}\label{subsec:nontrivbundlesetup}
We now set the stage for proving Theorem \ref{thm:exacttriangles-withbundles}. In this subsection and the two that follow, we ignore gradings. We will return to this issue in Subsection \ref{subsec:euler}.

Let $L,L',L''$ be an unoriented skein triple in an integer homology 3-sphere $Y$, where $L$ has one more component than $L'$ and $L''$. There are two components of $L$ that intersect the skein 3-ball, and we let $\omega$ be an arc connecting these two components. We can arrange that $\omega$ is outside of the skein 3-ball. As the links are identified outside this ball, we may also view $\omega$ as an arc with endpoints on $L'$ and $L''$, respectively. See Figure \ref{fig:nontrivbundlessetup}.

\begin{figure}[t]
    \centering
    \centerline{\includegraphics[scale=0.6]{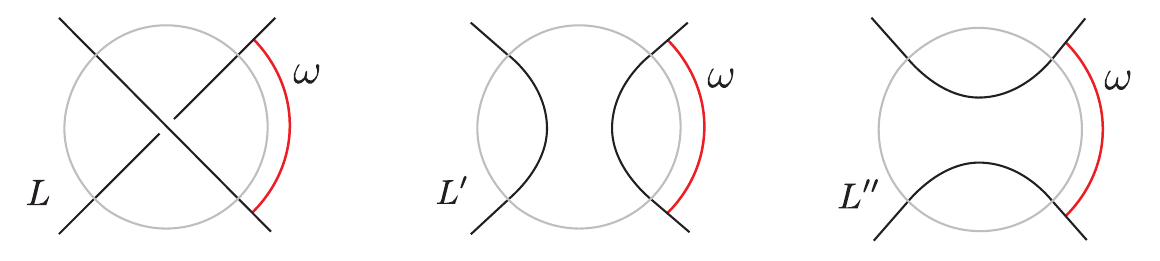}}
    \caption{{\small{The skein triple with uniform singular bundle data determined by an arc $\omega$.}}}
    \label{fig:nontrivbundlessetup}
\end{figure}

Choose metric and perturbation data, and form the $\cS$-complexes
\begin{equation}\label{eq:complexesinnontrivbundlecase}
    \widetilde C = \widetilde C^\omega(Y,L), \qquad \widetilde C' = \widetilde C^\omega(Y,L'), \qquad \widetilde C'' = \widetilde C^\omega(Y,L'').
\end{equation}
Note that $\omega$ has endpoints on a single component of $L'$, and also for $L''$, and thus in these two cases the singular bundle data determined by $\omega$ is topologically trivial, in contrast to the case for $L$; indeed, the isomorphism class of bundle data for $(Y,L')$, for example, is determined by the class of $\omega$ in $H_1(Y,L';\Z/2)$, which is zero. We assume that the links $L'$ and $L''$ have non-zero determinant, while there is no restriction on $L$.

All of the cobordism constructions from Subsection \ref{subsec:triangleprelim} are equipped with the singular bundle data determined by $I\times \omega$. This is sensible, as all the cobordisms are products outside of the skein region. See for example Figure \ref{fig:cobwbundle}. This choice of singular bundle data is uniform throughout this discussion and will often be omitted from notation.

The other key difference between the current situation and the setup in Subsection \ref{subsec:triangleprelim} is the accounting of reducibles. First, as the link $(Y,L,\omega)$ is admissible, it admits no reducible critical points. The differential of $\widetilde C=\widetilde C^\omega(Y,L)$ has the simpler form 
\begin{equation}
\widetilde d =  \left[ \begin{array}{cc} d & 0\\ v & -d  \end{array} \right] \label{eq:tildednontrivbun}
\end{equation}
The $\cS$-complexes $\widetilde C'$ and $\widetilde C''$ have the usual differentials
\begin{equation}
\widetilde d' =  \left[ \begin{array}{ccc} d' & 0 & 0 \\ v' & -d' & \delta'_2 \\ \delta'_1 & 0 & 0 \end{array} \right]
\qquad
\widetilde d'' =  \left[ \begin{array}{ccc} d'' & 0 & 0 \\ v'' & -d'' & \delta''_2 \\ \delta''_1 & 0 & 0 \end{array} \right]\label{eq:tildedppnontrivbun}
\end{equation}
While the reducible summands of the complexes $\widetilde C'$ and $\widetilde C''$ will be isomorphic to the free $R$-modules $\mathsf{R}'=\mathsf{R}(Y,L')$ and $\mathsf{R}''=\mathsf{R}(Y,L'')$ generated by quasi-orientations, these isomorphisms are not canonical. We now explain this point.

Let the components of $L'$ be written $L'_1,\ldots, L'_n$, and assume $\omega$ is an arc with endpoints on $L'_1$. Then $L'_1\setminus \partial \omega$ is two arcs, $A'_1,A'_2$. Denote by $\mathsf{O}(Y,L',\omega)$ the set of orientations of $L'\setminus \omega = A'_1\cup A'_2 \cup L_2'\cup \cdots L_n'$ such that the orientations of the two arcs $A'_1$ and $A'_2$ are opposite where they meet at their boundary points.

\begin{figure}[t]
    \centering
    \centerline{\includegraphics[scale=0.65]{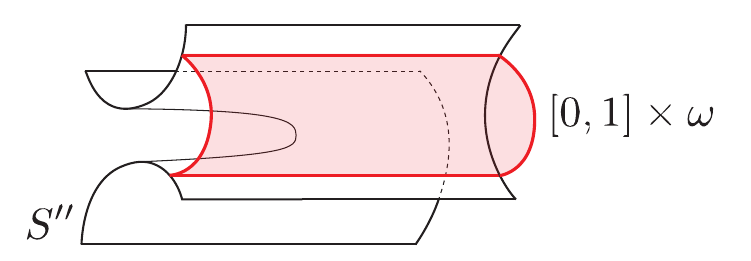}}
    \caption{{\small{The reverse of the saddle cobordism $S'':L''\to L$ with the product singular bundle data.}}}
    \label{fig:cobwbundle}
\end{figure}

The flat reducible singular connections on $(Y,L')$, mod gauge, are in natural bijection with conjugacy classes of representations $\pi_1(Y\setminus (L\cup \omega))\to SU(2)$ with abelian image and which send meridians of $\omega$ to $-1\in SU(2)$. These reducibles are in natural bijection with 
\begin{equation}\label{eq:orientationsetnontriv}
	\mathsf{O}(Y,L',\omega)/\pm
\end{equation}
where $\pm$ is the action which reverses all orientations. The correspondence is as follows. Given an orientation $\fo\in\mathsf{O}(Y,L',\omega)$, we obtain distinguished meridians around each of $A'_1,A'_2,L_2',\ldots, L_n'$ by the right-hand rule. Each of these meridians is sent to $i\in SU(2)$, and of course any meridian around $\omega$ is sent to $-1$. There is a remaining conjugation symmetry, say by $j\in SU(2)$, which maps this represention to the one determined by $-\fo$. We write $\theta_\fo$ for the reducible flat singular connection which corresponds to $\fo$.

The reducible summand $\mathsf{R}'$ of the $\cS$-complex $\widetilde C'=\widetilde C(Y,L',\omega)$ may then be identified with $\mathsf{R}(Y,L',\omega)$, the free $R$-module with generating set $\mathsf{O}(Y,L',\omega)/\pm$:
\[
    \sfR' = \sfR(Y,L',\omega) = \bigoplus_{\qo \in \mathsf{O}(Y,L',\omega)/\pm } R\cdot \theta_\qo  
\]
 The reducible summand $\mathsf{R}''$ of the $\cS$-complex $\widetilde C''$ for $L''$ is similarly defined.

Now we turn to the cobordism $S':L'\to L''$. As this is non-orientable, it admits no flat reducibles for the trivial singular bundle data. However, with the bundle data $I\times \omega$, for each reducible on $(Y,L',\omega)$ (resp. on $(Y,L'',\omega)$) there is a unique flat reducible extension over $(I\times Y,S',I\times \omega)$. This is of course a statement about fundamental groups, in fact homology; the key point is that $S'\setminus I\times \omega$ is orientable. 

\begin{remark}
	If $L'$ and $L''$ are unknots, the unique flat reducible instanton above is a restriction of the flat reducible instanton $A_+^1$ on $(S^4,\bR\bP^2_+)$, where $\bR\bP^2_+$ has self-intersection $+2$. This is an example from \cite[\S 2.7]{KM:unknot} and was discussed in Subsection \ref{sec:unobscobmaps}. In general, our choice of singular bundle data on $S'$ coincides, on the non-orientable part, with the description in \cite[Section 6]{DS2}. A key point is that the reducibles discussed above, at least at the level of adjoint connections, are the ones which can be obtained as orbifold quotients from the trivial adjoint connection on the double branched cover. $\diamd$
\end{remark}

\begin{figure}[t]
    \centering
    \centerline{\includegraphics[scale=0.33]{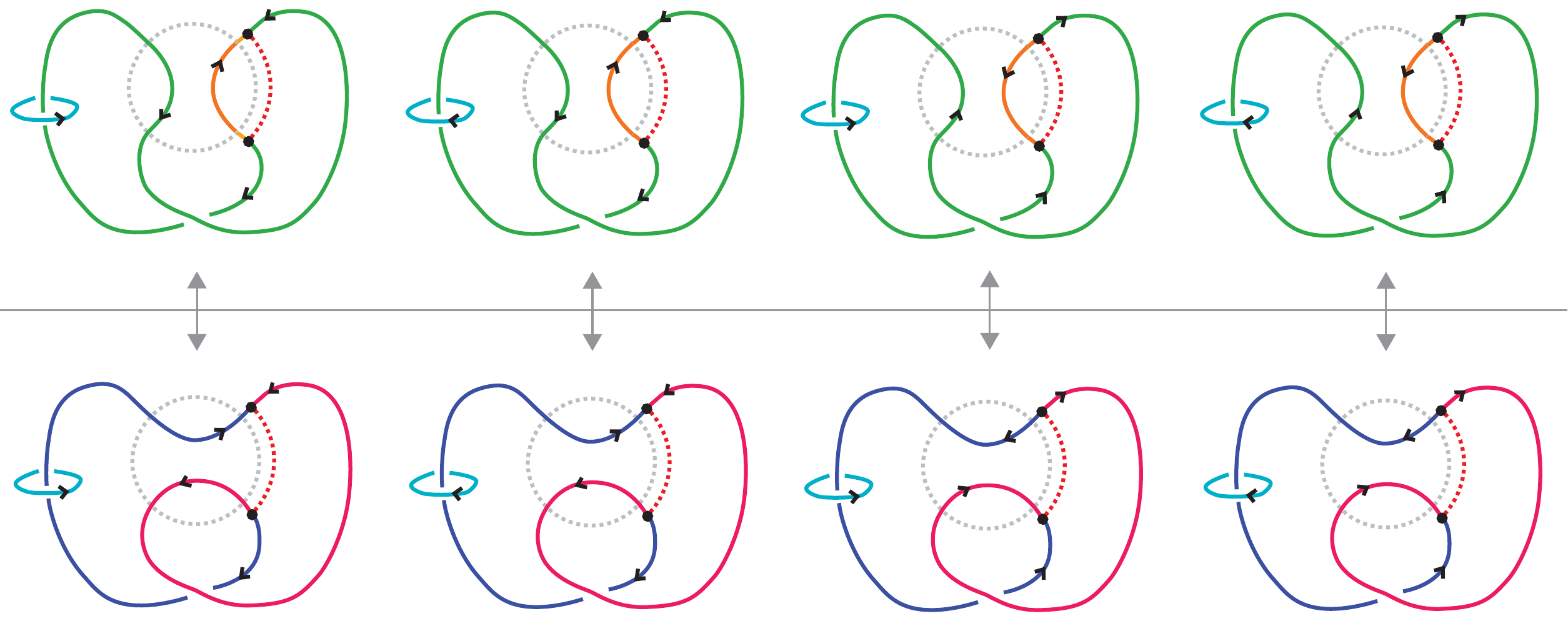}}
    \caption{{\small{In the top row is depicted a $2$-component link with an arc $\omega$ in dotted red which represents singular bundle data. This link has four orientations up to equivalence in the set \eqref{eq:orientationsetnontriv}, which are all displayed. The link of the bottom row is obtained by performing a skein resolution in the 3-ball indicated by the dotted circle. This resolution, and hence the associated saddle cobordism, is non-orientable. The bijection between the orienation sets \eqref{eq:nontrivbundorbij} is depicted.}}}
    \label{fig:orbijex}
\end{figure}

Thus the map induced by counting minimal reducibles on $(I\times Y,S',I\times \omega)$,
\[
	\eta':\mathsf{R}'\longrightarrow \mathsf{R}'',
\]
is an isomorphism. To describe this in terms of orientations, we have a natural bijection 
\begin{equation}\label{eq:nontrivbundorbij}
	\mathsf{O}(Y,L',\omega) \cong \mathsf{O}(Y,L'',\omega)
\end{equation}
which is as follows. Write $L'\setminus \omega = A'_1\cup A'_2 \cup L_2'\cup \cdots L_n'$ and $L''\setminus \omega = A''_1\cup A''_2 \cup L_2''\cup \cdots L_n''$ where $L_i''=L_i'$ for $i\geq 2$. Now suppose $L'\setminus \omega$ is oriented in such a way that $A_1'$ and $A_2'$ are oppositely oriented where they meet. Since $L'\setminus \omega$ and $L''\setminus \omega$ agree outside of the skein 3-ball, $L''\setminus \omega$ inherits an orientation from $L'\setminus \omega$, and the resulting orientation of $L''\setminus \omega$ has the property that $A_1''$ and $A_2''$ are oriented oppositely where they meet. This defines the bijection \eqref{eq:nontrivbundorbij}. An example of the bijection is given in Figure \ref{fig:orbijex}. The map $\eta'$ is then induced by the bijection \eqref{eq:nontrivbundorbij} up to sign-changes in the natural bases. 

Choosing one of the arcs, say $A_1'\subset L'\setminus \omega$, induces a bijection between $\mathsf{O}(Y,L',\omega)$ and orientations of $L'$, and hence an isomorphism $\sfR(Y,L',\omega)\cong \sfR(Y,L')$. Choosing the other arc $A_2'$ induces another bijection which is distinct from the previous one if $|L'|\geq 2$. Topologically, the first bijection is induced by a null-homotopy of $\omega$ that moves its two boundary points together to collapse the arc $A_2'$; the other bijection collapses $A_1'$.

We now turn to compute the indices of the minimal reducibles described above. For the purposes of these computations, we may use the formula of Proposition \ref{prop:redind}. Indeed, the choice of singular bundle data $I\times \omega$ on $(I\times Y, S')$ is such that any reducible on this cobordism is the quotient of the trivial connection on the double branched cover. In particular, the arguments of \cite[Lemma 11]{DS2} and Proposition \ref{prop:redind} adapt.  

Let $A'$ be any of the reducibles on $(I\times Y, S')$ from above, with limits $\theta_{\fo'}$ and $\theta_{\fo''}$. Applying the formula of Proposition \ref{prop:redind}, noting that $A'$ is flat and $\chi(S)=-1$, we obtain
\begin{equation}
	\text{ind}(A') = \frac{1}{2}S'\cdot S' + \sigma(Y,L',\fo') - \sigma(Y,L'',\fo'') -2. \label{eq:indnontrivsprimefirst}
\end{equation}
Write $X$, $X'$, $X''$ for the double covers of $I\times Y$ branched over $S$, $S'$, $S''$, respectively. A straightforward extension of \cite[Theorem 3.1]{kauffman-taylor} to this setting shows that 
\begin{equation}
	\sigma(X') = -\frac{1}{2}S'\cdot S' - \sigma(Y,L',\fo') + \sigma(Y,L'',\fo''). \label{eq:indnontrivsprimesign}
\end{equation}
We also recall from relation \eqref{eq:sumofthreesignatures} that the following holds:
\begin{equation}
	\sigma(X) + \sigma(X') + \sigma(X'') = -1. \label{eq:indnontrivsprimesignsum}
\end{equation}
Relations \eqref{eq:indnontrivsprimefirst}, \eqref{eq:indnontrivsprimesign} and \eqref{eq:indnontrivsprimesignsum}, combined with the identities $\sigma(X)=-\varepsilon(L,L')$ and $\sigma(X'')=-\varepsilon(L'',L)$ from Lemma \ref{prop:indbranchedcover}, give
\begin{equation}
	\text{ind}(A') = -1 - \epsilon(L,L') - \epsilon(L'',L).  \label{eq:nontrivbundleindreds}
\end{equation}

There is one other type of reducible singular instanton relevant for what follows. Consider the double composite cobordism $(I\times Y, T'' )$ with the projective plane $(B^4, F_0)$ removed; this is identified with $(I\times Y\setminus B^4, \overline S'\setminus B^2 ) $. The index of a reducible $A''$ on this cobordism is the same as the index of such a reducible on $\overline S' :L''\to L'$. The computation is similar to that of \eqref{eq:nontrivbundleindreds}, except that the orientation of the cobordism is reversed, so the signatures all change sign. The resulting formula is as follows:
\begin{equation}
	\text{ind}(A'')  = -3 + \epsilon(L,L') + \epsilon(L'',L)  \label{eq:nontrivbundleindreds-2}
\end{equation} 
Let $J'':\sfR''\to \sfR'$ count the minimal reducibles on $(I\times Y\setminus B^4, \overline S'\setminus B^2 ) $. Note that
\[
		\eta' J'' \; \dot = \; \text{id}_{\mathsf{R}''} \qquad \quad J'' \eta'  \; \dot = \; \text{id}_{\mathsf{R}'}
\]
where we remind the reader that $\dot =$ means ``up to sign-changes'' in the natural bases.

Recall that Theorem \ref{thm:exacttriangles-withbundles} is divided into two cases depending on the value of 
\[
	 \epsilon(L,L') + \epsilon(L'',L) \in \{0,2\}
\]
That these are the only possible values follows a similar argument as in Lemma \ref{lemma:casesexhaust}. The case of value $0$ is Case A, and that of value $2$ is Case B. With these preliminaries out of the way, we now turn to construct the exact triangles of Theorem \ref{thm:exacttriangles-withbundles}. Gradings will be omitted from the proofs given below, and revisted in Subsection \ref{subsec:euler}. Auxiliary data such as metrics, perturbations and I-orientations are fixed in a way similar to what was done in Section \ref{sec:proofs}.

\subsection{Case A}

In Case A of Theorem \ref{thm:exacttriangles-withbundles}, we assume $\epsilon(L,L') + \epsilon(L'',L)=0$. Recall that the isomorphism $\eta':\mathsf{R}'\to \mathsf{R}''$ counts minimal reducibles on $S'$, which by \eqref{eq:nontrivbundleindreds} are of index $-1$ and hence unobstructed. The map $J'':\mathsf{R}''\to \mathsf{R}'$ counts minimal reducibles on the cobordism $T''=S\circ S''$ after removing the copy of $\bR\bP^2$, which as discussed above may be identified with the reversed cobordism of $S'$ with a disk removed. (The specific sign convention for $J''$ follows a similar convention as for $J,J'$ in Remark \ref{rmk:signforjmaps}.) In Case A, by \eqref{eq:nontrivbundleindreds-2} the reducibles that $J''$ counts are of index $-3$ and obstructed. These maps are depicted as:

\begin{equation}
	\eta' = \vcenter{\hbox{\includegraphics[scale=0.35]{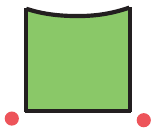} }} \qquad \quad J'' = \vcenter{\hbox{\includegraphics[scale=0.35]{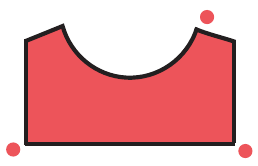} }} 
\end{equation}

\begin{figure}[t]
    \centering
    \centerline{\includegraphics[scale=0.27]{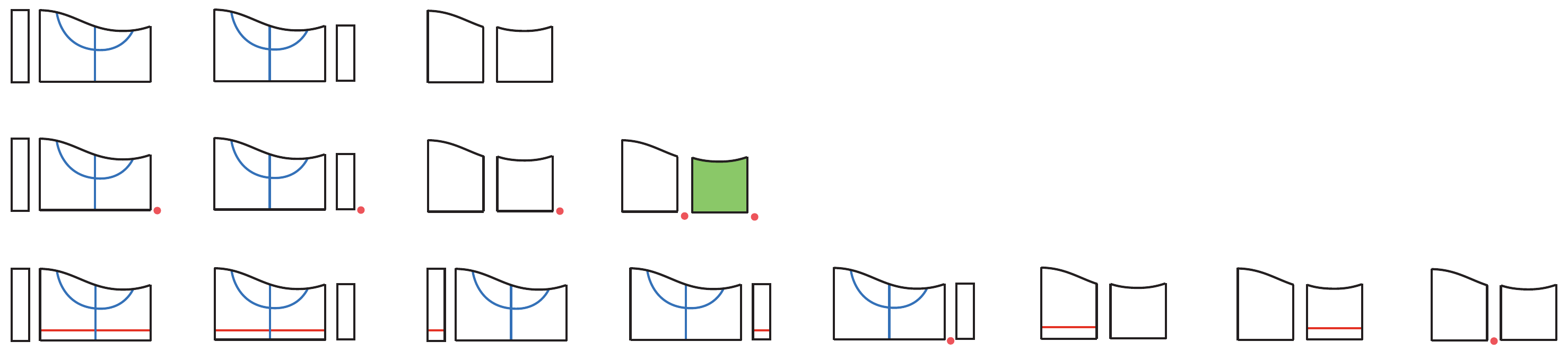}}
    \caption{{\small{Relations \eqref{eq:casearelc1}--\eqref{eq:casearelc3}.}}}
    \label{fig:casearelset1}
\end{figure}

We now proceed to construct an exact triangle of the form
\begin{equation*}\label{eq:exacttrianglecaseaproof}
	\begin{tikzcd}[column sep=5ex, row sep=10ex, fill=none, /tikz/baseline=-10pt]
& \widetilde C  \arrow[dr, "\widetilde \lambda"] \arrow[dl, bend right=60, "\widetilde K"'] & \\
\widetilde C''  \arrow[rr, bend right=60, "\widetilde K''"']  \arrow[ur, "\widetilde \lambda''"] & &\widetilde C' \arrow[ll, "\widetilde \lambda'"] \arrow[ul, bend right=60, "\widetilde K'"']
\end{tikzcd}
\end{equation*}
where the complexes $\widetilde C,\widetilde C',\widetilde C''$ are given in \eqref{eq:complexesinnontrivbundlecase}. First, we define $\widetilde \lambda, \widetilde\lambda', \widetilde \lambda''$:
\[
\widetilde\lambda =  \left[ \begin{array}{cc} \lambda & 0  \\ \mu & \lambda \\ \Delta_1 & 0  \end{array} \right] 
\qquad
\widetilde\lambda' =  \left[ \begin{array}{ccc} \lambda' & 0 & 0 \\ \mu' & \lambda' & \Delta'_2 \\ \Delta'_1 & 0 & \eta' \end{array} \right]
\qquad
\widetilde\lambda'' =  \left[ \begin{array}{ccc} \lambda'' & 0 & 0 \\ \mu'' & \lambda'' & \Delta''_2 \end{array} \right]
\]
The components are defined just as in Subsection \ref{subsec:triangleprelim}, but with the modifications for non-trivial bundles as described Subsection \ref{subsec:nontrivbundlesetup}. 

Similarly, we define $\widetilde K,\widetilde K',\widetilde K''$:
\[
\widetilde K =  \left[ \begin{array}{cc} K & 0  \\ L & -K  \\ M_1 & 0  \end{array} \right] 
\quad
\widetilde K' =  \left[ \begin{array}{ccc} K' & 0 & 0 \\ L' & -K' & M'_2 \end{array} \right]
\quad
\widetilde K'' =  \left[ \begin{array}{ccc} K'' & 0 & 0 \\ L'' & -K'' & M''_2 \\ M''_1 & 0 & J'' \end{array} \right]
\]

With this setup, the details of the proof run parallel to the proof of Case I of Theorem \ref{thm:exacttriangles}, but with fewer reducibles. We give the necessary ingredients.

The relation $\widetilde d'' \widetilde K + \widetilde K \widetilde d + \widetilde \lambda' \widetilde \lambda=0$ is verified by the following:
\begin{align}
	d''K  + K d + \lambda'\lambda = 0 \label{eq:casearelc1} \\
	\delta_1'' K + M_1 d + \Delta_1'\lambda  + \eta' \Delta_1= 0  \label{eq:casearelc2} \\
	v'' K - d''L + \delta_2'' M_1 + L d - Kv +\mu'\lambda + \lambda'\mu + \Delta_2'\Delta_1 = 0 \label{eq:casearelc3}
\end{align}
The arguments for these proceed, in the usual way, by studying the ends of certain 1-dimensional moduli spaces, and involve only unobstructed gluing theory. Similarly, the relation $\widetilde d \widetilde K' + \widetilde K' \widetilde d' + \widetilde \lambda'' \widetilde \lambda'=0$ follows from the relations:
\begin{align}
	dK'  + K' d' + \lambda''\lambda' = 0  \label{eq:casearelc4} \\
	-d M'_2 - K'\delta'_2 +\lambda''\Delta'_2 + \Delta_2 \eta' = 0  \label{eq:casearelc5}\\
	v K' - dL'  + L' d' - K'v' + M'_2\delta'_1 +\mu''\lambda' + \lambda''\mu' + \Delta_2''\Delta'_1 = 0  \label{eq:casearelc6}
\end{align}
Next, the equation $\widetilde d' \widetilde K'' + \widetilde K'' \widetilde d'' + \widetilde \lambda \widetilde \lambda''=0$ follows from:
\begin{align}
	d'K''  + K'' d'' + \lambda\lambda'' = 0  \label{eq:casearelc7} \\
	-d' M''_2 - K''\delta''_2 +\lambda\Delta''_2 +\delta_2' J''  = 0  \label{eq:casearelc8}\\
	\delta'_1 K'' + M''_1 d'' + \Delta_1\lambda''   + J''\delta_1''= 0  \label{eq:casearelc9} \\
	v' K'' - d'L'' + \delta_2' M''_1 + L'' d'' - K''v'' + M''_2\delta''_1 +\mu\lambda'' + \lambda\mu''  = 0 \label{eq:casearelc10}
\end{align}
We remark that relations \eqref{eq:casearelc8} and \eqref{eq:casearelc9} involve obstructed gluing theory; the arguments are essentially the same as that of relation \eqref{eq:caseirel5}. All of the relations listed above are depicted in Figures \ref{fig:casearelset1}, \ref{fig:casearelset2} and \ref{fig:casearelset3}.

\begin{figure}[t]
    \centering
    \centerline{\includegraphics[scale=0.27]{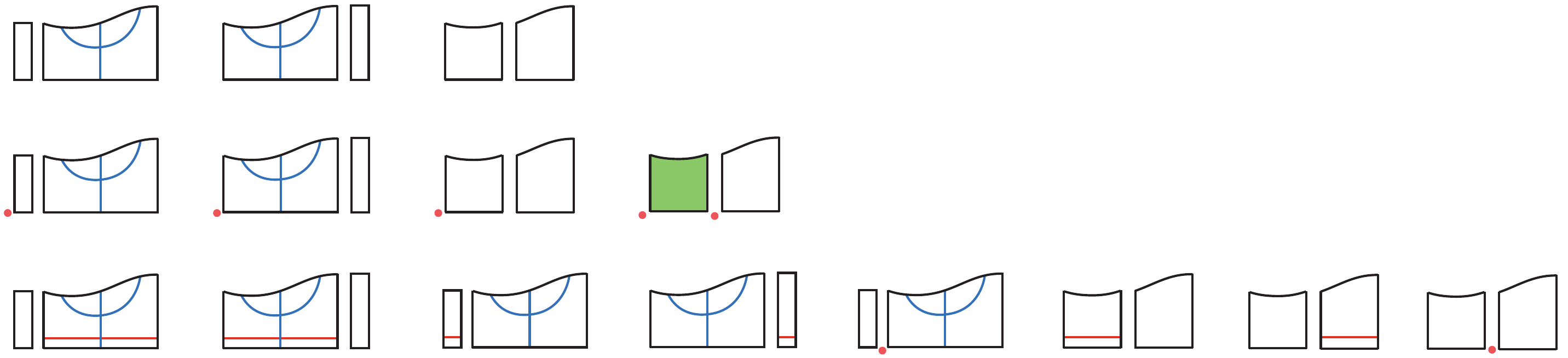}}
    \caption{{\small{Relations \eqref{eq:casearelc4}--\eqref{eq:casearelc6}.}}}
    \label{fig:casearelset2}
\end{figure}

To complete Case A we must show that the expressions \eqref{eq:caseirel1}--\eqref{eq:caseirel3}, as defined here, are each homotopic to an isomorphism. Just as in the proofs of Section \ref{sec:proofs} which used Lemma \ref{lemma:morphismuseful}, this is done by showing that the irreducible and reducible parts of these morphisms are homotopic to isomorphisms.

The irreducible part of \eqref{eq:caseirel1} is $d''  K + K  d + \lambda'\lambda$. This is homotopic to an isomorphism using the pentagonal metric family; no reducibles appear. The argument is similar to that of Lemma \ref{lemma:caseilaststep}. The irreducible parts of \eqref{eq:caseirel2} and \eqref{eq:caseirel3} are essentially the same.

There is no reducible part of \eqref{eq:caseirel1}, as it is a morphism defined on $\widetilde C=\widetilde C^\omega(Y,L)$. The reducible parts of \eqref{eq:caseirel2} and \eqref{eq:caseirel3} are $J'' \eta'  \; \dot = \; \text{id}_{\mathsf{R}'}$ and $\eta' J'' \; \dot = \; \text{id}_{\mathsf{R}''}$, respectively. This completes the proof of Theorem \ref{thm:exacttriangles-withbundles} in Case A.

\begin{figure}[t]
    \centering
    \centerline{\includegraphics[scale=0.27]{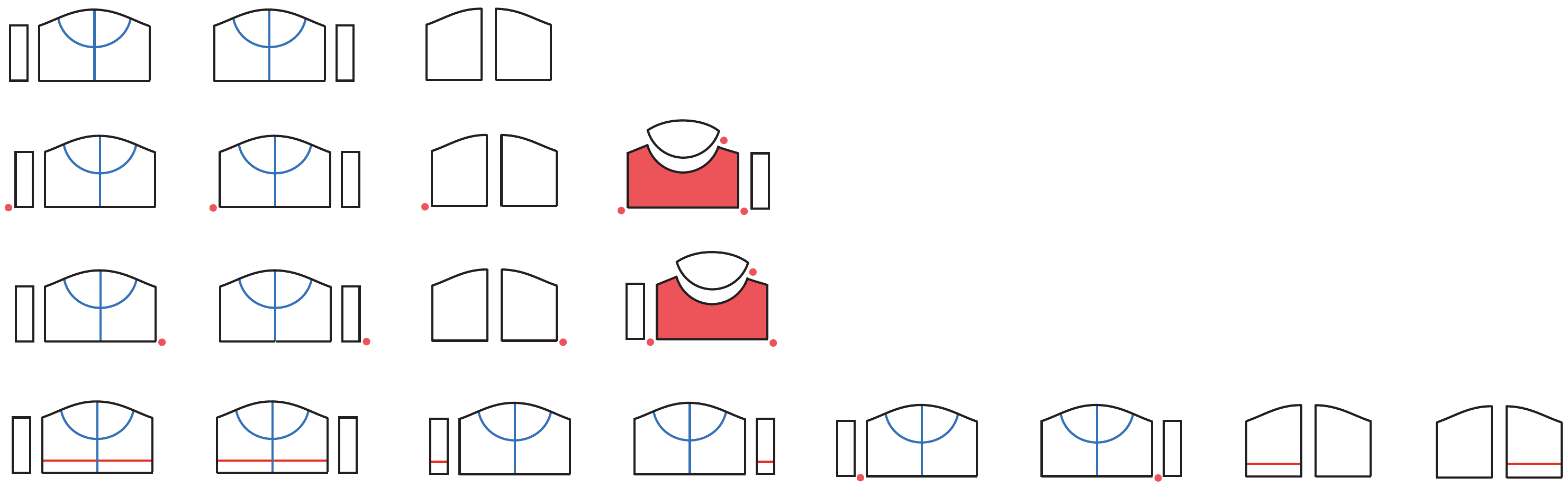}}
    \caption{{\small{Relations \eqref{eq:casearelc7}--\eqref{eq:casearelc10}.}}}
    \label{fig:casearelset3}
\end{figure}

\subsection{Case B}

We now consider Case B of Theorem \ref{thm:exacttriangles-withbundles}, in which we assume $\epsilon(L,L') + \epsilon(L'',L)=2$. From \eqref{eq:nontrivbundleindreds} it follows that the reducibles that $\eta'$ counts are obstructed, while by \eqref{eq:nontrivbundleindreds-2} those that $J''$ counts are unobstructed. These are depicted as follows:
\begin{equation}
	\eta' = \vcenter{\hbox{\includegraphics[scale=0.35]{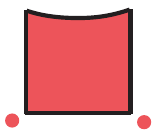} }} \qquad \quad J'' = \vcenter{\hbox{\includegraphics[scale=0.35]{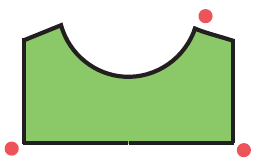} }} 
\end{equation}
We now construct an exact triangle of $\cS$-complexes of the form:
\begin{equation*}\label{eq:exacttrianglecaseaproof}
	\begin{tikzcd}[column sep=5ex, row sep=10ex, fill=none, /tikz/baseline=-10pt]
& \widetilde C  \arrow[dr, "\widetilde \lambda"] \arrow[dl, bend right=60, "\widetilde K"'] & \\
\widetilde C_\Sigma''  \arrow[rr, bend right=60, "\widetilde K''"']  \arrow[ur, "\widetilde \lambda''"] & &\widetilde C' \arrow[ll, "\widetilde \lambda'"] \arrow[ul, bend right=60, "\widetilde K'"']
\end{tikzcd}
\end{equation*}
The technical details of the construction run parallel to those of Case II in Theorem \ref{thm:exacttriangles}, but with fewer reducibles. We proceed to define all the terms and outline the proof. The complexes $\widetilde C = \widetilde C^\omega(Y,L)$ and $\widetilde C'=\widetilde C(Y,L')$ are as in Case A, with differentials $\widetilde d$, $\widetilde d'$ given as in \eqref{eq:tildednontrivbun}, \eqref{eq:tildedppnontrivbun}. The suspended complex $\widetilde C''_\Sigma = \Sigma \widetilde C(Y,L'')$ has differential
    \begingroup
\renewcommand{\arraystretch}{1.25}
\[
        \widetilde d''_\Sigma = \left[\begin{array}{cc|cc|c} 
            d'' & -\delta''_2 & 0 & 0 & 0 \\
            0 & 0 & 0 & 0 & 0 \\
            \hline
            v'' & 0 & -d'' & \delta''_2 & v''\delta''_2 \\
            \delta''_1 & 0 & 0 & 0& 0 \\
            \hline
            0 & 1 & 0 & 0 & 0
         \end{array} \right]
\]
\endgroup
The expression for $\widetilde \lambda$ is the same as in Case A, while $\widetilde \lambda'$ and $\widetilde \lambda''$ are given by:
    \begingroup
\renewcommand{\arraystretch}{1.25}
\[
\widetilde\lambda' = \left[ \begin{array}{c|c|c} \lambda' & 0 & 0 \\ \eta'\delta'_1 & 0 & 0 \\\hline \mu' & \lambda' & \Delta'_2 \\ \Delta'_1 & \eta'\delta'_1 & -s''\eta' - \eta' s' \\\hline0&0&\eta' \end{array} \right]
\qquad
\widetilde\lambda'' =  \left[ \begin{array}{cc|cc|c} \lambda''  & \Delta_2'' & 0 & 0 & 0  \\ \hline \mu'' & 0 &  \lambda'' & \Delta''_2 & \mu''\delta_2''+v\Delta_2''  \end{array} \right]
\]
\endgroup
The expression for $\widetilde K'$ is the same as in Case A, while $\widetilde K$ and $\widetilde K''$ are given by:
    \begingroup
\renewcommand{\arraystretch}{1.25}
\[
\widetilde K = \left[ \begin{array}{c|c} K & 0  \\ -\eta'\Delta_1  & 0 \\\hline L & -K  \\ M_1 & \eta'\Delta_1 \\\hline0&0\end{array} \right]
\quad
\widetilde K'' = \left[ \begin{array}{cc|cc|c} K'' & -M_2'' & 0 & 0 & 0 \\ \hline L'' & 0 & -K'' & M_2'' & v'M_2'' + L''\delta_2'' -\mu\Delta_2'' \\\hline M_1'' & 0 & 0 & 0 & J'' \end{array} \right]
\]
\endgroup
In this case, $\widetilde d'' \widetilde K + \widetilde K \widetilde d + \widetilde \lambda' \widetilde \lambda=0$ follows from the relations
\begin{align}
	d''K  + K d + \lambda'\lambda + \delta_2''\eta'\Delta_1= 0 \label{eq:casebrelc1} \\
	  - \eta'\Delta_1 d + \eta' \delta_1'\lambda= 0 \label{eq:casebrelc1half} \\
	\delta_1'' K + M_1 d + \Delta_1'\lambda  +\eta'\Delta_1 v -s''\eta'\Delta_1 -\eta's'\Delta_1 + \eta'\delta'_1\mu= 0  \label{eq:casebrelc2} \\
	v'' K - d''L + \delta_2'' M_1 + L d - Kv +\mu'\lambda + \lambda'\mu + \Delta_2'\Delta_1 = 0 \label{eq:casebrelc3}\\
	-\eta'\Delta_1 + \eta'\Delta_1 = 0 \label{eq:casebrelc4}
\end{align}
Three of these relations are depicted in Figure \ref{fig:casebrelset1} and are proved using obstructed gluing theory. For example, \eqref{eq:casebrelc1} and \eqref{eq:casebrelc2} are completely analogous to \eqref{eq:caseiirelc1} and \eqref{eq:caseiirelc3}, respectively. Relation \eqref{eq:casebrelc1half} follows from $-\Delta_1 d + \delta_1'\lambda=0$. 

Similarly, the following relations, depicted in Figure \ref{fig:casebrelset2}, imply $\widetilde d \widetilde K' + \widetilde K' \widetilde d' + \widetilde \lambda'' \widetilde \lambda'=0$:

\begin{figure}[t]
    \centering
    \centerline{\includegraphics[scale=0.27]{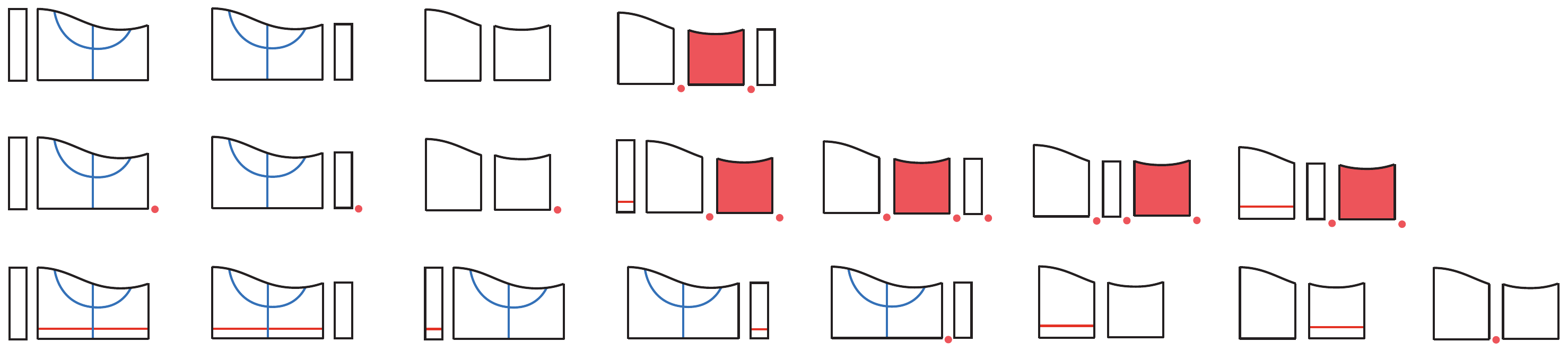}}
    \caption{{\small{Relations \eqref{eq:casebrelc1}, \eqref{eq:casebrelc2}, \eqref{eq:casebrelc3}.}}}
    \label{fig:casebrelset1}
\end{figure}

\begin{align}
	dK'  + K' d' + \lambda''\lambda'  + \Delta_2'' \eta'\delta_1'= 0  \label{eq:casebrelc5} \\
	-d M'_2 - K'\delta'_2 +\lambda''\Delta'_2 -\Delta_2'' s'' \eta' - \Delta_2'' \eta' s' +\mu''\delta_2''\eta' +v\Delta_2''\eta'= 0  \label{eq:casebrelc6}\\
	v K' - dL'  + L' d' - K'v' + M'_2\delta'_1 +\mu''\lambda' + \lambda''\mu' + \Delta_2''\Delta'_1 = 0  \label{eq:casebrelc7}
\end{align}

\begin{figure}[t]
    \centering
    \centerline{\includegraphics[scale=0.27]{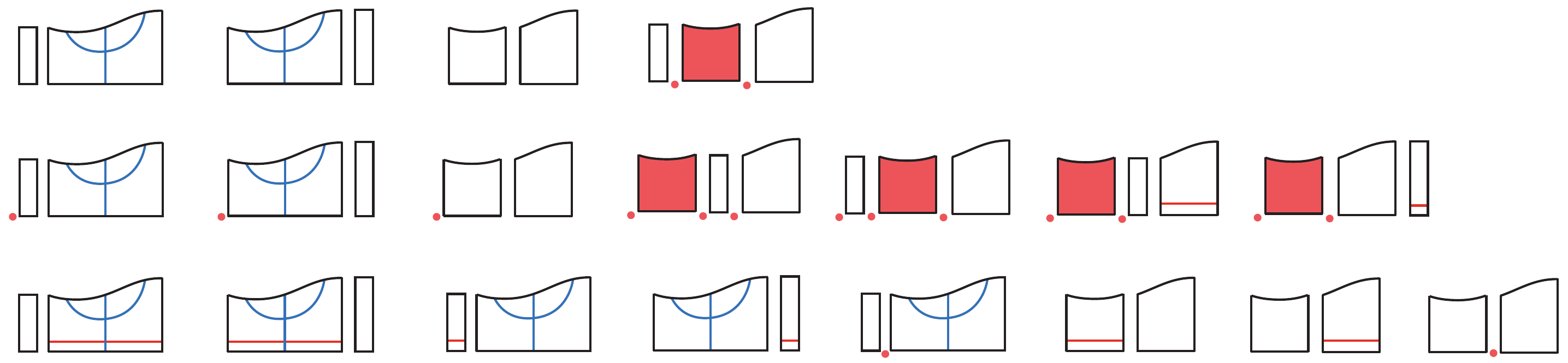}}
    \caption{{\small{Relations \eqref{eq:casebrelc5}--\eqref{eq:casebrelc7}.}}}
    \label{fig:casebrelset2}
\end{figure}

The relations that give $\widetilde d' \widetilde K'' + \widetilde K'' \widetilde d'' + \widetilde \lambda \widetilde \lambda''=0$ are:
\begin{align}
	d'K''  + K'' d'' + \lambda\lambda'' = 0  \label{eq:casebrelc8} \\
	-d' M''_2 - K''\delta''_2 +\lambda\Delta''_2 = 0  \label{eq:casebrelc9}\\
	\delta'_1 K'' + M''_1 d'' + \Delta_1\lambda'' = 0  \label{eq:casebrelc10} \\
		-\delta'_1 M_2'' - M''_1 \delta_2'' + \Delta_1\Delta_2'' + J''= 0  \label{eq:casebrelc11} \\
	v' K'' - d'L'' + \delta_2' M''_1 + L'' d'' - K''v'' + M''_2\delta''_1 +\mu\lambda'' + \lambda\mu''  = 0 \label{eq:casebrelc12}\\
	-v'M_2'' - L''\delta_2'' + v'M_2'' + L''\delta_2'' -\mu\Delta_2'' + \mu\Delta_2'' =0 \label{eq:casebrelc13}\\
	-d'v'M_2'' - d'L''\delta_2'' +d'\mu\Delta_2'' -K''v''\delta_2'' +\lambda\mu'' \delta_2'' + \lambda v \Delta_2'' +\delta_2' J'' = 0\label{eq:casebrelc14}
\end{align}

\begin{figure}[t]
    \centering
    \centerline{\includegraphics[scale=0.27]{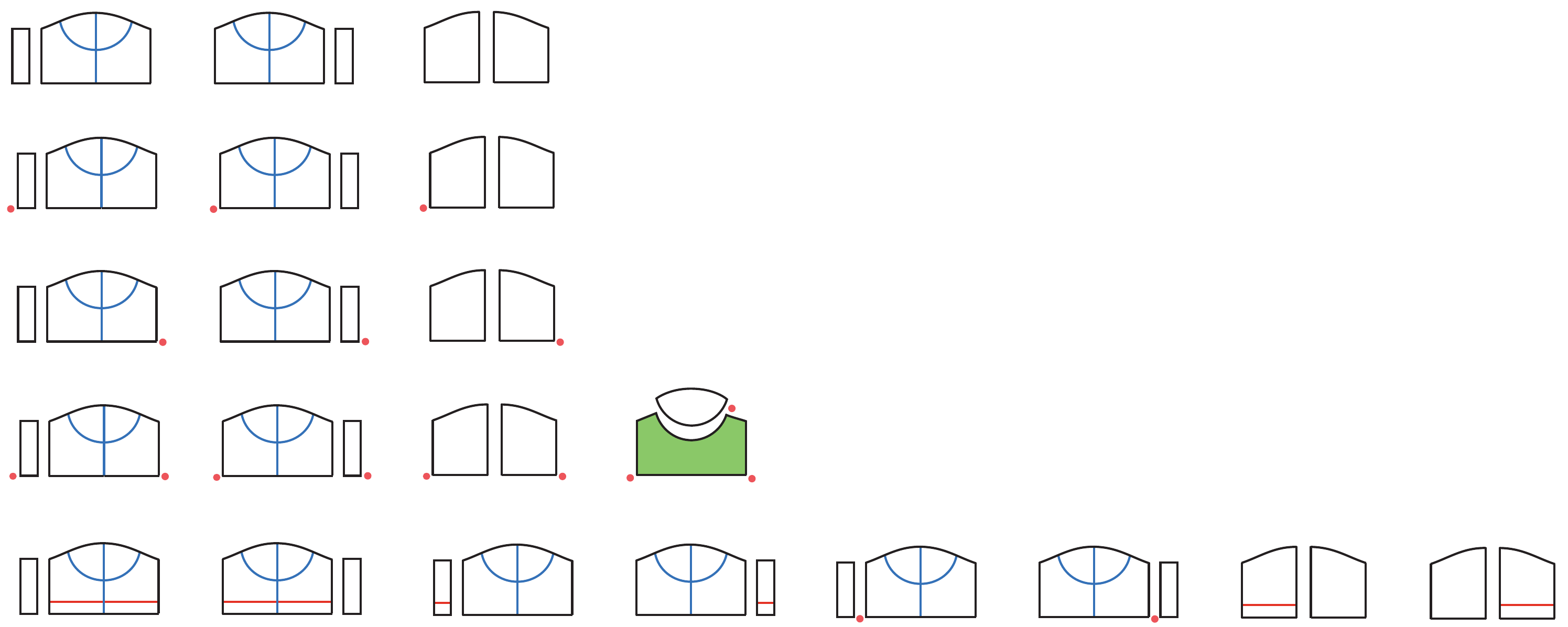}}
    \caption{{\small{Relations \eqref{eq:casebrelc8}--\eqref{eq:casebrelc12}.}}}
    \label{fig:casebrelset3}
\end{figure}

The first five of these relations are proved in the usual way, studying the ends of certain 1-dimensional moduli spaces, and in fact no obstructed reducibles appear. Relation \eqref{eq:casebrelc13} is identically true. The last relation, \eqref{eq:casebrelc14}, follows algebraically from other relations, similar to the case of \eqref{eq:caseiirelb6}.

Finally, we must show that the expressions \eqref{eq:caseirel1}--\eqref{eq:caseirel3}, as defined here, are homotopic to isomorphisms. The same strategy is followed. The reducible parts of these morphisms are the same as in Case A, and are isomorphisms. The irreducible parts are, respectively,
\begin{align}
	\lambda'' K - K'\lambda - \Delta_2''\eta'\Delta_1  \\
	\lambda K'  - K''\lambda' + M_2''\eta'\delta'_1 \\
	 \left[ \begin{array}{cc} \lambda'K'' - K \lambda'' & -\lambda' M_2'' - K\Delta_2'' \\[1mm] \eta'\delta'_1 K'' + \eta'\Delta_1 \lambda'' & -\eta' \delta'_1 M_2''+\eta' \Delta_1 \Delta_2''  \end{array} \right]
\end{align}
The arguments that these are homotopic to isomorphisms are completely analogous to those for the respective terms \eqref{eq:caseiiirrcomp3}, \eqref{eq:caseiiirrcomp1}, \eqref{eq:caseiiirrcompmatrix2}. This completes the proof of Case B, and hence of Theorem \ref{thm:exacttriangles-withbundles}, ignoring gradings. This issue is discussed in the next subsection.

\subsection{Euler characteristics}\label{subsec:euler}

Recall from Subsection \ref{subsec:absz2gr} that for an admissible link $(Y,L,\omega)$ where $Y$ is an integer homology 3-sphere, a choice of quasi-orientation on $L$ determines an absolute $\Z/2$-grading on $I^\omega(Y,L)$. We denote this homology group equipped with this absolute $\Z/2$-grading by $I_\fo^\omega(Y,L)$. In this subsection we prove the following, which implies Theorem \ref{thm:eulerchar-withbundles}. 

\begin{theorem}\label{thm:eulerchar-withbundles-signs}
	Let $(Y,L,\omega)$ be an admissible link, where $Y$ is an integer homology 3-sphere. Choose a quasi-orientation $\fo$ for $L$. If $\omega$ consists of a single arc which connects two distinct components $L_1,L_2\subset L$, then we have
	\begin{equation}
		\chi\left( I_{\fo}^\omega(Y,L) \right)= -2^{|L|-2}\text{{\emph{lk}}}_{\fo}(L_1,L_2) \label{eq:eulerchar-onearc}
	\end{equation}
	If the number of components $L_0\subset L$ with $\#( L_0\cap \partial \omega)\equiv 1 \pmod{2}$ is greater than $2$, then
	\[
		\chi\left( I_{\fo}^\omega(Y,L) \right) = 0
	\]
\end{theorem}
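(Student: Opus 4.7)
The plan for \textbf{Part 1} is to deduce the formula from the exact triangle of Theorem~\ref{thm:exacttriangles-withbundles}. Given $(Y, L, \omega)$ with $\omega$ an arc joining distinct components $L_1, L_2 \subset L$, I would choose a crossing of $L$ between $L_1$ and $L_2$ in a suitable diagram, introducing one by a local isotopy if none exists and arranging (by an auxiliary adjustment if necessary) that the two resolved links $L', L''$ have non-zero determinant; the degenerate case in which $L_1 \cup L_2$ is a split sublink, where $\text{lk}_\fo(L_1, L_2) = 0$, will be handled separately by observing that the relevant representation variety is empty. In the resulting skein triple both endpoints of $\omega$ then lie on the same merged component of each of $L'$ and $L''$, so the bundle is topologically trivial on these and $I(L'), I(L'')$ are defined as in Section~\ref{sec:links}. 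Applying Theorem~\ref{thm:exacttriangles-withbundles} and using that the bottom map of the triangle has degree $0$ modulo $2$ (in contrast to Theorem~\ref{thm:exacttriangles}) then yields the Euler characteristic identity recorded as \eqref{eq:eulercharskeinrel-withbundle}.

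The next step is to convert this identity into a statement about the linking number. Writing $\chi(I(L')) = 2^{|L|-3}\xi(L')$ and its analogue for $L''$, the desired equality \eqref{eq:eulerchar-onearc} reduces to the purely topological identity
\[
\tfrac{1}{2}\bigl(\xi(L') - \xi(L'')\bigr) - \widehat{\delta} \;=\; \mp\, \text{lk}_{\fo}(L_1, L_2).
\]
My plan is to derive this from the classical Murasugi crossing-change formula for $\sigma(L, o)$, averaged over quasi-orientations of the merged link: these quasi-orientations split into those restricting to locally consistent orientations of $L_1 \cup L_2$ at the chosen crossing (which extend across one of $S$ or $S'$) and those restricting to opposite orientations (which extend across the other), and tracking how the linking number $\text{lk}_\fo(L_1, L_2)$ enters when one passes between these two classes should yield the identity. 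This is where I expect the main combinatorial work to reside.

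For \textbf{Part 2}, my plan is to apply instead the Kronheimer--Mrowka exact triangle \eqref{eq:kmequivtriangle}, which allows all three links to carry the same singular bundle data. Assuming that at least three components of $L$ meet $\partial \omega$ in an odd number of points, I would pick a crossing of $L$, disjoint from $\omega$, between two such components; in both resolutions the new link $L'$ or $L''$ still has at least three components with odd intersection with $\partial \omega$, so the hypotheses are preserved. An induction on the pair (number of crossings, $|L|$) then applies, and since the three Euler characteristics of the triangle satisfy an additive relation, $\chi(I^\omega(Y,L)) = 0$ follows at each step from the vanishing at the previous step.

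The hardest part will be establishing the Murasugi-type signature identity in Part~1, which requires a careful combinatorial accounting of how $\xi$ changes under the two resolutions of a mixed crossing. A secondary but real obstacle, present in both parts, is arranging the non-zero determinant hypothesis on the auxiliary links $L', L''$ so that the exact triangles apply; this should always be achievable by choosing the crossing appropriately, but it adds a bookkeeping layer to the induction.
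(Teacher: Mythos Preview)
Your plan for Part~1 matches the paper's argument in the base case $|L|=2$: one resolves a crossing between $L_1$ and $L_2$, obtaining knots $L',L''$ (automatically of non-zero determinant), applies Theorem~\ref{thm:exacttriangles-withbundles}, and reduces to the knot formula $\chi(I(K))=4\lambda(Y)+\tfrac12\sigma(K)$. For $|L|>2$, however, your approach runs into a genuine gap. Resolving between $L_1$ and $L_2$ produces $(|L|{-}1)$-component links $L',L''$ with trivial bundle, and your argument then needs $\chi(I(Y,L'))=2^{|L'|-2}\xi(Y,L')+4\lambda(Y)$. This is established only for links in $S^3$ (Proposition~\ref{prop:irreuler}) or for knots in a general $Y$; for multi-component links in an arbitrary homology sphere it is the formula~\eqref{eq:eulmurasugigen}, which the paper leaves as an expectation. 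You also face the determinant hypothesis on $L',L''$, which is not automatic when they are not knots. The paper sidesteps both issues: for $|L|>2$ it resolves at a crossing \emph{not} involving $L_1$, so that $\omega$ remains a nontrivial arc on $L'$ and $L''$, and applies the Kronheimer--Mrowka triangle (Proposition~\ref{prop:kmtriangledegsofmaps}) with all three bundles admissible; this reduces directly to the case of fewer components and needs neither the determinant condition nor the trivial-bundle Euler characteristic.

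Your plan for Part~2 contains an error. Merging two components each meeting $\partial\omega$ oddly yields one component meeting it evenly, so the count $m_L$ drops by $2$, contrary to your claim that it stays at least $3$. Since $m_L$ is always even, the smallest case is $m_L=4$, and your move lands at $m=2$, outside the hypothesis; your induction has no base case. The paper's argument is different: its base case is $|L|=4$, $m_L=4$, with $\omega=\omega_1\cup\omega_2$ two arcs. Resolving between the two components joined by $\omega_1$ makes $\omega_1$ trivial on $L',L''$, leaving only the single arc $\omega_2$; thus Part~1 applies to $L',L''$, and the two resulting linking-number contributions cancel because the degrees in Proposition~\ref{prop:kmtriangledegsofmaps} force a subtraction. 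The general case then reduces to this one.
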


The grading calculations below are stated for irreducible homology groups, as our goal is to prove Theorem \ref{thm:eulerchar-withbundles-signs}. However, everything carries over without change to the corresponding $\cS$-complex morphisms. For simplicity of notation, we fix the integer homology 3-sphere $Y$ throughout, and often omit it from notation below.

Suppose $L,L',L''$ is an unoriented skein triple, where $|L|=|L'|+1=|L''|+1$. Let $\omega$ be an arc that connects the two components of $L$ that interact with the skein 3-ball. Assume $L'$ and $L''$ have non-zero determinant. Then we have one of the two exact triangles in Figure \ref{fig:exacttriangles-withbundles-irr}, which are the irreducible homology exact triangles derived from Theorem \ref{thm:exacttriangles-withbundles}. Recall that $I_+(L'')$ is the irreducible homology of the suspended $\cS$-complex $\Sigma \widetilde C(Y,L'')$.

Let $\fo'$ and $\fo''$ be quasi-orientations of $L'$ and $L''$ that agree as quasi-orientations when restricted to the components that do not interact with the skein 3-ball. We will slighly abuse notation and write $\fo'$ (resp. $\fo''$) for the quasi-orientation of $L$ that is compatible with $\fo'$ (resp. $L''$) via an oriented resolution to $L'$ (resp. $L''$). In what follows below, we will always choose our quasi-orientations in this manner.

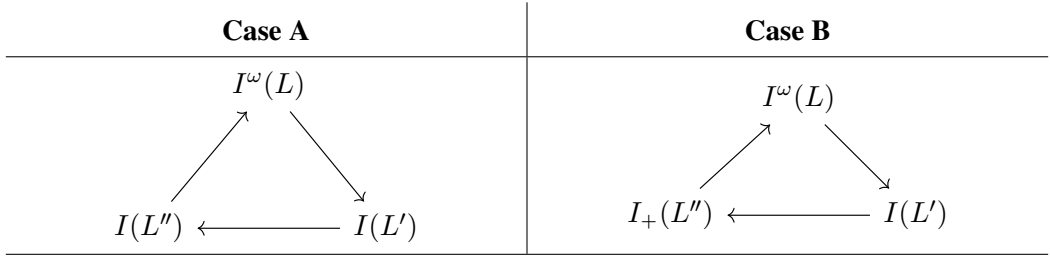
\begin{figure}[t]
    \centering
    \renewcommand{\arraystretch}{1.5}
    \begin{tabular}{ c | c }
        \textbf{Case A} & \textbf{Case B} \\ 
        \hline
        \begin{minipage}{6.5 cm}{
            \begin{equation*}
                \begin{tikzcd}[column sep=1.7ex, row sep=7ex, fill=none, /tikz/baseline=-10pt]
            & I^\omega(L)  \arrow[dr]  & \\
             I(L'')  \arrow[ur] & & I(L')  \arrow[ll] 
            \end{tikzcd}
        \end{equation*}
        }\end{minipage} & 
        \begin{minipage}{6.5 cm}{
            \begin{equation*}
               \begin{tikzcd}[column sep=1.7ex, row sep=5ex, fill=none, /tikz/baseline=-10pt]
            & I^\omega(L)  \arrow[dr]  & \\
            I_+(L'')  \arrow[ur] & & I(L')  \arrow[ll] 
            \end{tikzcd}
        \end{equation*}
        }\end{minipage} \\
        \hline 
    \end{tabular}
    \caption{{\small{The irreducible homology exact triangles of Theorem \ref{thm:exacttriangles-withbundles}.}}}
    \label{fig:exacttriangles-withbundles-irr}
\end{figure}

\begin{prop}\label{prop:degreesfornontrivbundletriangle}
With quasi-orientations as chosen above, the $\Z/2$-degrees of the maps in the Case A exact triangle of Figure \ref{fig:exacttriangles-withbundles-irr} are described as follows:
 \begin{equation}\label{eq:degstwocasesnontriv}
                \begin{tikzcd}[column sep=1.7ex, row sep=5ex, fill=none, /tikz/baseline=-10pt]
            & I^\omega_{\fo'}(L)  \arrow[dr,"\text{deg } 0"]  & \\
             I(L'')  \arrow[ur, "\text{deg } 1"] & & I(L')  \arrow[ll,"\text{deg } 0"] 
            \end{tikzcd}
\hspace{1.05cm}
                \begin{tikzcd}[column sep=1.7ex, row sep=5ex, fill=none, /tikz/baseline=-10pt]
            & I^\omega_{\fo''}(L)  \arrow[dr,"\text{deg } 1"]  & \\
             I(L'')  \arrow[ur, "\text{deg } 0"] & & I(L')  \arrow[ll,"\text{deg } 0"] 
            \end{tikzcd}
\end{equation}
The same result holds for the Case B exact triangle of Figure \ref{fig:exacttriangles-withbundles-irr}.
\end{prop}

\begin{proof}
	In what follows, it is convenient to write $I(L'')=I_{\fo''}^\omega(L'')$ and $I(L')=I_{\fo'}^\omega(L')$. Proposition \ref{prop:degmorphismnontriv} gives the mod $2$ degree of $I_{\fo'}^\omega(L)\to I_{\fo'}^\omega(L')$ as
	\[
	 \chi(S) +|L| + |L'| + (\partial c|_S)\cdot_{\fo\fo} (\partial c|_S)\; \, \equiv\;\,  (\partial c|_S)\cdot_{\fo'\fo'} (\partial c|_S) \equiv 0 \pmod{2}
	 \]
	 where $c=I\times \omega$. Indeed, in this case $S$ is orientable, and, by the choices of $\fo'$ on $L$ and $\fo'$ on $L'$, $(\partial c|_S)\cdot_{\fo'\fo'}(\partial c|_S)$ is computed with respect to orientations induced by one on $S$. Thus a pushoff of $(\partial c|_S)$ can be chosen disjoint from $(\partial c|_S)$. The degree of $I_{\fo''}^\omega(L)\to I_{\fo'}^\omega(L')$ then follows from Proposition \ref{prop:grchangenontriv}, noting $\text{gr}_{\fo'}-\text{gr}_{\fo''}\equiv \fo'\cdot \fo'' \equiv 1 \pmod{2}$. The other two non-horizontal maps in \eqref{eq:degstwocasesnontriv} are similar.
	 
	 Finally, consider $I_{\fo'}^\omega(L')\to I_{\fo''}^\omega(L'')$. Proposition \ref{prop:degmorphismnontriv} gives its degree as
	\[
	 \chi(S') +|L'| + |L''| + (\partial c|_{S'})\cdot_{\fo'\fo''} (\partial c|_{S'})\; \, \equiv\;\,  1 + (\partial c|_{S'})\cdot_{\fo'\fo''} (\partial c|_{S'}) \equiv 0 \pmod{2}
	 \]
	 The term $(\partial c|_{S'})\cdot_{\fo'\fo''} (\partial c|_{S'})$ is odd, as shown in Figure \ref{fig:mod2degofnonorsaddle}. Note that this cobordism map having even degree was already implicit in the proof of Theorem \ref{thm:exacttriangles-withbundles}: the presence of reducible instantons induces the non-triviality of the maps $\eta'$ and $J''$, which each have domain and co-domain supported in even degree.
\end{proof}

We will also make use of the exact triangle of \cite{KM:unknot}, which is the irreducible homology of \eqref{eq:kmequivtriangle}. This is the case where all bundles involved are non-trivial. The proof of the following is similar to that of Proposition \ref{prop:degreesfornontrivbundletriangle}.

\begin{prop}\label{prop:kmtriangledegsofmaps}
	Let $L,L',L''$ be a skein triple in an integer homology 3-sphere $Y$. Suppose bundle data $\omega$ is chosen uniformly, outside of the skein 3-ball, so that all three links are admissible. Without loss of generality, assume $L$ has one more component than $L'$ and $L''$. Choose quasi-orientations as described above. Then the $\Z/2$-degrees of the maps in the Kronheimer--Mrowka exact triangle for $L,L',L''$ are determined as follows:
		 \begin{equation*}\label{eq:degstwocasesnontrivkm}
                \begin{tikzcd}[column sep=1.7ex, row sep=5ex, fill=none, /tikz/baseline=-10pt]
            & I^\omega_{\fo''}(L)  \arrow[dr,"\text{deg  } \, \fo'\cdot_\omega\fo'' "]  & \\
             I_{\fo''}^\omega(L'')  \arrow[ur, "\text{deg  }\, 0"] & & I_{\fo'}^\omega(L')  \arrow[ll,"\text{deg  }\, 1 + \fo'\cdot_\omega\fo'' "] 
            \end{tikzcd}
\end{equation*}
where $\fo'\cdot_\omega\fo''$ is defined, with respect to the link $L$, as in \eqref{eq:ooprimeomega}.
\end{prop}

\begin{figure}[t]
    \centering
    \centerline{\includegraphics[scale=0.75]{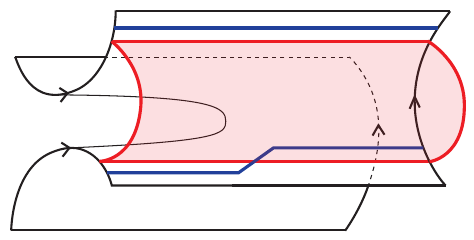}}
    \caption{{\small{Computation of $(\partial c|_{S'})\cdot_{\fo'\fo''} (\partial c|_{S'})$ where $c=I\times \omega$.}}}
    \label{fig:mod2degofnonorsaddle}
\end{figure}

\begin{proof}[Proof of Theorem \ref{thm:eulerchar-withbundles-signs}]
We assume that quasi-orientations appearing below are compatible with the conventions listed prior to Proposition \ref{prop:degreesfornontrivbundletriangle}. 

	The proof is by induction on the number of link components. Suppose $L$ has two components, $L_1$ and $L_2$, with the arc $\omega$ connecting them. We can fit $L$ into a skein triple $L,L',L''$ where $L'$ and $L''$ are knots. By \eqref{eq:degstwocasesnontriv} we have
	\[
		\chi\left( I_{\fo''}^\omega(L) \right) = \chi\left( I(L'') \right)  - \chi\left( I(L') \right) - \frac{1}{2}(\epsilon(L,L')  + \epsilon(L'',L))
	\]
	Note the last term is zero if we are in Case A of Figure \ref{fig:exacttriangles-withbundles-irr}, and is $-1$ in Case B, as determined by Lemma \ref{fig:exacttriangles-withbundles-irr}. Now by Theorem 1.3 of \cite{DS1} we have
	\[
		\chi\left( I(L'') \right) = 4\lambda(Y) + \frac{1}{2}\sigma(L'')
	\]
	where $\lambda(Y)$ is the Casson invariant of $Y$, and similarly for $L'$. We obtain
	\begin{align*}
		\chi\left( I_{\fo''}^\omega(L) \right) &= \frac{1}{2}\sigma(L'')  - \frac{1}{2}\sigma(L') - \frac{1}{2}(\epsilon(L,L')  + \epsilon(L'',L)) \\
		& =  \frac{1}{2}\left( - \sigma(L,\fo') + \sigma(L,\fo'')   \right) \\
		& = \frac{1}{2}\left(\text{lk}_{\fo'}(L_1,L_2) -  \text{lk}_{\fo''}(L_1,L_2) \right) = -\text{lk}_{\fo''}(L_1,L_2)
	\end{align*}
	where we used \eqref{eq:signaturechangeqo}. Thus the case of two component links is proved. 
	
	Now suppose $L$ has three components $L_1,L_2,L_3$. Assume $\omega$ is an arc connecting $L_1$ and $L_2$. We fit $L$ into a skein triple $L,L',L''$ where $L'$ and $L''$ each have two components, and $L_1\subset L$ does not intersect the skein 3-ball. Thus $L'=L_1\cup L_2'$ and $L''=L_1\cup L_2''$. We also specify our quasi-orientations so that they are represented by orientations which all agree on $L_1$, and which are related in the skein 3-ball as in Figure \ref{fig:ors3comp}. These choices give $\fo'\cdot_\omega\fo''\equiv 0 \pmod{2}$. Thus Proposition \ref{prop:kmtriangledegsofmaps} yields
			\begin{align*}
		\chi\left( I_{\fo''}^\omega(L) \right) &= \chi\left( I_{\fo'}^\omega(L')  \right)  + \chi\left( I_{\fo''}^\omega(L'')  \right) \\[1mm]
		&= - \text{lk}_{\fo'}(L_1,L_2')- \text{lk}_{\fo''}(L_1,L_2'')\\[1mm]
		& = -2\;\text{lk}_{\fo''}(L_1,L_2) = -2^{|L|-2} \; \text{lk}_{\fo''}(L_1,L_2)
	\end{align*}
	where the last line follows again by inspecting the linking numbers determined by Figure \ref{fig:ors3comp}. To see this, note that if $B$ is the skein $3$-ball, then the arc $L_2\setminus B$ oriented by $o''$ is oriented the same way when viewed as an arc in the diagrams for $(L',o')$ and $(L'',o'')$, while the arc $L_3\setminus B$ oriented by $o''$ is oriented in opposite ways in the diagrams for $(L',o')$ and $(L'',o'')$. This proves the statement for three component links. 
	
\begin{figure}[t]
    \centering
    \centerline{\includegraphics[scale=0.55]{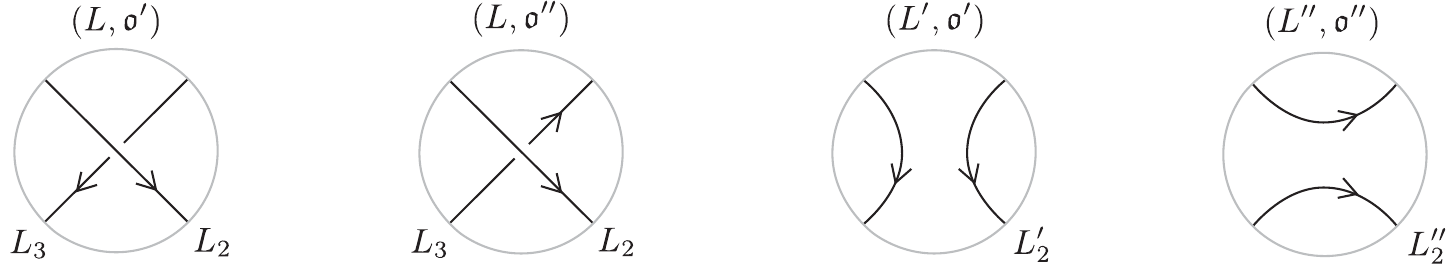}}
    \caption{{\small{The orientations inducing quasi-orientations that are used for a 3-component link. They agree on the component $L_1$ (not shown).}}}
    \label{fig:ors3comp}
\end{figure}
	
	Now suppose $L$ has $n>3$ components and $\omega$ is an arc joining two components $L_1,L_2$ of $L$. Assume the result in this case is proved for links with less than $n$ components. Choose a skein triple $L,L',L''$ such that $L'$, $L''$ have one few component than $L$, and the skein 3-ball does not interact with $L_1,L_2$. Recalling that $\fo',\fo''$ are chosen to agree away from the skein 3-ball, we have $\fo'\cdot_\omega\fo''\equiv 0\pmod{2}$ and Proposition \ref{prop:kmtriangledegsofmaps} yields
	\begin{align*}
		\chi\left( I_{\fo''}^\omega(L) \right) &= \chi\left( I_{\fo'}^\omega(L')  \right)  + \chi\left( I_{\fo''}^\omega(L'')  \right) \\[1mm]
		&= -2^{|L'|-2} \, \text{lk}_{\fo'}(L_1',L_2')-2^{|L''|-2} \,\text{lk}_{\fo''}(L_1'',L_2'')
	\end{align*}
where in the second line the induction hypothesis was used. For $i\in \{1,2\}$ we have written $L_i'$ and $L_i''$ for the copies of $L_i$ in the respective links $L'$ and $L''$. Each of the two final terms is equal to $-2^{|L|-3} \text{lk}_{\fo''}(L_1,L_2)$ by our setup, and the result follows. Thus the case in which $L$ is any link and $\omega$ is an arc connecting two distinct components is proved.
	
	Next, consider the case that $L$ has four components $L_1,L_2,L_3,L_4$ and $\omega=\omega_1\cup \omega_2$ where $\omega_1$ is an arc connecting $L_1$ and $L_2$, and $\omega_2$ is an arc connecting $L_3$ and $L_4$. Form a skein triple $L,L',L''$ where $L'$, $L''$ have one fewer component than $L$, and such that $L_1,L_2$ intersect the skein 3-ball. Then $\fo'\cdot_\omega\fo''\equiv 1 \pmod{2}$ on $L$, and Proposition \ref{prop:kmtriangledegsofmaps} gives
		\begin{align*}
		\chi\left( I_{\fo''}^\omega(L) \right) &= \chi\left( I_{\fo''}^\omega(L'')  \right)  - \chi\left( I_{\fo'}^\omega(L')  \right) \\[1mm]
		&= -2\; \text{lk}_{\fo''}(L_3'',L_4'')+2 \; \text{lk}_{\fo''}(L_3',L_4')  =0
	\end{align*}
	Next, let $L$ be any link with bundle data $\omega$, and $m_L$ be the number of components $L_0\subset L$ with $\#( L_0\cap \partial \omega)\equiv 1 \pmod{2}$. If $m_L\geq 4$, one can reduce to the four component case by a sequence of skein triples such that every link $L'$ appearing in each skein triple has $m_{L'}\geq 4$. Proposition \ref{prop:kmtriangledegsofmaps} then implies the result by induction. 
\end{proof}

\newpage

%!TEX root = main.tex

\section{Further constructions} \label{sec:further}

In this section we describe how a number of constructions in \cite{DS1,DS2} for instanton $\cS$-complexes of knots generalize to the case of non-zero determinant links, and at various points indicate how the exact triangles of Theorem \ref{thm:exacttriangles} fit in with these constructions. In the first subsection, we define equivariant homology groups, and prove Theorem \ref{thm:equivexacttriangle}. We then discuss local coefficient systems and a mild generalization of Theorem \ref{thm:localcoeffintro}. As an illustration, we study the case of alternating torus links. In the final subsection we introduce generalized Fr\o yshov-type invariants of links.

\subsection{Equivariant homology groups}\label{subsec:equivhomgrps}

In this subsection we discuss equivariant homology groups associated to an $\cS$-complex and their behavior under suspension. We then define the equivariant singular instanton Floer groups for links with non-zero determinant and deduce Theorem \ref{thm:equivexacttriangle}.

Fix a commutative ring $R$. Anaolgous to the constructions in \cite[Section 4]{DS1}, to any $\cS$-complex $(\widetilde C,\widetilde d, \chi)$ over $R$ we  may associate three chain complexes over $R[x]$:
\begin{equation}
	(\hrC,\widehat d), \qquad (\crC, \widecheck d), \qquad (\brC ,\overline d) \label{eq:largeeqcomplexes}
\end{equation}
These are the (large) equivariant chain complexes associated to $\widetilde C$; they are defined  by
\begin{align*}
	   \hrC & :=R[x]\otimes_R \widetilde C\hspace{1cm}  & \widehat d(x^i \cdot \zeta) & =-x^i \cdot \widetilde d\zeta+x^{i+1}\cdot \chi(\zeta)\\[1mm]
   \crC  & :=(R[\![x^{-1},x]/R[x])\otimes_R \widetilde C\hspace{1cm}  & \widecheck d(x^{i} \cdot \zeta) & = \phantom{-}x^{i} \cdot
  \widetilde d\zeta-x^{i+1}\cdot \chi(\zeta)\\[1mm]
 \brC & := R[\![x^{-1},x] \otimes_R \widetilde C\hspace{1cm}  & \overline d(x^{i} \cdot \zeta) & = -x^{i} \cdot
  \widetilde d\zeta-x^{i+1}\cdot \chi(\zeta)
\end{align*}
Here $R[\![x^{-1},x]$ is the Laurent power series ring in the variable $x^{-1}$ and with coefficients in $R$. The differentials given are extended $R$-linearly and to Laurent power series in the obvious way; similar remarks should be applied for formulas appearing below. These complexes inherit gradings using the tensor product grading and the convention that the grading of $x^i$ is $-2i$. For a morphism $\widetilde \lambda: \widetilde C\to \widetilde C'$ of $\cS$-complexes we obtain $\widehat\lambda:  \hrC\to  \hrC'$ defined by $\widehat\lambda(x^i\cdot \alpha) = x^i\cdot \widetilde \lambda(\alpha)$, and similarly $\widecheck\lambda,\overline\lambda$ are defined. These constructions are functorial, and are also compatible with chain homotopies, as in \cite[Proposition 4.9]{DS1}.

There are two triangles of chain complexes inducing exact triangles on homology:
 \begin{equation}\label{equiv-triangles}
                \begin{tikzcd}[column sep=4ex, row sep=8ex, fill=none, /tikz/baseline=-10pt]
            \crC \arrow[rr, "j"] & & \hrC \arrow[dl, "i"] \\
	& \brC \arrow[ul, "p"] &
            \end{tikzcd}
            \qquad\qquad
                            \begin{tikzcd}[column sep=4ex, row sep=8ex, fill=none, /tikz/baseline=-10pt]
           \widetilde C \arrow[rr, "z"] & & \hrC \arrow[dl, "x"] \\
	&  \hrC \arrow[ul, "y"] &
            \end{tikzcd}
 \end{equation}
Here $j(x^i\alpha) = 0$ for $i<-1$ and $j(x^{-1}\alpha)= -\chi(\alpha)$; $p$ is the projection map composed with the sign map $\epsilon$; and $i$ is the inclusion. The map $z$ is given by $\chi$, $x$ is multiplication by $x$, and $y$ is the projection to the constant term composed with the sign map. The maps $j$ and $z$ are only $R[x]$-module maps up to chain homotopy, while the other maps appearing in \eqref{equiv-triangles} are $R[x]$-module maps. The complex $\widetilde C$ is viewed as an $R[x]$-module with $x$ acting trivially. These triangles are functorial with respect to morphisms. In short, all of the constructions of \cite[Subsection 4.2]{DS1} directly apply to the more general definition of $\cS$-complexes considered in the present work.

Now assume, as we will for the rest of this section, that all $\cS$-complexes are $r$-perfect; these are all that are needed for the sequel. Then the construction of the small equivariant complexes of \cite[Subsection 4.3]{DS1} carries through with minor variations. Namely, to our $\cS$-complex $\widetilde C = C\oplus C[-1]\oplus \sfR$ we associate $R[x]$-chain complexes
\begin{equation}
(\fhrC,\widehat \fd), \qquad (\fcrC, \widecheck \fd), \qquad (\fbrC ,\overline \fd). \label{eq:smalleqcomplexes}
\end{equation}
To define these, first introduce the $R$-modules
\[
	\sfR[x]:=\sfR\otimes_R R[x], \qquad \sfR[\![x^{-1},x]:=\sfR\otimes_R R[\![x^{-1},x].
\]
The complexes \eqref{eq:smalleqcomplexes} are then defined as follows, where $\alpha\in C$ and $\theta\in \sfR$:
\begin{align*}
  \fhrC & :=C[-1]\oplus \sfR[x]  & \widehat \fd(\alpha, \theta x^i) & =(d\alpha-v^i\delta_2(\theta),0)\\[0.1mm]
  \fcrC & :=C \oplus \sfR[\![x^{-1},x]/\sfR[x]  & \widecheck \fd(\alpha, \theta x^i)  & =(d\alpha,
  \textstyle{\sum_{i\leqslant -1 }}\delta_1v^{-i-1}(\alpha)x^i)\\[1mm]
	\fbrC & := \sfR[\![x^{-1},x] & \overline \fd & = 0
\end{align*}
These small equivariant complexes are naturally homotopy equivalent to the respective large complexes \eqref{eq:largeeqcomplexes} as chain complexes over $R[x]$. For then small equivariant complexes \eqref{eq:smalleqcomplexes}, the $R[x]$-module structures are defined as follows:
\begin{align*}
  \fhrC: & &x\cdot (\alpha, \theta x^i) & =(v\alpha,\delta_1(\alpha) + \theta x^{i+1})\\[1mm]
  \fcrC: & & x\cdot (\alpha, \theta x^i)  & =\begin{cases} (v\alpha + \delta_2(\theta),  0 ) & i=-1 \\[1mm]   (v\alpha , \theta x^{i+1} ) & i \leqslant -2 \end{cases}
\end{align*}
while the module structure of $\fbrC$ is the usual one. The small equivariant package, apart from being ``smaller'', has the advantage that $\fbrC$ is exactly equal to $\sfR[\![x^{-1},x]/\sfR[x]$. 

To an $\cS$-complex $\widetilde C$ as above we have the equivariant homology groups $\widecheck H$, $\widecheck H$ and $\overline H = \sfR[\![x^{-1},x]$. These are graded $R[x]$-modules and may be defined as the homology groups of either the large or small equivariant chain complexes associated to $\widetilde C$.

The following result says that the equivariant homology groups of an $\cS$-complex are naturally isomorphic to the ones defined using the suspension complex, with the isomorphism between the ``bar'' homology groups given by multiplication by $x$.

\begin{prop}\label{prop:susequivar}
Let $\widetilde C = C \oplus C[-1] \oplus \sfR$ be an $\cS$-complex with equivariant homology groups $\widecheck H$, $\widehat H$ and let the suspension $\cS$-complex $\widetilde C_\Sigma$ have equivariant homology groups $\widecheck H_\Sigma$, $\widehat H_\Sigma$. There are natural isomorphisms $\widecheck f_\ast$  and $\widehat f_\ast$ which make the following diagram commute:
\begin{equation*} 
                \begin{tikzcd}[column sep=4ex, row sep=8ex, fill=none, /tikz/baseline=-10pt]
            	\cdots \arrow[rr, "p_\ast"]  && \widecheck H_\Sigma \arrow[rr, "j_\ast"] \arrow[d, "\widecheck f_\ast"] \arrow[rr]  &&\widehat H_\Sigma \arrow[rr, "i_\ast"] \arrow[d, "\widehat f_\ast"] && \sfR[\![x^{-1},x] \arrow[d, "x\cdot"] \arrow[rr, "p_\ast"] & & \cdots \\
            	\cdots \arrow[rr, "p_\ast"]  && \widecheck H \arrow[rr, "j_\ast"]\arrow[rr]  &&\widehat H \arrow[rr, "i_\ast"] && \sfR[\![x^{-1},x] \arrow[rr, "p_\ast"] & & \cdots
            \end{tikzcd}
            \end{equation*}	
\end{prop}

\begin{proof}
	Using Definition \ref{defn:suspension} we write $\widehat \fC_\Sigma = C[-3] \oplus \sfR[-2] \oplus \sfR[x]$ and $\widecheck\fC_\Sigma = C[-2] \oplus \sfR[-1] \oplus \sfR[\![x^{-1},x]/\sfR[x]$. We compute the respective differentials to be
\begin{align*}
	\widehat \fd_\Sigma(\alpha, \theta,  \theta' x^i) &  = (d\alpha  - \delta_2(\theta)- v^{i+1}\delta_2(\theta'), 0 ,0) \\[1mm]
	\widecheck \fd_\Sigma(\alpha, \theta,  \theta' x^i) &  = (d\alpha - \delta_2(\theta), 0 , \theta x^{-1} + \sum_{i\leqslant -2} \delta_1 v^{-i-2}(\alpha) x ^i)
\end{align*}
where $\alpha\in C$ and $\theta,\theta'\in \sfR$. The $R[x]$-module structures are given by
\begin{align*}
  \fhrC_\Sigma: & &x\cdot (\alpha, \theta,  \theta' x^i)  & =(v\alpha,\delta_1(\alpha), \theta + \theta' x^{i+1})\\[1mm]
  \fcrC_\Sigma: & & x\cdot (\alpha, \theta,  \theta' x^i)  & =\begin{cases} (v\alpha + v\delta_2(\theta'),  \delta_1(\alpha),0  ) & i=-1 \\[1mm]   (v\alpha , \delta_1(\alpha), \theta' x^{i+1} ) & i \leqslant -2 \end{cases}
\end{align*}
Define the following maps, where $\alpha\in C$ and $\theta,\theta_i\in \sfR$:
\begin{align*}
	\widehat{f}:\widehat{\fC}_\Sigma\to \widehat{\fC} &&  \widehat{f}(\alpha,\theta,\sum_{i\geq 0} \theta_i x^i)  & = (\alpha, \theta + \sum_{i\geq 0} \theta_i x^{i+1})\\
	\widehat{f}':\widehat{\fC}\to \widehat{\fC}_\Sigma && \widehat{f}'(\alpha,\sum_{i\geq 0} \theta_i x^i)  & = (\alpha, \theta_0, \sum_{i\geq 1} \theta_i x^{i-1})\\
	\widecheck{f}:\widecheck{\fC}_\Sigma\to \widecheck{\fC} &&  \widecheck{f}(\alpha,\theta,\sum_{i\leq -1} \theta_i x^i)  & = (\alpha + \delta_2(\theta_{-1}),\sum_{i\leq -2} \theta_i x^{i+1})\\
	\widecheck{f}':\widecheck{\fC}\to \widecheck{\fC}_\Sigma &&  \widecheck{f}'(\alpha,\sum_{i\leq -1} \theta_i x^i)  & = (\alpha, 0 , \sum_{i\leq -1} \theta_i x^{i-1})
\end{align*}
The following identities are direct computations: each of $\widehat{f}$, $\widehat{f}'$, $\widecheck{f}$, $\widecheck{f}'$ is a chain map; $\widehat{f}$ is an isomorphism of $R[x]$-chain complexes with inverse $\widehat{f}'$;  $\widecheck f x = x \widecheck f$; $\widecheck f \widecheck f' = 1$ and $1-\widecheck f' \widecheck f = \widecheck K \widecheck \fd_\Sigma  + \widecheck \fd_\Sigma  \widecheck K$; and $\widecheck f x - x \widecheck f = \widecheck \fd_\Sigma \widecheck K' + \widecheck K' \widecheck \fd$. Here $\widecheck K$ and $\widecheck K'$ are defined by

\begin{align*}
	\widecheck K(\alpha ,\theta, \sum_{i\leq -1} \theta_i x^i ) &= ( 0, \theta_{-1} , 0) \\
	\widecheck K'(\alpha , \sum_{i\leq -1} \theta_i x^i ) &= (0, -\theta_{-1} , 0)
\end{align*}
Next we recall that the maps $i_\ast$, $j_\ast$, $p_\ast$ have the following representations on the level of small equivariant chain complexes:
\begin{align*}
	\mathfrak{i}:\widehat{\fC}\to \overline{\fC} &&  \mathfrak{i}(\alpha,\sum_{i\geq 0} \theta_i x^i)  & = \sum_{i\leq -1 }\delta_1 v^{-i-1}(\alpha)x^i + \sum_{i\geq 0 }\theta_i x^i\\
	\fj:\widecheck{\fC} \to \widehat{\fC} &&  \fj(\alpha,\sum_{i\leq -1 }\theta_i x^i)  & = (-\alpha, 0)\\
	\fp:\overline{\fC} \to \widecheck{\fC} &&  \fp(\sum_{i\in \Z} \theta_i x^i)  & = (\sum_{i\geq 0} v^i \delta_2(\theta_i), \sum_{i\leq -1} \theta_i x^i ) 
\end{align*}
Then $\mathfrak{i}\widehat{f} = x \circ \mathfrak{i}_\Sigma$, $\fp \circ x = \widecheck f \fp_\Sigma$, and $\widehat f \fj_\Sigma - \fj \widecheck f = K \widecheck \fd_\Sigma + \widehat \fd K$ where the homotopy $K$ is defined by $K(\alpha,\theta,\textstyle{\sum_{i\leq -1} }\theta_i x^i) = (0,-\theta_{-1})$. Applying homology gives the result.
\end{proof}

We have already remarked in Proposition \ref{eq:tildehomologysuspension} that the homology of $\widetilde C$ is invariant under suspension. The exact triangle induced on homology from the right triangle in \eqref{equiv-triangles} can also be shown to be isomorphic to the one associated to the suspension $\cS$-complex. This essentially follows because $\widehat{f}$ commutes with multiplication by $x$.

\begin{definition}\label{defn:equivarianthomologygroups}
	Let $(Y,L)$ be a based link in a homology 3-sphere with non-zero determinant. Define the equivariant singular instanton Floer groups associated to $(Y,L)$, denoted
\[
	\widehat{I}(Y,L), \qquad \;\;\; \widecheck I(Y,L), \qquad \;\;\; \overline{I}(Y,L)
\] 
to be the equivariant homology groups associated to the $\cS$-complex $\widetilde C(Y,L)$. These are $\Z/2$-graded $R[x]$-modules. $\diamd$
\end{definition}

A choice of quasi-orientation of $(Y,L)$ lifts these to $\Z/4$-graded modules. Note
\[
	\overline{I}(Y,L) \cong R^{2^{|L|-1}}\otimes_R R[\![x^{-1},x]
\]
Definition \ref{defn:equivarianthomologygroups} extends to based admissible links $(Y,L,\omega)$. However, this case is much simpler: the absence of reducibles implies that the associated equivariant homology $\overline I$ is zero, while $ \widehat{I}$ and $\widecheck{I}$ are naturally isomorphic as $R[x]$-modules. For this reason, we focus on the case of non-zero determinant links in homology $3$-spheres.

\begin{lemma}\label{lemma:equivtriangle}
	Suppose $\cS$-complexes $\widetilde C,\widetilde C', \widetilde C''$ fit into an exact triangle as in Definition \ref{def:exacttriangle}. Then $\widehat C,\widehat C', \widehat C''$ fit into an exact triangle of $R[x]$-chain complexes, and similarly for the other two flavors of equivariant chain complexes.
\end{lemma}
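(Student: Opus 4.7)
The strategy is to show that the functorial constructions $\widetilde C \mapsto \widehat C$, $\widetilde C \mapsto \widecheck C$ and $\widetilde C \mapsto \overline C$ transport exact triangles of $\cS$-complexes to exact triangles of $R[x]$-chain complexes. I will describe the argument for the ``hat'' flavor; the ``check'' and ``bar'' flavors are entirely analogous.

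First I would apply the functor $\widetilde C \mapsto \widehat C$, along with its extension to morphisms and chain homotopies, to the data of the exact triangle. This produces $R[x]$-chain complexes $\widehat C_i$, together with morphisms $\widehat \lambda_i : \widehat C_i \to \widehat C_{i-1}$ and chain homotopies $\widehat K_i : \widehat C_i \to \widehat C_{i-2}$, all obtained by tensoring with $R[x]$ and extending $R$-linearly. The relations $\widehat d_i^2 = 0$, $\widehat d_{i-1}\widehat \lambda_i - \widehat \lambda_i \widehat d_i = 0$ and $\widehat d_{i-2}\widehat K_i + \widehat K_i \widehat d_i + \widehat \lambda_{i-1}\widehat \lambda_i = 0$ then need to be verified. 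The interesting point is that $\widehat d$ differs from the $R[x]$-linear extension of $\widetilde d$ by the term $x \cdot \chi$; however, the defining identities $\chi^2=0$, $\chi_{i-1}\widetilde \lambda_i = \widetilde \lambda_i \chi_i$ and $\chi_{i-2}\widetilde K_i + \widetilde K_i \chi_i = 0$, taken together with \eqref{eq:exacttriangle1}--\eqref{eq:exacttriangle3}, are exactly what is needed for the extra $x\chi$ contributions to cancel; this is the functoriality recorded in \cite[Section 4]{DS1} and reviewed in Subsection \ref{subsec:equivhomgrps}, which carries over verbatim to our slightly generalized notion of $\cS$-complex.

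Next I would verify the final clause of Definition \ref{def:exacttriangle}: for each $i \in \Z/3$, the $R[x]$-morphism
\[
\widehat \lambda_{i-2}\widehat K_i - \widehat K_{i-1}\widehat \lambda_i : (\widehat C_i, \widehat d_i) \to (\widehat C_i, -\widehat d_i)
\]
is $R[x]$-chain homotopic to an isomorphism. By assumption on the original exact triangle, there exists a map $\widetilde N_i : \widetilde C_i \to \widetilde C_i$ commuting with $\chi_i$ such that the expression \eqref{eq:antischainhomotopy} is an isomorphism of $\cS$-complexes. I would extend $\widetilde N_i$ $R$-linearly to $\widehat N_i : \widehat C_i \to \widehat C_i$. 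Since $\widetilde N_i$ commutes with $\chi_i$, the map $\widehat N_i$ is compatible with the $x\chi$ term in $\widehat d_i$, and the $\cS$-complex relation \eqref{eq:antischainhomotopy} promotes to an analogous identity over $R[x]$. What has to be checked is that the resulting isomorphism of $\cS$-complexes yields an isomorphism of $R[x]$-chain complexes. This is essentially automatic: an $\cS$-complex isomorphism is in particular an $R$-module isomorphism commuting with $\chi$, and $R$-linear extension preserves both of these properties (noting that $R[x]$ is free as an $R$-module); the $\chi$-equivariance guarantees that the extension intertwines the twisted differentials.

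The part of the argument that requires the most care is the last step, namely checking that ``homotopic to an isomorphism'' survives the passage from $\cS$-complexes to $R[x]$-complexes. The subtlety is that the equivariant package turns $\cS$-complex chain homotopies into $R[x]$-linear chain homotopies only because chain homotopies satisfy $\chi_{i-1}\widetilde K + \widetilde K \chi_i = 0$ rather than commuting with $\chi$, producing the expected sign but no obstruction. Once this is in place, the remaining verification is bookkeeping. For the $\widecheck C$ version, the same reasoning applies to $\widecheck d(x^i \zeta) = x^i \widetilde d \zeta - x^{i+1}\chi(\zeta)$, and for $\overline C$ the bar differential $\overline d(x^i \zeta) = -x^i \widetilde d\zeta - x^{i+1}\chi(\zeta)$ is handled identically. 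Functoriality of the long exact sequences in \eqref{equiv-triangles}, which already holds for morphisms and homotopies by \cite[Section 4]{DS1}, then delivers the desired exact triangles of $R[x]$-modules.
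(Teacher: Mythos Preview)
Your proposal is correct and takes essentially the same approach as the paper. The paper's proof is a one-line appeal to the functoriality of the equivariant chain complex constructions as recorded in \cite[Proposition 4.9]{DS1}; you have simply unpacked what that functoriality amounts to in this setting, including the point that the $\cS$-chain homotopy $\widetilde N_i$ commutes with $\chi_i$ (because the target $\cS$-complex is $(\widetilde C_i,-\widetilde d_i,-\chi_i)$), which is exactly what is needed for the extension to survive the $x\chi$ twist in $\widehat d$.
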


\begin{proof} This follows from the functoriality of the equivariant homology constructions from an $\cS$-complex as described in \cite[Proposition 4.9]{DS1}.
\end{proof}

\begin{proof}[Proof of Theorem \ref{thm:equivexacttriangle}]
	This follows from Theorem \ref{thm:exacttriangles}, that equivariant homology groups are invariant under suspension as shown in Proposition \ref{prop:susequivar}, and Lemma \ref{lemma:equivtriangle}.
\end{proof}

\begin{figure}[t]
    \centering
    \begin{equation*}
                \begin{tikzcd}[column sep=4ex, row sep=7ex, fill=none, /tikz/baseline=-10pt]
            	\cdots \arrow[rr]  &&  \widehat I(Y,L)  \arrow[rr] \arrow[d]  &&  \widehat I(Y,L) \arrow[rr] \arrow[d] && I^\natural(Y,L) \arrow[d] \arrow[rr] & & \cdots \\
            	            	\cdots \arrow[rr]  &&  \widehat I(Y,L')  \arrow[rr] \arrow[d]   &&  \widehat I(Y,L') \arrow[rr] \arrow[d] && I^\natural(Y,L') \arrow[d] \arrow[rr] & & \cdots \\
            	            \cdots \arrow[rr]  &&  \widehat I(Y,L'')  \arrow[rr] \arrow[d]   &&  \widehat I(Y,L'') \arrow[rr] \arrow[d] && I^\natural(Y,L'') \arrow[d] \arrow[rr] & & \cdots \\
            	                        	            	\cdots \arrow[rr]  &&  \widehat I(Y,L)  \arrow[rr]   &&  \widehat I(Y,L) \arrow[rr]  && I^\natural(Y,L)  \arrow[rr] & & \cdots 
            \end{tikzcd}
\end{equation*}
    \caption{{\small{The exact triangle for the equivariant homology group $\widehat{I}$ fits into an exact triangle with Kronheimer and Mrowka's exact triangle for $I^\natural$. Rows and columns are exact.}}}
    \label{fig:exacttrianglefromnatural}
\end{figure}

The exact triangle for $\widehat{I}$ in Theorem \ref{thm:equivexacttriangle} also fits into an exact triangle relating it to Kronheimer and Mrowka's exact triangle for $I^\natural$. This follows from the above discussion and the identification of $H(\widetilde C(Y,L))$ with $I^\natural(Y,L)$ via Theorem \ref{eq:inatural}. See Figure \ref{fig:exacttrianglefromnatural}.

The following will be useful when discussing concordance invariants for links.

\begin{prop}\label{prop:equivlevel}
Let $(W,S):(Y,L)\to (Y',L')$ be a cobordism between links in homology 3-spheres with non-zero determinant, and $b^1(W)=b^+(W)=0$. Suppose there is given an embedded path $\gamma\subset S$ between basepoints on $L$, $L'$, and also singular bundle data $c$. If this cobordism is unobstructed then there is an associated commutative diagram
\begin{equation*}
                \begin{tikzcd}[column sep=4ex, row sep=8ex, fill=none, /tikz/baseline=-10pt]
            	\cdots \arrow[rr, "p_\ast"]  && \widecheck{I}(Y,L)  \arrow[rr, "j_\ast"] \arrow[d, "\widecheck \lambda_\ast"]   && \widehat{I}(Y,L) \arrow[rr, "i_\ast"] \arrow[d, "\widehat \lambda_\ast"] && \overline{I}(Y,L) \arrow[d, "\overline\lambda_\ast"] \arrow[rr, "p_\ast"] & & \cdots \\
            	\cdots \arrow[rr, "p_\ast"]  && \widecheck{I}(Y',L') \arrow[rr, "j_\ast"]  &&\widehat{I}(Y',L') \arrow[rr, "i_\ast"] &&  \overline{I}(Y',L') \arrow[rr, "p_\ast"] & & \cdots
            \end{tikzcd}
\end{equation*}
with vertical maps induced by the cobordism data. If $(W,S,c)$ is negative definite of height $k\in \Z_{\geq 0}$ then identifying $\overline{I}(Y,L)=\sfR(Y,L)\otimes R[\![x^{-1},x]$ we have
\[
	\overline{\lambda}_\ast = \sum_{i=-\infty}^k f_i\otimes x^i, \quad\quad  \text{ for some }\qquad f_i:\sfR(Y,L)\longrightarrow \sfR(Y',L').
\]
Furthermore, $f_i=\tau_i$ with notation as in Subsection \ref{subsec:poslevels}. In particular, if the cobordism is strong height $k$, then $f_k$ is an isomorphism. The same holds in the obstructed case when $(W,S,c)$ is negative definite of height $k=-1$. In the cobordism is odd, then $\overline{\lambda}_\ast=0$.
\end{prop}

\begin{proof}
	The case of height $0$ cobordisms follows from the construction of $\widetilde \lambda$ and a straightforward generalization of \cite[Corollary 4.12]{DS1}. The unobstructed case follows from the height $0$ case by Proposition \ref{prop:levelsuspension}, Theorem \ref{thm:unobsmaps}, and Proposition \ref{prop:susequivar}. The obstructed case follows from Proposition \ref{decomp-neg-level}, the construction of Section \ref{sec:obstructed}, and Proposition \ref{prop:susequivar}.
\end{proof}

The cobordism maps above, all defined for ``negative definite'' cobordisms, i.e. ones for which $b^1(W)=b^+(W)=0$, are independent of metric and perturbation data; the usual continuation arguments relate different choices by $\cS$-chain homotopy.

\subsection{Local coefficients}\label{subsec:local}

In \cite{DS1,DS2} an additional layer of structure is defined for the equivariant singular instanton invariants of a knot: a local coefficient system modelled on the construction of \cite{KM:YAFT}. As explained below, this structure extends to the more general setting of $\cS$-complexes for links. At the end of the subsection we also describe the Chern--Simons filtration in this setting.

Define the local coefficient ring for an $n$-component link to be 
\[
	\sS_n = \Z[T_1^{\pm 1}, \ldots, T_n^{\pm 1}].
\]
The general local coefficient system, denoted $\Delta_n$, is defined on the configuration space of singular $SU(2)$ connections for a link $(Y,L)$. First label the components of the link $L_1,\ldots, L_n$. Then $\Delta_n$ is defined over a singular connection class $\alpha$ to be the $\sS_n$-module
\begin{equation}\label{eq:loccoefffiber}
	(\Delta_n)_\alpha = \sS_n\cdot T_1^{\text{hol}_{L_1}(\alpha)}\cdots T_n^{\text{hol}_{L_n}(\alpha)}	
\end{equation}
Here the terms $\text{hol}_{L_j}(\alpha)$ are well-defined in $\R/\Z$; any lifts to $\R$ will make do for the above expression. The term $\text{hol}_{L_j}(\alpha)$ is roughly defined as follows. First, we frame our link. The connection $\alpha$ is compatible, in the limit near $L_j$, with a reduction $F_j\oplus F_j^{-1}$ of the bundle; the factor $F_j$ is distinguished as the one for which the holonomy of $\alpha$ around meridians (determined by the framing) near $L_j$ is given by $i\in U(1)$. Then $\text{hol}_{L_j}(\alpha)$ is twice the holonomy of $\alpha$ restricted to $F_j$ along the longitude of $L_j$. In general, changing the framing of $L$ will change this local coefficient system $\Delta_n$ by a canonical isomorphism which is multiplication by a monomial in the $T_i's$. For details see \cite{KM:YAFT, DS1}.

To choose the framing of $L$ in the definition of the local system, we may choose for each component $L_j$ a Seifert surface in $Y$ and use the induced framing of $L_j$. This defines $\Delta_n$ and does not depend on any choice of orientation of the link. With this framing, 
let us compute \eqref{eq:loccoefffiber} for $\alpha=\theta_{\fo}$ a reducible. Such a reducible is compatible with a splitting $F\oplus F^{-1}$ over $Y$ which extends the splittings $F_j\oplus F_{j}^{-1}$ near each component mentioned above. Then 
\begin{equation}\label{eq:holofreducible}
	\text{hol}_{L_j}(\theta_\fo) \equiv  \frac{1}{2}\sum_{i\neq j} \text{lk}(L_i,L_j) \pmod{\Z},
\end{equation}
as the longitude for the Seifert surface of $L_j$ is mod 2 homologous to $\sum_{i\neq j} \text{lk}(L_i,L_j)\mu_i$ where $\mu_i$ is the meridian for $L_i$. While \eqref{eq:holofreducible} is a half-integer in general, we have
\begin{equation}\label{eq:holofreduciblesum}
	\sum_{i=1}^n \text{hol}_{L_i}(\theta_\fo)\equiv 0 \pmod{\Z}.
\end{equation}

 For $(Y,L)$ a non-zero determinant link in a homology sphere define
\[
	\widetilde C(Y,L,p;\Delta_n)	
\]
to be the singular instanton $\cS$-complex of $(Y,L)$, with some basepoint $p$, defined with the local coefficent system $\Delta_n$; this is a $\Z/2$-graded $\cS$-complex over $\sS_n$. The construction follows Section \ref{subsec:linkinv} with modifications as in \cite[Section 7]{DS1}. Further, a choice of quasi-orientation of $L$ can also be used to lift the grading to $\Z/4$.

The local coefficient systems above are functorial with respect to component-wise cobordisms. Suppose $(Y,L)$ and $(Y',L')$ are two links as above each with $n$ components, labelled $L_1,\ldots, L_n$ and $L'_1,\ldots,L'_n$ respectively. Consider a cobordism
\begin{equation}
	(W,S):(Y,L)\to (Y',L'), \qquad S = \bigsqcup_{i=1}^n S_i \qquad S_i: L_i\to L_i'  \label{eq:componentwise}
\end{equation}
Each $S_i$ in \eqref{eq:componentwise} is allowed to be non-orientable.
Choose singular bundle data $c$ which is trivial in a neighborhood of $S$. Then we obtain a morphism of local coefficient systems. In particular, if $(W,S,c)$ is negative definite of height $k\geq -1$, the constructions of Theorem \ref{thm:unobsmaps} for ($k\geq 0$) and Section \ref{sec:obstructed} (for $k=-1$) yield
\begin{equation}\label{eq:morphismlocalcoeff}
	\widetilde \lambda_{(W,S,c),\gamma} : \widetilde C(Y,L,p;\Delta_n)	\longrightarrow \widetilde C(Y',L',p';\Delta_n)	
\end{equation}
where $\gamma$ is some path in $W$ from the basepoint $p$ to $p'$; as usual, these are often omitted from notation. The map $\widetilde \lambda_{(W,S,c),\gamma}$ is a height $k$ morphism of $\cS$-complexes over $\sS_n$. In particular, all of the relations in Sections \ref{sec:links} and \ref{sec:obstructed} for cobordisms carry through unchanged. In the case that $S$ is non-orientable, to construct \eqref{eq:morphismlocalcoeff}, we again follow \cite{DS1} but with modifications from \cite[Subsection 2.4]{km-barnatan}.

As an ingredient in the construction of cobordism maps from earlier, recall from Subsection \ref{unob-cob-map} that $\eta_i(W,s,c):\sfR(Y,L)\to \sfR(Y',L')$ was defined by letting 
\begin{equation}\label{eq:morphismlocalcoeffred}
	\langle \eta_i(W,S,c),\theta_{\fo},\theta_{\fo'}\rangle
\end{equation}
equal a signed count of index $2i-1$ reducible instantons with reducible limits $\theta_\fo$ and $\theta_{\fo'}$. In the local coefficient setting, \eqref{eq:morphismlocalcoeffred} is an element of the ring $\sS_n$ which in general is a sum of monomials in the variables $T_i$. In particular, each reducible instanton contributes one such monomial, the powers of which are determined by monopole numbers, with a coefficient of $\pm 1$ determined by orientation conventions. There is a cohomological formula for \eqref{eq:morphismlocalcoeffred}, generalizing the case for knots as given in \cite{DS2}. However, we will not need an explicit formula for what follows.

Suppose $\sT$ is a module over $\sS_n$. Define a local coefficient system $\Delta_{n,\sT}=\Delta_n\otimes \sS_n$ by base change. An important special case is when $\sT$ is the ring
\begin{equation}\label{eq:coeffringt}
	\sS = \Z[ T^{\pm 1}]
\end{equation}
The module structure over $\sS_n$ is defined by $T_i\mapsto T$. We write $\Delta := \Delta_{n,\sS}$. Thus
\[
	\widetilde C(Y,L;\Delta)
\]
is an $\cS$-complex over $\sS$. The benefit of $\Delta$ is that there is functoriality for all cobordisms $(W,S)$ with bundle data trivial near $S$, not just component-wise cobordisms. Thus we obtain morphisms \eqref{eq:morphismlocalcoeff} without the constraint that $S$ is of the form \eqref{eq:componentwise}. Note that for $\alpha$ a reducible, $\Delta_\alpha=\sS$, as follows from \eqref{eq:loccoefffiber} and \eqref{eq:holofreduciblesum}.

The cobordisms that appear in the exact triangles induce maps with respect to the local coefficient systems $\Delta$. Consider a skein triple $L$, $L'$, $L''$ where $|L|-1=|L'|=|L''|$. Suppose $L$ has $n+1$ components. Identify the $n-1$ components of the links which do not interact with the skein region, and label them by $\{1,\ldots,n-1\}$. The remaining component for $L',L''$ is labelled by $n$, while $L$ has two remaining components, labelled by $n,n+1$. For $L'$ and $L''$ use the local system $\Delta_n$, and for $L$ use the one obtained from $\Delta_{n+1}$ by identifying $T_{n}=T_{n+1}$; also call this $\Delta_n$. The following implies Theorem \ref{thm:localcoeffintro}.

\begin{theorem}\label{thm:localcoeffgen}
	There are exact triangles just as in Theorem \ref{thm:exacttriangles} but where each $\cS$-complex has the local coefficient system $\Delta_{n}$ over the ring $\sS_n$, where $n=\min\{|L|,|L'|,|L''|\}$.
\end{theorem}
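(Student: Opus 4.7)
The plan is to carry out the proofs of Section \ref{sec:proofs} with each moduli space count replaced by its $\Delta_n$-weighted analogue over $\sS_n$. All the building blocks needed—the $\cS$-complexes $\widetilde C(Y,L;\Delta_n)$, the height $k$ cobordism morphisms for negative definite cobordisms with $k\geq 0$ in the local coefficient setting, and the height $-1$ morphisms from obstructed gluing theory—have already been set up: the unobstructed case is treated in Subsection \ref{subsec:local}, while the obstructed gluing theory of Section \ref{sec:obstructed} is insensitive to the choice of coefficient system. The work thus reduces to verifying that the cobordisms appearing in the proof are compatible with the stated coefficient systems, that the relations of Section \ref{sec:proofs} still hold, and that the final anti-chain homotopies are isomorphisms over $\sS_n$.

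First, note that the two components of $L$ meeting the skein $3$-ball get fused into the single component of $L'$ (resp.\ $L''$) on which they resolve, and with the identification $T_n=T_{n+1}$ the saddle cobordisms $S,S',S''$, the composites $T,T',T''$, and the triple composites $V,V',V''$ are all component-wise. Consequently each of the cobordism maps appearing in Subsection \ref{subsec:triangleprelim} and in the proofs of Cases I and II, including the reducible-counting maps $\eta,\eta'',J,J',\nu'$ and the pentagonal family map $Q$, admits a well-defined $\Delta_n$-weighted analogue over $\sS_n$. The degree and parity of these morphisms are unchanged, since gradings do not involve the coefficient system.

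Second, the boundary-counting arguments from Section \ref{sec:proofs}—for relations \eqref{eq:caseirel4}--\eqref{eq:caseirel15half} in Case I, for their analogues in Case II from Subsection \ref{subsec:caseii}, and for the obstructed relations from Section \ref{sec:obstructed}—go through verbatim because of a single multiplicativity principle: the $\Delta_n$-weight of a broken instanton $[A_1,A_2]$ equals the product of the weights of its pieces, by additivity of the holonomy terms $\text{hol}_{L_j}(\,\cdot\,)$ appearing in \eqref{eq:loccoefffiber}. In particular, the single instanton in $M^+(B^4,F_0;\theta)_0$ contributes the unit $\epsilon_0$ times a monomial in the $T_i$'s, which is still a unit in $\sS_n$, so the analogue of \eqref{eq:oneinstantonishere} still produces a valid term in the relations.

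The main obstacle is showing that the three anti-chain homotopies in each case are chain homotopic to isomorphisms of $\cS$-complexes over $\sS_n$. By Lemma \ref{lemma:morphismuseful} it suffices to treat the irreducible and reducible components separately. For the reducible components, the analogues of \eqref{eq:caseiredrels} take the form $\eta''J\pm J'\eta=U_\sfR$, $\eta J'=U_{\sfR'}$, $J\eta''=U_{\sfR''}$, where each $U_{(\cdot)}$ is diagonal in the natural quasi-orientation basis with diagonal entries monomials in $T_1^{\pm 1},\ldots,T_n^{\pm 1}$, hence units of $\sS_n$; this is because each minimal reducible giving a nonzero entry contributes a single such monomial, and the bijections between these reducibles and quasi-orientations still hold. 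For the irreducible components, the pentagon-family arguments (Lemma \ref{lemma:caseilaststep}, Lemma \ref{lemma:qfamily1} and its variants) produce chain homotopies to maps $\mathbf N,\mathbf N',\mathbf N''$, and the argument from \cite[\S 7.3]{KM:unknot} that these are homotopic to isomorphisms is coefficient-system-independent. Finally, Case III is obtained from Case II by mirroring, which under the local coefficient system acts by $T_i\mapsto T_i^{-1}$, preserving $\sS_n$.
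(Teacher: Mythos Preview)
Your outline is largely correct and aligns well with the paper's approach: both proceed by lifting the constructions of Section \ref{sec:proofs} to the $\Delta_n$-weighted setting, checking compatibility of the cobordisms with the coefficient system, generalizing the notion of sign-change to diagonal automorphisms with monomial entries, and verifying the irreducible and reducible components separately via Lemma \ref{lemma:morphismuseful}.

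However, there is one genuine gap. You assert that ``the argument from \cite[\S 7.3]{KM:unknot} that [$\mathbf N,\mathbf N',\mathbf N''$] are homotopic to isomorphisms is coefficient-system-independent.'' This is precisely the step the paper singles out for detailed justification, because it is \emph{not} obviously coefficient-system-independent. Kronheimer and Mrowka's argument replaces the pair $(B^4,F_2)$, equipped with its $1$-parameter metric family, by $(B^4,F)$ where $F$ is a standard annulus; this replacement is justified by the degree-$\pm 1$ endpoint map \eqref{eq:endpointmapfornmap}. In the local coefficient setting, the instantons on $(B^4,F_2)$ carry nontrivial monopole numbers, so a priori the replacement could introduce uncontrolled powers of $T_n$. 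The paper resolves this by observing that the instantons in the interval component of $M^+(B^4,F_2)^{G_5}_1$ lie in a single connected component of the configuration space and hence share a common monopole number; thus $\mathbf N$ is chain homotopic to a fixed monomial (a unit in $\sS_n$) times the identity, up to generalized sign-change. The paper also notes that any circle components of this moduli space contribute with signed count zero and so cause no trouble, and offers an alternative route via the double branched cover. Your proposal would be complete once this point is addressed.
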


\begin{proof}
The saddle cobordisms involved in the skein triangle relate $L_n$ and $L_{n+1}$ to $L'_n$ (resp. $L''_n$) by a connected component, and this necessitates the use of the local systems $\Delta_n$ described above. With this said, all of the maps in the proof of Theorem \ref{thm:exacttriangles} may be defined in the context of these local coefficient systems, and most of the proof adapts without changes. We discuss some of the more salient aspects of this adaptation.	
	
	First, recall that the map $J:\sfR\to \sfR''$ counts the instantons $([A],[B])$ in compactified moduli spaces $M^+(I\times Y, T;\theta_\fo,\theta_{\fo''})$ defined with respect to the metric broken along $(S^3,U_1)$, where $[A]$ is a minimal reducible instanton on $(I\times Y\setminus B^4, \overline S''\setminus B^2)$ and $[B]$ is an instanton on $(B^4,F_0)$ of index $0$, each with the appropriate cylindrical end metrics. Note that because there is only one instanton on $(B^4,F_0)$ of index $0$, this is the same as counting the minimal reducibles $[A]$ on $(I\times Y\setminus B^4, \overline S''\setminus B^2)$, up to a sign. See Remark \ref{rmk:signforjmaps}. In the setting of local coefficients, we note that the one instanton $[B]$ on $(S^4,\bR\bP^2)$ of index $0$ has non-trivial monopole number (see \cite[Section 2.7]{KM:unknot}). Thus it is important that $J$ is defined in the way described, to avoid powers of $T_n$ from entering some relations that appear in Section \ref{sec:proofs}. Similar remarks for $J'$ and $J''$.	
	
	Recall from Section \ref{sec:proofs} that a {\emph{sign-change}} is an automorphism of $\sfR=\sfR(Y,L)$ that is a diagonal matrix with entries in $\{1,-1\}$ with respect to this natural basis of quasi-orientations. In the local coefficient setting at hand, sign-changes should be generalized to automorphisms that are diagonal matrices with entries monomials in $\sS_n$ with coefficient $\pm 1$. Relations such as \eqref{eq:caseiredrels} should be understood as equalities up to these more general automorphisms.
	
	Finally, we remark on the map $\mathbf{N}:(C(Y,L),d)\to (C(Y,L),-d)$, defined using moduli spaces $M^+(I\times Y,V;\alpha,\beta)_0^{G_5}$ where $G_5$ is the edge of the pentagonal metric family $G_V$ which consists of metrics broken along the two-component unlink $(S^3,U_2)$. Kronheimer and Mrowka show in \cite[\S 7.3]{KM:unknot} that $\mathbf{N}$ is chain homotopic to an isomorphism. (Note that we use different orientation and sign conventions.) We explain how this step, used several times in the proof of Theorem \ref{thm:exacttriangles}, adapts to the setting of local coefficients.
	
	We recall the relevant ingredients. No perturbation is used for the Chern--Simons functional on $(S^3,U_2)$, so that the critical set $\fC(S^3,U_2)$ is an interval. Recall the decomposition \eqref{eq:twocompunlinkdecomp}, where $F_2$ is a copy of $\bR\bP^2$ with two disks removed. There is a map 
	\begin{equation}\label{eq:endpointmapfornmap}
		M^+(B^4, F_2)^{G_5}_1\to \fC(S^3,U_2)
	\end{equation}
	where the domain is the $1$-dimensional moduli space of instantons on $(B^4, F_2)$ for the restriction of the metric family $G_5$; the limiting critical points are unconstrained, and \eqref{eq:endpointmapfornmap} sends an instanton to its flat limit on $(S^3,U_2)$. Kronheimer and Mrowka argue that \eqref{eq:endpointmapfornmap} has degree $\pm 1$. More precisely, the domain of \eqref{eq:endpointmapfornmap} is an interval and possibly some $S^1$ components, and the endpoints of the interval are sent bijectively to those of $\fC(S^3,U_2)$, while all $S^1$ components are sent to the interior.
	
	This description of \eqref{eq:endpointmapfornmap} allows one, for the purposes of defining $\mathbf{N}$, to replace $(B^4,F_2)$ (with its $1$-parameter metric family) in a fiber product description of $M^+(I\times Y,V;\alpha,\beta)_0^{G_5}$ with $(B^4,F)$ where $F$ is a standard annulus (with a single metric). The 4-manifold after this replacement is the cylinder $(I\times Y, I\times L)$, equipped with a broken metric. A chain homotopy induced by a path of metrics from the new broken metric to the cylindrical one completes the argument. We refer to \cite[\S 7.3]{KM:unknot} for the remaining details. 
	
	In the local coefficient case, the above argument adapts to show that the map $\mathbf{N}$ is chain homotopic to an isomorphism. Indeed, the instantons in the interval component of $M^+(B^4, F_2)^{G_5}_1$ all have the same monopole number, as they belong to a connected component of the configuration space. Thus, in the replacement step above, the chain homotopy goes from $\mathbf{N}$ to some power of $T_n$, determined by the monopole number, times the identity map up to a sign-change. The circle components of $M^+(B^4, F_2)^{G_5}_1$ do not play an important role, in that for each such component, there is a signed count of zero. 
	
	To make this last aspect of the proof more direct, one may argue that the choices can be made such that $M^+(B^4, F_2)^{G_5}_1$ is itself an interval, and \eqref{eq:endpointmapfornmap} is a diffeomorphism. To see this, one may take double branched covers and observe that this is the case for the analogous proof of Floer's exact triangle in the non-singular setting \cite[Lemma 5.5]{scadutothesis}.	
	
	Similar remarks hold for the analogous maps $\mathbf{N}'$ and $\mathbf{N}''$.
\end{proof}

We end this subsection with a brief discussion regarding the Chern--Simons filtration on $\mathcal{S}$-complexes of links.  Following \cite{DS2}, if we fix one reducible $\theta_\fo$, each connection $A$ on $\R\times (Y,L)$ with limit $\alpha\in \fC_\pi$ at $-\infty$ and $\theta_\fo$ at $\infty$ determines a homotopy class of paths $\widetilde \alpha$ in the configuration space for $(Y,L)$. This is called a {\emph{lift}} of $\alpha$. We define
\begin{gather}
	\text{gr}[\fo]_\Z(\widetilde \alpha) = \text{ind}(A) + \dim\text{Stab}(\alpha),\label{eq:zgrdef}\\
	 \text{gr}[\fo]_I(\widetilde \alpha) = 2\kappa(A).
\end{gather}
where $\kappa(A)$ is the energy. The quantity $\text{gr}[\fo]_\Z(\widetilde \alpha)$ acts as a homological (Floer) grading and reduces modulo $4$ to \eqref{eq:mod4grdef}. On the other hand, $\text{gr}[\fo]_I(\widetilde \alpha)$, called the $I$-grading, is the Chern--Simons invariant of $\widetilde\alpha$. This structure fits into a $\Z\times \R$-graded $\cS$-complex in which the differential is filtered with respect to $\text{gr}[\fo]_I$. See \cite{DS2} for details.

Proposition \ref{prop:redind} computes \eqref{eq:zgrdef} for $\alpha=\theta_{\fo'}$ to be $8 \kappa(A)$. Indeed, in the index formula, $\frac{1}{2}S\cdot S + \sigma(Y,L,\fo) - \sigma(Y,L,\fo')$ vanishes, see for example \cite{kauffman-taylor}, specifically the proof of Theorem 3.3. Thus for lifts of reducibles, we have
\begin{equation}
	\text{gr}[\fo]_\Z(\widetilde \theta_{\fo'}) = 4\, \text{gr}[\fo]_I(\widetilde \theta_{\fo'}).
\end{equation}
Together with Proposition \ref{prop:grdiff} this gives the possible $\Z\times \R$-gradings of reducibles with respect to the fixed reducible $\theta_{\fo}$. Note in particular that $\text{gr}[\fo]_I(\widetilde \theta_{\fo'})\in \frac{1}{2}\Z$. In the next subsection, this structure is determined for the alternating torus links.

\subsection{Alternating torus links}

We now compute the $\cS$-complex of the torus link $T_{2,2k}$ over the ring $\sS=\Z[T^{\pm 1}]$, using an exact triangle and knowledge of the complex for $T_{2,2k-1}$. Then we compute the maps $s$, introduced in Definition \ref{defn:smap}, for these torus links.

Let $k\in \Z_{> 0}$. Then we have an unoriented skein triple given by
\[
	L = T_{2,2k}, \quad L' = U_1, \quad L'' = T_{2,2k-1}
\]
obstained by resolving any of the crossings in a standard diagram for $T_{2,2k}$. We have $\sigma(T_{2,2k-1})=-2k+2$ and $\sigma(T_{2,2k},\fo_+)=-2k+1$, $\sigma(T_{2,2k},\fo_-)=1$. Here $\fo_+$ (resp. $\fo_-$) is the quasi-orientation which makes the crossings of $T_{2,2k}$ positive (resp. negative). From these computations we see that the exact triangle of Theorem \ref{thm:localcoeffintro} is in Case I of Figure \ref{fig:exacttriangles}:
            \begin{equation}\label{eq:exacttrianglealttoruslink}
                \begin{tikzcd}[column sep=2ex, row sep=5ex, fill=none, /tikz/baseline=-10pt]
            &  \widetilde C(T_{2,2k};\Delta)  \arrow[dr, "\widetilde \lambda "]  & \\
           \widetilde C(T_{2,2k-1};\Delta)    \arrow[ur, "\widetilde \lambda'' "] & & \widetilde  C(U_1;\Delta)   \arrow[ll, "\widetilde \lambda' "]
            \end{tikzcd}
        \end{equation}
        We assume throughout that we are working with $\cS$-complexes defined, using local coefficients, over $\sS=\Z[T^{\pm 1}]$. 
        
        Let $\lambda',\mu',\Delta_1',\Delta_2',\rho'$ be the components of $\widetilde \lambda'$. As $\widetilde C(U_1;\Delta)$ has no irreducible part, $\lambda',\mu',\Delta_1'$ all vanish. Because $\widetilde \lambda'$ comes from a non-orientable saddle cobordism with trivial singular bundle data, which supports no reducible instantons, we have $\rho'=0$. Next, we note that $\widetilde C(T_{2,2k-1};\Delta)$, defined using zero perturbation and a spherical orbifold metric, has irreducible complex supported in odd gradings (see \cite[Subsection 3.2]{DS2}). As $\widetilde \lambda'$ is of odd degree and $\widetilde C(U_1;\Delta)$ has only a reducible in even degree, $\Delta_2'=0$. Thus $\widetilde \lambda'=0$. 

The exact triangle \eqref{eq:exacttrianglealttoruslink} gives an $\cS$-chain homotopy equivalence between $\widetilde C(T_{2,2k};\Delta)$ and the mapping cone $\cS$-complex of $\widetilde \lambda'$. Having established $\widetilde \lambda'=0$, the latter is simply a direct sum of $\cS$-complexes. Thus we have an $\cS$-chain homotopy equivalence
\[
	\widetilde C(T_{2,2k};\Delta) \simeq \widetilde C(T_{2,2k-1};\Delta) \oplus \widetilde C(U_1;\Delta)
\]	
With the computation of the $\widetilde C(T_{2,2k-1};\Delta)$ from \cite[Proposition 3.10]{DS2}, we obtain:

\begin{prop}\label{prop:toruslinkcomplex}
The $\cS$-complex $\widetilde C(T_{2,2k};\Delta)$, with local coefficient system defined over $\sS=\Z[T^{\pm 1}]$, is homotopy equivalent to the $\cS$-complex
\[
	\widetilde C = C \oplus C[-1] \oplus \mathsf{R}, \qquad C = \bigoplus_{i=1}^{k-1} \sS\cdot \xi^i, \qquad \mathsf{R} = \sS\cdot \theta_{\fo_+}\oplus \sS\cdot \theta_{\fo_-}
\]
where the differential $\widetilde d$ has components $d=\delta_2=0$ and
\begin{gather*}
	\delta_1(\xi^1) = (T^2-T^{-2})\theta_{\fo_+}, \qquad v(\xi^i) = (T^2-T^{-2})\xi^{i-1} \;\; (2\leq i \leq k-1)
\end{gather*}
with all other components of $\delta_1$ and $v$ equal to zero.
\end{prop}

	This computation can easily be extended to the include the Chern--Simons filtration structure, using the conventions given in the previous subsection. Write $\zeta^i$ for a path of flat connections from $\xi^i$ to the reducible $\theta_{\fo_+}$. These choices are made such that
	\[
		\text{gr}[\fo_+]_\Z(\zeta^i)= 2i-1+\epsilon, \qquad \text{gr}[\fo_+]_I(\zeta^i) = i^2/2k
	\]
	where $\epsilon=1$ if $i\in\{0,k\}$ and is $0$ otherwise. Here $\zeta^0$ is the trivial path of connections from $\theta_{\fo_+}$ to itself, and $\zeta^k$ is a particular path of connections from $\theta_{\fo_-}$ to $\theta_{\fo_+}$. This follows from the computations of \cite{austin}, which in particular includes a criterion for when moduli spaces of instantons on $L(p,1)\times \R$ of low expected dimension are non-empty. These results are related to the singular instanton moduli for $(S^3,T_{2,p})\times \R$ following the discussion in \cite[Section 9]{DS1}, which passes through double branched covers.
 
\begin{remark}\label{rmk:flipdiscussion}
The exact triangle does not directly compute the $\cS$-complex $\widetilde C(T_{2,2k};\Delta_{\sS_2})$ over the more general ring $\sS_2=\Z[T_1^{\pm 1}, T_2^{\pm}]$. In this case, we have
\begin{equation}\label{eq:t1t2formula}
	(\delta_1+v)(\xi^i) = (T_1T_2 - T_1^{-1}T_2^{-1})\xi^{i-1} \pm  (T_1T_2^{-1} - T_1^{-1}T_2)\xi^{i+1}
\end{equation}
for $1\leq i\leq k-1$, after possibly renormalizing the generators by units, while still $\delta_2=d=0$. Here we have written $\xi^0=\theta_{\fo_+}$ and $\xi^k=\theta_{\fo_-}$. If one sets $T_1=T_2=T$, the computation of Proposition \ref{prop:toruslinkcomplex} is recovered. This more symmetric complex defined over $\sS_2$ is a consequence of certain ``flip-symmetries'' that act on the $\cS$-complex. This structure will be further developed elsewhere. $\diamd$
\end{remark}

We now compute the $s$-map of Definition \ref{defn:smap} for the Hopf link $H=T_{2,2}$. 

\begin{prop}\label{prop:smaphopf}
	With local coefficient system defined over $\sS=\Z[T^{\pm 1}]$, the Hopf link $\cS$-complex is given by $\widetilde C(H;\Delta)=\mathsf{R}(H)= \sS\cdot\theta_{\fo_+}\oplus \sS\cdot\theta_{\fo_-}$ with $\widetilde d=0$. In this basis, the map $s:\mathsf{R}(H)\to \mathsf{R}(H)$ of Definition \ref{defn:smap} is given, up to a unit of $\sS$, by the matrix
	\[
		\left[\begin{array}{cc} 0 & T^2-T^{-2} \\ 0 & 0  \end{array} \right]
	\]
\end{prop}

\begin{proof}
We have a skein triple $H,T_{2,3},U_1$. Inspection of the signatures shows that the associated exact triangle of $\cS$-complexes is in Case III of Figure \ref{fig:exacttriangles}:
            \begin{equation}\label{eq:exacttrianglehopftrefoil}
                \begin{tikzcd}[column sep=2ex, row sep=5ex, fill=none, /tikz/baseline=-10pt]
            &  \widetilde C(H;\Delta)  \arrow[dr, "\widetilde \lambda "]  & \\
           \Sigma^{-1}\widetilde C(U_1;\Delta)    \arrow[ur, "\widetilde \lambda'' "] & & \widetilde  C(T_{2,3};\Delta)   \arrow[ll, "\widetilde \lambda' "]
            \end{tikzcd}
        \end{equation}
Now, $\widetilde C(H;\Delta)$ has no irreducible part and two reducible generators, $\theta_{\fo_+}$ and $\theta_{\fo_-}$. Here $\fo_+$ is the quasi-orientation of $H$ which makes the crossings positive. Further, $\widetilde C(T_{2,3};\Delta)$ has one irreducible generator $\xi^1$ and one reducible generator $\theta_{\fo}$.

The morphism $\widetilde \lambda$ in the triangle \eqref{eq:exacttrianglehopftrefoil} has $\rho(\theta_{\fo_+})=\theta_\fo$ and $\rho(\theta_{\fo_-})=0$. The only other possible non-zero component of $\widetilde \lambda$ is $\Delta_2$. For grading reasons, $\Delta_2(\theta_{\fo_+})=0$, and we can write $\Delta_2(\theta_{\fo_-})=c \xi^1$ for some ring element $c\in \sS$.

Next, $\Sigma^{-1}\widetilde C(U_1;\Delta)$ has one reducible generator $\theta$, and one irreducible generator $\xi$, and $\delta_2(\theta)=\xi$. The exact triangle \eqref{eq:exacttrianglehopftrefoil} implies $\Sigma^{-1}\widetilde C(U_1;\Delta)$ is $\cS$-homotopy equivalent to $\text{Cone}(\widetilde \lambda)$, the mapping cone of $\widetilde \lambda$, up to a grading shift. Thus $\text{Cone}(\widetilde \lambda)$ must trivial total homology. From this it follows that $c$ is a unit, so that $c= \pm T^n$ for some $n\in \Z$.

 Writing $(W,S)$ for the cobordism defining $\widetilde \lambda$, and considering the ends of the moduli space $M(W,S;\theta_{\fo_-},\theta_{\fo_+})_1$, we arrive at a special case of \eqref{rel-tau-i+1}:
\[
	\langle \delta_1\Delta_2(\theta_{\fo_-}), \theta_{\fo_+}\rangle  = \langle \rho s(\theta_{\fo_-}),\theta_{\fo_+}\rangle,
\]
This implies $\langle s(\theta_{\fo_-}), \theta_{\fo_+}\rangle=\pm T^n (T^2-T^{-2})$ for some $n\in \Z$.

Write $(W'',S'')$ for the cobordism defining $\widetilde \lambda''$ in \eqref{eq:exacttrianglealttoruslink} with $k=1$. Thus $(W'',S'')$ is a cobordism from $U_1$ to $H$. Considering the ends of the moduli space $M(W'',S'';\theta,\theta_{\fo_-})_1$ gives $\langle  s\rho''(\theta),\theta_{\fo_-}\rangle=0$, and as $\rho''(\theta)=\theta_{\fo_+}$, we obtain $s(\theta_{\fo_+})=0$.
\end{proof}

Similar to Remark \ref{rmk:flipdiscussion}, when the more general coefficient ring $\sS_2$ is used, the $s$-map for the Hopf link is in fact non-zero for both $\theta_{\fo_+}$ and $\theta_{\fo_-}$.
        
      For the $(2,2k)$ torus link with $|k|>1$, the map $s:\mathsf{R}(T_{2,2k})\to \mathsf{R}(T_{2,2k})$ is zero, defined using any coefficient ring. Indeed, there are no nonempty $0$-dimensional moduli spaces $\breve M(\theta_{\fo_{\pm}},\theta_{\fo_{\pm}})_0$ with distinct limits. This follows from a correspondence parallel to \cite[Section 9]{DS1} and computations for instantons on $\R\times L(2k,1)$ from \cite{austin}.
       
       \begin{remark}\label{rmk:twobridgecomps}
	The authors expect that the $\cS$-complex and $s$-maps for $T_{2,2k}$, and more generally for any two-bridge link, can be computed from equivariant ADHM data following arguments parallel to the case of knots as in \cite{DS1,DS2}, and using \cite{austin-lens}. $\diamd$
\end{remark}

\subsection{Fr\o yshov-type invariants for links}\label{subsec:froyshovinvs}

In \cite{DS1} the authors introduced Fr\o yshov-type invariants for knots in integer homology 3-spheres defined using the singular instanton $\cS$-complex associated to the knot. These invariants were further investigated in \cite{DS2}. Here we adapt this material to the more general setting of links with non-zero determinant.

For a knot, there is a unique reducible critical point for the Chern--Simons functional, with corresponding rank 1 reducible summand in the associated $\cS$-complex. Roughly, the Fr\o yshov invariant for the knot measures the interaction of this reducible with the irreducibles in the $\cS$-complex. For a link there is no longer a unique reducible, but one for every quasi-orientation. We would like to extract algebraic invariants from this more general situation that, like in the case for knots, lead to concordance invariants. 

To proceed, we begin with some algebra. Let $\widetilde C = C \oplus C[-1] \oplus \mathsf{R}$ be an $\cS$-complex, where $\mathsf{R}$ is a free module over the ground ring $R$. We assume that $R$ is an integral domain. As in Subsection \ref{subsec:equivhomgrps}, we have a long exact sequence involving the homology groups of the small equivariant complexes \eqref{eq:smalleqcomplexes}:
\begin{equation*} 
                \begin{tikzcd}[column sep=4ex, row sep=8ex, fill=none, /tikz/baseline=-10pt]
            	\cdots \arrow[rr, "p_\ast"] && \widecheck H \arrow[rr, "j_\ast"]  \arrow[rr]  &&\widehat H \arrow[rr, "i_\ast"] && \sfR[\![x^{-1},x] \arrow[rr, "p_\ast"] & & \cdots
            \end{tikzcd}
            \end{equation*}	
We denote by $\fI$ the image of $i_\ast$, or equivalently the kernel of $p_\ast$. Thus $\fI$ is an $R[x]$-submodule of $\sfR[\![x^{-1},x]=\mathsf{R}\otimes_R R[\![x^{-1},x]$. In the case that $\mathsf{R}$ is the ground ring $R$, the Fr\o yshov invariant of the $\cS$-complex $\widetilde C$ is defined by
\[
	h(\widetilde C) = -\text{min}\{ \deg Q(x) : \; Q(x) \in \fI \} \in \Z
\]
In the more general case, we proceed as follows. First, define
\[
	J_i(\widetilde C ) = \{ a_0 \in \mathsf{R} : \; \exists a_0 x^{-i} + a_{-1} x^{-i-1} + \cdots \in \fI \} \subset \mathsf{R}
\]
Then $J_i(\widetilde C)$ is an $R$-submodule of $\mathsf{R}$. Furthermore, there are inclusions
\[
	\cdots \subset J_{i+1}(\widetilde C) \subset J_i(\widetilde C) \subset J_{i-1}(\widetilde C) \subset \cdots \subset \mathsf{R}
\]
which follow because $\fI$ is an $R[x]$-module. This nested sequence of modules is a natural generalization of the nested sequence of ideals considered in \cite[Subsection 4.7]{DS1}.

Given a height $0$ morphism $\widetilde \lambda:\widetilde C\to \widetilde C'$, we obtain
\begin{equation}\label{eq:jiinclusion}
	\rho (J_i(\widetilde C)) \subset J_i(\widetilde C')
\end{equation}
where $\rho:\mathsf{R}\to \mathsf{R}'$ is the reducible component of $\widetilde\lambda$. The inclusion \eqref{eq:jiinclusion} follows from the diagram in Proposition \ref{prop:equivlevel}, along with the description of $\overline{\lambda}_\ast$ there (all of which hold for general $\cS$-complexes, not just those coming from links).

From Proposition \ref{prop:susequivar} we have the following relation for suspension:
\[
	J_i(\Sigma \widetilde C)  = J_{i-1}(\widetilde C)
\]
Then Propositions \ref{prop:levelsuspension} and \ref{decomp-neg-level}  imply that for a height $n$ morphism, $\tau_n(J_i(\widetilde C))\subset J_{i-n}(\widetilde C')$ where $\tau_n$ is as in Definition \ref{level--n-def}. It is also straightforward to verify
\[
	J_i(\widetilde C ) \otimes J_j(\widetilde C') \subset J_{i+j}(\widetilde C  \otimes\widetilde C') 
\]
For direct sums of $\cS$-complexes we have the simple relation 
\[
	J_i(\widetilde C \oplus \widetilde C') = J_{i}(\widetilde C ) \oplus J_i( \widetilde C') 
\]
For dual $\cS$-complexes, there is in general an inclusion
\[
	J_i(\widetilde C^\dagger) \subset \text{Ann}_R J_{-i}(\widetilde C) 
\]
where the right hand side is the annihilator of the $R$-module $J_{-i}(\widetilde C)$; this inclusion is equality if $R$ is a field. In particular, the sum of the ranks of $J_i(\widetilde C^\dagger)$ and $J_{-i}(\widetilde C) $ equal the rank of $\mathsf{R}$. These properties are straightforward generalizations of properties described in \cite[Subsection 4.7]{DS1}.

Given an $\cS$-complex $\widetilde C$ over $R$ we define a function
\[
	d_{\widetilde C}:\Z \to \Z_{\geq 0}
\]
\[
	d_{\widetilde C}(i ) = \text{rk}_R \left( J_i( \widetilde C) \right)
\]
The following result organizes some properties of $d_{\widetilde C}$, all of which follow from the properties of the ideals $J_i(\widetilde C)$ discussed above.

\begin{prop}
Let $\widetilde C=C \oplus C[-1]\oplus \mathsf{R}$ be an $\cS$-complex over an integral domain $R$. Then the associated function $d_{\widetilde C}:\Z\to \Z_{\geq 0}$ satisfies the following:
\begin{itemize}
	\item[\rm{(i)}] $d_{\widetilde C}(i)= 0$ for $i \gg 0$.
	\item[\rm{(ii)}] $d_{\widetilde C}(i)\geq d_{\widetilde C}(i+1)$.
	\item[\rm{(iii)}] $d_{\widetilde C}(i)={\rm{rk}}_R(\mathsf{R})$ for $i\ll 0$.
	\item[\rm{(iv)}] $d_{\Sigma^n \widetilde C}(i) = d_{\widetilde C}(i-n)$.
	\item[\rm{(v)}] if there is a strong height $n$ morphism $\widetilde \lambda :\widetilde C \to \widetilde C'$, then $d_{\widetilde C}(i) \leq d_{\widetilde C'}(i-n)$.
	\item[\rm{(vi)}] $d_{\widetilde C}(i) d_{\widetilde C'}(j) \leq d_{\widetilde C\otimes \widetilde C'}(i+j)$.
	\item[\rm{(vii)}] $d_{\widetilde C \oplus \widetilde C'}(i) = d_{\widetilde C}(i) + d_{\widetilde C'}(i)$.
	\item[\rm{(viii)}] $d_{\widetilde C^\dagger}(i) = {\rm{rk}}_R( \mathsf{R} ) - d_{\widetilde C}(-i)$. 
\end{itemize}
\end{prop}

\noindent Note that when $\mathsf{R}=R$, the function $d_{\widetilde{C}}$ is a non-increasing function on $\Z$ with values in $\{0,1\}$. The Fr\o yshov invariant of $\widetilde C$ in this case is given by
\begin{equation}\label{eq:froyshovfromgeneralconst}
		h(\widetilde C)  = \max\{ i: \;  d_{\widetilde C}(i) = 1 \}
\end{equation}
and this completely determines $d_{\widetilde C}$. It is in this sense that the function $d_{\widetilde C}:\Z\to \Z_{\geq 0}$ is a generalization of the Fr\o yshov invariant to $\cS$-complexes with reducible summand $\mathsf{R}$ not necessarily of rank $1$. Note that identity \eqref{eq:froyshovfromgeneralconst} and property (v) above yield the following, a relation mentioned earlier in Subsection \ref{subsec:suspensiondefn}:
\[
	h(\Sigma^n \widetilde C ) = h(\widetilde C) + n.
\]

We now apply the above construction to the singular instanton $\cS$-complexes of links. 

\begin{definition}
	Let $L\subset Y$ be a based link with non-zero determinant in an integer homology $3$-sphere, and let $n=|L|$ be the number of components. Given an integral domain $\sS$ which is an algebra over the universal ring $\sS_n$, we define 
	\[
		d_{(Y,L)}^\sS:\Z\to \Z_{\geq 0}
	\]
	to be the function $d_{\widetilde C}$ associated to the $\cS$-complex $\widetilde C = \widetilde C(Y,L;\Delta_\sS)$. $\diamd$
\end{definition}

Recall that two links $L$ and $L'$ in the $3$-sphere are {\emph{concordant}} if there is an embedding $S\subset [0,1]\times S^3$ with $\partial S=-L\sqcup L'$, and such that $S$ is a disjoint union of annuli, with the boundary of each annulus containing one component each of $L$ and $L'$. This notion generalizes as follows. We call $(W,S):(Y,L)\to (Y',L')$ a {\emph{homology concordance}} of links if $W$ is a homology cobordism $Y\to Y'$ and $S\subset W$ is a disjoint union of embedded annuli satisfying the same conditions as above. A homology concordance of {\emph{based}} links is required to have an annulus which connects the components of $L$ and $L'$ that have basepoints.

\begin{prop}
	The function $d_{(Y,L)}^\sS:\Z\to \Z_{\geq 0}$ only depends on the homology concordance class of the based link $(Y,L)$.
\end{prop}

\begin{proof}
	A homology concordance between based links $(Y,L)$ and $(Y',L')$ induces
	\[
		\widetilde C(Y,L;\Delta_\sS)\to \widetilde C(Y',L';\Delta_\sS),
\] 
which is strong height $0$ morphism. By property (v) above, $d_{(Y,L)}^\sS(i)\leq d_{(Y',L')}^\sS(i)$ for each $i\in \Z$. The reverse cobordism gives a strong height $0$ morphism in the other direction, and gives the reverse inequality.
	\end{proof}

As mentioned earlier, the invariants defined above for the case of knots, in the guise of the Fr\o yshov invariants \eqref{eq:froyshovfromgeneralconst}, were studied in \cite{DS1,DS2}. We expect that many of the techniques and computations done in those works can be adapted to this more general setting. Here we only mention the following application of the unoriented skein exact triangles.
	
	To state the result, we introduce the following terminology. Let $\widetilde C=C\oplus C[-1]\oplus \sfR$ be an $\cS$-complex over a ring $R$. We say that $d_{\widetilde C}$ is {\emph{trivial}} if
	\[
		d_{\widetilde C}(i) = \begin{cases}  0 & i > 0\\  \text{rk}_R\left( \sfR\right) & i \leq  0\end{cases}
	\]
	For example, if $\widetilde C$ is an $\cS$-complex with irreducible summand $C=0$, then $d_{\widetilde C}$ is trivial. 
	
	In what follows, the $\sS_n$-module $\sS=\Z$ is obtained by setting each $T_i=1$; this is the setting of $\cS$-complexes of links with ordinary $\Z$ coefficients.  

\begin{prop}\label{prop:alternatingfroyshov}
	Let $L\subset S^3$ be a non-split alternating link, or more generally a quasi-alternating link. Then the invariant $d_L^\Z$ is trivial.
\end{prop}

\begin{proof}
	Given an $\cS$-complex $\widetilde C=C\oplus C[-1]\oplus \sfR$ over $R$, a straightforward verification from the definitions shows the following: if $(\delta_2)_\ast:\sfR\to H_\ast(C,d)$ and $(\delta_1)_\ast:H_\ast(C,d)\to \sfR$ are zero, then the invariant $d_{\widetilde C}:\Z\to \Z_{\geq 0}$ is trivial. We then utilize Theorem \ref{thm:quasialtintro}, which is proved in the next section as Theorem \ref{thm:qa} using the unoriented skein exact triangles. This theorem implies that the spectral sequence \eqref{eq:ssirrtonatintro} collapses at the $E_2$-page. The differential on the $E_2$-page is given by the maps $(\delta_1)_\ast$ and $(\delta_2)_\ast$. The proposition follows.
\end{proof}

\noindent Proposition \ref{prop:alternatingfroyshov} generalizes \cite[Proposition 17]{DS2}, which is for knots. The argument given above, which relies on the unoriented skein exact triangles of Theorem \ref{thm:irredexacttrianglesintro}, provides an alternative proof, in the case of knots, to the one given in the above citation. 

The invariant $d_L^\Z$ in the case that $L$ is a knot is in general non-trivial and is related to the unoriented 4-ball genus. As explained in \cite{DS2}, the invariant appears to be an instanton analogue of invariants defined in the context of Heegaard Floer theory \cite{oss} and Khovanov homology \cite{ballinger}.

\newpage

%!TEX root = main.tex

\section{Applications to irreducible instanton homology} \label{sec:comps}

In this section we use the skein exact triangles to study $I(L)$, the irreducible instanton homology of links. In the first subsection, we review and expand upon some aspects of Theorem \ref{thm:irredexacttrianglesintro}. As a corollary, we verify that the Euler characteristic of $I(L)$, for $L$ a non-zero determinant link in the 3-sphere, is a multiple of the Murasugi signature. In the next subsection, we compute $I(L)$ for quasi-alternating links. In the remaining subsections, we study knots and links which are $I$-basic; these have the rank of $I(L)$ equal to the Euler characteristic, up to sign. After some general results about $I$-basic links, we show that certain pretzel knots and twisted torus knots are $I$-basic.

\subsection{Exact triangles for irreducible homology}\label{eq:irrexacttrisubsec}

Theorem \ref{thm:irredexacttrianglesintro} gives the unoriented skein exact triangles for irreducible instanton homology with trivial singular bundle data, with the condition that the links have non-zero determinant and are in the $3$-sphere. These triangles are displayed in Figure \ref{fig:irredexacttrianglesintro}. Note that Theorem \ref{thm:irredexacttrianglesintro} follows algebraically from Theorem \ref{thm:exacttriangles} by taking the irreducible homology, as explained in Subsection \ref{subsec:scomplextriangles}. In this section most links are in $S^3$, although much of what is done below holds inside a general integer homology 3-sphere. We also assume throughout that the coefficient ring is $\Z$, unless stated otherwise. 

On occasion we will also make use of the exact triangles with non-trivial bundles. For the relevant tools, see Figure \ref{fig:exacttriangles-withbundles-irr} and Propositions \ref{prop:degreesfornontrivbundletriangle} and \ref{prop:kmtriangledegsofmaps}.

In what follows, we typically choose an identification of the free abelian group $\mathsf{R}(L)$ generated by quasi-orientations with $\Z^{2^{|L|-1}}$.
The suspension homologies appearing in these triangles have the following concrete descriptions:
\begin{align*}
	I_+(L) = H_\ast\left( C(L) \oplus \Z_{(1)}^{2^{|L|-1}} , \left[ \begin{array}{cc} d & \delta_2 \\ 0 & 0 \end{array}\right] \right)  \cong H_\ast (\Sigma^{+1} C(L)) \\
	I_-(L) = H_\ast\left( C(L) \oplus \Z_{(0)}^{2^{|L|-1}} , \left[ \begin{array}{cc} d & 0 \\ \delta_1 & 0 \end{array}\right] \right)  \cong H_\ast (\Sigma^{-1} C(L))
\end{align*}
See also the exact triangles \eqref{eq:irrhomdeftriangles} from the introduction, both of which follow from these mapping cone descriptions. Here and throughout this section, all homology groups will be considered as $\Z/2$-graded, unless stated otherwise. 

Write $(\delta_1)_\ast$ and $(\delta_2)_\ast$ for the maps induced by $\delta_1$ and $\delta_2$ on homology:
\begin{align*}
  (\delta_2)_\ast: \Z^{2^{|L|-1}}\to I(L) \\
 (\delta_1)_\ast:I(L)\to \Z^{2^{|L|-1}}
\end{align*}
Note that in the special case that $L$ is a knot, the relation $dv-vd - \delta_2\delta_1=0$ implies that one of $(\delta_2)_\ast=0$ or $(\delta_1)_\ast=0$ must hold. Thus in this case, one of the relations $\text{rk}_\Z I_+(L)=\text{rk}_\Z I(L) + 1$ or $\text{rk}_\Z I_-(L)=\text{rk}_\Z I(L) + 1$ holds. The descriptions of $I_+(L)$ and $I_-(L)$ here run parallel to analogously defined instanton homology groups for integer homology 3-spheres studied in \cite[Section 7.1]{donaldson-book}.

We now compute the Euler characteristic of $I(L)$, stated in the introduction as \eqref{eq:irredhomlinkeuler}.

\begin{prop}\label{prop:irreuler}
	If $L$ is a link in $S^3$ with $\det\neq 0$, then we have
	\begin{equation}\label{eq:irreulerchar}
		\chi\left( I(L)\right) = 2^{|L|-2} \xi(L).
	\end{equation}
\end{prop}

\begin{proof}
	The Euler characteristic $\chi(I(L))$ satisfies the skein recursion \eqref{eq:eulercharskeinrel}, as follows from Theorem \ref{thm:irredexacttrianglesintro}. As shown in \cite{karan}, this recursion formula for $\det\neq 0$ links, together with $\chi(I(U_1))=0$, completely determines $\chi(I(L))$. The expression $ 2^{|L|-2} \xi(L)$ satisfies the same recursion and is also zero for the unknot, thus the result follows.
\end{proof}

We expect that the above result generalizes as follows. For any non-zero determinant link $L$ in an integer homology 3-sphere $Y$, we have the relation
\begin{equation}\label{eq:eulmurasugigen}
	\chi\left( I(Y,L) \right) = 2^{|L|-2} \xi(Y,L) + 4\lambda(Y)
\end{equation}
where $\lambda(Y)$ is the Casson invariant of $Y$. This is already established in the case of knots in $Y$ by \cite[Theorem 1.3]{DS1}. It is clear that \eqref{eq:eulmurasugigen} can be proved for many links in a fixed $Y$ using the unoriented skein exact triangles in a way similar to the argument above, using the base case of knots that is known. Following comments in Subsection \ref{subsec:concludingremarks}, it is also possible that this strategy works without the restriction that $\det\neq 0$, upon extending the construction of $I(Y,L)$ to the case of arbitrary links in $Y$. An alternative approach to proving \eqref{eq:eulmurasugigen} is to generalize the arguments of Herald \cite{herald}.

\subsection{Quasi-alternating links}

Ozsv\'{a}th and Szab\'{o} define in \cite{os:branched} the set $\cQ$ of {\emph{quasi-alternating links}} to be the smallest set of links such that the following conditions hold: (i) the unknot is in $\cQ$, and (ii) if $L,L',L''$ is an unoriented skein triple such that $L',L''\in \cQ$ and
	\[
		\det(L) = \det(L') + \det(L'')
	\]
	with $\det(L'), \det(L'')\neq 0$, then $L\in \cQ$. Clearly every quasi-alternating link has non-zero determinant. Examples of quasi-alternating links are non-split alternating links.

For a based quasi-alternating link $L$, Kronheimer and Mrowka \cite{KM:unknot} proved
\begin{equation}\label{eq:inaturalqa}
	I^\natural(L) \cong \Z^{\det(L)}
\end{equation}
In general, identifying $I^\natural(L;\Z)$ with the homology of the $\cS$-complex $\widetilde C=\widetilde C(L;\Z)$ using Theorem \ref{eq:inatural}, the filtration $C[-1] \subset C[-1]\oplus\sfR \subset \widetilde C$ induces a spectral sequence
\begin{equation}
	I(L)\oplus I(L)[-1] \oplus \Z_{(0)}^{2^{|L|-1}} \rightrightarrows I^\natural(L) \label{eq:ssirrtonat}
\end{equation}
already mentioned in the introduction. The following result says that this spectral sequence collapses for quasi-alternating links. In other words, given \eqref{eq:inaturalqa}, the irreducible instanton homology for a quasi-alternating link is as small as possible.

\begin{theorem}\label{thm:qa}
	Let $L$ be a quasi-alternating link. Then the irreducible singular instanton homology $I(L)$ is free abelian of rank $\frac{1}{2}(\det(L)-2^{|L|-1})$.
\end{theorem}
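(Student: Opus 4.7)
We argue by induction on $\det(L)$, following the recursive definition of quasi-alternating links. The base case $L = U_1$ is immediate, since $I(U_1) = 0 = \Z^{(1-1)/2}$. For the inductive step, fix a quasi-alternating decomposition $\det(L) = \det(L_1) + \det(L_2)$, where $L_1, L_2 \in \cQ$ have strictly smaller determinant and $(L, L_1, L_2)$ forms an unoriented skein triple. Write $n_0(X) := \tfrac{1}{2}(\det(X) - 2^{|X|-1})$, so that by induction $I(L_i) \cong \Z^{n_0(L_i)}$. Apply Theorem~\ref{thm:irredexacttrianglesintro} to this triple, after relabeling so that the link with the most components plays the role of the distinguished link in that theorem. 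This yields a cyclic long exact sequence among the three irreducible homologies (with one of them replaced by a suspension $I_\pm$ in Cases~II and~III). A universal lower bound $\text{rk}\,I(L) \geq n_0(L)$ comes from the spectral sequence~\eqref{eq:ssirrtonatintro}, whose $E_\infty$-page has rank $\det(L)$ by Kronheimer--Mrowka's freeness $I^\natural(L) \cong \Z^{\det L}$ for quasi-alternating $L$ (combined with Theorem~\ref{eq:inatural}).

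In Case~I the skein triangle involves only unsuspended homologies. The long exact sequence produces an upper bound matching $n_0(L)$, and the resulting short exact sequence of free abelian groups splits, giving freeness of the unknown term with the predicted rank. In Cases~II and~III the triangle involves $I_\pm$ of one of the three links, and one combines it with the auxiliary triangle~\eqref{eq:irrhomdeftriangles}. By the inductive hypothesis, $\text{rk}\,I(L_i) = n_0(L_i)$, which together with Kronheimer--Mrowka forces the spectral sequence~\eqref{eq:ssirrtonatintro} for each $L_i$ to collapse at $E_2$; consequently both $(\delta_1)_\ast$ and $(\delta_2)_\ast$ of~\eqref{eq:irrhomdeftriangles} vanish for $L_i$, yielding the explicit splitting $I_\pm(L_i) \cong I(L_i) \oplus \Z^{2^{|L_i|-1}}$ (up to grading shift). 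Feeding this into the skein triangle's long exact sequence---and, in those scenarios where the unknown term is $I_\pm$ of the link of interest rather than $I$ itself, invoking~\eqref{eq:irrhomdeftriangles} once more to recover $I$ from $I_\pm$---pins down $I(L)$ as free of rank $n_0(L)$. Torsion-freeness follows from the collapse: an $E_\infty$-page converging to the torsion-free group $I^\natural(L)$ must itself be torsion-free in every summand.

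The consequence that~\eqref{eq:ssirrtonatintro} collapses at the $E_2$-page for $L$ is immediate from the rank equality $\text{rk}\,I(L) = n_0(L)$, since the $E_2$-page then has total rank $2 n_0(L) + 2^{|L|-1} = \det(L) = \text{rk}\,I^\natural(L)$, leaving no room for further nonzero differentials.

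\textbf{Main obstacle.} The technical core lies in Cases~II and~III, where the presence of a suspension term causes the naive rank bound from the long exact sequence of the skein triangle to exceed $n_0(L)$ by exactly $2^{|L|-2}$. Closing this gap requires using the auxiliary triangle~\eqref{eq:irrhomdeftriangles} to replace $I_\pm(L_i)$ by its explicit inductive splitting---which in turn relies on the stronger inductive content that the spectral sequence for each QA child has already collapsed at $E_2$, not merely that its rank is correct. A careful case analysis (depending on whether the QA parent coincides with the distinguished link of Theorem~\ref{thm:irredexacttrianglesintro} or plays the role of one of the other two) is needed to ensure the long exact sequence can always be solved for the unknown group.
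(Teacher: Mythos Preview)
Your overall architecture is correct and matches the paper's: induct on $\det(L)$, use the spectral sequence \eqref{eq:ssirrtonatintro} together with $I^\natural(L)\cong \Z^{\det L}$ for the lower bound $\text{rk}\,I(L)\geq n_0(L)$, and use the exact triangle of Theorem~\ref{thm:irredexacttrianglesintro} for a matching upper bound. The freeness argument via collapse of \eqref{eq:ssirrtonatintro} is also right.

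However, your proposed mechanism for closing the $2^{|L|-2}$ gap in Cases~II and~III does not work in all sub-scenarios. Consider the case where the QA parent $L_0$ coincides with the distinguished link $L$ (the one with most components). In Case~II, the triangle reads $I(L_0)$, $I_+(L')$, $I(L'')$, with the suspension on a \emph{child}. Your splitting $I_+(L')\cong I(L')\oplus\Z^{2^{|L'|-1}}$ then yields
\[
\text{rk}\,I(L_0)\leq n_0(L')+2^{|L'|-1}+n_0(L'')=n_0(L_0)+2^{|L_0|-2},
\]
which still overshoots by $2^{|L_0|-2}$. The splitting trick only closes the gap when the suspended term is either the unknown $L_0$ itself, or when the arithmetic happens to cancel (as it does when $L_0=L'$ in Case~III, or $L_0=L''$ in Case~II). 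It does not cancel in the scenario above, nor does invoking \eqref{eq:irrhomdeftriangles} again help, since $I(L_0)$ already appears unsuspended.

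What is missing is a constraint ruling out these bad combinations. The paper supplies this via Lemma~\ref{lemma:manoz} (essentially a result of Manolescu--Ozsv\'ath building on Murasugi): the QA relation $\det(L_0)=\det(\text{child}_1)+\det(\text{child}_2)$ forces the signature differences $\epsilon$ across the orientable saddles to equal $+1$. Concretely, when $L_0=L$ this forces Case~I; when $L_0=L'$ it forces Case~I or~III (never~II); when $L_0=L''$ it forces Case~I or~II (never~III). In each of the surviving scenarios the naive rank bound, augmented by at most one $2^{|L''|-1}$ from the suspended child, is exactly $n_0(L_0)$. Once you insert this signature constraint, the rest of your argument goes through cleanly, and is in fact slightly more detailed than the paper's (your observation that collapse at $E_2$ for the children gives the explicit splitting of $I_\pm$ is correct and makes the bound transparent).
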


The proof of this theorem is by induction on $\det(L)$ and uses the exact triangles of irreducible instanton homology from Theorem \ref{thm:irredexacttrianglesintro}. We will need:

\begin{lemma}\label{lemma:manoz}
	Let $L,L',L'' \subset Y$ be an unoriented skein triple as in Figure \ref{fig:skein}, with no assumption on the number of components of each link. Assume 
\[
	\det(L) = \det(L') + \det(L'')
\]
and that each determinant is non-zero. Then if the saddle cobordism $L\to L'$ (resp. $L''\to L$) is orientable, we have $\epsilon(L,L') =  1$ (resp. $\epsilon(L'',L)=1$).
\end{lemma}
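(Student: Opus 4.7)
The plan is to reduce the question to a signature computation in the double branched covers $\widetilde Y_L$, $\widetilde Y_{L'}$, $\widetilde Y_{L''}$, each of which is a rational homology $3$-sphere with $|H_1(\widetilde Y_\bullet)| = \det(\bullet)$. Since the saddle $L \to L'$ is orientable, its double branched cover is a $4$-manifold cobordism $W \colon \widetilde Y_L \to \widetilde Y_{L'}$ obtained by attaching a single $2$-handle. Hence $b_2(W)=1$ and the intersection form on $H_2(W)$ is $\langle n\rangle$ for some integer $n$; Proposition \ref{prop:indbranchedcover} identifies $n = \sigma(W) = -\epsilon(L,L')$, which is forced into $\{-1,+1\}$ by the non-zero determinant hypotheses. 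The goal therefore becomes showing $n = -1$.

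I would next realize the three double branched covers as Dehn fillings of a common compact $3$-manifold $M$ with $\partial M \cong T^2$, namely the double branched cover of the complement of the skein $3$-ball, branched over the $4$-ended tangle. The three filling slopes $\alpha_L, \alpha_{L'}, \alpha_{L''} \in H_1(\partial M)$ pairwise intersect in $\pm 1$, and consistent orientations may be chosen so that $\alpha_L + \alpha_{L'} + \alpha_{L''} = 0$. The non-zero determinant hypothesis forces the inclusion $H_1(\partial M)\to H_1(M)$ to hit the free part, so $H_1(M)\cong \Z\oplus A$ with $A$ finite; writing $N_\alpha$ for the projection of the image of $\alpha$ to the $\Z$-summand, a Smith normal form computation gives $\det(\bullet) = |N_\bullet| \cdot |A|$. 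Linearity forces $N_L + N_{L'} + N_{L''} = 0$, while the Manolescu--Ozsv\'ath hypothesis $|N_L| = |N_{L'}| + |N_{L''}|$ with $N_\bullet \neq 0$ pins down the signs: $N_L$ has sign opposite to both $N_{L'}$ and $N_{L''}$.

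I would then identify the sign of the intersection form of $W$ in terms of these slopes. Realizing $W$ as the trace of the surgery change from the $\alpha_L$-filling to the $\alpha_{L'}$-filling, a Mayer--Vietoris computation expresses a generator of $H_2(W;\Q)$ as a rational combination of the $2$-handle core and a $2$-chain in $M$ cobounding a suitable multiple of the attaching circle, and its self-intersection works out to have sign $\sign(N_L \cdot N_{L'})$. Under the analysis of the previous paragraph, this is negative, so $\sigma(W) = -1$ and $\epsilon(L,L') = +1$. The second half of the lemma, concerning the saddle $L'' \to L$, follows by the same argument applied to the cobordism $\widetilde Y_{L''} \to \widetilde Y_L$; there $\sign(N_{L''} \cdot N_L) = -1$ by the same sign analysis.

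The hard part will be the final step, identifying $\sign(\sigma(W))$ with $\sign(N_L \cdot N_{L'})$. While this is intuitively clear from the Dehn surgery picture, a rigorous proof requires either a careful Mayer--Vietoris calculation with $\Q$-coefficients using explicit representatives of $H_2(W)$, or an appeal to Ozsv\'ath--Szab\'o's signature formulas for $2$-handle cobordisms in a surgery triad. Close attention must be paid to orientation conventions so that the signs of the $N_\bullet$ and of the intersection form match up with the sign convention for $\epsilon$.
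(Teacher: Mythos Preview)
Your approach is correct in outline but takes a genuinely different route from the paper. The paper's proof is a one-line citation: it invokes \cite[Lemma 3]{manolescu-ozsvath} (attributed there to Murasugi) for one case, and deduces the other by mirroring. You are instead reproving that lemma from scratch via double branched covers and the surgery-triad picture.

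A few remarks on the comparison and on points to tighten:
\begin{itemize}
\item Your identification ``$n = \sigma(W) = -\epsilon(L,L')$'' conflates the generator of the intersection form with its sign. The form on $H_2(W)\cong\Z$ is $\langle n\rangle$ for some (possibly large) nonzero $n$; what Proposition \ref{prop:indbranchedcover} gives is $\sigma(W)=\operatorname{sign}(n)=-\epsilon(L,L')$. This is harmless for the argument, since you only need $\operatorname{sign}(n)=-1$, but the sentence as written is imprecise.
\item The step you flag as ``the hard part''---that the self-intersection of a generator of $H_2(W;\Q)$ has sign $\operatorname{sign}(N_L\cdot N_{L'})$---is exactly the content that Manolescu--Ozsv\'ath extract from Goeritz/Gordon--Litherland-type computations (going back to Murasugi). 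Your Mayer--Vietoris/handle-trace computation is a valid alternative and does work, but it is precisely what the cited lemma packages; so in effect you are reproducing the proof of \cite[Lemma 3]{manolescu-ozsvath} in the branched-cover model rather than the Goeritz-matrix model.
\item Your argument has the advantage of working uniformly for $Y$ any integer homology sphere (the branched covers are still rational homology spheres and the filling picture is unchanged), whereas the cited reference is phrased for links in $S^3$. The paper implicitly relies on the fact that the same signature-determinant relation holds in this generality.
\end{itemize}
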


\begin{proof}
	One case follows directly from \cite[Lemma 3]{manolescu-ozsvath}, attributed to Murasugi, and the other case from an application of the same result to the mirrors of the links.
\end{proof}

\begin{proof}[Proof of Theorem \ref{thm:qa}]
Note that the result is true for the base case of the unknot, the unique quasi-alternating link with determinant $1$. Next, let $L_0$ be a quasi-alternating link, and suppose the theorem is true for quasi-alternating links with determinant less than $\det(L_0)$. 

We remark that \eqref{eq:inaturalqa} and the spectral sequence mentioned in \eqref{eq:ssirrtonat} imply
\begin{equation}\label{eq:ineqfromss}
	\text{rk}_\Z I(L_0) \geq  \frac{1}{2}(\det(L_0) - 2^{|L_0|-1} )
\end{equation}
The link $L_0$ fits into a skein triple $L,L',L''$ of quasi-alternating links with $|L|=|L'|+1=|L''|+1$; this is a quasi-alternating skein triple in that the determinant of $L_0$ is the sum of the other two links. There are now $3$ cases, corresponding to $L_0$ being $L$, $L'$, or $L''$.

First suppose $L_0=L$ so that we have the relation
\[
	 \det(L_0) = \det(L') + \det(L'')
\]
Lemma \ref{lemma:manoz} implies that we have an exact triangle as in Case I of Figure \ref{fig:irredexacttrianglesintro}. Thus by the exact triangle and the induction hypothesis we have
\begin{align*}
	\text{rk}_\Z I(L_0) & \leq \text{rk}_\Z I(L')  + \text{rk}_\Z I(L'') \\
						& = \frac{1}{2}(\det(L') - 2^{|L'|-1} ) +  \frac{1}{2}(\det(L'') - 2^{|L''|-1}) \\
						& = \frac{1}{2}(\det(L_0) - 2^{|L_0|-1} ) 
\end{align*}
Together with \eqref{eq:ineqfromss} this computes the rank of $I(L_0)$; the same argument works for any coefficient ring and this implies the free abelian claim (similar remarks hold below).

Second, suppose $L_0=L'$. In this case we have the relation
\[
	\det(L_0)  = \det(L) + \det(L'')
\]
and $|L_0|=|L''|=|L|-1$. By Lemma \ref{lemma:manoz} we have $\epsilon(L,L_0)=\epsilon(L,L')=1$. Thus we have an exact triangle in Figure \ref{fig:irredexacttrianglesintro} which is Case I or Case III. In either case we have
\begin{align*}
	\text{rk}_\Z I(L_0) & \leq \text{rk}_\Z I(L)  + \text{rk}_\Z I(L'') + 2^{|L''|-1} \\
						& = \frac{1}{2}(\det(L) - 2^{|L|-1} ) +  \frac{1}{2}(\det(L'') - 2^{|L''|-1}) + 2^{|L''|-1} \\
						& = \frac{1}{2}(\det(L_0) - 2^{|L_0|-1} ) 
\end{align*}
With \eqref{eq:ineqfromss} we again conclude the inequality above must be an equality. We remark that as a consequence the exact triangle here must in fact be the one of Case III, not Case I. 

Finally, the case in which $L_0=L''$ is similar to the previous case. One finds that the exact triangle is the one of Case II. (In fact this case is the mirror of the previous case.)
\end{proof}

As the total rank and Euler characteristic for the irreducible instanton homology of quasi-alternating links are determined, we obtain the following graded isomorphism.

\begin{cor}\label{cor:qa}
	Let $L$ be a quasi-alternating link. Then as $\Z/2$-graded abelian groups,
	\[
		I(L) \cong \Z_{(0)}^{\frac{1}{4}\det(L) - 2^{|L|-3}(1-\xi(L))} \oplus \Z_{(1)}^{\frac{1}{4}\det(L) - 2^{|L|-3}(1+\xi(L))} 
	\]
\end{cor}

\noindent  As the degrees of all the maps in the proof of Theorem \ref{thm:qa} are determined by formulas in Section \ref{sec:links}, in principle the $\Z/4$-gradings can also be computed.

\subsection{$I$-basic knots}

In this subsection we introduce and study a class of links which have particularly simple irreducible instanton homology. 

\begin{definition}
	A non-zero determinant link $L$ in an integer homology 3-sphere $Y$ is {\emph{$I$-basic}} if the irreducible instanton homology has the rank of its Euler characteristic:
\[
	\text{rk}_\Z I(Y,L) = 2^{|L|-2} \left| \xi(Y,L)\right|. \quad \diamd
\] 
\end{definition}

\noindent Our primary focus, for simplicity, will be on $I$-basic knots in the 3-sphere; links will only enter through our use of the exact triangles.

\begin{example}
	Torus knots are $I$-basic. This follows because the $SU(2)$-traceless character variety for a torus knot $T_{p,q}$ contains exactly $|\sigma(T_{p,q})/2|$ irreducible points, all of which are non-degenerate, and therefore the complex that computes $I(T_{p,q})$ has rank equal to the absolute value of its Euler characteristic. See also \cite{PS,DS2}. $\diamd$
\end{example}

\begin{example}
	The unknot is the unique knot which is $I$-basic and has zero signature. Indeed, if $I(K)=0$ then \eqref{eq:ssirrtonat} implies $I^\natural(K)=\Z$, and this implies $K$ is the unknot by work of Kronheimer and Mrowka \cite{KM:unknot}. More generally, the non-zero determinant links with $\xi(L)=0$ which are $I$-basic are connected sums of Hopf links (see \cite{zhang-xie-hopf}). $\diamd$
\end{example}

Define $|\Delta_L|$ to be the sum of the absolute values of the coefficients of the (single variable) Alexander polynomial $\Delta_L(t)\in \Z[t^{\pm 1/2}]$ of the link $L$. In \cite{KM:unknot} it is proved that
\begin{equation}\label{eq:kmalexineq}
	\text{rk}_\Z I^\natural (L) \geq |\Delta_L|.
\end{equation}
From the definition of $I$-basic, inequality \eqref{eq:kmalexineq} and the spectral sequence \eqref{eq:ssirrtonat} we have:

\begin{lemma}\label{eq:ibasicineq}
	Suppose a non-zero determinant link $L\subset S^3$ is $I$-basic. Then
\[
	2^{|L|-1}|\xi(L)| \geqslant |\Delta_L| - 2^{|L|-1}
\]
\end{lemma}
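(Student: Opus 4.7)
The proof will be a direct chain of inequalities, relating three things: the rank of $I^\natural(L)$, the rank of $I(L)$, and $|\Delta_L|$. The plan is to use the spectral sequence \eqref{eq:ssirrtonat} to bound $\text{rk}_\Z I^\natural(L)$ from above by data involving $\text{rk}_\Z I(L)$, invoke the $I$-basic hypothesis to rewrite the latter in terms of $\xi(L)$, and then sandwich with the Kronheimer--Mrowka lower bound \eqref{eq:kmalexineq}.

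First I would observe that the spectral sequence \eqref{eq:ssirrtonat}, which has $E_2$-page $I(L)\oplus I(L)[-1]\oplus \Z^{2^{|L|-1}}$ and converges to $I^\natural(L)$, gives
\[
  \text{rk}_\Z I^\natural(L) \;\leqslant\; 2\,\text{rk}_\Z I(L) + 2^{|L|-1},
\]
since the rank of the $E_\infty$-page can only be smaller than or equal to that of the $E_2$-page. Applying the hypothesis that $L$ is $I$-basic, which says $\text{rk}_\Z I(L) = 2^{|L|-2}|\xi(L)|$, this upper bound becomes
\[
  \text{rk}_\Z I^\natural(L) \;\leqslant\; 2^{|L|-1}|\xi(L)| + 2^{|L|-1}.
\]

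Then, combining with Kronheimer and Mrowka's Alexander polynomial inequality \eqref{eq:kmalexineq}, namely $\text{rk}_\Z I^\natural(L)\geqslant |\Delta_L|$, yields $|\Delta_L|\leqslant 2^{|L|-1}|\xi(L)| + 2^{|L|-1}$, which rearranges to the desired inequality. There is no obstacle here; the statement is essentially an immediate algebraic consequence of the three inputs already in hand. The work lies entirely in the definition of $I$-basic and in the two cited facts, both of which are either stated earlier in this section or taken from \cite{KM:unknot}.
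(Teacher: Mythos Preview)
Your proof is correct and matches the paper's approach exactly: the paper simply states that the lemma follows from the definition of $I$-basic, inequality \eqref{eq:kmalexineq}, and the spectral sequence \eqref{eq:ssirrtonat}, without writing out the details you have supplied.
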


\begin{prop}
	Among knots with crossing number at most $12$ the only $I$-basic knots are
\[
	U_1, \quad T_{2,3}, \quad T_{2,5}, \quad T_{2,7}, \quad T_{2,9}, \quad T_{2,11}, \quad T_{3,4}, \quad T_{3,5}, \quad P(-2,3,7)
\]
\end{prop}

\begin{proof}
	The inequality of Lemma \ref{eq:ibasicineq} rules out all knots except for those on the above list. As already mentioned, torus knots are $I$-basic. The pretzel knot $P(-2,3,7)$ is also $I$-basic, as can be seen in the next subsection.
\end{proof}

\begin{prop}
	If an alternating knot $K\subset S^3$ is $I$-basic, then $K$ is an alternating torus knot $T_{2,2k+1}$ for some integer $k$. 
\end{prop}

\begin{proof}
Assume that $K$ is a non-trivial alternating $I$-basic knot. By Theorem \ref{thm:qa} and the definition of $I$-basic, we have an equality
\begin{equation}\label{eq:altibasicproof1}
	|\sigma(K)| = \det(K)-1.
\end{equation}
An inequality of Crowell \cite[4.9]{crowell} says that for any alternating knot, we have
\begin{equation}\label{eq:altibasicproof2}
	\det(K) \geqslant c(K)
\end{equation}
where $c(K)$ is the crossing number of $K$. Let $G$ be the black graph associated to a minimal alternating diagram of $K$. Let $V$ be the number of vertices of $G$. Then
\begin{equation}\label{eq:altibasicproof3}
	\sigma(K) = V - P -1
\end{equation}
where $P$ is the number of positive crossings in the diagram; see \cite{lee-alt}. Let $E$ and $F$ be the number of edges and faces of the planar graph $G$. Note $E=c(K)$. As $K$ is non-trivial, $G$ is connected and has $V\geq 2, F\geq 2$. We have by Euler's formula $V=E-F+2\leq E = c(K)$. Also, $0\leq P \leq c(K)$. We then obtain 
\begin{equation}\label{eq:altibasicproof4}
	|\sigma(K)|\leq c(K)-1.
\end{equation}
Now \eqref{eq:altibasicproof1}, \eqref{eq:altibasicproof2} and \eqref{eq:altibasicproof4} imply $|\sigma(K)|=c(K)-1$. Replacing $K$ with its mirror image if necessary, we may assume that $\sigma(K)\leq 0$. Then by \eqref{eq:altibasicproof3}, $V-P-1=1-c(K)$ and consequently $V=2$ and $P=c(K)$. Thus $G$ is a planar graph with $2$ vertices and $c(K)$ edges. As the diagram for $K$ is minimal, $G$ has no loops. This uniquely determines $G$, and it comes from the torus knot $T_{2,2k+1}$ where $k$ is given by $(c(K)-1)/2$.
\end{proof}

The notion of $I$-basic generalizes to links with non-trivial bundle data in a straightforward manner. An admissible link $(Y,L,\omega)$ where $Y$ is an integer homology 3-sphere is {\emph{$I$-basic}} if
\begin{equation}\label{eq:ibasicnontrivbun}
	\text{rk}_\Z I^\omega(Y,L) =  \left|\chi( I^\omega(Y,L) )\right|.
\end{equation}
Theorem \ref{thm:eulerchar-withbundles} computes the right-hand side of condition \eqref{eq:ibasicnontrivbun}. In particular, if $\omega$ has an odd number of boundary points on more than two components of $L$, then $(Y,L,\omega)$ is $I$-basic if and only if $I^\omega(Y,L)$ vanishes.

\begin{example}
	Consider an admissible link $(L,\omega)$. Suppose $L$ is split by a separating $2$-sphere $S\subset S^3$, and $\omega\cap S$ is an odd number of points. Then $I^\omega(L,\omega)=0$, as there are no flat $SU(2)$ connections on a 2-sphere with an odd number of punctures, such that the holonomy around each puncture is $-1$. Thus $(L,\omega)$ is $I$-basic. $\diamd$
\end{example}

To obtain further examples of $I$-basic knots we use the exact triangles for irreducible singular instanton homology. Call any homology group {\emph{basic}} if it has rank the absolute value of its Euler characteristic. Suppose given an exact triangle of homology groups $H,H',H''$ where $H',H''$ are basic. If $|\chi(H)|=|\chi(H')|+|\chi(H'')|$ then $H$ is also basic. This principle is used in the following lemma as a strategy for generating more $I$-basic knots.

\begin{lemma}\label{lemma:ibasic}
	Let $K_0$ be a knot which is $I$-basic and $\sigma(K_0)\leqslant 0$. Let $K_m$ be obtained from $K_0$ by inserting $m$ positively oriented half-twists; see Figure \ref{fig:ibasiclemma}. Similarly, let $K_\infty$ be obtained from $K_0$ by the local replacement given in Figure \ref{fig:ibasiclemma}. If $K_\infty$ is an unknot, then the following holds.
	\begin{itemize}
		\item[{\rm{(i)}}] If $m$ is a positive even integer, then the knot $K_m$ is $I$-basic.
		\item[{\rm{(ii)}}] If $m$ is a positive odd integer, and $\omega$ is an arc connecting the two components of $K_m$, then $(K_m,\omega)$ is $I$-basic. Also, if $\det(K_m)\neq 0$, then $K_m$ is $I$-basic.
	\end{itemize} 
\end{lemma}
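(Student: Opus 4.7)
The plan is a joint induction on $m \geq 1$ covering both parts, with base case $K_0$ provided. For each $m \geq 1$, apply the unoriented skein relation at the topmost crossing of the twist region of $K_m$. Its two smoothings respectively recover $K_{m-1}$ (by removing one half-twist) and $K_\infty$ (by severing the twist tangle from the rest of the knot), so $(K_m, K_{m-1}, K_\infty)$ forms an unoriented skein triple. Since $K_\infty$ is an unknot, $I(K_\infty) = 0$ and $\chi(I(K_\infty)) = 0$.

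The parity of $m$ controls the component count: for $m$ even, $K_m$ is a knot and $K_{m-1}$ is a two-component link; for $m$ odd the roles are reversed. In either case the link with more components plays the role of $L$ in Theorem~\ref{thm:irredexacttrianglesintro}, so one of Cases~I--III applies. The signs $\epsilon(K_m, K_{m-1})$ and $\epsilon(K_\infty, K_m)$ are determined by the signatures, which are tracked inductively using $\sigma(K_0) \leq 0$, the monotonicity of signature under adding positive half-twists, and the Murasugi skein recursion \eqref{eq:eulercharskeinrel}. Combined with \eqref{eq:irreulerchar}, this inductively predicts $|\chi(I(K_m))|$ and ensures $\det(K_m) \neq 0$ as required for the conclusion to make sense.

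Substituting $I(K_\infty) = 0$ into the exact triangle of Theorem~\ref{thm:irredexacttrianglesintro} collapses it, yielding an upper bound of the form $\text{rk}_\Z I(K_m) \leq \text{rk}_\Z I(K_{m-1}) + 2^{|L|-1}$, where the last term (with $L \in \{K_{m-1}, K_\infty\}$) is the rank contribution of the reducible summand when one side is suspended in Cases~II or III, and is absent in Case~I. The inductive hypothesis makes $K_{m-1}$ $I$-basic (invoking part (ii) at $m-1$ if $m$ is even, and part (i) at $m-1$ if $m$ is odd), and matching the Euler characteristic identity against this upper bound forces equality, so $K_m$ is $I$-basic. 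This establishes part (i) for even $m$ and the $\det(K_m) \neq 0$ clause of (ii) for odd $m$.

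For the $(K_m, \omega)$ claim in (ii), the arc $\omega$ can be chosen near the skein $3$-ball so that its endpoints lie on the two components of $K_m$ emerging from the twist region; then $(Y, K_m, \omega)$ is admissible in the sense of Theorem~\ref{thm:exacttriangles-withbundles}, whose exact triangle applies to the same skein triple with trivial bundle data induced on $K_{m-1}$ and $K_\infty$. Its Euler characteristic is computed by Theorem~\ref{thm:eulerchar-withbundles}, and the same collapse via vanishing of the unknotted companion's homology yields that $(K_m, \omega)$ is $I$-basic. The principal obstacle throughout is careful bookkeeping of signatures, the signs $\epsilon(\cdot, \cdot)$, and mod-$2$ degrees of connecting maps, in order to determine which of Cases~I--III applies at each step and to match absolute Euler characteristics with the rank upper bounds, while maintaining $\det(K_m) \neq 0$ through the induction.
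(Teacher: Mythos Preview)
Your induction has a genuine gap: it relies on Theorem~\ref{thm:irredexacttrianglesintro} at each step, and that theorem requires \emph{all three} links in the skein triple to have non-zero determinant. At even $m$, your triple is $(K_m, K_{m-1}, K_\infty)$ where $K_{m-1}$ is a two-component link, and nothing prevents $\det(K_{m-1}) = 0$. Your claim that the induction ``ensures $\det(K_m) \neq 0$'' is simply false: the paper's own pretzel family $P_n = P(-2,3,n)$ (Lemma~\ref{lemma:pretzelsignatures}) has $\det(P_6) = 0$, so starting from $K_0 = P_1 = T_{2,5}$ one hits $K_5 = P_6$ with determinant zero. At that point your triangle does not exist, the trivial-bundle group $I(K_5)$ is not even defined, and the induction cannot reach $K_6 = P_7$ --- yet the lemma asserts $P_7$ is $I$-basic unconditionally.

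The paper's fix is to run the main induction through the \emph{non-trivial bundle} groups: at odd $m$, equip $K_m$ with the arc $\omega$ and use Theorem~\ref{thm:exacttriangles-withbundles} (Figure~\ref{fig:exacttriangles-withbundles-irr}) for the triples $(K_m, K_\infty, K_{m-1})$ and $(K_m, K_{m+1}, K_\infty)$. Admissibility replaces the $\det \neq 0$ hypothesis on $K_m$, and the other two entries are knots (hence have odd, non-zero determinant). This gives the chain
\[
K_0 \;\longrightarrow\; (K_1,\omega) \;\longrightarrow\; K_2 \;\longrightarrow\; (K_3,\omega) \;\longrightarrow\; \cdots
\]
establishing part~(i) and the first clause of~(ii) without any determinant condition on the links. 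Only afterwards, and separately, is the trivial-bundle statement for odd $m$ (second clause of~(ii)) deduced from the already-known even case $K_{m-1}$, under the explicit hypothesis $\det(K_m) \neq 0$. A secondary point: rather than matching Euler characteristics against rank upper bounds, the paper tracks that each group is supported in a single $\Z/2$-grading (starting from $\sigma(K_0) \leq 0$), which gives $I$-basic directly and avoids the case-by-case sign bookkeeping you flag as the ``principal obstacle.''
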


\begin{figure}[t]
    \centering
    \centerline{\includegraphics[scale=0.65]{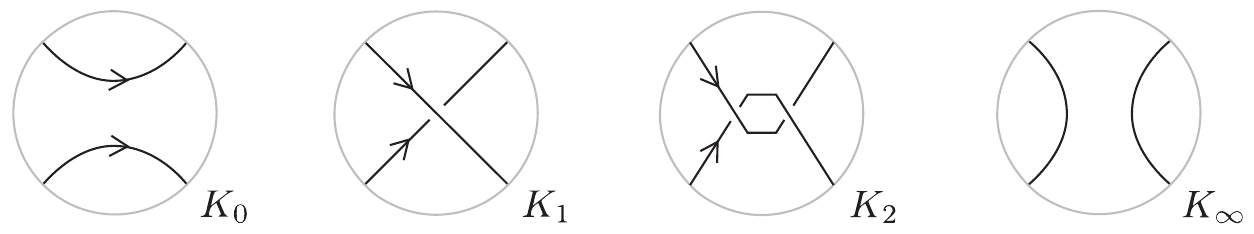}}
    \caption{{\small{The knots $K_0$, $K_2$, $K_\infty$ and the link $K_1$ which appear in Lemma \ref{lemma:ibasic}.}}}
    \label{fig:ibasiclemma}
\end{figure}

\begin{proof}
	Under the assumptions, $K_1$ is a 2-component link. We have an unoriented skein triple $(K_1,K_\infty,K_0)$. Let $\omega$ be an arc on $K_1$ that connects the two components. Then we consider the corresponding exact triangles with this non-trivial bundle data as described in Theorem \ref{thm:exacttriangles-withbundles}; see Figure \ref{fig:exacttriangles-withbundles-irr} for the irreducible homology exact triangles. Suppose we are in Case A of Figure \ref{fig:exacttriangles-withbundles-irr}. Then we have
\[
                \begin{tikzcd}[column sep=1.7ex, row sep=5ex, fill=none, /tikz/baseline=-10pt]
            & I_\fo^\omega(K_1)  \arrow[dr,"\text{deg } 1"]  & \\
             I(K_0)  \arrow[ur, "\text{deg } 0"] & & I(K_\infty) = 0  \arrow[ll,"\text{deg } 0"] 
            \end{tikzcd}
\]
Here $\fo$ is the quasi-orientation of $K_1$ that is given in Figure \ref{fig:ibasiclemma}, and $I_\fo^\omega(K_1)$ is the group $I^\omega(K_1)$ equipped with the $\Z/2$-grading determined by $\fo$ as described in Subsection \ref{subsec:absz2gr}. The mod $2$ degrees are computed from Proposition \ref{prop:degreesfornontrivbundletriangle}. We obtain a $\Z/2$-graded isomorphism $I_\fo^\omega(K_1)\cong I(K_0)$. As $K_0$ is $I$-basic, so too is $(K_1,\omega)$. Note that $\sigma(K_0)\leq 0$ implies that $I(K_0)$ and hence $I_\fo^\omega(K_1)$ is supported in odd gradings.

If instead we are in Case B of Figure \ref{fig:exacttriangles-withbundles-irr}, then we obtain a $\Z/2$-graded isomorphism $I_\fo^\omega(K_1)\cong I_+(K_0)$. As $I(K_0)$ is supported in odd gradings, we have $(\delta_2)_\ast=0$, and thus $I_+(K_0)$ is also supported in odd gradings (see the discussion in Subsection \ref{eq:irrexacttrisubsec}). Thus again $(K_1,\omega)$ is $I$-basic.

Next, we have a skein triple $(K_1,K_2,K_\infty)$. We continue to equip $K_1$ with the arc $\omega$, and look at the corresponding exact triangles. If we are in Case A of Figure \ref{fig:exacttriangles-withbundles-irr} then we have
\[
                \begin{tikzcd}[column sep=1.7ex, row sep=5ex, fill=none, /tikz/baseline=-10pt]
            & I_\fo^\omega(K_1)  \arrow[dr,"\text{deg } 0"]  & \\
            0= I(K_\infty)  \arrow[ur, "\text{deg } 1"] & & I(K_2)  \arrow[ll,"\text{deg } 0"] 
            \end{tikzcd}
\]
and the $\Z/2$-graded isomorphism $I_\fo^\omega(K_1)\cong I(K_2)$ implies $K_2$ is $I$-basic. If instead we are in Case B of Figure \ref{fig:exacttriangles-withbundles-irr}, then $I(K_\infty)$ is replaced in the above exact triangle by $I_+(K_\infty)=\Z_{(1)}$. As the group $I_\fo^\omega(K_1) $ is supported in odd gradings, we obtain that so too is $I(K_2)$. Thus in this case $K_2$ is also $I$-basic.

Inductively we obtain that $(K_m,\omega)$ for $m>0$ odd is $I$-basic, and $K_m$ for $m$ even is $I$-basic. It remains to show that $K_m$ for $m$ odd, assuming $\det(K_m)\neq 0$, is $I$-basic. It suffices to show that $K_1$ is $I$-basic, assuming $\det(K_1)\neq 0$. Indeed, the same argument applied to each $I$-basic $K_m$, where $m\geq  0$ is even, shows that $K_{m+1}$ is $I$-basic. 

Consider the unoriented skein triple $(K_1,K_\infty,K_0)$. We look at the exact triangles with trivial singular bundle data, as given in Theorem \ref{thm:irredexacttrianglesintro}. For any skein triple $(L,L',L'')$ with non-zero determinants there is a relation $\epsilon \det(L) + \epsilon' \det(L') + \epsilon'' \det(L'') =0 $ where $\epsilon,\epsilon',\epsilon''\in \{ \pm 1\}$. In the case at hand we have
\begin{align}
	\det(K_1) &= \det(K_0) \pm 1  \label{eq:ibasiclemmadets1}
\end{align}
Indeed, if $\det(K_1)+\det(K_0)=1$, then, as the determinant of a knot is positive, we must have $\det(K_1)=0$, contrary to our assumption on $K_1$. 

Relation \eqref{eq:ibasiclemmadets1} with Lemma \ref{lemma:manoz} implies that the exact triangle of Theorem \ref{thm:irredexacttrianglesintro} is either in Case I or Case II. In Case I the exact triangle and $I(K_\infty)=0$ yield $I(K_0)\cong I(K_1)$. Thus if $K_0$ is $I$-basic then so too is the link $K_1$. If instead we are in Case II, we have
\begin{equation*}
                \begin{tikzcd}[column sep=1.7ex, row sep=5ex, fill=none, /tikz/baseline=-10pt]
            & I(K_1)  \arrow[dr,  "\text{deg } 0"]  & \\
             I(K_{0})  \arrow[ur,  "\text{deg } 0"] & &   I_+(K_{\infty}) =\Z_{(1)}  \arrow[ll,  "\text{deg } 1"]
            \end{tikzcd}
\end{equation*}
As $I(K_0)$ is supported in odd gradings, we obtain that $I(K_1)$ is supported in odd gradings. Thus in this case $K_1$ is again $I$-basic.
\end{proof}

An alternative perspective is as follows. Let $K_0$ be a knot obtained from the unknot by some band move. Then the lemma says that if $K_0$ has $\sigma(K_0)\leqslant 0$ and is $I$-basic, so too are any of the links $K_{i}$ (with $\det\neq 0$) obtained by adding $i$ postive half-twists to the band.

\begin{example} Lemma \ref{lemma:ibasic} gives another proof that the torus knots $T_{2,2k+1}$ are $I$-basic. For $k\neq 0$, we also obtain that the torus links $T_{2,2k}$ are $I$-basic, and that each $(T_{2,2k},\omega)$ is $I$-basic, where $\omega$ is an arc connecting the two components.
$\diamd$
\end{example}

\begin{remark}
Motivated by Question \ref{question:lspaceknots}, it would interesting to see whether Lemma \ref{lemma:ibasic} is also true for Heegaard Floer L-space knots. $\diamd$
\end{remark}

\begin{remark}
	In Lemma \ref{lemma:ibasic}, if $I(K_0)$ is free abelian, then so too are the instanton homology groups for the $I$-basic links that are determined. Indeed, the arguments with the exact triangles work for each field $\Z/p$ ($p$ prime), and one concludes that no torsion appears. $\diamd$ \label{rmk:lemmaibasicskeintorsion}
\end{remark}

The $\Z/4$-gradings for the ranks of an $I$-basic knot are determined as follows.

\begin{prop}\label{prop:z4equidist}
	If a knot $K$ is $I$-basic, then as $\Z/4$-graded vector spaces we have
	\[
	I(K)\otimes \Q   \cong \begin{cases}\Q_{(1)}^{\lceil -\sigma(K)/4\rceil}\oplus  \Q_{(3)}^{\lfloor -\sigma(K)/4\rfloor} \qquad  & \sigma(K)\leq 0\\
	& \\
					 \Q_{(0)}^{\lfloor \sigma(K)/4\rfloor}\oplus  \Q_{(2)}^{\lceil \sigma(K)/4\rceil}    \qquad  & \sigma(K)\geq 0 \end{cases}
\]
\end{prop}

\begin{proof}
	The proof is similar to the argument of \cite[Corollary 3]{DS2}. Let $\sS'$ be the field of fractions of the ring $\sS=\Z[T^{\pm 1}]$. For any knot $K$, if we use local coefficients over the ring $\sS'$, then we have that the $\Z/4$-graded $\sS'$-vector space $I(K;\Delta_{\sS'})$ is as described in the statement of the proposition, with $\sS'$ replacing $\Q$ throughout. Furthermore, we also have
	\[
		\text{rk}_\Z I(K)_j \geq \dim_{\sS'} I(K;\Delta_{\sS'})_j
	\]
	for each $j\in \Z/4$. Combined with the condition that $K$ is I-basic determines the ranks of $I(K)_j $ as desired.
\end{proof}

\subsection{Pretzel links}

In this subsection we compute the irreducible instanton homology of the pretzel links $P_{n}:=P(-2,3,n)$ for $n>0$. In this subsection and the one to follow, for simplicity we focus entirely on the case of $I$-basic knots and links with trivial singular bundle data. We show that these are $I$-basic using Lemma \ref{lemma:ibasic}. To state the results in an explicit form we first compute the signatures of these links. 

For $n>0$ odd, $P_n$ is a knot and of course has a unique quasi-orientation. For $n>0$ even, $P_n$ is a link and has two quasi-orientations $\fo_+$ and $\fo_-$, see Figure \ref{fig:pretzel}.

\begin{figure}[t]
    \centering
    \centerline{\includegraphics[scale=0.7]{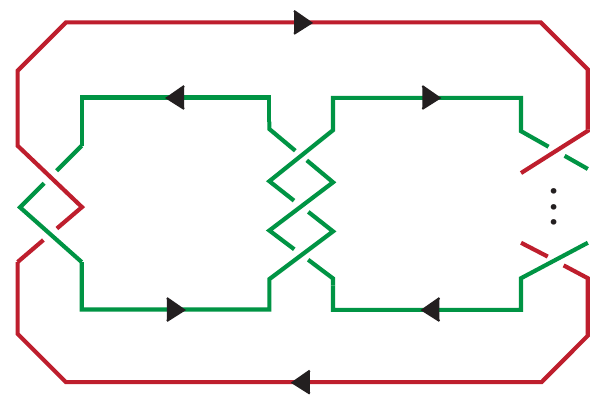}}
    \caption{{\small{The pretzel link $P_n=P(-2,3,n)$ with $n>0$, $n$ even. There are $n$ crossings in the right-most vertical part of the link. The link shown is $P_n$ with the quasiorientation $\fo_+$. To obtain the other quasi-orientation $\fo_-$, reverse the orientation of either of the two components.}}}
    \label{fig:pretzel}
\end{figure}

\begin{lemma}\label{lemma:pretzelsignatures}
	For odd $n>0$ the knot $P_n=P(-2,3,n)$ has signature
\[
	\sigma(P_n) = \begin{cases} -n-1 & n\geq 7 \\ -n-3 & n\in \{1,3,5\} \end{cases}
\]
For even $n>0$ the link $P_n=P(-2,3,n)$ has signatures
\begin{align*}
	\sigma(P_n,\fo_+) &= \begin{cases} -n-1 & n\geq 8 \\
									-n-2 =-8 \;\;\;\;\; & n=6 \\
									-n-3 & n\in \{2,4\}\end{cases}\\
	\sigma(P_n,\fo_-) &= \begin{cases} +1 \;\;\;\;\; & n\geq 8 \\ 0 & n=6 \\ -1 & n\in\{2,4\}  \end{cases}
\end{align*}
The determinant of the Pretzel link $P_n$ for all $n>0$ is given as follows:
\[
	\det(P_n) = |n-6|
\]
\end{lemma}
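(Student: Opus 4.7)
The determinant is immediate from the standard formula $\det P(p,q,r) = |pq+qr+pr|$ for three-strand pretzel links (derivable, for instance, from a Goeritz matrix of the standard diagram), which for $(p,q,r)=(-2,3,n)$ evaluates to $|{-6}+3n-2n| = |n-6|$, independent of the parity of $n$.

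For the signatures, I would proceed by induction on $n$ using unoriented skein moves on a single crossing in the $n$-twist region of the standard diagram. Removing one half-twist produces an unoriented skein triple $(P_n, P_{n-1}, L_\infty)$, where $L_\infty$ is the common smoothing of the crossing; a direct inspection of the diagram identifies $L_\infty$ as a fixed link (not depending on $n$), which can be recognized as a two-bridge link or a Hopf link whose signature is readily computed. The ingredients needed are then: (i) Murasugi's skein inequality giving $\sigma(P_n)-\sigma(P_{n-1}) \in \{-1,0,+1\}$ and the analogous statement involving $L_\infty$; and (ii) the determinant identity $\det(P_n) = \pm\det(P_{n-1}) \pm \det(L_\infty)$, which, combined with $\det(P_n)=|n-6|$, fixes the signs at each step outside the exceptional range near $n=6$, hence pins down the signature jump.

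The base cases $n \leq 8$ I would verify directly. Construct the canonical Seifert surface $\Sigma_n$ obtained by applying Seifert's algorithm to the standard pretzel diagram; the associated Seifert matrix $V_n$ is block tridiagonal of small rank, and the signature of $V_n + V_n^\intercal$ is a straightforward linear-algebra calculation. For $n$ even, there are two quasi-orientations $\fo_\pm$ as shown in Figure \ref{fig:pretzel}; each is realized by a specific choice of orientation on the diagram, yielding two distinct Seifert matrices whose signatures produce the two values $\sigma(P_n,\fo_\pm)$ in the statement.

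The main obstacle will be the transitional regime $n \in \{5,6,7,8\}$. Here $\det(P_6) = 0$, so the inductive determinant-based bookkeeping breaks down at that step; and the transition between the pattern $\sigma = -n-3$ for small $n$ and $\sigma = -n-1$ for large $n$ happens exactly in this window. These values must be handled by the direct Seifert matrix computation, after which the induction is restarted for $n \geq 9$ (odd) and $n \geq 10$ (even) using the skein/determinant argument. Bookkeeping of which orientation of $P_n$ represents $\fo_+$ versus $\fo_-$ (and the corresponding sign conventions for the $\epsilon$'s in the skein steps) will also need care, since the two quasi-orientations behave quite differently, especially for $\fo_-$ where the signature stabilizes at $+1$.
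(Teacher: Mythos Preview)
Your proposal is correct and close in spirit to the paper's proof, but the mechanics of the inductive step differ. The paper computes the Alexander polynomial via the oriented skein relation and reads off $\det(P_n)=|n-6|$ from $|\Delta_{P_n}(-1)|$; for odd $n$ it then uses a full crossing change $P_n\to P_{n-2}$, the constraint $\sigma(L_+)-\sigma(L_-)\in\{0,-2\}$, and the congruence $|\sigma(K)|+1\equiv\det(K)\pmod 4$ to pin down the jump. For even $n$ it observes that $P_n$ (with $\fo_+$) and $P_{n-1}$ differ by an oriented band move, giving $|\sigma(P_n,\fo_+)-\sigma(P_{n-1})|\leq 1$, and then obtains $\sigma(P_n,\fo_-)$ from the linking-number formula \eqref{eq:signaturechangeqo}. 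The paper also notes that a $2\times 2$ Goeritz matrix together with the Gordon--Litherland correction handles everything directly.

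Your route---a half-twist skein triple $(P_n,P_{n-1},L_\infty)$ with $L_\infty$ the fixed smoothing---is equally valid (and indeed $L_\infty$ is an unknot, which is exactly what the paper exploits in Proposition~\ref{lemma:-237pretzel} via Lemma~\ref{lemma:ibasic}). The one point you leave implicit is the bridge from ``determinant signs'' to ``signature jump'': this is precisely the content of Lemma~\ref{lemma:manoz} (Murasugi/Manolescu--Ozsv\'ath), which says that when the determinants add, the corresponding $\epsilon$ equals $+1$. With that cited, your induction goes through cleanly. For the base cases, the Seifert matrix works but grows with $n$; the $2\times 2$ Goeritz matrix the paper mentions is considerably lighter and handles both quasi-orientations at once via the Gordon--Litherland correction term.
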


\begin{proof}
The knot $P_1$ is the $(2,5)$ torus knot, and $P_0$ is a trefoil connect summed with a Hopf link. Using the oriented skein relation for the Alexander polynomial applied to the oriented link as given in the middle of Figure \ref{fig:pretzel} we inductively compute:
\[
	\Delta_{P_n}(t) = \sum_{\substack{-3\leqslant i<n\\i \text{ odd}, i\neq 1}}(-1)^{\frac{i+1}{2}}(t^{\frac{n-i}{2}} + t^{-\frac{n-i}{2}} ) + \epsilon
\]
where $\epsilon=1$ if $n$ is odd and $\epsilon=0$ if $n$ is even. Evaluating $|\Delta_{P_n}(-1)|$ gives the determinants as claimed. The Murasugi condition $|\sigma(K)|+1\equiv \det(K) \pmod{4}$ for a knot $K$ along with $\sigma(L_+)-\sigma(L_-)\in \{0,-2\}$ for two non-zero determinant links related by a positive crossing change can then be used to compute the signatures, for $n$ odd, inductively from the base cases. For $n$ even, as $P_n$ with quasi-orientation $\fo_+$ and $P_{n-1}$ are related by an oriented band move, we have $|\sigma(P_n,\fo_+)-\sigma(P_{n-1})|\in \{0,1\}$, and this computes $\sigma(P_n,\fo_+)$ from the knot cases; for $\sigma(P_n,\fo_-)$ then use \eqref{eq:signaturechangeqo}. Alternatively, a $2\times 2$ Goeritz matrix and the Gordon--Litherland formula can be used to compute these quantities. 
\end{proof}

\begin{prop}\label{lemma:-237pretzel}
	For the pretzel link $P(-2,3,n)$ with $n>0$ and $n\neq 6$ we have:
\[
	I(P(-2,3,n)) \cong \begin{cases}  \Z_{(1)}^{\lceil \frac{n+3}{4}\rceil} \oplus  \Z_{(3)}^{\lfloor \frac{n+3}{4} \rfloor} & 1\leq n \leq 5 \\ & \\ \Z_{(1)}^{\lceil \frac{n+1}{4}\rceil}\oplus \Z_{(3)}^{\lfloor \frac{n+1}{4}\rfloor} & n\geqslant 7 \end{cases}
\]
In particular, these links are $I$-basic.
\end{prop}

\begin{proof}
The knot $P_1=T_{2,5}$ is $I$-basic. That $P_n$ for $n>1$ and $n\neq 6$ are $I$-basic now follows from Lemma \ref{lemma:ibasic}, using the right vertical strand of $n$-half twists. (The case $n=6$ is omitted, as $\det(P_6)=0$.) The rank of each group is determined by the signatures as computed in Lemma \ref{lemma:pretzelsignatures}, and the $\Z/4$-gradings follow from Proposition \ref{prop:z4equidist}. That the groups are free abelian follows from Remark \ref{rmk:lemmaibasicskeintorsion}. 
\end{proof}

We note that $P_1=T_{2,5}$, $P_3=T_{3,4}$, $P_5=T_{3,5}$ are torus knots and thus are $I$-basic from earlier remarks. Furthermore, that $P_7=P(-2,3,7)$ is $I$-basic can be seen by a more direct computation: the irreducible traceless $SU(2)$ character variety consists of $4=|\sigma(P_7)/2|$ non-degenerate points, see for example \cite[\S 7.2]{PS}. Such a direct computation does not work for the knot $P_n$ in the cases when $n>7$.

\subsection{Twisted torus knots}
Let $p,q$ be relatively prime positive integers. We denote by $T(p,q;a,b)$ the twisted torus knot obtained from taking $a>0$ parallel strands in a standard braid presentation of $T(p,q)$ and applying $b\in \Z$ full twists. We allow $b$ to be a half-integer as well, indicating that half twists are inserted, and in this case we obtain a twisted torus link. See Figure \ref{fig:twistedtorus52} for a summary of our conventions in the case $a=2$. Note that we also write $T(p,q;2,\infty)$ for the result of a vertical resolution of the 2 horizontal strands; this link is relevant when applying the exact triangles in this setting. 

\begin{figure}[t]
    \centering
    \centerline{\includegraphics[scale=0.85]{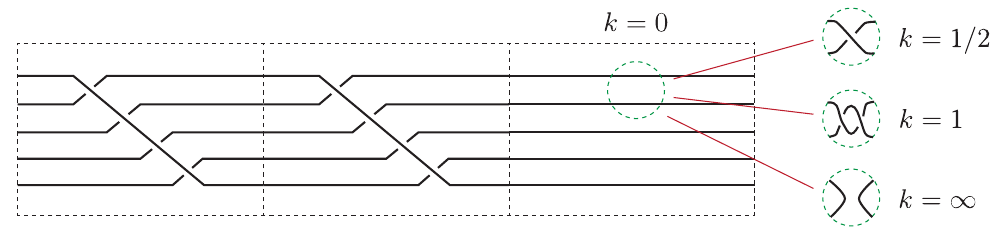}}
    \caption{{\small{The twisted torus link $T(5,2;2,k)$ for various $k$, represented as a braid closure. When $k$ is an integer (resp. half-integer), the result is a knot (resp. 2-component link).}}}
    \label{fig:twistedtorus52}
\end{figure}

\begin{prop}\label{prop:twistedtorusknots}
For $p>1$ an odd integer, $m>0$ any integer, and $k\in \frac{1}{2}\Z_{\geq 0}$, the twisted torus link $T(p,pm \pm 2 ; 2,k)$ is $I$-basic whenever it has $\det \neq 0$.
\end{prop}

\begin{proof}
First, we claim that $T(p,pm\pm 2;2,\infty)$ is an unknot. The case in which $p=5$, $m=1$ and $\pm = -$ is established in Figure \ref{fig:twistedtorusunknot}. It is easy to see that the same isotopy works regardless of $p$, the number of strands. Varying $m\in \Z$ amounts to adding full twists, and the isotopy extends to these cases; this is because the isotopy can be done so that its support is disjoint from the braid axis. The case with $\pm =+$ is similar. The result then follows from Lemma \ref{lemma:ibasic} applied to $K_0=T_{p,pm+ \epsilon 2}$, which is $I$-basic and has negative signature.
\end{proof}

We remind the reader that the technical condition $\det\neq 0$ is included only because the invariant $I(L)$ has not yet been defined for links with zero determinant. In any case, the determinant of the $T(p,q;2,k)$ twisted torus link is straightforward to compute. Write $r$ for the unique integer such that $0<r<p$ and $r\equiv -q^{-1}\pmod{p}$.

\begin{figure}[t]
    \centering
    \centerline{\includegraphics[scale=1.0]{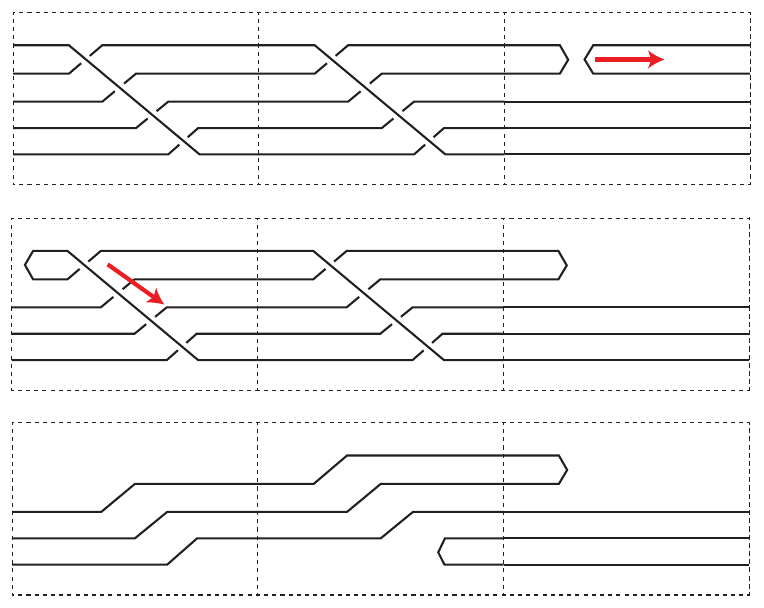}}
    \caption{{\small{Isotoping $T(5,2;2,\infty)$ into an unknot.}}}
    \label{fig:twistedtorusunknot}
\end{figure}

\begin{prop}
	Let $p,q$ be relatively prime positive integers. The Alexander polynomial of the twisted torus link $T(p,q;2,k)$ for $k\in \frac{1}{2}\Z$ is given as follows:
\[
	\Delta_{T(p,q;2,k)}(t) = \frac{t^{-\frac{(p-1)(q-1)}{2}-k}(t-1)}{(t^p-1)(t^q-1)}\left( t^{pq+2k}-1 - \frac{(t^{2k}-1)}{(t+1)}\left(t^{rq+1}+ t^{(p-r)q}\right)\right)
\]
\end{prop}

\begin{proof}
	The computation for knots, i.e. when $k$ is even, is done by Morton \cite{morton} (we have normalized his results to fit our conventions), and the case for links follows from the case for knots via the oriented skein relation for the Alexander polynomial.
\end{proof}

Evaluating (or possibly taking the limit of) the above polynomial at $t=-1$ and taking the absolute value yields the following computation.

\begin{cor}\label{cor:twistedtorusdet}
The determinant of $T(p,q;2,k)$ for $k\in \frac{1}{2}\Z$ is as follows:
\begin{equation*}
	\det(T(p,q;2,k)) = \begin{cases} |2k+(-1)^r| \qquad  & p,q \text{ odd}\\
											|q(1-2k) + \frac{4k}{p}(1+rq)| \qquad \quad & p \text{ even}, q \text{ odd}\\ 
											|(2k+1)p - 4kr | & p \text{ odd}, q \text{ even}\\ 
										 \end{cases}
\end{equation*}
\end{cor}

Finally, let us describe the irreducible instanton homology of the particular family of twisted torus knots $T(3,3n+2;2,k)$, where $n,k>0$. To this end, we note that well-known formulas for the signatures of torus knots \cite{litherland} give
\begin{equation}\label{eq:signtwistedtorus3}
	\sigma(T(3,3n+2;2,0)) = \begin{cases} -4n-2 \;\; & n \text{ odd} \\ -4n-4 \;\; & n \text{ even} \end{cases}
\end{equation}
Next, Corollary \ref{cor:twistedtorusdet} in this case yields
\begin{equation}\label{eq:dettwistedtorus3}
	\det(T(3,3n+2;2,k)) = \begin{cases} |2k-1| \;\; & n \text{ odd} \\ |2k+3| \;\; & n \text{ even} \end{cases}
\end{equation}
Using \eqref{eq:signtwistedtorus3} and \eqref{eq:signtwistedtorus3} and the constraints $|\sigma(K)|+1\equiv \det(K) \pmod{4}$ for any knot $K$ and $\sigma(K_+)-\sigma(K_-)\in \{0,-2\}$ for knots differing by a positive crossing, we obtain
\[
	\sigma(T(3,3n+2;2,k)) = -4n -2 - 2k
\]
for any integer $k>0$. Finally, using Proposition \ref{prop:twistedtorusknots}, and following an argument similar to that of Proposition \ref{lemma:-237pretzel}, we obtain the following.

\begin{cor}\label{cor:3twistedtoruscomp}
	For the twisted torus knot $T(3,3n+2;2,k)$ where $n,k\in \Z_{>0}$ we have:
\[
	I(T(3,3n+2;2,k)) \cong   \Z_{(1)}^{n+\lceil \frac{k+1}{2}\rceil} \oplus  \Z_{(3)}^{n+\lfloor \frac{k+1}{2}\rfloor}
\]
\end{cor}

\vspace{.1cm}

\noindent We note that $T(3,5;2,1)$ is in fact isotopic to the pretzel knot $P(-2,3,7)$, and this gives an overlapping case in Corollaries \ref{lemma:-237pretzel} and \ref{cor:3twistedtoruscomp}.

\newpage

\addcontentsline{toc}{section}{References}

\bibliography{references}
\bibliographystyle{alpha.bst}

\Addresses

\end{document}